\documentclass[11pt,leqno]{article}
\usepackage{a4wide}
\usepackage{amssymb}
\usepackage{amsmath}
\usepackage{amsthm}
\usepackage[curve]{xypic}
\usepackage{color}
\usepackage{hyperref}
\theoremstyle{plain}
\newcommand{\Id}{\operatorname{Id}}
\newcommand{\id}{\operatorname{id}}

\newcommand{\Aut}{\operatorname{Aut}}
\newcommand{\Hom}{\operatorname{Hom}}
\newcommand{\Ind}{\operatorname{Ind}}

\newcommand{\Res}{\operatorname{Res}}
\newcommand{\res}{\operatorname{res}}

\newcommand{\ind}{\operatorname{c-Ind}}

\newcommand{\ev}{\operatorname{ev}}
\newcommand{\End}{\operatorname{End}}
\newcommand{\val}{\operatorname{val}}
\newcommand{\Gal}{\operatorname{Gal}}

\newcommand{\Ker}{\operatorname{Ker}}
\newcommand{\Coker}{\operatorname{Coker}}
\newcommand{\diag}{\operatorname{diag}}
\newcommand{\supp}{\operatorname{supp}}
\newtheorem{theorem}{Theorem}[section]
\newtheorem{corollary}[theorem]{Corollary}
\newtheorem{lemma}[theorem]{Lemma}
\newtheorem{remark}[theorem]{Remark}
\newtheorem{proposition}[theorem]{Proposition}

\newtheorem{definition}[theorem]{Definition}

\frenchspacing

\begin{document}

 \title{From  \'etale $P_{+}$-representations to \\  $G$-equivariant sheaves on $G/P$}

\author{Peter Schneider, Marie-France Vigneras, Gergely Zabradi \footnote{The third author was partially supported by OTKA Research grant no.\ K-101291} }
 \maketitle

\abstract{} Let $K/\mathbb Q_{p}$ be a finite extension with ring of integers $o$,  let $G$ be a connected reductive split $\mathbb Q_{p}$-group of Borel subgroup $P=TN$ and let $\alpha$ be a simple root of $T$ in $N$. We associate  to a finitely generated module $D$ over the Fontaine ring over $o $ endowed with a semilinear \'etale action of
 the monoid $T_{+} $ (acting on the Fontaine ring via  $\alpha$), a $G(\mathbb Q_{p})$-equivariant sheaf   of $o$-modules on the compact space $G(\mathbb Q_{p})/P(\mathbb Q_{p})$. Our construction  generalizes  the  representation $D\boxtimes \mathbb P^{1} $ of $ GL(2,\mathbb Q_{p})$ associated by Colmez
   to a   $(\varphi,\Gamma)$-module  $D$ endowed with a character  of $\mathbb Q_{p}^{*}$.
\tableofcontents

 \section{Introduction}
\subsection{Notations}  We fix  a finite extension $K/\mathbb Q_{p}$ of ring of integers $o$ and an algebraic closure $\overline {\mathbb Q}_p$ of $K$. We denote by $\mathcal G_p = \Gal (\overline{ \mathbb Q}_p /\mathbb Q_p)$ the absolute Galois group  of $\mathbb Q_p$,   by  $\Lambda(\mathbb Z_p) $
the Iwasawa $o$-algebra of maximal ideal $\mathcal M(\mathbb Z_p)$, and by
 $\mathcal O_{\mathcal E}$ the Fontaine ring which is the $p$-adic completion of the localisation of the Iwasawa $o$-algebra $ \Lambda(\mathbb Z_p)=o[[Z_p]]$ with respect to the elements not in $p \Lambda(\mathbb Z_p)$. We put on  $\mathcal O_{\mathcal E}$ the weak topology inducing  the $ \mathcal M(\mathbb Z_p)$-adic topology on $ \Lambda(\mathbb Z_p)$, a fundamental system of neighborhoods of $0$ being
 $(p^n\mathcal O_{\mathcal E}+  \mathcal M(\mathbb Z_p)^n)_{n\in \mathbb N}.$
The  action of $\mathbb Z_p - \{0\}$ by multiplication on $\mathbb Z_p$
extends to an  action   on $\mathcal O_{\mathcal E}$.

We fix an arbitrary split reductive connected $\mathbb Q_{p}$-group $G$ and a Borel $\mathbb Q_{p}$-subgroup $P=TN$ with maximal $\mathbb Q_{p}$-subtorus $T$ and unipotent radical $N$. We denote by  $w_0$ the longest element of the Weyl group of $T$ in $G$, by $\Phi_+$ the  set  of  roots of $T$ in $N$, and by $u_\alpha :\mathbb G_a \to  N_\alpha$, for $\alpha \in \Phi_+$,  a  $\mathbb Q_p$-homomorphism
 onto the root subgroup $N_\alpha$ of $N$ such that $tu_\alpha (x) t^{-1}=  \alpha (t) x$ for $x\in \mathbb Q_p$ and $t\in T(\mathbb Q_p)$, and   $N_0=\prod_{\alpha\in \Phi_+} u_\alpha (\mathbb Z_p)$  is a subgroup of $N(\mathbb Q_p) $.  We denote
by $T_{+} $ the monoid of dominant elements  $t$ in $T(\mathbb Q_p)$ such that $ \val_p(\alpha(t))\geq 0$ for all $\alpha \in \Phi_+$,   by $T_0\subset T_+$ the maximal subgroup,  by $T_{++}$ the subset of strictly dominant elements, i.e. $\val_p(\alpha(t))> 0$ for all $\alpha \in \Phi_+$, and we put $P_+=N_0T_+, P_0=N_0T_0$.  The natural action of  $T_+$   on $N_0$ extends to an action on the Iwasawa $o$-algebra $\Lambda (N_0)=o[[N_0]]$.
The compact  set $G(\mathbb Q_p)/P(\mathbb Q_p)$ contains the open dense subset $\mathcal C=N(\mathbb Q_p)w_0 P(\mathbb Q_p)/P(\mathbb Q_p)$ homeomorphic to $N(\mathbb Q_p)$ and the compact subset  $ \mathcal C_0=N_0 w_0 P(\mathbb Q_p)/P(\mathbb Q_p)$ homeomorphic to $N_0$. We put $\overline P(\mathbb Q_p)=w_0P(\mathbb Q_p)w_0^{-1}$.

Each simple root $\alpha$ gives a $\mathbb Q_p$-homomorphism
$x_\alpha: N\to \mathbb G_a$ with   section  $u_\alpha$.
We denote  by $\ell_{\alpha}:N_0\to \mathbb Z_p$, resp. $\iota_{\alpha}:  \mathbb Z_p\to N_0$,    the restriction  of $x_\alpha$, resp.  $u_{\alpha}$,     to $N_0$, resp.
$\mathbb Z_p$.

For example,  $G=GL(n)$,  $P$  is  the subgroup of upper triangular matrices,  $N$    consists of the strictly upper diagonal matrices ($1$ on the diagonal),
$T$  is the diagonal subgroup,  $N_0= N(\mathbb Z_p)$,
the simple roots are $\alpha_1, \ldots, \alpha_{n-1}$ where  $\alpha_i(\diag(t_1,\ldots, t_n))= t_i t_{i+1}^{-1}$, $x_{ \alpha_i}$ sends a matrix to its $(i,i+1)$-coefficient, $u_{ \alpha_i}(.) $ is the strictly upper triangular matrix, with $(i,i+1)$-coefficient $.$ and $0$ everywhere else.

We denote by $C^\infty (X,o)$ the $o$-module of locally constant  functions on a locally profinite space $X$ with values in $o$, and by $C_c^\infty (X,o)$ the subspace of compactly supported functions.

\subsection{General overview} Colmez established a correspondence $V\mapsto \Pi(V)$ from the absolutely irreducible $K$-representations  $V$ of dimension $2$ of the Galois group $\mathcal G_p$ to the unitary admissible absolutely irreducible $K$-representations  $\Pi$ of $GL(2, \mathbb Q_p)$ admitting a central character \cite{C}. This correspondence relies on the construction of a representation $D(V)\boxtimes  \mathbb P^1$ of $GL(2, \mathbb Q_p)$ for any representation $V$ (not necessarily of dimension $2$) of $\mathcal G_p$  and any unitary character $\delta:\mathbb Q_{p}^* \to o^*$.
When the dimension of $V$ is $2$ and when $\delta = (x|x|)^{-1}\delta_V$, where $\delta_V$ is the character of $\mathbb Q_{p}^* $ corresponding to the representation $\det V$ by  local class field theory, then $D(V)\boxtimes \mathbb P^1$ is an extension of $\Pi(V)$ by its dual twisted by $\delta \circ \det$.
It is a general belief that the correspondence $V\to \Pi(V)$  should extend to a correspondence  from representations $V$ of dimension $d$   to representations $\Pi$ of  $GL(d,\mathbb Q_p)$.

We generalize here   Colmez's construction of the representation   $D\boxtimes \mathbb P^1$  of $GL(2, \mathbb Q_p)$, replacing  $GL(2)$  by the  arbitrary split reductive connected $\mathbb Q_p$-group $G$. More precisely, we denote by $O_{\mathcal E, \alpha}$   the  ring $\mathcal O_{\mathcal E}$ with the action of    $T_+$  via a simple root $ \alpha\in \Delta$ (if the rank of $G$ is $1$,  $\alpha$ is unique and we omit $\alpha$). For any finitely generated  $\mathcal O_{\mathcal E, \alpha}$-module $D$ with an \'etale semilinear action of $T_+$, we construct a  representation of $G(\mathbb Q_p)$.
It is  realized as  the space of global sections of a  $G(\mathbb Q_p)$-equivariant sheaf on the compact quotient  $G(\mathbb Q_p)/P(\mathbb Q_p)$.
When the rank of $G$ is $1$,  the compact space $G(\mathbb Q_p)/P(\mathbb Q_p)$ is isomorphic to $ \mathbb P^1(\mathbb Q_p)$ and when $G=GL(2)$ we recover  Colmez's sheaf.

We review briefly the main  steps of our construction.

1.   \ We show that the category of  \'etale $T_+$-modules finitely generated over $\mathcal O_{\mathcal E, \alpha}$  is equivalent to the category of
\'etale $T_+$-modules finitely generated over   $\Lambda_{\ell_\alpha}(N_0)$,   for a topological ring $\Lambda_{\ell_\alpha}(N_0)$ generalizing  the Fontaine ring  $\mathcal O_{\mathcal E}$,  which is better adapted to the group $G$,  and depends on the simple root $\alpha$.

2. \  We show that the sections over $\mathcal C_0\simeq N_0 $ of a  $P(\mathbb Q_p)$-equivariant sheaf $\cal S$  of $o$-modules over   $\mathcal C \simeq N$ is an \'etale $o[P_+]$-module $\mathcal S  ({\mathcal C}_0 )$ and that the functor  $\mathcal S\mapsto  \mathcal S (\mathcal C_0 )$ is an equivalence of categories.

3. \  When $ \mathcal S  (\mathcal C_0 )$  is an \'etale  $T_+$-module
finitely generated over $\Lambda_{\ell_\alpha}(N_0)$, and the root system of $G$ is irreducible,  we show that the  $P(\mathbb Q_p)$-equivariant sheaf $\mathcal S$   on  $\mathcal C$ extends to a $G(\mathbb Q_p)$-equivariant sheaf  over  $G(\mathbb Q_p)/P(\mathbb Q_p)$ if  and only if the rank of $G$ is $1$.

4. \ For any strictly dominant element $s\in T_{++} $, we associate functorially  to an \'etale  $T_+$-module $M$
finitely generated over $\Lambda_{\ell_\alpha}(N_0)$,   a $G(\mathbb Q_p)$-equivariant sheaf $\mathfrak Y_s$ of $o$-modules  over  $G(\mathbb Q_p)/P(\mathbb Q_p)$   with sections over $\mathcal C_0$ a dense  \'etale
$\Lambda (N_0)[T_+]$-submodule   $M_s^{bd}$ of $M $. When the rank of $G$ is $1$,  the  sheaf $\mathfrak Y_s$  does not depend on the choice of $s\in T_{++} $,  and $M_s^{bd}=M $; when $G=GL(2)$ we recover the construction of Colmez.
For a general $G$, the   sheaf $\mathfrak Y_s$   depends on the choice of $s\in T_{++} $,   the system $(\mathfrak Y_s)_{s\in T_{++} }$ of sheaves is compatible, and we associate functorially to $M$ the $G(\mathbb Q_p)$-equivariant sheaves $\mathfrak Y_\cup$ and $\mathfrak Y_\cap$ of $o$-modules over  $G(\mathbb Q_p)/P(\mathbb Q_p)$ with sections  over  $\mathcal C_0$ equal to  $\cup_{s\in T_{++}} M_s^{bd}$ and $\cap_{s\in T_{++}} M_s^{bd}$, respectively.

\subsection{The rings $\Lambda_{\ell_\alpha}(N_0)$ and $\mathcal O_{\mathcal E, \alpha}$}

Fixing a simple root $\alpha \in \Delta$,    the  topological local ring $\Lambda_{\ell_\alpha}(N_0)$, generalizing the Fontaine ring $\mathcal O_\mathcal E$,
is defined as \cite{S} with  the  surjective homomorphism $ \ell_\alpha:N_0\to \mathbb Z_p$.

  We denote  by  $\mathcal M (N_{\ell_\alpha})$ the maximal ideal of the   Iwasawa $o$-algebra $\Lambda(N_{\ell_\alpha})=o[[N_{\ell_\alpha}]]$ of the kernel $N_{\ell_\alpha}$ of $\ell_\alpha $.
The ring
  $\Lambda_{\ell_\alpha}(N_0)$  is the $\mathcal M (N_{\ell_\alpha})$-adic completion of the localisation of $\Lambda(N_0)$ with respect to the Ore subset of elements which are not in $ \mathcal M (N_{\ell_\alpha})\Lambda(N_0)$. This is a noetherian  local ring with maximal ideal $\mathcal M_{\ell_\alpha}(N_0)$ generated by $\mathcal M (N_{\ell_\alpha})$.
  We put on  $\Lambda_{\ell_\alpha}(N_0)$ the weak topology with fundamental system of neighborhoods of $0$ equal to $( \mathcal M_{\ell_\alpha}(N_0)^n+ \mathcal M(N_0)^n)_{n\in \mathbb N}$. The action of $T_+$  on $N_0$ extends to an action on $\Lambda_{\ell_\alpha}(N_0)$.
  We denote by $\mathcal O_{\mathcal E, \alpha }$ the ring $\mathcal O_{\mathcal E }$ with the action of $T_+$ induced by $(t,x)\mapsto \alpha(t)x:T_+\times  \mathbb Z_p \to \mathbb Z_p$.
     The homomorphism $\ell_\alpha$ and its section $\iota_\alpha$  induce   $T_+$-equivariant   ring homomorphisms
$$\ell_\alpha: \Lambda_{\ell_\alpha}(N_0) \to \mathcal O_{\mathcal E , \alpha}\ , \ \iota_\alpha:  \mathcal O_{\mathcal E ,\alpha} \to \Lambda_{\ell_\alpha}(N_0) \ , \ \text{such that  $ \ell_\alpha \circ \iota_\alpha =\id.$}$$

 \subsection{Equivalence of categories}

  An \'etale $T_+$-module over $\Lambda_{\ell_\alpha}(N_0)$ is a finitely generated $\Lambda_{\ell_\alpha}(N_0)$-module $M$ with a semi-linear action $T_+\times M \to M$ of $T$ which is \'etale, i.e.
  the action $\varphi_t$ on $M$ of each   $t\in T_+$ is injective and
 $$M=\oplus_{u\in J(N_0/   tN_0t^{-1})} u\varphi_t(M) \ , $$  if $J(N_0/  tN_0t^{-1})\subset N_0$ is a system of representatives of
 the cosets $ N_0/  tN_0t^{-1}$; in particular, the action of each element of the maximal subgroup $T_0$ of $T_+$ is invertible. We denote by $\psi_t$ the left inverse of $\varphi_t$ vanishing on $u\varphi_t(M)$  for $u\in N_0$ not in $tN_0t^{-1}$.  These modules form an abelian category $\mathcal M^{et}_{\Lambda_{\ell_\alpha}(N_0)} (T_+)$.

 We define analogously the abelian category $ \mathcal M^{et}_{\mathcal O_{\mathcal E, \alpha} }(T_+)$ of finitely generated $\mathcal O_{\mathcal E, \alpha}$-modules with an \'etale  semilinear action of $T_+$. The action $\varphi_t$ of each element  $t\in T_+$ such that $\alpha(t)\in \mathbb Z_p^*$ is invertible.
We show that the action $T_+\times D\to D$ of $T_+$ on $D\in  \mathcal M^{et}_{\mathcal O_{\mathcal E, \alpha} }(T_+)$ is continuous for  the weak topology on $D$; the canonical action of the inverse $T_-$ of $T$ is also continuous. Extending the results of \cite{Z}, we show:

 \begin{theorem} The  base change functors  $ \mathcal O_{\mathcal E } \otimes_{\ell_\alpha}- $ and
$ \Lambda_\ell (N_0)  \otimes_{\iota_\alpha} -$ induce  quasi-inverse isomorphisms
$$
\mathbb D: \mathcal M^{et}_{\Lambda_{\ell_\alpha}(N_0)} (T_+)\to
 \mathcal M^{et}_{\mathcal O_{\mathcal E, \alpha} }(T_+)\ , \
\mathbb M: \mathcal M^{et}_{\mathcal O_{\mathcal E, \alpha} }(T_+) \to
\mathcal M^{et}_{\Lambda_{\ell_\alpha}(N_0)} (T_+)\ .
$$

 \end{theorem}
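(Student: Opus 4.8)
The plan is threefold: (1) check that the two base change functors are well defined, i.e.\ that they send \'etale $T_+$-modules to \'etale $T_+$-modules; (2) observe that $\mathbb{D}\circ\mathbb{M}$ is the identity for essentially formal reasons; (3) prove the natural isomorphism $\mathbb{M}\circ\mathbb{D}\cong\id$, which carries all the weight and is the only place where the \'etale hypothesis is genuinely used. Throughout one follows the strategy of \cite{Z}, extending it from the situation treated there to an arbitrary split reductive $G$; the new difficulties are that $N_0$ need not be abelian and that the whole monoid $T_+$, rather than a single Frobenius, has to be handled at once.

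I would begin with a structural preliminary, read off from the construction of $\Lambda_{\ell_\alpha}(N_0)$ in \cite{S}. Via $\ell_\alpha$ the ring $\mathcal{O}_{\mathcal{E},\alpha}$ is the quotient of $\Lambda_{\ell_\alpha}(N_0)$ by the closed ideal $I$ generated by $\mathcal{M}(N_{\ell_\alpha})$; since $I\subseteq\mathcal{M}_{\ell_\alpha}(N_0)$ lies in the Jacobson radical and $\Lambda_{\ell_\alpha}(N_0)$ is complete, Nakayama is available modulo $I$, so $\mathbb{D}(M)=M/IM$ is finitely generated over $\mathcal{O}_{\mathcal{E},\alpha}$ with no completion needed, and $\mathbb{M}(D)=\Lambda_{\ell_\alpha}(N_0)\otimes_{\iota_\alpha}D$ is genuinely finitely generated over $\Lambda_{\ell_\alpha}(N_0)$. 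I also record two facts about $\Lambda_{\ell_\alpha}(N_0)$ to be extracted from \cite{S}: via $\iota_\alpha$ it is a faithfully flat, pseudocompact $\mathcal{O}_{\mathcal{E},\alpha}$-module, topologically free on a basis indexed by $N_{\ell_\alpha}$ (coming from the set-theoretic factorisation $N_0\simeq N_{\ell_\alpha}\times\iota_\alpha(\mathbb{Z}_p)$); and it is an \'etale $T_+$-module over itself, i.e.\ $\Lambda_{\ell_\alpha}(N_0)=\bigoplus_{u\in J(N_0/tN_0t^{-1})}u\,\varphi_t(\Lambda_{\ell_\alpha}(N_0))$ for every $t\in T_+$, inherited from the freeness of $\Lambda(N_0)$ over $\Lambda(tN_0t^{-1})$ because the Ore localisation and the $\mathcal{M}(N_{\ell_\alpha})$-adic completion are compatible with conjugation by $T_+$ (the subgroup $N_{\ell_\alpha}$ being $T_+$-stable). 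Granting these, well-definedness goes as follows. Because $\ell_\alpha(tnt^{-1})=\alpha(t)\ell_\alpha(n)$ and $N_{\ell_\alpha}$ is $T_+$-stable, $\ell_\alpha$ presents $N_0/tN_0t^{-1}$ as an extension of $\mathbb{Z}_p/\alpha(t)\mathbb{Z}_p$ by $N_{\ell_\alpha}/tN_{\ell_\alpha}t^{-1}$; applying the right exact functor $\mathcal{O}_{\mathcal{E},\alpha}\otimes_{\ell_\alpha}-$ to $M=\bigoplus_u u\varphi_t(M)$ collapses the $N_{\ell_\alpha}$-part and produces $\mathbb{D}(M)=\bigoplus_{i}\iota_\alpha(i)\varphi_t(\mathbb{D}(M))$ over a system of representatives of $\mathbb{Z}_p/\alpha(t)\mathbb{Z}_p$, with $\varphi_t$ injective on $\mathbb{D}(M)$ by directness. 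For $\mathbb{M}$ one chooses $J(N_0/tN_0t^{-1})$ as a product of a system for $N_{\ell_\alpha}/tN_{\ell_\alpha}t^{-1}$ and the $\iota_\alpha$-image of a system for $\mathbb{Z}_p/\alpha(t)\mathbb{Z}_p$, and then the \'etale decomposition of $\mathbb{M}(D)=\Lambda_{\ell_\alpha}(N_0)\otimes_{\iota_\alpha}D$ is obtained by combining the self-\'etaleness of $\Lambda_{\ell_\alpha}(N_0)$ in the $N_{\ell_\alpha}$-directions with the \'etaleness of $D$ in the $\iota_\alpha(\mathbb{Z}_p)$-direction. Functoriality is clear, and $T_+$-equivariance of everything is inherited from that of $\ell_\alpha$ and $\iota_\alpha$.

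The composite $\mathbb{D}\circ\mathbb{M}$ is the identity: by associativity of the tensor product and $\ell_\alpha\circ\iota_\alpha=\id$,
\[
\mathbb{D}(\mathbb{M}(D))=\mathcal{O}_{\mathcal{E},\alpha}\otimes_{\ell_\alpha}\bigl(\Lambda_{\ell_\alpha}(N_0)\otimes_{\iota_\alpha}D\bigr)=\bigl(\mathcal{O}_{\mathcal{E},\alpha}\otimes_{\ell_\alpha}\Lambda_{\ell_\alpha}(N_0)\bigr)\otimes_{\iota_\alpha}D=\mathcal{O}_{\mathcal{E},\alpha}\otimes_{\mathcal{O}_{\mathcal{E},\alpha}}D=D,
\]
where $\mathcal{O}_{\mathcal{E},\alpha}\otimes_{\ell_\alpha}\Lambda_{\ell_\alpha}(N_0)=\mathcal{O}_{\mathcal{E},\alpha}$ with its usual module structure precisely because $\ell_\alpha\circ\iota_\alpha=\id$; this chain is natural in $D$ and $T_+$-equivariant. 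All the remaining content is therefore the natural isomorphism $\mathbb{M}\circ\mathbb{D}\cong\id$, and here the \'etale hypothesis is unavoidable: when $G$ has rank $\geq 2$ the (non-\'etale) module $\Lambda_{\ell_\alpha}(N_0)/I$ satisfies $\mathbb{M}(\mathbb{D}(\Lambda_{\ell_\alpha}(N_0)/I))=\Lambda_{\ell_\alpha}(N_0)\neq\Lambda_{\ell_\alpha}(N_0)/I$, so no map defined on the underlying module alone can be the comparison isomorphism. Given an \'etale $M$, I would fix $s\in T_{++}$ and iterate the \'etale decomposition, $M=\bigoplus_{u\in J(N_0/s^nN_0s^{-n})}u\,\varphi_s^{\,n}(M)$; splitting off the $N_{\ell_\alpha}$-directions as above and passing to the limit $n\to\infty$ --- legitimate since $s^nN_{\ell_\alpha}s^{-n}$ shrinks to $\{1\}$, $M$ is separated and complete for the weak topology, and the $T_+$-action is continuous --- one identifies $M$ naturally with the completed induction $\Lambda_{\ell_\alpha}(N_0)\otimes_{\iota_\alpha}\mathbb{D}(M)$; equivalently, a lift to $M$ of an $\mathcal{O}_{\mathcal{E},\alpha}$-generating family of $\mathbb{D}(M)$ generates $M$ over $\Lambda_{\ell_\alpha}(N_0)$ (Nakayama modulo $I$), giving a natural surjection $\mathbb{M}(\mathbb{D}(M))\twoheadrightarrow M$ that one sees is an isomorphism after reducing modulo $p$, where $\mathcal{O}_{\mathcal{E},\alpha}/p$ is a field, the two sides become free of the same rank, and $\mathbb{D}$, $\mathbb{M}$ are exact. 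I expect this last descent to be the main obstacle: carrying it through with the non-abelian $N_0$ and with the full monoid $T_+$ acting is exactly where the arguments of \cite{Z} must be genuinely strengthened, the rest being bookkeeping with coset representatives, right exactness, and the weak topology.
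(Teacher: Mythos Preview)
Your outline for (1) and (2) is fine and matches the paper. The gap is in (3), and it is a real one: you have not actually produced a comparison map between $M$ and $\mathbb{M}\mathbb{D}(M)$ in the category $\mathcal{M}^{et}_{\Lambda_{\ell_\alpha}(N_0)}(T_+)$. Your ``Nakayama surjection'' $\mathbb{M}\mathbb{D}(M)\twoheadrightarrow M$, obtained by lifting an $\mathcal{O}_{\mathcal{E},\alpha}$-generating family of $\mathbb{D}(M)$ to $M$, is $\Lambda_{\ell_\alpha}(N_0)$-linear but has no reason to be $T_+$-equivariant; an arbitrary lift of generators will not intertwine the $\varphi_t$. (Your own counterexample $\Lambda_{\ell_\alpha}(N_0)/I$ already shows that the map cannot be determined by the underlying module alone.) The mod $p$ step is also incomplete: $\Lambda_{\ell_\alpha}(N_0)/p$ is not a field, so there is no reason for $M/pM$ to be free. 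As for the ``limit'' argument, it is a suggestive heuristic but not a construction: at each finite level the summands $u\varphi_s^n(M)$ are copies of $M$, not of $\mathbb{D}(M)$, and you have not specified what map you are taking the limit of, nor why it converges.

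The paper proceeds quite differently. It first proves that $\mathbb{D}$ is faithful (if $\mathbb{D}(f)=0$ then $f(M_1)\subset\bigcap_k\varphi_s^k(J_{\ell_\alpha}(N_0))M_2=0$), so it suffices to construct, for each $M$, an isomorphism $\Theta_M:M\xrightarrow{\sim}\mathbb{M}\mathbb{D}(M)$ with $\mathbb{D}(\Theta_M)=\id$. The key step is the free case: starting from any $\Lambda_{\ell_\alpha}(N_0)$-basis $(\epsilon_i)$ of $M$, one \emph{modifies} it to a new basis $(\eta_i)$ such that $\bigoplus_i\iota_\alpha(\mathcal{O}_{\mathcal{E}})\eta_i$ is $\varphi_s$-stable, by solving the matrix equation $X=A^{-1}B+A^{-1}\varphi_s(X)(A+B)$ via the convergent series $X=\sum_{k\geq 0}U_k$. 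Convergence holds because of the crucial contraction lemma
\[
\varphi_s^{\,k}(J_{\ell_\alpha}(N_0))\subset\mathcal{M}_{\ell_\alpha}(N_0)^{n+1}\qquad\text{for }k\gg_n 0,
\]
which is the genuine technical input you are missing; it reflects that $s^kN_{\ell_\alpha}s^{-k}\to\{1\}$ forces elements of $s^kN_{\ell_\alpha}s^{-k}$ to be high $p$-th powers in $N_{\ell_\alpha}$. The general case is then handled by choosing a two-term free \'etale presentation $M_2\to M_1\to M\to 0$ and using faithfulness of $\mathbb{D}$ to see that the square over $M_2\to M_1$ commutes. Finally one checks $T_+$-equivariance of $\Theta_M$ separately, again using the contraction lemma. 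If you want to rescue your limit idea, you would end up needing exactly this lemma to make the approximations converge; as written, the comparison map is simply not there.
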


Using this theorem, we  show that  the action of $T_+$ and of $T_-$ on an \'etale $T_+$-module over $\Lambda_{\ell_\alpha}(N_0) $ is continuous for the
weak topology.

   \subsection{$P$-equivariant sheaves on $\mathcal C$}

The $o$-algebra  $C ^\infty(N_0,o)$  is naturally  an \'etale $o[P_+]-$module, and  the monoid $P_+$ acts  on the $o$-algebra $\End_o M$ by $(b,F)\mapsto \varphi_b \circ F\circ  \psi_b$.
 We show that  there exists a unique $o[P_+]$-linear map
$$
\res: C ^\infty(N_0,o)\to \End_o M
$$
 sending the characteristic function $1_{N_0}$ of $N_0$ onto the identity $\id_M$; moreover $\res$ is an algebra homomorphism which sends $1_{b.N_0}$ to $\varphi_b \circ \psi_b$ for all $b\in P_+$ acting on $x\in N_0$ by $(b,x)\mapsto b.x$.

 For the sake of simplicity, we denote  now  by the same letter a group defined over $\mathbb Q_p$ and the group of its $\mathbb Q_p$-rational points.

 Let $M^P$ be the  $o[P]$-module  induced by the canonical action of   the inverse monoid $P_-$ of $P_+$ on $M$; as a representation of
 $N $,  it is isomorphic to the representation induced by   the action of $N_0$ on $M$.  The value at $1$, denoted by
 $\ev_0:M^P\to M$,  is $P_-$-equivariant, and  admits a $P_+$-equivariant splitting $\sigma_0:M\to M^P$ sending $m\in M$ to the function equal to $n \mapsto nm$ on $N_0$ and
 vanishing on $N -N_0$. The $o[P]$-submodule $M^P_c$ of $M^P$
 generated generated by $\sigma_0(M)$ is naturally isomorphic to $A[P] \otimes _{A[P_+]} M$.  When $M=C ^\infty(N_0,o)$ then $M^P_c=C_c ^\infty(N,o)$  and $M^P=C ^\infty(N,o)$ with the natural  $o[P]-$module structure. We have the  natural $o$-algebra embedding
 $$F\mapsto \sigma_0\circ F \circ \ev_0:  \End_o M \to  \End_o M^P \ .$$
 sending  $\id_M$ to the idempotent  $R_0=\sigma_0 \circ ev_0$ in  $\End_o M^P$.

 \begin{proposition} There exists a unique  $o[P]$-linear map
 $$\Res: C_c ^\infty(N,o)\to \End_o M^P \ $$
 sending $1_{N_0}$  to $R_0$;
moreover $\Res$  is an algebra homomorphism.
\end{proposition}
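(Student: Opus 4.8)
The plan is to obtain $\Res$ by inducing $\res$ up from $P_+$ to $P$. Recall from the discussion preceding the Proposition that $M^P_c\cong o[P]\otimes_{o[P_+]}M$ naturally, whence $C_c^\infty(N,o)\cong o[P]\otimes_{o[P_+]}C^\infty(N_0,o)$ with $1_{N_0}$ corresponding to $1\otimes 1_{N_0}$. Since $C^\infty(N_0,o)$ is generated as an $o[P_+]$-module by $1_{N_0}$ (this is the content of the uniqueness assertion for $\res$), it follows that $C_c^\infty(N,o)$ is generated as an $o[P]$-module by $1_{N_0}$; this immediately gives uniqueness of an $o[P]$-linear map with prescribed value at $1_{N_0}$. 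For existence, I would feed the composite $\iota\circ\res$ into the universal property of $o[P]\otimes_{o[P_+]}(-)$, where $\iota\colon\End_o M\to\End_o M^P$ is the embedding $F\mapsto\sigma_0\circ F\circ\ev_0$ and $\End_o M^P$ carries the conjugation action of $P$. Two observations about $\iota$ are needed. First, $\iota$ is multiplicative: since $\ev_0\circ\sigma_0=\id_M$,
\[
\iota(F_1)\iota(F_2)=\sigma_0 F_1(\ev_0\sigma_0)F_2\ev_0=\sigma_0 F_1F_2\ev_0=\iota(F_1F_2),\qquad \iota(\id_M)=\sigma_0\ev_0=R_0.
\]
Second, $\iota$ intertwines the $P_+$-action $(b,F)\mapsto\varphi_b\circ F\circ\psi_b$ on $\End_o M$ with the conjugation action on $\End_o M^P$: using that $\sigma_0$ intertwines $\varphi_b$ with $b$ and $\ev_0$ intertwines $b^{-1}$ with $\psi_b$ for $b\in P_+$,
\[
\iota(\varphi_b\circ F\circ\psi_b)=(\sigma_0\varphi_b)\,F\,(\psi_b\ev_0)=b\,\sigma_0 F\ev_0\,b^{-1}=b\circ\iota(F)\circ b^{-1}.
\]
Hence $\iota\circ\res$ is $o[P_+]$-linear, and its unique $o[P]$-linear extension is the desired map $\Res\colon C_c^\infty(N,o)\to\End_o M^P$; as the extension restricts to $\iota\circ\res$ on $C^\infty(N_0,o)$, we get $\Res(1_{N_0})=\iota(\res(1_{N_0}))=\iota(\id_M)=R_0$.

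It remains to prove that $\Res$ is a ring homomorphism. Both $(f_1,f_2)\mapsto\Res(f_1f_2)$ and $(f_1,f_2)\mapsto\Res(f_1)\Res(f_2)$ are $o$-bilinear, so it suffices to check $\Res(1_{W_1}1_{W_2})=\Res(1_{W_1})\Res(1_{W_2})$ for compact open $W_1,W_2\subseteq N$. If $W_1,W_2\subseteq N_0$ this follows from $\Res|_{C^\infty(N_0,o)}=\iota\circ\res$, the multiplicativity of $\iota$, and the fact that $\res$ is an algebra homomorphism, since $1_{W_1}1_{W_2}=1_{W_1\cap W_2}\in C^\infty(N_0,o)$. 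For the general case I would fix $s_0\in T_{++}$; because $\val_p(\alpha(s_0))>0$ for all $\alpha\in\Phi_+$ one has $s_0N_0s_0^{-1}\subseteq N_0$ and $\bigcup_{n\ge 0}s_0^{-n}N_0s_0^{\,n}=N$, so some $s=s_0^{\,n}\in T_{++}$ satisfies $V_i:=sW_is^{-1}\subseteq N_0$ for $i=1,2$. Since $s$ acts on $N\cong\mathcal C$ by $x\mapsto sxs^{-1}$, we have $1_{W_i}=s^{-1}\cdot 1_{V_i}$, and $o[P]$-linearity of $\Res$ (the action of $g\in P$ on $\End_o M^P$ being conjugation) together with the case already settled for $V_1,V_2$ gives
\[
\Res(1_{W_1}1_{W_2})=s^{-1}\Res(1_{V_1}1_{V_2})s=s^{-1}\Res(1_{V_1})\Res(1_{V_2})s=\Res(s^{-1}\cdot 1_{V_1})\Res(s^{-1}\cdot 1_{V_2})=\Res(1_{W_1})\Res(1_{W_2}),
\]
using $s^{-1}\cdot(1_{V_1}1_{V_2})=(s^{-1}\cdot 1_{V_1})(s^{-1}\cdot 1_{V_2})$ and $1_{V_1}1_{V_2}\in C^\infty(N_0,o)$.

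The main obstacle is precisely this last step: promoting multiplicativity from indicator functions supported in $N_0$, where it is inherited directly from $\res$, to all compactly supported locally constant functions on $N$; the device is to conjugate by a sufficiently dominant torus element so as to push the relevant supports back into $N_0$, after which $o[P]$-equivariance of $\Res$ does the rest. The only other point requiring attention is the verification that $\iota\circ\res$ is $o[P_+]$-linear for the conjugation action on $\End_o M^P$, which hinges on the precise equivariance of $\sigma_0$ and $\ev_0$ recorded above.
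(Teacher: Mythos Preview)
Your argument is correct, and it is a genuinely cleaner packaging of the existence step than the paper's own proof. The paper proceeds concretely: it defines $\Res(1_{b.N_0}) := b \circ R_0 \circ b^{-1}$ for each $b \in P$, then has to verify by hand that this depends only on the coset $b.N_0$ (using that $P_0$ normalizes $N_0$ and fixes $R_0$ under conjugation) and that for a finite disjoint decomposition $b.N_0 = \sqcup_i b_i.N_0$ the idempotents $b_i R_0 b_i^{-1}$ are orthogonal with the correct sum (reducing by conjugation to the already-established additivity of $\res$). Your route absorbs all of this into the single observation that $\iota \circ \res$ is $o[P_+]$-equivariant, together with the identification $C_c^\infty(N,o) \cong o[P] \otimes_{o[P_+]} C^\infty(N_0,o)$; the universal property of the induced module then hands you the $o[P]$-linear extension with no further case analysis. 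The cost is that you must invoke (or reprove) the tensor description of $M_c^P$ for the \'etale module $C^\infty(N_0,o)$, whereas the paper's direct argument is self-contained at this point.

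For multiplicativity the two proofs are essentially the same: both reduce to products of characteristic functions supported in $N_0$, where the claim follows from the multiplicativity of $\res$ (and, in your formulation, of $\iota$), and both achieve this reduction by conjugating with a suitable element that contracts the supports into $N_0$. One cosmetic point: you invoke $s_0 \in T_{++}$, which is fine in the introduction's reductive-group setting, but the proposition in the body of the paper is stated and proved in the abstract setup of a monoid $P_+ = N_0 L_+$ with a distinguished central element $s$; there the same argument goes through verbatim with $s^k$ in place of your $s = s_0^n$, since $N = \bigcup_{k \ge 0} s^{-k} N_0 s^k$ by hypothesis.
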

The topology of $N$ is totally disconnected and by a general argument, the functor  of compact global sections is an equivalence of categories from the $P$-equivariant sheaves on  $N\simeq \mathcal C $ to
the non-degenerate modules on the skew group ring
$$ C_c ^\infty( N,o)\# P\ = \oplus_{b\in P } bC ^\infty(N ,o)\ . $$
in which the multiplication is determined by the rule
$(b_1f_1)(b_2f_2)=b_1b_2 f_1^{b_2}f_2$ for $b_i\in P , f_i\in C ^\infty(N ,o)$ and $f_1^{b_2}(.)=f(b_2.)$.

\begin{theorem}
The functor  of sections over $N_0\simeq \mathcal C_0$ from the  $P$-equivariant sheaves on  $N\simeq \mathcal C$ to
the \'etale $o[P_+]$-modules is an equivalence of categories.
\end{theorem}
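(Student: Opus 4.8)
The plan is to combine the stated equivalence between $P$-equivariant sheaves on $\mathcal C\simeq N$ and non-degenerate modules over the skew group ring $A\#P$, where $A:=C_c^\infty(N,o)$, with a second equivalence
\[
\mathcal M\ \longmapsto\ 1_{N_0}\mathcal M
\]
from non-degenerate $A\#P$-modules to \'etale $o[P_+]$-modules. The reduction is immediate: writing $\mathcal M=\Gamma_c(\mathcal S)$ for the compact global sections of a $P$-equivariant sheaf $\mathcal S$, the clopen decomposition $N=N_0\sqcup(N\setminus N_0)$ identifies $\mathcal S(\mathcal C_0)=\mathcal S(N_0)$ with the direct summand $1_{N_0}\mathcal M$, and the $o[P_+]$-action read off from the sheaf --- namely $\varphi_b(s)$ is the section $b\cdot s\in\mathcal S(b\cdot N_0)$ extended by zero to $N_0$, which makes sense because $b\cdot N_0\subseteq N_0$ is clopen --- is exactly the action $m\mapsto bm$ of $b\in P_+$ inside $\mathcal M$. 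So it suffices to study the functor $\mathcal M\mapsto 1_{N_0}\mathcal M$.

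First I would check that $M:=1_{N_0}\mathcal M$ is an \'etale $o[P_+]$-module. The corner ring $1_{N_0}A1_{N_0}$ is $C^\infty(N_0,o)$, so $M$ is a $C^\infty(N_0,o)$-module; for $b\in P_+$ the skew commutation rule gives $b\,1_{N_0}=1_{b\cdot N_0}\,b$ with $1_{b\cdot N_0}\le 1_{N_0}$, whence $\varphi_b:=(m\mapsto bm)$ sends $M$ semilinearly into $1_{b\cdot N_0}\mathcal M\subseteq M$ --- this is precisely the data encoded by the homomorphism $\res$ above. \'Etaleness of $\varphi_b$ rests on two points: the map $\varphi_b:M\to 1_{b\cdot N_0}\mathcal M$ is bijective with inverse $x\mapsto b^{-1}x$ (valid since every $g\in P$ acts by an $o$-automorphism of the non-degenerate module $\mathcal M$, and $b^{-1}\,1_{b\cdot N_0}\mathcal M=1_{N_0}b^{-1}\mathcal M\subseteq M$); and the partition $N_0=\bigsqcup_{u\in J(N_0/bN_0b^{-1})}u\,(b\cdot N_0)$ yields orthogonal idempotents $1_{u(b\cdot N_0)}=u\,1_{b\cdot N_0}\,u^{-1}$ summing to $1_{N_0}$, so that $M=\bigoplus_u u\,(1_{b\cdot N_0}\mathcal M)=\bigoplus_u u\,\varphi_b(M)$. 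The functor obviously acts on morphisms and is additive.

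For the quasi-inverse I would send an \'etale $o[P_+]$-module $M$ to $M^P_c$, made into an $A\#P$-module by combining the $o[P]$-module structure on $M^P\supseteq M^P_c$ with the algebra homomorphism $\Res:C_c^\infty(N,o)\to\End_o M^P$ of the Proposition; one checks that $\Res$ and the $P$-action satisfy the skew rule (here one uses that $\Res$ is $o[P]$-linear and multiplicative) and that $M^P_c=o[P]\,\sigma_0(M)$ is an $A\#P$-submodule stable under $\Res$, since $R_0=\Res(1_{N_0})$. It is non-degenerate: any element is a finite sum $\sum_i g_im_i$ with $m_i\in\sigma_0(M)=1_{N_0}M^P$, hence fixed by $1_{\bigcup_i g_i\cdot N_0}$. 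The first natural isomorphism is easy: $1_{N_0}M^P_c=R_0M^P_c=\sigma_0(M)$, and the $P_+$-equivariant (and $C^\infty(N_0,o)$-equivariant) isomorphism $\ev_0:\sigma_0(M)\xrightarrow{\ \sim\ }M$ identifies this with $M$ as an \'etale $o[P_+]$-module. For the second, given non-degenerate $\mathcal M$ with $M=1_{N_0}\mathcal M$, the evident $A\#P$-linear map
\[
\Theta:\ M^P_c=o[P]\otimes_{o[P_+]}M\ \longrightarrow\ \mathcal M,\qquad g\otimes m\mapsto gm,
\]
is surjective: the sets $g\cdot N_0$ ($g\in P$) form a basis of the compact open subsets of $N$, so $1_{g\cdot N_0}\mathcal M=g(1_{N_0}\mathcal M)$ and non-degeneracy gives $\mathcal M=\sum_{g\in P}gM$.

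The main obstacle is the injectivity of $\Theta$, i.e.\ that no relations are lost when one forms $o[P]\otimes_{o[P_+]}(-)$. I would settle it by a contraction trick. Given $x=\sum_{i=1}^{r}g_i\otimes m_i$ in $\ker\Theta$, choose $b\in T_{++}$ and $n\gg 0$ with $b^{n}g_i\in P_+$ for every $i$ --- this is possible because, writing $g_i=n_it_i$, conjugation by $b^{n}$ contracts any $n_i\in N$ into $N_0$ while $b^{n}$ pushes any $t_i\in T$ into $T_+$. Moving the elements $b^{n}g_i\in P_+$ across the tensor product, $x=b^{-n}\otimes m'$ with the single element $m'=\sum_i\varphi_{b^{n}g_i}(m_i)\in M$; then $0=\Theta(x)=b^{-n}m'$ in $\mathcal M$, and since $b^{-n}\in P$ acts bijectively on $\mathcal M$ we get $m'=0$, hence $x=0$. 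Naturality of the two isomorphisms in $\mathcal M$, resp.\ $M$, is then routine, and the theorem follows.
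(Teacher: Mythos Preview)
Your proof is correct and takes a genuinely different route from the paper. The paper (in the body, this is Theorem~\ref{red3}) works directly with the sheaf: it first shows $\mathcal S(N_0)$ is \'etale via the criterion that \'etaleness of $\varphi_s$ alone suffices (Corollary~\ref{red2}), and then argues that the $P$-action on compact sections $\mathcal S(N)_c$ is completely determined by the $P_+$-action on $\mathcal S(N_0)$, since $P$ is generated by $N$ and $L_+$ and one can trace how each acts on the pieces $\mathcal S(uN_0)=u\,\mathcal S(N_0)$; this gives $\mathcal S(N)_c=\mathcal S(N_0)^P_c$ and hence $\mathcal S=\mathcal S_{\mathcal S(N_0)}$. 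You instead pass through the module-theoretic equivalence with non-degenerate $C^\infty_c(N,o)\#P$-modules and prove directly that $\mathcal M\mapsto 1_{N_0}\mathcal M$ is an equivalence, with the contraction trick for injectivity of $\Theta$ as the crux. Your approach has the virtue of fitting into the Morita-type framework the paper later exploits for $G/P$; the paper's argument is shorter and more geometric.

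Two small points deserve attention. First, your contracting element ``$b\in T_{++}$'' is notation from the reductive-group setting of \S\ref{crsg}, whereas the theorem lives in the abstract setting of \S\ref{S3}--\S\ref{2.4}; the correct object is the fixed central element $s$, and then $P=s^{\mathbb Z}P_+$ gives exactly what you need. Second, the $A$-linearity (as opposed to mere $o[P]$-linearity) of $\Theta$ is not quite ``evident'': since $C^\infty_c(N,o)$ is generated by $P$-conjugates of $1_{N_0}$, it reduces to $\Theta\circ R_0=1_{N_0}\circ\Theta$, which on $y=s^{-k}\sigma_0(m)$ unwinds to the identity $\ev_0(s^{-k}\sigma_0(m))=\psi^k(m)$ on the $M^P_c$-side and to $1_{N_k}m=\varphi^k\psi^k(m)$ (inside $\mathcal M$) on the other. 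Once this is checked your argument is complete.
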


The global sections on $\mathcal C$ of a $P$-equivarianf sheaf $\cal S$  on $\mathcal C$ is $\mathcal S(C)=\mathcal S(C_0)^P$.

 \subsection{Generalities on $G$-equivariant sheaves on $G/P$}
The functor of global sections from the $G$-equivariant sheaves on $G/P$ to the modules on
the skew group ring $\mathcal A_{G/P}= C ^\infty(G /P ,o)\# G $ is an equivalence of categories. We have the intermediary ring $\mathcal A$   $$\mathcal A_{\mathcal C}=C_c ^\infty( \mathcal C,o)\# P  \ \subset \ \mathcal A =  \oplus_{g\in G } gC_c ^\infty(g^{-1}\mathcal C \cap \mathcal C  ,o)\ \subset \ \mathcal A_{G/P} , $$
and the $o$-module
$$\mathcal Z =  \oplus_{g\in G(\mathbb Q_p)} gC_c ^\infty( \mathcal C  ,o)$$
which is a left ideal of $\mathcal A_{G/P}$ and a right $\mathcal A$-submodule.

\begin{proposition} The functor
$$Z\mapsto Y(Z)=\mathcal Z\otimes_{\mathcal A } Z$$  from the
non-degenerate $\mathcal A$-modules   to the  $\mathcal A_{G/P}$-modules   is an equivalence of categories; moreover   the $G $-sheaf  on $G /P $
corresponding to $Y(Z)$  extends the $P$-equivariant sheaf  on $ {\mathcal C}$
corresponding to $Z|_{\mathcal A_{\mathcal C}} $.
\end{proposition}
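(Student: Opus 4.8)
The plan is to exhibit $\mathcal{Z}$ as an $(\mathcal A_{G/P},\mathcal A)$-bimodule which realizes a Morita-type equivalence, and then to check compatibility with restriction to $\mathcal C$. First I would record that $\mathcal{Z} = \bigoplus_{g\in G} gC_c^\infty(\mathcal C,o)$ is a left ideal of $\mathcal A_{G/P}$: indeed $C_c^\infty(\mathcal C,o)$ is an ideal of $C^\infty(G/P,o)$ (functions vanishing outside $\mathcal C$ and compactly supported there), it is stable under no element of $G$ in general, but the $G$-translates of $\mathcal C$ cover $G/P$, so the left $\mathcal A_{G/P}$-action permutes the summands up to multiplication by characteristic functions and stays inside $\mathcal Z$. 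Simultaneously $\mathcal Z$ is a right $\mathcal A$-module, where $\mathcal A = \bigoplus_g gC_c^\infty(g^{-1}\mathcal C\cap\mathcal C,o)$, since right-multiplying $g_1 f_1$ (with $f_1$ supported in $\mathcal C$) by $g_2 f_2$ (with $f_2$ supported in $g_2^{-1}\mathcal C\cap\mathcal C$) gives $g_1g_2\, f_1^{g_2} f_2$, and $f_1^{g_2}$ is supported in $g_2^{-1}\mathcal C$, so the product is supported in $g_2^{-1}\mathcal C\cap\mathcal C\subseteq g_2^{-1}\mathcal C$, hence after the shift lies again in a summand of $\mathcal Z$. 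The key structural fact I would isolate is that $\mathcal Z\otimes_{\mathcal A}\mathcal Z \cong \mathcal Z$ as $\mathcal A_{G/P}$-modules (equivalently, $\mathcal Z$ is idempotent as a two-sided object), which is what makes $Y(Z)=\mathcal Z\otimes_{\mathcal A}Z$ behave like induction/extension by zero followed by $G$-translation.

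Next I would construct the quasi-inverse. Given an $\mathcal A_{G/P}$-module $Y$, restrict the action along $\mathcal A\hookrightarrow\mathcal A_{G/P}$ to get a non-degenerate $\mathcal A$-module $Z=\operatorname{Res}^{\mathcal A_{G/P}}_{\mathcal A} Y$; non-degeneracy holds because $1\in C^\infty(G/P,o)$ is a sum of $G$-translates of elements of $C_c^\infty(\mathcal C,o)$ (by compactness of $G/P$ and density of $\mathcal C$, any locally constant function is a finite sum of functions each supported in some $g\mathcal C$), so $\mathcal A\cdot Y = Y$. The unit of the adjunction is the canonical map $\mathcal Z\otimes_{\mathcal A}\operatorname{Res}Y\to Y$, $g f\otimes y\mapsto (gf)y$, and the counit is $Z\to \operatorname{Res}(\mathcal Z\otimes_{\mathcal A}Z)$ sending $z$ to $\sum e_i\otimes z$ for a suitable local-unit decomposition $1=\sum e_i$ in $\mathcal A$ adapted to $z$. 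I would verify that both are isomorphisms: for the unit, use that $\mathcal Z$ generates $\mathcal A_{G/P}$ as a left module over itself (again from the covering of $G/P$ by $G$-translates of $\mathcal C$) together with $\mathcal Z\otimes_{\mathcal A}\mathcal Z\cong\mathcal Z$; for the counit, use that $\mathcal A\hookrightarrow\mathcal A_{G/P}$ is, on each graded piece, a localization-type inclusion $C_c^\infty(g^{-1}\mathcal C\cap\mathcal C,o)\hookrightarrow C_c^\infty(\mathcal C,o)$, and the $\mathcal A$-module $\mathcal Z$ is free-ish with the obvious basis so that $\operatorname{Res}(\mathcal Z\otimes_{\mathcal A}Z)\cong Z$ functorially.

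Finally, for the compatibility claim I would unwind the dictionary between modules over skew group rings and equivariant sheaves. The $P$-equivariant sheaf on $\mathcal C$ attached to the non-degenerate $\mathcal A_{\mathcal C}$-module $Z|_{\mathcal A_{\mathcal C}}$ has, over an open compact $U\subseteq\mathcal C$, sections $1_U\cdot (Z|_{\mathcal A_{\mathcal C}})$; the $G$-equivariant sheaf attached to $Y(Z)$ has sections over the same $U$ given by $1_U\cdot Y(Z) = 1_U\cdot(\mathcal Z\otimes_{\mathcal A}Z)$. Since $1_U\in C_c^\infty(\mathcal C,o)\subseteq\mathcal A_{\mathcal C}\subseteq\mathcal A$ sits in the degree-$1$ (identity) part, multiplying $\mathcal Z\otimes_{\mathcal A}Z$ by $1_U$ kills every summand $gC_c^\infty(\mathcal C,o)\otimes Z$ with $g\notin P$ (it lands outside $U\subseteq\mathcal C$ after applying $1_U$), and on the $g=1$ summand it recovers $1_U\cdot(C_c^\infty(\mathcal C,o)\otimes_{\mathcal A}Z) = 1_U\cdot Z$. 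Hence the two sheaves agree on opens inside $\mathcal C$, and by definition of extension of an equivariant sheaf this says precisely that the $G$-sheaf restricts to the given $P$-sheaf.

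The main obstacle I expect is the bookkeeping in the second paragraph: proving that the unit and counit are isomorphisms amounts to showing $\mathcal Z$ is a "biprojective generator" in the appropriate sense, and the delicate point is that $\mathcal A$ is not unital (only idempotented / with local units) and $\mathcal Z$ is not finitely generated, so one cannot invoke classical Morita theory as a black box. The right framework is Morita theory for idempotented rings and non-degenerate modules (as already used implicitly in the equivalences recalled just above for $\mathcal A_{G/P}$ and $\mathcal A_{\mathcal C}$); the technical heart is the identity $\mathcal Z\otimes_{\mathcal A}\mathcal Z\cong\mathcal Z$, which in turn rests on the geometric input that the $G$-translates of $\mathcal C$ form an open cover of $G/P$ compatible with the decompositions defining $\mathcal A$ and $\mathcal Z$ — essentially the Bruhat-type combinatorics of $G/P$. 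Everything else is a routine unravelling of skew-group-ring module structures into equivariant-sheaf data.
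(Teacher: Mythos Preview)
Your proposed quasi-inverse is wrong, and this is the main gap. Restriction of a unital $\mathcal A_{G/P}$-module $Y$ along $\mathcal A\hookrightarrow\mathcal A_{G/P}$ is \emph{not} non-degenerate in general. Your argument conflates $\mathcal Z$ with $\mathcal A$: it is true that $1_{G/P}$ decomposes as a sum of characteristic functions of sets $g_i V_i$ with $V_i\subset\mathcal C$ compact open, but the elements $1\cdot 1_{g_iV_i}$ lie in $C^\infty(G/P,o)\subset\mathcal A_{G/P}$, not in $\mathcal A$ (whose $g=1$ component is $C_c^\infty(\mathcal C,o)$). Concretely, for any $gf\in\mathcal A$ with $f$ supported in $g^{-1}\mathcal C\cap\mathcal C$, the element $(gf)y$ corresponds on the sheaf side to a section supported in $g\,\supp(f)\subset\mathcal C$; thus $\mathcal A\cdot Y\subset\bigcup_{V\subset\mathcal C}1_V Y$, and for the constant sheaf this is visibly a proper submodule of $Y$. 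The correct quasi-inverse is exactly the paper's $R(Y):=\bigcup_{V\subset\mathcal C}1_V Y$.

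Once you use $R$, the paper's argument is much more direct than the Morita-style statement $\mathcal Z\otimes_{\mathcal A}\mathcal Z\cong\mathcal Z$ you propose (which is not obviously well-posed or correct). The key computation is the identity $1_V\mathcal Z=1_V\mathcal A$ for every compact open $V\subset\mathcal C$, obtained by noting that $1_V(gf)=g\,1_{g^{-1}V}f$ has support in $g^{-1}V\cap\mathcal C\subset g^{-1}\mathcal C\cap\mathcal C$. This immediately yields
\[
1_V(\mathcal Z\otimes_{\mathcal A}Z)=(1_V\mathcal Z)\otimes_{\mathcal A}Z=(1_V\mathcal A)\otimes_{\mathcal A}Z\xrightarrow{\ \cong\ }1_VZ,
\]
hence $R(Y(Z))\cong Z$; and then the unit $\mathcal Z\otimes_{\mathcal A}R(Y)\to Y$ is an isomorphism because $R$ detects isomorphisms (the $G$-translates of compact opens in $\mathcal C$ cover $G/P$). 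Your compatibility paragraph is also off: $1_U\cdot(gf\otimes z)$ need not vanish for $g\notin P$ (whenever $g^{-1}U\cap\supp f\neq\emptyset$ it survives). What actually happens is the same identity $1_U\mathcal Z=1_U\mathcal A$: the element $1_U\cdot gf$ lies in $\mathcal A$ and can be pushed across the tensor, so $1_U(\mathcal Z\otimes_{\mathcal A}Z)=1_UZ$, which is precisely the statement that the two sheaves agree on $U\subset\mathcal C$.
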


Given an \'etale $o[P_+]$-module $M$, we consider the problem of extending  to $\mathcal A$   the   $o$-algebra homomorphism
$$
\Res: A_{\mathcal C}\to \End_o(M_c^P) \quad, \quad \sum_{b\in P}bf_b\mapsto b\circ \Res (f_b) \ .
$$
We introduce the  subrings
\begin{align*}\mathcal A_0 &= 1_{\mathcal C_0}\mathcal A 1_{\mathcal C_0} =
\oplus _{g\in G}gC^\infty(g^{-1}\mathcal C_0\cap \mathcal C_0, o)\   \subset \ \mathcal A \ , \\
 \mathcal A_{\mathcal C 0} &= 1_{\mathcal C_0}\mathcal A_{\mathcal C}1_{\mathcal C_0} =
 \oplus _{b\in P} bC^\infty(b^{-1}\mathcal C_0\cap \mathcal C_0, o)\   \subset \ \mathcal A_{\mathcal C} \ .
 \end{align*}
The  skew monoid ring $\mathcal A_{\mathcal C_0}= C^\infty (\mathcal C_0, o) \# P_+ =\oplus_{b\in P_+}b C^\infty( \mathcal C_0,o)$ is contained in  $\mathcal A_{\mathcal C0}$.   The intersection $g^{-1}\mathcal C_0\cap \mathcal C_0$ is not $0$ if and only if $g\in N_0\overline P N_0$. The subring $\Res ( \mathcal A_{\mathcal C0})$ of $\End_o(M^P)$ necessarily lies in the image of $\End_o (M) $.

The group $P$ acts on  $\mathcal A $ by  $(b,y)\mapsto (b1_{G/P}) y (b1_{G/P})^{-1}$ for $b\in P $, and the map $b\otimes y \mapsto  (b1_{G/P}) y (b1_{G/P})^{-1}$  gives $o[P ]$ isomorphisms
$$
o[P ]\otimes _{o[P_+]} \mathcal A_0\to \mathcal A \quad {\rm and} \quad
o[P ]\otimes _{o[P_+]} \mathcal A_{\mathcal C_0}\to \mathcal A_{\mathcal C}\ .
$$

\begin{proposition} Let $M$ be an \'etale $o[P_+]$-module.  We suppose given, for any $g\in N_0\overline PN_0$, an element $\mathcal H_g \in \End_o(M)$.
The map
$$\mathcal R_0:\mathcal A_0\to \End_o(M)\quad , \quad \sum_{g\in N_0\overline PN_0} gf_g \mapsto \mathcal H_g \circ \res (f_g)$$
is a $P_+$-equivariant $o$-algebra homomorphism which extends $\Res |_{\mathcal A_{\mathcal C0}}$ if and only if,
for all $g,h\in N_0\overline{P}N_0$,   $b\in P \cap N_0\overline{P}N_0$, and all compact open subsets $ \mathcal V \subset \mathcal C_{0}$, the relations
\begin{itemize}
\item[H1.] $ \res (1_{\mathcal V })\circ  {\mathcal H}_{g}   =
  {\mathcal H}_{g} \circ \res (1_{g^{-1}\mathcal V \cap \mathcal{C}_0}) $  ,
\item[H2.]  $  {\mathcal H}_{g} \circ  {\mathcal H}_{h} = {\mathcal H}_{gh} \circ \res ( 1_{h^{-1}\mathcal C_{0} \cap \mathcal C_{0}}) $  ,
  \item[H3.] ${\mathcal H}_{b}  = b \circ \res (1_{b^{-1}\mathcal{C}_0 \cap \mathcal{C}_0})$ .
\end{itemize}
hold true.
In this case,   the unique $o[P]$-equivariant map $\mathcal R:\mathcal A \to \End_A(M_c^P)$ extending   $\mathcal R_0$ is multiplicative.
\end{proposition}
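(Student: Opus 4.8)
The plan is to reduce everything to checking compatibilities on the building blocks $\mathcal A_0$ and $\mathcal A_{\mathcal C_0}$, since $\mathcal A$ and $\mathcal A_{\mathcal C}$ are obtained from these by the $o[P]$-base change isomorphisms $o[P]\otimes_{o[P_+]}\mathcal A_0\xrightarrow{\sim}\mathcal A$ and $o[P]\otimes_{o[P_+]}\mathcal A_{\mathcal C_0}\xrightarrow{\sim}\mathcal A_{\mathcal C}$ described just above the statement. First I would verify the ``if'' direction: assuming H1--H3, show that $\mathcal R_0$ as defined is well-defined, $o$-linear, $P_+$-equivariant, and a ring homomorphism. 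Well-definedness is immediate since the decomposition $\mathcal A_0=\oplus_g gC^\infty(g^{-1}\mathcal C_0\cap\mathcal C_0,o)$ is a direct sum and $\res$ is already known (from the earlier discussion of $\res:C^\infty(N_0,o)\to\End_o M$) to be an algebra homomorphism on each piece. Multiplicativity of $\mathcal R_0$ is the heart of the ``if'' direction: given $gf_g$ and $hf_h$, their product in $\mathcal A_0$ is $gh\,(f_g^{\,h}f_h\cdot 1_{\text{appropriate set}})$, and applying $\mathcal R_0$ one must match $\mathcal H_g\circ\res(f_g)\circ\mathcal H_h\circ\res(f_h)$ with $\mathcal H_{gh}\circ\res(f_g^{\,h}f_h\cdots)$. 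Here one slides $\res(f_g)$ past $\mathcal H_h$ using H1 (with $\mathcal V$ running over the compact opens making up $\supp f_g$), then collapses $\mathcal H_g\circ\mathcal H_h$ via H2, and finally uses that $\res$ is multiplicative on $C^\infty(N_0,o)$ together with the $P_+$-covariance $\res(f^{\,h})=h^{-1}\circ\res(f)\circ h$-type relations built into the $o[P_+]$-module structure. The condition H3 is exactly what forces $\mathcal R_0$ to restrict to $\Res$ on $\mathcal A_{\mathcal C_0}\subset\mathcal A_0$, because on that subring the group elements $b$ lie in $P\cap N_0\overline P N_0$ and $\Res(bf_b)=b\circ\res(f_b)$ by definition of $\Res$.

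Next I would do the ``only if'' direction: if such a $P_+$-equivariant algebra homomorphism $\mathcal R_0$ extending $\Res|_{\mathcal A_{\mathcal C_0}}$ exists and has the stated form $\sum gf_g\mapsto\mathcal H_g\circ\res(f_g)$, then H1 is the compatibility of $\mathcal R_0$ applied to the two ways of writing $1_{\mathcal V}\cdot(g1_{g^{-1}\mathcal C_0\cap\mathcal C_0})=(g1_{g^{-1}\mathcal C_0\cap\mathcal C_0})\cdot 1_{g^{-1}\mathcal V\cap\mathcal C_0}$ inside $\mathcal A_0$; H2 is multiplicativity applied to $(g1_{g^{-1}\mathcal C_0\cap\mathcal C_0})(h1_{h^{-1}\mathcal C_0\cap\mathcal C_0})$; and H3 is the requirement that $\mathcal R_0$ agree with $\Res$ on the group-like elements of $\mathcal A_{\mathcal C_0}$. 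One subtlety worth spelling out: the form $\mathcal H_g\circ\res(f_g)$ is not an extra hypothesis but a consequence, because $gf_g=(g1_{g^{-1}\mathcal C_0\cap\mathcal C_0})\cdot f_g$ forces $\mathcal R_0(gf_g)=\mathcal H_g\circ\mathcal R_0(f_g)=\mathcal H_g\circ\res(f_g)$ once one sets $\mathcal H_g:=\mathcal R_0(g1_{g^{-1}\mathcal C_0\cap\mathcal C_0})$ and uses that $\mathcal R_0$ restricted to $C^\infty(\mathcal C_0,o)$ must coincide with $\res$ (this last fact follows from the uniqueness of $\res$ sending $1_{N_0}$ to $\id_M$, since $\mathcal R_0$ restricted to the functions is $P_+$-equivariant and sends $1_{\mathcal C_0}$ to $\id_M$). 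I should also record that $\Res(\mathcal A_{\mathcal C_0})$ indeed lands in the image of $\End_o M$ (remarked before the statement), so that $\mathcal H_g\in\End_o M$ is the right target.

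Finally, for the last assertion, I would extend $\mathcal R_0$ to $\mathcal R:\mathcal A\to\End_o(M_c^P)$ by the formula dictated by the base-change isomorphism: write $y\in\mathcal A$ uniquely as $\sum_{b\in J(P/P_+)}(b1_{G/P})y_b(b1_{G/P})^{-1}$ with $y_b\in\mathcal A_0$, and set $\mathcal R(y)=\sum_b b\circ\mathcal R_0(y_b)\circ b^{-1}$ where on the right $b$ acts via the $o[P]$-module structure on $M_c^P$ (recall $\mathcal R_0$ lands in $\End_o M\hookrightarrow\End_o M_c^P$ via $F\mapsto\sigma_0\circ F\circ\ev_0$ sending $\id_M$ to $R_0$). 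Uniqueness of the $o[P]$-equivariant extension is clear from the base change. Multiplicativity of $\mathcal R$ is then the main obstacle: one must check $\mathcal R(y)\mathcal R(y')=\mathcal R(yy')$ for $y,y'$ supported on single cosets $b,b'$ of $P/P_+$, which after conjugating reduces to a product inside $\mathcal A$ of the form $(b1_{G/P})y_b(b^{-1}b'1_{G/P})y'_{b'}(b'1_{G/P})^{-1}$; the element $b^{-1}b'$ need not lie in $P_+$, so one cannot stay inside $\mathcal A_0$, and the argument must use the structure of $\mathcal A$ as $\oplus_g gC_c^\infty(g^{-1}\mathcal C\cap\mathcal C,o)$ and the way $P_+$-conjugates of $\mathcal A_0$-elements recombine. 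The key point is that $\mathcal R$ is already multiplicative on the subring $\mathcal A_{\mathcal C}$ (that is $\Res$, Proposition on $\Res$) and on $\mathcal A_0$ (just proved), and the two together with $P$-equivariance generate enough relations; concretely one writes any product of two homogeneous elements of $\mathcal A$ in terms of products where at least one factor lies in $\mathcal A_{\mathcal C_0}$ or $\mathcal A_0$, using the decomposition $g\in N_0\overline P N_0$ and $\mathcal A=o[P]\cdot\mathcal A_0$. I expect the bookkeeping of which group element lands in which Bruhat cell, and the verification that the recombination is forced by H1--H3 rather than requiring new relations, to be the genuinely delicate part; the rest is formal manipulation with skew group rings and the already-established properties of $\res$, $\Res$, $\sigma_0$, $\ev_0$, and $R_0$.
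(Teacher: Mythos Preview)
Your treatment of the equivalence between H1--H3 and the ring-homomorphism/extension properties of $\mathcal R_0$ is essentially the paper's argument, and your derivation of H1--H3 in the ``only if'' direction by evaluating $\mathcal R_0$ on the specific products $1_{\mathcal V}\cdot(g1_{g^{-1}\mathcal C_0\cap\mathcal C_0})$, $(g1_{\ldots})(h1_{\ldots})$, and $b1_{\ldots}$ is exactly right. One point you leave implicit: the $P_+$-equivariance of $\mathcal R_0$ in the ``if'' direction is not automatic from H1 and H2 alone. The paper reduces it to the identity $\mathcal H_{cgc^{-1}}\circ\varphi_c\circ\res(1_{g^{-1}\mathcal C_0\cap\mathcal C_0})=\varphi_c\circ\mathcal H_g$ for $c\in P_+$, and then uses H3 (which gives $\mathcal H_c=\varphi_c$) together with two applications of H2 (to $\mathcal H_{cgc^{-1}}\circ\mathcal H_c$ and to $\mathcal H_c\circ\mathcal H_g$). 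You should make this explicit.

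Where your proposal diverges from the paper is the multiplicativity of $\mathcal R$. You anticipate a delicate bookkeeping problem with cosets $b,b'\in P/P_+$ and the product $b^{-1}b'$, but this difficulty does not arise. The point you are missing is that the base-change isomorphism $o[P]\otimes_{o[P_+]}\mathcal A_0\cong\mathcal A$ comes from an \emph{increasing union}
\[
\mathcal A=\bigcup_{m\ge 0}(s^{-m}1_{G/P})\,\mathcal A_0\,(s^m1_{G/P}),
\]
since $P=s^{-\mathbb N}P_+$. Given $y,z\in\mathcal A$, one may therefore choose a \emph{common} $m$ with $y=(s^{-m}1_{G/P})y_0(s^m1_{G/P})$ and $z=(s^{-m}1_{G/P})z_0(s^m1_{G/P})$ for $y_0,z_0\in\mathcal A_0$; then $yz=(s^{-m}1_{G/P})y_0z_0(s^m1_{G/P})$ with $y_0z_0\in\mathcal A_0$, and
\[
\mathcal R(yz)=s^{-m}\circ\mathcal R_0(y_0z_0)\circ s^{m}
=s^{-m}\circ\mathcal R_0(y_0)\circ\mathcal R_0(z_0)\circ s^{m}
=\mathcal R(y)\circ\mathcal R(z).
\]
No Bruhat-cell analysis or recombination is needed; the entire argument is three lines once one uses the filtration by powers of $s$ rather than a direct-sum decomposition over coset representatives.
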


 When these conditions are satisfied,    we obtain a  $G$-equivariant sheaf on $G/P$  with sections on $\mathcal C_0$  equal to $M$.

\subsection{$(s,\res,\mathfrak C)$-integrals   $\mathcal H_g$}
Let $M$ be an \'etale $T_+$-module $M $ over $\Lambda_{\ell_\alpha}(N_0)$
 with the weak topology.
 We denote by $\End_o^{cont}(M)$ the $o$-module of continuous $o$-linear endomorphisms of $M$,
and for $g$ in $ N_0\overline{P}N_0$, by $U_g \subseteq N_0$ the compact open subset such that $$U_g w_0 P/P = g^{-1} \mathcal{C}_0 \cap \mathcal{C}_0\ .$$
For $u\in U_g$, we have a unique element
$\alpha(g,u)\in N_0 T $ such that
 $guw_0N=\alpha(g,u)u w_0 N . $
We consider the map
\begin{align*}
\alpha_{g,0}:N_0\to &\End_o^{cont}(M)  \\
\alpha_{g,0}(u)= \Res(1_{\mathcal C_0})\circ \alpha(g,u) \circ \Res(1_{\mathcal C_0})  \ &{\rm for} \ u\in U_g  \ {\rm and} \ \alpha_{g,0}(u)=0  \ {\rm otherwise}.
\end{align*}
 The   module $M$  is Hausdorff  complete but not compact, also
 we introduce a notion of integrability with respect to a special family $\mathfrak C$ of compact subsets  $C\subset M$, i.e.
 satisfying:
\begin{itemize}
\item[$\mathfrak{C}(1)$] Any  compact subset of  a compact set in
$ \mathfrak{C}$  also lies in $\mathfrak{C}$.
\item[$\mathfrak{C}(2)$] If $C_1,C_2,\dots,C_n\in\mathfrak{C}$ then $\bigcup_{i=1}^nC_i$ is in
  $\mathfrak{C}$, as well.
\item[$\mathfrak{C}(3)$] For all $C\in\mathfrak{C}$ we have $N_0C\in\mathfrak{C}$.
\item[$\mathfrak{C}(4)$] $M(\mathfrak{C}):=\bigcup_{C\in\mathfrak{C}}C$ is an \'etale $o[P_+]$-submodule
of $M$.
\end{itemize}

A map  from $M(\mathfrak{C})$ to $M$ is called $\mathfrak{C}$-continuous if its restriction to any
$C\in \mathfrak{C}$ is continuous.
The $o$-module $\Hom_o^{\mathfrak{C}ont}(M(\mathfrak{C}),M)$ of $\mathfrak{C}$-continuous $o$-linear homomorphisms from $M(\mathfrak{C})$ to $M$  with the $\mathfrak C$-open
topology,  is a
 topological complete $o$-module.

For $s\in T_{++}$,  the open compact subgroups $N_k=s^kN_0s^{-k}\subset N$ for $k\in \mathbb Z$, form a decreasing sequence of union $N$ and intersection $\{1\}$. A map $F\colon N_0\to
\Hom_A^{\mathfrak{C}ont} (M(\mathfrak{C}),M)$ is called $(s, \res, \mathfrak{C})$-integrable  if the limit
\begin{equation*}
    \int_{N_0} Fd\res := \lim_{k \rightarrow \infty} \sum_{u \in J(N_0/N_k)} F(u) \circ \res(1_{uN_k}) \ ,
\end{equation*}
where $J(N_0/N_k) \subseteq N_0$, for any $k \in \mathbb{N}$, is a set of representatives for the cosets in $N_0/N_k$, exists in $ \Hom_A^{\mathfrak{C}ont} (M(\mathfrak{C}),M)$ and is independent of the choice of the sets $J(N_0/N_k)$.
We denote by $\mathcal{H}_{g,J(N_0/N_k)}$ the sum in the right hand side when $F=\alpha_{g,0}(.) |_{M(\mathfrak{C})}$.

\begin{proposition} For all  $g\in N_{0}\overline P N_{0}$, the map $\alpha_{g,0}(.) |_{M(\mathfrak{C})} \colon N_0\to
  \Hom_A^{\mathfrak{C}ont} (M(\mathfrak{C}),M)$ is   ($s$, $\res$, $\mathfrak{C}$)-integrable
 when
 \begin{itemize}
\item[$\mathfrak{C}(5)$]  For any $C\in\mathfrak{C}$ the compact subset
  $ \psi_s(C)\subseteq M$ also lies in $\mathfrak{C}$.
\item[$\mathfrak{T}(1)$]  For any   $C\in\mathfrak{C}$ such that $C=N_{0}C$, any open $A[N_0]$-submodule $\mathcal{M}$ of $M$, and any
compact subset $C_+ \subseteq L_+$ there exists a compact open subgroup $P_{1}=P_1(C,\mathcal{M},C_+)
\subseteq P_0$ and an integer $k(C,\mathcal{M},C_+) \geq 0$ such that
\begin{equation*}
s^{k }(1-P_1)C_+ \psi_s^{k} \subseteq E(C,\mathcal M) \qquad\text{for any $k \geq k(C,\mathcal{M},C_+)$} \ .
\end{equation*}
\end{itemize}
The integrals  $\mathcal H_{g} $ of $ \alpha_{g,0}(.) |_{M(\mathfrak{C})}$
satisfy the    relations H1, H2, H3, when   they  belong  $\End_A (M(\mathfrak{C}))$, and when
\begin{itemize}
\item[$\mathfrak{C}(6)$]  For any $C\in\mathfrak{C}$ the compact subset
  $ \varphi_s(C)\subseteq M$ also lies in $\mathfrak{C}$.
  \item[$\mathfrak{T}(2)$] Given
  a  set $J(N_0/N_k)\subset N_{0}$ of representatives for cosets in $N_0/N_k$, for $k\geq 1$, for any $x\in M(\mathfrak{C})$ and $g\in N_0\overline{P}N_0$ there
  exists a compact $A$-submodule $C_{x,g}\in\mathfrak{C}$ and a positive integer $k_{x,g}$ such that
  $\mathcal{H}_{g,J(N_0/N_k)}(x)\subseteq C_{x,g}$ for any $k\geq  k_{x,g}$.
  \end{itemize}
  \end{proposition}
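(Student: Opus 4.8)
The plan is to establish the existence and independence of the integrals $\mathcal H_g = \int_{N_0}\alpha_{g,0}(.)\,d\res$, and then the relations H1, H2, H3, by reducing everything to the continuity/convergence estimates encoded in $\mathfrak C(1)$--$\mathfrak C(6)$ and $\mathfrak T(1)$--$\mathfrak T(2)$. First I would unwind the definition of $\alpha(g,u)\in N_0T$: writing $guw_0N = \alpha(g,u)uw_0N$, one checks that for $u\in U_g$ the $T$-component of $\alpha(g,u)$ is automatically \emph{dominant}, i.e.\ lies in $T_+$ (this is the geometric input: $g^{-1}\mathcal C_0\cap\mathcal C_0\neq\emptyset$ forces $g\in N_0\overline PN_0$, and the open cell combinatorics on $G/P$ pin down the sign of the valuation). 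Hence $\alpha(g,u)$ acts on $M$ through $\varphi$ on its $T_+$-part composed with left translation by an element of $N_0$, so $\alpha_{g,0}(u) = \Res(1_{\mathcal C_0})\circ\alpha(g,u)\circ\Res(1_{\mathcal C_0})$ really does land in the image of $\End_o(M)$, and is continuous; moreover $u\mapsto\alpha(g,u)$ is locally constant on $U_g$ in a suitable sense. I record that the partial sums $\mathcal H_{g,J(N_0/N_k)}$ are, up to reindexing the coset representatives through $s$, of the shape $\sum_v v\,\varphi_s^k\,(\text{bounded data})\,\psi_s^k\,\res(\cdot)$.

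Next I would prove $(s,\res,\mathfrak C)$-integrability. Fix $C\in\mathfrak C$ with $C=N_0C$ (legitimate by $\mathfrak C(3)$) and an open $A[N_0]$-submodule $\mathcal M\subseteq M$; I must show the partial sums $\mathcal H_{g,J(N_0/N_k)}$ form a Cauchy net in $\Hom^{\mathfrak Cont}_A(M(\mathfrak C),M)$ for the $\mathfrak C$-open topology, i.e.\ that $(\mathcal H_{g,J(N_0/N_{k+1})}-\mathcal H_{g,J(N_0/N_k)})(C)\subseteq\mathcal M$ for $k$ large. The difference telescopes into terms supported on the "new'' cosets $N_k/N_{k+1}$; using the étale decomposition $M=\oplus_u u\varphi_s(M)$ and $\mathfrak C(5),\mathfrak C(6)$ (so that $\varphi_s(C),\psi_s(C)$ stay in $\mathfrak C$), each such term is $s^k(1-P_1)C_+\psi_s^k$ for an appropriate compact $C_+\subseteq L_+$, and $\mathfrak T(1)$ is exactly the statement that this lies in $E(C,\mathcal M)$ once $k\geq k(C,\mathcal M,C_+)$. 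Independence of the choice of $J(N_0/N_k)$ follows from the same estimate applied to the difference of two choices. This gives $\mathcal H_g\in\Hom^{\mathfrak Cont}_A(M(\mathfrak C),M)$ for every $g\in N_0\overline PN_0$.

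Finally, for the relations, I would assume additionally that $\mathcal H_g\in\End_A(M(\mathfrak C))$ and that $\mathfrak C(6),\mathfrak T(2)$ hold. H1 ($\res(1_{\mathcal V})\circ\mathcal H_g=\mathcal H_g\circ\res(1_{g^{-1}\mathcal V\cap\mathcal C_0})$) is proved at finite level: for $k$ large enough that $\mathcal V$ and $g^{-1}\mathcal V\cap\mathcal C_0$ are unions of $N_k$-cosets, the identity holds for the partial sums by the projection-formula behaviour of $\res$ and the definition of $\alpha_{g,0}$ as a conjugate by $\Res(1_{\mathcal C_0})$, then pass to the limit using $\mathfrak C$-continuity. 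H3 is the base case $g=b\in P\cap N_0\overline PN_0$: here $U_b=b^{-1}\mathcal C_0\cap\mathcal C_0$, $\alpha(b,u)=b$ is constant, so the integral collapses to $b\circ\res(1_{b^{-1}\mathcal C_0\cap\mathcal C_0})$ directly. H2 is the cocycle relation; I would write $\mathcal H_g\circ\mathcal H_h$ as an iterated integral $\int_{N_0}\int_{N_0}\alpha_{g,0}(u)\alpha_{h,0}(v)\,d\res\,d\res$, use the cocycle identity $\alpha(gh,u)=\alpha(g,\overline{\alpha(h,u)u})\,\alpha(h,u)$ for the $G/P$-action (combined with $\mathfrak T(2)$ to justify interchanging the limits and to keep the double partial sums inside a fixed $C_{x,g}\in\mathfrak C$), and collapse the inner sum against $\res(1_{h^{-1}\mathcal C_0\cap\mathcal C_0})$ to land on $\mathcal H_{gh}\circ\res(1_{h^{-1}\mathcal C_0\cap\mathcal C_0})$. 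The main obstacle is precisely this last point: making the Fubini-type interchange of the two $\res$-integrals rigorous in the non-compact module $M$ requires the uniform boundedness supplied by $\mathfrak T(2)$, and one must check carefully that $\mathfrak C(1)$--$\mathfrak C(6)$ are stable under all the operations (finite unions, $N_0$-translation, $\varphi_s$, $\psi_s$) invoked along the way so that every intermediate expression genuinely lies in $\mathfrak C$ and the $\mathfrak C$-open topology sees the convergence.
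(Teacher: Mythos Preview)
Your overall architecture matches the paper's, but there is one genuine error and one substantial gap. The error is the claim that the $L$-component $t(g,u)$ of $\alpha(g,u)$ automatically lies in $L_+$. It does not: already for $G=GL_2$ and $g=\mathrm{diag}(p^{-1},p)\in L\subset N_0\overline PN_0$ one has $\alpha(g,u)=g$ for every $u$, with antidominant $L$-part. What is true is only that $t(g,U_g)\subset L$ is compact, so there exists $k_g^{(1)}\ge0$ with $\Lambda_g:=t(g,U_g)\,s^{k_g^{(1)}}\subset L_+$. Thus $\alpha(g,u)$ does \emph{not} act on $M$ through $\varphi$'s; it acts on $M^P$, and only after composing with $\res(1_{uN_k})$ for $k$ large does the operator factor as $n(g,u)\cdot s^{k-k_g^{(1)}}\cdot(\text{element of }\Lambda_g)\cdot\psi^k\cdot u^{-1}$, i.e.\ as something built from $N_0$, $\varphi_s$, $\psi_s$ and a compact piece of $L_+$. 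This shift by $s^{k_g^{(1)}}$ is what makes the partial sums live in $\End_A(M(\mathfrak C))$ at all, and it reappears in every estimate.

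The gap is in the Cauchy step: you assert that the telescoped difference ``is $s^k(1-P_1)C_+\psi_s^k$'' but do not explain the mechanism, which is the real content. The key identity is
\[
\alpha(g,x_{uv})=\alpha(g,x_u)\cdot u\,\alpha\bigl(\bar n(g,u),x_v\bigr)\,u^{-1}\qquad(u\in U_g,\ v\in N_k),
\]
so that $(\alpha_{g,0}(u)-\alpha_{g,0}(uv))\circ\res(1_{uN_{k+1}})=\alpha(g,x_u)\,u\,(1-\alpha(\bar n(g,u),x_v))\,\Res(1_{N_{k+1}})\,u^{-1}$. One then needs a separate compactness lemma: since $\bar n(g,U_g)\subset\overline N$ is compact, the conjugated set $s^{-(k-k_g^{(1)})}t(g,u)\,\alpha(\bar n(g,u),x_v)\,t(g,u)^{-1}s^{k-k_g^{(1)}}$ (for $u\in U_g$, $v\in N_k$) shrinks into any given open $P_1\subset P_0$ as $k\to\infty$. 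Only after establishing this does $\mathfrak T(1)$ apply with $C_+=\Lambda_g s$. This factorisation-plus-shrinking argument is the heart of the integrability proof and is invisible in your sketch. (Your treatment of H1, H3 and the Fubini-style H2 is essentially the paper's; for H2 the paper makes the finite-level identity exact by choosing, for each $u$, a representative set $J_u(N_0/N_{k-k_0})$ containing $n(h,u)$, and then $\mathfrak T(2)$ is used exactly as you say, to pass from convergence of each factor to convergence of the composite.)
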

When $\mathfrak{C} $ satisfies $\mathfrak C (1), \ldots, \mathfrak C (6) $ and  the technical properties $\mathfrak T (1) ,\mathfrak T (2) $ are true,  we obtain a  $G$-equivariant sheaf on $G/P$  with sections on $\mathcal C_0$  equal to $M(\mathfrak C )$.

\subsection{Main theorem}  Let $M$ be an \'etale $T_+$-module $M $ over $\Lambda_{\ell_\alpha}(N_0)$
 with the weak topology and let $s\in T_{++}$.
We have the natural $T_+$-equivariant quotient map
 $$\ell_M:M\to D =\mathcal O_{\mathcal E, \alpha} \otimes_{\ell_\alpha }M \quad, \quad m \mapsto 1\otimes m $$
from $M$  to $D=\mathbb D(M)\in \mathcal M_{\mathcal O_{\mathcal E , \alpha}} (T_+)$, of  $T_+$-equivariant section
$$
\iota_D:D\to M = \Lambda_{\ell_\alpha}(N_0)\otimes_{\iota_\alpha }  D\quad, \quad d \mapsto 1\otimes d  \ . $$
We note that  $o[N_0]\iota_D( D)$ is dense in $M$. A lattice $D_0$ in $D$ is a $\Lambda(\mathbb Z_p)$-submodule generated by a finite set of generators of $D$ over $\mathcal O_{\mathcal E}$.
 When  $D$ is killed by a power of $p$, the $o$-module
\begin{equation*}
M_s^{bd}(D_0):=\{m\in M\mid \ell_{M}(\psi_s^k(u^{-1}m))\in D_0\text{ for all }u\in
N_0\text{ and }k\in \mathbb N \}
\end{equation*}
of $M$ is compact and is a $\Lambda (N_0)$-module. Let   $\mathfrak{C}_{s } $ be  the   family    of compact subsets   of $M $ contained in  $M_{s}^{bd}(D_{0})$ for some lattice $D_{0} $ of $D$, and let  $M_s^{bd}=\cup_{D_0}M_s^{bd}(D_0)$ for all lattices $D_0$ in $D$.
 In general, $M$ is $p$-adically complete, $M/p^nM$ is an \'etale $T_+$-module over $\Lambda_{\ell_\alpha} (N_0)$, and  $D/p^n D = \mathbb D (M/p^nM)$. We denote by $p_n:M\to M/p^n M$ the reduction modulo $p^n$, and by $\mathfrak{C}_{s,n } $ the  family of compact subsets constructed above for $M/p^nM$.
We define     the  family $\mathfrak{C}_{s } $ of compact subsets  $C\subset M$ such that $p_n(C) \in \mathfrak{C}_{s,n }$ for all $n\geq 1$, and  the $o$-module $M_s^{bd}$ of $m\in  M$ such that the  set of $\ell _{M}(\psi_s^{k}(u^{-1}m))$  for $k\in \mathbb N, u\in N_{0}$ is bounded in $D$ for the weak topology.

By reduction to the easier case where $M$ is killed by a power of $p$, we show that  $\mathfrak{C}_{s } $ satisfies $\mathfrak C (1), \ldots, \mathfrak C (6) $ and that the technical properties $\mathfrak T (1) ,\mathfrak T (2) $ are true.

 \begin{proposition} Let $M$ be an \'etale $T_+$-module $M $ over $\Lambda_{\ell_\alpha}(N_0)$ and  let $s\in T_{++}$.

  (i)  $M _s^{bd}$ is a dense $\Lambda (N_0)[T_+]$-\'etale submodule of $M$ containing $\iota_D(D)$.

(ii) For $g\in N_0\overline P N_0$, the  $(s, \res, \mathfrak{C}_s)$-integrals $\mathcal H_{g,s}$ of
 $\alpha_{g,0}|_{M_s^{bd}}$  exist, lie in $\End_o (M _s^{bd})$, and satisfy
the relations H1, H2, H3.

(iii) For $s_1, s_2\in T_{++}$, there exists $s_3\in T_{++}$ such that $M_{s_3}^{bd}$ contains $M_{s_1}^{bd} \cup M_{s_2}^{bd}$ and $\mathcal H_{g,s_1}=\mathcal H_{g,s_2}$ on $M_{s_1}^{bd} \cap M_{s_2}^{bd}$.

\end{proposition}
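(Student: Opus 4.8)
The plan is to deduce all three parts from the structural facts just established --- that $\mathfrak{C}_s$ is a special family satisfying $\mathfrak{C}(1)$--$\mathfrak{C}(6)$ and $\mathfrak{T}(1)$, $\mathfrak{T}(2)$ --- from the two preceding propositions, and from one boundedness input on the operators $\psi_s$; exactly as in the argument yielding the $\mathfrak{C}$- and $\mathfrak{T}$-properties, it is convenient to treat first the case where $M$ is killed by a power of $p$ and then pass to the general case by taking inverse limits along $p_n\colon M\to M/p^nM$. A preliminary remark, immediate from the definitions (using that $\Lambda(N_0)m$ is compact for every $m\in M$, and that a subset of $D$ is bounded for the weak topology iff its image in each $D/p^nD$ is), is that $M(\mathfrak{C}_s)=M_s^{bd}$.

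\emph{Part (i).} By $\mathfrak{C}(4)$, $M_s^{bd}=M(\mathfrak{C}_s)$ is an \'etale $o[P_+]$-submodule of $M$; it is moreover a $\Lambda(N_0)$-submodule, because the condition defining $M_s^{bd}$ is closed and invariant under the $N_0$-action (equivalently, in the $p$-torsion case, $M_s^{bd}=\bigcup_{D_0}M_s^{bd}(D_0)$ is a directed union of $\Lambda(N_0)$-modules). For $\iota_D(D)\subseteq M_s^{bd}$ one checks first that $\psi_s\circ\iota_D=\iota_D\circ\psi_s$ (the right-hand $\psi_s$ being the operator on $D$): in the \'etale decomposition of $\iota_D(d)$ relative to a system of representatives of $N_0/sN_0s^{-1}$ chosen to contain the elements $u_\alpha(a_0)$, the components attached to the other representatives all vanish. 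Iterating this, $\psi_s^k(u^{-1}\iota_D(d))$ is either $0$ or of the form $n\cdot\iota_D(e_k)$ with $n\in N_0$ and $e_k$ a $k$-fold composition of $\psi_s$ with multiplications by group-like elements of $\mathcal{O}_{\mathcal{E},\alpha}$, applied to $d$; since $\ell_M\circ\iota_D=\id_D$ and $\ell_M(nm)=\ell_\alpha(n)\ell_M(m)$ for $n\in N_0$ with $\ell_\alpha(N_0)$ bounded, boundedness of $\{\ell_M(\psi_s^k(u^{-1}\iota_D(d)))\}_{u,k}$ reduces to boundedness in $D$ of the set of the $e_k$. The latter holds because $D$, being \'etale over $\mathcal{O}_{\mathcal{E},\alpha}$ with $s$ strictly dominant, admits around any element a lattice stable under $\psi_s$ and under group-like multiplications --- the standard fact that the operator $\psi$ of an \'etale $(\varphi,\Gamma)$-module over $\mathcal{O}_{\mathcal E}$ preserves a lattice. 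Density of $M_s^{bd}$ in $M$ then follows from $M_s^{bd}\supseteq\Lambda(N_0)\iota_D(D)\supseteq o[N_0]\iota_D(D)$, the last being dense.

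\emph{Part (ii).} Since $\mathfrak{C}_s$ satisfies $\mathfrak{C}(1)$--$\mathfrak{C}(6)$ and $\mathfrak{T}(1)$, $\mathfrak{T}(2)$ and $M(\mathfrak{C}_s)=M_s^{bd}$, the preceding proposition on $(s,\res,\mathfrak{C})$-integrability applies with $\mathfrak{C}=\mathfrak{C}_s$: for each $g\in N_0\overline PN_0$ the map $\alpha_{g,0}|_{M_s^{bd}}$ is $(s,\res,\mathfrak{C}_s)$-integrable, so $\mathcal{H}_{g,s}$ exists in $\Hom_o^{\mathfrak{C}ont}(M_s^{bd},M)$. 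Property $\mathfrak{T}(2)$ gives $\mathcal{H}_{g,s}\in\End_o(M_s^{bd})$: for $x\in M_s^{bd}$ the partial sums $\mathcal{H}_{g,J(N_0/N_k)}(x)$ eventually lie in a fixed $C_{x,g}\in\mathfrak{C}_s$, which is compact hence closed, so $\mathcal{H}_{g,s}(x)\in C_{x,g}\subseteq M(\mathfrak{C}_s)=M_s^{bd}$; and $\mathcal{H}_{g,s}$ is $o$-linear as a limit of $o$-linear maps. The remaining hypothesis of that proposition being thus verified, the relations H1, H2, H3 follow from it.

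\emph{Part (iii).} Take $s_3:=s_1s_2\in T_{++}$, so $s_i^{-1}s_3\in T_+$. The key point is the inclusion $M_s^{bd}\subseteq M_{st}^{bd}$ for $s\in T_{++}$, $t\in T_+$; applying it with $(s,t)=(s_1,s_2)$ and $(s_2,s_1)$ yields $M_{s_1}^{bd}\cup M_{s_2}^{bd}\subseteq M_{s_3}^{bd}$. To prove it, write $\psi_{st}^k=\psi_s^k\psi_t^k$ and, iterating $\psi_t(vm)=(t^{-1}v\sigma(v)t)\,\psi_t(\sigma(v)^{-1}m)$ (where $\sigma(v)\in J(N_0/tN_0t^{-1})$ is the representative with $v\sigma(v)\in tN_0t^{-1}$), express each $\ell_M(\psi_{st}^k(u^{-1}m))$ through finitely many $\ell_M(\psi_s^k(w^{-1}m))$ up to bounded factors, so that boundedness along $st$ follows from $m\in M_s^{bd}$; here again one reduces first to $M$ killed by a power of $p$. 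For the compatibility of the $\mathcal{H}$'s I would prove the sharper $\mathcal{H}_{g,st}|_{M_s^{bd}}=\mathcal{H}_{g,s}$: since $s^kt^kN_0t^{-k}s^{-k}\subseteq s^kN_0s^{-k}$, a set of representatives of $N_0/(st)^kN_0(st)^{-k}$ refines one of $N_0/s^kN_0s^{-k}$, and regrouping the defining Riemann sum of the $(st,\res,\mathfrak{C}_{st})$-integral over the coarser cosets --- using $\res(1_{vN})=\sum_w\res(1_{vwN'})$ for $N'\subseteq N$ and the independence of the $(s,\res,\mathfrak{C})$-integral of the chosen representatives --- identifies its limit on $M_s^{bd}$ with the $(s,\res,\mathfrak{C}_s)$-integral. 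Hence $\mathcal{H}_{g,s_i}=\mathcal{H}_{g,s_3}|_{M_{s_i}^{bd}}$, so $\mathcal{H}_{g,s_1}=\mathcal{H}_{g,s_2}$ on $M_{s_1}^{bd}\cap M_{s_2}^{bd}$. The genuinely non-formal steps are precisely the two boundedness inclusions $\iota_D(D)\subseteq M_s^{bd}$ and $M_s^{bd}\subseteq M_{st}^{bd}$: both amount to tracking $\psi$-iterates twisted by elements of $N_0$ and showing the resulting orbits remain bounded, which is cleanest modulo $p^n$ --- where all the modules in sight are compact and the $M_s^{bd}(D_0)$ are compact $\Lambda(N_0)$-modules --- and then lifted by passage to the inverse limit.
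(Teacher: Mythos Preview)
Your treatment of parts (i) and (ii) is essentially the paper's approach, and the observation that $\mathfrak{T}(2)$ alone forces $\mathcal H_{g,s}(x)\in C_{x,g}\subseteq M(\mathfrak C_s)$ (since $C_{x,g}$ is compact hence closed) is a clean way to get $\mathcal H_{g,s}\in\End_o(M_s^{bd})$.

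Part (iii), however, has a genuine gap. Your key inclusion $M_s^{bd}\subseteq M_{st}^{bd}$ for arbitrary $t\in T_+$ is not established by your sketch, and in fact the paper proves the \emph{opposite} direction in the only case it handles: for $t_0\in T_+$ with $\alpha(t_0)\in\mathbb Z_p^*$ one has $M_{st_0}^{bd}\subseteq M_s^{bd}$, together with $M_s^{bd}=M_{s^r}^{bd}$. Your formula $\psi_t(vm)=(t^{-1}v\sigma(v)t)\psi_t(\sigma(v)^{-1}m)$ only rewrites $\psi_t^k$ in terms of $\psi_t^k$; it does not produce $\psi_s^k(w^{-1}m)$. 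Concretely, if you set $n=\psi_s^k(u^{-1}m)$ and try to control $\ell_M(\psi_t^k(n))$, the \'etale decomposition on $D$ only tells you that $\psi_{\ell(t)}^k(\ell_M(n))=\sum_{w\in N_\ell\cap J(N_0/t^kN_0t^{-k})}\ell_M(\psi_t^k(w^{-1}n))$: you bound the \emph{sum} over $w\in N_\ell$, not the individual term $w=1$ that you need. This is exactly why Lemma~\ref{3.6} forbids $\ell\circ\psi_t=\psi_{\ell(t)}\circ\ell$ unless $tN_\ell t^{-1}=N_\ell$.

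The paper's route to (iii) is genuinely different. It defines a transitive relation $s_1\le s_2$ on $T_{++}$ generated by $s_1=s_2t_0$ with $\alpha(t_0)\in\mathbb Z_p^*$ and by $s_1^{r_1}=s_2^{r_2}$; Prop.~\ref{powers} shows $s_1\le s_2\Rightarrow M_{s_1}^{bd}\subseteq M_{s_2}^{bd}$ and $\mathcal H_{g,s_1}=\mathcal H_{g,s_2}|_{M_{s_1}^{bd}}$. The existence of $s_3$ dominating $s_1,s_2$ is then a separate, nontrivial right-filteredness lemma for split reductive groups (Section~\ref{crsg}), and $s_3$ is \emph{not} $s_1s_2$ in general: one first passes to the adjoint group, uses the fundamental coweights to build elements $s_{[k]}$ with $(\val_p\alpha_i(s_{[k]}))_i=(k,1,\dots,1)$, and pulls back. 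Likewise, the equality $\mathcal H_{g,s}=\mathcal H_{g,st_0}$ is not a mere regrouping of Riemann sums: the partial sums at level $k$ for $s$ and for $st_0$ are only shown to agree term-by-term on $N_0\iota_D(D)$ (using that $\psi_{t_0}$ is \emph{invertible} on $D$ when $\alpha(t_0)\in\mathbb Z_p^*$), and the extension to all of $M_{st_0}^{bd}$ requires an approximation argument combining density of $N_0\iota_D(D)$, the map $m\mapsto m(st_0)$ into $\langle N_0\iota_D(D)\rangle_o$ with error in $\varphi_{st_0}^k(J_\ell(N_0)M)$, and the $\mathfrak C$-uniform convergence of the partial sums.
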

The    endomorphisms $\mathcal H_{g,s}\in  \End_o (M _s^{bd})$ induce endomorphisms of  $ \cap_{s\in T_{++}}M_s^{bd}$ and of
$ \cup_{s\in T_{++}}M_s^{bd}=\sum_{s\in T_{++}}M_s^{bd}$  satisfying the relations H1, H2, H3. Moreover $ \cup_{s\in T_{++}}M_s^{bd}$ and  $ \cap_{s\in T_{++}}M_s^{bd}$
are  $\Lambda (N_0)[T_+]$-\'etale submodules  of $M$ containing $\iota_D(D)$. Our main theorem is the following:

\begin{theorem} There are  faithful functors
$$
\mathbb Y_{\cap},\ (\mathbb Y_{s})_{s\in T_{++}}, \ \mathbb Y_{\cup} :  \mathcal M^{et}_{\mathcal O_{\mathcal E, \alpha}} (T_+) \ \longrightarrow \ \text { $G$-equivariant sheaves on $G/P$} \ ,
$$
sending $D=\mathbb D(M)$ to a sheaf with sections on $\mathcal C_0$ equal to  the dense $\Lambda(N_0)[T_+]$-submodules of $M$
$$
  \bigcap_{s\in T_{++}} M_s^{bd}, \quad  (M_s^{bd})_{s\in T_{++}}, \ \text{and}\quad  \bigcup_{s\in T_{++}} M_s^{bd} \ ,
$$
respectively.
\end{theorem}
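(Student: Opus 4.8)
The plan is to assemble the three functors from the equivalences and propositions already established, the bridge from a module $D$ to its geometric incarnation being the equivalence $\mathbb D,\mathbb M$: for $D\in\mathcal M^{et}_{\mathcal O_{\mathcal E,\alpha}}(T_+)$ put $M=\mathbb M(D)\in\mathcal M^{et}_{\Lambda_{\ell_\alpha}(N_0)}(T_+)$, so that $D=\mathbb D(M)$. Fix $s\in T_{++}$ and let $N=M_s^{bd}=M(\mathfrak C_s)$. By the reduction to the $p$-power-torsion case together with the Proposition immediately preceding the theorem, $\mathfrak C_s$ satisfies $\mathfrak C(1),\dots,\mathfrak C(6)$ and $\mathfrak T(1),\mathfrak T(2)$, the submodule $N$ is a dense \'etale $\Lambda(N_0)[T_+]$-submodule of $M$ containing $\iota_D(D)$, and by part~(ii) the $(s,\res,\mathfrak C_s)$-integrals $\mathcal H_{g,s}\in\End_o(N)$ exist for all $g\in N_0\overline P N_0$ and satisfy H1, H2, H3. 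Feeding $(N,(\mathcal H_{g,s})_g)$ into the Proposition on H1, H2, H3 yields a $P_+$-equivariant $o$-algebra homomorphism $\mathcal R_0\colon\mathcal A_0\to\End_o(N)$ extending $\Res|_{\mathcal A_{\mathcal C 0}}$, hence a multiplicative $o[P]$-equivariant $\mathcal R\colon\mathcal A\to\End_A(N_c^P)$, which turns $N_c^P$ into a non-degenerate $\mathcal A$-module: non-degeneracy already holds over $C_c^\infty(\mathcal C,o)\subseteq\mathcal A$ since $N_c^P$ is the module of compact global sections of the $P$-equivariant sheaf on $\mathcal C$ attached to $N$. Applying the equivalence $Z\mapsto Y(Z)=\mathcal Z\otimes_{\mathcal A}Z$ and then the equivalence between $\mathcal A_{G/P}$-modules and $G$-equivariant sheaves on $G/P$ produces the sheaf $\mathbb Y_s(D)$; by the cited Proposition its restriction to $\mathcal C$ is the $P$-equivariant sheaf attached to $N_c^P|_{\mathcal A_{\mathcal C}}$. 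Since $\mathcal C_0$ is compact open in $\mathcal C$ and hence open in $G/P$, and $N_0$ is open in $N$, the sections of $\mathbb Y_s(D)$ over $\mathcal C_0$ coincide with those of that $P$-sheaf, namely $N=M_s^{bd}$.

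For $\mathbb Y_\cup$ and $\mathbb Y_\cap$ one repeats the construction with $N$ replaced respectively by $\bigcup_{s\in T_{++}}M_s^{bd}=\sum_{s\in T_{++}}M_s^{bd}$ and by $\bigcap_{s\in T_{++}}M_s^{bd}$. Part~(iii) of the same Proposition shows the family $(\mathcal H_{g,s})_s$ is compatible, so the $\mathcal H_{g,s}$ glue to well-defined endomorphisms $\mathcal H_g$ of each of these two submodules; as recorded in the discussion preceding the theorem, these still satisfy H1, H2, H3, and both submodules are \'etale $\Lambda(N_0)[T_+]$-submodules of $M$ containing $\iota_D(D)$. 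Thus the H1--H2--H3 Proposition applies verbatim, and the same chain of equivalences yields $G$-equivariant sheaves whose sections over $\mathcal C_0$ are $\bigcup_s M_s^{bd}$, resp.\ $\bigcap_s M_s^{bd}$.

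Functoriality: a morphism $f\colon D\to D'$ gives, via $\mathbb M$, a $T_+$-equivariant $\Lambda_{\ell_\alpha}(N_0)$-linear map $\mathbb M(f)\colon M\to M'$ with $\ell_{M'}\circ\mathbb M(f)=f\circ\ell_M$ and $\mathbb M(f)\circ\iota_D=\iota_{D'}\circ f$. Being $T_+$-equivariant, $\mathbb M(f)$ respects the decompositions $M=\oplus_u u\varphi_t(M)$, hence commutes with the $\psi_s$, and it commutes with the $N_0$-action; since moreover $f$ is continuous, $\mathbb M(f)$ carries $M_s^{bd}$ into $(M')_s^{bd}$ and likewise the unions and intersections. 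It intertwines the integrals $\mathcal H_{g,s}$ and $\mathcal H'_{g,s}$, because these are limits of the Riemann-type sums $\mathcal H_{g,J(N_0/N_k)}$ built from $\res$, $\Res(1_{\mathcal C_0})$ and the elements $\alpha(g,u)\in N_0T$, all of which are natural in the module. Hence $\mathbb M(f)$ descends to a morphism of the associated $\mathcal A$-modules, then of $\mathcal A_{G/P}$-modules, then of $G$-equivariant sheaves, defining $\mathbb Y_s(f)$, $\mathbb Y_\cup(f)$ and $\mathbb Y_\cap(f)$ compatibly with composition and identities. Faithfulness is then immediate: if $\mathbb Y_s(f)$ (or $\mathbb Y_\cup(f)$, or $\mathbb Y_\cap(f)$) is zero, its restriction to sections over $\mathcal C_0$ vanishes, so $\mathbb M(f)$ kills the relevant submodule $N\supseteq\iota_D(D)$, whence $f=\ell_{M'}\circ\mathbb M(f)\circ\iota_D=0$ because $\ell_M\circ\iota_D=\id_D$.

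The genuinely difficult part — convergence of the $(s,\res,\mathfrak C_s)$-integrals $\mathcal H_{g,s}$, their containment in $\End_o(M_s^{bd})$, the relations H1, H2, H3, and the compatibility in~(iii) — is already disposed of by the propositions of the preceding subsections. The only real obstacle left here is the bookkeeping showing that the three successive equivalences transport the submodule $N$ untouched onto the sections over $\mathcal C_0$; concretely one must check that $N_c^P$ is a non-degenerate $\mathcal A$-module, that $\mathcal C_0$ is open in $G/P$ so that passing from the $P$-equivariant sheaf on $\mathcal C$ to its $G$-equivariant extension leaves the stalks over $\mathcal C_0$ unchanged, and — for $\mathbb Y_\cup$ and $\mathbb Y_\cap$ — that the glued endomorphisms $\mathcal H_g$ genuinely act on an \'etale $o[P_+]$-module, so that the H1--H2--H3 Proposition remains available.
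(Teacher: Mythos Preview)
Your proposal is correct and follows essentially the same route as the paper: assemble the sheaf from the $\mathcal H_{g,s}$ via Prop.~\ref{multiplicative}, Prop.~\ref{P+-P}, Lemma~\ref{mult}, and the equivalence of Prop.~\ref{cat-equiv} (with Remark~\ref{G-extension}), exactly as is done in Theorem~\ref{main} together with Prop.~\ref{spr} and Theorem~\ref{the}. Your faithfulness argument, deducing $f=0$ from $\mathbb M(f)\circ\iota_D=\iota_{D'}\circ f=0$ and the injectivity of $\iota_{D'}$, is a clean variant of the paper's, which instead argues that $f$ (being automatically continuous) must vanish on all of $M$ once it vanishes on the dense submodule $M_\cap^{bd}\supseteq\Lambda(N_0)\iota_D(D)$; both are valid and equivalent in spirit.
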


When $G=GL(2, \mathbb Q_p)$,    the sheaves  $\mathbb Y_s(D)$ are all equal to the $G$-equivariant sheaf on $G/P\simeq \mathbb P^1(\mathbb Q_p)$  of global sections $D\boxtimes \mathbb P^1$ constructed by Colmez. When the root system of $G$ is irreducible of rank $>1$, we check that $\cup_{s\in T_{++}}M_s^{bd} $  is never equal to $M$.

 \subsection{Structure of the paper}
In  section   2, we consider a general commutative (unital) ring $A$ and   $A$-modules $M$ with two endomorphisms $\psi, \varphi$ such that $\psi \circ \varphi=\id$.  We show that the induction functor $\Ind^{\mathbb Z}_{\mathbb N, \psi}= \varprojlim_\psi  $ is exact and   that the module $A[\mathbb Z]\otimes_{\mathbb N, \varphi} M$ is isomorphic to the subrepresentation of  $\Ind^{\mathbb Z}_{\mathbb N, \psi}(M)=\varprojlim_\psi M$ generated by the elements of the form $(\varphi^k(m))_{k\in \mathbb N}$.

In  section 3, we consider a general monoid  $P_+=N_0\rtimes L_+$ contained in a  group $P$ with the property that $tN_0t^{-1}\subset N_0$ has a finite index for all $t\in L_+$ and we study the \'etale $A[P_+]$-modules $M$. We show that the inverse monoid $P_-=L_- N_0 \subset P$  acts  on $M$,  the inverse of $t\in L_+ $ acting by the left inverse $\psi_t$ of  the action $\varphi_t$  of $t$ with kernel $\sum u \varphi_t(M)$ for $u\in N_0$ not in $tN_0t^{-1}$. We add the hypothesis   that $L_+$ contains a central element $s$ such that the sequence $( s^kN_0 s^{-k})_{k\in \mathbb N}$ is decreasing of  trivial intersection, of union a group $N$, and that  $P=N\rtimes L$ is the  semi-direct product of $N$ and of $L=\cup_{k\in \mathbb N}L_- s^{k}$. An $A[P_+]$-submodule of  $M$ is \'etale if and only if it is stable by $\psi_s$.
The  representation $M^P$ of  $P$ induced by  $M|_{P_-}$,  restricted to $N$ is the representation induced from $M|_{N_0}$, and restricted to $s^{\mathbb Z}$ is the representation $\varprojlim_{\psi_s} M$ induced from $M|_{s^{-\mathbb N}}$. The natural  $A[P_+]$-embedding $M\to M^P$ generates a subrepresentation
 $M^P_c$ of $M^P$ isomorphic to $A[P]\otimes_{A[P_+]}M.$ When $N$ is a locally profinite group and  $N_0$ an open compact subgroup, we show the existence and the uniqueness of a unit-preserving $A[P_+]$-map $\res: C^{\infty}(N_0,A)\to \End_A(M)$, we  extend it  uniquely to an $A[P]$-map $\Res:C^{\infty}(N,A)\to \End_A(M^P)$,   and we prove our first theorem: the equivalence between the $P$-equivariant sheaves of $A$-modules on $N$ and the \'etale $A[P_+]$-modules on $N_0$.

In   section 4, we suppose that $A$ is a linearly topological commutative ring, that $P$ is a locally profinite group  and that $M$ is a  complete linearly topological $A$-module with a continuous \'etale action of $P_+$ such that the action of $P_-$ is also continuous, or equivalently  $\psi_s$ is continuous (we say that $M$ is a topologically \'etale module). Then $M^P$ is complete for  the compact-open topology  and  $\Res$ is a measure  on $N$ with values in the algebra $E^{cont}$ of continuous endomorphisms of $M^P$. We show  that  $E^{cont}$  is a complete topological ring for the topology defined by the ideals $E^{cont}_{\mathcal L}$ of  endomorphisms with image in an open $A$-submodule $\mathcal L \subset M^P$, and  that  any continuous map  $N\to E^{cont}$ can be integrated with respect to  $\Res$.

In  section 5, we introduce a locally profinite group $G$ containing $P$ as a closed subgroup with compact quotient set $G/P$, such that the double cosets $P\backslash G/P$  admit a finite system $W$ of representatives normalizing  $L$,  of  image  in $N_G(L)/L$  equal to a group, and the image $\mathcal C=Pw_0P/P$ in $G/P$ of a double coset (with $w_0\in W$) is   open dense  and  homeomorphic to $N$ by the map $ n\mapsto nw_0 P/P $. We show that any compact open subset of $G/P$ is a finite disjoint union of $g^{-1}Uw_0P/P$ for $g\in G$ and $U\subset N$ a compact open subgroup.
We prove  the basic result that the  $G$-equivariant sheaves of $A$-modules on $G/P$ identify with modules over the skew group ring $  C^\infty (G/P, A)\# G$, or with    non-degenerate  modules over a (non unital) subring $\mathcal A$, and that    an \'etale $A[P_+]$-module $M$ endowed with endomorphisms $\mathcal H_g\in \End_A(M)$, for $g\in N_0 \overline P N_0$,  satisfying certain relations H1, H2, H3, gives rise to a non-degenerate $\mathcal A$-module. For $g\in G$ we denote $N_g \subset N$  such that $N_gw_0P/P=g^{-1}\mathcal C\cap \mathcal C$.
We study the map $\alpha$ from the set of $(g,u)$ with $g\in G$ and $u\in N_g$    to $P$ defined by  $guw_0N= \alpha(g,u)u w_0 N$. In particular, we show the cocycle relation $\alpha(gh,u)=\alpha(g,h.u)\alpha(h,u)$ when each term makes sense.   When $M$ is compact, then $M^P$ is compact and the action of $P$ on $M^P$ induces a continuous map $P\to E^{cont}$.
We show that the $A$-linear map $\mathcal A \to E^{cont}$ given the integrals of $\alpha(g,.) f(.)$ with respect to  $\Res$,  for $f\in C_c^\infty(N_g,A)$,   is multiplicative. As explained above, we obtain  a  $G$-equivariant sheaf of $A$-modules on $G/P$ with sections $M$ on $\mathcal C_0$.

 In   section  6,  we do not suppose that $M$ is compact and  we introduce the notion of $(s,\res, \mathfrak C)$-integrability
 for a special family $\mathfrak C$ of compact subsets of $M$.
We give  an $(s,\res, \mathfrak C)$-integrability criterion for
 the function $\alpha_{g,0}(u)=\Res(1_{N_0})\alpha(gh,u)\Res(1_{N_0})$ on the open subset $U_g \subset N_0$ such that $U_gw_0P/P=g^{-1}\mathcal C_0\cap \mathcal C_0$,  for $g\in N_0 w_0Pw_0N_0$,    a criterion which ensures that the integrals $\mathcal H_g$ of  $\alpha_{g,0}$ satisfy the relations H1, H2, H3, as well as a method of reduction to the case where $M$ is killed by a power of $p$. When these criterions are satisfied, as explained in section 5, one gets a $G$-equivariant sheaf of $A$-modules on $G/P$ with sections $M$ on $\mathcal C_0$.

The section 7 concerns  classical $(\varphi, \Gamma)$-modules  over $\mathcal O_{\mathcal E}$, seen as \'etale $o[P_+^{(2)}]$-module $D$, where the upper exponent indicates that $P_+^{(2)}$ is the upper triangular monoid $P_+$ of $GL(2,\mathbb Q_p)$. Using the properties of treillis we apply the method explained in section 6   to this case and  we obtain the sheaf constructed by Colmez.

In  section 8 we  consider the  case where $N_0$ is a compact $p$-adic Lie group  endowed with a continuous non-trivial  homomorphism $\ell :N_0\to N_0^{(2)}$ with a section $\iota$, that $L_*\subset L$ is a monoid acting by conjugation on $N_0$ and  $\iota(N_0^{(2)})$, that $\ell$ extends to a continuous homomorphism $\ell :P_*=N_0\rtimes L_*\to  P_+^{(2)}$ sending $L_*$ to $L_+^{(2)}$ and that $\iota$ is $L_*$ equivariant.  We consider the abelian categories of \'etale $L_*$-modules finitely generated over the microlocalized ring $\Lambda_\ell (N_0)$ resp. over $\mathcal O_{\mathcal E}$ (with the action of $L_*$ induced by $\ell$).  Between these categories we have the   base change functors given by  the natural $L_*$-equivariant algebra homomorphisms   $\ell:\Lambda_\ell (N_0)\to\mathcal O_{\mathcal E}$ and  $\iota:\mathcal O_{\mathcal E}\to \Lambda_\ell (N_0)$.  We show  our second theorem: the base change functors    are quasi-inverse equivalences of categories.
When $L_*$ contains an open topologically finitely generated pro-$p$-subgroup, we show that an \'etale $L_*$-module over $\mathcal O_{\mathcal E}$ is   automatically  topologically \'etale for the weak topology; the result  extends to
\'etale $L_*$-modules over $\Lambda_\ell (N_0)$, with the help of this last theorem.

 In the section 9,  we  suppose that  $\ell:P\to P^{(2)}(\mathbb Q_p)$  is a continuous homomorphism with $\ell (L)\subset L^{(2)}(\mathbb Q_p)$,  and that  $\iota:N^{(2)}(\mathbb Q_p)\to N$ is a $L$-equivariant section of $\ell|_N$ (as  $L$ acts on $N^{(2)}(\mathbb Q_p)$ via $\ell$) sending $\ell(N_0)$ in $N_0$. The assumptions of section 8 are satisfied for $L_*=L_+$.  Given an \'etale $L_+$-module $M$ over $\Lambda_\ell (N_0)$, we  exhibit  a special family $\mathfrak C _{s}$ of  compact subsets in $M$ which satisfies the criterions of section 6  with $M(\mathfrak C_{s})$ equal to a dense $\Lambda (N_0)[L_+]$-submodule  $M_s^{bd} \subset M$. We obtain our third theorem: there exists a  faithful functor from the \'etale $L_+$-modules over $\Lambda_\ell (N_0)$ to the $G$-equivariant sheaves  on $G/P$ sending $M$ to the sheaf with sections $M_s^{bd}$ on $\mathcal C_0$.

In   section 10, we check  that   our theory  applies to the group $G(\mathbb Q_p)$ of rational points of a split reductive group of $\mathbb Q_p$,  to a Borel subgroup $P(\mathbb Q_p)$ of maximal split torus $T(\mathbb Q_p)=L$  and to a natural homomorphism $\ell_{\alpha} :P(\mathbb Q_p)\to P^{(2)}(\mathbb Q_p)$ associated to a simple root $\alpha$.  We obtain our main theorem: there are compatible faithful functors from the \'etale $T(\mathbb Q_p)_+$-modules $D$ over $ \mathcal O_{\mathcal E}$  (where $T(\mathbb Q_p)_+$ acts via $\alpha$) to the $G(\mathbb Q_p)$-equivariant sheaves  on $G(\mathbb Q_p)/P(\mathbb Q_p)$ sheaves  with sections $\mathbb M(D)_s^{bd}$ on $\mathcal C_0$, for all strictly dominant $s\in T(\mathbb Q_p)$. When  the root system of $G$ is irreducible of rank $>1$, we show that  $\cup_s M_s^{bd} \neq M=\mathbb M(D) $.

 \bigskip {\bf Acknowledgements:} A part of the work on this article was done when the first and  third authors visited the  Institut Math\'ematique de Jussieu at the Universities of Paris 6 and Paris 7, and the second author visited the Mathematische Institut at the Universit\"at M\"unster.  We express our gratitude to these institutions for their hospitality.
We thank heartily  C.I.R.M.,  I.A.S., the Fields Institute,  as well as Durham, Cordoba and Caen Universities, for their invitations giving us the opportunity to present this work.

\section{Induction  $\Ind_{H}^{G}$ for monoids $H \subset G$}
 A monoid is supposed to have   a unit.

\subsection{Definition and  remarks} \label{Induction} Let $A$ be a commutative  ring,  let $G$ be a monoid and let $H $ be a submonoid of $G$.  We denote by $A[G]$  the monoid  $A$-algebra of $G$ and by  ${\mathfrak M}_{A} (G)$    the category of  left $A[G]$-modules, which has no reason to be equivalent to the category of right $A[G]$-modules. One can construct $A[G]$-modules starting from $A[H]$-modules  in two natural ways, by taking the two adjoints of the restriction  functor
$\Res_{H}^{G}: {\mathfrak M}_{A} (G) \to {\mathfrak M}_{A} (H)$ from $G$ to $H$. For  $M \in  {\mathfrak M} _{A}(H)$  and $V \in  {\mathfrak M}_{A} (G )$ we have the isomorphism
$$
\Hom _{A[G]}( A[G]\otimes_{A[H]}M , V)  \xrightarrow {\ \simeq  \ }  \Hom _{A[H]}(M,V)
$$
and the isomorphism
$$
\Hom _{A[G]}(V,\Hom_{A[H]}(A[G],M))  \xrightarrow {\ \simeq  \ }  \Hom _{A[H]}(V,M) \  .
$$
For monoid algebras, restriction of homomorphisms induces the identification
$$\Hom_{A[H]}(A[G],M)  =  \Ind_{H}^{G}(M)$$
where $\Ind_{H}^{G}(M)$ is
 formed by the functions
$$f:G\to M \  {\rm such \ that }\  f(hg)=hf(g) \  {\rm for \ any }\  \  h\in H, g\in G   \ .$$  The group $G$ acts by right translations, $gf(x)=f(xg) $ for $g,x\in G$,
 and    the isomorphism
 pairs a morphism $\phi$ of the left side   and the  morphism $\Phi$ of   the right side    such that
$$ \phi(v) (g)= \Phi(gv)$$
for  $v$ in $V$ and  $g$ in $G$  (\cite{Vig} I.5.7). It is well known that
 the left and right adjoint functors of  $\Res^{G}_{H}$ are transitive  (for monoids $H\subset K\subset G$), the left adjoint is right exact, the right adjoint is left exact.

\bigskip We observe important differences between  monoids and  groups:

1)  The binary relation $g \sim g'$  if  $g\in   Hg'$ is not    symmetric, there is no ``quotient space''  $H\backslash G$, no notion of
function with finite support modulo $H$ in $ \Ind_{H}^{G}(M)$.


2) When $hM=0$ for some   $h\in H$ such that $hG=G$, then $\Ind_{H}^{G}(M) = 0$.  Indeed $f(hg)=h f(g)$ implies $f(hg)=0$ for any $g\in G$.

3) When $G$ is a group generated, as a monoid, by $H$ and the inverse monoid $H^{-1}  := \{ h\in G \  | \ h^{-1}\in H\}$, and when $M$ in an $A[H]$-module such that the action of any element $h\in H$ on $M$ is invertible, then
$f(g)=g f(1)$ for all $g\in G$ and $f\in \Ind_{H}^{G}(M)$.  This can be seen by induction on the minimal number $m\in \mathbb N$ such that $g=g_{1}\ldots g_{m}$ with   $g_{i} \in H \cup H^{-1}$. Then $g_{1}\in H$ implies $f(g)=g_{1}f(g_{2}\ldots g_{m}) $,
and $g_{1}\in H^{-1}$ implies $f(g_{2}\ldots g_{m}) = f(g_{1}^{-1}g_{1} g_{2}\ldots g_{m})=  g_{1}^{-1}f(g)$.
The representation  $\Ind_{H}^{G}(M)$ is isomorphic by $f\mapsto f(1)$ to  the natural representation of $G$ on $M$.


\subsection{From  $\mathbb N$ to $\mathbb Z$}\label{limpro}

An   $A$-module with an endomorphism $\varphi$ is equivalent to an $A[\mathbb N]$-module, $\varphi$ being the action of   $1 \in \mathbb N$, and  an $A$-module with a bijective endomorphism $\varphi$ is equivalent to an $A[\mathbb Z]$-module.
When  $\varphi$  is bijective,
 $A[\mathbb Z] \otimes_{A[\mathbb N]}M$ and $\Ind_{\mathbb N}^{\mathbb Z}(M) $  are isomorphic  to $M$.

 In general, $A[\mathbb Z] \otimes_{A[\mathbb N]}M$ is the limit of an inductive system  and $\Ind_{\mathbb N}^{\mathbb Z}(M) $ is the limit of  a projective system. The first one is interesting when $\varphi$ is injective, the second one when $\varphi$ is surjective.

   For $r\in \mathbb N$ let $M_{r} =M$. The general element of $M_{r}$ is written  $x_{r} $ with $x\in M$.  Let $\varinjlim \ (M,\varphi)$  be the quotient  of $\sqcup_{r\in \mathbb N}M_{r}$  by the equivalence relation  generated by  $ \varphi(x)_{r+1}  \equiv x_{r}$, with the isomorphism
   induced by the maps $x_{r}\to \varphi(x)_{r} \ : \ M_{r} \to M_{r}$  of inverse  induced by the maps $x_{r}\to x_{r+1}\ : \ M_{r}\to M_{r+1}$.
Let $x\mapsto [x]:\mathbb Z \to A[\mathbb Z] $ be the canonical map. The maps $x_{r}  \to [-r]\otimes x\ : \ M _{r} \to A[\mathbb Z] \otimes_{A[\mathbb N]}M$  for  $r\in \mathbb N$  induce an isomorphism   of $A[\mathbb Z]$-modules $$\varinjlim\  M\quad  \to  \quad A[\mathbb Z] \otimes_{A[\mathbb N]}M \quad .
$$  Let   \begin{equation}
  \varprojlim \ M:=  \{x=(x_m)_{m \in \mathbb N} \in \prod_{m \in \mathbb{N}} M :
     \varphi(x_{m+1}) = x_{m}\ \text{for any } \ m
  \in \mathbb{N}\}  \quad.
   \end{equation}
 with the isomorphism
   $x \mapsto (\varphi(x_{0}), x_{0}, x_{1},  \ldots ) = (\varphi(x_{0}),\varphi(x_{1}), \varphi(x_{2})  \ldots ) $  of inverse    $  x\mapsto ( x_{1}, x_{2}, \ldots ) \ . $
 The map $ f\mapsto  (f(-m))_{m \in \mathbb{N}}$    is an isomorphism  of $A[\mathbb Z]$-modules
  $$\Ind_{\mathbb N}^{\mathbb Z}(M) \quad \to \quad   \varprojlim \ M \quad .$$
The submodules of $M$
$$M^{\varphi^{\infty}=0}\ :=\ \cup_{k\in \mathbb N}M^{\varphi^{k}=0} \quad , \quad  \varphi^{\infty}(M) \ :=  \ \cap_{n\in \mathbb N}\  \varphi^{n}(M)$$
are stable by $\varphi$. The inductive limit sees only the quotient  $M/M^{\varphi^{\infty}=0}$ and the projective limit sees only the submodule  $\varphi^{\infty}(M)$,
$$\varinjlim\  M \ = \ \varinjlim\  (M / M^{\varphi^{\infty}=0}) \quad, \quad   \varprojlim \ M\ =\  \varprojlim \ (\varphi^{\infty}(M)) \quad .$$

 \begin{lemma} \label{2.2}  Let   $0\to  M_{1} \to M_{2}\to M_{3} \to 0 $ be an exact sequence
 of $A$-modules with an endomorphism $\varphi$.

 a) The sequence $$0 \to  \varinjlim \ (M_{1})\to   \varinjlim \ (M_{2})\to  \varinjlim \  (M_{3})\to 0$$
 is exact.

b)  When $\varphi$ is surjective on  $M_{1}$,  the  sequence
$$0 \to  \varprojlim \ (M_{1})\to   \varprojlim \ (M_{2})\to  \varprojlim \  (M_{3})\to 0$$
 is exact.
 \end{lemma}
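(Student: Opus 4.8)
The statement is a standard exactness assertion about the functors $\varinjlim$ and $\varprojlim$ attached to an $A$-module equipped with an endomorphism $\varphi$, applied to a short exact sequence $0\to M_1\to M_2\to M_3\to 0$ compatible with $\varphi$. I would prove parts (a) and (b) separately, in each case using the explicit descriptions of $\varinjlim$ and $\varprojlim$ given just above in the excerpt (the colimit of the system $M\xrightarrow{\varphi}M\xrightarrow{\varphi}M\to\cdots$, and the limit of the same tower).

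For part (a): filtered colimits of modules are exact. Concretely, $\varinjlim(M,\varphi)$ is the colimit of the $\mathbb N$-indexed diagram with all objects $M$ and all transition maps $\varphi$. A short exact sequence of such diagrams (i.e. a short exact sequence $0\to M_1\to M_2\to M_3\to 0$ in which every square with the $\varphi$'s commutes, which holds by hypothesis) yields a short exact sequence of colimits because $\mathbb N$ is a filtered (indeed directed) index category and filtered colimits are exact in the category of $A$-modules. If one prefers a hands-on argument: right-exactness of $\varinjlim$ is formal (it is a left adjoint, being $A[\mathbb Z]\otimes_{A[\mathbb N]}-$ by the isomorphism recorded above); for left-exactness one checks directly that if $x\in M_1$ maps to $0$ in $\varinjlim M_2$, then by the description of the equivalence relation there is $k$ with $\varphi^k(x)=0$ already in $M_2$, hence in $M_1$, so $x_r\equiv 0$ in $\varinjlim M_1$. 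No hypothesis on $\varphi$ is needed here.

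For part (b): the issue is the usual non-exactness of inverse limits, controlled by $\varprojlim^1$. The tower $(M_1,\varphi)$ satisfies the Mittag-Leffler condition precisely because $\varphi$ is assumed surjective on $M_1$: all transition maps are surjective, hence the images stabilize, so $\varprojlim^1(M_1,\varphi)=0$. Then the six-term exact sequence associated to the short exact sequence of towers,
\begin{equation*}
0\to\varprojlim M_1\to\varprojlim M_2\to\varprojlim M_3\to{\varprojlim}^1 M_1\to\cdots,
\end{equation*}
collapses to the desired short exact sequence. Alternatively, avoiding $\varprojlim^1$ entirely, I would argue by hand: injectivity of $\varprojlim M_1\to\varprojlim M_2$ and exactness in the middle are immediate from the componentwise description; for surjectivity of $\varprojlim M_2\to\varprojlim M_3$, given $(z_m)\in\varprojlim M_3$ one lifts each $z_m$ to some $y_m\in M_2$, then corrects the discrepancies $\varphi(y_{m+1})-y_m\in M_1$ inductively using surjectivity of $\varphi$ on $M_1$ to replace $y_{m+1}$ by $y_{m+1}+ (\text{a }\varphi\text{-preimage of the discrepancy})$, building a compatible lift $(y_m)\in\varprojlim M_2$. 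This inductive correction is the only place the hypothesis on $\varphi|_{M_1}$ is used, and it is the one genuinely nontrivial point of the proof.

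I expect the main obstacle to be purely bookkeeping rather than conceptual: organizing the inductive lifting in part (b) so that the corrections made at stage $m+1$ do not disturb the compatibility already arranged at stages $\le m$. This is handled by noting that changing $y_{m+1}$ by an element of $M_1$ does not affect $y_m$ and only shifts the next discrepancy $\varphi(y_{m+2})-y_{m+1}$ by a controlled amount, which can again be absorbed. Everything else — right-exactness of $\varinjlim$, the identification of the relevant limits and colimits, and the reduction to $\varphi^\infty(M)$ resp. $M/M^{\varphi^\infty=0}$ noted above — is formal and can be cited from the discussion preceding the lemma.
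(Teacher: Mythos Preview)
Your proposal is correct, and your hands-on arguments for both parts coincide with the paper's proof: for (a) the paper checks injectivity of $\varinjlim M_1\to\varinjlim M_2$ exactly as you describe (an element dying in $\varinjlim M_2$ has $\varphi^k$ vanishing in $M_2$, hence in $M_1$), and for (b) the paper does the single-step inductive lift, correcting an arbitrary lift $z'$ of $x$ by a $\varphi$-preimage in $M_1$ of the discrepancy. Your additional abstract framings (exactness of filtered colimits, Mittag-Leffler and the $\varprojlim^1$ sequence) are valid alternatives that the paper does not invoke, but they lead to the same conclusion with no extra input.
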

 \begin{proof} a)   It suffices to show that the map  $\varinjlim \ (M_{1})\to   \varinjlim \ (M_{2})$ is injective.
 Let $x$ in the kernel and let $y_{r}\in M_{r}$ a representative of $x$.  Let $z$ be the image of $y$ in $M_{2}$. Then $z_{r}$ is a representative of the image of $x$ in $\varinjlim \ (M_{2})$.
  There exists  $k\in \mathbb N$ such that  $\varphi^{k}(z)=0$. As
 the map $M_{1}\to M_{2}$ is injective and commutes with $\varphi$ we have
  $\varphi^{k}(y)=0$.  Hence $x=0$.

 b) It suffices to show that  for   $x \in M_{3}, y \in M_{2} $  of image $\varphi(x)$, there exists $z\in M_{2}$ of image $x $ such that $\varphi(z)=y$.
 Take any $z'\in M_{2}$ of image $x$ and $\varphi(z')=y'$. The difference
 $y'-y$ belongs to the image of $M_{1}$ and $\varphi$ being surjective on $M_{1}$ there exists $t\in M_{2}$ in the image of $M_{1}$ such that $\varphi(t)=y-y'$. Take $z=z'+t$.
   \end{proof}

 \subsection{$(\varphi, \psi)$-modules}
 Let   $M$ be  an $A$-module with two  endomorphisms $\psi, \varphi$   such that $\psi \circ \varphi =1 $.
    Then   $\psi$ is surjective,    $\varphi$ is injective,
the endomorphism   $\varphi \circ \psi $ is a projector of $M$ giving the direct  decomposition
 \begin{equation}\label{decom}  \quad  M= \varphi(M) \oplus   M^{\psi=0} \quad , \quad m \ = \ (\varphi \circ \psi)(m) \ + \ m^{\psi =0}
 \end{equation}
for  $m\in M$  and   $m^{\psi =0} \in M^{\psi=0}$  the kernel of $\psi$.
We consider the representation of $\mathbb Z$   induced by $(M,\psi)$ as in (\ref{limpro}),
$$\Ind_{\mathbb N, \psi} ^{\mathbb Z} (M)\  \simeq \   \underset{\psi} \varprojlim \ (M) \ . $$
On the induced representation $\psi$ is an isomorphism and we introduce $\varphi:=\psi^{-1}$.
As $\psi$ is surjective on $M$, the   map $\ev_{0}\ : \ \Ind_{\mathbb N, \psi} ^{\mathbb Z} (M) \to M$, corresponding to the map
 $$  \underset{\psi} \varprojlim \ (M) \ \to \ M  , \quad  (x_{m})_{m\in \mathbb N} \mapsto  x_{0} \quad
  $$
is surjective.  A splitting is  the map
  $\sigma_{0}\ : \ M\to \Ind_{\mathbb N, \psi} ^{\mathbb Z} (M)$ corresponding to
\begin{equation} \label{split} M\ \to\  \underset{\psi} \varprojlim \ (M) \ , \quad   x \mapsto  (\varphi^{m}(x))_{m\in \mathbb N} \quad  .
\end{equation}
  Obviously $\ev_{0}$ is $\psi$-equivariant,  $\sigma_{0}$ is $\varphi$-equivariant,
  $\ev_{0} \ \circ \ \sigma_{0}= \id_{M}$,  and
    $$R_{0}\quad :=\quad  \sigma_{0} \circ \ev_{0}   \ \in  \End_{A}(\Ind_{\mathbb N, \psi} ^{\mathbb Z} (M) )  $$
    is an idempotent  of image $\sigma_{0}(M)$.

    \begin{definition} \label{ci} The representation of $\mathbb Z$ compactly induced from $(M,\psi)$ is the subrepresentation $\ind_{\mathbb N, \psi} ^{\mathbb Z} (M) $     of  $\Ind_{\mathbb N, \psi} ^{\mathbb Z} (M) $ generated by the image of  $\sigma_{0}(M )$.
  \end{definition}

We note that,  for any $k\geq 1$,   the endomorphism $\psi^{k}, \varphi^{k}$ satisfy   the same properties than $\psi, \varphi$ because  $\psi^{k}\circ  \varphi^{k} =1 $.
 For any integer $k\geq 0$,   the value at $k$ is  a   surjective  map $\ev_{k}\ : \ \Ind_{\mathbb N, \psi} ^{\mathbb Z} (M) \to M$, corresponding to the map
     \begin{equation}  \underset{\psi} \varprojlim \ (M) \ \to \ M  , \quad   (x_{m})_{m\in \mathbb N} \mapsto  x_{k} \quad    \end{equation}
of splitting   $\sigma_{k}\ : \ M\to \Ind_{\mathbb N, \psi} ^{\mathbb Z} (M)$ corresponding to the map
    \begin{equation}\label{6}  M\ \to\  \underset{\psi} \varprojlim \ (M) \ , \quad   x \mapsto (\psi^{k}(x),\ldots , \psi(x),  x, \varphi(x), \varphi^{2}(x),  \ldots ) \quad. \end{equation}

The following relations are immediate:
  $$\ev_{k}  \ = \  \ev_{0}  \circ \, \varphi^{k} \  = \  \psi \circ \ev_{k+1}   \  = \   \ev_{k+1} \circ \, \psi  \  ,
  $$
    $$\sigma_{k} \ = \ \psi^{k} \circ \sigma_{0}  \  = \ \sigma_{k+1}\circ \varphi \ =  \ \varphi  \circ \sigma_{k+1}\   . $$
We deduce that $\sigma_{k} (M)\subset \sigma_{k+1} (M)$.
      Since $\sigma_k(M)$ is $\varphi$-invariant we have
\begin{equation}\label{deco}
\ind_{\mathbb N, \psi} ^{\mathbb Z} (M) \ =\ \sum _{k\in \mathbb N} \psi^{k} (\sigma_{0}(M))\ =\ \sum _{k\in \mathbb N} \sigma_{k}(M) \  = \  \bigcup_{k\in \mathbb N} \sigma_{k}(M)  \ .
\end{equation} In $  \underset{\psi} \varprojlim \ (M)$  the subspace  of $(x_{m})_{m\in \mathbb N} $ such that  $x_{k+r} =  \varphi^{k}(x_{r})  $ for all $k\in \mathbb N $ and  for  some $r\in \mathbb N $, is equal to $\ind_{\mathbb N, \psi} ^{\mathbb Z} (M) $.
The definition of $\ind_{\mathbb N, \psi} ^{\mathbb Z} (M)$ is functorial. We get a functor  $\ind_{\mathbb N, \psi} ^{\mathbb Z}$ from the category of $A$-modules with two  endomorphisms $\psi, \varphi$   such that $\psi \circ \varphi =1 $ (a morphism  commutes with $\psi$ and with $\varphi$)  to the category of $A[\mathbb Z]$-modules.

\begin{proposition}\label{pexx}
The  map
\begin{align*}
 A[\mathbb Z]\otimes_{A[\mathbb N], \varphi} M & \ \to \  \Hom_{A[\mathbb N], \psi} ( A[\mathbb Z],M)\ = \  \Ind_{\mathbb N, \psi} ^{\mathbb Z} (M) \\
 [k] \otimes m & \ \mapsto \ (\varphi^k \circ \sigma_0)(m)
\end{align*}
induces an isomorphism from the tensor product  $A[\mathbb Z]\otimes_{A[\mathbb N], \varphi} M$ to  the compactly induced representation  $ \ind_{\mathbb N, \psi} ^{\mathbb Z} (M)$  (note that $\psi$  and $\varphi$ appear).
\end{proposition}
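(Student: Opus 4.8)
The plan is to check that the displayed map is well-defined, surjective onto $\ind_{\mathbb N,\psi}^{\mathbb Z}(M)$, and injective. For well-definedness, I would observe that the tensor product $A[\mathbb Z]\otimes_{A[\mathbb N],\varphi}M$ is built from $M$ by adjoining a two-sided inverse to $\varphi$, so a general element can be written $[-k]\otimes m$ with $k\in\mathbb N$, and the relation to respect is $[k+1]\otimes m = [k]\otimes\varphi(m)$. Since $\varphi\circ\sigma_0=\sigma_0\circ\varphi$ (one of the relations established just above the proposition), the assignment $[k]\otimes m\mapsto(\varphi^k\circ\sigma_0)(m)$ respects this relation, hence descends to the tensor product; it is manifestly $A$-linear and $A[\mathbb Z]$-equivariant because multiplication by $[1]$ on the left corresponds to applying $\varphi=\psi^{-1}$ on $\Ind_{\mathbb N,\psi}^{\mathbb Z}(M)$. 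I would also recall from the general adjunction discussion in \S\ref{Induction} that the map $A[\mathbb Z]\otimes_{A[\mathbb N],\varphi}M\to\Hom_{A[\mathbb N],\psi}(A[\mathbb Z],M)$ is the canonical one coming from the counit, so only its image needs to be identified.

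For surjectivity onto $\ind_{\mathbb N,\psi}^{\mathbb Z}(M)$, I would use the description \eqref{deco}: the compactly induced subrepresentation is $\bigcup_{k\in\mathbb N}\sigma_k(M)=\bigcup_{k\in\mathbb N}\psi^k(\sigma_0(M))$. Since $\sigma_k=\psi^k\circ\sigma_0=\varphi^{-k}\circ\sigma_0$ (using that $\psi=\varphi^{-1}$ on the induced module), every element of $\sigma_k(M)$ is of the form $(\varphi^{-k}\circ\sigma_0)(m)$, which is the image of $[-k]\otimes m$. Conversely the image of the map is the $A[\mathbb Z]$-submodule generated by $\sigma_0(M)$, which is exactly $\ind_{\mathbb N,\psi}^{\mathbb Z}(M)$ by Definition \ref{ci}. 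So the image equals the target.

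For injectivity, I would argue as follows. An element of $A[\mathbb Z]\otimes_{A[\mathbb N],\varphi}M$ can be written as $[-k]\otimes m$ for $k$ large enough (since increasing $k$ is harmless: $[-k]\otimes m = [-(k+1)]\otimes\varphi(m)$, so any finite sum can be brought to a common denominator). Suppose $(\varphi^{-k}\circ\sigma_0)(m)=0$ in $\varprojlim_\psi(M)$; applying $\ev_k$ and using $\ev_k\circ\varphi^{-k}=\ev_k\circ\psi^k=\ev_0$ together with $\ev_0\circ\sigma_0=\id_M$ gives $m=0$, hence $[-k]\otimes m=0$. The slightly delicate point — and the step I expect to be the main obstacle — is justifying that every element of the tensor product really has the normal form $[-k]\otimes m$: this requires knowing that in $A[\mathbb Z]\otimes_{A[\mathbb N],\varphi}M$ one has $[j]\otimes m=[j-1]\otimes\varphi(m)$ for all $j$, so that a sum $\sum_i[j_i]\otimes m_i$ collapses after multiplying through by a suitable $[-k]$; this is precisely the inductive-limit description $A[\mathbb Z]\otimes_{A[\mathbb N],\varphi}M\cong\varinjlim(M,\varphi)$ from \S\ref{limpro}, which identifies the tensor product with the colimit of the system $M\xrightarrow{\varphi}M\xrightarrow{\varphi}\cdots$. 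Once that identification is in hand, injectivity and the normal form are immediate, and the proof is complete. I would close by noting that, under the two colimit/limit identifications, the map of the proposition is simply the natural map $\varinjlim(M,\varphi)\to\varprojlim_\psi(M)$ sending the class of $m$ at stage $k$ to $(\varphi^{\max(0,j-k)}(m))_{j}$ adjusted in the obvious way, which makes all the verifications transparent.
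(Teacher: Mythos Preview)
Your argument is correct and complete, but it takes a different route from the paper's.

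The paper proves the proposition by exhibiting matching direct sum decompositions on both sides. Using the relation $\sigma_k(m)=\sigma_{k-1}(\psi(m))+\sigma_k(m^{\psi=0})$ and an inductive argument, it shows
\[
\ind_{\mathbb N,\psi}^{\mathbb Z}(M)=\sigma_0(M)\oplus\bigoplus_{k\geq 1}\sigma_k(M^{\psi=0}),
\]
while on the tensor side the decomposition $M=\varphi(M)\oplus M^{\psi=0}$ from \eqref{decom} yields
\[
A[\mathbb Z]\otimes_{A[\mathbb N],\varphi}M=([0]\otimes M)\oplus\bigoplus_{k\geq 1}([-k]\otimes M^{\psi=0}),
\]
and the map visibly identifies the summands. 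Your approach instead bypasses these decompositions entirely: you use the inductive-limit description from \S\ref{limpro} to put every tensor in the normal form $[-k]\otimes m$, read off surjectivity directly from \eqref{deco} and $\sigma_k=\varphi^{-k}\circ\sigma_0$, and obtain injectivity by the one-line computation $\ev_k\circ\varphi^{-k}\circ\sigma_0=\ev_0\circ\sigma_0=\id_M$. This is shorter and more elementary; the paper's decomposition, on the other hand, gives extra structural information (an explicit ``basis'' of the compactly induced module in terms of $M^{\psi=0}$) that you do not recover, though that information is not needed for the statement itself.
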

\begin{proof}
  From (\ref{decom}) and the relations between the $\sigma_{k}$ we have  for $m\in M, k\in \mathbb N, k\geq 1$,
 $$
 \sigma_{k}(m)=\sigma_{k-1}(\psi(m))+\sigma_{k}(m^{\psi =0}) \ .
 $$
  By induction  $\sum_{k\in \mathbb N}\sigma_{k}(M)=\sigma_{0}(M)+\sum_{k\geq 1}\sigma_{k}(M^{\psi=0})$. Using (\ref{6}) one checks that  the sum is direct, hence by (\ref{deco}),
$$
\ind_{\mathbb N, \psi} ^{\mathbb Z} (M)\ = \ \sigma_{0}(M) \oplus (\oplus _{k\geq 1}\sigma_{k}(M^{\psi=0}) ) \ .
$$
On the other hand, one deduces from (\ref{decom})  that
$$
A[\mathbb Z]\otimes_{A[\mathbb N], \varphi} M \quad = \quad ([0]\otimes M) \oplus (\oplus_{k\geq 1} ([-k]\otimes M^{\psi = 0}) ) \ .
$$

  \end{proof}

With the lemma \ref{2.2}   we deduce:

\begin{corollary}\label{exx}  The functor  $\ind_{\mathbb N, \psi} ^{\mathbb Z}$   is exact.
\end{corollary}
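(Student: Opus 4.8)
The plan is to identify the functor $\ind_{\mathbb N,\psi}^{\mathbb Z}$ with the inductive-limit functor $\varinjlim(-,\varphi)$ of subsection~\ref{limpro}, and then quote Lemma~\ref{2.2}(a). By Proposition~\ref{pexx} there is an isomorphism $\ind_{\mathbb N,\psi}^{\mathbb Z}(M)\simeq A[\mathbb Z]\otimes_{A[\mathbb N],\varphi}M$, and by the discussion of subsection~\ref{limpro} there is an isomorphism $A[\mathbb Z]\otimes_{A[\mathbb N],\varphi}M\simeq\varinjlim(M,\varphi)$ (here $M$ carries the $A[\mathbb N]$-module structure in which $1\in\mathbb N$ acts by $\varphi$). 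Composing, one gets a natural isomorphism $\ind_{\mathbb N,\psi}^{\mathbb Z}(M)\simeq\varinjlim(M,\varphi)$; I would spend one line checking naturality, which is immediate because the maps in both isomorphisms are written purely in terms of $\varphi$ and $\sigma_0$, and every morphism in the source category (being required to commute with $\psi$ and with $\varphi$) commutes with $\sigma_0$.

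Next I would take a short exact sequence $0\to M_1\to M_2\to M_3\to 0$ in the category of $A$-modules equipped with two endomorphisms $\psi,\varphi$ satisfying $\psi\circ\varphi=1$. Forgetting $\psi$ turns it into a short exact sequence of $A$-modules equipped with the single endomorphism $\varphi$ (exactness is tested on the underlying $A$-modules, and all maps commute with $\varphi$), so Lemma~\ref{2.2}(a) — which carries no hypothesis beyond exactness of the given sequence — applies and shows that $0\to\varinjlim(M_1,\varphi)\to\varinjlim(M_2,\varphi)\to\varinjlim(M_3,\varphi)\to 0$ is exact. Transporting this through the natural isomorphism of the previous paragraph yields exactness of $0\to\ind_{\mathbb N,\psi}^{\mathbb Z}(M_1)\to\ind_{\mathbb N,\psi}^{\mathbb Z}(M_2)\to\ind_{\mathbb N,\psi}^{\mathbb Z}(M_3)\to 0$. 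Since $\ind_{\mathbb N,\psi}^{\mathbb Z}$ is visibly additive, preservation of short exact sequences is the same as exactness, and we are done.

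I do not expect a genuine obstacle here: the argument is a two-line deduction once Proposition~\ref{pexx} and Lemma~\ref{2.2}(a) are in place. The only point deserving a word of care is the naturality of the identification $\ind_{\mathbb N,\psi}^{\mathbb Z}\simeq\varinjlim(-,\varphi)$, so that the exact sequence produced by Lemma~\ref{2.2}(a) is indeed the one obtained by applying the functor $\ind_{\mathbb N,\psi}^{\mathbb Z}$. (Alternatively one can bypass Lemma~\ref{2.2} altogether, since $A[\mathbb Z]=A[t,t^{-1}]$ is a localisation of $A[\mathbb N]=A[t]$, hence flat, so $A[\mathbb Z]\otimes_{A[\mathbb N],\varphi}(-)$ is exact; but the route through Lemma~\ref{2.2}(a) is the one the text points to.)
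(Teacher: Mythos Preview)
Your proposal is correct and follows exactly the route the paper indicates: the text preceding the corollary says ``With the lemma \ref{2.2} we deduce,'' and your argument---identify $\ind_{\mathbb N,\psi}^{\mathbb Z}$ with $A[\mathbb Z]\otimes_{A[\mathbb N],\varphi}(-)\simeq\varinjlim(-,\varphi)$ via Proposition~\ref{pexx} and subsection~\ref{limpro}, then apply Lemma~\ref{2.2}(a)---is precisely the intended deduction. Your parenthetical alternative via flatness of the localisation $A[\mathbb N]\to A[\mathbb Z]$ is also valid and perhaps even cleaner, but, as you note, it is not the argument the paper points to.
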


 We have two kinds of idempotents   in $\End_{A}( \Ind_{\mathbb N, \psi} ^{\mathbb Z} (M))$, for $k\in \mathbb N$, defined by
 \begin{equation}\label{2.9}
 R_{k}\ :=\    \sigma_{0} \circ  \varphi ^{k} \circ \psi^{k}\circ  \ev_{0} \quad, \quad  R_{-k}\  :=\  \psi^{k} \circ R_{0}\circ \varphi^{k} = \sigma_{k} \circ \ev_{k} \quad .
   \end{equation}
 The first ones  are the images of   the idempotents  $r_{k}:=\varphi ^{k} \circ \psi^{k} \in \End_{A}(M)$ via the ring homomorphism
 \begin{equation}\label{evo}  \End_{A}(M) \ \to \  \End_{A} \Ind_{\mathbb N, \psi} ^{\mathbb Z} (M) \quad, \quad  f\mapsto   \sigma_{0} \circ  f\circ  \ev_{0} \quad .
  \end{equation}
 The second ones give an isomorphism from $ \Ind_{\mathbb N, \psi} ^{\mathbb Z} (M)$ to the limit of the projective system $(\sigma_{k}(M), R_{-k}:\sigma_{k+1}(M)\to \sigma_{k}(M))$.
    \begin{lemma} \label{surj} The map
    $f\mapsto (R_{-k}(f))_{k\in \mathbb N}$ is an isomorphism from $ \Ind_{\mathbb N, \psi} ^{\mathbb Z} (M)$ to
    $$\underset{R_{-k}} \varprojlim (\sigma_{k}(M)) \quad : = \quad  \{(f_{k})_{k\in \mathbb N} \ | \ f_{k}\in \sigma_{k}(M) \ , \  f_{k} = R_{-k}(f_{k+1}) \quad {\rm for } \ k \in \mathbb N\
    \}$$
    of inverse $(f_{k})_{k\in \mathbb N} \ \to f$ with $\ev_{k}(f)=\ev_{k}(f_{k})$.
      \end{lemma}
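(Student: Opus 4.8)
The plan is to exhibit the map $f\mapsto (R_{-k}(f))_{k\in\mathbb N}$ as the isomorphism of projective limits induced by an isomorphism of the projective system $(\sigma_k(M),\,R_{-k}\colon\sigma_{k+1}(M)\to\sigma_k(M))$ with the system $(M,\,\psi\colon M\to M)$ that defines $\Ind_{\mathbb N,\psi}^{\mathbb Z}(M)\simeq\varprojlim_\psi M$. First I would record the two relations already established in the text that are needed: $\ev_k\circ\sigma_k=\id_M$ (so that $\sigma_k\colon M\to\sigma_k(M)$ and $\ev_k\colon\sigma_k(M)\to M$ are mutually inverse isomorphisms, recalling $\sigma_k(M)=\im R_{-k}$ with $R_{-k}=\sigma_k\circ\ev_k$), and $\ev_k=\psi\circ\ev_{k+1}$. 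From the latter one also gets $\ev_k\circ\sigma_{k+1}=\psi\circ\ev_{k+1}\circ\sigma_{k+1}=\psi$, which makes transparent that $R_{-k}$ carries $\sigma_{k+1}(M)$ into $\sigma_k(M)$ and looks like $\psi$ under the identifications.

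Next I would check that the vertical maps $\ev_k\colon\sigma_k(M)\to M$ form a morphism of projective systems, i.e. that $\psi\circ\ev_{k+1}=\ev_k\circ R_{-k}$ on $\sigma_{k+1}(M)$: the left side equals $\ev_k$ by $\ev_k=\psi\circ\ev_{k+1}$, and the right side equals $\ev_k\circ\sigma_k\circ\ev_k=\ev_k$ by $\ev_k\circ\sigma_k=\id$. Since each $\ev_k$ is an isomorphism with inverse $\sigma_k$, passing to projective limits yields an isomorphism $\varprojlim_{R_{-k}}(\sigma_k(M))\xrightarrow{\ \sim\ }\varprojlim_\psi M=\Ind_{\mathbb N,\psi}^{\mathbb Z}(M)$ sending $(f_k)_{k}$ to $(\ev_k(f_k))_k$, with inverse sending $(x_m)_m$ to $(\sigma_k(x_k))_k$. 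The description of the inverse in the statement, "$(f_k)_k\mapsto f$ with $\ev_k(f)=\ev_k(f_k)$", is exactly this limit isomorphism read off coordinatewise (using that the $\ev_k$, $k\in\mathbb N$, are the coordinate projections of $\varprojlim_\psi M$ and hence jointly separate points).

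Finally I would translate the forward map: for $f\in\varprojlim_\psi M$ with coordinates $x_m=\ev_m(f)$, the inverse limit isomorphism applied to $f$ gives $(\sigma_k(x_k))_k=(\sigma_k(\ev_k(f)))_k=(R_{-k}(f))_k$, so $f\mapsto(R_{-k}(f))_k$ is precisely the isomorphism above; coherence of this family, $R_{-k}(f)=R_{-k}(R_{-(k+1)}(f))$, is the identity $R_{-k}\circ R_{-(k+1)}=\sigma_k\circ\ev_k\circ\sigma_{k+1}\circ\ev_{k+1}=\sigma_k\circ\psi\circ\ev_{k+1}=\sigma_k\circ\ev_k=R_{-k}$. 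I do not expect any genuine obstacle: the whole argument is bookkeeping with the relations among $\ev_k$, $\sigma_k$, $R_{-k}$ already proved in the text, the only point needing a moment's care being to see that the transition map $R_{-k}|_{\sigma_{k+1}(M)}$ corresponds to $\psi$ (not $\varphi$), which is why I isolate $\ev_k=\psi\circ\ev_{k+1}$ at the outset.
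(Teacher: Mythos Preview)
Your proof is correct. The paper states this lemma without proof, presumably because it is the routine bookkeeping you describe: once the relations $\ev_k\circ\sigma_k=\id_M$, $R_{-k}=\sigma_k\circ\ev_k$, and $\ev_k=\psi\circ\ev_{k+1}$ are in hand, the identification of the two projective systems via the level-wise isomorphisms $\ev_k|_{\sigma_k(M)}\colon\sigma_k(M)\xrightarrow{\sim}M$ (with inverse $\sigma_k$) is immediate, and your argument carries this out cleanly.
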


\begin{remark} {\rm  As $\varphi$ is injective,
its restriction to $\cap_{n\in \mathbb N}\varphi^{n}(M)$ is an isomorphism and the  following  $A[\mathbb Z]$-modules are isomorphic  (section \ref{limpro}):
$$\Ind_{\mathbb N, \varphi} ^{\mathbb Z} (M)\quad  \simeq \quad \underset{\varphi} \varprojlim  \ (M) \quad  \simeq \quad  \cap_{n\in \mathbb N}\varphi^{n}(M) \  . $$
  As $\psi$ is surjective, its action on the quotient $M/M^{\psi^{\infty}=0}$ is bijective and the  following  $A[\mathbb Z]$-modules are isomorphic  (section \ref{limpro}):
 $$A[\mathbb Z] \otimes_{A[\mathbb N], \psi} M \quad  \simeq \quad  \underset{\psi} \varinjlim  \ (M) \quad   \simeq \quad  M/M^{\psi^{\infty}=0}.\quad  $$
 }
 \end{remark}

   \begin{remark} When  the $A$-module $M$ is noetherian,  a $\psi$-stable submodule  of $M$ which generates $M$ as a $\varphi$-module is equal to $M$.
 \end{remark}

 \begin{proof} Let  $N$ be a submodule of $M$.  As $M$ is  noetherian  there exists    $k \in \mathbb{N}$ such that the $\varphi$-stable submodule of $M$ generated by $N$ is the submodule $N_{k} \subset M$   generated by $N, \varphi(N), \ldots , \varphi^{k}(N)$.  When $N$ is $\psi $-stable  we have $\psi^{k}(N_{k})=N$ and when $N$ generates $M$ as a $\varphi$-module  we have  $M=N_{k}$. In this case, $M= \psi^{k}(M)=\psi^{k}(N_{k})=N.$
 \end{proof}

 \section{Etale $P_{+}$-module}\label{S3}

{\sl  Let $P=N\rtimes L$ be a semi-direct product of an invariant subgroup $N$ and of a group $L$ and let
 $N_{0}\subset N$ be a subgroup of $N$. For any subgroups $V\subset U \subset N$, the symbol $J(U/V)\subset U$ denotes a set of representatives for the cosets in $U/V$.}

\bigskip  The group $P$ acts on $N$ by
$$(b= n t, x)  \to b.x = ntxt^{-1}$$
for $n,x\in N$ and $t\in L$. The $P$-stabilizer $\{ b\in P \ | \ b.N_{0}\subset N_{0}\}$  of $N_{0}$ is  a monoid   $$P_{+} \ = \ N_{0}L_{+}$$
where    $L_{+} \subset L$   is the $L$-stabilizer of $N_{0}$. Its maximal subgroup $\{ b\in P \ | \ b.N_{0}=N_{0}\}$ is the intersection $P_{0}=N_{0}\rtimes L_{0}$ of $P_{+}$ with the inverse monoid $P_{-}= L_{-}N_{0}$ where $L_{-}$ is the inverse monoid of $L_{+}$ and $L_{0}$ is the maximal subgroup of $L_+$.

\bigskip {\sl We suppose that  the subgroup $t.N_{0} =t N_{0}t^{-1}\subset N_{0}$ has a finite index, for all  $t\in L_{+}$. Let  $A$ be a commutative ring and let $M$ be an $A[P_{+}]$-module,  equivalently an $A[N_{0}]$-module with a semilinear action of $L_{+}$.  }

 \bigskip The action  of  $b  \in P_{+}$  on $M$  is  denoted by $\varphi_{b}$. When $p\in P_{0}$ then $\varphi_{b}$ is invertible and  we write  also $\varphi_{b}(m)=bm \ , \  \varphi_{b}^{-1}(m) =b^{-1}m$ for  $m\in M$.
  The action   $\varphi_{t}\in \End_{A}(M)$ of $t\in L_{+}$
   is $A[N_{0}]$-semilinear:
\begin{equation}\label{produit1}
\varphi_{t}(xm)=\varphi_{t}(x)\varphi_{t}(m) \quad {\rm for } \quad x\in A[N_{0}],  \ m\in M\  .
\end{equation}

 \subsection{Etale}

The  group algebra $A[N_{0}]$ is naturally an $A[P_{+} ]$-module. For $t\in L_{+}$, then  $\varphi_{t}$  is  injective of image $A[tN_{0} t^{-1}]$,  and
 $$A[N_{0}]= \oplus_{u \in  J(N_{0}/tN_{0}t^{-1})}u A[tN_{0} t^{-1}]  \ .$$

 \begin{definition}  We say that  $M $ is \'etale if, for any $t\in L_{+}$,   $\varphi_{t}$ is injective and
  \begin{equation} \label{almostetale} M =\oplus_{u\in  J(N_{0}/tN_{0}t^{-1})} \ u \ \varphi_{t}(M) \  .
  \end{equation}
 An equivalent formulation  is that, for any $t\in L_{+}$,   the   linear map
   $$A[N_{0}]\otimes_{A[N_0], \varphi_{t}}M\to M \ \ , \ \ x\otimes m \mapsto x\varphi_{t}(m)$$
    is bijective.
    For  $M$  \'etale and   $t\in L_{+}$, let $\psi_{t}\in \End_{A}(M)$ be the unique canonical left inverse of $\varphi_{t}$  of kernel $M^{\psi_{t}=0}= \sum_{u\in (N_{0}-tN_{0}t^{-1})}u\varphi_{t}(M) $.

    \end{definition}

 The trivial action of $P_{+}$ on $M$ is not \'etale, and obviously  the restriction to $P_{+}$ of a representation of $P$ is not always \'etale.

 \begin{lemma} \label{idemp} Let $M$ be an \'etale $A[P_{+}]$-module. For  $t\in L_{+}$,  the
  kernel $M^{\psi_{t}=0}$ is an $A[tN_{0}t^{-1}]$-module, the idempotents in  $\End_{A}M$
 $$(u  \circ \varphi_{t} \circ \psi_{t}  \circ  u ^{-1})_{u \in J(N_{0}/t N_{0}t^{-1} ) }$$
 are orthogonal of sum the identity. Any $m\in M$
can be written
\begin{equation}\label{writing}
 m=\sum_{u\in J(N_{0}/tN_{0}t^{-1})} u \varphi_{t}(m_{u,t})
 \end{equation}
for unique elements $m_{u,t}\in M$,  equal to  $m_{u,t}=\psi_t(u^{-1}m) $.
\end{lemma}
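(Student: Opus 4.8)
The plan is to deduce everything directly from the defining decomposition (\ref{almostetale}) together with the observation, which is (\ref{decom}) applied to the pair $(\varphi_t,\psi_t)$ (it satisfies $\psi_t\circ\varphi_t=\id_M$), that $\varphi_t\circ\psi_t\in\End_A(M)$ is the projector onto $\varphi_t(M)$ with kernel $M^{\psi_t=0}$. Throughout I will use that every $u\in N_0$ lies in $P_0$, hence acts invertibly on $M$, and that by construction $M^{\psi_t=0}=\sum_{u'\in N_0-tN_0t^{-1}}u'\varphi_t(M)$. For the first assertion, $M^{\psi_t=0}$ is an $A$-submodule of $M$ as the kernel of the $A$-linear map $\psi_t$, so it suffices to check it is stable under left translation by $v\in tN_0t^{-1}$; but for $u'\in N_0-tN_0t^{-1}$ one has $vu'\in N_0$ and $vu'\notin tN_0t^{-1}$ (else $u'=v^{-1}(vu')\in tN_0t^{-1}$, a contradiction), so $vu'\varphi_t(M)$ is again one of the summands defining $M^{\psi_t=0}$, and $vM^{\psi_t=0}\subseteq M^{\psi_t=0}$.

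Next I would treat the idempotents. Put $e_u:=u\circ\varphi_t\circ\psi_t\circ u^{-1}$ for $u\in J(N_0/tN_0t^{-1})$. Since $u$ is invertible and $\varphi_t\circ\psi_t$ is idempotent, each $e_u$ is idempotent, with image $u\varphi_t(M)$ and kernel $uM^{\psi_t=0}=\sum_{u'\in N_0-tN_0t^{-1}}uu'\varphi_t(M)$. If $u\neq w$ in $J(N_0/tN_0t^{-1})$ then $w^{-1}u\in N_0-tN_0t^{-1}$, whence $u\varphi_t(M)=w(w^{-1}u)\varphi_t(M)\subseteq wM^{\psi_t=0}=\ker e_w$, so $e_w e_u=0$, and symmetrically $e_u e_w=0$; thus the family $(e_u)$ is orthogonal. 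To see $\sum_u e_u=\id_M$, I would write a given $m\in M$ via (\ref{almostetale}) as $m=\sum_{u\in J(N_0/tN_0t^{-1})}u\varphi_t(m_u)$; the previous computation shows $e_w(u\varphi_t(m_u))=0$ for $u\neq w$ while $e_w(w\varphi_t(m_w))=w\varphi_t\psi_t\varphi_t(m_w)=w\varphi_t(m_w)$, so $\sum_w e_w(m)=m$.

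For the final assertion, the existence and uniqueness of the $m_{u,t}\in M$ with $m=\sum_u u\varphi_t(m_{u,t})$ are precisely the statement that the sum in (\ref{almostetale}) is direct together with the injectivity of $\varphi_t$. To identify $m_{u,t}$ I would apply $u^{-1}$ and decompose $u^{-1}m=\varphi_t(m_{u,t})+\sum_{w\neq u}(u^{-1}w)\varphi_t(m_{w,t})$; for $w\neq u$ one has $u^{-1}w\in N_0-tN_0t^{-1}$, so each such term lies in $M^{\psi_t=0}=\ker\psi_t$, and applying $\psi_t$ (using $\psi_t\circ\varphi_t=\id_M$) yields $\psi_t(u^{-1}m)=m_{u,t}$.

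There is no substantial obstacle here: the lemma is essentially bookkeeping with the left cosets $N_0/tN_0t^{-1}$. The only points demanding a little care are to track that left translation by an element of $N_0$ (respectively of $tN_0t^{-1}$) permutes these cosets and moves, respectively fixes, the distinguished coset $tN_0t^{-1}$, and to invoke at the right moments that elements of $N_0$ act invertibly on $M$, so that conjugation by $u$ and the manipulations $uM^{\psi_t=0}=\ker e_u$ are legitimate.
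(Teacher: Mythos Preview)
Your proof is correct and follows essentially the same approach as the paper: both derive everything from the \'etale decomposition (\ref{almostetale}) together with the fact that $\varphi_t\circ\psi_t$ is the projector onto $\varphi_t(M)$ along $M^{\psi_t=0}$, using that $N_0-tN_0t^{-1}$ is stable under left multiplication by $tN_0t^{-1}$. The paper's version is much terser and leaves the orthogonality and the explicit formula $m_{u,t}=\psi_t(u^{-1}m)$ implicit, whereas you spell these out carefully; but the underlying argument is the same.
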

\begin{proof} The kernel $M^{\psi_{t}=0}$ is an $A[tN_{0}t^{-1}]$-module because $N_{0}- tN_{0}t^{-1}$ is stable by left multiplication by $tN_{0}t^{-1}$. The endomorphism  $\varphi_{t}\circ \psi_{t}$ is an idempotent  because $\psi_{t}   \circ \varphi_{t}=\id_{M}$.  Then apply (\ref{almostetale}) and notice that $m\in M$ is equal to
  $$m = \sum_{u\in J(N_{0}/tN_{0}t^{-1})} (u\circ  \varphi_{t}\circ  \psi_{t} \circ u^{-1})(m) \ .
  $$
\end{proof}

\begin{remark}\label{cr}  {\rm  1) An $A[P_{+}]$-module $M$ is \'etale when, for any $t\in L_{+}$, the action $\varphi_{t}$ of $t$ admits a left inverse  $f_{t}\in \End_{A}M$  such that  the idempotents $(u   \circ \varphi_{t} \circ f_{t}  \circ u^{-1})_{u \in J(N_{0}/t N_{0}t^{-1} ) } $
 are orthogonal of sum the identity. The endomorphism $f_{t}$ is the canonical left inverse $\psi_{t}$.

2) The  $A[P_{+}]$-module $A[N_{0}]$ is \'etale. As $A[N_{0}]$ is a left and right free $A[tN_{0}t^{-1}] $-module of rank $[N_{0}: tN_{0}t^{-1}]$ we have for $x\in A[N_{0}]$ ,
$$x = \sum_{u\in J(N_{0}/tN_{0}t^{-1})} u \varphi_{t}(x_{u,t}) =  \sum_{u\in J(N_{0}/tN_{0}t^{-1})}
 \varphi_{t}(x'_{u,t}) u^{-1}$$
 where $x_{u,t}=\psi_{t}(u^{-1}x)$, $ x'_{u,t}=\psi_{t}( xu) $ and  $\psi_{t}$ is   the left inverse of $\varphi_{t}$ of kernel
 $$\sum_{u \in N_{0}-tN_{0}t^{-1}} u A[tN_{0}t^{-1}]  = \sum_{u \in N_{0}-tN_{0}t^{-1}}A[tN_{0}t^{-1}] u^{-1}.$$
}
 \end{remark}

 Let $M$ be an \'etale $A[P_{+}]$-module and $t\in L_{+}$.  We denote $m\mapsto m^{\psi_{t}=0}:M\to M^{\psi_{t}=0}$ the projector $\id_{M}-\varphi_{t} \circ \psi_{t}$ along the decomposition $M=\varphi_{t}(M)\oplus M^{\psi_{t}=0}$.

 \begin{lemma} \label{produit}  Let  $x \in A[N_{0}]$ and $m \in M$. We have  $$ \psi_{t} (\varphi_{t}(x )m)\ = \ x \psi_{t} (m)\ , \quad  \psi_{t} (x \varphi_{t}(m))\ = \ \psi_{t} (x) m \ , $$
$$ ( \varphi_{t}(x) m )^{ {\psi_{t}=0} }\ =  \ \varphi_{t}(x) (m^{ {\psi_{t}=0}} ) \quad , \quad (x \varphi_{t}(m) )^{ {\psi_{t}=0} }\ =  \  x^{ {\psi_{t}=0} } \varphi_{t}(m) \quad .
$$
   \end{lemma}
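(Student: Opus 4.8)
The four identities split naturally into two matched pairs, one computing the $\varphi_t(M)$-component and one the $M^{\psi_t=0}$-component of a product, relative to the direct sum decomposition $M=\varphi_t(M)\oplus M^{\psi_t=0}$. In each case both sides are $A$-linear in $m$ and additive (indeed $A$-linear) in $x$, so it suffices to treat $x=g$ a group element of $N_0$; and $\varphi_t(g)=tgt^{-1}$ by \eqref{produit1}. The only ingredients are the semilinearity \eqref{produit1}, the étale decomposition \eqref{almostetale}, the definitions of $\psi_t$ and of $m\mapsto m^{\psi_t=0}$, and one elementary group-theoretic fact: left multiplication by an element $w\in tN_0t^{-1}$ maps the complement $N_0-tN_0t^{-1}$ into itself (if $wu\in tN_0t^{-1}$ with $w\in tN_0t^{-1}$ then $u=w^{-1}(wu)\in tN_0t^{-1}$, a contradiction). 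From this fact and \eqref{produit1} one gets the stability $\varphi_t(A[N_0])\cdot M^{\psi_t=0}\subseteq M^{\psi_t=0}$, since $M^{\psi_t=0}=\sum_{u\in N_0-tN_0t^{-1}}u\varphi_t(M)$ and $\varphi_t(g)\cdot u\varphi_t(m')=(tgt^{-1}u)\varphi_t(m')$ with $tgt^{-1}u\in N_0-tN_0t^{-1}$.

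\textbf{First pair ($\varphi_t(x)m$).} Write $m=\varphi_t(\psi_t(m))+m^{\psi_t=0}$. Then, using \eqref{produit1} on the first summand and the stability just noted on the second,
\[
\varphi_t(x)\,m=\varphi_t\big(x\,\psi_t(m)\big)+\varphi_t(x)\,m^{\psi_t=0},
\]
where the first term lies in $\varphi_t(M)$ and the second in $M^{\psi_t=0}$. Since this is precisely the decomposition of $\varphi_t(x)m$ along $M=\varphi_t(M)\oplus M^{\psi_t=0}$, applying $\psi_t$ (which is the identity on $\varphi_t(M)$ after stripping $\varphi_t$, and zero on $M^{\psi_t=0}$) gives $\psi_t(\varphi_t(x)m)=x\,\psi_t(m)$, and reading off the kernel component gives $(\varphi_t(x)m)^{\psi_t=0}=\varphi_t(x)\,(m^{\psi_t=0})$.

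\textbf{Second pair ($x\varphi_t(m)$).} Choose a system of representatives $J=J(N_0/tN_0t^{-1})$ with $1\in J$. By Remark~\ref{cr}(2) applied to the étale module $A[N_0]$, write $x=\sum_{u\in J}u\,\varphi_t(x_{u,t})$ with $x_{u,t}=\psi_t(u^{-1}x)\in A[N_0]$; in particular $\psi_t(x)=x_{1,t}$ and $x^{\psi_t=0}=\sum_{u\in J,\,u\neq 1}u\,\varphi_t(x_{u,t})$. Then, by \eqref{produit1},
\[
x\,\varphi_t(m)=\sum_{u\in J}u\,\varphi_t(x_{u,t})\,\varphi_t(m)=\sum_{u\in J}u\,\varphi_t(x_{u,t}\,m),
\]
where the term $u=1$ lies in $\varphi_t(M)$ and the terms $u\neq 1$ lie in $M^{\psi_t=0}$. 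Applying $\psi_t$ kills the latter and yields $\psi_t(x\varphi_t(m))=x_{1,t}\,m=\psi_t(x)\,m$; the kernel component is $\sum_{u\neq 1}u\,\varphi_t(x_{u,t}m)=\big(\sum_{u\neq 1}u\,\varphi_t(x_{u,t})\big)\varphi_t(m)=x^{\psi_t=0}\,\varphi_t(m)$, again using \eqref{produit1}.

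\textbf{Main obstacle.} There is no real difficulty: the content is entirely in the étale axiom \eqref{almostetale} together with Lemma~\ref{idemp}/Remark~\ref{cr}(2). The one point requiring care is the stability $\varphi_t(A[N_0])\,M^{\psi_t=0}\subseteq M^{\psi_t=0}$ and, dually, keeping track of which summands of the decomposition of $x$ in $A[N_0]$ land in $\varphi_t(M)$ versus $M^{\psi_t=0}$ after multiplying by $\varphi_t(m)$; both come down to the observation that $tN_0t^{-1}$ is a subgroup of $N_0$, so cosets and their complement behave well under left translation.
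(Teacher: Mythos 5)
Your proof is correct and follows essentially the same route as the paper: in each pair you decompose the element you are not twisting (first $m$, then $x$) along $M=\varphi_t(M)\oplus M^{\psi_t=0}$ (resp.\ $A[N_0]=\varphi_t(A[N_0])\oplus A[N_0]^{\psi_t=0}$), multiply, observe via semilinearity that each summand lands in the corresponding piece of the target decomposition, and read off $\psi_t$ and $(\cdot)^{\psi_t=0}$ by uniqueness. The ``stability'' fact you prove by hand --- that $M^{\psi_t=0}$ is closed under multiplication by $\varphi_t(A[N_0])$ --- is already recorded in Lemma~\ref{idemp} as the statement that $M^{\psi_t=0}$ is an $A[tN_0t^{-1}]$-module, and you could simply cite that.
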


 \begin{proof}  We multiply $m = (\varphi_{t}\circ  \psi_{t})(m) + m^{\psi_{t}=0}$ on the left by   $\varphi_{t}(x)$. By the $A[N_{0}]$-semilinearity of $\varphi_{t}$ we have $\varphi_{t}(x) m =  \varphi_{t} (x \psi_{t} (m)) + \varphi_{t}(x)  (m^{ {\psi_{t}=0} })$.  As $M^{\psi_{t}=0}$ is an $A[tN_{0} t^{-1} ]$-module, the uniqueness of the decomposition  implies
$\psi_{t} ( \varphi_{t}(x) m )= x \psi_{t} (m) $ and $   ( \varphi_{t}(x) m )^{\psi_{t}=0}= \varphi_{t}(x)  (m^{ {\psi_{t}=0} }) $ .

We multiply  $x = (\varphi_{t}\circ  \psi_{t})(x) + x^{\psi_{t}=0}$ on the right by   $\varphi_{t}(m)$. By the semilinearity of $\varphi_{t}$ we have
$x \varphi_{t}(m) = \varphi_{t} (\psi_{t} (x)m) \oplus   x^{\psi_{t}=0}\varphi_{t}(m) $. As $ A[N_{0}]  ^{\psi_{t}=0}\varphi_{t}(M) = M^{\psi_{t}=0}$ the uniqueness of the decomposition   implies
   $\psi_{t}(x \varphi_{t}(m) ) = \psi_{t} (x) m \ , \   (x \varphi_{t}(m) )^{ {\psi_{t}=0} } =   x^{ {\psi_{t}=0} } \varphi_{t}(m)  .$
   \end{proof}

  \begin{lemma} \label{produit2}  Let  $x \in A[N_{0}]$ and $m\in M$.  We have
 $$
  \psi_{t} (xm)=\sum_{u\in J(N_{0}/tN_{0}t^{-1})}\psi_{t} (xu)\psi _{t}(u^{-1}m) \ .
  $$
   \end{lemma}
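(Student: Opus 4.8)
The plan is to reduce everything to the decomposition of $m$ already obtained in Lemma~\ref{idemp} together with the multiplicativity formula of Lemma~\ref{produit}. The point is that the right-hand side of the asserted identity is exactly what one gets by writing $m$ in its canonical expansion along the cosets $N_0/tN_0t^{-1}$, multiplying by $x$, and applying $\psi_t$ term by term.

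Concretely, I would first invoke Lemma~\ref{idemp} to write
\begin{equation*}
m \;=\; \sum_{u\in J(N_0/tN_0t^{-1})} u\,\varphi_t\bigl(\psi_t(u^{-1}m)\bigr)\ ,
\end{equation*}
since $m_{u,t}=\psi_t(u^{-1}m)$. Next I would multiply on the left by $x\in A[N_0]$ and use $A$-linearity of $\psi_t$ to get
\begin{equation*}
\psi_t(xm)\;=\;\sum_{u\in J(N_0/tN_0t^{-1})}\psi_t\Bigl(xu\,\varphi_t\bigl(\psi_t(u^{-1}m)\bigr)\Bigr)\ .
\end{equation*}
The key observation here is that for each $u$ the element $xu$ again lies in the group algebra $A[N_0]$, so the second identity of Lemma~\ref{produit}, namely $\psi_t\bigl(y\,\varphi_t(n)\bigr)=\psi_t(y)\,n$ for $y\in A[N_0]$ and $n\in M$, applies with $y=xu$ and $n=\psi_t(u^{-1}m)$. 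Substituting this in yields precisely
\begin{equation*}
\psi_t(xm)\;=\;\sum_{u\in J(N_0/tN_0t^{-1})}\psi_t(xu)\,\psi_t(u^{-1}m)\ ,
\end{equation*}
which is the claim. One should also remark that the sum is independent of the chosen representatives $u$, which is automatic from the uniqueness in Lemma~\ref{idemp} (changing $u$ to $un$ with $n\in tN_0t^{-1}$ moves $\varphi_t$-factors across and leaves each summand unchanged by Lemma~\ref{produit}).

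There is really no serious obstacle: the whole content is bookkeeping with the canonical decomposition, and the only thing to be careful about is to apply Lemma~\ref{produit} to the correct side (the version $\psi_t(y\varphi_t(n))=\psi_t(y)n$, not the other one) and to note that $xu\in A[N_0]$ so that this lemma is legitimately available. If anything is mildly delicate it is simply making sure the indexing set $J(N_0/tN_0t^{-1})$ is the same one used for the expansion of $m$ and for the formula, but by the independence-of-representatives remark this causes no difficulty.
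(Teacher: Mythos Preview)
Your proof is correct. It is also somewhat more economical than the paper's argument: the paper expands \emph{both} $x$ and $m$ along the cosets $N_0/tN_0t^{-1}$ (using the right-hand decomposition $x=\sum_u\varphi_t(x'_{u,t})u^{-1}$ from Remark~\ref{cr} for $x$ and the left-hand one for $m$), obtains a double sum over pairs $(u,v)$, and then argues that the off-diagonal terms $u\neq v$ lie in $M^{\psi_t=0}$ because the kernel can be rewritten as $\sum_{u\in N_0\setminus tN_0t^{-1}}A[tN_0t^{-1}]u\varphi_t(M)$. Your route expands only $m$, keeps $xu$ intact as an element of $A[N_0]$, and invokes the identity $\psi_t(y\varphi_t(n))=\psi_t(y)n$ from Lemma~\ref{produit} once per summand; this bypasses the double sum and the explicit description of the kernel. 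The paper's version has the minor advantage of making visible that the formula is really a ``diagonal extraction'' from a double expansion, but your argument gets to the result with fewer moving parts.
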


 \begin{proof} Replace $x$ by  $\sum_{u\in J(N_{0}/tN_{0}t^{-1})}
 \varphi_{t}(x'_{u,t}) u^{-1}$ and $m$ by $\sum_{v\in J(N_{0}/tN_{0}t^{-1})} v \varphi_{t}(m_{v,t})$ in  $\psi_{t} (xm)$. We get
$$
\psi_{t} (xm)= \sum_{u,v\in J(N_{0}/tN_{0}t^{-1})}\psi_{t}( \varphi_{t}(x'_{u,t}) u^{-1}v \varphi_{t}(m_{v,t})) \ .
$$
The kernel of $\psi_{t}$, being an $A[tN_{0}t^{-1}]$-module, is equal to
$$
M^{\psi_{t}=0} = \sum _{u\in N_{0}-t N_{0}t^{-1}} A[tN_{0}t^{-1}]u \varphi_{t}(M) \ . $$
Hence $\psi_{t}( \varphi_{t}(x'_{u,t}) u^{-1}v \varphi_{t}(m_{v,t}))=0$ if $u\neq v$, and $ \psi_{t} (xm)=  \sum_{u\in J(N_{0}/tN_{0}t^{-1})}x'_{u,t}m_{u,t}$.
\end{proof}

\bigskip

  \begin{proposition}  \label{3.2} Let $M$ be an \'etale  $A[P_{+}]$-module. The
  map $$b^{-1}=(ut) ^{-1} \ \mapsto \ \psi_{b} := \psi_{t} \circ u^{-1}\ : \ P_{-}\  \to \ \End_{A}(M) \quad {\rm for} \quad t\in L_{+}\ ,\  u\in N_{0} \quad , $$  defines a canonical  action of $P_{-}$ on $M$.
     \end{proposition}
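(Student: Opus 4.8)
The statement to prove is Proposition \ref{3.2}: that $b^{-1} = (ut)^{-1} \mapsto \psi_b := \psi_t \circ u^{-1}$ is a well-defined action of the inverse monoid $P_- = L_- N_0$ on $M$. There are two things to check: first, that the formula is unambiguous (it does not depend on the chosen factorization $b = ut$ with $u \in N_0$, $t \in L_+$), and second, that it is multiplicative, i.e. $\psi_{b_1^{-1} b_2^{-1}} = \psi_{b_1^{-1}} \circ \psi_{b_2^{-1}}$ whenever $b_1, b_2 \in P_+$, equivalently $\psi_{(b_2 b_1)^{-1}} = \psi_{b_1^{-1}} \circ \psi_{b_2^{-1}}$. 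Here I use that $P_+ = N_0 L_+$ with $L_+$ normalizing $N_0$, so that $P_-= L_- N_0$ and every element of $P_-$ is of the form $(ut)^{-1} = t^{-1} u^{-1}$.

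**Well-definedness.** Suppose $ut = u't'$ with $u, u' \in N_0$ and $t, t' \in L_+$. Then $(u')^{-1} u = t' t^{-1} \in L$, and since the left side lies in $N_0$ and $P = N \rtimes L$ this forces $(u')^{-1}u = 1$ and $t't^{-1} = 1$, hence $u = u'$, $t = t'$. So the factorization $b = ut$ in $P_+ = N_0 L_+$ is literally unique and there is nothing to check beyond this remark. (This is the reason the definition is phrased via $P_+ = N_0 L_+$ rather than an arbitrary product.)

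**Multiplicativity.** The key step is to show that for $t \in L_+$ and $n \in N_0$ one has the "commutation rule"
\begin{equation}\label{commrule}
\psi_t \circ n = (t n t^{-1}) \circ \psi_t \quad\text{whenever } t n t^{-1} \in N_0,
\end{equation}
and more precisely that for general $n \in N_0$, writing $tnt^{-1} = v \cdot w$ with $v \in J(N_0/tN_0t^{-1})$ and $w \in tN_0t^{-1}$ (so $w = t n_0 t^{-1}$ for a unique $n_0 \in N_0$), one has $\psi_t \circ n = n_0 \cdot \psi_t \circ v^{-1}$ — but in fact for the monoid law we only need the clean case where conjugation stays inside $N_0$, which is automatic here because $t \in L_+$ means $tN_0t^{-1} \subseteq N_0$, so $tnt^{-1} \in N_0$ always, and it lies in $tN_0t^{-1}$, so in the decomposition it is itself the "$w$" part with $v = 1$. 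Thus \eqref{commrule} holds in the form $\psi_t \circ n = (tnt^{-1})\circ \psi_t$ for all $n \in N_0$, $t \in L_+$. This is exactly the content of Lemma \ref{produit} specialized appropriately: from $\psi_t(\varphi_t(x)m) = x\psi_t(m)$ with $x$ the group element $tnt^{-1}$ (noting $\varphi_t(tnt^{-1}) = n$ by semilinearity, since $\varphi_t$ on $A[N_0]$ sends $n_0 \mapsto tn_0t^{-1}$... more precisely $\varphi_t$ is the action of $t$, conjugation) one gets $\psi_t(nm) = (tnt^{-1})\psi_t(m)$. Granting \eqref{commrule}, the verification is a direct computation: given $b_i = u_i t_i \in P_+$,
\begin{align*}
\psi_{b_1^{-1}} \circ \psi_{b_2^{-1}} &= \psi_{t_1} \circ u_1^{-1} \circ \psi_{t_2} \circ u_2^{-1},
\end{align*}
and one rewrites $u_1^{-1}\circ \psi_{t_2} = \psi_{t_2}\circ (t_2^{-1}u_1^{-1}t_2)$ using \eqref{commrule} (valid since $t_2 \in L_+$ and one needs $t_2^{-1} u_1^{-1} t_2 \in N_0$ — here one must be slightly careful, as $t_2^{-1}$ need not be in $L_+$; instead rewrite $\psi_{t_1}\circ u_1^{-1}$ differently, or conjugate the other way). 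The cleaner route: compute $\psi_{(b_2 b_1)^{-1}}$ directly. We have $b_2 b_1 = u_2 t_2 u_1 t_1 = u_2 (t_2 u_1 t_2^{-1}) t_2 t_1 = u_3 t_3$ with $u_3 = u_2 (t_2 u_1 t_2^{-1}) \in N_0$ (as $t_2 \in L_+$) and $t_3 = t_2 t_1 \in L_+$. So $\psi_{(b_2b_1)^{-1}} = \psi_{t_2 t_1} \circ u_3^{-1} = \psi_{t_1}\circ \psi_{t_2} \circ (t_2 u_1 t_2^{-1})^{-1} \circ u_2^{-1}$ — using that $\varphi_{t_2 t_1} = \varphi_{t_2}\varphi_{t_1}$ gives $\psi_{t_2 t_1} = \psi_{t_1}\psi_{t_2}$ (apply uniqueness of left inverse, or Lemma \ref{produit2}-type bookkeeping). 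Then move $(t_2 u_1 t_2^{-1})^{-1}$ past $\psi_{t_2}$ using \eqref{commrule}: $\psi_{t_2} \circ (t_2 u_1 t_2^{-1})^{-1} = u_1^{-1} \circ \psi_{t_2}$. This yields $\psi_{t_1}\circ u_1^{-1} \circ \psi_{t_2} \circ u_2^{-1} = \psi_{b_1^{-1}} \circ \psi_{b_2^{-1}}$, as desired. Finally one checks $\psi_b = \varphi_b^{-1}$ when $b \in P_0$, so the $P_-$-action extends the obvious $P_0$-action and the two are consistent; this is immediate from $\psi_t \circ \varphi_t = \id$ and the analogous statement for $N_0$-elements.

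**Main obstacle.** The only real subtlety is the commutation rule \eqref{commrule} and keeping the conjugations on the correct side — since $L_+$ (not all of $L$) normalizes $N_0$ \emph{into} itself, one must always push $N_0$-elements through a $\psi_t$ in the direction $\psi_t \circ n = (tnt^{-1}) \circ \psi_t$ and never the reverse (which would require $t^{-1}nt \in N_0$, false in general). Once the associativity bookkeeping $b_2 b_1 = u_3 t_3$ is organized so that every conjugation that appears is by an element of $L_+$, everything follows mechanically from Lemma \ref{produit} and the uniqueness of canonical left inverses established in the \'etale definition. I would also remark that $\psi_b$ indeed depends only on $b^{-1} \in P_-$ and that the assignment respects the unit, giving a genuine monoid action.
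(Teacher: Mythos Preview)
Your overall strategy---unique factorization $b=ut$, the commutation of $\psi_t$ with $N_0$ via Lemma~\ref{produit}, and reduction to the identity $\psi_{t_2 t_1}=\psi_{t_1}\psi_{t_2}$---is exactly the paper's. Two points deserve correction.

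First, your displayed rule \eqref{commrule} is stated in the wrong direction. From Lemma~\ref{produit} one has $\psi_t(\varphi_t(x)m)=x\,\psi_t(m)$; with $x=n\in N_0$ this gives $\psi_t\circ(tnt^{-1})=n\circ\psi_t$, not $\psi_t\circ n=(tnt^{-1})\circ\psi_t$. (Your justification ``$\varphi_t(tnt^{-1})=n$'' is also backward: $\varphi_t$ conjugates \emph{by} $t$, so $\varphi_t(tnt^{-1})=t^2nt^{-2}$.) Fortunately, the form you actually apply, $\psi_{t_2}\circ(t_2u_1^{-1}t_2^{-1})=u_1^{-1}\circ\psi_{t_2}$, is the correct one, so your computation survives; but the exposition should be fixed.

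Second, and more substantively, the step ``$\varphi_{t_2 t_1}=\varphi_{t_2}\varphi_{t_1}$ gives $\psi_{t_2 t_1}=\psi_{t_1}\psi_{t_2}$ (apply uniqueness of left inverse)'' is a genuine gap. Left inverses of an injective non-surjective map are \emph{not} unique; what is unique is the \emph{canonical} left inverse, singled out by its kernel. You must therefore verify that $\psi_{t_1}\psi_{t_2}$ vanishes on $M^{\psi_{t_1 t_2}=0}=\sum_{u\in N_0\setminus t_1t_2N_0(t_1t_2)^{-1}}u\,\varphi_{t_1 t_2}(M)$, equivalently that
\[
M^{\psi_{t_1 t_2}=0}\;=\;\varphi_{t_1}\bigl(M^{\psi_{t_2}=0}\bigr)\;+\;M^{\psi_{t_1}=0}.
\]
This kernel computation is precisely the content of the paper's proof and is not bypassed by invoking Lemma~\ref{produit2}. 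It is routine once written out, but it is the work that remains to be done.
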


\begin{proof}
We  check that  $\psi_{b_{1}b_{2}} = \psi_{b_{2}} \circ \psi_{b_{1}} $ for $ b_{1}= u_{1}t_{1},b_{2}=u_{2} t_{2}\in  P_{+}$.  We have  $ \psi_{b_{1}b_{2}} = \psi_{t_{1}t_{2}} \circ (u_{1}t_{1}u_{2}t_{1}^{-1} )^{-1}$ and  $\psi_{b_{2}} \circ \psi_{b_{1}} =  \psi_{t_{2} } \circ u_{2} ^{-1} \circ  \psi_{t_{1} } \circ u_{1} ^{-1} $. As $ u_{2} ^{-1} \circ  \psi_{t_{1} }=  \psi_{t_{1} } \circ t_{1} u_{2} ^{-1} t_{1}^{-1} $. it remains only  to show $\psi_{t_{2}} \psi_{t_{1}}  = \psi_{t_{1}t_{2}} $.  For the sake of simplicity, we note $\varphi_{i} =\varphi_{t_{i}}, \psi_{i}=\psi_{t_{i}}$.
 For $m\in M$ we have
 $m=\varphi_{1}(( \varphi_{2} \circ \psi_{2})(\psi_{1} (m)+ \psi_{1} (m)^{\psi_{2}=0})+ m^{\psi_{1}=0}$.
 This is also $$m= ( \varphi_{t_{1}t_{2}} \circ \psi_{2} \circ \psi_{1} )(m)+  \varphi_{1}( \psi_{1} (m)^{\psi_{2}=0})+ m^{\psi_{1}=0}$$  because
 $\varphi_{t_{1}} \circ  \varphi_{t_{2}} = \varphi_{t_{1}t_{2}}$.
 By the uniqueness of the decomposition
  $m=(\varphi_{t_{1}t_{2}}\circ \psi_{t_{1}t_{2}})(m)+m^{\psi_{t_{1}t_{2}} =0}$ we are reduced to show that
 $$M^{\psi_{t_{1}t_{2}} =0} \quad  =  \quad \varphi_{t_{1}}(M^{\psi_{t_{2}}=0})+  M^{\psi_{t_{1}}=0}  \quad . $$
 It is enough to prove the inclusion $M^{\psi_{t_{1}t_{2}} =0} \ \subset \ \varphi_{t_{1}}(M^{\psi_{t_{2}}=0})+  M^{\psi_{t_{1}}=0} $ to get the equality because $M = \varphi_{t_{1}t_{2}}(M) \oplus V$ with $V$ equal to any of them.  Hence we want to show
\begin{equation}\label{conta} \sum_{u\in N_{0}-t_{1}t_{2}N_{0}(t_{1}t_{2})^{-1} } u  \varphi_{t_{1}t_{2}}(M)   \ \subset  \ \varphi_{1} (\sum_{u\in N_{0}-t_{2}N_{0}t_{2}^{-1} } u  \varphi_{t_{2}}(M))\ +\sum_{u\in N_{0}-t_{1}N_{0}t_{1}^{-1} } u  \varphi_{t_{1}}(M) \ .
\end{equation}
As $\varphi_{t_{1}} \circ u  \circ \varphi_{t_{2}}= t_{1}ut_{1}^{-1}\circ \varphi_{t_{1}t_{2}}$ the right side of (\ref{conta})  is
$$ \sum_{u\in t_{1}N_{0}t_{1}^{-1}-t_{1}t_{2}N_{0}(t_{1}t_{2})^{-1} } u  \varphi_{t_{1}t_{2}}(M) +\sum_{u\in N_{0}-t_{1}N_{0}t_{1}^{-1} } u  \varphi_{t_{1}}(M) \quad .
$$
As $\varphi_{t_{1}t_{2}} =\varphi_{1}\circ \varphi_{t_{2}}$ we have $\varphi_{t_{1}t_{2}} (M)\subset \varphi_{t_{1}} (M)$.  Hence  (\ref{conta}) is true.
\end{proof}

\begin{lemma}\label{-eq}
Let   $f:M\to M'$  be an $A$-morphism between two \'etale  $A[P_{+}]$-modules $M$ and $M'$. Then $f$ is $P_{+}$-equivariant if and only if $f$ is $P_{-}$-equivariant (for the canonical action of  $P_{-}$).
\end{lemma}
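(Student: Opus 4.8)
The plan is to prove the two implications separately, using the principle that for an $A$-linear map which already intertwines the action of $N_0$, intertwining $\varphi_t$ is equivalent to intertwining $\psi_t$, for each $t\in L_+$. The ingredients are the \'etale decomposition~(\ref{almostetale}) for both $M$ and $M'$, the description $M^{\psi_t=0}=\sum_{u\in N_0-tN_0t^{-1}}u\varphi_t(M)$ together with the expansion formula of Lemma~\ref{idemp}, the identity $\psi_t(x\varphi_t(m))=\psi_t(x)m$ of Lemma~\ref{produit}, the description of $\ker(\psi_t)$ on $A[N_0]$ in Remark~\ref{cr}(2), and the injectivity of $\varphi_t$. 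Note that, since $\varphi\colon P_+\to\End_A(M)$ is a monoid homomorphism, $f$ is $P_+$-equivariant if and only if $f$ commutes with all $\varphi_u$ ($u\in N_0$) and all $\varphi_t$ ($t\in L_+$); likewise, by Proposition~\ref{3.2}, $f$ is $P_-$-equivariant if and only if $f$ commutes with all $\varphi_u^{-1}$ ($u\in N_0$) and all $\psi_t$ ($t\in L_+$).

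Assume first that $f$ is $P_+$-equivariant. Then $f$ commutes with every $\varphi_u$ for $u\in N_0$ --- hence also with $\varphi_u^{-1}$, as $N_0\subseteq P_0$ --- and with every $\varphi_t$ for $t\in L_+$. Fix $t\in L_+$. From $f\circ\varphi_t=\varphi_t\circ f$ one gets $f(\varphi_t(M))\subseteq\varphi_t(M')$, and from $f\circ\varphi_u=\varphi_u\circ f$ together with the kernel description one gets $f(M^{\psi_t=0})\subseteq (M')^{\psi_t=0}$. Thus $f$ carries the decomposition $M=\varphi_t(M)\oplus M^{\psi_t=0}$ into the analogous one for $M'$, so it commutes with the idempotent $\varphi_t\circ\psi_t$. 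Composing $f\circ(\varphi_t\circ\psi_t)=(\varphi_t\circ\psi_t)\circ f$ with $\varphi_t\circ f=f\circ\varphi_t$ and cancelling the injective $\varphi_t$ on the left yields $f\circ\psi_t=\psi_t\circ f$. Hence $f$ commutes with all $\psi_t$ and all $\varphi_u^{-1}$, i.e. $f$ is $P_-$-equivariant.

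Conversely, assume $f$ is $P_-$-equivariant, so $f$ commutes with $\varphi_u^{-1}$ for all $u\in N_0$, hence with the $N_0$-action, and with $\psi_t$ for all $t\in L_+$; it remains to check $f\circ\varphi_t=\varphi_t\circ f$ for $t\in L_+$. Put $m':=f(\varphi_t(m))$ and expand it in $M'$ by Lemma~\ref{idemp} along a system of representatives $J(N_0/tN_0t^{-1})$ containing $1$, so that $m'=\sum_{u}u\,\varphi_t\big(\psi_t(u^{-1}m')\big)$. For each $u$,
$$\psi_t(u^{-1}m')=\psi_t\big(f(u^{-1}\varphi_t(m))\big)=f\big(\psi_t(u^{-1}\varphi_t(m))\big)=f\big(\psi_t(u^{-1})\,m\big),$$
where the first equality uses $N_0$-equivariance of $f$, the second uses $\psi_t\circ f=f\circ\psi_t$, and the last is Lemma~\ref{produit}. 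Now $\psi_t(u^{-1})=1$ for $u=1$, while for $u\in J(N_0/tN_0t^{-1})\setminus\{1\}$ one has $u^{-1}\notin tN_0t^{-1}$, so $\psi_t(u^{-1})=0$ by Remark~\ref{cr}(2). Hence only the term $u=1$ survives and $m'=\varphi_t(f(m))$, i.e. $f$ is $P_+$-equivariant.

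Neither implication presents a genuine obstacle. The only point needing a little care is the converse direction: the \'etale decomposition must be applied on the \emph{target} module $M'$, and the collapse of the sum rests on $\psi_t$ annihilating the non-trivial $N_0$-translates of $\varphi_t(M')$; everything else is a direct manipulation of the semilinearity identities already assembled in Section~\ref{S3}.
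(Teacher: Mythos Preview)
Your proof is correct and follows essentially the same approach as the paper's own proof. In both directions you use the same key ingredients: for $P_+\Rightarrow P_-$, the preservation of the two summands of the \'etale decomposition and cancellation of the injective $\varphi_t$; for $P_-\Rightarrow P_+$, the expansion of $f(\varphi_t(m))$ via Lemma~\ref{idemp} and the vanishing of $\psi_t(u^{-1})$ for $u\notin tN_0t^{-1}$ (the paper phrases this last step as first checking $f(\varphi_t(m))\in\varphi_t(M')$ via the characterisation of $\varphi_t(M')$ and then identifying the preimage, but this is the same computation).
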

\begin{proof}
Let $t\in L_{+}$.  We  suppose  that $f$ is $N_{0}$-equivariant and we show that $f \circ \varphi_{t}=\varphi_{t} \circ f$ is equivalent to $f \circ \psi_{t}=\psi_{t} \circ f$. Our arguments follow the proof of (\cite{Mira} Prop. II.3.4).

a) We suppose $f \circ \varphi_{t}=\varphi_{t} \circ f$. Then $f (\varphi_{t}(M))= \varphi_{t}(f(M))$  is contained in $\varphi_{t}(M')$ and
$f(M^{\psi_{t}=0})=\sum_{u\in N_{0}-tN_{0}t^{-1}} \varphi_{t}(f(M))$ is contained in $M'^{\psi_{t}=0}$. By Lemma \ref{idemp}, this implies $f \circ \varphi_{t}\circ \psi_{t} = \varphi_{t} \circ \psi_{t} \circ f$. As $f \circ \varphi_{t}= \varphi_{t}\circ f$ and $\varphi_{t}$ is injective this is equivalent to $f \circ \psi_{t} =    \psi_{t} \circ f$.

b) We suppose $f \circ \psi_{t}=\psi_{t} \circ f$. Let $m\in M$. Then $f (\varphi _{t}(m))$ belongs to $\varphi _{t}(M)$ because  $\varphi _{t}(M)$ is the subset of $x\in M$ such that $\psi_{t}(u^{-1}x)=0$ for all $u\in N_{0}-tN_{0}t^{-1}$ and we have
$$
\psi_{t}(u^{-1}f (\varphi_{t}(m)))= f (\psi_{t}(u^{-1}(\varphi_{t} (m)))) \ .
$$
Let $x(m)\in M$ be the element such that $f (\varphi _{t}(m))=\varphi_{t}(x(m))$. We have
$$
x(m) = \psi_{t} \varphi_{t}(x(m)) =\psi_{t} (f (\varphi _{t}(m)))= f (\psi_{t} \varphi_{t}(m))= f(m) \ .
$$
Therefore  $f (\varphi _{t}(m))=\varphi_{t}( f(m))$.
\end{proof}

\begin{proposition} \label{abetale} The category ${\mathfrak M}_{A}(P_{+})^{et}$ of \'etale $A[P_{+}]$-modules  is abelian and has a natural fully faithful functor into the abelian category  ${\mathfrak M}_{A}(P_{-})$ of $A[P_{-}]$-modules.
\end{proposition}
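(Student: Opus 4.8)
The plan is to split the statement into two essentially independent tasks: producing the fully faithful functor into $\mathfrak M_A(P_-)$, and verifying that $\mathfrak M_A(P_+)^{et}$ is abelian.

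\emph{The functor.} By Proposition \ref{3.2}, every \'etale $A[P_+]$-module $M$ carries a canonical action of the inverse monoid $P_-$, so I would define the functor $\mathfrak M_A(P_+)^{et}\to \mathfrak M_A(P_-)$ to send $M$ to this $A[P_-]$-module and to act as the identity on the underlying $A$-linear map of a morphism. Preservation of identities and composition is then trivial. By Lemma \ref{-eq}, an $A$-linear map between two \'etale modules is $P_+$-equivariant if and only if it is $P_-$-equivariant for the canonical actions; this simultaneously shows that the functor is well defined on morphisms and that it induces a bijection $\Hom_{A[P_+]}(M,M')\xrightarrow{\sim}\Hom_{A[P_-]}(M,M')$, i.e. that it is fully faithful. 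Since $\mathfrak M_A(P_-)$ is the module category of the monoid ring $A[P_-]$, it is abelian, and nothing further is needed for this part.

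\emph{Reduction of abelianness.} The category $\mathfrak M_A(P_+)$ of all $A[P_+]$-modules is abelian. I would prove that $\mathfrak M_A(P_+)^{et}$ is a full additive subcategory closed under the kernels, cokernels and finite direct sums computed in $\mathfrak M_A(P_+)$; by the standard criterion this forces $\mathfrak M_A(P_+)^{et}$ to be abelian with kernels and cokernels inherited from $\mathfrak M_A(P_+)$. That the zero module is \'etale and that a finite direct sum of \'etale modules is \'etale are immediate from (\ref{almostetale}), so the content is that kernels and cokernels of morphisms of \'etale modules are again \'etale. Fix a morphism $f\colon M\to M'$ of \'etale modules; by Lemma \ref{-eq}, $f$ commutes with $\psi_t$ for every $t\in L_+$, and of course with $\varphi_t$ and with the $N_0$-action. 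Throughout, $u$ runs over a fixed set $J(N_0/tN_0t^{-1})$.

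\emph{Kernel and cokernel computations.} Write $K=\Ker f$ and $C=\Coker f$ for the kernel and cokernel formed in $\mathfrak M_A(P_+)$. For $K$: $\varphi_t$ is injective on $K$ as a restriction of an injective map; for $m\in K$, Lemma \ref{idemp} gives $m=\sum_u u\varphi_t(\psi_t(u^{-1}m))$, and $f(\psi_t(u^{-1}m))=\psi_t(u^{-1}f(m))=0$ shows each $\psi_t(u^{-1}m)\in K$, whence $K=\sum_u u\varphi_t(K)$, the sum being direct because it is already direct inside $M$. For $C=M'/f(M)$, write $\overline{m'}$ for the class of $m'$: if $\varphi_t(\overline{m'})=0$ then $\varphi_t(m')=f(m)$ for some $m\in M$, so $m'=\psi_t\varphi_t(m')=\psi_tf(m)=f(\psi_tm)\in f(M)$, i.e. $\overline{m'}=0$, so $\varphi_t$ is injective on $C$; from $M'=\sum_u u\varphi_t(M')$ one gets $C=\sum_u u\varphi_t(C)$, and if $\sum_u u\varphi_t(\overline{m'_u})=0$ in $C$ one lifts to $\sum_u u\varphi_t(m'_u)=f(m)$ in $M'$, expands $m=\sum_u u\varphi_t(\psi_t(u^{-1}m))$, rewrites $f(m)=\sum_u u\varphi_t(f(\psi_t(u^{-1}m)))$ using the $N_0$- and $\varphi_t$-equivariance of $f$, and concludes from the directness of $M'=\oplus_u u\varphi_t(M')$ and the injectivity of $u\varphi_t$ that $m'_u=f(\psi_t(u^{-1}m))\in f(M)$, i.e. $\overline{m'_u}=0$. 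Hence both $K$ and $C$ are \'etale, completing the reduction.

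\emph{Main obstacle.} Everything but the cokernel is formal; the delicate point is that $C=\Coker f$ is \'etale, where one must check both that $\varphi_t$ stays injective on $C$ and that the induced decomposition of $C$ is direct. Both steps genuinely use that $f$ intertwines the operators $\psi_t$ — precisely the output of Lemma \ref{-eq} — and not merely the $\varphi_t$; once that is available, the verification is bookkeeping with the decomposition (\ref{almostetale}).
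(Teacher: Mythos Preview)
Your proof is correct, but the argument for kernels and cokernels differs from the paper's. The paper observes that the \'etale condition for $M$ is precisely that the natural map $j_M\colon A[N_0]\otimes_{A[N_0],\varphi_t}M\to M$ is an isomorphism, and that the ring endomorphism $\varphi_t$ of $A[N_0]$ is flat (since $A[N_0]$ is free of finite rank over $A[tN_0t^{-1}]$). Hence the functor $\Phi_t=A[N_0]\otimes_{A[N_0],\varphi_t}(-)$ is exact, and applying it to the four-term exact sequence $0\to\Ker f\to M\to M'\to\Coker f\to 0$ yields a morphism of exact sequences via the natural transformation $j$; since $j_M$ and $j_{M'}$ are isomorphisms, the five lemma forces $j_{\Ker f}$ and $j_{\Coker f}$ to be isomorphisms too.

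Your route is more elementary in that it avoids flatness and the five lemma, instead computing directly with the idempotent decomposition and the operators $\psi_t$. The trade-off is that you genuinely need Lemma~\ref{-eq} (that $f$ commutes with $\psi_t$) for the abelian part of the statement, whereas the paper's flatness argument uses only $\varphi_t$-equivariance of $f$ and invokes Proposition~\ref{3.2} and Lemma~\ref{-eq} solely for the fully faithful functor to $\mathfrak M_A(P_-)$. Both are clean; the paper's is slightly more conceptual, yours slightly more explicit.
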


\begin{proof} From the proposition \ref{3.2} and the lemma \ref{-eq}, it suffices to show that
 the kernel and the image of  a morphism $f: M\to M'$  between two \'etale modules $M,M'$,  are  \'etale.
 Since the ring homomorphism $\varphi_t$ is flat,  for $t\in L_{+}$,
 the functor $\Phi_{t}:= A[N_{0}]\otimes_{A[N_0], \varphi_{t}}-$ sends  the exact sequence
\begin{equation} (E) \qquad 0\to \Ker f \to M \to M' \to \Coker f \to 0
\end{equation} to an exact sequence
\begin{equation}(\Phi_{t}(E))\qquad    0\to \Phi_{t}(\Ker f ) \to \Phi_{t}(M) \to \Phi_{t}(M') \to \Phi_{t}(\Coker f) \to 0\ ,
\end{equation}
and the natural maps  $j_{-}: \Phi_{t}(-)\to -$ define a map  $\Phi_{t}(E) \to (E)$.
The maps $j_{M}$ and $j_{M'}$ are isomorphisms because $M$ et $M'$ are \'etale, hence the maps $j_{\Ker f}$ and $j_{\Coker f}$ are isomorphisms, i.e. $\Ker f$ and $\Coker f$ are \'etale.
\end{proof}
Note that a subrepresentation of an \'etale representation of $P_{+}$ is not necessarily \'etale and stable by $P_{-}$.

\begin{remark} \label{projlim} An arbitrary direct product or a projective limit of \'etale $A[P_{+}]$-modules is \'etale.
 \end{remark}
\begin{proof} Since the $A[tN_{0}t^{-1}]$-module $A[N_{0}]$ is  free of finite rank, for $t\in L_{+}$, the tensor product $A[N_{0}]\otimes_{A[tN_{0}t^{-1}]} - $ commutes with arbitrary projective limits.
\end{proof}
\subsection{Induced representation $M^{P}$} \label{2.4}

{\sl  Let  $P$ be a locally profinite group, semi-direct product $P=N\rtimes L$ of  closed subgroups $N,L$,  let $N_{0}\subset N$ be an open profinite subgroup, and let $s$ be an element of the centre $Z(L)$ of $L$ such that
$L=L_{-} s^{\mathbb Z}$ (notation of the section \ref{S3})
and  $(N_{k} :=s^{k}N_{0}s^{-k}) _{ k\in \mathbb Z}$
is a  decreasing sequence of union $N$ and  trivial intersection. }

\bigskip  As the conjugation action $L\times N \to N$ of $L$ on $N$ is continuous and $N_{0}$ is compact open in $N$, the subgroups $L_{0}\subset L, P_{0}\subset P$ are open and the monoids $P_{+}, P_{-}$ are open
  in $P$.

 We have
 $$P=P_{-}s^{\mathbb Z}\quad  = \quad s^{\mathbb Z} P_{+}$$
because,
for $n\in N$ and $t\in L$, there exists $k\in \mathbb N$ and $n_{0}\in N_{0}$ such that $n=s^{-k}n_{0}s^{k}$ and  $t s^{-k}\in L_{-}$. Thus
 $  tn= t s^{-k}n_{0}s^{k}  \in P_{-}s^{k}$ and $(tn)^{-1} \in s^{-k}P_{+}$.  In particular $P$ is generated by $P_{+}$ and by its inverse $P_{-}$.

 Let   $M $ be an \'etale left $A[P_{+}]$-module.    We denote by $\varphi$ the action of $s$ on $M$ and by $\psi$   the canonical left inverse of $\varphi$,
 by  $$M^{P} \ : = \ \Ind_{P_{-}}^{P}(M)  $$
the $A[P]$-module induced from the canonical action of $P_{-}$ on $M$ (section \ref{Induction}).

When $f:P\to M$ is an element of $M^{P}$,
 the  values of $f $ on $s^{\mathbb N}$ determine the values  of $f$ on $N$ and reciprocally because,  for any $u\in N_{0}, k\in \mathbb N$,
\begin{equation} \label{2.6}
\begin{split}
     f(s^{-k}us^{k}) & = (\psi^{k} \circ u) ( f(s^{k})) \ , \\
  f(s^{k}) & = \sum_{v\in  J(N_0/N_k)}( v \circ  \varphi^{k}) (f(s^{-k}v ^{-1}s^{k})) \   .
\end{split}
\end{equation}
The first equality is obvious from the definition of  $\Ind_{P_{-}}^{P}$ ,  the second equality  is obvious  by the first equality as the idempotents $(v   \circ \varphi^{k} \circ \psi^{k} \circ  v ^{-1})_{v \in J(N_{0}/ N_{k} ) } $
 are orthogonal of sum the identity, by the lemma \ref{idemp}.

\begin{proposition} \label{3.3}   a) The  restriction to $s^{\mathbb Z}$ is an  $A[s^{\mathbb Z}]$-equivariant isomorphism
$$M^{P}\quad   \to \quad  \Ind^{s^{\mathbb Z}}_{s^{-\mathbb N}} (M)  \quad .  $$
b) The restriction to $N$ is an $N$-equivariant bijection from $ M^{P}$ to
  $\Ind_{N_{0}}^{N} (M)$.
\end{proposition}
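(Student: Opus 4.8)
The plan is to prove both statements by exhibiting explicit mutually inverse maps and checking equivariance, relying only on the structure of $P = N\rtimes L$ with $L = L_- s^{\mathbb Z}$ and the \'etale hypothesis on $M$. For part (a), the restriction map $M^P \to \Ind^{s^{\mathbb Z}}_{s^{-\mathbb N}}(M)$, $f\mapsto f|_{s^{\mathbb Z}}$, is manifestly $A[s^{\mathbb Z}]$-equivariant since $s$ acts by right translation on both sides, and it lands in $\Ind^{s^{\mathbb Z}}_{s^{-\mathbb N}}(M)$ because for $k\in\mathbb N$ the element $s^{-k}\in P_-$ and $f(s^{-k}s^m) = \psi^k(f(s^m))$ by definition of the induced module. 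The content is to show this map is bijective. Injectivity follows from the formulas~(\ref{2.6}): the values on $s^{\mathbb Z}$ determine the values on $N$ (via $f(s^{-k}us^k) = \psi^k(u\,f(s^k))$), and then the values on all of $P$ because $P = P_- s^{\mathbb Z}$, so $f(b s^k) = \psi_b^{-1}\text{-relation}$... more precisely any $g\in P$ can be written $g = b^{-1} s^k$ with $b\in P_+$ (using $P = s^{\mathbb Z}P_+$ and centrality of $s$), and $f(b^{-1}s^k)$ is determined by $f(s^k)$ via the $P_-$-action. For surjectivity, given $(y_m)_{m\in\mathbb Z}\in \Ind^{s^{\mathbb Z}}_{s^{-\mathbb N}}(M)$ — i.e.\ a sequence with $\psi(y_{m}) = y_{m-1}$... let me be careful: an element of $\Ind^{s^{\mathbb Z}}_{s^{-\mathbb N}}(M)$ is a function $h\colon s^{\mathbb Z}\to M$ with $h(s^{-k}s^m) = \psi^k(h(s^m))$; equivalently by the isomorphism in~(\ref{limpro}) a compatible system under $\psi$. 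One defines $f$ on $P$ by $f(g) := \varphi_b(h(s^k))$ appropriately normalized when $g = b^{-1}s^k$, and must check this is well-defined (independent of the representation $g = b^{-1}s^k$) and satisfies $f(cg) = \varphi_c^{-1}f(g)$ for $c\in P_-$.

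The cleanest route to (a) is in fact to quote the results of Section~\ref{limpro} and~\ref{S3} directly: since $s$ is central in $L$ and $L = L_- s^{\mathbb Z}$, and the action of $L_0$ (hence of $P_-\cap$ the relevant stabilizers) is invertible on $M$, one has $\Ind_{P_-}^{P}(M) = \Ind_{P_-}^{P}(M)$ and by transitivity of induction for monoids $P_- \subset P$, combined with the observation that $P = P_- s^{\mathbb Z}$ realizes $P$ as generated over $P_-$ by $s^{\mathbb Z}$ with $P_- \cap s^{\mathbb Z} = s^{-\mathbb N}$, the restriction-to-$s^{\mathbb Z}$ map is an isomorphism onto $\Ind^{s^{\mathbb Z}}_{s^{-\mathbb N}}(M)$. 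I would phrase this as: restriction gives a map, (\ref{2.6}) shows it is injective, and to see surjectivity one takes $h\in\Ind^{s^{\mathbb Z}}_{s^{-\mathbb N}}(M)$, defines $f(s^{-k}us^k v s^m)$ for $u\in N_0$... actually since every element of $P$ is $s^{-a} w$ with $w\in P_+$, write $g = s^{-a}w$, $a\in\mathbb N$, $w\in P_+$, and set $f(g) := \psi^a(\varphi_w(h(1)))$ — no, one needs $h$ not just $h(1)$. Let me set $f(s^{-a}w) := \psi^a(w\cdot h(s^{?}))$... The correct definition: for $g\in P$ pick $k$ large so that $s^k g \in P_+$ (possible since $P = \bigcup_k s^{-k}P_+$), then set $f(g) := \psi^k\big(\varphi_{s^k g}(\text{something})\big)$; but $f$ is not yet defined so instead use that $f$ restricted to $P_+$ must be $b\mapsto \varphi_b(f(1))$-type only if the $P_-$-action were the whole story — it is not. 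The honest definition uses $h$ directly: $f(g) := (s^k g)\cdot_{?} h(s^{-k})$ — I will work out in the writeup that $f(g) := \psi^k\big( (s^k g)\, h(s^{-k})\big)$ where $s^k g\in P_+$ acts via $\varphi$ and $h(s^{-k})\in M$, checking independence of $k$ using $\psi^k\varphi = \psi^{k-1}$ and the compatibility $h(s^{-(k+1)}) = \psi(h(s^{-k}))$... wait, $h(s^{-(k+1)}) = h(s^{-1}s^{-k}) = \psi(h(s^{-k}))$, yes. Independence of $k$ and the $P$-equivariance $f(cg) = \psi_c f(g)$-type relations for $c\in P_-$ are then direct checks.

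Part (b) is the analogous statement with $N$ in place of $s^{\mathbb Z}$: the restriction-to-$N$ map $M^P\to \Ind_{N_0}^N(M)$ is $N$-equivariant (right translation on both sides), and I would prove bijectivity using that $N = \bigcup_k N_k$ with $N_k = s^k N_0 s^{-k}$, so $N\subset P_-\cdot(\text{something})$ — indeed $N\subset P$ and for $n\in N$ there is $k$ with $s^{-k}ns^{k}\in N_0\subset P_-$, giving $n = s^k(s^{-k}ns^k)s^{-k}$. Injectivity: the values on $N$ determine the values on $s^{\mathbb Z}$ by the second formula in~(\ref{2.6}), and hence on all of $P$ as in (a). Surjectivity: given $\overline{h}\colon N\to M$ with $\overline h(n_0 n) = n_0 \overline h(n)$ for $n_0\in N_0$, one extends to $P$ by the formulas~(\ref{2.6}) — define $f(s^k) := \sum_{v\in J(N_0/N_k)} (v\circ\varphi^k)(\overline h(s^{-k}v^{-1}s^k))$ for $k\ge 0$ and $f(s^{-k}) := \psi^k(\overline h(1))$, check these are consistent under $\psi$ (using Lemma~\ref{idemp} on the orthogonal idempotents $v\varphi^k\psi^k v^{-1}$), then extend to $P = P_- s^{\mathbb Z}$ and verify $A[P]$-linearity.

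The main obstacle is the well-definedness bookkeeping in the surjectivity arguments: one must verify that the extension of $h$ (resp.\ $\overline h$) to a function on all of $P$ is independent of the various choices (the integer $k$, the coset representatives $J(N_0/N_k)$, the decomposition $g = b^{-1}s^k$), and that it genuinely satisfies $f(hg) = \varphi_h f(g)$ for $h\in P_-$ — not merely for $h\in N_0$ and $h = s^{-1}$ separately. This reduces, via $P_- = L_- N_0$ and $L_- = L_0 s^{-\mathbb N}$ together with Proposition~\ref{3.2}, to the compatibility $\psi_{b_1 b_2} = \psi_{b_2}\circ\psi_{b_1}$ already established and to Lemma~\ref{produit} ($\psi_t(\varphi_t(x)m) = x\psi_t(m)$, etc.), so no genuinely new difficulty arises — it is a matter of assembling these lemmas carefully. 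I expect the write-up to consist of: (1) stating the restriction maps and their equivariance (immediate); (2) injectivity via~(\ref{2.6}) plus $P = P_-s^{\mathbb Z}$; (3) constructing the inverse and checking it is well-defined and $P$-equivariant, which is where essentially all the work lies but which is routine given the earlier lemmas.
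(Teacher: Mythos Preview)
Your overall strategy matches the paper's: show the restriction maps are equivariant, prove injectivity using $P = P_{-}s^{\mathbb Z}$ together with~(\ref{2.6}), and construct an explicit inverse for surjectivity. Part~(b) is correct and is essentially the paper's argument: your formula $f(s^{k}) := \sum_{v\in J(N_{0}/N_{k})}(v\circ\varphi^{k})(\overline h(s^{-k}v^{-1}s^{k}))$ is exactly the map $\phi\mapsto f_{\phi}$ the paper writes down, and reducing to part~(a) is how the paper proceeds as well.

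The gap is in your surjectivity argument for part~(a). Your final formula $f(g) := \psi^{k}\big(\varphi_{s^{k}g}(h(s^{-k}))\big)$ with $s^{k}g\in P_{+}$ does not recover $h$ on positive powers of $s$. Take $g = s$, $k = 0$: your formula gives $f(s) = \varphi(h(1))$, whereas you need $f(s) = h(s)$; but the only constraint on $h(s)$ is $\psi(h(s)) = h(1)$, so $h(s)$ can be any element of $\varphi(h(1)) + M^{\psi=0}$. The problem is structural: $h(s^{-k}) = \psi^{k}(h(1))$ carries no information about $h(s^{m})$ for $m>0$, so any formula built from $h(s^{-k})$ alone cannot be a section of restriction. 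The decomposition $P = \bigcup_{k} s^{-k}P_{+}$ is the wrong one here, because there is no $P_{+}$-equivariance available in $\Ind_{P_{-}}^{P}(M)$.

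The correct construction is the one you actually wrote first and then abandoned: use $P = P_{-}s^{\mathbb Z}$, write $b = b_{-}s^{r}$ with $b_{-}\in P_{-}$, and set $f(b) := b_{-}\cdot\phi(s^{r})$ via the canonical $P_{-}$-action of Proposition~\ref{3.2} (so $\psi_{b_{-}^{-1}}$, not $\varphi_{b_{-}^{-1}}$ as you wrote). Well-definedness is then the one-line check that for two decompositions $b = b_{-}s^{r} = b'_{-}s^{r'}$ with $r\geq r'$ one has $b_{-}\phi(s^{r}) = b'_{-}s^{r'-r}\phi(s^{r}) = b'_{-}\phi(s^{r'})$. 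This is what the paper does, and it avoids all the bookkeeping you anticipate at the end.
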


\begin{proof} a) As $P=P_{-} s^{\mathbb Z} $ and $s^{-\mathbb N}\subset  P_{-} \cap s^{\mathbb Z}$ (it is an equality if $N$ is not trivial),
the restriction to $s^{\mathbb Z}$ is a $s^{\mathbb Z}$-equivariant injective map
 $ M^{P} \to  \Ind^{s^{\mathbb Z}}_{s^{-\mathbb N}}(M)$.
  To  show that the map is surjective, let  $\phi \in  \Ind^{s^{\mathbb Z}}_{s^{-\mathbb N}}(M)$ and
 $b\in P$. Then, for   $b=b_{-}s^{r}$ with $b_{-}  \in P_{-},   r\in \bf Z$,
 $$ f(b) \quad  := \quad  b_{-} \phi(s^{r}) $$
is well defined because the right side depends only on $b$, and not   on the choice of $(b_{-},r)$.  Indeed for two choices  $b=b_{-}s^{r}=b'_{-}s^{r'}$ with $b_{-}, b'_{-}  \in P_{-},   r \geq r' $ in $\bf Z$,  we have
    $$b_{-} \phi(s^{r})=b'_{-}s^{r'-r} \phi(s^{r})=b'_{-}\phi(s^{r'}) \quad . $$
   The  well defined function $b\mapsto
  f(b) $  on $P$ belongs obviously to  $M^{P}$ and its restriction   to $s^{\mathbb Z}$ is equal to $\phi$.

b)  As $P_{-}\cap N=N_{0}$ the restriction to $N$ is an $N$-equivariant  map $ M^{P}\to \Ind^{N}_{N_{0}}(M) \ .$  The map is injective because the restriction to $N$ of $f\in M^{P} $ determines the restriction of $f$ to $s^{\mathbb N}$ by  (\ref{2.6}) which determines $f$ by a).   We have the natural injective map
 \begin{equation}\label{e1}
  f\  \mapsto \ \phi_f \quad :  \quad\Ind^{s^{\mathbb Z}}_{s^{-\mathbb N}}(M) \to M^{P}\to \Ind_{N_{0}}^{N} (M)
  \end{equation}
    $$ \phi _f(s^{-k}us^{k})=(\psi^{k} \circ u) ( f(s^{k})) \quad {\rm for }\quad k\in \mathbb N, u\in N_{0} \quad, $$
 and we have  the map
  $$ \phi\  \mapsto \  f_\phi \quad : \quad  \Ind^{N}_{N_{0}}(M)\quad \to \quad  \Ind^{s^{\mathbb Z}}_{s^{-\mathbb N}}(M)
  $$
  defined by
$$
 f_\phi(s^{k}) \ = \  \sum_{v\in  J(N_0/N_k)}( v \circ  \varphi^{k}) (\phi (s^{-k}v ^{-1}s^{k}))  \quad {\rm for } \quad k\in \mathbb N.
$$
 Indeed the function $f_\phi$ satisfies $\psi (f_\phi(s^{k+1})) = f_\phi(s^{k})$ : since  $\psi \circ u \circ  \varphi^{k+1}=  s^{-1}u s \circ \varphi^{k}$ when $u\in  N_{1}$ and is  $0$ otherwise, we have
 \begin{align*}
  \psi (f_\phi(s^{k+1})) &
 = \psi (\sum_{v\in  J(N_0/N_{k+1})}( v \circ  \varphi^{k+1}) (\phi (s^{-k-1}v ^{-1}s^{k+1}))   \\
 & = \
\sum_{v\in  N_{1}\cap J(N_0/N_{k+1})}(s^{-1} v s \circ  \varphi^{k}) (\phi (s^{-k-1}v ^{-1}s^{k+1})) \ .
\end{align*}
The last term is
$$
 \sum_{v\in   J(N_0/N_{k})}( v  \circ  \varphi^{k}) (\phi (s^{-k}v ^{-1}s^{k})) \ = \  f_\phi(s^{k})
$$ because   $s ^{-1}(N_{1}\cap J(N_{0}/N_{k+1})) s$ is a system of representatives of $N_{0}/N_{k}$ and each term of the sum does not depend on the representative. Indeed for $u\in N_0$,
\begin{multline*}
    (v s^kus^{-k} \circ  \varphi^{k}) (\phi (s^{-k}(v s^kus^{-k})^{-1}s^{k}) \\
    = (v  \circ  \varphi^{k} \circ u) (\phi (u^{-1}s^{-k}v^{-1}s^k))= ( v  \circ  \varphi^{k}) (\phi (s^{-k}v ^{-1}s^{k})) \ .
\end{multline*}
For $u\in N_{0} , k \in \mathbb N$, we have
\begin{multline*}
    \phi_{f_\phi } (s^{-k}us^{k}) \ = \ (\psi^k\circ u)f_\phi(s^k) \\
     = \  \sum_{v\in  J(N_0/N_k)}(\psi^{k} \circ u  v \circ  \varphi^{k}) (\phi (s^{-k}v ^{-1}s^{k}))\ = \ \phi (s^{-k}us^{k})
\end{multline*}
where the last equality comes from  $\Ker \psi^k= \sum _{u\in N_0-N_k}u\varphi^k(M)$ . Moreover, we have $f_{\phi_f}=f$ as a consequence of Lemma \ref{idemp}.
\end{proof}

\begin{proposition} The induction functor
 $$\Ind_{P_{-}}^{P}    \quad : \quad {\mathcal M}_{A}(P_{+})^{et} \to {\mathcal M}_{A}(P_{-})  \to  {\mathcal M}_{A}(P)$$ is exact.
\end{proposition}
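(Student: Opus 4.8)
The plan is to reduce the assertion to the exactness of a projective‑limit functor along $\psi_s$. First I would observe that the functor $\mathcal M_A(P_+)^{et}\to\mathcal M_A(P_-)$ of Proposition \ref{abetale} is exact: a short exact sequence in $\mathcal M_A(P_+)^{et}$ is, by the proof of that proposition, short exact as a sequence of underlying $A$‑modules (kernel, image and cokernel are étale submodules/quotients), and the canonical $P_-$‑action of Proposition \ref{3.2} is automatically respected by all the morphisms involved thanks to Lemma \ref{-eq}. Hence it suffices to prove that $\Ind_{P_-}^{P}$ itself carries a short exact sequence $0\to M_1\to M_2\to M_3\to 0$ of étale $A[P_+]$‑modules to a short exact sequence. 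Being the right adjoint of $\Res^{P}_{P_-}$, the functor $\Ind_{P_-}^{P}$ is left exact (section \ref{Induction}), so the whole problem collapses to proving surjectivity of the induced map $M_2^{P}\to M_3^{P}$.

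To handle that, I would invoke the structural description of $M^P$ from Proposition \ref{3.3}(a): restriction to $s^{\mathbb Z}$ is an $A[s^{\mathbb Z}]$‑equivariant isomorphism $M^{P}\xrightarrow{\sim}\Ind_{s^{-\mathbb N}}^{s^{\mathbb Z}}(M)$, which by section \ref{limpro} is identified with the inverse limit $\varprojlim_{\psi_s}M=\{(x_m)_{m\in\mathbb N}\mid \psi_s(x_{m+1})=x_m\}$ (transition maps $\psi_s$). These identifications are natural in $M$, so under them the map $M_2^{P}\to M_3^{P}$ becomes the natural map $\varprojlim_{\psi_s}M_2\to\varprojlim_{\psi_s}M_3$. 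Now Lemma \ref{2.2}(b), applied with the endomorphism $\psi_s$, yields exactness of $0\to\varprojlim_{\psi_s}M_1\to\varprojlim_{\psi_s}M_2\to\varprojlim_{\psi_s}M_3\to 0$ — in particular the surjectivity we need — provided $\psi_s$ is surjective on $M_1$.

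That last hypothesis is the only place where étaleness enters, and it is essentially free: on an étale module $\psi_s$ is by construction a left inverse of $\varphi_s$, so $\psi_s\circ\varphi_s=\id$ forces $\psi_s$ to be surjective; and $M_1$ is itself étale because $\mathcal M_A(P_+)^{et}$ is abelian (Proposition \ref{abetale}), so its kernels are étale. This completes the argument. I do not expect a genuine obstacle; the only points worth spelling out carefully are that the forgetful functor to $A$‑modules is exact on $\mathcal M_A(P_+)^{et}$ (so the hypothesis of Lemma \ref{2.2}(b) may be verified on underlying modules) and that the identification $M^P\cong\varprojlim_{\psi_s}M$ coming from Proposition \ref{3.3} and section \ref{limpro} is functorial, so that the morphism of inverse limits fed into Lemma \ref{2.2}(b) is indeed the one induced by $M_2\to M_3$.
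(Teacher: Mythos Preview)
Your proposal is correct and follows essentially the same approach as the paper: the paper's proof consists of the single observation that the canonical $P_-$-action on an \'etale module is surjective, followed by an appeal to Lemma \ref{2.2}, which is exactly your reduction to $\varprojlim_{\psi_s}$ via Proposition \ref{3.3}(a) together with the surjectivity of $\psi_s$. You have simply spelled out in detail (left exactness from adjointness, functoriality of the identification, \'etaleness of $M_1$) what the paper leaves implicit.
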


\begin{proof} The canonical action of any element of $P_{-}$ on an \'etale  $A[P_{+}]$-module is surjective.  Apply Lemma \ref{2.2}.
\end{proof}

\begin{proposition} \label{action}
Let $f\in M^{P}$ .  Let $n,n'\in N$ and $t\in L _{+}$ and denote by $k(n)$ the smallest integer $k\in \mathbb N$ such that $n \in N_{-k}$. We have :
\begin{gather*}
( nf )(s^{m})=  (s^{m}n s^{-m} )(f(s^{m}) )  \quad \text{for all $m\geq k(n)$}, \\
  (t^{-1}f) (s^{m})=\psi_{t}(f(s^{m})) \quad\text{and} \quad  (sf)(s^{m}) = f(s^{m+1} )  \quad \text{for all $m \in \mathbb Z$}, \\
  (s^{k}f)(n')= \sum_{v \in  J(N_0/N_k)}v \varphi ^{k}(f( s^{-k}  v^{-1}n' s^{k} )) \quad \text{for all $k \geq 1$}, \\
   (t^{-1}f)(n')= \psi_{t} (f(tn't^{-1})) \quad\text{and}\quad  (nf)(n')=f(n'n) \  .
\end{gather*}
\end{proposition}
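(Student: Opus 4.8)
The plan is to obtain each identity by unwinding the definition of $M^{P}=\Ind_{P_{-}}^{P}(M)$ --- that is, $f\colon P\to M$ with $f(hg)=h\,f(g)$ for $h\in P_{-}$, $g\in P$, where $P_{-}$ acts on $M$ by the canonical action of Proposition~\ref{3.2} --- together with the fact that $P$ acts by right translation, $(gf)(x)=f(xg)$, and the idempotent decomposition of Lemma~\ref{idemp}. The two conventions to keep straight are that for $u\in N_{0}$ the canonical action of $u\in P_{-}$ on $M$ is $\varphi_{u}$ (so $u^{-1}$ acts by $\varphi_{u}^{-1}$), while for $t\in L_{+}$ the canonical action of $t^{-1}\in P_{-}$ is $\psi_{t}$; both are read off the rule $b^{-1}=(ut)^{-1}\mapsto\psi_{t}\circ u^{-1}$.

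For the three values on $s^{\mathbb Z}$: the relation $(sf)(s^{m})=f(s^{m+1})$ is immediate. For $(t^{-1}f)(s^{m})=f(s^{m}t^{-1})$ I would use that $s\in Z(L)$, so $s^{m}t^{-1}=t^{-1}s^{m}$ with $t^{-1}\in L_{-}\subset P_{-}$; hence $f(s^{m}t^{-1})=t^{-1}f(s^{m})=\psi_{t}(f(s^{m}))$. For $(nf)(s^{m})=f(s^{m}n)$ with $m\geq k(n)$, I would write $s^{m}n=(s^{m}ns^{-m})s^{m}$; since $(N_{k})_{k\in\mathbb Z}$ is decreasing, $n\in N_{-k(n)}\subseteq N_{-m}$, so $s^{m}ns^{-m}\in N_{0}\subset P_{-}$ and $f(s^{m}n)=(s^{m}ns^{-m})\,f(s^{m})$.

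For the values on $N$: $(nf)(n')=f(n'n)$ is the definition of right translation. For $(t^{-1}f)(n')=f(n't^{-1})$ I would factor $n't^{-1}=t^{-1}(tn't^{-1})$, noting $tn't^{-1}\in N$ since $N\triangleleft P$, so that $f(n't^{-1})=t^{-1}f(tn't^{-1})=\psi_{t}(f(tn't^{-1}))$. The one identity needing a short computation is $(s^{k}f)(n')=f(n's^{k})$: apply Lemma~\ref{idemp} with $t=s^{k}$ (so $tN_{0}t^{-1}=N_{k}$) to the vector $m=f(n's^{k})\in M$, giving
$$f(n's^{k})=\sum_{v\in J(N_{0}/N_{k})}v\,\varphi^{k}\psi^{k}\bigl(\varphi_{v}^{-1}f(n's^{k})\bigr).$$
Since $v^{-1}\in N_{0}\subset P_{-}$ and $s^{-k}\in L_{-}\subset P_{-}$ act on $M$ by $\varphi_{v}^{-1}$ and $\psi^{k}$, two applications of $P_{-}$-equivariance give first $\varphi_{v}^{-1}f(n's^{k})=f(v^{-1}n's^{k})$ and then $\psi^{k}f(v^{-1}n's^{k})=f(s^{-k}v^{-1}n's^{k})$; substituting yields the stated formula.

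I do not expect a genuine obstacle here: the argument is bookkeeping. The only points demanding care are the correct reading of the $P_{-}$-action convention noted above, and the two elementary membership facts $s^{m}ns^{-m}\in N_{0}$ for $m\geq k(n)$ (using that $(N_{k})_{k}$ decreases) and $tn't^{-1}\in N$ (using $N\triangleleft P$), which are exactly what make the relevant factorizations lie in $P_{-}$ and in $N$.
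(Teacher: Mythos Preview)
Your proposal is correct and follows essentially the same approach as the paper: each identity is obtained by rewriting $f(xg)$ via a factorization $xg=hy$ with $h\in P_{-}$, and the formula for $(s^{k}f)(n')$ is derived exactly as you do, by applying the idempotent decomposition of Lemma~\ref{idemp} to $f(n's^{k})$ and then pulling $v^{-1}$ and $s^{-k}$ inside $f$ using $P_{-}$-equivariance. Your treatment is in fact slightly more explicit than the paper's about the membership checks $s^{m}ns^{-m}\in N_{0}$ and $tn't^{-1}\in N$, but there is no substantive difference.
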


\begin{proof}  The formulas $(sf)(s^{m}) = f(s^{m+1} ), (nf)(n')=f(n'n)$ are obvious. It is clear that
$$(t^{-1}f) (s^{m})= f(s^{m}t^{-1})=f(t^{-1}s^{m})=  t^{-1}(f( s^{m}))=\psi_{t} (f( s^{m})) \ ,$$
$$(t^{-1}f) (n')= f(nt^{-1})=f(t^{-1}tn't^{-1})=t^{-1}(f(tn't^{-1}))=\psi_{t} (f(tn't^{-1})) \ .$$
$$nf(s^{m} )= f(s^{m}n)= f(s^{m}ns^{-m}s^{m})=(s^{m}ns^{-m}) f(s^{m}) \ . $$
 Using Lemma \ref{idemp}, we write
$$(s^{k}f)(n')= \sum_{v \in  J(N_0/N_k)}v \varphi ^{k}( \psi ^{k}( v^{-1} ((s^{k}f)(n'))))\  , $$
$$ \psi ^{k}( v^{-1}( (s^{k}f)(n')))=  \psi ^{k}( v^{-1} (f(n's^{k})))= \psi^{k} (f(v^{-1}n's^{k}))=f(s^{-k}v^{-1}n's^{k}) \ .
$$
We obtain
$(s^{k}f)(n')= \sum_{v \in  J(N_0/N_k)}v \varphi (f( s^{-k}  v^{-1}n 's^{k} ))\ . $
 \end{proof}

\begin{definition} The $s$-model  and the $N$-model of $M^{P}$ are the spaces
 $\Ind^{s^{\mathbb Z}}_{s^{-\mathbb N}}(M) \simeq \underset{\psi} \varprojlim \ M $   and    $\Ind^{N}_{N_{0}} (M) $, respectively,  with the action of $P$ described in the proposition \ref{action}.
\end{definition}

\subsection{Compactly induced representation $M_{c}^{P}$}\label{27}

The map
$$
  \ev_{0} \ : \ M^{P}\  \to \ M  \quad, \quad f \ \mapsto f(1)  \quad,
$$
admits a splitting
$$
\sigma_{0}:M\to M^{P}
$$
For $m\in M$,  $\sigma_{0}(m)$ vanishes on $N-N_{0}$ and is equal to $nm$ on $n \in N_0$ and to $\varphi^{k}(m)$ on $s^{k}$ for $k\in \mathbb N$. In particular, by proposition \ref{3.3}.b, $\sigma_0$ is independent of the choice of $s$.

\begin{lemma}\label{equ}
The map $\ev_{0} $ is $P_{-}$-equivariant, the map  $ \sigma_{0} $  is $P_{+}$-equivariant, the $A[P_{+}]$-modules $\sigma_{0}(M)$ and $M$ are isomorphic.
\end{lemma}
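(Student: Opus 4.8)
The plan is to verify each of the three assertions directly from the explicit formulas for $\ev_0$ and $\sigma_0$, using Proposition \ref{action} to compute the $P$-action on $M^P$ and Proposition \ref{3.3} to move freely between the $s$-model and the $N$-model. First I would check that $\ev_0$ is $P_-$-equivariant. Since $P_- = L_- N_0$ and $\ev_0(f) = f(1)$, for $u \in N_0$ Proposition \ref{action} gives $(uf)(1) = f(u) = u\,f(1)$ (using $N_0 \subset P_-$ so that $f(u) = u f(1)$ directly from the definition of $\Ind_{P_-}^P$), and for $t \in L_+$ it gives $(t^{-1}f)(1) = \psi_t(f(1)) = \psi_t(\ev_0(f))$. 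Since $\psi_{b}$ for $b^{-1} = ut \in P_+$ is by definition $\psi_t \circ u^{-1}$, combining these two computations shows $\ev_0(\psi_b f) = \psi_b(\ev_0 f)$ for all $b^{-1} \in P_-$; that is, $\ev_0$ intertwines the $P_-$-action on $M^P$ with the canonical $P_-$-action on $M$.

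Next I would check that $\sigma_0$ is $P_+$-equivariant. Here it is cleanest to work in the $s$-model $\Ind_{s^{-\mathbb N}}^{s^{\mathbb Z}}(M) \simeq \varprojlim_\psi M$, under which $\sigma_0(m)$ corresponds to the sequence $(\varphi^k(m))_{k\in\mathbb N}$, exactly as in formula (\ref{split}) of Section \ref{limpro}. For $t \in L_+$ one has $\varphi_t \circ \varphi^k = \varphi^k \circ \varphi_t$ because $s$ is central in $L$, so $\varphi_t \cdot (\varphi^k(m))_k = (\varphi^k(\varphi_t(m)))_k$, i.e. $\sigma_0(\varphi_t(m)) = \varphi_t \cdot \sigma_0(m)$; one must also check compatibility of the $\varphi_t$-action on the $s$-model with the action on $M^P$, which is what Proposition \ref{action} encodes (the formula $(t^{-1}f)(s^m) = \psi_t(f(s^m))$ dualizes to the statement that the $\varphi_t$-action on $M^P$ restricts on $s^{\mathbb Z}$-coordinates to the componentwise $\varphi_t$). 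For $u \in N_0$, using the $N$-model description — $\sigma_0(m)$ is $n \mapsto nm$ on $N_0$ and $0$ off $N_0$ — one computes $(u\sigma_0(m))(n') = \sigma_0(m)(n'u)$, which equals $n'u\,m = n'\cdot(um)$ when $n'u \in N_0$ and is $0$ otherwise; this is exactly $\sigma_0(um)(n')$ since $n' \in N_0$ and $n'u \in N_0$ are equivalent for $n' \in N_0$. Hence $\sigma_0$ commutes with the $N_0$-action as well, so $\sigma_0$ is $P_+$-equivariant.

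Finally, since $\ev_0 \circ \sigma_0 = \id_M$ (immediate: $\sigma_0(m)$ takes value $m$ at $1 \in N_0$), the map $\sigma_0$ is an injective $A$-linear splitting, and by the previous paragraph it is $P_+$-equivariant; therefore it identifies $M$ with the $A[P_+]$-submodule $\sigma_0(M)$ of $M^P$, proving the last assertion. The only mildly delicate point — and the one I would be most careful about — is matching the two descriptions of the $P$-action on $M^P$ (the abstract one from $\Ind_{P_-}^P$ versus the explicit $s$-model and $N$-model formulas of Proposition \ref{action}) and checking that the $P_+$-action induced on $\sigma_0(M) \subset M^P$ really is carried to the original $\varphi_t$ and $N_0$-actions on $M$ rather than to some twist; but this is precisely guaranteed by Propositions \ref{3.3} and \ref{action}, so once those identifications are invoked the verification is routine.
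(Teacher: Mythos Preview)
Your overall strategy matches the paper's: use the $s$-model for $L_+$-equivariance of $\sigma_0$ and the $N$-model for $N_0$-equivariance. Your treatment of $\ev_0$ and of the $N_0$-equivariance of $\sigma_0$ is correct (and in fact your $N_0$ argument is slightly cleaner than the paper's, which evaluates at the points $s^{-k}us^k$ via formula (\ref{2.6}) rather than directly on $N_0$ and its complement).

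There is, however, a genuine gap in your $L_+$-equivariance argument. You assert that the $t$-action on $M^P$, transported to the $s$-model, is the componentwise $\varphi_t$, and justify this by saying the formula $(t^{-1}f)(s^m)=\psi_t(f(s^m))$ from Proposition~\ref{action} ``dualizes'' to this statement. But Proposition~\ref{action} only computes the action of $t^{-1}\in L_-$, not of $t\in L_+$, and $\psi_t$ is merely a left inverse of $\varphi_t$, not a two-sided inverse; knowing $\psi_t((tf)(s^m))=f(s^m)$ does not determine $(tf)(s^m)$. The paper closes this gap by choosing $t'\in L_+$ and $r\in\mathbb{N}$ with $t't=s^r$ (possible since $L=L_-s^{\mathbb{Z}}$), so that $s^k t = t'^{-1}s^{k+r}$ lies in $P_-s^{\mathbb{Z}}$ and one can compute directly:
\[
(t\sigma_0(m))(s^k)=\sigma_0(m)(t'^{-1}s^{k+r})=\psi_{t'}\varphi^{k+r}(m)=\varphi_t\varphi^k(m)=\varphi^k(\varphi_t(m)),
\]
using $\varphi_t=\psi_{t'}\varphi^r$. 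Once you insert this computation in place of the ``dualization'' remark, your proof is complete.
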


\begin{proof}  It is clear on the definition of $M^{P}$ that $\ev_{0}$ is  $P_{-}$-equivariant.
We show that $\sigma_{0}$ is  $L_{+}$-equivariant using the $s$-model.
 Let  $ t\in L_{+}$.
 We choose  $t' \in L_{+}, r\in \mathbb N$ with $t't=s^{r}$.  Then $\varphi_{t'}\varphi_{t}=\varphi^{r}$ and
 $\varphi_{t}= \psi_{t'}\varphi^{r}.$ We  obtain for  $t \sigma_{0}(m) (s^{k}) = \sigma_{0}(m) (s^{k}t)$ the following expression
\begin{multline*}
    \sigma_{0}(m)(t'^{-1}s^{k+r})
    =\psi_{t'}(\sigma_{0}(m)(s^{k+r})) \\
    =\psi_{t'}\varphi ^{r+k}(m)=\varphi_{t}\varphi^{k}(m)
    =\varphi^{k}\varphi_{t}(m)=
    \sigma_{0}(tm)(s^{k}) \ .
\end{multline*}
Hence $t \sigma_{0}(m)= \sigma_{0}(tm)$. We show that $\sigma_{0}$ is $N_{0}$-equivariant using the $N$-model. Let $n_{0}\in N_{0}$ and $m\in M$.  Then  $n_{0}\sigma_{0}(m)=\sigma_{0}(n_{0}m)$, because for  $k\in \mathbb N ,\ u\in N_{0}$,
\begin{multline*}
    n_{0}\sigma_{0}(m) (s^{-k}us^{k}) = \sigma_{0}(m) (s^{-k}us^{k}n_{0}) =  \sigma_{0}(m) (s^{-k}us^{k}n_{0} s^{-k}s^{k}) \\
    = (\psi^{k} \circ u s^{k}n_{0} s^{-k} \circ \varphi ^{k}) (m)=  (\psi^{k} \circ u   \circ \varphi ^{k}) (n_{0}m)=\sigma_{0}(n_{0}m) (s^{-k}us^{k}) \ .
\end{multline*}
\end{proof}

The  compact induction of $M$ from $P_{-}$ to $P$  is defined to be the $A[P]$-submodule $$ \ind_{P_{-}}^{P}(M):=M^{P}_{c} $$  of $M^{P}$ generated by $\sigma_{0}(M)$.  The space  $M^{P}_{c} $  is   the subspace of functions  $f\in M^{P}$ with compact restriction to $N$, equivalently such that  $f(s^{k+r})=\varphi^{k}(f(s^{r}))$ for all $k\in \mathbb N$ and some $r\in \mathbb N$. The restriction to $s^{\mathbb Z}$ is an $s^{\mathbb Z}$-isomorphism  (proposition \ref{3.3})
$$
M^{P}_{c} \quad \to \quad \ind_{s^{-\mathbb N},\psi}^{s^{\mathbb Z}}(M)\quad  .
$$
By proposition \ref{pexx}, the map
\begin{align*}
  A[P]\otimes _{A[P_{+}]}M & \ \to \  \ind_{P_{-}}^{P}(M)\\
 [s^{-k}] \otimes m & \ \mapsto \ (\varphi^{-k} \circ \sigma_0)(m)
\end{align*}
is an isomorphism.

\begin{lemma} \label{ci1} The  compact induction functor   from $P_{-}$ to $P$ is isomorphic to  \begin{equation}   \ind_{P_{-}}^{P} \simeq A[P] \otimes_{A[P_{+}]} \ : \  {\mathcal M}_{A}(P_{+})^{et} \ \to \ {\mathcal M}_{A}(P )  \ ,  \end{equation}
and is exact.  \end{lemma}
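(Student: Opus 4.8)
The plan is to establish Lemma \ref{ci1} by reducing everything to the statements already proved for the pair $(\mathbb N,\mathbb Z)$ in Section \ref{limpro}. First I would observe that the isomorphism $A[P]\otimes_{A[P_+]}M\to \ind_{P_-}^P(M)$ has already been exhibited explicitly in the paragraph preceding the lemma, via $[s^{-k}]\otimes m\mapsto (\varphi^{-k}\circ\sigma_0)(m)$; so the content of the first assertion is really the functoriality and well-definedness of this map. For functoriality: given a morphism $g\colon M\to M'$ of \'etale $A[P_+]$-modules, it is $P_-$-equivariant by Lemma \ref{-eq}, hence induces a morphism $M^P=\Ind_{P_-}^P(M)\to \Ind_{P_-}^P(M')=M'^P$; by Lemma \ref{equ} this morphism commutes with $\sigma_0$, hence carries $M_c^P$ into $M_c'^P$, giving the functor $\ind_{P_-}^P$. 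The natural transformation $A[P]\otimes_{A[P_+]}-\ \Rightarrow\ \ind_{P_-}^P$ is then seen to be an isomorphism of functors because it is, on each object, the bijection described above.

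The second and substantive point is exactness. Here I would pass to the $s$-model: by Proposition \ref{3.3}(a) restriction to $s^{\mathbb Z}$ identifies $M^P$ with $\Ind^{s^{\mathbb Z}}_{s^{-\mathbb N}}(M)\simeq\varprojlim_\psi M$, and carries $M_c^P$ onto $\ind_{s^{-\mathbb N},\psi}^{s^{\mathbb Z}}(M)$ — this compatibility is recorded just before the lemma. Thus the functor $\ind_{P_-}^P$, composed with the forgetful functor to $A$-modules (or to $A[s^{\mathbb Z}]$-modules), coincides with the functor $\ind_{\mathbb N,\psi}^{\mathbb Z}$ of Section \ref{limpro}, which is exact by Corollary \ref{exx}. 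Since a sequence of $A[P]$-modules is exact if and only if it is exact as a sequence of $A$-modules, exactness of $\ind_{P_-}^P$ follows. One should check that a short exact sequence of \'etale $A[P_+]$-modules remains, after applying $\ind_{P_-}^P$, a complex of $A[P]$-modules — this is automatic since $\ind_{P_-}^P$ is a functor — and that the maps in the $s$-model agree with those used in Corollary \ref{exx}, which is immediate from the definition of $\sigma_0$ in the $s$-model and the relation $\psi\circ\varphi=\id$ coming from \'etaleness with respect to $s\in Z(L)$.

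I expect the only mild obstacle to be bookkeeping: one must be careful that the identification $M^P\simeq\varprojlim_\psi M$ and the map $\ind_{\mathbb N,\psi}^{\mathbb Z}(M)\hookrightarrow \Ind_{\mathbb N,\psi}^{\mathbb Z}(M)$ are genuinely the restrictions of the $A[P]$-structure, so that exactness transported back up really is exactness of $A[P]$-modules and not merely of $A$-modules — but this is exactly what Proposition \ref{3.3} and Lemma \ref{equ} provide, so no new idea is needed. Alternatively, and perhaps more cleanly, exactness can be read directly off the isomorphism $\ind_{P_-}^P\simeq A[P]\otimes_{A[P_+]}-$ by noting that, on the underlying $A$-modules, the decomposition in Proposition \ref{pexx}, namely $\ind_{P_-}^P(M)=\sigma_0(M)\oplus\bigoplus_{k\ge 1}\sigma_k(M^{\psi=0})$, together with the analogous decomposition of $A[P]\otimes_{A[P_+]}M$, exhibits $\ind_{P_-}^P$ as a direct sum of the exact functors $M\mapsto M$ and $M\mapsto M^{\psi=0}$ (the latter exact because $M\mapsto\varphi(M)$ is exact, $\varphi$ being injective by \'etaleness), whence the claim. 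I would present the $s$-model argument as the main line and mention the direct-sum argument as a remark.
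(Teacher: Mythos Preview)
Your proposal is correct and takes essentially the same approach as the paper: the isomorphism $A[P]\otimes_{A[P_+]}M\cong \ind_{P_-}^P(M)$ is already recorded in the text immediately preceding the lemma (via Proposition \ref{pexx}), and the paper's entire proof of exactness is the one-line reference ``For the exactness see Corollary \ref{exx}'', which is precisely your $s$-model reduction. Your additional remarks on functoriality and the direct-sum alternative are fine but go beyond what the paper bothers to spell out.
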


\begin{proof} For the exactness  see Corollary \ref{exx}.
\end{proof}

   \subsection{$P$-equivariant  map
$\Res  :  C_{c}^{\infty}(N,A)  \to \End_{A}( M^{P})$}

 Let $C_{c}^{\infty}(N,A)$ be the $A$-module  of locally constant compactly
 supported functions on $N$ with values in $A$, with the  usual product of
 functions and with the natural action of $P$, $$P\times C_{c}^{\infty}(N,A)
 \ \to \ C_{c}^{\infty}(N,A) \quad, \quad (b, f) \mapsto  (bf)(x)=f(b^{-1}.x)
 \quad  . $$
 For any open compact subgroup $U\subset N$,  the subring  $C^{\infty}(U,A) \subset C_{c}^{\infty}(N,A)$  of  functions $f $  supported in $U$, has a unit
 equal to   the characteristic function  $1_{U}$ of $U$,  and  is  stable by the $P$-stabilizer $P_{U}$ of $U$.    We have $b1_{U}=1_{b.U}$. The $A[P_{U}]$-module $C^{\infty}(U,A)$  and
the $A[P]$-module $C_{c}^{\infty}(N,A)$ are cyclic generated by $1_{U}$.
The  monoid $P_{+}=N_{0}L_{+}$  acts on $ \End_{A}(M)$ by
$$P_{+} \times  \End_{A}(M) \ \to \  \End_{A}(M)$$
$$(b, F) \mapsto  \varphi_{b} \circ  F \circ \psi_{b} \quad. $$

\begin{proposition} \label{in} There exists a unique  $P_{+} $-equivariant $A$-linear  map
 $$\res \ : \ C^{\infty}(N_{0},A) \quad \to \quad  \End_{A}(M)$$
 respecting the unit.  It is an homomorphism of $A$-algebras.
 \end{proposition}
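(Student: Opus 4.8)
The plan is to build $\res$ one coset-level at a time. Since $s\in L_{+}$ and $N_{0}$ is profinite, the subgroups $N_{k}=s^{k}N_{0}s^{-k}$ ($k\in\mathbb N$), which are open of finite index in $N_{0}$ and decreasing with trivial intersection, form a fundamental system of neighbourhoods of $1$ in $N_{0}$, so $C^{\infty}(N_{0},A)=\bigcup_{k\in\mathbb N}A[N_{0}/N_{k}]$, where $A[N_{0}/N_{k}]$ is the subalgebra of functions constant on right cosets of $N_{k}$, with $A$-basis the orthogonal idempotents $1_{uN_{k}}$, $u\in J(N_{0}/N_{k})$, summing to $1_{N_{0}}$, and $1_{uN_{k}}=\sum_{v\in J(N_{k}/N_{k+1})}1_{uvN_{k+1}}$. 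Recalling $b\cdot 1_{N_{0}}=1_{b.N_{0}}$ and $(us^{k}).N_{0}=uN_{k}$, any $P_{+}$-equivariant, $A$-linear, unit-preserving map must send $1_{uN_{k}}=(us^{k})\cdot 1_{N_{0}}$ to $\varphi_{us^{k}}\circ\psi_{us^{k}}$; this already gives uniqueness. For existence I would define $\res_{k}\colon A[N_{0}/N_{k}]\to\End_{A}(M)$ by $1_{uN_{k}}\mapsto e^{(k)}_{u}:=\varphi_{us^{k}}\circ\psi_{us^{k}}=\varphi_{u}\circ\varphi_{s^{k}}\circ\psi_{s^{k}}\circ\varphi_{u}^{-1}$.

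By Lemma~\ref{idemp} applied with $t=s^{k}$ (so $tN_{0}t^{-1}=N_{k}$), the $e^{(k)}_{u}$ are orthogonal idempotents with sum $\id_{M}$; each $e^{(k)}_{u}$ is the projector onto $\varphi_{u}\varphi_{s^{k}}(M)$ with kernel $\varphi_{u}(M^{\psi_{s^{k}}=0})$, and since $M^{\psi_{s^{k}}=0}$ is a module over $A[N_{k}]$ (Lemma~\ref{idemp} again) and $\varphi_{u}$ is invertible for $u\in N_{0}$, both this image and this kernel, hence $e^{(k)}_{u}$ itself, depend only on the coset $uN_{k}$; thus $\res_{k}$ is well defined, and being an $A$-linear bijection between two complete systems of orthogonal idempotents it is automatically a unit-preserving $A$-algebra homomorphism. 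For the compatibility $\res_{k+1}|_{A[N_{0}/N_{k}]}=\res_{k}$ I would apply the injective operator $\varphi_{s^{k}}$ to $M=\bigoplus_{m\in J(N_{0}/N_{1})}\varphi_{m}\varphi_{s}(M)$, use $\varphi_{s^{k}}\circ\varphi_{m}=\varphi_{s^{k}ms^{-k}}\circ\varphi_{s^{k}}$ to get $\varphi_{s^{k}}(M)=\bigoplus_{v\in J(N_{k}/N_{k+1})}\varphi_{v}\varphi_{s^{k+1}}(M)$, hence $\varphi_{u}\varphi_{s^{k}}(M)=\bigoplus_{v}\varphi_{uv}\varphi_{s^{k+1}}(M)$, and then compare images and kernels to conclude $e^{(k)}_{u}=\sum_{v}e^{(k+1)}_{uv}=\res_{k+1}(1_{uN_{k}})$. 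Passing to the union yields a unit-preserving $A$-algebra homomorphism $\res\colon C^{\infty}(N_{0},A)\to\End_{A}(M)$ characterised by $\res(1_{uN_{k}})=\varphi_{us^{k}}\circ\psi_{us^{k}}$.

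For $P_{+}$-equivariance the key step is the identity $\res(1_{b.N_{0}})=\varphi_{b}\circ\psi_{b}$ for all $b\in P_{+}$. Granting it: setting $c=us^{k}$ we have $b\cdot 1_{uN_{k}}=1_{b.(c.N_{0})}=1_{(bc).N_{0}}$, so, using that $\varphi$ is multiplicative and $\psi$ anti-multiplicative on $P_{+}$ (Proposition~\ref{3.2}), $\res(b\cdot 1_{uN_{k}})=\varphi_{bc}\circ\psi_{bc}=\varphi_{b}\circ\varphi_{c}\circ\psi_{c}\circ\psi_{b}=\varphi_{b}\circ\res(1_{uN_{k}})\circ\psi_{b}$, and $A$-linearity finishes. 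For $b=us^{k}$ the identity is the definition; for a general $b=wt$ with $w\in N_{0}$, $t\in L_{+}$, I would pick $j$ with $N_{j}\subseteq tN_{0}t^{-1}$, observe that this is equivalent to $t':=t^{-1}s^{j}\in L_{+}$ with $tt'=s^{j}$, so $\varphi_{s^{j}}(M)=\varphi_{t}\varphi_{t'}(M)$, and deduce from $\varphi_{m}\circ\varphi_{t}=\varphi_{t}\circ\varphi_{t^{-1}mt}$ (for $m\in tN_{0}t^{-1}$) together with \'etaleness of $M$ relative to $t'$ that $\varphi_{t}(M)=\bigoplus_{m\in J((tN_{0}t^{-1})/N_{j})}\varphi_{m}\varphi_{s^{j}}(M)$; then, expanding $1_{b.N_{0}}=1_{w(tN_{0}t^{-1})}=\sum_{m}1_{wmN_{j}}$, a comparison of the image and kernel of $\sum_{m}e^{(j)}_{wm}$ with those of $\varphi_{b}\circ\psi_{b}=\varphi_{w}\circ(\varphi_{t}\circ\psi_{t})\circ\varphi_{w}^{-1}$ (the latter kernel being $\varphi_{w}(M^{\psi_{t}=0})$, with $M^{\psi_{t}=0}=\bigoplus_{a\in J(N_{0}/tN_{0}t^{-1}),\,a\neq 1}\varphi_{a}\varphi_{t}(M)$) gives the identity.

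I expect the main obstacle to be exactly these ``comparison of idempotents'' arguments (compatibility across levels, and the identity $\res(1_{b.N_{0}})=\varphi_{b}\psi_{b}$): two idempotents onto the same submodule need not agree, so each time one must also verify that the complementary ``along'' submodules coincide, and this is precisely where one invokes that $M^{\psi_{s^{k}}=0}$ is an $A[N_{k}]$-module, respectively that $M^{\psi_{t}=0}$ is the sum of the non-identity pieces of the $t$-\'etale decomposition of $M$. The accompanying technical nuisance is reconciling the $\varphi_{s^{k}}$-adic and $\varphi_{t}$-adic \'etale decompositions of $M$ in order to push $P_{+}$-equivariance beyond the elements $us^{k}$; the device that makes this work is the auxiliary element $t'=t^{-1}s^{j}\in L_{+}$ with $tt'$ a power of $s$. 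Everything else (well-definedness modulo these points, the algebra-homomorphism property, unitality, uniqueness) is then formal.
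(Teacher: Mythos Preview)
Your proof is correct and follows essentially the same strategy as the paper's: both establish the formula $\res(1_{b.N_0})=\varphi_b\circ\psi_b$ and reduce the required consistency to Lemma~\ref{idemp} via an auxiliary element of $L_+$ (your $t'=t^{-1}s^j$, the paper's $\tau=t^{-1}t'$). The only difference is organizational: you build $\res$ level by level on the filtration $C^\infty(N_0,A)=\bigcup_k A[N_0/N_k]$ and then prove the general formula $\res(1_{b.N_0})=\varphi_b\psi_b$, whereas the paper defines $\res(1_{b.N_0})$ for all $b\in P_+$ at once and proves the single additivity identity~\eqref{a}; your image/kernel comparisons of idempotents are exactly what the paper's reduction of~\eqref{a} to Lemma~\ref{idemp} accomplishes.
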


\begin{proof} If the map $\res$ exists, it is unique because the $A[P_{+}]$-module $C^{\infty}(N_{0},A)$ is   generated by the unit $1_{N_{0}}$.
 The existence of $\res$ is equivalent to the lemma \ref{idemp}.
 For $b \in P_{+}$ we have the idempotent
\begin{equation}\label{inn}\res (1_{b.N_{0}}) :=   \varphi_{b} \circ \psi_{b} \ \in \  \End_{A}(M) \quad.
\end{equation}
We claim that for any   finite disjoint sum $b.N_{0} =\sqcup_{i\in I }b_{i}.N_{0}$ with $b_{i}   \in P_{+},$   the idempotents  $\res (1_{b_{i}.N_{0}})$ are orthogonal of sum
\begin{equation}\label{a} \res (1_{b.N_{0}}) = \sum_{i}\ \res (1_{b_{i}.N_{0}}) \quad.  \end{equation}
We prove the claim
  by reducing  to the case     $b=1$ and $b_{i}=u_{i} t$ with $u_{i}\in N_{0}, t \in L_{+}$ where the claim follows from the lemma  \ref{idemp}.
To  do this, we write   (\ref{a}) as
$$
u \circ \varphi_{t} \circ \psi_{t} \circ u^{-1} =  \sum_{i\in I}\ u_{i} \circ \varphi_{t_{i}} \circ \psi_{t_{i}} \circ u_{i}^{-1} \quad {\rm for} \quad b=ut, b_{i}=u_{i}t_{i}, \ u,u_{i}\in N_{0}\ , \ t,t_{i}\in L_{+} \ .
$$
Multiplying on the left by $u^{-1}$ and on the right by $u$ we reduce to the case  $u=1$.
 Then we choose  $t'\in L_{+}$ such that $t'\in t_{i}L_{+}$ for all $i\in I$. We reduce to the case    $t_{i}=t'$  constant for $i\in I$, because    $u_{i}t_{i}.N_{0} = \sqcup_{j\in I_{i}} u_{i,j}t'.N_{0}$ is a finite disjoint union with $u_{i,j} \in  N_{0} $,  the equality   (\ref{a}) will be satisfied when both  $\res(1_{t.N_{0}}) \ = \ \sum_{i\in I} \sum_{j\in I_{i}}  \ \res(1_{u_{i,j}t'.N_{0}})    $ and   $\res(1_{u_{i}t_{i}.N_{0}}) \ = \ \sum_{j\in I_{i}}  \ \res(1_{u_{i,j}t'.N_{0}} )  $,  the orthogonality of the idempotents
 $\res(1_{u_{i}t_{i}.N_{0}})$ will be satisfied when   the idempotents $\res(1_{u_{i,j}t'.N_{0}} )$ are orthogonal.
 We are reduced to $b=t , b_{i}=u_{i}t'$ for $i\in I$.
The inclusion   $u_{i} t'N_{0}t'^{-1} \subset  tN_{0}t^{-1}$ implies $t^{-1}t' \in L_{+}$. We write $t'=t \tau$ with $\tau \in L_{+}$. We have $\varphi_{t'}=\varphi_{t  }\circ \varphi_{ \tau}$ and
$\psi_{t'} =\psi_{ \tau }\circ  \psi_{t } $ by Proposition \ref{3.2}.
We have $ tN_{0}t^{-1}=\sqcup_{i\in I} u_{i}t\tau N_{0}\tau^{-1} t^{-1}$ with the $u_{i}$  form a representative system $J(tN_{0}t^{-1}/ t\tau N_{0}\tau^{-1} t^{-1})$   of $tN_{0}t^{-1}/ t\tau N_{0}\tau^{-1} t^{-1}$.  Writing $u_{i}=t v_{i} t^{-1}$
 we write (\ref{a}) under the form
$$  \varphi_{t} \circ \psi_{t}   \quad =  \quad \sum_{v \in J(N_{0}/\tau N_{0}\tau^{-1})}\ t v  t^{-1} \circ \varphi_{t  }\circ \varphi_{ \tau} \circ \psi_{ \tau } \circ \psi_{t }  \circ t v ^{-1}t^{-1}  \quad.
$$
Using (\ref{produit1}) and the lemma \ref{produit} this identity is equivalent to
 $$  \varphi_{t} \circ \psi_{t}   \quad =  \quad \sum_{v\in J(N_{0}/\tau N_{0}\tau^{-1})}\  \varphi_{t  } \circ  v  \circ \varphi_{ \tau} \circ \psi_{ \tau }  \circ  v^{-1}  \circ \psi_{t }
$$
which follows from Lemma \ref{idemp}.
As $\psi_{t}\circ \psi_{t}=\id$, the orthogonality of the  idempotents $  v  \circ \varphi_{ \tau} \circ \psi_{ \tau }  \circ  v ^{-1} $ for
$v \in J(N_{0}/\tau N_{0}\tau^{-1})$  implies   the orthogonality of the  idempotents
$\varphi_{t  } \circ  v \circ \varphi_{ \tau} \circ \psi_{ \tau }  \circ  v^{-1}  \circ \psi_{t }$.

The claim being proved, we get  an $A$-linear map $\res:C^{\infty}(N_{0},A)  \to   \End_{A}(M)$ which  is clearly $P_{+}$-equivariant and respects the unit.
 It respects the product because, for  $f_{1}, f_{2}\in C ^{\infty}(N_{0},A)$,
there exists $t\in L_{+}$ such that $f_{1}$ and $f_{2}$ are constant on each coset  $u tN_{0}t^{-1} \subset N_{0}$.  Hence $\res (f_{1}f_{2})=\sum_{v\in J(N_{0}/tN_{0}t^{-1})}f_{1}(v) f_{2}(v) \res(1_{v t. N_{0}})= \res(f_{1})\circ \res (f_{2})$.
  \end{proof}

The group $P=NL$  acts   on   $\End_{A}( M^{P})$  by conjugation. We have the  canonical injective algebra map
\begin{equation}\label{defs} F\ \mapsto\  \sigma_{0}\circ F\circ \ev_{0} \quad: \quad \End_{A}M \ \to \  \End_{A}(M^{P}) \ .
\end{equation}
It is $P_+$-equivariant since,
by the lemma \ref{equ}   for $b \in P_{+}$,   we have
\begin{equation}\
 b\circ \sigma_{0} \circ F \circ  \ev_{0} \circ b^{-1} \ = \  \sigma_{0} \circ   \varphi_{b} \circ F  \circ \psi_{b}  \circ \ev_{0}  \ .
\end{equation}
We consider the composite  $P_{+}$-equivariant algebra homomorphism
$$ C^{\infty}(N_{0},A)\ \xrightarrow{\ \res \ } \ \End_{A}( M)  \ \longrightarrow \ \End_{A}( M^{P}) \ . $$
 sending $1_{N_{0}} $ to   $R_0:=\sigma_{0} \circ \ev_{0}$ and, more generally, $1_{b.N_0}$ to  $b \circ R_0 \circ b^{-1}$ for $b\in P_+$.

 For $f\in M^{P}$, $R_0(f) \in M^{P}$  vanishes on $N-N_{0}$ and $R_0(f)(s^{k}) \ = \ \varphi^{k}(f(1))$. In the $N$-model,    $R_0$ is the restriction to $N_{0}$.

 We show now that the composite morphism extends to $ C_{c}^{\infty}(N,A)$.

\begin{proposition}  \label{Res} There exists a unique $P$-equivariant $A$-linear  map
$$\Res \ : \ C_{c}^{\infty}(N,A) \ \to \ \End_{A}( M^{P})$$
 such that $ \Res (1_{N_{0}} ) \ = \ R_0   $.
 The map $\Res$ is an algebra homomorphism.
  \end{proposition}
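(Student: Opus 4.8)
The plan is to build $\Res$ using the decomposition $N = \bigcup_k N_{-k}$, where $N_{-k} = s^{-k}N_0 s^k$, together with the fact that $C^\infty_c(N,A)$ is generated as an $A[P]$-module by $1_{N_0}$. Uniqueness is immediate: any compactly supported locally constant function on $N$ is a finite $A$-linear combination of functions $1_{b.U}$ with $U$ a compact open subgroup of $N$ and $b\in P$; since $P = P_- s^{\mathbb Z}$ and $P$ acts transitively on the $N_{-k}$ through dilation by $s$, every such $1_{b.U}$ lies in the $A[P]$-submodule generated by $1_{N_0}$, so a $P$-equivariant map is determined by its value $R_0$ on $1_{N_0}$. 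For existence, first I would note that the $A[P_+]$-equivariant algebra map $C^\infty(N_0,A)\to\End_A(M^P)$ already constructed (the composite $\res$ followed by $F\mapsto\sigma_0\circ F\circ\ev_0$, sending $1_{N_0}$ to $R_0$) must be transported along the $s^{\mathbb Z}$-action. Concretely, for $k\ge 0$ set $\Res|_{C^\infty(N_{-k},A)} := s^{-k}\circ (\text{the }C^\infty(N_0,A)\text{-map acting on }s^k(\cdot)\text{-translates})\circ s^k$, i.e. $\Res(s^{-k}f_0 s^k\text{-translate})$ is defined by conjugating the already-known map on $C^\infty(N_0,A)$ by the automorphism $\varphi^{\pm k}$ of $M^P$.

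The key step is to check that these definitions on the nested pieces $C^\infty(N_{-k},A)$ are compatible, so that they glue to a well-defined map on $C^\infty_c(N,A) = \bigcup_k C^\infty(N_{-k},A)$. This reduces to the identity $\Res(1_{uN_k}) = \sum_{v}\Res(1_{uvN_{k+1}})$ for a system of representatives $v$ of $N_k/N_{k+1}$ inside $N_0$, which in $\End_A(M^P)$ is exactly the orthogonality-and-sum statement for the idempotents attached via $\res$ and Lemma \ref{idemp}, pushed forward by the algebra embedding \eqref{defs}. Because $C^\infty(N_0,A)\to\End_A M$ is already an algebra homomorphism (Proposition \ref{in}) and \eqref{defs} is an algebra map, multiplicativity of $\Res$ on each $C^\infty(N_{-k},A)$ is automatic; the products of functions with disjoint supports across different $N_{-k}$ also behave correctly because $C^\infty_c(N,A)$ is the increasing union and multiplication stays within a single large enough piece. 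Once well-definedness on $C^\infty_c(N,A)$ is established, $P_+$-equivariance holds on $C^\infty(N_0,A)$ by construction, $s^{\mathbb Z}$-equivariance holds by the very way we transported along $\varphi^{\pm k}$, and since $P$ is generated by $P_-$, $P_+$ and $s^{\mathbb Z}$ (indeed $P = P_- s^{\mathbb Z}$), full $P$-equivariance follows — using Lemma \ref{-eq} to get $P_-$-equivariance from $P_+$-equivariance where needed, together with the explicit formulas of Proposition \ref{action} for how $P$ acts on $M^P$.

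The main obstacle I expect is the gluing/compatibility check: one must verify carefully that conjugating the $C^\infty(N_0,A)$-map by $\varphi^k$ and by $\varphi^{k+1}$ agree on the overlap $C^\infty(N_0,A)\subset C^\infty(N_{-1},A)$, i.e. that $\sigma_0\circ\res(1_{N_0})\circ\ev_0$ equals $\sum_{v\in J(N_0/N_1)} v\circ\varphi\circ\big(\sigma_0\circ\res(1_{v^{-1}N_0 v\cap N_0}\text{-type term})\circ\ev_0\big)\circ\varphi^{-1}\circ v^{-1}$ after unwinding, which is again Lemma \ref{idemp} applied inside $M^P$ via \eqref{defs}, but bookkeeping the $P$-action (Proposition \ref{action}) on the non-compact model $M^P$ needs care. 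A cleaner route, which I would actually pursue, is to avoid the $N$-model entirely: use the $s^{\mathbb Z}$-model $M^P\simeq\varprojlim_\psi M$, where $\Res(1_{uN_k})$ acts componentwise through the idempotents $\res(1_{u s^{-\bullet}N_0 s^\bullet})$ of $\End_A(M)$, and then verify the relation $(b_1 f_1)(b_2 f_2) = b_1 b_2\, f_1^{b_2} f_2$ of the skew group ring directly, reducing everything to Proposition \ref{in} and the commutation rules already recorded. Either way, the heart is purely the orthogonal-idempotent decomposition of Lemma \ref{idemp}; no genuinely new input is needed.
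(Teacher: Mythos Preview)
Your approach is correct and rests on the same core input as the paper (Proposition~\ref{in}/Lemma~\ref{idemp}), but it is organized less directly. The paper does not build $\Res$ level by level along the filtration $C^\infty(N_{-k},A)$; instead it defines $\Res(1_{b.N_0}) := b\circ R_0\circ b^{-1}$ uniformly for every $b\in P$, then checks (i) this depends only on the coset $b.N_0$ (since $b.N_0=b'.N_0$ iff $bP_0=b'P_0$, and $R_0$ is $P_0$-invariant), and (ii) for any disjoint decomposition $b.N_0=\sqcup_i b_i.N_0$ the idempotents $b_i R_0 b_i^{-1}$ are orthogonal with sum $bR_0b^{-1}$, which by conjugating by $b^{-1}$ reduces immediately to Proposition~\ref{in}. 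With this definition $P$-equivariance is tautological, so there is no need to argue equivariance from generators. Multiplicativity is then checked by the same conjugation-to-$N_0$ trick.

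Your gluing check along $C^\infty(N_{-k},A)\subset C^\infty(N_{-k-1},A)$ unwinds to exactly the same reduction, so nothing is wrong; it is just the paper's single well-definedness step split into a sequence of compatibility checks. One small wrinkle: your appeal to Lemma~\ref{-eq} to pass from $P_+$- to $P_-$-equivariance is misplaced, since that lemma concerns morphisms between \'etale $A[P_+]$-modules, whereas $\End_A(M^P)$ with the conjugation action is already a genuine $A[P]$-module. You do not need that lemma at all: once you know $\Res$ is equivariant for $P_+$ and for $s^{-1}$ (both acting invertibly on $\End_A(M^P)$), equivariance for the group they generate, namely $P$, follows formally.
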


  \begin{proof}  If the map $\Res$ exists, it is unique because the $A[P]$-module $C^{\infty}(N,A)$ is   generated by $1_{N_{0}}$.

For  $b\in P$  we define
$$
\Res (1_{b.N_0})\  := \ b \circ R_0 \circ b^{-1}  \  .
 $$
We prove that
$b \circ R_0 \circ b^{-1}$ depends only on the subset $b.N_{0} \subset N$,
and that  for any finite disjoint decomposition of $b.N_{0} = \sqcup_{i\in I}   b_{i}.N_{0}$ with  $b_{i}\in P$,
the idempotents  $b_{i} \circ R_0 \circ b_{i}^{-1}$ are orthogonal of sum  $b \circ R_0 \circ b^{-1}$.

The equivalence relation $b.N_{0}=b'.N_{0}$ for $b,b'\in P$ is equivalent to $b'P_{0}=bP_{0}$ because the normalizer or of $N_{0}$ in $P$ is $P_{0}$.  We have $b \circ R_0 \circ b^{-1} =R_0$
when $b\in P_{0}$ because $\res(1_{b.N_{0}})=\res(1_{N_{0}})= \id$ (proposition \ref{in}). Hence $b \circ R_0 \circ b^{-1}$ depends only on $b.N_{0}$.
By conjugation by $b^{-1}$, we reduce to prove that the idempotents $b_{i} \circ R_0 \circ b_{i}^{-1}$ are orthogonal of sum $R_0 $
   for  any disjoint decomposition of $N_{0} = \sqcup_{i\in I}   b_{i}.N_{0}$  and $b_{i}\in P$.
    The $b_{i}$   belong to $P_{+}$, and the proposition \ref{in} implies the equality.

To prove that the $A$-linear map $\Res$ respects the product it suffices to check that, for any $t\in L_{+}, k\in \mathbb N$,   the endomorphisms  $\Res(1_{v tN_{0}t^{-1}})\in  \End_{A}( M^{P})$ are orthogonal idempotents, for $v \in J(N_{-k}/ tN_{0}t^{-1})$. We already proved this for $k=0$ and for all $t\in L_{+}$, and $s^{k} J(N_{-k}/ tN_{0}t^{-1}) s^{-k} = J(N_{0}/s^{k}t N_{0}t^{-1}s^{-k})$.
 Hence  we know that
 $$(s^{k}\circ \Res(1_{v tN_{0}t^{-1}}) \circ s^{-k})_{v \in J(N_{-k}/ tN_{0}t^{-1})}$$
  are orthogonal idempotents.
This implies  that $(\Res(1_{v tN_{0}t^{-1}}))_{v \in J(N_{-k}/ tN_{0}t^{-1})}$ are orthogonal idempotents.
\end{proof}

\begin{remark} \begin{itemize}
\item[(i)] The map $\Res$ is the restriction of an algebra homomorphism
 $$ C^{\infty}(N,A) \ \to \ \End_{A}( M^{P}) \ , $$
 where $C^{\infty}(N,A)$ is the algebra of all locally constant functions on $N$. For this we observe
 \begin{enumerate}
\item   The $A[P_{+}]$-module $ C^{\infty}(N_{0},A)$ is \'etale.  For $t\in L_{+}$, the corresponding  $\psi_{t}$
satisfies $(\psi_{t}f)(x)=f(txt^{-1})$.

\item The map $(f,m)\mapsto \res(f)(m) \ : \  C^{\infty}(N_{0},A)\times M \to M$ is $\psi_{t}$-equivariant, hence induces to a pairing  $ C^{\infty}(N_{0},A)^{P}\times M^{P}\to M^{P}$.

\item The $A[P]$-module $ C^{\infty}(N_{0},A)^{P}$ is canonically isomorphic to $C^{\infty}(N,A)$.
 \end{enumerate}

\item[(ii)] The monoid $P_{+}\times P_{+}$ acts on $\End_{A}(M)$ by   $\varphi_{(b_{1},b_{2})}F := \varphi_{b_{1}}\circ F \circ \psi_{b_{2}}$. For this action $\End_{A}(M)$ is an \'etale $A[P_{+}\times P_{+}]$-module,  and we have $\psi_{(b_{1},b_{2})}F =\psi_{b_{1}}\circ F \circ \varphi_{b_{2}}$.

\end{itemize}
\end{remark}

\begin{definition}   \label{not} For any compact open subsets $V\subset U\subset N_{0}$ and $m \in M$, we denote
 $$\res_{U}:=\res (1_{U})\ , \ M_{U}:= \res_{U}(M)\ , \  m_{U}:=\res_{U}(m) \  , \ \res^{U}_{V}:= \res_{V}|_{M_{U}}:M_{U}\to M_{V}  \ . $$
For any compact open subsets $V\subset U\subset N$ and $ f\in M^{P} $
$$ \Res_{U}:=\Res (1_{U})\ , \ M_{U}:= \Res_{U}(M^{P})\ , \  f_{U}:=\Res_{U}(f) \  , \ \Res^{U}_{V}:= \Res_{V}|_{M_{U}}:M_{U}\to M_{V}  .
$$
\end{definition}

\begin{remark} \label{rr}  {\rm The notations are coherent   for $U\subset N_{0}$, as follows from the following  properties.  For $b\in P_{+}$ we have
\begin{itemize}
\item[--]  $\res_{b.U}= \varphi_{b} \circ \res_{U} \circ \psi_{b}$ (proposition \ref{in}) \ ;
\item[--]
 $ b \circ \Res_{U}\   = \  \sigma_{0} \circ \varphi_{b} \circ \res_{U}\circ \ev_{0} $ and   $ \Res_{U} \circ b^{-1}\   = \  \sigma_{0}  \circ \res_{U}\circ   \psi_{b} \circ \ev_{0}$  \  ;
 \item[--]
 $(\Res_{U} f)(1)\   =\ \res_{U}(f(1))  $ .
 \end{itemize}}
 \end{remark}
 We note also that the proposition \ref{Res} implies:

\begin{corollary} \label{Rr}  For any  compact open subset $U\subset N$ equal to a finite disjoint union  $U=\sqcup_{i\in I}U_{i}$ of compact open subsets $U_{i}\subset N$, the idempotents $\Res_{U_{i}} $ are orthogonal   of sum $\Res_{U} $.
\end{corollary}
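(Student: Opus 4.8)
The statement is an immediate consequence of Proposition~\ref{Res}, which asserts that $\Res\colon C_c^\infty(N,A)\to\End_A(M^P)$ is an $A$-linear algebra homomorphism. The plan is simply to transport the corresponding elementary identities among characteristic functions across $\Res$.

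First I would record the two identities in $C_c^\infty(N,A)$ coming from the disjointness of the decomposition $U=\sqcup_{i\in I}U_i$: namely $1_U=\sum_{i\in I}1_{U_i}$, and $1_{U_i}1_{U_j}=\delta_{ij}\,1_{U_i}$ for $i,j\in I$ (the product of functions being pointwise multiplication). Both are clear since the $U_i$ are pairwise disjoint with union $U$, and the sum is finite so it lies in $C_c^\infty(N,A)$.

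Next, applying the $A$-linearity of $\Res$ to the first identity gives $\Res_U=\sum_{i\in I}\Res_{U_i}$, which is the "sum" part of the claim. Applying the multiplicativity of $\Res$ to the second identity gives $\Res_{U_i}\circ\Res_{U_j}=\Res(1_{U_i}1_{U_j})=\delta_{ij}\,\Res_{U_i}$; taking $i=j$ shows each $\Res_{U_i}$ is idempotent, and taking $i\neq j$ shows the family is orthogonal. This completes the argument.

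There is no real obstacle here; the only thing to be careful about is that $\Res$ has already been established (in Proposition~\ref{Res}) to be defined on all compactly supported locally constant functions and to be both additive and multiplicative, so no further extension or continuity argument is needed. (One may also note that the same reasoning applied to $\res$ and Proposition~\ref{in} gives the analogous statement for $U\subseteq N_0$, consistent with Remark~\ref{rr}.)
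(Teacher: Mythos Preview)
Your proof is correct and is exactly the argument the paper has in mind: the corollary is stated immediately after Proposition~\ref{Res} with the remark that it is implied by that proposition, and your transport of the identities $1_U=\sum_i 1_{U_i}$ and $1_{U_i}1_{U_j}=\delta_{ij}1_{U_i}$ through the algebra homomorphism $\Res$ is precisely what is intended.
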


  \begin{corollary}\label{bN} For $u\in N$, the projector $\Res_{uN_{0}}$ is the restriction to $N_{0}u^{-1}$ in the $N$-model.
  \end{corollary}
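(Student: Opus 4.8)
The plan is to unwind the definitions of the two $N$-models and transport the description of $\Res_{uN_0}$ from $\mathcal{C}$ to $N$ via the bijection of Proposition~\ref{3.3}.b. Recall that $uN_0$ is a compact open subset of $N$ (a coset of $N_0$), so $1_{uN_0}\in C_c^\infty(N,A)$ and $\Res_{uN_0}=\Res(1_{uN_0})$ is a well-defined idempotent in $\End_A(M^P)$ by Proposition~\ref{Res} and Corollary~\ref{Rr}. Write $u=s^{-k}u_0 s^k$ for suitable $k\in\mathbb N$ and $u_0\in N_0$ (possible since the $N_{-k}$ exhaust $N$); equivalently $u=b^{-1}.1$ for an appropriate $b$, but the cleaner route is to use $u\in N$ directly as an element of the group and the $P$-equivariance $\Res_{uN_0}=\Res(u\cdot 1_{N_0})=u\circ\Res_{N_0}\circ u^{-1}=u\circ R_0\circ u^{-1}$, using $u1_{N_0}=1_{u.N_0}=1_{uN_0}$ since $L$ acts trivially in the second coordinate of $N=N\rtimes L$ — wait, here $u\in N$ acts by left translation so $u.N_0=uN_0$ indeed. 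So $\Res_{uN_0}=u\circ R_0\circ u^{-1}$ as an operator on $M^P$.

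**Next I would** compute $(u\circ R_0\circ u^{-1})(f)$ for $f\in M^P$ in the $N$-model, i.e.\ regard $f$ as a function $N\to M$ with the $P$-action of Proposition~\ref{action}. By that proposition, $(u^{-1}f)(n')=f(n'u)$ for $n'\in N$, wait more precisely $nf(n')=f(n'n)$ so $(u^{-1}f)(n')=f(n'u^{-1})$. Then $R_0$ in the $N$-model is restriction to $N_0$ (stated just after Proposition~\ref{Res}: ``In the $N$-model, $R_0$ is the restriction to $N_0$''), meaning $R_0(g)$ is the function equal to $g$ on $N_0$ and $0$ on $N-N_0$. Finally applying $u$ again, $(u\,h)(n')=h(n'u)$. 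So $(u R_0 u^{-1}f)(n')$ equals $f(n')$ when $n'u\in N_0$, i.e.\ when $n'\in N_0u^{-1}$, and equals $0$ otherwise. That is exactly the assertion: $\Res_{uN_0}$ is restriction to the coset $N_0u^{-1}$ in the $N$-model.

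**The one point requiring care** is matching the conventions: whether the $N$-model uses $f\mapsto f|_N$ with $P$ acting by $(bf)(n')=f(n'b)$ (for $b$ in the appropriate piece), and correspondingly whether left translation by $u\in N$ sends the support $N_0$ to $N_0u^{-1}$ or $uN_0$. I would verify this against Proposition~\ref{action}: there $(nf)(n')=f(n'n)$, so left multiplication by $n$ on $M^P$ corresponds to right translation of the argument by $n$; hence the support of $n\cdot(\text{restriction to }N_0)$ is $\{n' : n'n\in N_0\}=N_0n^{-1}$. With $n=u$ this gives support $N_0u^{-1}$, consistent with the claim. I expect no genuine obstacle here — the corollary is a direct bookkeeping consequence of Proposition~\ref{Res}, the $P$-equivariance of $\Res$, and the explicit $N$-model formulas — so the ``proof'' is a two-line verification; the only thing to get right is the left/right convention, which the displayed formulas in Proposition~\ref{action} pin down unambiguously.
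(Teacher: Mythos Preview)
Your proposal is correct and follows essentially the same approach as the paper: both use the $P$-equivariance to write $\Res_{uN_0}=u\circ R_0\circ u^{-1}$, invoke that $R_0$ is restriction to $N_0$ in the $N$-model, and then compute via the formula $(nf)(n')=f(n'n)$ from Proposition~\ref{action} to see the support is $N_0u^{-1}$. The only difference is stylistic---the paper's proof is terser and omits the convention-checking you spell out.
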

  \begin{proof} We have $\Res_{uN_{0}}=u\circ \Res_{N_{0}}\circ u^{-1}$ and  $\Res_{N_{0}}$ is the restriction to $N_{0}$ in the $N$-model. Hence for $x\in N$, $(\Res_{uN_{0}}f)(x)=(\Res_{N_{0}}u^{-1}f)(xu)$ vanishes   for  $x\in N-N_{0}u^{-1}$ and for $ v\in N_{0}$, $(\Res_{uN_{0}}f)(vu^{-1})=(u^{-1}f)(v)=f(vu^{-1})$.
  \end{proof}

The constructions are functorial. A morphism $f:M\to M'$ of $A[P_{+}]$-module, being also $A[P_{-}]$-equivariant induces a morphism $\Ind_{P_{-}}^{P}(f): M^{P}\to M'^{P}$ of $A[P]$-modules.  On the other hand, $M^P$ is a module over the non unital ring $ C^{\infty}_{c}(N,A) $ through the map $\Res$. The morphism $\Ind_{P_{-}}^{P}(f)$ is $ C^{\infty}_{c}(N,A) $-equivariant.  Since $\Res$ is $P$-equivariant , it suffices to prove that $\Ind_{P_{-}}^{P}(f)$ respects $R_0 = \sigma_0\circ ev_0$ which is obvious.

    \subsection{$P$-equivariant sheaf on N}

We formulate now
the   proposition \ref{Res}  in the language of sheaves.

\begin{theorem} \label{the11} One can associate to an \'etale $ A[P_{+}]$-module $M$,  a $P$-equivariant sheaf $\mathcal S_M$ of $A$-modules on the compact open subsets   $U\subset N$, with

 - \  sections $M_{U} $ on $U$,

- \   restrictions
 $\Res^{U}_{V}  $
for any   open compact  subset $V\subset U $,

- \   action $f \mapsto bf =\Res_{b.U}(bf) \ : \ M_{U} \  \to  \ M_{b.U}$ of $b\in P$.
 \end{theorem}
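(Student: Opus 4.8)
The plan is to package the data already produced in Proposition~\ref{Res} and Definition~\ref{not} into a sheaf, and to verify the sheaf axioms together with $P$-equivariance. Recall that for a locally profinite group $N$, a $P$-equivariant sheaf of $A$-modules on $N$ is the same as the following combinatorial datum: for every compact open $U\subseteq N$ an $A$-module $\mathcal S_M(U)$, for every inclusion $V\subseteq U$ of compact opens an $A$-linear restriction map such that the sheaf (gluing) condition holds for finite disjoint covers — this suffices because the topology of $N$ is totally disconnected and every open cover of a compact open refines to a finite disjoint one — and for $b\in P$ compatible isomorphisms $\mathcal S_M(U)\to\mathcal S_M(b.U)$ satisfying the cocycle identity. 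So the first step is simply to \emph{define} the presheaf: set $\mathcal S_M(U):=M_U=\Res_U(M^P)$, with restriction maps $\Res^U_V=\Res_V|_{M_U}\colon M_U\to M_V$, and, for $b\in P$, the map $f\mapsto \Res_{b.U}(bf)$. Since $1_{b.U}=b1_U$ and $\Res$ is $P$-equivariant we have $\Res_{b.U}=b\circ\Res_U\circ b^{-1}$, so this last map is indeed the restriction to $M_{b.U}$ of the bijection $b\colon M^P\to M^P$, hence an isomorphism onto $M_{b.U}$.

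The second step is the presheaf-to-sheaf verification. Functoriality of restriction, $\Res^U_U=\id_{M_U}$ and $\Res^V_W\circ\Res^U_V=\Res^U_W$ for $W\subseteq V\subseteq U$, is immediate from the fact that $\Res$ is a ring homomorphism and $1_V1_U=1_V$, $1_W1_V=1_W$. For the gluing axiom, suppose $U=\bigsqcup_{i\in I}U_i$ is a finite disjoint decomposition into compact opens. By Corollary~\ref{Rr} the idempotents $\Res_{U_i}$ are orthogonal with sum $\Res_U$; restricting these identities to $M^P$ and composing with $\Res_U$ gives that $M_U=\bigoplus_{i\in I}M_{U_i}$ with projections the restriction maps $\Res^U_{U_i}$. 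This is exactly the statement that $\mathcal S_M$ is a sheaf on the (disjoint-cover) basis, i.e. separatedness and gluing for finite disjoint covers, which extends to all covers by total disconnectedness; equivalently one invokes the already-cited equivalence between $P$-equivariant sheaves on $N$ and non-degenerate modules over the skew group ring $C_c^\infty(N,A)\# P$, under which $M^P$ with its $\Res$- and $P$-action is precisely such a module.

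The third step is $P$-equivariance. One must check the cocycle relation: for $b_1,b_2\in P$ and $U$ compact open, the composite $M_U\xrightarrow{b_2}M_{b_2.U}\xrightarrow{b_1}M_{b_1b_2.U}$ equals the map attached to $b_1b_2$; this follows because $(b_1b_2)f=b_1(b_2f)$ in $M^P$ and $\Res_{(b_1b_2).U}=b_1b_2\circ\Res_U\circ(b_1b_2)^{-1}$, using associativity of the $P$-action on $M^P$. Compatibility of the $P$-action with restriction, namely the square relating $\Res^U_V$ and $\Res^{b.U}_{b.V}$ via the $b$-isomorphisms, follows from $P$-equivariance of $\Res$ (Proposition~\ref{Res}) applied to $1_V\le 1_U$: $b\circ\Res_V\circ b^{-1}=\Res_{b.V}$ and $b\circ\Res_U\circ b^{-1}=\Res_{b.U}$. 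Finally one identifies the stalk-level data with the claimed sections: over $N_0$ we have $M_{N_0}=\Res_{N_0}(M^P)=R_0(M^P)=\sigma_0(M)\cong M$ by Lemma~\ref{equ}, so the sheaf has sections $M$ (and more generally $M_U$) as asserted.

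I do not expect a serious obstacle here: every ingredient — orthogonality and additivity of the idempotents $\Res_U$, their behaviour under $P$, the ring-homomorphism property of $\Res$ — has been established in Proposition~\ref{in}, Proposition~\ref{Res} and its corollaries. The only point requiring a little care is the passage from finite disjoint covers to arbitrary open covers, i.e. checking that the presheaf defined on the basis of compact opens genuinely sheafifies to a $P$-equivariant sheaf on all of $N$; this is where one uses that $N$ is locally profinite (so compact opens form a basis stable under finite intersection and every cover of a compact open admits a finite disjoint refinement), or alternatively one simply quotes the equivalence of categories with non-degenerate $C_c^\infty(N,A)\# P$-modules stated just before Theorem~\ref{the11}, under which the whole statement becomes a restatement of Proposition~\ref{Res}.
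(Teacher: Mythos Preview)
Your proposal is correct and follows essentially the same route as the paper: define $\mathcal S_M(U)=\Res_U(M^P)$, use the idempotent/ring-homomorphism properties of $\Res$ (Proposition~\ref{Res}, Corollary~\ref{Rr}) to get functoriality of restriction and the direct-sum decomposition $M_U=\bigoplus_i M_{U_i}$ for finite disjoint covers, and verify $P$-equivariance from $\Res_{b.U}=b\circ\Res_U\circ b^{-1}$. The paper differs only in presentation: it derives transitivity $\Res_W^V\circ\Res_V^U=\Res_W^U$ from orthogonality of $\Res_W$ and $\Res_{V\setminus W}$ rather than directly from multiplicativity, and it spells out explicitly (steps c1--c3) the passage from finite disjoint covers to arbitrary covers that you flag as ``the only point requiring a little care''---your sketch of that reduction is right but the paper's c3 shows concretely why a section glued on a disjoint refinement restricts correctly on \emph{every} member of the original cover, not just those in the refinement. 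Your alternative of simply invoking the equivalence with nondegenerate $C_c^\infty(N,A)\# P$-modules is legitimate and slightly slicker, though the paper reserves that formalism for the later $G/P$ discussion and gives a self-contained proof here.
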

\begin{proof}
 a) $\Res_{U}^{U}$ is the identity on $M_{U}=\Res_{U}(M) $  because $\Res_{U}$ is an idempotent.

 b) $\Res_{W}^{V}\circ \Res_{V} ^{U}= \Res_{W} ^{U}$  for compact open subsets $W\subset V \subset U\subset N$. Write  $V$ as the disjoint union of $W$ and of a compact open subset $W'\subset V$, and use that $\Res_{W} $ and $ \Res_{W'}$ are orthogonal idempotents in $\End_A (M^P)$.

 c) If $U$ is the union of compact open subsets $U_{i}\subset U$ for $i\in I$, and  $f_{i}\in  M_{U_{i}} $  satisfying  $\Res_{U_{i}\cap U_{j}}^{U_{i}} (f_{i})= \Res_{U_{i}\cap U_{j}}^{U_{j}} (f_{j})$ for $i,j\in I$, there exists a unique $f\in
 M_{U} $ such that  $ \Res_{U_{i}} ^{U}(f)=f_{i}$ for all $i\in I$.

  c1) True when $(U_{i})_{i\in I} $ is a partition of $U$ because $I$ is finite and $\Res_{U}$ is the sum of the orthogonal idempotents   $\Res_{U_{i}}$.

c2) True when $I$ is finite because the finite covering defines a finite partition of $U$ by open compact subsets $V_{j}$ for $j\in J$, such that  $V_{j} \cap U_{i}$ is empty or equal to $V_{j}$ for all $i\in I,j\in J$ .
By hypothesis on the $f_{i}$, if   $V_{j} \subset U_{i}$, then the restriction of $f_{i}$ to $V_{j}$ does not depend on the choice of $i$, and is denoted by $\phi_{j}$. Applying c1), there is a unique $f\in  M_{U}$ such that $\Res_{V_{j}}(f)=\phi_{j}$ for all $j\in J$. Note also that  the $V_{j}$ contained in $U_{i}$ form a finite partition of $U_{i}$ and that $f_{i}$ is the unique element of $M_{U_{i}}$ such that $\Res_{V_{j}}(f_{i})=\phi_{j}$ for those $j$,
We deduce that $f$ is the unique element of  $ M_{U}$ such that $\Res_{U_{i}}(f)=f_{i}$ for all $i \in I$.

c3) In general, $U$ being compact, there exists a finite subset $I'\subset I$ such that $U$ is covered by $U_{i}$ for $i\in I'$. By c2), there exists a unique $f_{I'} \in M_{U} $ such that $f_{i}=\Res_{U_{i}}(f_{I'})$ for all $i\in I'$. Let $i'\in I$ not belonging to $I'$.  Then the non empty  intersections $U_{i'} \cap U_{j}$ for $j\in I'$ form a finite covering of $U_{i'}$ by compact open subsets. By c2), $f_{i'}$ is the unique element of  $M_{U_{i'}} $ such that $\Res_{U_{i'}\cap U_{j}}(f_{j})=\Res_{U_{i'} \cap U_{j}}(f_{i'})$ for all non empty  $U_{i'} \cap U_{j}$.
The element $\Res_{U_{i'}}(f) $ has the same property, we deduce by uniqueness that  $f_{i'}= \Res_{U_{i'}}(f)$.

d) Let $f\in M_{U}$. When $b=1$ we have clearly $1(f)= f $.
 For $b,b'\in P$, we have $(bb')(f )= \Res_{(bb').U}((bb') f) =  \Res_{b. ( b'.U)}(b(b' f)) = b (b'f)$.
 For a compact open subset $ V\subset U$, we have $b \circ \Res_{V}\circ \Res_{U} =\Res_{bV}\circ b \circ \Res_{U}$ in $\End_{A}M^{P}$ hence
$b \Res_{V}^{U}= \Res_{b.V} b $.

\end{proof}

\begin{proposition} \label{extension} Let $H$ be a topological group acting continuously on  a locally compact totally disconnected space $X$.
Any $H$-equivariant sheaf  $\mathcal F$ (of $A$-modules)   on  the  compact open subsets of $X$ extends uniquely  to a  $H$-equivariant sheaf on the  open subsets of $X$.

\end{proposition}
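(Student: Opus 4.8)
The plan is to use the standard device that recovers a sheaf on the full topology of open subsets of $X$ from its values on a basis of the topology, here the basis of compact open subsets (which indeed form a basis because $X$ is locally compact and totally disconnected). First I would forget the $H$-action momentarily and produce the sheaf of $A$-modules. Given $\mathcal F$ defined on compact open subsets with restriction maps and the sheaf axioms (gluing and separation) verified for finite covers by compact open subsets — which is exactly the content carried over from Theorem \ref{the11} — I would define, for an arbitrary open subset $V \subseteq X$,
\[
\widetilde{\mathcal F}(V) \ := \ \varprojlim_{U} \mathcal F(U),
\]
the projective limit taken over the directed poset of compact open subsets $U \subseteq V$, ordered by inclusion, with transition maps the restrictions $\mathcal F(U') \to \mathcal F(U)$ for $U \subseteq U'$. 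This poset is directed because a finite union of compact open sets is compact open. For $V$ compact open, the poset has a largest element $V$ itself, so $\widetilde{\mathcal F}(V) = \mathcal F(V)$ canonically, i.e. $\widetilde{\mathcal F}$ extends $\mathcal F$. The restriction maps $\widetilde{\mathcal F}(V) \to \widetilde{\mathcal F}(V')$ for $V' \subseteq V$ are induced by the universal property of the limit (every compact open $U \subseteq V'$ is a compact open subset of $V$).

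Next I would check that $\widetilde{\mathcal F}$ is a sheaf on all open subsets. Separation: if $V = \bigcup_{i} V_i$ is an open cover and a section $f \in \widetilde{\mathcal F}(V)$ restricts to $0$ on each $V_i$, then for every compact open $U \subseteq V$, the sets $U \cap V_i$ form an open cover of the compact $U$, hence a finite subfamily covers $U$; refining to a finite partition of $U$ into compact opens each contained in some $V_i$ and using the separation axiom for $\mathcal F$ on finite compact-open covers (Theorem \ref{the11}(c)), the image of $f$ in $\mathcal F(U)$ vanishes; as this holds for all such $U$, $f = 0$. Gluing: given compatible $f_i \in \widetilde{\mathcal F}(V_i)$, for each compact open $U \subseteq V$ choose a finite partition $U = \bigsqcup_j W_j$ with each $W_j$ compact open and contained in some $V_{i(j)}$; set $g_U \in \mathcal F(U)$ to be the unique element (by Theorem \ref{the11}(c1)) restricting to the image of $f_{i(j)}$ on each $W_j$. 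One verifies $g_U$ is independent of the choice of partition and of the indices $i(j)$ using compatibility of the $f_i$ and the separation/gluing for $\mathcal F$, and that $(g_U)_U$ is compatible under restriction, hence defines $g \in \widetilde{\mathcal F}(V)$ with $g|_{V_i} = f_i$.

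Finally I would transport the $H$-action. For $h \in H$ and $V$ open, the map $\mathcal F(U) \to \mathcal F(h.U)$ for compact open $U \subseteq V$ (note $h.U$ is compact open, $h.U \subseteq h.V$) is compatible with restrictions, so by the universal property of the limit it induces $h \colon \widetilde{\mathcal F}(V) \to \widetilde{\mathcal F}(h.V)$; the cocycle identity $(hh') = h \circ h'$ and $1 = \id$ descend from the corresponding identities for $\mathcal F$. That $\widetilde{\mathcal F}$ with this action is an $H$-equivariant sheaf, and that it is the \emph{unique} such extension, both follow from the basis argument: any $H$-equivariant sheaf on all open subsets restricting to $\mathcal F$ on compact opens must, by the sheaf axioms applied to the cover of $V$ by its compact open subsets, have $\mathcal F(V) = \varprojlim_U \mathcal F(U)$ canonically and $H$-equivariantly, which pins it down up to unique isomorphism. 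The main obstacle I anticipate is purely bookkeeping: checking that the glued sections $g_U$ are well defined independently of the chosen finite partitions into compact opens and of the choice of covering index for each piece — everything reduces, via common refinements of partitions, to the finite compact-open case already established in Theorem \ref{the11}, so no genuinely new idea is needed, only careful diagram chasing.
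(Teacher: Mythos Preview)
Your proposal is correct and follows precisely the standard construction the paper has in mind: the paper's own proof merely cites \cite{BGR} \S 9.2.3 Prop.~1 as ``well known'', and the subsequent remark in the paper confirms that sections on an open $U$ are recovered as $\varprojlim_V \mathcal{F}(V)$ over compact open $V \subset U$, exactly your definition. One small point: your references to Theorem~\ref{the11} for the sheaf axioms on compact opens are slightly misplaced, since Proposition~\ref{extension} is stated for an \emph{arbitrary} $H$-equivariant sheaf $\mathcal{F}$ on compact opens (the axioms are part of the hypothesis, not something inherited from Theorem~\ref{the11}); but this is only a labeling issue and does not affect your argument.
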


\begin{proof}    This is well known. See
\cite{BGR} \S 9.2.3 Prop. 1.

\end{proof}

\begin{remark} {\sl The space of sections on an open subset $U\subset X$ is the projective limit of the sections ${\mathcal F}(V)$ on the compact open subsets $V$ of $U$ for the restriction maps ${\mathcal F}(V)\to {\mathcal F}(V')$ for $V'\subset V$.}
\end{remark}

By this general result, the $P$-equivariant sheaf defined by $M$  on the compact open subsets of $ N$ (theorem \ref{the11}),  extends uniquely to a $P$-equivariant sheaf
${\cal S}_{M}$ on (arbitrary open subsets of) $N$.
We extend the definitions \ref{not} to arbitrary  open subsets $U\subset N$.  We denote by $\Res_{V}^{U}$ the restriction maps for open subsets $V\subset U$ of $N$, by $\Res_{U}=\Res^{N}_{U}$ and by $M_{U}= \Res_{U}(M^{P})$.  In this way we obtain an exact functor $M \to (M_U)_U$ from   ${\mathcal M}_A(P_+)^{et}$ to the category of $P$-equivariant sheaves of $A$-modules on $N$.
Note that for a compact open subset $U$ even the functor $M\to M_U$ is exact.

\begin{proposition}\label{sections} The representation of $P$ on the   global sections of the sheaf $\mathcal S_{M}$  is  canonically   isomorphic to $M^{P}$.
\end{proposition}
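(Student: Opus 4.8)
The global sections of $\mathcal{S}_M$ are, by Proposition \ref{extension} and the remark following it, the projective limit $\varprojlim_V M_V$ over all compact open subsets $V\subseteq N$, with transition maps $\Res^V_{V'}$ for $V'\subseteq V$. The plan is to produce mutually inverse $A[P]$-linear maps between this projective limit and $M^P$. In one direction, given $f\in M^P$, the system $(\Res_V f)_{V}=(\Res_V(f))_V$ is visibly compatible under the restriction maps (by the orthogonality of the idempotents $\Res_{V_i}$, i.e. Corollary \ref{Rr}, or directly from part (b) of the proof of Theorem \ref{the11}), so $f\mapsto (\Res_V f)_V$ defines an $A$-linear map $M^P\to \varprojlim_V M_V$. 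It is $P$-equivariant because $\Res$ is $P$-equivariant: $b\circ\Res_V = \Res_{b.V}\circ b$, which is exactly the compatibility with the $P$-action $f\mapsto bf=\Res_{b.V}(bf)$ on the sheaf side.

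\textbf{The inverse map.} For the reverse direction I would use the cofinal system of compact open subgroups $N_k=s^kN_0s^{-k}$, $k\in\mathbb{Z}$: since $N=\bigcup_k N_k$ and the $N_k$ are a neighbourhood basis of $1$, every compact open $V\subseteq N$ is a finite disjoint union of cosets $uN_k$, so $\varprojlim_V M_V=\varprojlim_k M_{N_{-k}}$ (the $N_{-k}$, $k\geq 0$, exhaust $N$). Given a compatible family $(f_k)_k$ with $f_k\in M_{N_{-k}}=\Res_{N_{-k}}(M^P)$, I would simply take $f:=\lim_k f_k$ in the sense that $f$ is the unique element of $M^P$ with $\Res_{N_{-k}}(f)=f_k$ for all $k$; existence and uniqueness of such an $f$ is precisely the sheaf gluing/separation axiom (parts (a)--(c) of Theorem \ref{the11}) applied to the exhaustion $N=\bigcup_k N_{-k}$ — equivalently, in the $N$-model of $M^P$ (Proposition \ref{3.3}b, Corollary \ref{bN}), $\Res_{N_{-k}}$ is restriction of functions to $N_{-k}$, and a function on $N$ is determined by and can be assembled from its restrictions to the $N_{-k}$ since these cover $N$ and any element of $M^P_{c}$ has compact — hence eventually contained in some $N_{-k}$ — support, while a general element of $M^P$ is handled by the formula $f(s^m)$ being recoverable from $f|_{N_{-k}}$ via \eqref{2.6}. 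One must check $f$ lands in $M^P$ and not just $M^P_c$: here the point is that the $f_k$ need not have common compact support, but the compatible system still determines a well-defined function on all of $N$, and via \eqref{2.6} and Proposition \ref{3.3} a well-defined element of $M^P$.

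\textbf{Bijectivity and equivariance.} That the two maps are mutually inverse is then formal: $\Res_{N_{-k}}(\lim_j f_j)=f_k$ by construction, and conversely $f=\lim_k \Res_{N_{-k}}(f)$ because $\Res_{N_{-k}}\to \id$ on $M^P$ in the relevant sense (again the separation axiom: an element of $M^P$ with all restrictions to the $N_{-k}$ equal to zero is zero). $P$-equivariance of the inverse map follows from that of the forward map, or is checked directly from $b\circ\Res_{N_{-k}}=\Res_{b.N_{-k}}\circ b$ together with cofinality of $\{b.N_{-k}\}_k$ in the compact open subsets of $N$ for each fixed $b\in P$.

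\textbf{Main obstacle.} The only genuinely non-formal point is the passage from the compatible system $(f_k)_k$, which a priori only gives an element of $\varprojlim_V M_V$, to an honest element of $M^P=\Ind_{P_-}^P(M)$ — that is, showing the sheaf-theoretic global sections, built as a projective limit over the \emph{non-compact} space $N$, really reproduce the full induced module $M^P$ rather than only its compactly-induced submodule $M^P_c$. I expect to resolve this by working in the $N$-model: a compatible system of restrictions to the exhausting family $N_{-k}$ is literally the data of a function $N\to M$ with the correct $N_0$-equivariance on each piece, hence of an element of $\Ind_{N_0}^N(M)$, and Proposition \ref{3.3}b identifies this with $M^P$. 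The distinction with $M^P_c$ is automatically accounted for because we impose no finiteness on the supports in the projective limit.
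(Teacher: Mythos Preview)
Your proof is correct and follows essentially the same strategy as the paper: define the forward map $M^P\to\varprojlim_U M_U$ by $f\mapsto(\Res_U f)_U$, reduce to the cofinal system $(N_{-k})_{k\geq 0}$, and then check bijectivity. The only real difference is in how the last step is carried out. The paper works in the $s$-model: it observes that $\Res_{s^{-k}.N_0}=R_{-k}=\sigma_k\circ\ev_k$ and then simply invokes Lemma~\ref{surj}, which was set up earlier precisely to say that $f\mapsto(R_{-k}(f))_{k\in\mathbb N}$ is an isomorphism $M^P\xrightarrow{\sim}\varprojlim_{R_{-k}}\sigma_k(M)$. Your argument works instead in the $N$-model, identifying $\Res_{N_{-k}}$ with restriction of functions to $N_{-k}$ (via Corollary~\ref{bN} and additivity) and then gluing compatible restrictions to a single $N_0$-equivariant function on $N$, which Proposition~\ref{3.3}b identifies with an element of $M^P$. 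Both routes are equivalent through Proposition~\ref{3.3}; the paper's is shorter because the relevant projective-limit statement has already been packaged as Lemma~\ref{surj}, whereas you re-derive the gluing by hand. Your ``main obstacle'' (that the global sections recover $M^P$ and not just $M^P_c$) is exactly what Lemma~\ref{surj} settles in one line, and is equally settled by your $N$-model argument since no support condition is imposed on the glued function.
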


\begin{proof} We have the obvious $P$-equivariant homomorphism
$$
M^P\ \xrightarrow{(\Res_U)_U} \ M_N \ = \ \varprojlim _U M_U \ .
$$
The group $N$ is  the  union of $s^{-k}.N_{0} = s^{-k}N_{0}s^{k} $ for $k\in \mathbb N$. Hence
$M_{ N} = \varprojlim _k M_{N_{-k}}$.  In the $s$-model of $M^{P}$ we have
$\Res_{s^{-k}.N_{0} }=R_{-k}$ and by the lemma \ref{surj} the morphism
 $$f\mapsto  (\Res_{s^{-k}.N_{0} } (f))_{k\in \mathbb N} \ : \ M^{P} \quad \to \quad M_{N}$$ is   bijective.
   \end{proof}

 \begin{corollary}   The restriction $\Res^{N}_{U}:M_{N} \to M_{U}$ from the global sections  to the sections on an open   compact subset      $U\subset  N$  is surjective   with a natural splitting.
 \end{corollary}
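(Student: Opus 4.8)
The plan is to deduce everything from the fact that $\Res_U:=\Res(1_U)$ is an idempotent of $M^P$. First I would use Proposition \ref{sections} to identify the module of global sections $M_N$ with the induced representation $M^P$; by the description of that isomorphism used in its proof (it sends $f\in M^P$ to the compatible family $(\Res_V f)_V$ of its images in the $M_V=\Res_V(M^P)$, where $V\subset N$ runs over the compact open subsets), the sheaf restriction map $\Res^N_U\colon M_N\to M_U$ corresponds under this identification to the corestriction of the operator $\Res_U\in\End_A(M^P)$ onto its image $M_U=\Res_U(M^P)$.

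Now $1_U\in C_c^\infty(N,A)$ is idempotent and $\Res$ is an algebra homomorphism (Proposition \ref{Res}), so $\Res_U$ is an idempotent endomorphism of $M^P$; hence $M^P=M_U\oplus\Ker(\Res_U)$ and $\Res_U\colon M^P\to M_U$ is surjective, which yields the surjectivity of $\Res^N_U$. For the splitting I would take the canonical inclusion $j_U\colon M_U=\Res_U(M^P)\hookrightarrow M^P\cong M_N$ of the direct summand: if $g=\Res_U(h)\in M_U$, then $\Res_U(g)=\Res_U^2(h)=\Res_U(h)=g$, so $\Res^N_U\circ j_U=\id_{M_U}$, i.e. $j_U$ is a section. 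This splitting is moreover $P_U$-equivariant for the stabilizer $P_U$ of $U$ in $P$: since $b\Res_U b^{-1}=\Res_{b.U}=\Res_U$ for $b\in P_U$ by $P$-equivariance of $\Res$ (Proposition \ref{Res}), the summand $M_U$ is a $P_U$-subrepresentation of $M^P$. Finally the whole construction is functorial in $M$, because both $\Res$ and the isomorphism $M_N\cong M^P$ of Proposition \ref{sections} are functorial in $M$.

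I do not expect a genuine obstacle here: the statement is a formal consequence of $\Res_U$ being idempotent together with Propositions \ref{Res} and \ref{sections}. The only point requiring a little care is the bookkeeping that matches the sheaf-theoretic restriction $\Res^N_U$ with the operator $\Res_U$ on $M^P$, that is, unwinding the identification of Proposition \ref{sections}; once that is in place the rest is immediate.
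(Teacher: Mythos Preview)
Your argument is correct and is exactly the paper's approach: the restriction $\Res^N_U$ corresponds, via the identification $M_N\cong M^P$ of Proposition \ref{sections}, to the idempotent $\Res_U=\Res(1_U)\in\End_A(M^P)$, whence surjectivity and a canonical splitting by the inclusion of the direct summand. One small caution on your extra remark: the equality $b\Res_U b^{-1}=\Res_U$ holds only for $b$ with $b.U=U$ (the group stabilizer), not for all $b$ in the monoid $P_U=\{b\in P:\, b.U\subset U\}$; your conclusion that $M_U$ is $P_U$-stable is still correct, but the justification should use $\Res_{b.U}\circ\Res_U=\Res_{b.U}$ (from $b.U\subset U$) rather than $\Res_{b.U}=\Res_U$.
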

\begin{proof} It corresponds to an idempotent $\Res_{U}=\Res(1_{U})\in \End_{A}(M^{P})$.
\end{proof}

\subsection{Independence of $N_{0}$}

Let  $U\subset N$ be  a compact open subgroup. For $n\in N$ and $t\in L$, the inclusion $ntUt^{-1}\subset U$ is obviously equivalent to $n\in U$ and $tUt^{-1} \subset U$.
Hence the $P$-stabilizer  $P_U:=\{ b\in P \ | \ b.U\subset U\}$ of $U$ is the semi-direct product of $U$ by  the $L$-stabilizer  where  $L_U$ of $U$.  As the decreasing sequence $(N_k=s^k N_0 s^{-k})_{k\in \mathbb N}$   form a basis of neighborhoods of $1$ in $N$ and $N=\cup_{r\in \mathbb Z}N_{-r}$, the compact open subgroup $U\subset N$ contains some $N_k$ and is contained in some $N_{-r}$. This implies that
the intersection $L_U \cap s^{\mathbb N}$ is not empty hence  is equal to $s _U^{\mathbb N}$ where $s_U=s^{k_U}$ for some $k_U \geq 1$.  The monoid $P_U=UL_U$  and the central element $s_U$ of $L$ satisfy the same conditions as $(P_+=N_0L_+, s)$, given at the beginning of the  section \ref{2.4}. Our theory associates to each  \'etale $A[P_{U}]$-module a $P$-equivariant sheaf on $N$.

The subspace  $M_{U} \subset M^{P}$  (definition \ref{not}) is stable by $P_{U} $ because $b\circ \Res_U = \Res_{b.U} \circ b$ for $b\in P$ and $M_{b.U}= \Res_{b.U}(M) \subset \Res_U(M)=M_U$.
As $M_U =\oplus_{u\in J(U/t.U)} u M_{t.U}$ for $t\in L_U$  the  $A[ P_{U}]$-module  $M_{U}$ is \'etale.

  \begin{proposition}\label{b}  The $P$-equivariant sheaf ${\mathcal S}_{M}$ on $N$ associated to the \'etale $A[P_{+}]$-module $M$  is equal to the  $P$-equivariant sheaf on $N$ associated to the \'etale $A[P_{U}]$-module $M_{U}$.
\end{proposition}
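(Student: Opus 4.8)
The plan is to show that both sheaves on $N$ are determined by the same data, namely the system of sections and restriction maps on a cofinal family of compact open subgroups of $N$, and that this data coincides. By Proposition \ref{extension} and the remark following it, a $P$-equivariant sheaf on $N$ is uniquely determined by its restriction to the compact open subsets, and in fact by its values on any cofinal system of compact open subgroups (with the transition maps and the $P$-action). For the sheaf $\mathcal S_M$ built from $M$ the relevant subgroups are the $N_k = s^k N_0 s^{-k}$, $k \in \mathbb Z$, with sections $M_{N_k} = \Res_{N_k}(M^P)$; for the sheaf built from the \'etale $A[P_U]$-module $M_U$ (which is \'etale by the computation $M_U = \oplus_{u \in J(U/t.U)} u M_{t.U}$ recorded just before the statement) the relevant subgroups are the $s_U^k . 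U = s^{k k_U} U s^{-k k_U}$, $k \in \mathbb Z$, with sections $(M_U)_{s_U^k.U}$ computed via the $\Res$-operator attached to the pair $(P_U, s_U)$.

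First I would observe that the two families $\{N_k\}_{k \in \mathbb Z}$ and $\{s_U^k . U\}_{k \in \mathbb Z}$ are mutually cofinal among compact open subgroups of $N$: $U$ is squeezed between some $N_k$ and $N_{-r}$ as noted in the paragraph preceding the statement, and conjugating by powers of $s$ propagates this. Hence it suffices to compare the two sheaves on the common cofinal family of all compact open subgroups $V$ with $N_k \subseteq V \subseteq N_{-r}$ for suitable $k,r$. The key point is then a local identification: for such a $V$, the section space of $\mathcal S_M$ on $V$ is $M_V = \Res(1_V)(M^P) \subseteq M^P$, and I claim the section space of the $M_U$-sheaf on $V$ (for $V \subseteq U$) is the space obtained by applying the $\Res$-operator of $(P_U, s_U)$ with coefficient module $M_U$ to $1_V$, acting on $(M_U)^{P}$. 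The crucial compatibility is that $(M_U)^P$, as an $A[P]$-module with its $\Res$-action, is canonically $R_0^U := \Res(1_U)$ applied to $M^P$ with the inherited structure — i.e. that forming $M^P$ and then cutting by $1_U$ agrees with forming the induced module of the \'etale $A[P_U]$-module $M_U$. This is because $M_U \subseteq M^P$ is exactly $\Res(1_U)(M^P)$, which is $P_U$-stable, and restriction of the sheaf $\mathcal S_M$ to the open subgroup $U \subseteq N$ together with Proposition \ref{sections} identifies its global sections over $U$ with $(M_U)^{P_U}$ in a way compatible with the $\Res$-structure; one then pushes back out to $N$ using $P$-equivariance, since $N = \bigcup_k s^{-k}.U$.

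The main obstacle I anticipate is bookkeeping the identification $\Res(1_U)(M^P) \cong (M_U)^{P}$ of $A[P]$-modules-with-$\Res$-action coherently — i.e. checking that the $\res$-operator $\res^{M_U}: C^\infty(U, A) \to \End_A(M_U)$ guaranteed by Proposition \ref{in} for the \'etale $A[P_U]$-module $M_U$ agrees with the restriction of $\res^M: C^\infty(N_0,A) \to \End_A(M)$ composed appropriately, and then that the two extensions $\Res$ to $C_c^\infty(N,A)$ agree on all of $\End_A(M^P)$. By the uniqueness clauses in Propositions \ref{in} and \ref{Res}, it is enough to check agreement on the single generator $1_U$ (respectively $1_{N_0}$ after translating), which should reduce to the bare identity $\Res^M(1_U) = R_0^U$ by construction. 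Once the $\Res$-data match on the common cofinal family, the transition maps (which are the relevant idempotents composed) and the $P$-action (conjugation by $b1_{G/P}$, or rather $f \mapsto \Res_{b.V}(bf)$ as in Theorem \ref{the11}) automatically agree, and the gluing/sheaf axioms give that the two $P$-equivariant sheaves on $N$ coincide, since each is the unique sheaf on $N$ extending this common data.
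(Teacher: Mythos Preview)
Your strategy correctly isolates the key step --- matching the $\res$-operator of the \'etale $A[P_U]$-module $M_U$ with the restriction of $\Res$ to $C^\infty(U,A)$ --- but the paper's proof is more direct and avoids your detour through $(M_U)^P$. The paper simply observes that on $M_U\subset M^P$ one has the decomposition $M_U=M_{b.U}\oplus M_{U-b.U}$ for each $b\in P_U$, from which $\psi_{U,b}(f_U)=b^{-1}\Res_{b.U}(f_U)$ and hence $(\varphi_{U,b}\circ\psi_{U,b})(f_U)=\Res_{b.U}(f_U)$. This one-line identity says precisely that the section spaces and restriction maps of the two sheaves coincide on the basis $\{b.U:b\in P_U\}$ of open subsets of $U$; a symmetry argument (interchanging the roles of $N_0$ and $U$, and more generally of any two compact open subgroups) then extends the equality to all of $N$.

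Your uniqueness argument via Proposition~\ref{in} would also succeed, but to apply it you must first verify that $f\mapsto\Res(f)|_{M_U}$ is $P_U$-equivariant for the twisted action $(b,F)\mapsto\varphi_{U,b}\circ F\circ\psi_{U,b}$ on $\End_A(M_U)$, and unwinding this requirement leads exactly to the formula $\psi_{U,b}=b^{-1}\circ\Res_{b.U}|_{M_U}$ above --- so there is no genuine shortcut. Two imprecisions to flag: the symbol ``$(M_U)^{P_U}$'' is undefined, and the assertion that ``$(M_U)^P$ is canonically $\Res(1_U)$ applied to $M^P$'' is a type mismatch, since $\Res(1_U)(M^P)=M_U$ is only a $P_U$-module, not a $P$-module. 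Establishing an isomorphism $(M_U)^P\cong M^P$ of $A[P]$-modules \emph{with matching $\Res$-structure} is essentially the content of the proposition itself (via Proposition~\ref{sections}), so invoking it as an ingredient risks circularity; the paper's purely local computation on $U$ sidesteps this entirely.
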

\begin{proof}
For $b\in P_U $ we denote by $\varphi_{U,b}$ the action of $b$ on $M_{U}$ and by $\psi_{U,b}$  the left inverse of $\varphi_{U,b}$ with kernel $M_{U-b.U}$. We have
  $M_U = M_{b.U}\oplus M_{U-b.U}$ and  for $f_U\in M_U$,
\begin{equation}
\varphi_{U,b}(f_U)\ = \ b f_U \ , \ \psi_{U,b}(f_U)\ = \ b^{-1} \Res_{b.U}(f_U)  \ , \ ( \varphi_{U,b} \circ  \psi_{U,b} )(f_U) \ = \ \Res_{b.U}(f_U) \ .
 \end{equation}
 By the last formula and the remark \ref{rr}, the sections   on $b.U$  and the restriction maps from $M_U$ to $M_{b.U}$ in the two sheaves  are the same for any $b\in P_U$. This implies that the two sheaves are equal on (the open subsets of) $U$. By symmetry they are also equal on (the open subsets of)  $N_0$. The same arguments for arbitrary compact open subgroups $U, U'\subset N$  imply that  the $P$-equivariant sheaves on $N$ associated to the \'etale $A[P_{U}]$-module $M_U$ and to  the \'etale $A[P_{U'}]$-module $M_{U'}$ are equal on (the open subsets of)  $U$ and on (the open subsets of)  $U'$. Hence all these sheaves are equal on (the open subsets of)  the compact open subsets of $N$ and also on (the open subsets of) $N$.
 \end{proof}

\subsection{Etale $A[P_{+}]$-module and $P$-equivariant sheaf on $N$}

 \begin{proposition}\label{red1} Let   $M$ be an $A[P_{+}]$-module such that the action $\varphi$ of $s$ on $M$ is \'etale. Then $M$ is an \'etale $A[P_{+}]$-module.
\end{proposition}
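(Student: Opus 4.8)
The plan is to verify directly the definition of an \'etale $A[P_{+}]$-module: for every $t\in L_{+}$ that $\varphi_{t}$ is injective and that $M=\oplus_{u\in J(N_{0}/tN_{0}t^{-1})}u\,\varphi_{t}(M)$, using only that $\varphi=\varphi_{s}$ is \'etale. The first thing I would do is upgrade the hypothesis from $s$ to all its powers: starting from $M=\oplus_{u\in J(N_{0}/N_{1})}u\,\varphi(M)$ and applying $\varphi$ (which is injective and $A[N_{0}]$-semilinear, hence carries a direct sum to a direct sum, with $\{sus^{-1}\}$ running over a system of representatives of $N_{1}/N_{2}$, and so on), an immediate induction on $k$ shows that $\varphi^{k}=\varphi_{s^{k}}$ is injective and that $M=\oplus_{w\in J(N_{0}/N_{k})}w\,\varphi^{k}(M)$ for all $k\geq 1$.

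Now fix $t\in L_{+}$. Since $tN_{0}t^{-1}$ is an open neighbourhood of $1$ in $N$ and the $N_{k}$ (being a decreasing sequence of compact open subgroups with trivial intersection) form a basis of neighbourhoods of $1$, there is $r\geq 1$ with $N_{r}\subseteq tN_{0}t^{-1}$; put $t':=t^{-1}s^{r}$. Then $t'N_{0}t'^{-1}=t^{-1}N_{r}t\subseteq N_{0}$, so $t'\in L_{+}$, and because $s$ is central in $L$ one has $tt'=t't=s^{r}$, whence $\varphi_{t}\varphi_{t'}=\varphi_{t'}\varphi_{t}=\varphi^{r}$. In particular $\varphi_{t}$ is injective, since $\varphi^{r}$ is. Next I would exploit the tower $N_{r}\subseteq tN_{0}t^{-1}\subseteq N_{0}$: writing $J_{1}=J(N_{0}/tN_{0}t^{-1})$ and $J_{2}=J(tN_{0}t^{-1}/N_{r})$, the products $uv$ with $u\in J_{1}$, $v\in J_{2}$ form a system $J(N_{0}/N_{r})$, so the decomposition $M=\oplus_{w\in J(N_{0}/N_{r})}w\,\varphi^{r}(M)$ regroups (the regrouping into blocks indexed by $J_{1}$ keeping the sum direct) as $M=\oplus_{u\in J_{1}}u\,E$, where $E:=\oplus_{v\in J_{2}}v\,\varphi^{r}(M)$.

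It then suffices to prove $E=\varphi_{t}(M)$, for then $M=\oplus_{u\in J_{1}}u\,\varphi_{t}(M)$ together with injectivity of $\varphi_{t}$ is exactly \'etaleness for $t$. The inclusion $E\subseteq\varphi_{t}(M)$ is clear: $\varphi^{r}(M)=\varphi_{t}(\varphi_{t'}(M))\subseteq\varphi_{t}(M)$, and $\varphi_{t}(M)$ is an $A[tN_{0}t^{-1}]$-submodule, so it contains $v\,\varphi^{r}(M)$ for every $v\in tN_{0}t^{-1}$. For the reverse inclusion, given $m\in M$ I would write $m=\sum_{w\in J(N_{0}/N_{r})}w\,\varphi^{r}(m_{w})$ and apply $\varphi_{t}$: by $A[N_{0}]$-semilinearity and the relation $\varphi_{t}\varphi^{r}=\varphi^{r}\varphi_{t}$ (again because $s$ is central) this gives $\varphi_{t}(m)=\sum_{w}(twt^{-1})\,\varphi^{r}(\varphi_{t}(m_{w}))$; since each $twt^{-1}$ lies in $tN_{0}t^{-1}$, writing $twt^{-1}=v_{w}n_{w}$ with $v_{w}\in J_{2}$, $n_{w}\in N_{r}$ and absorbing $n_{w}$ into $\varphi^{r}$ by semilinearity exhibits $\varphi_{t}(m)$ as an element of $E$. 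This yields $E=\varphi_{t}(M)$ and completes the proof.

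The only step requiring genuine care is the identification $E=\varphi_{t}(M)$, i.e.\ that applying $\varphi_{t}$ to $M$ reproduces precisely the sub-block of the $\varphi^{r}$-decomposition indexed by $J_{2}$; everything else is bookkeeping with coset representatives, the one subtle point there being that the regrouping of $\oplus_{w\in J(N_{0}/N_{r})}$ into blocks indexed by $J_{1}$ remains a direct sum. The structural inputs used are the centrality of $s$ in $L$ (needed both for $tt'=t't$ and for $\varphi_{t}\varphi^{r}=\varphi^{r}\varphi_{t}$), the semilinearity of the various $\varphi$'s, and that the $N_{k}$ shrink to $\{1\}$.
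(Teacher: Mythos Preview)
Your proof is correct and follows essentially the same route as the paper's: pick $r$ (the paper's $k$) so that $t':=s^{r}t^{-1}\in L_{+}$, use the factorisation $\varphi^{r}=\varphi_{t}\circ\varphi_{t'}$ to get injectivity of $\varphi_{t}$, and exploit the tower $N_{r}\subset tN_{0}t^{-1}\subset N_{0}$ together with the known \'etale decomposition for $\varphi^{r}$. The only stylistic difference is that the paper works with the idempotents $v\circ\varphi^{k}\circ\psi^{k}\circ v^{-1}$ and rewrites $\id_{M}$ to extract an explicit candidate $\psi_{t}$, whereas you establish the equality $E=\varphi_{t}(M)$ directly from the direct-sum decomposition; the two arguments are interchangeable.
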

\begin{proof} Let $t\in L_{+}$. We have to show that  the action  $\varphi_{t}$ of $t$ on $M$ is \'etale. As $L=L_{+}s^{-\mathbb N}$ with $s$ is central in $L$, there exists $k\in \mathbb N$ such that $s^{k}t^{-1}\in L_{+}$. This implies   $\varphi^{k} = \varphi_{s^{k}t^{-1}}\circ \varphi_{t}$ in $\End_{A}(M)$ and    $s^{k}N_{0}s^{-k}\subset tN_{0}t^{-1}$.
 As  $\varphi$ is injective, $\varphi_{t}$ is also injective. For   any representative system $J( tN_{0}t^{-1}/ s^{k}N_{0}s^{-k})$ of $ tN_{0}t^{-1}/s^{k}N_{0}s^{-k}$ and any representative system $J(N_{0}/tN_{0}t^{-1})$ of $N_{0}/tN_{0}t^{-1}$,  the set of $uv$ for $u\in J(N_{0}/tN_{0}t^{-1})$ and $v\in J(tN_{0}t^{-1}/s^{k}N_{0}s^{-k})$ is a  representative system $J(N_{0}/ s^{k}N_{0}s^{-k})$ of $N_{0}/ s^{k}N_{0}s^{-k}$. Let $\psi$ be the canonical left inverse of $\varphi$. We have
$$\id = \sum_{u\in J(N_{0}/tN_{0}t^{-1})}u \circ \sum _{v\in J(tN_{0}t^{-1}/ s^{k}N_{0}s^{-k})} v \circ \varphi^{k} \circ \psi^{-k} \circ v^{-1} \circ u^{-1}   $$
 $$ = \sum_{u\in J(N_{0}/tN_{0}t^{-1})}u \circ  \sum _{v\in J(tN_{0}t^{-1}/ s^{k}N_{0}s^{-k})} v \circ  \varphi_{t} \circ \varphi_{ {t^{-1}s^{k}}} \circ \psi^{-k} \circ v^{-1} \circ u^{-1}   $$
 $$ = \sum_{u\in J(N_{0}/tN_{0}t^{-1})}u \circ  \varphi_{t} \circ  ( \sum _{v\in J(N_{0}/ t^{-1}s^{k}N_{0}s^{-k}t)} v   \circ \varphi_{ {t^{-1}s^{k}}} \circ \psi^{-k} \circ v^{-1}) \circ u^{-1} \ .  $$
 We deduce that  $\varphi_{t}$ is \'etale of canonical left inverse
 $\psi_{t}$ the expression between parentheses.
 \end{proof}

\begin{corollary}\label{redMF}  An  $A[P_{+}]$-submodule $M' \subset M$ of an \'etale $A[P_{+}]$-module $M$ is \'etale if and only if it is stable by the canonical inverse $\psi$ of $\varphi$.
\end{corollary}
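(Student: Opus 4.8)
The plan is to reduce both directions to a statement about the single endomorphism $\varphi=\varphi_{s}$, using Proposition \ref{red1}. Since $M'$ is an $A[P_{+}]$-submodule of $M$ it is in particular an $A[P_{+}]$-module, and by Proposition \ref{red1} it is \'etale precisely when the restricted action $\varphi|_{M'}$ of $s$ on $M'$ is \'etale. So everything comes down to comparing the decomposition $M=\oplus_{u\in J(N_{0}/N_{1})}u\varphi(M)$, with $N_{1}=sN_{0}s^{-1}$, to its putative analogue for $M'$, and to the behaviour of $\psi$, the canonical left inverse of $\varphi$ with kernel $M^{\psi=0}=\sum_{u\in N_{0}-N_{1}}u\varphi(M)$.

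First I would treat the ``only if'' direction. Assume $M'$ is \'etale, and let $\psi'$ denote the canonical left inverse of $\varphi|_{M'}$. The inclusion $\iota\colon M'\hookrightarrow M$ is an $A$-linear $P_{+}$-equivariant map between \'etale $A[P_{+}]$-modules, so by Lemma \ref{-eq} it is also $P_{-}$-equivariant; intertwining the canonical action of $s^{-1}\in P_{-}$, this says $\psi(m')=\psi'(m')$ for all $m'\in M'$, whence $\psi(M')=\psi'(M')\subseteq M'$. One can also argue directly, without Lemma \ref{-eq}: writing $m'=\varphi(m'_{1})+r'$ with $m'_{1}\in M'$ and $r'\in\sum_{u\in N_{0}-N_{1}}u\varphi(M')\subseteq M^{\psi=0}$ gives immediately $\psi(m')=m'_{1}=\psi'(m')\in M'$.

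For the ``if'' direction, assume $M'$ is stable by $\psi$. The map $\varphi|_{M'}$ is injective since $\varphi$ is. For $m'\in M'$, Lemma \ref{idemp} writes $m'=\sum_{u\in J(N_{0}/N_{1})}u\varphi(m_{u})$ in $M$ with $m_{u}=\psi(u^{-1}m')$; since $N_{0}$ is a subgroup contained in $P_{+}$, the submodule $M'$ is stable under each $u^{-1}\in N_{0}$, so $u^{-1}m'\in M'$ and hence $m_{u}=\psi(u^{-1}m')\in M'$ by the stability hypothesis. As moreover $\varphi(M')\subseteq M'$ and $N_{0}\subseteq P_{+}$, each $u\varphi(M')$ lies in $M'$, so $M'=\sum_{u\in J(N_{0}/N_{1})}u\varphi(M')$, the sum being direct because it refines the direct sum $M=\oplus_{u}u\varphi(M)$. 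Thus $\varphi|_{M'}$ is \'etale, and Proposition \ref{red1} concludes that $M'$ is an \'etale $A[P_{+}]$-module. I do not expect a genuine obstacle here; the only points needing care are that $N_{0}$, being a subgroup of $P_{+}$, acts invertibly on $M'$ (so that $u^{-1}m'\in M'$), and that directness of the decomposition for $M'$ is inherited from that for $M$.
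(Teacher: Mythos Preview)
Your proof is correct and takes essentially the same approach as the paper: for the ``if'' direction both you and the paper observe that $\psi$-stability forces each coefficient $m'_{u,s}=\psi(u^{-1}m')$ in the expansion \eqref{writing} to lie in $M'$, giving \'etaleness of $\varphi|_{M'}$, and then invoke Proposition~\ref{red1}. The paper does not spell out the ``only if'' direction at all, so your argument via Lemma~\ref{-eq} (or the direct decomposition) is a welcome addition rather than a deviation.
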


\begin{proof} If $M'$ is $\psi$-stable, for $m'\in M'$ every $m'_{u,s}$ belongs to $M'$ in \eqref{writing}.  Hence the action of $s$ on $M'$ is \'etale, and   $M'$ is \'etale by Proposition  \ref{red1}.
\end{proof}

\begin{corollary} \label{red2} The space ${\mathcal S}(N_{0})$ of global sections of a $P_{+}$-equivariant sheaf ${\mathcal S}$ on $N_{0}$ is an \'etale representation of $P_{+}$, when the action  $\varphi$ of $s$ on  ${\mathcal S}(N_{0})$ is injective.
\end{corollary}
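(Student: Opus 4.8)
The plan is to reduce at once to the single endomorphism $\varphi=\varphi_s$ of $\mathcal S(N_0)$ and then to read the étale decomposition off the sheaf axiom. By Proposition~\ref{red1} it suffices to prove that the action $\varphi=\varphi_s$ of the central element $s$ on $\mathcal S(N_0)$ is étale. Injectivity of $\varphi$ is part of the hypothesis, so the only point left is the direct sum decomposition
\[
\mathcal S(N_0)=\bigoplus_{u\in J(N_0/sN_0s^{-1})} u\,\varphi\bigl(\mathcal S(N_0)\bigr).
\]
Set $N_1=sN_0s^{-1}$; since $s\in L_+$ stabilises $N_0$, this is an open subgroup of finite index of $N_0$, and for a system $J(N_0/N_1)\subset N_0$ of coset representatives the sets $uN_1$ form a finite partition of $N_0$ by compact open subsets, with $uN_1=(us).N_0$ and $us\in P_+$.

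Next I would feed this partition into the sheaf axiom: the restriction maps give a canonical decomposition $\mathcal S(N_0)=\bigoplus_{u\in J(N_0/N_1)}\mathcal S^{(u)}$, where $\mathcal S^{(u)}\subseteq\mathcal S(N_0)$ is the subspace of sections vanishing on $N_0\setminus uN_1$, identified with $\mathcal S(uN_1)$ by restriction. The crux is to identify $u\,\varphi(\mathcal S(N_0))$ with $\mathcal S^{(u)}$. By the definition of the induced $P_+$-action on global sections, for $b\in P_+$ the endomorphism $\varphi_b$ is the composite of the $P_+$-equivariant structure isomorphism $\mathcal S(N_0)\xrightarrow{\sim}\mathcal S(b.N_0)$ with the extension-by-zero inclusion $\mathcal S(b.N_0)\hookrightarrow\mathcal S(N_0)$, so its image is precisely the summand of sections supported on $b.N_0$. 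Taking $b=us$ and using $\varphi_{us}=u\varphi_s=u\varphi$ gives $u\,\varphi(\mathcal S(N_0))=\mathcal S^{(u)}$; equivalently, $\varphi(\mathcal S(N_0))=\mathcal S^{(1)}$ is the space of sections supported on $s.N_0=N_1$, and left translation by $u\in N_0\subseteq P_+$, being compatible with the restriction maps and carrying $N_1$ onto $uN_1$, maps $\mathcal S^{(1)}$ isomorphically onto $\mathcal S^{(u)}$. Summing over $u$ yields $\bigoplus_u u\,\varphi(\mathcal S(N_0))=\bigoplus_u\mathcal S^{(u)}=\mathcal S(N_0)$, which together with the injectivity of $\varphi$ is exactly the assertion that $\varphi$ is étale; Proposition~\ref{red1} then gives that $\mathcal S(N_0)$ is an étale $A[P_+]$-module.

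The step I expect to be the main obstacle is this identification of $\varphi_{us}(\mathcal S(N_0))$ with the summand of sections supported on $uN_1$: one has to spell out the compatibility between the $P_+$-equivariant structure maps of $\mathcal S$ and the restriction maps of the underlying sheaf carefully enough to see that the image of $\varphi_{us}$ is exactly this summand and not merely contained in it -- this is where both the equivariance and the injectivity hypothesis are genuinely used. Everything else is a formal consequence of the sheaf gluing axiom and of Proposition~\ref{red1}.
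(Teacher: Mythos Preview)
Your proof is correct and follows exactly the same route as the paper: reduce to the single operator $\varphi_s$ via Proposition~\ref{red1}, then read the required decomposition off the sheaf axiom applied to the partition $N_0=\bigsqcup_u u(s.N_0)$ together with the identification $\mathcal S(us.N_0)=us(\mathcal S(N_0))$ coming from the equivariant structure. One small remark on the ``main obstacle'' you flagged: once the $P_+$-equivariant structure is taken to consist of sheaf isomorphisms $\mathcal S\xrightarrow{\sim} b^{\ast}\mathcal S$ (as the paper does, cf.\ the line after the proof describing $\psi$ via the isomorphism $\mathcal S(sN_0s^{-1})\to\mathcal S(N_0)$ induced by $s^{-1}$), the image of $\varphi_{us}$ is automatically the full summand $\mathcal S^{(u)}$ and $\varphi$ is automatically injective, so the hypothesis is in fact redundant rather than ``genuinely used'' at that step.
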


\begin{proof}   By proposition \ref{red1} it suffices to show that
${\cal S}(N_{0}) = \oplus_{u\in J(N_{0}/sN_{0}s^{-1})} us ({\cal S}(N_{0}))$.
But this equality is true because
 $N_{0}$ is the disjoint sum of the open subsets $usN_{0}s^{-1} =us. N_{0}$ and
  ${\cal S}(us.N_{0})=us({\cal S}(N_{0}))$.
\end{proof}

  The canonical  left inverse $\psi$ of the action $\varphi$ of $s$ on ${\cal S}(N_{0})$ vanishes on ${\cal S}(usN_{0}s^{-1})$ for $u\neq 1$ and on $ {\cal S}(sN_{0}s^{-1}) $ is equal to  the isomorphism  $ {\cal S}(sN_{0}s^{-1}) \to  {\cal S}(N_{0})$ induced by  $s^{-1}$.

\begin{theorem}\label{red3}
The functor $M\mapsto {\mathcal S} _{M}$ is an equivalence of categories from the abelian category  of \'etale $A[P_{+}]$-modules to the abelian category of $P$-equivariant sheaves of $A$-modules on $N$, of inverse the functor  ${\mathcal S} \mapsto  {\mathcal S}(N_{0})$ of sections over $N_{0}$.
\end{theorem}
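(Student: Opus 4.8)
The plan is to establish the equivalence by producing explicit quasi-inverse functors and checking they compose to the identity in both directions, using the results already accumulated in Section~\ref{S3}. In one direction we send an \'etale $A[P_+]$-module $M$ to the sheaf $\mathcal S_M$ constructed in Theorem~\ref{the11} and extended to all open subsets of $N$ by Proposition~\ref{extension}. In the other direction we send a $P$-equivariant sheaf $\mathcal S$ on $N$ to its space of sections $\mathcal S(N_0)$ over the compact open subgroup $N_0$. First I would verify that $\mathcal S(N_0)$ is genuinely an object of the target category: this is exactly Corollary~\ref{red2} together with the remark following it, provided the action $\varphi$ of $s$ on $\mathcal S(N_0)$ is injective. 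So the first real point is to observe that for a $P$-equivariant sheaf the map $\varphi$ induced by $s$ on $\mathcal S(N_0)$ \emph{is} injective, because $s^{-1}\colon \mathcal S(s.N_0)\to \mathcal S(N_0)$ is an isomorphism of sheaf sections and $\mathcal S(N_0)\hookrightarrow \mathcal S(s.N_0)$ under the restriction map is split injective (it corresponds to the idempotent $\Res_{s.N_0}$, or rather its sheaf-theoretic analogue, since $s.N_0 = sN_0s^{-1}\subset N_0$ so the relevant map goes $\mathcal S(N_0)\to \mathcal S(s.N_0)$); here one uses that $s$ is central in $L$ and the $N_k$ are decreasing, so that the decomposition of $N_0$ into $us.N_0$ forces $\varphi$ to be part of a direct-sum decomposition and in particular injective. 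Thus $\mathcal S\mapsto \mathcal S(N_0)$ does land in $\mathcal M_A(P_+)^{et}$, and it is visibly functorial.

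Next I would check $\mathcal S_M(N_0)\cong M$ naturally. By construction the sections of $\mathcal S_M$ over $N_0$ are $M_{N_0}=\Res_{N_0}(M^P)$, and $\Res_{N_0}=R_0=\sigma_0\circ\ev_0$ is an idempotent of $\End_A(M^P)$ with image $\sigma_0(M)$; by Lemma~\ref{equ} the $A[P_+]$-module $\sigma_0(M)$ is isomorphic to $M$ via $\sigma_0$ with inverse $\ev_0$. One must then check that the $P_+$-action transported to $\sigma_0(M)$ through the sheaf structure (namely $f\mapsto \Res_{b.N_0}(bf)$ for $b\in P_+$) agrees with $\varphi_b$ under this identification; this is the displayed compatibility $b\circ\sigma_0\circ F\circ\ev_0\circ b^{-1}=\sigma_0\circ\varphi_b\circ F\circ\psi_b\circ\ev_0$ specialised appropriately, together with Remark~\ref{rr}. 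This gives the natural isomorphism $\mathcal S_M(N_0)\cong M$, i.e.\ one composite is the identity up to natural isomorphism.

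For the other composite, given a $P$-equivariant sheaf $\mathcal S$ on $N$ I must show $\mathcal S_{\mathcal S(N_0)}\cong \mathcal S$ naturally. Set $M:=\mathcal S(N_0)$. The key is to build, for every compact open subgroup $U\subset N$, a natural isomorphism $M_U\xrightarrow{\sim}\mathcal S(U)$ compatible with restrictions and with the $P$-action; by Proposition~\ref{extension} and the uniqueness of the extension from compact opens to all opens, this suffices. For $U=b.N_0$ with $b\in P_+$ one has $M_U=\varphi_b\psi_b(M^P)$ and $\mathcal S(U)=\mathcal S(b.N_0)$, and the map $b\colon\mathcal S(N_0)\to\mathcal S(b.N_0)$ (an iso onto its image inside $\mathcal S(N_0)$ via the restriction) matches $\Res_{b.N_0}$ on the $M^P$ side; for general compact open $U$ one writes $U$ as a finite disjoint union of translates $b_i.N_0$ and uses the sheaf gluing axiom on both sides, i.e.\ Corollary~\ref{Rr} and the corresponding statement for $\mathcal S$, noting that every compact open subgroup of $N$ is such a finite disjoint union of $P_+$-translates of $N_0$ because the $N_k$ are a neighbourhood basis. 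One checks the restriction maps and the $P$-action (not just $P_+$) correspond: for the $P$-action the point is that both sheaves are $P$-equivariant and determined on a $P$-stable generating family of compact opens, and $P=P_-s^{\mathbb Z}=s^{\mathbb Z}P_+$ is generated by $P_+$ and $P_-$, so equivariance under $P_+$ plus the sheaf structure already pins everything down — this is where one invokes Proposition~\ref{sections} identifying global sections with $M^P$ and Lemma~\ref{surj}. The main obstacle I anticipate is precisely this last coherence check: verifying that the gluing isomorphisms $M_U\cong\mathcal S(U)$ are simultaneously compatible with all restriction maps \emph{and} with the full $P$-action (not merely $P_+$), since $P$ moves a compact open subgroup off of the ``nice'' family of translates of $N_0$; the cleanest way around it is to reduce everything, via $P=s^{\mathbb Z}P_+$ and centrality of $s$, to the $P_+$-equivariant and purely combinatorial statements about finite partitions of $N_0$ by cosets of $s^kN_0s^{-k}$, where Lemma~\ref{idemp} and Corollary~\ref{Rr} do all the work.
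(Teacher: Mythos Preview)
Your proposal is correct and follows essentially the same route as the paper: both directions rest on Corollary~\ref{red2} (for the injectivity of $\varphi$ on $\mathcal S(N_0)$, giving \'etaleness) and Theorem~\ref{the11} (for $\mathcal S_M(N_0)\cong M$). The only packaging difference is in the other composite: the paper identifies the $A[P]$-module $\mathcal S(N)_c$ of compactly supported global sections directly with $\mathcal S(N_0)^P_c$ via the decomposition $\mathcal S(N)_c=\bigoplus_{u\in J(N/N_0)}u\,\mathcal S(N_0)$ and the fact that $P$ is generated by $N$ and $L_+$, whereas you build the comparison locally on compact open subsets and then glue---your anticipated coherence check is exactly what the paper's global argument sidesteps.
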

\begin{proof}
Let $ {\cal S}$ be a $P$-equivariant sheaf on $N$.  By the corollary \ref{red2}, the space ${\cal S}(N_{0})$ of sections on $N_{0}$  is an \'etale representation of $P_{+}$ because the action $\varphi $ of $s$ on $ {\cal S}(N_{0})$ is injective.

We show now that  the  representation of $P$  on the space $ {\cal S}(N)_{c}$ of  compact sections on $N$ depends uniquely of the representation of $P_{+} $ on ${\cal S}(N_{0})$.
The representation of $N$ on $ {\cal S}(N)_{c}$ is defined by the representation of $N_{0} $ on   ${\cal S}(N_{0})$, because  $ {\cal S}(N)_{c}  = \oplus_{u\in J( N/N_{0})} {\cal S}(uN_{0})  $ and ${\cal S}(uN_{0})= u {\cal S}(N_{0})$ for $u\in N$. The group $P$ is generated by $N$ and $L_{+}$.
  For  $t\in L_{+}$, the action of $t$ on  $ {\cal S}(N)_{c}$ is defined by the action of $N$ on $ {\cal S}(N)_{c}$ and by the action of $t$ on  ${\cal S}(N_{0})$, because $t{\cal S}(uN_{0})= tut^{-1}t{\cal S}(N_{0})$ with $tut^{-1}\in N$
    for  $u\in  N$.

  We deduce that the $A[P]$-module  $ {\cal S}(N)_{c}$ is equal to the compact induced representation ${\cal S}(N_{0})^{P}_{c}$, and that the sheaves $ {\cal S}$ and ${\cal S} _{{\cal S} (N_{0})}$ are equal.

Conversely, let $M$ be an  \'etale $A[P_{+}]$-module. The  $A[P_{+}]$-module  ${\cal S} _{M}(N_{0})$  of sections on $N_{0}$ of the sheaf  ${\cal S} _{M}$ is equal to $M $   (Theorem \ref{the11}).

\end{proof}

  \section{Topology}

\subsection{Topologically \'etale $A[P_{+}]$-module} \label{S4}

{\sl We add to the hypothesis of the section \ref{2.4} that

a) $A$ is a  linearly topological commutative ring (the open ideals form a basis of  neighborhoods of $0$).

 b)  $M $ is  a  linearly topological $A$-module (the open $A$-submodules form a basis of neighborhoods of $0$), with a continuous action of $P_{+}$
\begin{align*}
    P_{+}\times M & \to M  \\
    (b,x) & \mapsto  \varphi_{b}(x) \ .
\end{align*}
We call such an $M$ a continuous $A[P_+]$-module.
If $M$ is also \'etale in the algebraic sense (definition \ref{almostetale}) and the maps $\psi_t$, for $t\in L_+$, are continuous we   call $M$  a  topologically  \'etale   $A[P_{+}]$-module.  }

\begin{lemma}\label{con1} Let $M$ be  a continuous $A[P_+]$-module which is algebraically \'etale, then:

(i) \ The maps $\psi_t$ for $t\in L_+ $ are open.

(ii) \ If $\psi=\psi_s$ is continuous then $M$ is  topologically  \'etale.
\end{lemma}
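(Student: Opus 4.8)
The plan is to treat the two statements separately, in each case reducing everything to continuity of the $P_{+}$-action on $M$ together with the defining relation $\psi_{t}\circ\varphi_{t}=\id_{M}$ for \'etale modules, and (for (ii)) the multiplicativity of the canonical $P_{-}$-action from Proposition~\ref{3.2}.

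For part (i), I would fix $t\in L_{+}$. Since $\psi_{t}$ is additive and the cosets of open $A$-submodules form a basis for the topology of $M$, it suffices to check that $\psi_{t}$ maps every open $A$-submodule $\mathcal L\subseteq M$ to an open $A$-submodule. The key point is that $\psi_{t}(\mathcal L)$ is an $A$-submodule containing $\psi_{t}(\mathcal L\cap\varphi_{t}(M))$, and the latter set equals $\varphi_{t}^{-1}(\mathcal L)$ by the relation $\psi_{t}\circ\varphi_{t}=\id_{M}$. As $t\in P_{+}$ and the $P_{+}$-action is continuous, $\varphi_{t}$ is continuous, so $\varphi_{t}^{-1}(\mathcal L)$ is an open submodule; and a submodule containing an open submodule is open. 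I do not expect any obstacle here: it is little more than surjectivity of $\psi_{t}$ combined with continuity of $\varphi_{t}$.

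For part (ii), I would express each $\psi_{t}$ ($t\in L_{+}$) as a composite of $\psi=\psi_{s}$ with a single continuous operator $\varphi_{b}$, $b\in P_{+}$. As in the proof of Proposition~\ref{red1}, $L=L_{+}s^{-\mathbb N}$ and $s$ is central, so $t^{-1}\in L$ can be written $t^{-1}=rs^{-k}$ with $r\in L_{+}$, $k\in\mathbb N$; then $r=s^{k}t^{-1}\in L_{+}$ and $rt=s^{k}$. Applying Proposition~\ref{3.2} (which gives $\psi_{b_{1}b_{2}}=\psi_{b_{2}}\circ\psi_{b_{1}}$) to the identity $rt=s^{k}$ yields $\psi_{s}^{k}=\psi_{s^{k}}=\psi_{t}\circ\psi_{r}$, and composing on the right with $\varphi_{r}$ and using $\psi_{r}\circ\varphi_{r}=\id_{M}$ gives $\psi_{t}=\psi_{s}^{k}\circ\varphi_{r}$. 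Since $\psi_{s}$ is continuous by hypothesis and $\varphi_{r}$ is continuous (as $r\in P_{+}$), $\psi_{t}$ is continuous; as $t$ was arbitrary and $M$ is already algebraically \'etale, $M$ is topologically \'etale by definition.

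The only point needing a little care is the existence of $k$ with $s^{k}t^{-1}\in L_{+}$, but this is exactly the step already carried out in the proof of Proposition~\ref{red1}, so in the end both parts are routine consequences of the continuity hypotheses and the monoid identities established earlier. (A more computational alternative for (ii) — writing $\psi_{t}=\sum_{v\in J(N_{0}/t'N_{0}t'^{-1})}v\circ\varphi_{t'}\circ\psi_{s}^{k}\circ v^{-1}$ with $t'=s^{k}t^{-1}$ via Lemma~\ref{idemp} — also works, but the one-line composition identity above is cleaner, so that is the route I would take.)
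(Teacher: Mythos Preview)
Your argument for (i) is correct and essentially the same as the paper's: both observe that $\psi_t(\mathcal L)\supseteq\varphi_t^{-1}(\mathcal L)$ (the paper phrases this via the projection $\varphi_t\circ\psi_t$ onto $\varphi_t(M)$ in the decomposition $M=\varphi_t(M)\oplus\Ker\psi_t$, but the content is identical).

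For (ii) your route is correct but genuinely different from the paper's. The paper writes $t't=s^n$ and uses the identity $\psi_t^{-1}(V)=\psi_{t'}\bigl((\psi_t\circ\psi_{t'})^{-1}(V)\bigr)=\psi_{t'}\bigl((\psi^n)^{-1}(V)\bigr)$, then invokes part (i) (openness of $\psi_{t'}$) together with continuity of $\psi^n$. Your argument instead produces the explicit formula $\psi_t=\psi_s^{k}\circ\varphi_r$ with $r=s^k t^{-1}\in L_+$, which is an immediate consequence of Proposition~\ref{3.2} and $\psi_r\circ\varphi_r=\id$, and concludes by composing continuous maps. Your approach is slightly cleaner in that it avoids the appeal to part (i); the paper's approach, on the other hand, makes the logical dependence between the two parts of the lemma visible. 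Both are short and either is perfectly acceptable here.
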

\begin{proof}  (i) \ The projection  of $M=M_0 \oplus M_1$ onto the algebraic direct summand $M_0$ (with  the submodule topology) is open. Indeed let $V\subset M$ be an open subset, then $M_0\cap (V+M_1)$ is open in $M_0$ and is equal to the projection of $V$. We apply this to $M=\varphi_t(M) \oplus  \Ker \psi_t$ and to the projection
$\varphi_t \circ \psi_t$. Then we note that $\psi_t (V) = \varphi _t^{-1 } ((\varphi_t \circ \psi_t)(V))$.

(ii) \ Given any $t\in L_+$ we find $t'\in L_+$ and $n\in \mathbb N$ such that $t't=s^n$. Hence $\psi_{t't} = \psi_t \circ \psi_{t'} = \psi ^n$ is continuous by assumption.
As $\psi_{t'}$ is surjective and open, for any open subset $V\subset M$ we have $\psi_t^{-1}(V)=\psi_{t'}((\psi_t \circ \psi_{t'})^{-1}(V))$  which is open.
 \end{proof}

\begin{lemma}\label{cont1}  (i) \ A compact algebraically \'etale $A[P_+]$-module is  topologically  \'etale.

(ii) \ Let  $M $ be a topologically  \'etale $A[P_{+}]$-module. The $P_-$-action $(b^{-1},m)\mapsto \psi_{b}(m) \ : \ P_{-} \times M \to M$ on $M$ is continuous.
\end{lemma}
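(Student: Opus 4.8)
The plan is to treat the two parts independently; for (i) the heart of the matter is a compactness argument, and for (ii) it is a covering of $P_-$ by left translates of $P_0$ on which the action is visibly a composite of continuous maps. Throughout I use that ``compact'' includes Hausdorff, and that in the topological setting an $A[P_+]$-module comes with a continuous action.

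For part (i), let $M$ be a compact algebraically \'etale $A[P_+]$-module, so the $P_+$-action on $M$ is continuous and $M$ is Hausdorff. By Lemma~\ref{con1}(ii) it is enough to show that $\psi=\psi_s$ is continuous. I would fix a system $J(N_0/sN_0s^{-1})$ of coset representatives, which is finite since $sN_0s^{-1}$ has finite index in $N_0$ and which we may take to contain $1$, and form
\[
\Phi\colon \prod_{u\in J(N_0/sN_0s^{-1})}M\ \longrightarrow\ M\ ,\qquad (m_u)_u\ \longmapsto\ \sum_u u\,\varphi(m_u)\ .
\]
Each map $m\mapsto u\,\varphi(m)=\varphi_u(\varphi(m))$ is continuous by continuity of the $P_+$-action, and a finite sum of continuous maps into the topological $A$-module $M$ is continuous, so $\Phi$ is continuous; it is bijective because $M$ is algebraically \'etale. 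Its source is compact (a finite product of compact spaces) and its target is compact Hausdorff, so $\Phi$ is a homeomorphism and $\Phi^{-1}$ is continuous. By Lemma~\ref{idemp} the component of $\Phi^{-1}(m)$ at $u$ is $\psi_s(u^{-1}m)$, so composing $\Phi^{-1}$ with the projection onto the coordinate $u=1$ exhibits $m\mapsto\psi(m)$ as continuous. Lemma~\ref{con1}(ii) then yields that $M$ is topologically \'etale.

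For part (ii), let $M$ be topologically \'etale. First I would record that $\psi_b$ is continuous for every $b\in P_+$: writing $b=ut$ with $u\in N_0$, $t\in L_+$, one has $\psi_b=\psi_t\circ\varphi_u^{-1}$ by Proposition~\ref{3.2}, and $\psi_t$ is continuous by definition of topological \'etaleness while $\varphi_u^{-1}=\varphi_{u^{-1}}$ is continuous because $u^{-1}\in N_0\subset P_+$. Now $P_0$ is an open subgroup of $P$ contained in $P_-$ (indeed $P_0=P_+\cap P_-$), and $P_-$ is a monoid, so the left cosets $c_0P_0$ for $c_0\in P_-$ are open in $P$, are contained in $P_-$, and cover $P_-$; hence the sets $c_0P_0\times M$ form an open cover of $P_-\times M$ and it suffices to check continuity of the $P_-$-action on each one. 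For $c=c_0p_0$ with $c_0\in P_-$, $p_0\in P_0$ one has $c^{-1}=p_0^{-1}c_0^{-1}$ with $p_0^{-1},c_0^{-1}\in P_+$, so Proposition~\ref{3.2} together with $\psi_{p_0^{-1}}=\varphi_{p_0}$ gives
\[
c\cdot m\ =\ \psi_{c^{-1}}(m)\ =\ (\psi_{c_0^{-1}}\circ\varphi_{p_0})(m)\ =\ \psi_{c_0^{-1}}(p_0\,m)\ .
\]
Thus on $c_0P_0\times M$ the action is the composite of the homeomorphism $(c,m)\mapsto(c_0^{-1}c,m)\colon c_0P_0\times M\to P_0\times M$, the restriction to $P_0\times M$ of the continuous action $P_+\times M\to M$, and the map $\psi_{c_0^{-1}}\colon M\to M$, which is continuous by the first step. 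Hence the $P_-$-action on $M$ is continuous.

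The main obstacle is modest. In (ii) one must resist arguing by separate continuity in each variable (which does not give joint continuity) and instead use the explicit formula $\psi_{c^{-1}}(m)=\psi_{c_0^{-1}}(p_0\,m)$ on the pieces $c_0P_0\times M$, which presents the action there as a composite of maps already known to be continuous. In (i) the one point to keep in mind is that ``compact'' is understood to include Hausdorff, which is exactly what turns the continuous bijection $\Phi$ into a homeomorphism.
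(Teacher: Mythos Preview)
Your proof is correct and follows essentially the same approach as the paper. For (i), both you and the paper use that a continuous bijection from a compact space to a Hausdorff space is a homeomorphism; you package this via the single map $\Phi$ from the product and then invoke Lemma~\ref{con1}(ii), while the paper works directly with each $t\in L_+$ and the two-summand decomposition $M=\varphi_t(M)\oplus\Ker\psi_t$, but the underlying idea is identical. For (ii), your covering of $P_-$ by cosets $c_0P_0$ and the factorization $c\cdot m=\psi_{c_0^{-1}}(p_0\,m)$ is exactly the paper's argument (phrased there with cosets $t^{-1}P_0$ for $t\in L_+$ and a commutative square).
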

\begin{proof} (i) \ The compactness of $M$ implies that
$$
M = \varphi_t(M) \oplus \sum_{u \in (N_0 - tN_0 t^{-1})} u \varphi_t (M)
$$
is a topological decomposition of $M$ as the direct sum of finitely many closed submodules. It suffices to check that the restriction of $\psi_t$ to each summand is continuous. On all summands except the first one $\psi_t$ is zero.  By compactness of $M$ the map $\varphi_t$ is a homeomorphism between $M$ and the closed submodule $\varphi_t(M)$. We see that $\psi_t|\varphi_t(M)$ is the inverse of this homeomorphism and hence is continuous.

(ii) \ Since $P_0$ is open in $P_- = L_+^{-1}P_0 $ we only need to show that the restriction of the $P_-$-action to $t^{-1}P_0 \times M \to M$, for any $t \in L_+$, is continuous. We contemplate the commutative diagram
$$
\xymatrix{
  t^{-1}P_0 \times M  \ar[r] \ar[d]_{t\cdot \times \id} & M  \\
  P_0 \times M \ar[r] & M \ar[u]_{\psi_t}   }
$$
where the horizontal arrows are given by the $P_-$-action. The $P_0$-action on $M$ induced by $P_-$ coincides with the one induced by the $P_+$-action. Therefore the bottom horizontal arrow is continuous. The left vertical arrow is trivially continuous, and   $\psi_t$ is continuous by assumption.\end{proof}

\begin{lemma}\label{No} For any compact subgroup  $C\subset P_{+} $, the open $C$-stable $A$-submodules  of $M$ form a basis of neighborhoods of $0$.
\end{lemma}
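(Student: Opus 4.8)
The plan is to reduce the statement to the claim that \emph{every} open $A$-submodule $\mathcal{L}\subseteq M$ contains an open $C$-stable $A$-submodule. Since $M$ is by hypothesis a linearly topological $A$-module, the open $A$-submodules already form a basis of neighbourhoods of $0$; so once the claim is proved the lemma follows, because any neighbourhood of $0$ contains some such $\mathcal{L}$ and hence an open $C$-stable submodule.

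Given $\mathcal{L}$, I would introduce
\[
\mathcal{L}'\ :=\ \{\,m\in M\ :\ \varphi_c(m)\in\mathcal{L}\ \text{for all}\ c\in C\,\}\ =\ \bigcap_{c\in C}\varphi_c^{-1}(\mathcal{L})\ .
\]
Each $\varphi_c$ is $A$-linear — the $A[N_0]$-semilinearity \eqref{produit1} fixes the scalars $A\subseteq A[N_0]$, and $N_0$ acts $A$-linearly because $A$ is central in $A[N_0]$ — so $\mathcal{L}'$ is an $A$-submodule. It is contained in $\mathcal{L}$ since $1\in C$ and $\varphi_1=\id_M$. It is $C$-stable because $C$ is a group: for $m\in\mathcal{L}'$ and $c'\in C$, the assertion $\varphi_{c'}(m)\in\mathcal{L}'$ unravels to $\varphi_{cc'}(m)\in\mathcal{L}$ for all $c\in C$, which holds since $cc'\in C$.

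The only substantive point is that $\mathcal{L}'$ is open, and here I would run a tube-lemma argument using the compactness of $C$. For each $c\in C$, continuity of the action $C\times M\to M$ at $(c,0)$, together with $\varphi_c(0)=0\in\mathcal{L}$ and the fact that the open $A$-submodules of $M$ form a neighbourhood basis of $0$, yields an open neighbourhood $V_c\subseteq C$ of $c$ and an open $A$-submodule $U_c\subseteq M$ with $\varphi_b(U_c)\subseteq\mathcal{L}$ for all $b\in V_c$. By compactness, finitely many $V_{c_1},\dots,V_{c_n}$ cover $C$; put $U:=\bigcap_{i=1}^n U_{c_i}$, still an open $A$-submodule. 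For any $c\in C$, choosing $i$ with $c\in V_{c_i}$ gives $\varphi_c(U)\subseteq\varphi_c(U_{c_i})\subseteq\mathcal{L}$, so $U\subseteq\mathcal{L}'$ and $\mathcal{L}'$ is open. The one thing to watch is to pass to submodules $U_c$ before intersecting, so that $U$ is again a submodule; otherwise the argument is formal and I anticipate no real obstacle.
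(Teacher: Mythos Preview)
Your proof is correct and follows essentially the same approach as the paper: both reduce to showing every open $A$-submodule $\mathcal{L}$ contains an open $C$-stable one, and both run the identical compactness/tube-lemma argument to produce an open submodule $U$ with $\varphi_c(U)\subseteq\mathcal{L}$ for all $c\in C$. The only cosmetic difference is the final packaging---the paper takes the $C$-saturation $\sum_{c\in C}\varphi_c(U)$, while you take the largest $C$-stable submodule $\bigcap_{c\in C}\varphi_c^{-1}(\mathcal{L})$; both sit between $U$ and $\mathcal{L}$ and either works.
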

\begin{proof} We have to show that any open $A$-submodule $\mathcal M$ of $M  $ contains an open $C$-stable $A$-submodule.
By continuity of the action of $P_{+}$ on $M$, there exists for each $c \in C$,
an open $A$-submodule ${\mathcal M}_{c}$ of $M$ and an open neighborhood $H_{c}\subset P_{+}$ of $c$ such that
$\varphi_{x}({\mathcal M}_{c}) \subset {\mathcal M}$ for all $x\in H_{c}$. By the compactness of $C$, there exists a finite subset $I\subset C$ such that
$C= \cup_{c\in I}(H_{c}\cap C)$. By finiteness of $I$, the intersection  ${\mathcal M}'':=\cap_{c\in I}{\mathcal M}_{c} \subset M$ is an open $A$-submodule such that ${\mathcal M}':=\sum_{c\in C} \varphi_{c}({\mathcal M}'') \subset {\mathcal M}$.  The $A$-submodule ${\mathcal M}' $ is $C$-stable and, since ${\mathcal M}'' \subset {\mathcal M}' \subset {\mathcal M}$, also open.
 \end{proof}

Let   $M $ be a  topologically \'etale $A[P_{+}]$-module. Since $P_0$ is open in $P$ the $A$-module $M^{P}$ is a submodule of the $A$-module $C(P,M)$ of all continuous maps from $P$ to $M$. We equip $C(P,M)$ with the compact-open topology which makes it a linear-topological $A$-module. A basis of neighborhoods of zero is given by the submodules
${\mathcal C}(C, {\mathcal M}):=\{f \in C(P,M) \ | f(C)\subset {\mathcal M} \}$  with $C$ and $\mathcal M$ running over all compact subsets in $P$ and over all open submodules in $M$, respectively.
With $M$ also $C(P,M)$ is Hausdorff. Evidently $M^P$ is characterized inside $C(P,M)$ by closed conditions and hence is a closed submodule. Similarly, $\Ind_{s^{-\mathbb N}}^{s^{\mathbb Z}}(M)$ and $\Ind_{N_0}^N(M)$ are closed submodules of $C(s^{\mathbb Z},M)$ and $C(N,M)$, respectively, for the compact-open topologies. Clearly the homomorphisms of restricting maps (proposition \ref{3.3}) $M^P \to \Ind_{s^{-\mathbb N}}^{s^{\mathbb Z}}(M)$ and $M^P \to \Ind_{N_0}^N(M)$  are continuous.

\begin{lemma}\label{compact-open}
The restriction maps $M^P \to \Ind_{s^{-\mathbb N}}^{s^{\mathbb Z}}(M)$ and $M^P \to \Ind_{N_0}^N(M)$  are topological isomorphisms.
\end{lemma}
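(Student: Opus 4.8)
The plan is to show that the two continuous restriction maps from Proposition~\ref{3.3} are homeomorphisms by exhibiting continuous inverses, using the explicit formulas already established. Since Proposition~\ref{3.3} already provides set-theoretic bijections and Lemma~\ref{compact-open}'s maps are continuous for the compact-open topologies, the only thing left is the continuity of the inverse maps.

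First I would treat the isomorphism $M^P \xrightarrow{\sim} \Ind_{s^{-\mathbb N}}^{s^{\mathbb Z}}(M)$. The inverse sends $\phi$ to the function $b = b_- s^r \mapsto b_-\phi(s^r)$ (notation of the proof of Proposition~\ref{3.3}(a)). To check continuity at $0$, fix a basic neighborhood $\mathcal C(C,\mathcal M)$ of $M^P$ with $C \subseteq P$ compact and $\mathcal M \subseteq M$ open. Since $P = P_- s^{\mathbb Z}$ and $P_0$ is open, $C$ is covered by finitely many sets of the form $b_- P_0 s^r$; shrinking, we may assume $C \subseteq b_-^{(i)} P_0 s^{r_i}$ for finitely many pairs $(b_-^{(i)}, r_i)$. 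On such a piece, $f(b) = b_-^{(i)} p \phi(s^{r_i})$ for $p \in P_0$ and $b = b_-^{(i)} p s^{r_i}$; since $b_-^{(i)}$ acts as a fixed continuous ($P_-$-)automorphism of $M$ and the $P_0$-action is continuous (Lemma~\ref{cont1}(ii), or just continuity of the $P_+$-action on the compact piece $\{p : b_-^{(i)} p s^{r_i} \in C\}$), one finds via Lemma~\ref{No} an open $A$-submodule $\mathcal M'$ such that $\phi(s^{r_i}) \in \mathcal M'$ forces $f(b) \in \mathcal M$ for all $b$ in that piece. Taking $\mathcal M''$ the intersection of these finitely many $\mathcal M'$ and using that evaluation at the finite set $\{s^{r_i}\}$ is continuous on $\Ind_{s^{-\mathbb N}}^{s^{\mathbb Z}}(M)$, the preimage of $\mathcal C(C,\mathcal M)$ contains a basic neighborhood. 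This is the main obstacle — handling the "sliding" of the decomposition $b = b_- s^r$ uniformly over a compact set — but it is a routine compactness argument once one observes that only finitely many pieces and finitely many values $\phi(s^{r_i})$ are involved.

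Next I would handle $M^P \xrightarrow{\sim} \Ind_{N_0}^N(M)$. Here the inverse factors, by the proof of Proposition~\ref{3.3}(b), through the composite $\Ind_{N_0}^N(M) \to \Ind_{s^{-\mathbb N}}^{s^{\mathbb Z}}(M) \to M^P$, where the first map is $\phi \mapsto f_\phi$ with $f_\phi(s^k) = \sum_{v \in J(N_0/N_k)} (v \circ \varphi^k)(\phi(s^{-k}v^{-1}s^k))$; by the previous paragraph it suffices to show $\phi \mapsto f_\phi$ is continuous into $\Ind_{s^{-\mathbb N}}^{s^{\mathbb Z}}(M)$. Since the topology on the target is that of pointwise convergence at the points $s^k$, $k \in \mathbb N$, and each $f_\phi(s^k)$ is a finite sum of terms $(v \circ \varphi^k)(\phi(s^{-k}v^{-1}s^k))$, continuity follows from the continuity of each $\varphi^k$ and of each $v \in N_0$ on $M$ together with the continuity of evaluation at the finite set $\{s^{-k}v^{-1}s^k : v \in J(N_0/N_k)\} \subseteq N$ on $\Ind_{N_0}^N(M)$: given an open $\mathcal M$, choose by continuity of $v\circ\varphi^k$ an open $\mathcal M_v$ with $(v\circ\varphi^k)(\mathcal M_v) \subseteq \mathcal M'$ for suitable $\mathcal M'$ making the finite sum land in $\mathcal M$, then intersect. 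I would remark that all the requisite continuity of $\varphi^k$, the $v$'s, and evaluation is immediate from the definition of the compact-open topology and the standing continuity hypotheses on $M$, so the proof is essentially the unwinding of the explicit formulas of Proposition~\ref{3.3} combined with the finiteness of the index sets $J(N_0/N_k)$.
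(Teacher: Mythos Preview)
Your proposal is correct and follows essentially the same route as the paper. The paper phrases the argument as showing that three explicit neighborhood bases on $M^P$ --- the intrinsic $\mathcal C(C,\mathcal M)$, the $s$-model base $C_{k,\mathcal M}=\{f:f(s^k)\in\mathcal M\}$, and the $N$-model base $D_{k,\mathcal M}=\{f:f(N_{-k})\subset\mathcal M\}$ --- are mutually cofinal, but the actual computations are the ones you outline: cover the compact $C$ by finitely many translates of an open subgroup in $P_-$ and use continuity of the $\psi_{b_i}$ to produce a suitable $C_{k,\mathcal M'}\subset\mathcal C(C,\mathcal M)$; then use the finite-sum formula \eqref{2.6} together with continuity of the $v\circ\varphi^k$ to produce $D_{k,\mathcal M''}\subset C_{k,\mathcal M'}$.

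Two small points worth tightening. First, your invocation of Lemma~\ref{No} is slightly off, since that lemma is stated for compact \emph{subgroups}; what you actually need (and what the paper uses) is the same compactness argument for the compact subset $b_-^{(i)}K_i\subset P_-$, relying on Lemma~\ref{cont1}(ii). Second, when you describe the compact-open topology on $\Ind_{s^{-\mathbb N}}^{s^{\mathbb Z}}(M)$ as ``pointwise at $s^k$, $k\in\mathbb N$'', you are implicitly using that $s^{\mathbb Z}$ is discrete (so compact subsets are finite) and that values at $s^k$ for $k<0$ are controlled via $\psi$; the paper makes this explicit by first reducing from $B_{k,\mathcal M}$ to the single-point condition $C_{k,\mathcal M}$ using continuity of $\psi$. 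Neither point is a genuine gap.
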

\begin{proof}
The topology on $M^P$ induced by the compact-open topology on the $s$-model $\Ind_{s^{-\mathbb N}}^{s^{\mathbb Z}} M$ is the  topology with basis of neighborhoods of zero
$$
B_{k,{\mathcal M}}\ := \ \{ f\in M^{P}\  |\  f(s^{m}) \in {\mathcal M} \  { \rm for \ all }\ -k\leq m\leq k  \} \ ,
$$
for all $k\in \mathbb N$ and all open $A$-submodules ${\mathcal M}$ of $M$. One can replace $B_{k,{\mathcal M}}$ by
$$
C_{k,{\mathcal M}}\ := \ \{ f\in M^{P} \ | \ f(s^{k}) \in {\mathcal M} \} \ ,
$$
because $B_{k,{\mathcal M}}\subset C_{k,{\mathcal M}}$ and conversely given $(k,{\mathcal M})$ there exists an open $A$-submodule ${\mathcal M}' \subset {\mathcal M}$ such that  $\psi^{m}({\mathcal M}') \subset {\mathcal M}$ for all $0\leq  m\leq 2k $ as $\psi$ is continuous (lemma \ref{cont1}), hence $C_{k,{\mathcal M}'}\subset B_{k,{\mathcal M}}$.

The topology on $M^P$ induced by the compact-open topology on the $N$-model $\Ind_{N_{0}}^{N}M$ is the topology with basis of neighborhoods of zero
$$
D_{k,{\mathcal M} }\ := \ \{ f \in M^{P}\ | \ f(N_{-k}) \subset  {\mathcal M} \} \ ,
$$
for all $(k,{\mathcal M})$ as above.

We fix an auxiliary compact open subgroup $P_0' \subset P_0$. It then suffices, by Lemma \ref{No}, to let ${\mathcal M}$ run, in the above families, over the open $A[P_0']$-submodules ${\mathcal M}$ of $M$.

Let $C \subset P$ be any compact subset and let ${\mathcal M}$ be an open $A[P_0']$-submodule of $M$. We choose $k \in \mathbb N$ large enough so that $Cs^{-k} \subset P_-$. Since $Cs^{-k}$ is compact and $P_0'$ is an open subgroup of $P$ we find finitely many $b_1, \ldots, b_m \in P_+$ such that $Cs^{-k} \subset b_1^{-1}P_0' \cup \ldots \cup b_m^{-1}P_0'$. The continuity of the maps $\psi_{b_i}$ implies the existence of an open $A[P_0']$-submodule ${\mathcal M}'$ of $M$ such that $\psi_{b_i}({\mathcal M}') \subset {\mathcal M}$ for any $1 \leq i \leq m$. We deduce that
$$
C_{k,{\mathcal M}'} \subset  {\mathcal C} ( \bigcup_i b_i^{-1} P_0' s^k, {\mathcal M}) \subset  {\mathcal C}(C,{\mathcal M}) \ .
$$
Furthermore, by the continuity of the action of $P_{+}$ on $M$, there exists an  open submodule ${\mathcal M}''$ such that $ \sum_{v \in J(N_0/N_k)}v \varphi^{k}({\mathcal M}'') \subset {\mathcal M}'$. The second part of the formula (\ref{2.6}) then implies that
$$
D_{k,{\mathcal M}''} \subset C_{k,{\mathcal M}} \ .
$$
\end{proof}

The maps $\ev_{0} :M^{P}\to M$ and $\sigma_{0}:M\to M^{P}$ are continuous (section \ref{27}). We denote by $\End_{A}^{cont}(M)\subset \End_{A}(M)$ and $E^{cont} \subset E := \End_{A}(M^{P})$ the subalgebra of continuous endomorphisms. We have the canonical injective algebra map (\ref{defs})
$$
f\mapsto \sigma_{0}\circ f \circ \ev_{0} \quad :\quad \End_{A}^{cont}(M) \ \to \ E^{cont}\ .
$$

\begin{proposition}\label{cont2}  Let $M$ be a  topologically  \'etale $A[P_{+}]$-module.

 (i) \  If $M$ is complete, resp. compact, the $A$-module  $M^{P} $  is complete, resp. compact.

 (ii) \ The natural map $P\times M^{P }\  \to  \ M^{P}$ is continuous.

(iii) \  $\Res (f) \in E^{cont}$ for each $f\in C_{c}^{\infty}(N,A) $  (proposition \ref{Res}).

\end{proposition}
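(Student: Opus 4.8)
The plan is to deduce all three statements from Lemma \ref{compact-open}, which lets us work interchangeably with the $s$-model $\Ind_{s^{-\mathbb N}}^{s^{\mathbb Z}}(M) \simeq \varprojlim_\psi M$ and the $N$-model $\Ind_{N_0}^N(M)$, whichever is more convenient. For part (i), I would argue that $M^P$, being a closed submodule of $C(P,M)$ (as noted before the lemma), inherits completeness from $C(P,M)$ once $M$ is complete; and the compact-open topology on $C(P,M)$ for a complete $M$ and a locally compact $\sigma$-compact $P$ is complete by a standard argument (a Cauchy net converges pointwise because $M$ is complete, and the limit is continuous because convergence is uniform on compacta). For the compact case, I would instead use the $s$-model: $\varprojlim_\psi M$ is a closed subspace of the product $\prod_{m\in\mathbb N} M$, which is compact by Tychonoff when $M$ is compact, hence $M^P$ is compact. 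One must check the topology from Lemma \ref{compact-open} agrees with the product topology restricted to the projective limit — but this is exactly what the basis $\{C_{k,\mathcal M}\}$ in the proof of that lemma records.

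For part (ii), the continuity of $P \times M^P \to M^P$, I would reduce to continuity of the three ``generating'' families of operators: the action of $N$ (translation), the action of $s^{\mathbb Z}$, and the action of $L_-$ (equivalently $\psi_t$). Using $P = P_- s^{\mathbb Z}$ and openness of $P_0$ in $P$, it suffices to check continuity of $P_- \times M^P \to M^P$ and of $s^{\mathbb Z} \times M^P \to M^P$ separately. In the $s$-model, $s$ acts by the shift $(x_m)_m \mapsto$ shift, which is visibly continuous for the topology with basis $C_{k,\mathcal M}$; the inverse shift $\psi = \varphi^{-1}$ on the $s$-model is continuous by the same token (it is the model shift in the other direction — recall $\psi$ is a \emph{bijection} on $\varprojlim_\psi M$). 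For $P_-$: $P_- = L_- N_0$ and $N_0 \subset P_0$ is compact open, so by Lemma \ref{No} we may use $P_0'$-stable neighborhoods; continuity of $\psi_t$ on $M$ (the topological étale hypothesis) together with the explicit formulas of Proposition \ref{action} for $(t^{-1}f)(s^m) = \psi_t(f(s^m))$ gives continuity of the $L_-$-action, and the $N_0$-action is continuous by the analogous formula $(nf)(n') = f(n'n)$ in the $N$-model. Joint continuity (as opposed to separate continuity in each variable) is handled exactly as in the proof of Lemma \ref{compact-open}: given a compact $C \subset P$ and open $\mathcal M$, push $C$ into finitely many cosets $b_i^{-1}P_0'$, use continuity of the finitely many $\psi_{b_i}$ to find a single $\mathcal M'$ that works uniformly.

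For part (iii), I would show $\Res(f) \in E^{cont}$ for every $f \in C_c^\infty(N,A)$. Since $C_c^\infty(N,A)$ is generated as an $A[P]$-module by $1_{N_0}$ and $\Res$ is $P$-equivariant and $A$-linear, and since conjugation by elements of $P$ preserves $E^{cont}$ by part (ii), it suffices to prove $\Res(1_{N_0}) = R_0 = \sigma_0 \circ \ev_0 \in E^{cont}$. But $\ev_0$ and $\sigma_0$ are continuous (noted in section \ref{27}, and re-stated just before the proposition), so their composite is continuous. Then $\Res(1_{b.N_0}) = b \circ R_0 \circ b^{-1}$ is continuous for all $b \in P$ by part (ii), and a general $f \in C_c^\infty(N,A)$ is a finite $A$-linear combination of such $1_{b.N_0}$ (refine the support into cosets of a small enough $tN_0t^{-1}$), so $\Res(f) \in E^{cont}$.

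\textbf{Expected main obstacle.} The only genuinely delicate point is proving \emph{joint} continuity in part (ii) rather than separate continuity, and correctly identifying the topology on $M^P$ (via Lemma \ref{compact-open}) with the one making the shift and the $\psi_{b_i}$ simultaneously controllable; the argument mirrors the coset-pushing trick already used to prove Lemma \ref{compact-open}, so I expect it to go through, but it is where the real work lies. Parts (i) and (iii) are essentially formal once Lemma \ref{compact-open} and the continuity of $\ev_0,\sigma_0$ are in hand.
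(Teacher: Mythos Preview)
Your treatment of (i) and (iii) matches the paper's proof essentially verbatim: closed in $C(P,M)$ for completeness, Tychonoff on the $s$-model for compactness, and $\Res(1_{b.N_0}) = b \circ \sigma_0 \circ \ev_0 \circ b^{-1}$ for (iii).

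For (ii), however, the paper takes a much shorter and more general route than you do. Rather than using any structure of $P$ or the $\psi_t$-formulas, the paper simply observes that the $P$-action on $M^P$ is the restriction of the \emph{right translation} action of $P$ on $C(P,M)$, and proves the latter is continuous by the standard factorization
\[
P \times C(P,M) \to P \times C(P \times P,M) \to P \times C(P,C(P,M)) \to C(P,M),
\]
each arrow being continuous for the compact-open topology because $P$ is locally compact. No étale structure, no $\psi_t$, no coset-pushing is needed.

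Your approach can be made to work, but there is a gap in the reduction as you state it: from $P = P_- s^{\mathbb Z}$ one cannot deduce that joint continuity on $P$ follows from joint continuity on $P_-$ and on $s^{\mathbb Z}$ separately (the decomposition map need not interact well). What does work is: since $P_0$ is open, it suffices to show that each individual $b \in P$ acts continuously on $M^P$ (which your formula arguments do establish) and that $P_0 \times M^P \to M^P$ is jointly continuous (which your coset-pushing argument, modeled on Lemma~\ref{compact-open}, can prove using compactness of $P_0$). So your route is salvageable, but it is considerably more laborious and uses the specific structure of the setup where the paper does not.
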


\begin{proof} (i) \ If $M$ is complete, by \cite{BTG} TG X.9 Cor. 3 and TG X.25 Th. 2, the compact-open topology on $C(P,M)$ is complete because $P$ is locally compact.
Hence, $M^P$ as a closed submodule is complete as well.

 If $M$ is compact, the  $s$-model of $M^{P}$  is compact as a closed subset of the compact space $M^{\mathbb N} $. Hence by Lemma \ref{compact-open}, $M^P$ is compact.

(ii) It suffices to show that the right translation action of $P$ on $C(P,M)$ is continuous. This is well known: the map in question is the composite of the following three continuous maps
\begin{align*}
    P \times C(P,M) & \longrightarrow P \times C(P \times P,M) \\
    (b,f) & \longmapsto (b, (x,y) \mapsto f(yx)) \ ,
\end{align*}
\begin{align*}
    P \times C(P \times P,M) & \longrightarrow P \times C(P,C(P,M)) \\
    (b, F) & \longmapsto (b, x \mapsto [y \mapsto F(x,y)]) \ ,
\end{align*}
and
\begin{align*}
    P \times C(P,C(P,M)) & \longrightarrow C(P,M) \\
    (b, \Phi) & \longmapsto \Phi(b) \ ,
\end{align*}
where the continuity of the latter relies on the fact that $P$ is locally compact.

(iii) It suffices to consider functions of the form $f = 1_{b.N_0}$ for some $b \in P$. But then $\Res(f) = b \circ \sigma_0 \circ \ev_0 \circ b^{-1}$ is the composite of continuous endomorphisms.
\end{proof}

 \subsection{Integration on $N$ with value in $\End_{A}^{cont}(M^{P})$} \label{Inte}

{\sl We  suppose   that  $M$  is a complete topologically \'etale $A[P_{+}]$-module}.

\bigskip We denote by $E^{cont}  $   the   ring of continuous $A$-endomorphisms of the complete $A$-module $M^{P}$ with the topology defined by the
 right ideals
$$E^{cont}_{\mathcal L} \ :=  \ \Hom_{A}^{cont}(M^{P}, {\mathcal L} ) $$
for all  open $A$-submodules ${\mathcal L} \subset M^{P}$.

\begin{lemma}  $E^{cont}$ is a complete  topological ring.
\end{lemma}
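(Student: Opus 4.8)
The plan is to verify the two assertions separately: that the right ideals $E^{cont}_{\mathcal{L}}=\Hom_{A}^{cont}(M^{P},\mathcal{L})$, with $\mathcal{L}$ running over the open $A$-submodules of $M^{P}$, define a ring topology on the subring $E^{cont}\subset E$ of continuous endomorphisms (note $E^{cont}$ is a subring since a composite of continuous $A$-linear maps is continuous and $A$-linear), and that this topology is complete. For the first point I would first observe that each $E^{cont}_{\mathcal{L}}$ is an $A$-submodule of $E^{cont}$ and that $E^{cont}_{\mathcal{L}_{1}}\cap E^{cont}_{\mathcal{L}_{2}}=E^{cont}_{\mathcal{L}_{1}\cap\mathcal{L}_{2}}$, with $\mathcal{L}_{1}\cap\mathcal{L}_{2}$ again an open submodule; hence these submodules form a filter basis and define a topology making $(E^{cont},+)$ a topological group, which is Hausdorff because $\bigcap_{\mathcal{L}}\mathcal{L}=0$ ($M^{P}$ being Hausdorff) forces $\bigcap_{\mathcal{L}}E^{cont}_{\mathcal{L}}=0$. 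For multiplicativity the key elementary inclusions are $E^{cont}_{\mathcal{L}}\circ g\subseteq E^{cont}_{\mathcal{L}}$ and $E^{cont}_{\mathcal{L}}\circ E^{cont}_{\mathcal{L}}\subseteq E^{cont}_{\mathcal{L}}$ for every $g\in E^{cont}$ (the image of the right factor lies in $M^{P}$, resp.\ in $\mathcal{L}$, and $\mathcal{L}$ is $\circ$-absorbing on the left inside $M^{P}$), together with $h\circ E^{cont}_{h^{-1}(\mathcal{L})}\subseteq E^{cont}_{\mathcal{L}}$ for every $h\in E^{cont}$ (using that $h^{-1}(\mathcal{L})$ is an open $A$-submodule by continuity of $h$). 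Then, writing $fg-f_{0}g_{0}=f_{0}(g-g_{0})+(f-f_{0})g_{0}+(f-f_{0})(g-g_{0})$ and choosing an open $\mathcal{L}'\subseteq\mathcal{L}\cap f_{0}^{-1}(\mathcal{L})$, each of the three summands lies in $E^{cont}_{\mathcal{L}}$, so $(f_{0}+E^{cont}_{\mathcal{L}'})(g_{0}+E^{cont}_{\mathcal{L}'})\subseteq f_{0}g_{0}+E^{cont}_{\mathcal{L}}$; hence multiplication is continuous and $E^{cont}$ is a topological ring.

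For completeness I would argue directly with Cauchy filters (or nets). A Cauchy net $(f_{i})$ in $E^{cont}$ is, by the very definition of the topology, \emph{uniformly} Cauchy: for each open $\mathcal{L}$ there is an index $i_{0}$ with $f_{i}(x)-f_{j}(x)\in\mathcal{L}$ for all $x\in M^{P}$ and all $i,j\geq i_{0}$. In particular, for each fixed $x$ the net $(f_{i}(x))_{i}$ is Cauchy in $M^{P}$, and since $M$ is complete (the standing hypothesis of this section) the module $M^{P}$ is complete by Proposition \ref{cont2}(i), so $f(x):=\lim_{i}f_{i}(x)$ exists. Passing to the limit in the relations $f_{i}(ax+by)=af_{i}(x)+bf_{i}(y)$ (valid by continuity of the module operations on $M^{P}$) shows that $f$ is $A$-linear. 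To see that $f$ is continuous and that $f_{i}\to f$, fix an open $\mathcal{L}$ and $i_{0}$ as above; since an open $A$-submodule of a linearly topological module is also closed, letting $j$ run through the net yields $f_{i}(x)-f(x)\in\mathcal{L}$ for all $x\in M^{P}$ and all $i\geq i_{0}$, i.e.\ $f-f_{i}\in E^{cont}_{\mathcal{L}}$ for $i\geq i_{0}$. In particular $f=f_{i_{0}}+(f-f_{i_{0}})$ where $f-f_{i_{0}}$ has image in $\mathcal{L}$, so $f$ maps the open submodule $f_{i_{0}}^{-1}(\mathcal{L})$ into $\mathcal{L}+\mathcal{L}=\mathcal{L}$ and is therefore continuous. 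Thus $f\in E^{cont}$ and $f_{i}\to f$, which is the desired completeness.

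The ring-topology verification is routine bookkeeping with the three displayed inclusions; the step needing (mild) care is the completeness argument, because the topology on $E^{cont}$ is the topology of \emph{uniform} convergence on $M^{P}$, strictly finer in general than the topology of pointwise convergence, so one cannot simply invoke completeness of a product. The two ingredients that make the hand argument work are precisely the completeness of $M^{P}$ (Proposition \ref{cont2}(i), available since $M$ is assumed complete) and the fact that open submodules in a linear topology are simultaneously closed, which is what allows the pointwise limit to inherit both $A$-linearity/continuity and the uniform estimate.
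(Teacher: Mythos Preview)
Your proof is correct. The ring-topology verification is the same as the paper's (the paper simply declares that continuity of subtraction and composition ``is clear''; you spell out the standard inclusions). For completeness, however, you take a different route: you argue directly with Cauchy nets, building the pointwise limit $f(x)=\lim_i f_i(x)$ inside the complete module $M^P$, then using that open submodules are closed to upgrade pointwise convergence to uniform convergence and to check continuity of $f$. The paper instead identifies $E^{cont}$ with the projective limit $\varprojlim_{\mathcal L}\Hom_A^{cont}(M^P,M^P/\mathcal L)$ via the isomorphism $M^P\cong\varprojlim_{\mathcal L}M^P/\mathcal L$, and reads off completeness from that description. The two arguments are morally the same---your Cauchy-net computation is exactly what underlies the paper's projective-limit identification---but your version is more elementary and self-contained, while the paper's is shorter once one accepts the projective-limit formalism. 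Both rely on the completeness of $M^P$ (Proposition~\ref{cont2}(i)) in the same essential way.
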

\begin{proof} It is clear that  the maps $(x,y)\mapsto x-y$ and $(x,y)\mapsto x\circ y$  from $E^{cont}\times  E^{cont}$ to $E^{cont}$   are continuous, i.e. that $E^{cont}$ is a   topological ring. The composite of the  natural  morphisms
$$
E^{cont}\ \to\  \varprojlim_{{\mathcal L}} E^{cont }/E_{\mathcal L}^{cont} \ \to \  \varprojlim_{{\mathcal L}} \Hom_{A}^{cont}(M^{P},M^{P}/{\mathcal L})
$$
is an isomorphism (the natural map  $M^{P} \to  \varprojlim_{\mathcal L}M^{P}/{\mathcal L}$ is an isomorphism),  hence the two morphisms are isomorphisms since the kernel of the map $E^{cont} \to \Hom_{A}^{cont}(M^{P},M^{P}/{\mathcal L})$  is $E^{cont}_{\mathcal L}$.
We deduce that $E^{cont}$   is complete.
  \end{proof}

 \begin{definition} An $A$-linear map  $C_{c}^{\infty}(N,A)\to E^{cont}$ is called a measure on $N$ with values in $E^{cont}$.
 \end{definition}

The map $\Res$ is a measure on $N$ with values in  $E^{cont}$ (proposition \ref{cont2}).

\bigskip Let $C_{c}(N,E^{cont})$ be the space of compactly supported {\bf continuous} maps   from $N$ to $E^{cont}$.
  One can ``integrate'' a function in $ C_{c}(N,E^{cont})$ with respect to a measure  on $N$ with values in $E^{cont}$.

 \begin{proposition} \label{int} There is a natural bilinear map
 \begin{align*} C_{c}(N,E^{cont}) \times  \Hom_{A}(C_{c}^{\infty}(N,A), E^{cont}) & \  \to \  E^{cont} \\
  (f,\lambda) &\  \mapsto \ \int_{N}f \ d\lambda \ .
  \end{align*}
  \end{proposition}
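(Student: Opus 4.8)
The plan is to define $\int_N f\,d\lambda$ as the limit of Riemann sums, the convergence being forced by the completeness of $E^{cont}$ together with the fact that its topology is defined by \emph{right} ideals; since no continuity is assumed of $\lambda$, it is this last point that makes the argument work. Fix $f\in C_c(N,E^{cont})$ and $\lambda$. The open subgroups $N_{-k}=s^kN_0s^{-k}$ form an increasing exhaustion of $N$, so $\supp f\subseteq V:=N_{-k}$ for $k$ large enough; I fix such a $V$, which is a profinite group, so its open subgroups form a neighbourhood basis of $1$. For a compact open subgroup $U\subseteq V$ and a set $J(V/U)\subseteq V$ of coset representatives, I set
\[ S_{U}\ :=\ \sum_{v\in J(V/U)}f(v)\circ\lambda(1_{vU})\ \in\ E^{cont}\ . \]

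Next I would check that, with the $U$'s ordered by reverse inclusion, $(S_U)_U$ is a Cauchy net. Fix an open $A$-submodule $\mathcal L\subseteq M^P$, so that $E^{cont}_{\mathcal L}=\Hom_A^{cont}(M^P,\mathcal L)$ is a basic open right ideal of $E^{cont}$. Since $f|_V$ is continuous on the compact space $V$ it is uniformly continuous, so there is a compact open subgroup $U_0\subseteq V$ with $f(x)-f(x')\in E^{cont}_{\mathcal L}$ whenever $x,x'\in V$ lie in a common coset of $U_0$. For $U\subseteq U'\subseteq U_0$, using that $\lambda$ is additive and that $1_{v'U'}=\sum 1_{vU}$ (sum over the $U$-cosets contained in $v'U'$) one finds
\[ S_U-S_{U'}\ =\ \sum_{v\in J(V/U)}\bigl(f(v)-f(v')\bigr)\circ\lambda(1_{vU})\ , \]
where $v'\in J(V/U')$ is the chosen representative of the $U'$-coset containing $v$. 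Each $f(v)-f(v')$ lies in $E^{cont}_{\mathcal L}$, and because $E^{cont}_{\mathcal L}$ is a right ideal each summand, hence the whole sum, lies in $E^{cont}_{\mathcal L}$. Thus $(S_U)_U$ is Cauchy, and as $E^{cont}$ is complete the limit $\int_N f\,d\lambda:=\lim_U S_U$ exists.

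It remains to verify well-definedness and bilinearity. Taking $U=U'$ in the last display shows $S_U$ varies only inside $E^{cont}_{\mathcal L}$ (for $U\subseteq U_0$) under a change of coset representatives, so the limit is independent of the $J(V/U)$; and enlarging $V$ to some $N_{-k'}$ does not change $S_U$ for $U\subseteq V$, since for such $U$ any coset $vU$ with $v\notin V$ is disjoint from $V\supseteq\supp f$, whence the extra summands have $f(v)=0$ — so the limit is independent of $V$ as well. For fixed $\lambda$ the map $f\mapsto S_U$ is $A$-linear (a common $V$ serves for $f_1,f_2$ and for $af$, and $(af)\circ g=a(f\circ g)$ in the $A$-algebra $E^{cont}$), and for fixed $f$ the map $\lambda\mapsto S_U$ is $A$-linear since $\lambda$ enters only through the evaluations $\lambda(1_{vU})$; as addition and the $A$-action on $E^{cont}$ are continuous, passing to the limit preserves $A$-linearity, so $(f,\lambda)\mapsto\int_N f\,d\lambda$ is bilinear. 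One reads off from the Riemann sums the normalization $\int_N 1_W e\,d\lambda=e\circ\lambda(1_W)$ for $W\subseteq N$ compact open and $e\in E^{cont}$, which in fact characterizes the map; the construction involves no further choices, so it is canonical. The main obstacle is exactly the Cauchy estimate above: it goes through only because the "small" factor $f(v)-f(v')$ stands to the \emph{left} of the otherwise uncontrolled factor $\lambda(1_{vU})$ while $E^{cont}_{\mathcal L}$ is a right ideal, and because $\supp f$ is compact, making $f$ uniformly continuous.
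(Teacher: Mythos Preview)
Your proof is correct and rests on the same key point as the paper's, namely that each $E^{cont}_{\mathcal L}$ is a \emph{right} ideal in $E^{cont}$, which is what allows one to absorb the uncontrolled factor $\lambda(1_{vU})$. The organisation differs slightly: the paper defines $\int_N f\,d\lambda$ level by level in the projective limit $E^{cont}=\varprojlim_{\mathcal L}E^{cont}/E^{cont}_{\mathcal L}$, observing that modulo each $\mathcal L$ the function $f$ becomes locally constant so that the pairing $(f\otimes x,\lambda)\mapsto x\lambda(f)$ on $C^\infty(U,A)\otimes_A E^{cont}/E^{cont}_{\mathcal L}$ already gives the answer; your Riemann sum limit is then derived as a corollary (Cor.~\ref{limit}). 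You instead take the Riemann sums as the definition and establish convergence directly via a Cauchy argument. Your route is a bit more hands-on and makes the role of the right-ideal property very explicit; the paper's route is a bit more conceptual and yields the compatibility with the projective limit for free. They are equivalent packagings of the same idea.
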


\begin{proof} a) \ Every compact subset of $N$ is contained in a compact open subset. It follows that $C_{c}(N,E^{cont})$ is the union of its subspaces  $C(U,E^{cont})$ of functions with support contained in $U$,  for all  compact open subsets $U\subset N$.

b) \  For any open $A$-submodule $\mathcal L$ of $M^{P}$,
a function in $C (U,E^{cont}/E^{cont}_{\mathcal L})$ is locally constant  because
  $E^{cont}/E_{\mathcal L}^{cont} $ is discrete. An upper index $\infty$ means that we consider locally constant functions
 hence
 $$
  C (U,E^{cont}/E^{cont}_{\mathcal L})\  =\   C^{\infty} (U,E^{cont}/E^{cont}_{\mathcal L}) \ = \ C^{\infty} (U,A) \otimes _AE^{cont}/E^{cont}_{\mathcal L} \ .
  $$
There is a natural
 linear pairing
 \begin{align*}
 (C^{\infty} (U,A) \otimes _AE^{cont}/E^{cont}_{\mathcal L} ) \times  \Hom_{A}(C^{\infty}(U,A), E^{cont}) & \ \to \  E^{cont}/E^{cont}_{\mathcal L} \\
(f\otimes x , \lambda) & \ \mapsto \  x\lambda(f) \ .
 \end{align*}
Note that $E^{cont}/E^{cont}_{\mathcal L} $ is a right  $E^{cont}$-module.

 c) Let $f\in C_{c}(N,E^{cont})$ and let $\lambda \in \Hom_{A}(C_{c}^{\infty}(N,A), E^{cont})$. Let $U\subset N$ be an  open compact subset  containing the support of $f$.  For any open $A$-submodule $L$ of $M^{P}$ let $f_{{\mathcal L}} \in C^{\infty}_{c}(U,E^{cont}/E^{cont}_{\mathcal L})$ be the map induced by $f$.   Let
  $$\int_{U} f_{{\mathcal L}} \ d\lambda \quad \in  E^{cont}/E^{cont}_{\mathcal L}$$
 be the image of $(f_{{\mathcal L}}, \lambda)$ by the natural pairing of b).
  The elements $\int_{U} f_{{\mathcal L}} \ d\lambda$ combine in the projective limit $E^{cont}=\varprojlim_{{\mathcal L}} E^{cont}/E^{cont}_{\mathcal L}$ to give an element $\int_{U} f \ d\lambda  \in E^{cont}$.  One checks easily  that $\int_{U} f \ d\lambda  $ does not depend on the choice of $U$. We define
 $$ \int_{N}f \ d\lambda \quad := \quad \int_{U} f \ d\lambda  \quad. $$

\end{proof}

 We recall that  $J(N/V)$ is a system of representatives of $N/V$ when $V \subset N$  is a compact open subgroup.

\begin{corollary} \label{limit} Let $ f\in C_{c}(N,E^{cont})$  and let $\lambda $ be a measure on $N$ with values in $E^{cont}$. Then
$$\lim_{V\to \{1\}}\sum_{v \in  J(N/V)} f(v) \ \lambda (1_{v V})\quad = \quad \int_{N} f \ d\lambda \quad . $$
limit on compact open subgroups $V\subset N$ shrinking to $\{1\}$.
\end{corollary}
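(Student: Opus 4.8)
The plan is to deduce the corollary directly from the construction of $\int_N f\,d\lambda$ given in Proposition~\ref{int}, by showing that the finite Riemann-type sums $S_V:=\sum_{v\in J(N/V)}f(v)\circ\lambda(1_{vV})$ eventually land in any prescribed neighborhood $\int_N f\,d\lambda + E^{cont}_{\mathcal L}$ of the integral. Since $E^{cont}$ carries the topology defined by the right ideals $E^{cont}_{\mathcal L}$ for $\mathcal L$ an open $A$-submodule of $M^P$, and since $E^{cont}$ is complete (hence Hausdorff), it suffices to fix one such $\mathcal L$ and produce a compact open subgroup $V_0\subset N$ such that $S_V\equiv\int_N f\,d\lambda \pmod{E^{cont}_{\mathcal L}}$ for every compact open subgroup $V\subset V_0$. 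Note the sum $S_V$ is independent of the choice of representative set $J(N/V)$ exactly because $f(vw)\circ\lambda(1_{vwV})$ depends only on the coset $vV$ once $V$ is small enough — this will come out of the same continuity estimate — so there is no ambiguity in the statement.

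First I would fix a compact open subset $U\subseteq N$ containing the (compact) support of $f$; enlarging $U$ we may assume $U$ is a finite disjoint union of cosets of any given small subgroup. Fix an open $A$-submodule $\mathcal L\subset M^P$. By part~(b) of the proof of Proposition~\ref{int}, the image $f_{\mathcal L}\in C^\infty(U,E^{cont}/E^{cont}_{\mathcal L})$ is locally constant, so there is a compact open subgroup $V_0\subseteq N$ such that $f_{\mathcal L}$ is constant on each coset $vV_0\subset U$, i.e.\ $f(v)-f(v')\in E^{cont}_{\mathcal L}$ whenever $v,v'$ lie in the same coset of $V_0$ and $vV_0\subseteq U$ (and $f$ vanishes off $U$). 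Now let $V\subseteq V_0$ be any compact open subgroup. Since $E^{cont}_{\mathcal L}$ is a right ideal of $E^{cont}$, for $v\in J(N/V)$ with $vV\subseteq U$ and any $v'\in vV_0$ we have $(f(v)-f(v'))\circ\lambda(1_{vV})\in E^{cont}_{\mathcal L}$; summing the finitely many $V$-cosets inside a fixed $V_0$-coset $v'V_0$ and using the additivity $\sum_{vV\subseteq v'V_0}\lambda(1_{vV})=\lambda(1_{v'V_0})$ (which holds because $\lambda$ is $A$-linear and $1_{v'V_0}=\sum 1_{vV}$), we obtain
\begin{equation*}
S_V \;=\; \sum_{v'\in J(N/V_0),\ v'V_0\subseteq U} f(v')\circ\lambda(1_{v'V_0}) \pmod{E^{cont}_{\mathcal L}}.
\end{equation*}
The right-hand side is exactly $S_{V_0}$, and it does not depend on $V\subseteq V_0$; this already proves the independence of $J(N/V)$ claimed in the statement and shows the net $(S_V)_V$ is Cauchy.

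It remains to identify the common residue class with $\int_N f\,d\lambda$. This is essentially a matter of unwinding the definition in part~(c) of the proof of Proposition~\ref{int}: the element $\int_U f_{\mathcal L}\,d\lambda\in E^{cont}/E^{cont}_{\mathcal L}$ is produced from the pairing in part~(b) applied to $f_{\mathcal L}=\sum_{v'V_0\subseteq U} 1_{v'V_0}\otimes [f(v')]$, which the pairing sends to $\sum_{v'V_0\subseteq U}[f(v')]\circ\lambda(1_{v'V_0})$ — precisely the class of $S_{V_0}$ computed above. Hence $S_V\equiv \int_U f_{\mathcal L}\,d\lambda \pmod{E^{cont}_{\mathcal L}}$ for all $V\subseteq V_0$, and since $\int_N f\,d\lambda\in E^{cont}=\varprojlim_{\mathcal L}E^{cont}/E^{cont}_{\mathcal L}$ is by construction the element whose image mod $E^{cont}_{\mathcal L}$ is $\int_U f_{\mathcal L}\,d\lambda$, we conclude $\lim_{V\to\{1\}}S_V=\int_N f\,d\lambda$. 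The only mildly delicate point — and the place I would be most careful — is making sure the local constancy of $f_{\mathcal L}$ can be achieved by a single subgroup $V_0$ uniformly over the (finitely many) cosets in the compact set $U$; this is immediate since $U$ is compact and $f_{\mathcal L}$ is locally constant, but it is the hinge on which the whole estimate turns. Everything else is bookkeeping with the right-ideal property of $E^{cont}_{\mathcal L}$ and the additivity of $\lambda$ on disjoint unions.
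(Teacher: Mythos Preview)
Your proof is correct and follows essentially the same approach as the paper: fix a compact open $U$ containing $\supp(f)$, reduce modulo $E^{cont}_{\mathcal L}$ so that $f_{\mathcal L}$ becomes locally constant, pick $V_0$ so that $f_{\mathcal L}$ is constant on $V_0$-cosets contained in $U$, and identify the Riemann sum $S_V$ (for $V\subseteq V_0$) with $\int_U f_{\mathcal L}\,d\lambda$ via the pairing of part~(b) of Proposition~\ref{int}. Your write-up is more explicit than the paper's (you spell out the refinement step $S_V\equiv S_{V_0}$ using the right-ideal property of $E^{cont}_{\mathcal L}$ and additivity of $\lambda$), but the underlying argument is the same. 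One small imprecision: your remark that ``$S_V$ is independent of $J(N/V)$'' is only true modulo $E^{cont}_{\mathcal L}$ for $V\subseteq V_0(\mathcal L)$, not absolutely for each fixed $V$; but this is exactly what is needed for the net to converge, so the conclusion stands.
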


\begin{proof} We choose  an open compact subset $U\subset N$ containing the support of $f$.
Let    $L$ be an open $o$-submodule of $M^{P}$ and a compact open subgroup $V\subset N$ such  that $uV\subset U$ and    $f_{{\mathcal L}} $ (proof of the proposition \ref{int}) is constant on $uV$ for all $u\in U$.
Then  $\int_{U} f_{{\mathcal L}} \ d\lambda$ is the image of
  $$ \sum_{v \in  J(N/V)} f(v) \ \lambda (1_{v V})   \quad   $$
 by the quotient map $E^{cont} \to E^{cont}/E_{\mathcal L}^{cont}$.
  \end{proof}

\begin{lemma}\label{lzero}
Let $f \in C_c(N,E^{cont})$ be a continuous map with support in the compact open subset $U \subset N$, let $\lambda $ be a measure on $N$ with values in $E^{cont}$, and let $\mathcal{L} \subset E^{cont}$ be any open $A$-submodule. There is a compact open subgroup $V_{\mathcal{L}} \subset N$ such that $UV_{\mathcal{L}} = U$ and
\begin{equation*}
    \int_N (f1_{uV} - f(u)) d\lambda \in E^{cont}_{\mathcal{L}}
\end{equation*}
for any open subgroup $V \subset V_{\mathcal{L}}$ and any $u \in U$.
\end{lemma}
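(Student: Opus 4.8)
The plan is to reduce the statement to a uniform-continuity argument for $f$ combined with the defining property of the integral as a limit of Riemann-type sums, exploiting that the values of $\lambda$ on disjoint pieces are orthogonal idempotents (Corollary \ref{Rr}).

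First I would unwind what needs to be shown. Fix the open $A$-submodule $\mathcal{L} \subset E^{cont}$; we may assume $\mathcal L = E^{cont}_{\mathcal M}$ for some open $A$-submodule $\mathcal M \subset M^P$, since these form a basis of neighborhoods of zero. The function $f$ takes values in $E^{cont}$, but modulo $E^{cont}_{\mathcal M}$ it becomes a locally constant function $f_{\mathcal M} \in C^\infty(U, E^{cont}/E^{cont}_{\mathcal M})$ (as in step b) of the proof of Proposition \ref{int}). By continuity of $f$ and compactness of $U$, there is a compact open subgroup $V_{\mathcal L} \subset N$ small enough that $UV_{\mathcal L} = U$ and $f_{\mathcal M}$ is \emph{constant} on each coset $uV_{\mathcal L}$ for $u \in U$; hence the same holds for any open subgroup $V \subseteq V_{\mathcal L}$. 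Now for such $V$ and $u \in U$, the function $f1_{uV} - f(u)$ has image in $uV$, and on $uV$ its value at any point $x$ is $f(x) - f(u)$, which lies in $E^{cont}_{\mathcal M}$ by the choice of $V_{\mathcal L}$; off $uV$ it is $-f(u)1_{N \setminus uV}$ composed appropriately — more precisely $f1_{uV} - f(u) = (f(x)-f(u))1_{uV}(x) - f(u)1_{N\setminus uV}(x)$ as a function of $x$. So I split the integral into $\int_N (f1_{uV} - f1_{uV}(u)\cdot 1_{uV})\,d\lambda$ and $\int_N f(u)(1_{uV} - 1_N)\,d\lambda$; wait — more cleanly, write $f \cdot 1_{uV} - f(u) = (f - f(u))\cdot 1_{uV} - f(u)\cdot(1_N - 1_{uV})$ and note $\int_N (f-f(u))1_{uV}\,d\lambda$ lands in $E^{cont}_{\mathcal M}$ because each Riemann sum $\sum_{v} (f(v)-f(u))\lambda(1_{vV'})$ over a refinement does (each $f(v)-f(u) \in E^{cont}_{\mathcal M}$ and $E^{cont}_{\mathcal M}$ is a right ideal), using Corollary \ref{limit}.

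The remaining term is $-f(u)\int_N (1_N - 1_{uV})\,d\lambda$. Here I need the normalization $\lambda(1_N) = \id$ on $M^P$ — but $\lambda$ is a general measure, not $\Res$, so $\lambda(1_N)$ need not be the identity; nonetheless $1_N - 1_{uV} = 1_{N \setminus uV}$, and the point is that $\int_N 1_{uV}\,d\lambda = \lambda(1_{uV})$ directly (a single-set integral is just the value of the measure, as follows from Corollary \ref{limit} with a partition refining $\{uV\}$), so $\int_N(1_N - 1_{uV})\,d\lambda = \lambda(1_N) - \lambda(1_{uV})$. Hmm, this does not obviously lie in $E^{cont}_{\mathcal M}$. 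Let me reconsider: in fact the statement should be read with $f 1_{uV}$ meaning the function $x \mapsto f(x)1_{uV}(x)$ and $f(u)$ meaning the \emph{constant} function with value $f(u) \in E^{cont}$ — no: then $\int_N f(u)\,d\lambda = f(u)\lambda(1_N)$ which is nonsense for non-unital $\lambda$. The correct reading must be $\int_N(f1_{uV} - f(u)1_{uV})\,d\lambda = \int_N (f - f(u))1_{uV}\,d\lambda$, i.e. $f(u)$ is shorthand for $f(u)\cdot 1_{uV}$ or the statement implicitly restricts to the support; under that reading the argument above is complete. I would therefore phrase the proof as: reduce to $\mathcal L = E^{cont}_{\mathcal M}$; choose $V_{\mathcal L}$ by uniform continuity of $f$ on the compact set $U$ so that $f(x) - f(u) \in E^{cont}_{\mathcal M}$ whenever $x \in uV_{\mathcal L}$; then for $V \subseteq V_{\mathcal L}$ and $u \in U$ apply Corollary \ref{limit} to $(f - f(u))1_{uV}$, observing every term of every approximating sum lies in the right ideal $E^{cont}_{\mathcal M}$, hence so does the limit.

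The main obstacle I anticipate is purely a matter of bookkeeping with the two competing topologies on $M^P$ (compact-open, and the one via the $s$-model) and making sure "$f$ continuous with compact support $U$" genuinely yields the required uniform statement — i.e. that $V_{\mathcal L}$ can be chosen \emph{independently of $u$}. This is exactly the uniform continuity of the continuous map $f$ restricted to the compact set $U$ with values in the topological module $E^{cont}/E^{cont}_{\mathcal M}$ (which is discrete), so $f_{\mathcal M}$ is locally constant, and by compactness of $U$ a single $V_{\mathcal L}$ works for all $u$; once that is pinned down, everything else is the orthogonality/right-ideal formalism already available from Corollaries \ref{Rr} and \ref{limit}. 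I expect no further difficulty.
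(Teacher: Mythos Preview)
Your proposal is correct and follows essentially the same route as the paper: choose $V_{\mathcal L}$ by a uniform-continuity/compactness argument so that $f(uv)-f(u)\in E^{cont}_{\mathcal L}$ for all $u\in U$, $v\in V_{\mathcal L}$, and then use that $E^{cont}_{\mathcal L}$ is a \emph{right} ideal so every approximating Riemann sum (hence the limit) lies in it. Your reading of ``$f(u)$'' as $f(u)1_{uV}$ is the intended one, and note that Corollary~\ref{Rr} (orthogonal idempotents) is specific to $\Res$ and plays no role here---only the right-ideal property and Corollary~\ref{limit} are needed, exactly as you conclude.
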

\begin{proof}
The integral in question is the limit (with respect to open subgroups $V' \subset V$) of the net
\begin{equation*}
    \sum_{v \in J(V/V')} (f(uv) - f(u)) \lambda(1_{uv V'}) \ .
\end{equation*}
Since $E^{cont}_{\mathcal{L}}$ is a right ideal it therefore suffices to find a compact open subgroup $V_{\mathcal{L}} \subset N$ such that $UV_{\mathcal{L}} = U$ and
\begin{equation*}
    f(uv) - f(u) \in E^{cont}_{\mathcal{L}} \qquad \textrm{for any $u \in U$ and $v \in V_{\mathcal{L}}$}.
\end{equation*}
We certainly find a compact open subgroup $\tilde{V} \subset N$ such that $U \tilde{V} = U$. The map
\begin{align*}
    U \times \tilde{V} & \ \to \ E^{cont} \\
    (u,v) & \ \mapsto \ f(uv) - f(u)
\end{align*}
is continuous and maps any $(u,1)$ to zero. Hence, for any $u \in U$, there is an open neighborhood $U_u \subset U$ of $u$ and a compact open subgroup $V_u \subset \tilde{V}$ such that $U_u \times V_u$ is mapped to $E^{cont}_{\mathcal{L}}$. Since $U$ is compact we have $U = U_{u_1} \cup \ldots \cup U_{u_s}$ for finitely many appropriate $u_i \in U$. The compact open subgroup $V_{\mathcal{L}} := V_{u_1} \cap \ldots \cap V_{u_s}$ then is such that $U \times V_{\mathcal{L}}$ is mapped to $E^{cont}_{\mathcal{L}}$.
\end{proof}

 Let $C(N,E^{cont})$ be the space of continuous functions from $N$ to $E^{cont}$. For any continuous function $f\in C(N,E^{cont})$, for any compact open subset $U\subset N$ and for any measure $\lambda$ on $N$ with values in $ E^{cont}$ we denote
  $$\int_{U}f \  d\lambda \ : = \int_{N}  f\ 1_{U} \ d\lambda \quad $$
where $1_{U} \in  C^{\infty} (U,A)$ is the characteristic function of $U$ hence $f1_{U} \in C_{c}(N,E^{cont})$
is the restriction of $f$ to $U$. The ``integral of $f$ on $U$''  (with respect to the measure $\lambda$) is equal to the ``integral of the restriction of $f$ to $U$''.

\begin{remark}\label{multiplic}
For $f \in C_c(N,E^{cont})$ and $\phi \in C_c^\infty(N,A)$ we have
\begin{equation*}
    \int_N f\phi d\Res \ = \ \int_N \phi f d\Res \ = \ \int_N f d\Res \circ \Res(\phi) \ .
\end{equation*}
\end{remark}
\begin{proof}
This is immediate from the construction of the integral and the multiplicativity of $\Res$.
\end{proof}

\section{$G$-equivariant sheaf   on $G/P$}\label{S5}

{\sl  Let $G$ be  a locally profinite group  containing $P=N\rtimes L$ as a closed subgroup  satisfying the assumptions of section \ref{2.4}  such that

a)  $G/P$ is compact.

b) There is a subset $W$ in the $G$-normalizer $N_G(L)$ of   $L$ such that

\begin{itemize}
\item[--] the image of $W$ in $N_G(L)/L$ is a subgroup,

\item[--] $G$ is the disjoint union of $PwP$  for $w\in W$.
\end{itemize}
 We note that $P wP=NwP =PwN$.

c) There exists $w_{0}\in W$  such that  $Nw_{0}P $   is an open  dense subset  of $G$. We call
\begin{equation*}
    \mathcal{C} := Nw_{0}P /P
\end{equation*}
the open cell of $G/P$.

d)   The map $(n,b)\mapsto nw_{0}b$  from $N\times P$  onto $Nw_{0}P $ is a homeomorphism. }

\begin{remark} \label{dense} These conditions imply that
$$G = P \overline P P = C(w_{0}) C(w_{0}^{-1})$$
where $\overline P:=w_{0}Pw_{0}^{-1} $ and $C(g)=PgP$ for $g\in G$.
\end{remark}
\begin{proof}
 The intersection of the two dense open subsets  $g\mathcal{C}$ and $\mathcal{C}$ in $G/P$  is open and not empty, for any $g\in G$.
\end{proof}

The group $G$ acts continuously on the topological space $G/P$,
\begin{align*}
    G \ \times \ G/P & \ \to \ G/P \\
    (g,xP/P) & \ \mapsto \ gxP/P \ .
\end{align*}
For $n,x\in N$ and $t\in L$ we have $ntxw_{0}P/P  = ntxt^{-1}w_{0}P/P  =  (nt.x)w_{0}P/P$ hence
the action of $P$ on the  open cell  corresponds to the action of $P$ on $N$ introduced before the proposition \ref{Res}.

When $M$ is an \'etale $A[P_{+}]$-module, this allows us to systematically view the map $\Res$ in the following as a $P$-equivariant homomorphism of $A$-algebras
\begin{equation*}
    \Res : C^\infty_c(\mathcal{C},A) \ \rightarrow \ \End_A(M^P)
\end{equation*}
and the corresponding sheaf (theorem \ref{the11}) as a sheaf on $\mathcal{C}$. Our purpose is to show that this sheaf extends naturally to a $G$-equivariant sheaf on $G/P$ for certain  \'etale $A[P_{+}]$-modules.  When $M$ is a complete topologically $A[P_{+}]$-module we note that also integration with respect to the measure  $\Res $ (proposition \ref{int}) will be viewed in the following as a map
\begin{align*}
C_{c}(\mathcal{C},E^{cont}) & \  \to \  E^{cont} \\
  f &\  \mapsto \ \int_{\mathcal{C}}f \ d\Res
\end{align*}
on the space $C_{c}(\mathcal{C},E^{cont})$ of compactly supported continuous maps from $\mathcal{C}$ to $E^{cont}$.

 \subsection{Topological $G$-space $G/P$ and the map $\alpha$}

\begin{definition} \label{ap}  An open subset ${\mathcal U}$ of $G/P$ is called standard if there is a $g \in G$ such that $g{\mathcal U}$ is contained in the open cell $\mathcal{C}$.
\end{definition}

The inclusion $g{\mathcal U} \subset Nw_{0}P/P$ is equivalent to ${\mathcal U} = g^{-1}Uw_{0}P/P$ for a unique open subset $U\subset N$. An open subset of a standard open subset is standard. The translates by $G$ of $N_{0}w_{0}P/P$ form a basis of the topology of $G/P$.

\begin{proposition} \label{fpar} A compact open subset ${\mathcal U} \subset G/P$ is a  disjoint union
\begin{equation*}
    {\mathcal U}= \bigsqcup_{g\in I} g^{-1}U w_{0}P/P
\end{equation*}
where $U\subset N$ is a compact open subgroup and $I\subset G$ a finite subset.
\end{proposition}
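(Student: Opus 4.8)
The plan is to combine a straightforward compactness argument with two elementary facts: that every compact open subset of the locally profinite group $N$ is a finite disjoint union of cosets of a single compact open subgroup of $N$, and that hypothesis d) forces such coset decompositions to stay disjoint after they are pushed into $G/P$. The first fact is standard for locally profinite groups; the second is where d) really enters.

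First I would record the basic properties of a set of the form $g^{-1}Ww_{0}P/P$ with $g\in G$ and $W\subset N$ a compact open subgroup. Such a set is clopen in $G/P$: it is open because $W$ is open in $N$, $\mathcal C=Nw_{0}P/P$ is open in $G/P$ by c), and left translation by $g^{-1}$ is a homeomorphism; it is closed because it is the image of the compact set $g^{-1}Ww_{0}$ under the orbit map $G\to G/P$ (and $G/P$ is Hausdorff, $P$ being closed in $G$). Moreover, by d) the map $n\mapsto nw_{0}P/P$ identifies $N$ homeomorphically with $\mathcal C$, so it carries $W$ onto the compact open subset $Ww_{0}P/P\subset\mathcal C$; and if $W\subset W'$ are compact open subgroups of $N$ then $W'w_{0}P/P=\bigsqcup_{u\in J(W'/W)}uWw_{0}P/P$ is a \emph{disjoint} union, since an equality $u_{1}x_{1}w_{0}b_{1}=u_{2}x_{2}w_{0}b_{2}$ with $x_{i}\in W$ and $b_{i}\in P$ forces $u_{1}x_{1}=u_{2}x_{2}$ by d), hence $u_{1}W=u_{2}W$. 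Applying $g^{-1}$ then gives $g^{-1}W'w_{0}P/P=\bigsqcup_{u\in J(W'/W)}(u^{-1}g)^{-1}Ww_{0}P/P$.

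Now let $\mathcal U\subset G/P$ be compact open. Since the translates $g^{-1}N_{0}w_{0}P/P$, $g\in G$, form a basis of the topology, $\mathcal U$ is covered by finitely many of them lying inside it: $\mathcal U=\bigcup_{j=1}^{m}g_{j}^{-1}N_{0}w_{0}P/P$. I would disjointify this cover by setting $\mathcal W_{j}:=g_{j}^{-1}N_{0}w_{0}P/P\setminus\bigcup_{i<j}g_{i}^{-1}N_{0}w_{0}P/P$; each $\mathcal W_{j}$ is clopen, being a finite Boolean combination of the clopen sets above, and $\mathcal U=\bigsqcup_{j=1}^{m}\mathcal W_{j}$. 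Transporting $\mathcal W_{j}$ by $g_{j}$ into $N_{0}w_{0}P/P\cong N_{0}$, it corresponds to a compact open subset $U_{j}\subseteq N_{0}$ of $N$; by the first elementary fact $U_{j}=\bigsqcup_{v\in S_{j}}vW_{j}$ for some compact open subgroup $W_{j}\subset N$ and finite $S_{j}\subset N_{0}$, so $\mathcal W_{j}=\bigsqcup_{v\in S_{j}}(v^{-1}g_{j})^{-1}W_{j}w_{0}P/P$.

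Finally I would put $U:=\bigcap_{j=1}^{m}W_{j}$, a compact open subgroup of $N$, and apply the coset refinement from the second paragraph to rewrite each $g^{-1}W_{j}w_{0}P/P$ as the disjoint union of the sets $(u^{-1}g)^{-1}Uw_{0}P/P$ over $u\in J(W_{j}/U)$. Substituting these into $\mathcal U=\bigsqcup_{j}\mathcal W_{j}$ produces $\mathcal U=\bigsqcup_{g\in I}g^{-1}Uw_{0}P/P$ for a finite $I\subset G$, which is the claim. The only genuinely non-formal point is the repeated use of d) to keep the various unions disjoint once they are pushed into $G/P$; everything else is routine manipulation of clopen sets in a compact totally disconnected space, so I expect the main thing to watch to be the bookkeeping in the disjointification step rather than any real obstacle.
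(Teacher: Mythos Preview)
Your proof is correct and follows essentially the same route as the paper: cover $\mathcal U$ by finitely many standard compact open sets, disjointify, then use that compact open subsets of the locally profinite group $N$ are finite disjoint unions of cosets of a compact open subgroup. The paper's proof is considerably terser and leaves implicit both the justification of disjointness (your appeal to hypothesis d)) and the final refinement to a single common $U$ via $U=\bigcap_j W_j$; your version supplies these details explicitly, which is a genuine improvement in precision rather than a different argument.
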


\begin{proof}  We first observe that any open covering of ${\mathcal U}$ can be refined into a disjoint open covering. In our case, this implies that  ${\mathcal U}$ has a finite disjoint  covering by standard compact open subsets. Let  $g^{-1} Uw_{0}P/P \subset G/P$ be a standard compact open subset. Then $U=\sqcup_{u\in J}u^{-1}V$ (disjoint union) with a finite set $I\subset U$ and  $V\subset N$ is a compact open subgroup. Then
$g^{-1} Uw_{0}P/P= \sqcup_{h\in I} h^{-1}V w_{0}P/P$  (disjoint union)  where     $I=uJ$.
\end{proof}

 For $g\in G$  and  $x$ in the non empty open subset  $g^{-1}\mathcal{C} \cap \mathcal{C}$  of $G/P$ (remark \ref{dense}), there is a unique element $\alpha(g,x) \in P$ such that, if $x = uw_0P/P$ with $u \in N$, then
\begin{equation*}
    guw_0N \ = \ \alpha(g,x)uw_0N \ .
\end{equation*}

We give some properties of the map $\alpha$.

\begin{lemma}\label{c3} Let $g\in G$. Then

(i) \ $g^{-1}\mathcal{C} \cap \mathcal{C} = \mathcal{C}$ if and only if $g\in P$.

(ii) \  The map $\alpha(g,.) : g^{-1}\mathcal{C} \cap \mathcal{C} \to P$ is continuous.

  (iii) \ We have $gx=\alpha(g,x) x$ for $x\in g^{-1}\mathcal{C} \cap \mathcal{C}$ and  we have $\alpha(b,x)=b$ for $b\in P$ and $x\in \mathcal{C}$.
\end{lemma}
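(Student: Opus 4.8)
The plan is to unravel the definitions and use the hypotheses (a)--(d) on $G/P$, in particular the fact that $Nw_0P$ is open dense and the multiplication map $N \times P \to Nw_0P$, $(n,b)\mapsto nw_0b$, is a homeomorphism (hypothesis (d)), together with the remark $G = P\overline P P$.

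\textbf{(i)} If $g \in P$, then $g\mathcal{C} = gNw_0P/P = Nw_0P/P = \mathcal{C}$ (using $PN = P$ and $PNw_0P = Nw_0P$ since $P$ normalizes $N$ and $\mathcal{C}$ is a $P$-stable subset, as noted after Remark \ref{dense}), so $g^{-1}\mathcal{C}\cap\mathcal{C}=\mathcal{C}$. Conversely, suppose $g^{-1}\mathcal{C}\cap\mathcal{C} = \mathcal{C}$, i.e. $\mathcal{C} \subseteq g^{-1}\mathcal{C}$, i.e. $g\mathcal{C}\subseteq\mathcal{C}$. In particular $gw_0P/P \in \mathcal{C} = Nw_0P/P$, so $gw_0 \in Nw_0P$, and by hypothesis (d) we may write $gw_0 = nw_0b$ with $n\in N$, $b\in P$; thus $g = nw_0bw_0^{-1} \in N\overline{P}$. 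Now $g\mathcal{C}\subseteq\mathcal{C}$ gives $g n'w_0 P/P \in \mathcal{C}$ for every $n'\in N$. If $g\notin P$, then writing $g = n w_0 b w_0^{-1}$ with $b\notin w_0^{-1}Nw_0 \cdot$(stuff in $P$), one exhibits an $n'$ for which $g n' w_0 P/P \notin Nw_0 P/P$: concretely, choose $n'$ so that $bw_0^{-1}n'w_0 \in P$ lands, after moving $w_0$ past, outside the big cell. The cleanest argument: the condition $g\mathcal C\subseteq\mathcal C$ together with $G = C(w_0)C(w_0^{-1})$ and the disjointness of the Bruhat cells $PwP$ forces $g$ to lie in the stabilizer of $\mathcal{C}$, which is exactly $P$ since $\mathcal{C}$ is open dense and $P$ is its own normalizer-type stabilizer here; I would spell this out by noting that $\overline{g\mathcal C} = G/P$ would fail if $g\mathcal C$ were a proper $P$-translate, hence $g\mathcal C = \mathcal C$ and then $g(w_0P/P) = w_0P/P$ combined with $g$ normalizing the $N$-action forces $g\in P$.

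\textbf{(iii)} This is essentially the definition. For $x = uw_0P/P \in g^{-1}\mathcal C\cap\mathcal C$ with $u\in N$, the element $\alpha(g,x)\in P$ is defined by $guw_0N = \alpha(g,x)uw_0N$, i.e. $guw_0P/P = \alpha(g,x)uw_0P/P$ (passing from the coset $uw_0 N$ in $Nw_0$, which determines $uw_0P/P$ by hypothesis (d), to $G/P$); hence $gx = \alpha(g,x)x$. For $b \in P$ and $x = uw_0P/P \in \mathcal{C}$: we have $bx = buw_0P/P$, and $buw_0 = (bub^{-1})bw_0 $; write $b = nt$ with $n\in N$, $t\in L$, so $bw_0 = ntw_0 = n w_0 (w_0^{-1}tw_0) \in Nw_0 P$ with the $N$-part $(bub^{-1})n \cdot$(adjustment). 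Matching $bx = (b.u)w_0 P/P$ with the defining relation $bx = \alpha(b,x)x$, uniqueness of $\alpha(b,x)$ in $P$ and the fact that $b$ itself satisfies $buw_0N = b\, uw_0 N$ gives $\alpha(b,x) = b$.

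\textbf{(ii)} For continuity of $\alpha(g,\cdot)$: on the open set $g^{-1}\mathcal C\cap\mathcal C$, write a point as $x = uw_0P/P$; by hypothesis (d) the assignment $x \mapsto u \in N$ is continuous (it is the inverse of a homeomorphism). Since $x\in g^{-1}\mathcal C$, we have $guw_0P/P\in\mathcal C$, so $guw_0 \in Nw_0P$, and again by (d) we may factor $guw_0 = n(x)w_0 p(x)$ with $n(x)\in N$, $p(x)\in P$ depending continuously on $guw_0$, hence on $x$ (multiplication in $G$ is continuous). Then $guw_0 N = n(x)w_0 N$, so by definition $\alpha(g,x) = n(x)w_0 p(x) w_0^{-1} \cdot$ — more carefully, $\alpha(g,x)$ is the unique element of $P$ with $\alpha(g,x)uw_0 N = guw_0 N = n(x)w_0N$; since $uw_0N$ and $n(x)w_0N$ are both cosets of $N$ inside $Nw_0$, and $P$ acts on these via $b\cdot(vw_0N) = (b.v)w_0N$, we get $\alpha(g,x)$ determined by $\alpha(g,x).u = n(x)$ in $N$, an equation which depends continuously on $x$ through $u$ and $n(x)$; solving in the locally profinite group for $\alpha(g,x)$ (its image in $N$ under $b\mapsto b.u \cdot$ something, combined with the $L$-component extracted from $p(x)$) is continuous. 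I expect the main obstacle to be bookkeeping in part (i): making the implication ``$g\mathcal{C}\subseteq\mathcal{C}\Rightarrow g\in P$'' fully rigorous requires carefully using the Bruhat-type decomposition $G = \bigsqcup_{w\in W}PwP$ and the openness/density of $Nw_0P$, rather than a one-line argument; the continuity statements (ii) and the identity (iii) are then routine consequences of hypothesis (d) and continuity of the group operations.
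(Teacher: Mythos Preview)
Your proof of part (i) has a genuine gap: you never complete the implication $g\mathcal{C}\subseteq\mathcal{C}\Rightarrow g\in P$. You obtain $g\in N\overline{P}$ and then offer two sketches, neither of which is a proof. The density argument (``$\overline{g\mathcal C}=G/P$ would fail\ldots'') does not work as stated, since $g\mathcal{C}$ is always open dense regardless of $g$. The paper's argument is short and uses hypothesis~(b) directly: the condition $gPw_0P\subset Pw_0P$ depends only on the double coset $PgP$, so one may take $g=w\in W$; then $wPw_0P\subset Pw_0P$ gives $ww_0\in Pw_0P$. But $ww_0\in N_G(L)$ has image in $N_G(L)/L$ lying in the group generated by the image of $W$, hence $ww_0\in w'L$ for some $w'\in W$, and disjointness of the Bruhat cells forces $w'=w_0$, i.e.\ $ww_0\in w_0L$, so $w\in L\subset P$. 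This is the missing idea.

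For part (ii), you have the right ingredients but your extraction of $\alpha(g,x)$ from $n(x)$ and $p(x)$ is tangled; the equation ``$\alpha(g,x).u=n(x)$ in $N$'' does not determine $\alpha(g,x)\in P$, only its $N$-component. The paper avoids this by observing that $\alpha(g,x_u)u\in P$ and that $u\mapsto \alpha(g,x_u)u$ is the composite of the continuous map $u\mapsto guw_0N\in Pw_0N/N$ with the inverse homeomorphism $bw_0N\mapsto b:Pw_0N/N\to P$; then $\alpha(g,x_u)=(\alpha(g,x_u)u)\cdot u^{-1}$ is continuous in $u$, hence in $x$. Your part (iii) is essentially correct though over-argued: once you know $\alpha(g,x)$ is the \emph{unique} element of $P$ with $guw_0N=\alpha(g,x)uw_0N$, the identity $\alpha(b,x)=b$ for $b\in P$ is immediate from $buw_0N=buw_0N$.
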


\begin{proof}
(i)  We have $g^{-1}\mathcal{C} \cap \mathcal{C} = \mathcal{C}$ if and only if $gNw_{0} P \subset Nw_{0}P $ if and only if $g\in P$. Indeed,  the condition $hPw_{0}P \ \subset \ Pw_{0}P$ on $h\in G$ depends only on $PhP$ and for $ w\in W$, the condition  $ w\, Pw_{0}P \ \subset \ Pw_{0}P$   implies $ww_{0}\in Pw_{0}P$ hence $ww_{0}\in w_{0}L$ by the hypothesis b) hence $w\in L$.

(ii) \ Let $N_g \subset N$ be such that $N_g w_0P/P = g^{-1}\mathcal{C} \cap \mathcal{C}$. It suffices to show that the map $u \to \alpha(g,uw_0 P) u : N_g \to P$ is continuous. This follows from the continuity of the maps $u\mapsto guw_{0}N : N_g \to Pw_{0}P/N = Pw_{0}N/N$  and  $bw_{0}N \mapsto b: Pw_{0}N/N \to P$.

(iii) \ Obvious.
\end{proof}

\begin{lemma}\label{prod!}  Let $g,h\in G$ and $x\in (gh)^{-1}\mathcal{C} \cap h^{-1} \mathcal{C} \cap \mathcal{C}$. Then $hx \in g^{-1}\mathcal{C} \cap \mathcal{C}$ and  we have
\begin{equation*}
    \alpha(gh,x) = \alpha(g,hx) \alpha(h,x) \ .
\end{equation*}
\end{lemma}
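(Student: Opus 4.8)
The plan is to run the whole argument inside the coset space $G/N$, using only the two facts attached to $\alpha$ at the moment it was introduced: for $x'=u'w_0P/P$ lying in $g'^{-1}\mathcal{C}\cap\mathcal{C}$ the element $\alpha(g',x')\in P$ is the \emph{unique} one with $g'u'w_0N=\alpha(g',x')u'w_0N$, and it is characterised by that relation. Since $x\in\mathcal{C}$ I begin by fixing $u\in N$ with $x=uw_0P/P$.

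First I would dispose of the membership assertion. From $x\in h^{-1}\mathcal{C}$ we get $hx\in\mathcal{C}$, and from $x\in(gh)^{-1}\mathcal{C}$ we get $(gh)x=g(hx)\in\mathcal{C}$; hence $hx\in g^{-1}\mathcal{C}\cap\mathcal{C}$. In particular $\alpha(gh,x)$, $\alpha(h,x)$ and $\alpha(g,hx)$ are all defined, and since $hx\in\mathcal{C}$ I may also write $hx=u'w_0P/P$ with $u'\in N$. Because $hx\in\mathcal{C}$ we have $huw_0\in Nw_0P$, so assumption (d) at the beginning of the section yields a unique $b\in P$ with $huw_0=u'w_0b$ (the $N$-part of the decomposition must be $u'$, since $huw_0P=u'w_0P$). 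As $N$ is normal in $P$ one has $bN=Nb$, whence $huw_0N=u'w_0bN=u'w_0Nb$ in $G/N$.

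Now the identity is obtained by a direct coset chase. Using $(gh)uw_0N=g(huw_0N)$, then the previous line, then the defining relation of $\alpha(g,hx)$ (applied to $hx=u'w_0P/P$), and finally the defining relation of $\alpha(h,x)$ (applied to $x=uw_0P/P$):
\begin{align*}
(gh)u w_0 N & = g\bigl(h u w_0 N\bigr) = g\bigl(u' w_0 N b\bigr) = \bigl(g u' w_0 N\bigr)b \\
& = \bigl(\alpha(g,hx)\,u' w_0 N\bigr)b = \alpha(g,hx)\,u' w_0 N b \\
& = \alpha(g,hx)\bigl(h u w_0 N\bigr) = \alpha(g,hx)\,\alpha(h,x)\,u w_0 N .
\end{align*}
On the other hand $(gh)uw_0N=\alpha(gh,x)uw_0N$ by definition of $\alpha(gh,x)$, so $\alpha(gh,x)uw_0N=\alpha(g,hx)\alpha(h,x)uw_0N$; since $\alpha(g,hx)\alpha(h,x)\in P$, the uniqueness clause forces $\alpha(gh,x)=\alpha(g,hx)\alpha(h,x)$.

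I do not expect any real obstacle: the statement is essentially formal once hypothesis (d) and the uniqueness of $\alpha$ are available. The only point that wants care is the bookkeeping of left versus right cosets in $G/N$ — the harmless identities $bN=Nb$ and $(yN)b=(yb)N$ for $b\in P$, which is precisely where normality of $N$ in $P$ enters — together with checking that each invocation of the defining relation of $\alpha$ is legitimate, which is exactly what the membership statement proved in the second step guarantees.
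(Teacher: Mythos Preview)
Your proof is correct and follows essentially the same route as the paper: write $x=uw_0P/P$ and $hx=u'w_0P/P$, use the decomposition of $huw_0$ in $Nw_0P$ to pass between the two representatives, and chase cosets in $G/N$ until the defining relation of $\alpha(gh,x)$ matches $\alpha(g,hx)\alpha(h,x)uw_0N$. The only cosmetic differences are that the paper extracts $b\in L$ from $\alpha(h,x)u=u'b$ rather than your $b\in P$ from $huw_0=u'w_0b$, and that you make the final uniqueness step explicit whereas the paper leaves it implicit.
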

\begin{proof} The first part of the assertion is obvious. Let $x = uw_0P$ and $hx = vw_0P$ with $u,v \in N$. We have
\begin{equation*}
    huw_0N = \alpha(h,x)uw_0N,\ gvw_0N = \alpha(g,hx)vw_0N,\ \textrm{and}\ \alpha(gh,x)uw_0N = ghuw_0N \ .
\end{equation*}
The first identity implies $\alpha(h,x)u = vb$ for some $b \in L$. Multiplying the second identity by an appropriate $b' \in L$ we obtain $gvbw_0N = \alpha(g,hx)vbw_0N = \alpha(g,hx)\alpha(h,x)uw_0N$. Finally, by inserting the first identity into the right hand side of the third identity we get
\begin{equation*}
    \alpha(gh,x)uw_0N = g \alpha(h,x)uw_0N = gvbw_0N = \alpha(g,hx)\alpha(h,x)uw_0N
\end{equation*}
which is the assertion.
\end{proof}

  It will be technically convenient later to work on $N$ instead of $\mathcal{C}$. For $g\in G  $ let therefore $N_{g}$ be the  open subset of $N$ such that  $\mathcal{C}\cap g^{-1}\mathcal{C} = N_{g}w_{0}P/P$. We have $N_{g} =N$ if and only if $g\in P$  (lemma \ref{c3} (i)).  We  have the homeomorphism $u \mapsto x_{u} :=uw_{0}P/P :N \xrightarrow{\sim} \mathcal{C}$ and the continuous map (lemma \ref{c3} (ii))
\begin{align*}
    N_g & \longrightarrow P \\
    u & \longmapsto \alpha(g,x_u)
\end{align*}
such that
\begin{equation} \label{f:alpha}
 \begin{split}  gu =  & \ \alpha(g,x_u)u \bar{n}(g,u) \qquad  \text{for some $\bar{n}(g,u) \in \overline{N} := w_0 N w_0^{-1}$} ,  \\
 \alpha(g,x_u) u = & \ n(g,u) t(g,u)   \qquad \quad  \  \text{for some } \ n(g,u) \in N  , t(g,u) \in L  \ .
 \end{split}
\end{equation}

\begin{lemma}\label{covering}
Fix $g \in G$ and let $V \subset g^{-1}\mathcal{C} \cap \mathcal{C}$ be any compact open subset. There exists a disjoint covering $V = V_1 \,\dot{\cup} \ldots \dot{\cup}\, V_m$ by compact open subsets $V_i$ and points $x_i \in V_i$ such that
\begin{equation*}
    \alpha(g,x_i)V_i \subset gV \qquad\textrm{for any $1 \leq i \leq m$}.
\end{equation*}
\end{lemma}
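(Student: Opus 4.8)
The plan is to exploit the continuity of $\alpha(g,\cdot)$ together with the continuity of the $G$-action on $G/P$, and the fact that $G/P$ — hence the compact open subset $V$ — has a basis of compact open subsets.

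First I would record two elementary facts. Since left translation by $g$ is a homeomorphism of $G/P$ and $V$ is open, the set $gV$ is open; and by Lemma~\ref{c3}(iii) we have $gx = \alpha(g,x)x$ for every $x \in g^{-1}\mathcal{C}\cap\mathcal{C}$, so in particular $gV = \{\alpha(g,x)x : x\in V\}$. The key device is then the two-variable map
\[
F\colon V\times V \longrightarrow G/P,\qquad (x,y)\longmapsto \alpha(g,x)\,y ,
\]
which is continuous because $\alpha(g,\cdot)\colon V\to P$ is continuous (Lemma~\ref{c3}(ii)) and the action $P\times G/P\to G/P$ is continuous. On the diagonal it satisfies $F(x,x)=\alpha(g,x)x = gx\in gV$. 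Hence $F^{-1}(gV)$ is an open subset of $V\times V$ containing the diagonal. Using that $G/P$ has a basis of compact open sets (so that $V$ does as well), for each $x\in V$ I would choose a compact open neighbourhood $C_x\subseteq V$ of $x$ with $C_x\times C_x \subseteq F^{-1}(gV)$; explicitly, $\alpha(g,x')\,x'' \in gV$ for all $x',x''\in C_x$.

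Now $\{C_x\}_{x\in V}$ is an open covering of the compact set $V$; pick a finite subcover $C_{x_1},\dots,C_{x_r}$. As in the proof of Proposition~\ref{fpar}, refine it to a finite partition $V = V_1\,\dot\cup\cdots\dot\cup\, V_m$ into nonempty compact open subsets, each $V_i$ contained in some $C_{x_{\iota(i)}}$ (for instance, disjointify the $C_{x_j}$ and discard the empty pieces). Choose any $x_i\in V_i$. Then $x_i\in C_{x_{\iota(i)}}$ and $V_i\subseteq C_{x_{\iota(i)}}$, so
\[
\alpha(g,x_i)V_i = F(\{x_i\}\times V_i) \subseteq F\bigl(C_{x_{\iota(i)}}\times C_{x_{\iota(i)}}\bigr) \subseteq gV ,
\]
which is the assertion.

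The only point requiring care — and the reason for introducing the ``square'' neighbourhood $C_x\times C_x$ rather than a one-variable neighbourhood of $x$ — is that the base point $x_i$ of a piece $V_i$ must lie in the same set over which $\alpha(g,x_i)$ is controlled; a naive one-variable argument produces neighbourhoods adapted to fixed base points and then breaks down when one disjointifies and is forced to re-choose base points. Once the two-variable map is in place, the remainder is routine point-set topology about compact totally disconnected spaces.
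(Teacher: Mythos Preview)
Your proof is correct and self-contained; it is, however, organised differently from the paper's.

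The paper exploits the identification $\mathcal{C}\simeq N$ via $u\mapsto x_u=uw_0P$. Rather than working with the two-variable map $F(x,y)=\alpha(g,x)y$, it considers the \emph{one}-variable continuous map $x\mapsto \alpha(g,x)u_x$ from $V$ into $P$, whose compact image $C$ sends the single point $w_0P$ into $gV$. Continuity of the $P$-action then yields a \emph{uniform} compact open neighbourhood $V_0$ of $w_0P$ with $CV_0\subset gV$; after shrinking $V_0$ one covers $V$ by translates $u_iV_0$ and takes $x_i=u_iw_0P$. Thus the paper absorbs the moving base point into the acting element, turning your diagonal/two-variable argument into a uniform one-variable one.

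What each approach buys: the paper's version delivers, essentially for free, the uniform statement ``there is $V_0$ such that $\alpha(g,x_u)uV_0\subset gV$ for \emph{every} $u\in U$'' (with $Uw_0P/P=V$), which is exactly the form re-used later (see the choice of $k_g^{(0)}$ in \eqref{f:N0}). Your argument also contains this---your boxes $C_x\times C_x$ give $\alpha(g,x')C_x\subset gV$ for all $x'\in C_x$, and refining to cosets of a small $N_k$ recovers the uniform $V_0$---but it is one step further away. On the other hand, your route is slightly more general in spirit: it never unpacks the coordinates on $\mathcal{C}$ and would work verbatim for any continuous $P$-valued cocycle on an open subset of a compact totally disconnected $G$-space.
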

\begin{proof}
We denote the inverse of the homeomorphism $u \mapsto x_{u} :N \xrightarrow{\sim} \mathcal{C}$ by $x \mapsto u_x$. The image $C \subset P$ of $V$ under the continuous map  $x \mapsto \alpha(g,x)u_x \ : \ V \to P$ is compact.  As (lemma \ref{c3} (iii))
$ \alpha(g,x)x = gx \in gV  $ for any $x \in V$,
 under the continuous action of $P$ on $\mathcal{C}$, every element in the compact set $C$ maps the point $w_0P$ into $gV$. It follows that there is an open neighborhood $V_0 \subset \mathcal{C}$ of $w_0P$ such that $C V_0 \subset gV$. This means that
\begin{equation*}
    \alpha(g,x)u_x V_0 \subset gV \qquad\textrm{for any $x \in V$}.
\end{equation*}
Using the proposition \ref{fpar} we find, by appropriately shrinking $V_0$, a disjoint covering of $V$ of the form $V = u_1V_0 \,\dot{\cup} \ldots \dot{\cup}\, u_mV_0$ with $u_i \in N$. We put $x_i := u_i w_0P$.
\end{proof}

We denote by $G_{X}:=\{ x\in G \ | \ xX \subset X \} $  the $G$-stabilizer   of a  subset $X \subset G/P$ and  by
$$ G_{X}^{\dagger}:=\{g\in G \ | \ g \in G_{X} \ , \ g^{-1}\in G_{X} \} \ = \ \{x \in G \ | \ xX\ =\ X\} $$ the subgroup of invertible elements of $G_{X}$.  If $G_{X}$ is open then its inverse monoid  is open hence  $G_{X}^{\dagger}$
is open (and conversely).

\begin{lemma}\label{open}  The $G$-stabilizer $G_{\mathcal U}$  and $G_{\mathcal U}^{\dagger}$  are open in $G$,  for any compact open subset  ${\mathcal U} \subset G/P$.
 \end{lemma}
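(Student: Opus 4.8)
The plan is to establish first that the stabilizer monoid $G_{\mathcal U}$ is open, and then to obtain the openness of $G_{\mathcal U}^{\dagger}$ for free: since inversion $g \mapsto g^{-1}$ is a homeomorphism of the topological group $G$, the set $(G_{\mathcal U})^{-1}$ is open, and $G_{\mathcal U}^{\dagger} = G_{\mathcal U} \cap (G_{\mathcal U})^{-1}$ is then open as well. This is precisely the remark recorded just before the statement (``if $G_{X}$ is open then its inverse monoid is open hence $G_{X}^{\dagger}$ is open''), so the whole matter reduces to the openness of $G_{\mathcal U}$.

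To prove that $G_{\mathcal U}$ is open I would argue pointwise, by a tube-lemma argument for the continuous action $a \colon G \times G/P \to G/P$. Fix $g \in G_{\mathcal U}$, so that $g\mathcal U \subseteq \mathcal U$, and recall that $\mathcal U$ is open in $G/P$. For each $x \in \mathcal U$ the pair $(g,x)$ lies in the open subset $a^{-1}(\mathcal U)$ of $G \times G/P$, hence there exist open neighbourhoods $V_x$ of $g$ in $G$ and $W_x$ of $x$ in $G/P$ with $V_x \cdot W_x \subseteq \mathcal U$. The family $(W_x)_{x \in \mathcal U}$ is an open cover of $\mathcal U$; since $\mathcal U$ is compact, finitely many $W_{x_1}, \dots, W_{x_n}$ already cover it. Then $V := V_{x_1} \cap \dots \cap V_{x_n}$ is an open neighbourhood of $g$, and for any $h \in V$ one has $h\mathcal U \subseteq \bigcup_{i=1}^{n} h W_{x_i} \subseteq \bigcup_{i=1}^{n} V_{x_i} W_{x_i} \subseteq \mathcal U$, i.e. $V \subseteq G_{\mathcal U}$. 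Thus $G_{\mathcal U}$ is a neighbourhood of each of its points, hence open.

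I do not expect a genuine obstacle here: the argument is the standard one, and the only point that must not be overlooked is the compactness of $\mathcal U$, which is exactly what the hypothesis ``$\mathcal U$ is a \emph{compact} open subset of $G/P$'' provides and what legitimizes the passage to a finite subcover. If one preferred, one could first invoke Proposition \ref{fpar} to write $\mathcal U$ as a finite disjoint union of standard compact open subsets, but this is not needed for the argument above.
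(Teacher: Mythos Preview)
Your argument is correct. The tube-lemma proof you give works for any continuous action of a topological group on a topological space and any compact open subset $\mathcal U$; nothing specific to $G/P$ is needed.

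The paper takes a different, more structural route. It first invokes Proposition~\ref{fpar} to reduce to the case $\mathcal U = Uw_0P/P$ with $U \subset N$ a compact open subgroup. It then picks an open subgroup $K \subset G$ with $Kw_0 \subset Uw_0P$, replaces $K$ by the finite intersection $K' := \bigcap_{u \in U} uKu^{-1}$ (open because $U/(K\cap U)$ is finite), and observes that $K'$ is normalized by $U$, so $K'Uw_0P = UK'w_0P \subset Uw_0P$, whence $K' \subset G_{\mathcal U}$. Since the submonoid $G_{\mathcal U}$ contains an open subgroup, it is a union of left cosets of $K'$ and hence open. Your approach is shorter and more general; the paper's approach has the minor virtue of exhibiting an explicit open \emph{subgroup} inside $G_{\mathcal U}$, but this extra information is not used elsewhere.
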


\begin{proof} By proposition \ref{fpar} it suffices to consider the case where ${\mathcal U}=Uw_{0}P/P$ for some compact open subgroup $U\subset N$. As $Uw_{0}P\subset G$ is an open subset containing $w_{0}$ there exists an open subgroup $K\subset G$ such that $Kw_{0}\subset Uw_{0}P$. The  set $U/(K\cap U)$ is finite because $U$ is compact and $(K\cap U)\subset U$ is an open subgroup. The  finite intersection $K' := \bigcap_{u\in U/(U\cap K)}uKu^{-1} = \bigcap_{u\in U}uKu^{-1}$ is an open subgroup of $K$ which is normalized by $U$. But $K'U = UK'$ implies that  $K'Uw_{0}P= UK'w_{0}P \subset U(Uw_{0}P)P=Uw_{0}P$, and hence that $K'\subset G_{\mathcal U}$. We deduce that $G_{\mathcal U}$ is open. Hence $G_{\mathcal U}^{\dagger}$ is open.
\end{proof}

\begin{remark}\label{sta} The $G$-stabilizer of the open cell $\mathcal{C}$ is the group $P$.
\end{remark}
\begin{proof}
Proof of lemma \ref{c3} (i).
\end{proof}

For  ${\mathcal U} \subset \mathcal{C}$  the map
\begin{equation}\label{c5}
     G_{\mathcal U} \times \mathcal{U} \ \to \ P \quad, \quad (g,x)\ \mapsto \ \alpha(g,x)
\end{equation}
is continuous because, if ${\mathcal U} = Uw_0P/P$ with $U$ open in $N$, then the map $(g,u) \mapsto guw_{0}N : G_{\mathcal{U}} \times U \to Pw_{0}P/N = Pw_{0}N/N$ is continuous (cf.\ the proof of the lemma \ref{c3} (ii)).

\subsection{Equivariant sheaves and modules over skew group rings}

Our construction of the sheaf on $G/P$ will proceed through a module theoretic interpretation of equivariant sheaves. The ring $C^\infty_c(\mathcal{C},A)$ has no unit element. But it has sufficiently many idempotents (the characteristic functions $1_V$ of the compact open subsets $V \subset \mathcal{C}$). A (left) module $Z$ over $C^\infty_c(\mathcal{C},A)$ is called nondegenerate if for any $z \in Z$ there is an idempotent $e \in C^\infty_c(\mathcal{C},A)$ such that $ez = z$.

It is well known that the functor
\begin{equation*}
    \textrm{sheaves of $A$-modules on $\mathcal{C}$} \ \to \  \textrm{nondegenerate $C^\infty_c(\mathcal{C},A)$-modules}
\end{equation*}
which sends a sheaf $\mathcal{S}$ to the $A$-module of global sections with compact support $\mathcal{S}_c(\mathcal{C}) := \bigcup_V \mathcal{S}(V)$, with $V$ running over all compact open subsets in $\mathcal{C}$, is an equivalence of categories. In fact, as we have discussed in the proof of the theorem \ref{the11} a quasi-inverse functor is given by sending the module $Z$ to the sheaf whose sections on the compact open subset $V \subset \mathcal{C}$ are equal to $1_V Z$.

In order to extend this equivalence to equivariant sheaves we note that the group $P$ acts, by left translations, from the right on $C^\infty_c(\mathcal{C},A)$ which we write as $(f,b) \mapsto f^b(.) := f(b.)$. This allows to introduce the skew group ring
\begin{equation*}
    \mathcal{A}_{\mathcal{C}} := C^\infty_c(\mathcal{C},A) \# P = \oplus_{b \in P}\; bC^\infty_c(\mathcal{C},A)
\end{equation*}
in which the multiplication is determined by the rule
\begin{equation*}
    (b_1f_1)(b_2f_2) = b_1b_2 f_1^{b_2}f_2 \qquad \textrm{for $b_i \in P$ and $f_i \in C^\infty_c(\mathcal{C},A)$}.
\end{equation*}
It is easy to see that the above functor extends to an equivalence of categories
\begin{equation*}
    \textrm{$P$-equivariant sheaves of $A$-modules on $\mathcal{C}$} \ \xrightarrow{\simeq} \  \textrm{nondegenerate $\mathcal{A}_{\mathcal{C}}$-modules}.
\end{equation*}

We have the completely analogous formalism for the $G$-space $G/P$. The only small difference is that, since $G/P$ is assumed to be compact, the ring $C^\infty(G/P,A)$ of locally constant $A$-valued functions on $G/P$ is unital. The skew group ring
\begin{equation*}
    \mathcal{A}_{G/P} := C^\infty(G/P,A) \# G = \oplus_{g \in G}\; gC^\infty(G/P,A)
\end{equation*}
therefore is unital as well, and the equivalence of categories reads
\begin{equation*}
    \textrm{$G$-equivariant sheaves of $A$-modules on $G/P$} \ \xrightarrow{\simeq} \  \textrm{unital $\mathcal{A}_{G/P}$-modules}.
\end{equation*}

For any open subset $\mathcal{U} \subset G/P$ the $A$-algebra $C^\infty_c(\mathcal{U},A)$ of $A$-valued locally constant and compactly supported functions on $\mathcal{U}$ is, by extending functions by zero, a subalgebra of $C^\infty(G/P,A)$. It follows in particular that $\mathcal{A}_{\mathcal{C}}$ is a subring of $\mathcal{A}_{G/P}$. There is a for our purposes very important ring   in between  these two rings which is defined to be
\begin{equation*}
    \mathcal{A} := \mathcal{A}_{\mathcal{C} \subset G/P} := \oplus_{g \in G}\; gC^\infty_c(g^{-1}\mathcal{C} \cap \mathcal{C},A) \ .
\end{equation*}
That $\mathcal{A}$ indeed is multiplicatively closed is immediate from the following observation. If $\supp(f)$ denotes the support of a function $f \in C^\infty(G/P,A)$ then we have the formula
\begin{equation}\label{f:supp}
    \supp(f_1^g f_2) = g^{-1}\supp(f_1) \cap \supp(f_2) \qquad\textrm{for $g \in G$ and $f_1, f_2 \in C^\infty(G/P,A)$}.
\end{equation}
In particular, if $f_i \in C^\infty_c(g_i^{-1}\mathcal{C} \cap \mathcal{C},A)$ then
\begin{equation*}
    \supp(f_1^{g_2} f_2) \subset g_2^{-1}(g_1^{-1}\mathcal{C} \cap \mathcal{C}) \cap (g_2^{-1}\mathcal{C} \cap \mathcal{C}) \subset (g_1 g_2 )^{-1}\mathcal{C} \cap \mathcal{C} \ .
\end{equation*}
We also have the $A$-submodule
\begin{equation*}
    \mathcal{Z} := \oplus_{g \in G}\; g C^\infty_c(\mathcal{C},A)
\end{equation*}
of $\mathcal{A}_{G/P}$. Using \eqref{f:supp} again one sees that $\mathcal{Z}$ actually is a left ideal in $\mathcal{A}_{G/P}$ which at the same time is a right $\mathcal{A}$-submodule. This means that we have the well defined functor
\begin{align*}
    \textrm{nondegenerate $\mathcal{A}$-modules} & \ \to \ \textrm{unital $\mathcal{A}_{G/P}$-modules} \\
    Z & \ \mapsto \ \mathcal{Z} \otimes_{\mathcal{A}} Z \ .
\end{align*}

\begin{remark}\label{restrict}
The functor of restricting $G$-equivariant sheaves on $G/P$ to the open cell $\mathcal{C}$ is faithful and detects isomorphisms.
\end{remark}
\begin{proof}
Any sheaf homomorphism which is the zero map, resp.\ an isomorphism, on sections on any compact open subset of $\mathcal{C}$ has, by $G$-equivariance, the same property on any standard compact open subset and hence, by the proposition \ref{fpar}, on any compact open subset of $G/P$.
\end{proof}

\begin{proposition}\label{cat-equiv}
The above functor $Z \mapsto \mathcal{Z} \otimes_{\mathcal{A}} Z$ is an equivalence of categories; a quasi-inverse functor is given by sending the $\mathcal{A}_{G/P}$-module $Y$ to $\bigcup_{V \subset \mathcal{C}} 1_V Y$ where $V$ runs over all compact open subsets in $\mathcal{C}$.
\end{proposition}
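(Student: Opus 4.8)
The plan is to exhibit the two functors as mutually quasi-inverse by a direct verification, exploiting that $\mathcal Z$ is a free right $\mathcal A$-module on an explicit basis and that $\mathcal A_{G/P}$ is likewise free over $\mathcal A$ on a coset basis. First I would fix a set of representatives $\{g_j\}_{j\in J}\subset G$ for the left cosets $G/P$; since $g^{-1}\mathcal C\cap\mathcal C=\mathcal C$ exactly when $g\in P$ (Lemma \ref{c3}(i), Remark \ref{sta}), one checks that as a right $\mathcal A$-module $\mathcal Z=\bigoplus_{j\in J} g_j\,(1_{\mathcal C}\mathcal A)$ with $1_{\mathcal C}\mathcal A=\oplus_{b\in P}bC^\infty_c(\mathcal C,A)=\mathcal A_{\mathcal C}$, and that as a right $\mathcal A_{G/P}$-module (hence a fortiori over $\mathcal A$) $\mathcal A_{G/P}=\bigoplus_{j\in J} g_j\,(1_{\mathcal C}\mathcal A_{G/P})$. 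In particular $\mathcal Z\otimes_{\mathcal A}Z$ is already, as an $A$-module, $\bigoplus_{j\in J} g_j\otimes(1_{\mathcal C}\mathcal A\otimes_{\mathcal A}Z)$; and since $Z$ is nondegenerate and $1_{\mathcal C}=\sup_V 1_V$ is a "unit up to idempotents" on $Z$ (more precisely $\mathcal A_{\mathcal C}Z=Z$ because every $z$ lies in some $1_VZ$ with $V\subset\mathcal C$ compact open), one gets a natural $A$-isomorphism $1_{\mathcal C}\mathcal A\otimes_{\mathcal A}Z\xrightarrow{\ \simeq\ }Z$. This already shows $\mathcal Z\otimes_{\mathcal A}Z$ is unital over $\mathcal A_{G/P}$: a unit is visible on the generator $\sum_j g_j\otimes e_j z$ after a partition-of-unity argument on $G/P$, using Proposition \ref{fpar} to write $1_{G/P}$ as a finite sum of translates $g^{-1}1_Uw_0P/P$.

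Next I would construct the natural transformations. In one direction, for a nondegenerate $\mathcal A$-module $Z$ set $Y:=\mathcal Z\otimes_{\mathcal A}Z$; the computation above gives a canonical $A$-isomorphism $1_{\mathcal C}Y\cong Z$, and one checks it is $\mathcal A_{\mathcal C}$-linear, hence $\bigcup_{V\subset\mathcal C}1_VY=1_{\mathcal C}Y\cong Z$ as $\mathcal A$-modules (here $1_{\mathcal C}=\sup_V 1_V$ acts as the identity on $\bigcup_V 1_VY$). In the other direction, for a unital $\mathcal A_{G/P}$-module $Y$ put $Z:=1_{\mathcal C}Y=\bigcup_{V\subset\mathcal C}1_VY$, a nondegenerate $\mathcal A$-module, and consider the $\mathcal A_{G/P}$-linear map $\mathcal Z\otimes_{\mathcal A}Z\to Y$, $z\otimes w\mapsto zw$. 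Using the decomposition $\mathcal Z=\bigoplus_j g_j\mathcal A_{\mathcal C}$ and $\mathcal A_{G/P}=\bigoplus_j g_j\mathcal A_{\mathcal C}\oplus(\text{stuff with support off }\mathcal C)$, together with $\mathcal A_{G/P}=\mathcal Z\cdot\mathcal A_{G/P}$ — which follows from $\sum_j g_j 1_{\mathcal C}(g_j^{-1}g)$ reconstructing $g$, i.e. from covering $G/P$ by the translates $g_j\mathcal C$ (Remark \ref{dense} and Proposition \ref{fpar}) — one verifies this map is bijective: surjectivity because $Y=\mathcal A_{G/P}Y=\mathcal Z\,\mathcal A_{G/P}Y=\mathcal Z(1_{\mathcal C}Y)$ (after refining supports into the cells $g_j\mathcal C$ via a partition of unity), injectivity because the $g_j$-components match up under the two decompositions.

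Finally I would check naturality of both transformations in $Z$ resp. $Y$ and that the two composites are the respective identities; this is formal once the isomorphisms above are written down. The main obstacle I anticipate is the bookkeeping in the surjectivity/injectivity step for $\mathcal Z\otimes_{\mathcal A}Z\to Y$: one must justify that an arbitrary element of $\mathcal A_{G/P}$ (or of $Y$) can be rewritten, after a fine enough partition of unity on the compact space $G/P$ subordinate to the open cover $\{g_j\mathcal C\}_{j}$, as a finite $\mathcal A$-combination of the $g_j$'s acting on $1_{\mathcal C}Y$, and that the ambiguity in such rewriting is exactly absorbed by the relations defining the tensor product over $\mathcal A$. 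Here Proposition \ref{fpar} (every compact open subset of $G/P$ is a finite disjoint union of standard pieces) and equation \eqref{f:supp} (the support calculus for the skew multiplication) are the two technical inputs that make the argument go through; the faithfulness input of Remark \ref{restrict} guarantees that no information is lost by passing to $1_{\mathcal C}Y$, which is what makes the quasi-inverse well defined at the level of morphisms.
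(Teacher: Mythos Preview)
Your plan rests on a false premise: $\mathcal{Z}$ is \emph{not} free as a right $\mathcal{A}$-module on the set $\{g_j\}_{j\in J}$. The $A$-module decomposition $\mathcal{Z}=\bigoplus_j g_j\,\mathcal{A}_{\mathcal C}$ is correct, but the right $\mathcal{A}$-action mixes the summands. Concretely (arranging $g_{j_0}=1$), pick any $h\in G\setminus P$ and a nonzero $f\in C^\infty_c(h^{-1}\mathcal C\cap\mathcal C,A)$; writing $h=g_{j_1}b$ with $b\in P$ and $j_1\neq j_0$, the element of $\bigoplus_j\mathcal A$ with $j_0$-component $hf\in\mathcal A$, $j_1$-component $-bf\in\mathcal A_{\mathcal C}\subset\mathcal A$, and all other components zero, maps to $hf-g_{j_1}bf=hf-hf=0$ in $\mathcal Z$. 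So the natural map $\bigoplus_j\mathcal A\to\mathcal Z$ has nontrivial kernel, your asserted splitting $\mathcal Z\otimes_{\mathcal A}Z\cong\bigoplus_j g_j\otimes(\cdots)$ is unjustified, and the injectivity step for $\mathcal Z\otimes_{\mathcal A}R(Y)\to Y$ has nothing to stand on. A related problem: $1_{\mathcal C}$ is not locally constant on $G/P$ (the cell $\mathcal C$ is open and dense but not closed unless $G=P$), hence does not lie in $\mathcal A_{G/P}$; your manipulations with it, and the identification ``$1_{\mathcal C}\mathcal A=\mathcal A_{\mathcal C}$'', are formal at best.

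The paper's argument is both shorter and sidesteps all of this. The key computation is the identity $1_V\mathcal Z=1_V\mathcal A$ for each compact open $V\subset\mathcal C$ (together with $\bigcup_V 1_V\mathcal A_{G/P}1_V=\mathcal A$), which immediately gives
\[
1_V(\mathcal Z\otimes_{\mathcal A}Z)=(1_V\mathcal Z)\otimes_{\mathcal A}Z=(1_V\mathcal A)\otimes_{\mathcal A}Z\;\cong\;1_VZ,
\]
and hence a natural isomorphism $R(\mathcal Z\otimes_{\mathcal A}Z)\cong Z$. For the other composite one writes down the obvious map $\mathcal Z\otimes_{\mathcal A}R(Y)\to Y$, observes that applying $R$ to it yields the identity, and invokes Remark~\ref{restrict} (the functor $R$ detects isomorphisms). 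No coset representatives, no partitions of unity on $G/P$, and no freeness claim are needed.
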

\begin{proof}
We abbreviate the asserted candidate for the quasi-inverse functor by $R(Y) := \bigcup_{V \subset \mathcal{C}} 1_V Y$. It immediately follows from the remark \ref{restrict} that the functor $R$, which in terms of sheaves is the functor of restriction, is faithful and detects isomorphisms.

By a slight abuse of notation we identify in the following a function $f \in C^\infty(G/P,A)$ with the element $1f \in \mathcal{A}_{G/P}$, where $1 \in G$ denotes the unit element. Let $V \subset \mathcal{C}$ be a compact open subset. Then $1_V \mathcal{A}_{G/P} 1_V$ is a subring of $\mathcal{A}_{G/P}$ (with the unit element $1_V$), which we compute as follows:
\begin{align*}
    1_V \mathcal{A}_{G/P} 1_V & = \sum_{g \in G} 1_V g C^\infty(V,A) = \sum_{g \in G} g 1_{g^{-1}V} C^\infty(V,A) \\
    & = \sum_{g \in G} gC^\infty(g^{-1}V \cap V,A) \ .
\end{align*}
We note:
\begin{itemize}
  \item[--] If $U \subset V \subset \mathcal{C}$ are two compact open subsets then $1_V \mathcal{A}_{G/P} 1_V  \supset 1_U \mathcal{A}_{G/P} 1_U$.
  \item[--] Let $f \in C^\infty_c(g^{-1} \mathcal{C} \cap \mathcal{C},A)$ be supported on the compact open subset $U \subset g^{-1} \mathcal{C} \cap \mathcal{C}$. Then $V := U \cup gU$ is compact open in $\mathcal{C}$ as well, and $U \subset g^{-1}V \cap V$. This shows that $C^\infty_c(g^{-1} \mathcal{C} \cap \mathcal{C},A) = \bigcup_{V \subset \mathcal{C}} C^\infty(g^{-1}V \cap V,A)$.
\end{itemize}
We deduce that
\begin{equation*}
    \bigcup_{V \subset \mathcal{C}} 1_V \mathcal{A}_{G/P} 1_V = \mathcal{A}_{\mathcal{C} \subset G/P} = \mathcal{A} \ .
\end{equation*}
A completely analogous computation shows that
\begin{equation*}
    1_V \mathcal{Z} = 1_V \mathcal{A} \ .
\end{equation*}
Given a nondegenerate $\mathcal{A}$-module $Z$ the map
\begin{align*}
    1_V (\mathcal{Z} \otimes_{\mathcal{A}} Z) = (1_V \mathcal{Z}) \otimes_{\mathcal{A}} Z = (1_V \mathcal{A}) \otimes_{\mathcal{A}} Z & \ \to \ 1_V Z \\
    1_Va \otimes z = 1_V \otimes 1_V a z & \ \mapsto \ 1_V a z
\end{align*}
therefore is visibly an isomorphism of $1_V \mathcal{A}_{G/P} 1_V$-modules. In the limit with respect to $V$ we obtain a natural isomorphism of $\mathcal{A}$-modules
\begin{equation*}
    R(\mathcal{Z} \otimes_{\mathcal{A}} Z) \ \xrightarrow{\cong} \ Z \ .
\end{equation*}
On the other hand, for any unital $\mathcal{A}_{G/P}$-module $Y$ there is the obvious natural homomorphism of $\mathcal{A}_{G/P}$-modules
\begin{align*}
    \mathcal{Z} \otimes_{\mathcal{A}} R(Y) & \ \to \ Y \\
    a \otimes z & \ \mapsto \ az \ .
\end{align*}
It is an isomorphism because applying the functor $R$, which detects isomorphisms, to it gives the identity map.
\end{proof}

\begin{remark}\label{G-extension}
Let $Z$ be a nondegenerate $\mathcal{A}$-module. Viewed as an $\mathcal{A}_{\mathcal{C}}$-module it corresponds to a $P$-equivariant sheaf $\widetilde{Z}$ on $\mathcal{C}$. On the other hand, the $\mathcal{A}_{G/P}$-module $Y := \mathcal{Z} \otimes_{\mathcal{A}} Z$ corresponds to a $G$-equivariant sheaf $\widetilde{Y}$ on $G/P$. We have $\widetilde{Y} | \mathcal{C} = \widetilde{Z}$, i.\ e., the sheaf $\widetilde{Y}$ extends the sheaf $\widetilde{Z}$.
\end{remark}

We have now seen that the step of going from $\mathcal{A}$ to $\mathcal{A}_{G/P}$ is completely formal. On the other hand, for any topologically \'etale $A[P_{+}]$-module $M$,  the $P$-equivariance of $\Res$ together with the proposition \ref{cont2} imply that  $\Res$ extends to the $A$-algebra homomorphism
\begin{align*}
    \Res \ : \qquad \mathcal{A}_{\mathcal{C}} & \ \to \ \End_{A}^{cont}(M^{P}) \\
    \sum_{b \in P} b f_b & \ \mapsto \ \sum_{b \in P} b \circ \Res(f_b) \ .
\end{align*}

When $M$ is compact it is relatively easy, as we will show in the next section, to further extend this map from $\mathcal{A}_{\mathcal{C}}$ to $\mathcal{A}$.  This makes crucially use of the full topological module $M^P$ and not only its submodule $M^P_c$ of sections with compact support. When $M$ is not compact this extension problem is much more subtle and requires more facts about the ring $\mathcal{A}$.

We introduce the compact open subset $\mathcal{C}_0 := N_0 w_0 P/P$ of $\mathcal{C}$, and we consider the unital subrings
\begin{equation*}
    \mathcal{A}_0 := 1_{\mathcal{C}_0} \mathcal{A}_{G/P} 1_{\mathcal{C}_0} = \sum_{g \in G} gC^\infty (g^{-1}\mathcal{C}_0 \cap \mathcal{C}_0,A)
\end{equation*}
and
\begin{equation*}
    \mathcal{A}_{\mathcal{C}0} := 1_{\mathcal{C}_0} \mathcal{A}_{\mathcal{C}} 1_{\mathcal{C}_0} = \sum_{b \in P} bC^\infty (b^{-1}\mathcal{C}_0 \cap \mathcal{C}_0,A)
\end{equation*}
of $\mathcal{A}$ and $\mathcal{A}_{\mathcal{C}}$, respectively. Obviously $\mathcal{A}_{\mathcal{C} 0} \subseteq \mathcal{A}_0$ with the same unit element $1_{\mathcal{C}_0}$. Since $g^{-1}\mathcal{C}_0 \cap \mathcal{C}_0$ is nonempty if and only if $g \in N_0\overline{P}N_0$ we in fact have
\begin{equation*}
    \mathcal{A}_0 = \sum_{g \in N_0\overline{P}N_0} gC^\infty (g^{-1}\mathcal{C}_0 \cap \mathcal{C}_0,A) \ .
\end{equation*}
The map $A[G] \longrightarrow \mathcal{A}_{G/P}$ sending $g$ to $g1_{G/P}$ is a unital ring homomorphism. Hence we may view $\mathcal{A}_{G/P}$ as an $A[G]$-module  for the adjoint action
\begin{align*}
    G \times \mathcal{A}_{G/P} & \longrightarrow \mathcal{A}_{G/P} \\
    (g, y) & \longmapsto (g 1_{G/P}) y (g 1_{G/P})^{-1} \ .
\end{align*}
One checks that $\mathcal{A}_{\mathcal{C}} \subseteq \mathcal{A}$ are $A[P]$-submodules, that $\mathcal{A}_{\mathcal{C}0} \subseteq \mathcal{A}_0$ are $A[P_+]$-submodules, and that the map $\Res : \mathcal{A}_{\mathcal{C}} \longrightarrow E^{cont}$ is a homomorphism of $A[P]$-modules.

\begin{proposition}\label{P+-P}
The homomorphism of $A[P]$-modules
\begin{align*}
    A[P] \otimes_{A[P_+]} \mathcal{A}_0 & \xrightarrow{\; \cong \;} \mathcal{A} \\
    b \otimes y & \longmapsto (b1_{G/P}) y (b1_{G/P})^{-1}
\end{align*}
is bijective; it restricts to an isomorphism $A[P] \otimes_{A[P_+]} \mathcal{A}_{\mathcal{C}0} \xrightarrow{\; \cong \;} \mathcal{A}_{\mathcal{C}}$.
\end{proposition}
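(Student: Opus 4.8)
The plan is to exploit the decomposition $P = P_- s^{\mathbb{Z}} = s^{\mathbb{Z}} P_+$ together with the semidirect product structure $P = N \rtimes L$, in close parallel with the already established isomorphisms $A[P] \otimes_{A[P_+]} M \xrightarrow{\simeq} M^P_c$ and $A[P]\otimes_{A[P_+]}\mathcal{A}_{\mathcal{C}0} \to \mathcal{A}_{\mathcal{C}}$ of Lemma~\ref{ci1}. First I would write down the inverse map explicitly. Every $g \in N_0 \overline{P} N_0$ and every $b \in P$ give an element $(b 1_{G/P}) \cdot g C^\infty(g^{-1}\mathcal{C}_0 \cap \mathcal{C}_0, A) \cdot (b 1_{G/P})^{-1} = (bgb^{-1}) C^\infty(b g^{-1} \mathcal{C}_0 \cap b\mathcal{C}_0, A)$ inside $\mathcal{A}$, using the support formula \eqref{f:supp}; I would send a general element $\sum_h h f_h \in \mathcal{A}$ (with $h \in G$, $\supp f_h \subseteq h^{-1}\mathcal{C}\cap\mathcal{C}$) to a sum of tensors $b \otimes y$ where, for each $h$, I choose $b \in P$ with $b^{-1}\mathcal{C}_0 \cap \mathcal{C}_0$ large enough to absorb the relevant translate of $\supp f_h$. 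The $A[P_+]$-module structure on $\mathcal{A}_0$ means the choice of $b$ matters only up to $P_+$, so this is well defined; checking that it is a two-sided inverse to the displayed map is then a direct computation with the skew-group-ring multiplication rule.

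The structural input making this work is that $\mathcal{C}$ is covered by the $P$-translates of $\mathcal{C}_0$: indeed $N = \bigcup_{k} s^{-k} N_0 s^k$, so $\mathcal{C} = \bigcup_k s^{-k}\mathcal{C}_0$, and every compact open $V \subset \mathcal{C}$ lies in some $s^{-k}\mathcal{C}_0$. Hence for any $h \in G$ and any $f \in C^\infty_c(h^{-1}\mathcal{C}\cap\mathcal{C},A)$, choosing $k$ with $\supp f \subseteq s^{-k}\mathcal{C}_0 \cap h^{-1}\mathcal{C}_0$ (shrinking on the target side by applying the cocycle/stabilizer bounds, or simply taking $k$ large on both factors) exhibits $hf$ as lying in the image of $s^{-k} \otimes \mathcal{A}_0$. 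I would organize the surjectivity argument exactly this way: reduce a homogeneous summand $hf_h$ to the case $\supp f_h \subseteq s^{-k}\mathcal{C}_0 \cap s^{-k}h^{-1}\mathcal{C}_0$ after conjugating, then observe $s^k h s^{-k} \in N_0 \overline{P} N_0$ and $s^k f_h(s^{-k}\cdot) \in C^\infty(\,(s^k h s^{-k})^{-1}\mathcal{C}_0 \cap \mathcal{C}_0,A)$, which is precisely a piece of $\mathcal{A}_0$. Injectivity follows because the sum $\mathcal{A} = \oplus_{g} gC^\infty_c(g^{-1}\mathcal{C}\cap\mathcal{C},A)$ is direct and the map respects the $G$-grading after unwinding conjugation, together with the already-known freeness statement that $A[P]\otimes_{A[P_+]}(-)$ applied to an \'etale-type module injects (here $\mathcal{A}_0$ itself need not be \'etale, but the direct-sum decomposition of $\mathcal{A}$ over cosets $P_+\backslash P$ suffices for a bookkeeping argument).

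For the final clause, that the isomorphism restricts to $A[P]\otimes_{A[P_+]}\mathcal{A}_{\mathcal{C}0} \xrightarrow{\simeq} \mathcal{A}_{\mathcal{C}}$, I would note that both sides are visibly submodules: $\mathcal{A}_{\mathcal{C}0} = 1_{\mathcal{C}_0}\mathcal{A}_{\mathcal{C}}1_{\mathcal{C}_0}$ and $\mathcal{A}_{\mathcal{C}}$ are indexed only by $b \in P$ rather than $g \in G$, and conjugation by $b \in P$ preserves the condition of being supported in a subset of $\mathcal{C}$ (since $P$ stabilizes $\mathcal{C}$, by Remark~\ref{sta}). So the displayed bijection carries the sub-tensor-product into $\mathcal{A}_{\mathcal{C}}$, and the same covering argument as above — now with $V \subset \mathcal{C}$ and $b = s^{-k}$ — shows surjectivity onto $\mathcal{A}_{\mathcal{C}}$; this part in fact simply re-derives the statement already recorded in Lemma~\ref{ci1} / the discussion of $M^P_c$, so it requires no new idea.

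\textbf{Main obstacle.} The genuinely fiddly point is the well-definedness of the inverse map: when I split $\supp f_h$ across translates of $\mathcal{C}_0$ and write $hf_h$ as a sum of terms each conjugate to something in $\mathcal{A}_0$, I must check that the resulting element of $A[P]\otimes_{A[P_+]}\mathcal{A}_0$ does not depend on the chosen coset representatives $b \in P/P_+$ nor on the chosen partition of $\supp f_h$. This amounts to verifying the compatibility $b \otimes y = bt \otimes t^{-1}\!\cdot\! y$ for $t \in L_+$ under the conjugation action, i.e.\ that $\varphi_t$-type relabeling in $\mathcal{A}_0$ is absorbed by the tensor product over $A[P_+]$ — which is exactly the \'etale bookkeeping (Lemma~\ref{idemp}, orthogonality of the $\Res(1_{vtN_0t^{-1}})$) transplanted from $M$ to the function-side ring $C^\infty_c(\mathcal{C},A)$, where it holds because $C^\infty_c(\mathcal{C},A)$ is itself an \'etale $A[P_+]$-module. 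Once that compatibility is in hand, everything else is routine.
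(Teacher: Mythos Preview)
Your approach is essentially correct and rests on the same core observation as the paper --- that $\mathcal{C} = \bigcup_k s^{-k}\mathcal{C}_0$, so every homogeneous element $hf_h \in \mathcal{A}$ becomes, after conjugation by a suitable power of $s$, an element of $\mathcal{A}_0$. But you have made the argument considerably more elaborate than it needs to be.

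The paper's proof is two lines. Since $P = s^{-\mathbb{N}} P_+$, the tensor product $A[P] \otimes_{A[P_+]} \mathcal{A}_0$ is the increasing union of the images of $s^{-n} \otimes \mathcal{A}_0$, and the displayed map carries this to $\bigcup_{n\geq 0} (s^{-n}1_{G/P})\,\mathcal{A}_0\,(s^n 1_{G/P})$ inside the unital ring $\mathcal{A}_{G/P}$. Since conjugation by a unit is injective, the forward map is injective for free --- there is no need for an explicit inverse, no coset-representative ambiguity to resolve, no \'etale bookkeeping \`a la Lemma~\ref{idemp}. Surjectivity is the single computation
\[
(s^{-n}1_{G/P}) \big( gC^\infty(g^{-1} \mathcal{C}_0 \cap \mathcal{C}_0,A) \big) (s^n 1_{G/P}) =  (s^{-n}gs^n)\, C^\infty\!\big((s^{-n}gs^n)^{-1}(s^{-n}\mathcal{C}_0) \cap s^{-n}\mathcal{C}_0,\,A\big),
\]
combined with the fact that any compact subset of $\mathcal{C}$ (in particular $\supp f_h \cup h\cdot \supp f_h$) lies in $s^{-n}\mathcal{C}_0$ for $n$ large.

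In particular your ``main obstacle'' evaporates: you never have to construct the inverse by hand, so its well-definedness is not an issue. Your injectivity sketch --- invoking a ``direct-sum decomposition of $\mathcal{A}$ over cosets $P_+\backslash P$'' and an \'etale-type freeness statement --- is the murkiest part of the proposal; it is not wrong in spirit, but it is unnecessary given that the map is visibly a filtered union of conjugations. (A small slip: you want $b = s^{-k}$ so that $b\mathcal{C}_0$ is \emph{large}; what you wrote, $b^{-1}\mathcal{C}_0 \cap \mathcal{C}_0$, shrinks as $k$ grows.) The restricted isomorphism for $\mathcal{A}_{\mathcal{C}0} \to \mathcal{A}_{\mathcal{C}}$ follows by the identical argument with $g$ ranging only over $P$, exactly as you say.
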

\begin{proof}
Since $P = s^{-\mathbb{N}}P_+$ the assertion amounts to the claim that
\begin{equation*}
    \mathcal{A} = \bigcup_{n \geq 0} (s^{-n}1_{G/P}) \mathcal{A}_0 (s^n 1_{G/P})
\end{equation*}
and correspondingly for $\mathcal{A}_{\mathcal{C}}$. But we have
\begin{equation*}
   (s^{-n}1_{G/P}) \big( gC^\infty(g^{-1} \mathcal{C}_0 \cap \mathcal{C}_0,A) \big) (s^n 1_{G/P}) =  s^{-n}gs^n C^\infty((s^{-n}g^{-1}s^n)s^{-n}\mathcal{C}_0 \cap s^{-n}\mathcal{C}_0,A)
\end{equation*}
for any $n \geq 0$ and any $g \in G$.
\end{proof}

Suppose that we may extend the map $\Res : \mathcal{A}_{\mathcal{C} 0} \longrightarrow \End_{A}^{cont}(M^{P})$ to an $A[P_+]$-equivariant (unital) $A$-algebra homomorphism
\begin{equation*}
    \mathcal{R}_0 : \mathcal{A}_0 \longrightarrow \End_A(M^P) \ .
\end{equation*}
By the above proposition \ref{P+-P} it further extends uniquely to an $A[P]$-equivariant map $\mathcal{R} : \mathcal{A} \longrightarrow \End_A(M^P)$.

\begin{lemma}\label{mult}
The map $\mathcal{R}$ is multiplicative.
\end{lemma}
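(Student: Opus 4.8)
The map $\mathcal{R}$ is multiplicative.

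The plan is to reduce the multiplicativity of $\mathcal{R}$ on all of $\mathcal{A}$ to the multiplicativity of $\mathcal{R}_0$ on $\mathcal{A}_0$, exploiting the $A[P]$-equivariance of $\mathcal{R}$ together with the decomposition $\mathcal{A} = \bigcup_{n \geq 0}(s^{-n}1_{G/P})\mathcal{A}_0(s^n 1_{G/P})$ established in Proposition \ref{P+-P}. First I would record the precise compatibility: for $y \in \mathcal{A}_0$ and $b \in P$ write $b \cdot y := (b1_{G/P})y(b1_{G/P})^{-1}$, so that $\mathcal{R}(b \cdot y) = b \circ \mathcal{R}_0(y) \circ b^{-1}$ in $\End_A(M^P)$, where $b$ on the right denotes the action of $b$ coming from the $A[P]$-module structure on $M^P = \Ind_{P_-}^P(M)$. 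This is exactly the defining property of the unique extension $\mathcal{R}$ from Proposition \ref{P+-P}.

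Next I would take two general elements $y_1, y_2 \in \mathcal{A}$ and use the decomposition to write $y_1 = b_1 \cdot z_1$ and $y_2 = b_2 \cdot z_2$ with $z_i \in \mathcal{A}_0$ and $b_i \in P$; since $P = s^{-\mathbb{N}}P_+$ and $\mathcal{A}_0$ is an $A[P_+]$-submodule (the adjoint action of $P_+$ preserves $\mathcal{A}_0$, as $b^{-1}\mathcal{C}_0 \cap \mathcal{C}_0$ behaves well under $P_+$), one may after absorbing a factor of $s^{-n}$ into the $z_i$ assume the $b_i$ lie in $P_+$, or more simply just compute directly. The product $y_1 y_2 = (b_1 \cdot z_1)(b_2 \cdot z_2)$ can be rewritten: $(b_1 1_{G/P}) z_1 (b_1 1_{G/P})^{-1}(b_2 1_{G/P}) z_2 (b_2 1_{G/P})^{-1} = (b_1 1_{G/P})\bigl(z_1 (b_1^{-1}b_2 1_{G/P}) z_2\bigr)(b_2 1_{G/P})^{-1}$, and after conjugating $z_2$ over, $= b_1 b_2 \cdot \bigl((b_2^{-1}b_1 \cdot^{-1}? )\ldots\bigr)$ — the cleanest bookkeeping is: choose $n$ large enough that $s^n b_1, s^n b_2 \in P_+$, set $z_i' := (s^n b_i)\cdot z_i \in \mathcal{A}_0$ using that $\mathcal{A}_0$ is $P_+$-stable, and then $y_i = s^{-n}\cdot z_i'$, so $y_1 y_2 = s^{-n}\cdot(z_1' \cdot (s^n 1_{G/P}) z_2' (s^{-n}1_{G/P})^{-1}) $; but $(s^n 1_{G/P}) z_2'(s^{-n}1_{G/P})$ need not lie in $\mathcal{A}_0$. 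The genuinely clean route is: pick $n$ with $s^{-n}\cdot y_i \in \mathcal{A}_0$ for both $i$ — possible since each $y_i$ lies in $\bigcup_m s^{-m}\cdot\mathcal{A}_0$ and the union is increasing — and moreover with $s^{-n}\cdot(y_1 y_2)\in\mathcal{A}_0$; then set $w_i := s^{?}$. In fact the decisive observation is that $\mathcal{R}(s^{-n}\cdot y) = s^{-n}\circ \mathcal{R}_0(y)\circ s^n$ whenever $s^{-n}\cdot y \in \mathcal{A}_0$, together with the compatibility $\mathcal{R}_0\bigl((s^n\cdot z_1)\cdot(\text{appropriate conjugate of }z_2)\bigr)$.

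Rather than belabor indices, here is the structure I would present. Fix $y_1, y_2 \in \mathcal{A}$. Since $\mathcal{A}_{\mathcal{C}0}\subseteq\mathcal{A}_0$ are $A[P_+]$-submodules and $\mathcal{A}_0$ is a \emph{unital ring} on which $\mathcal{R}_0$ is by hypothesis an $A$-algebra homomorphism, and since $P = s^{-\mathbb{N}}P_+$, we may choose $n \geq 0$ so large that $v_i := (s^n 1_{G/P}) y_i (s^{-n}1_{G/P}) \in \mathcal{A}_0$ for $i = 1,2$; because $\mathcal{A}_0$ is a ring, $v_1 v_2 \in \mathcal{A}_0$ too, and $v_1 v_2 = (s^n 1_{G/P}) y_1 y_2 (s^{-n}1_{G/P})$, so $y_1 y_2 = s^{-n}\cdot(v_1 v_2)$. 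By the defining $A[P]$-equivariance of $\mathcal{R}$ (its values on $s^{-n}\cdot\mathcal{A}_0$ are forced to be $s^{-n}\circ\mathcal{R}_0(-)\circ s^n$) we get
\begin{align*}
\mathcal{R}(y_1 y_2) &= s^{-n}\circ \mathcal{R}_0(v_1 v_2)\circ s^n = s^{-n}\circ \mathcal{R}_0(v_1)\circ\mathcal{R}_0(v_2)\circ s^n \\
&= \bigl(s^{-n}\circ\mathcal{R}_0(v_1)\circ s^n\bigr)\circ\bigl(s^{-n}\circ\mathcal{R}_0(v_2)\circ s^n\bigr) = \mathcal{R}(y_1)\circ\mathcal{R}(y_2),
\end{align*}
using $s^n\circ s^{-n} = \id$ on $M^P$ in the middle. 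The main obstacle — and the only real content — is verifying that the assignment $s^{-n}\cdot y \mapsto s^{-n}\circ\mathcal{R}_0(y)\circ s^n$ is well defined independently of $n$ (i.e.\ consistent when one replaces $n$ by $n+1$) and hence genuinely equals $\mathcal{R}$; this consistency is precisely what Proposition \ref{P+-P} buys us, since $A[P]\otimes_{A[P_+]}\mathcal{A}_0 \xrightarrow{\cong}\mathcal{A}$ means the extension $\mathcal{R}$ exists and is unique, so the formula on each $s^{-n}\cdot\mathcal{A}_0$ must be the restriction of that $\mathcal{R}$. I would also need to check the elementary but necessary point that $s^n\circ s^{-n}$ acts as the identity on $M^P$ — true because $s \in Z(L)\subset P$ acts invertibly on the induced module $M^P = \Ind_{P_-}^P(M)$ (it lies in $P$, which acts by a genuine group action), which is where the hypothesis $L = L_- s^{\mathbb{Z}}$ and the invertibility of the $P$-action on $M^P$ (as opposed to the mere monoid action on $M$) enters.
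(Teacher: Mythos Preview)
Your proposal is correct and, after the initial meandering, lands on precisely the paper's argument: conjugate both $y_1,y_2$ by the same $s^n$ to put $v_1,v_2$ in $\mathcal{A}_0$, use the multiplicativity of $\mathcal{R}_0$ there, and conjugate back using that $s$ acts invertibly on $M^P$. The paper's proof is just the clean final paragraph of yours, so you could drop the exploratory first half.
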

\begin{proof}
Using proposition \ref{P+-P} we have that two arbitrary elements $y,z \in \mathcal{A}$ are of the form $y = (s^{-m}1_{G/P}) y_0 (s^{m}1_{G/P}), z = (s^{-n}1_{G/P}) z_0 (s^{n}1_{G/P})$ with $m,n \in \mathbb{N}$ and $y_0 , z_0 \in \mathcal{A}_0$.  We can choose $m=n$.  It follows that
\begin{align*}
    yz = (s^{-m}1_{G/P})y_0 z_0  (s^{m}1_{G/P})
    =(s^{-m}1_{G/P}) x_0 (s^{m}1_{G/P})
   \end{align*}
 with $  x_0 := y_0 z_0  \in \mathcal{A}_0 $, and that
 \begin{align*}
    \mathcal{R}(yz) & = \mathcal{R}( (s^{-m}1_{G/P}) x_0 (s^{m}1_{G/P})) = s^{-m} \circ \mathcal{R}_0(x_0) \circ s^{m} \\
      & = s^{-m} \circ \mathcal{R}_0(y_0) \circ \mathcal{R}_0(z_0) \circ s^{n-m} \circ s^{m} \\
    & = (s^{-m} \circ \mathcal{R}_0(y_0) \circ s^{m}) \circ (s^{-m} \circ \mathcal{R}_0(z_0) \circ s^{m}) \\
    & = \mathcal{R}(y) \circ  \mathcal{R}(z) \ .
\end{align*}
 \end{proof}

Note that the images $\Res(\mathcal{A}_{\mathcal{C}0})$ and $\mathcal{R}_0 (\mathcal{A}_0)$ necessarily lie in the image of $\End_A(M) = \End_A(\Res(1_{\mathcal{C}_0})(M^P))$ by the natural embedding into $\End_A(M^P)$. This reduces us to search for an $A[P_+]$-equivariant (unital) $A$-algebra homomorphism
\begin{equation*}
     \mathcal{R}_0 : \mathcal{A}_0 \longrightarrow \End_A(M)
\end{equation*}
which extends $\Res | \mathcal{A}_{\mathcal{C} 0}$. In fact, since for $g \in N_0\overline{P}N_0$ and $f \in C^\infty (g^{-1}\mathcal{C}_0 \cap \mathcal{C}_0,A)$ we have $gf = (g 1_{g^{-1}\mathcal{C}_0 \cap \mathcal{C}_0})(1f)$ with $1f \in \mathcal{A}_{\mathcal{C} 0}$ it suffices to find the elements
\begin{equation*}
    \mathcal{H}_g = \mathcal{R}_0 (g 1_{g^{-1}\mathcal{C}_0 \cap \mathcal{C}_0}) \in \End_A(M) \qquad\text{for $g \in N_0\overline{P}N_0$} \ .
\end{equation*}
 Note that $P_{+}=N_{0}L_{+}$ is contained in $N_0\overline{P}N_0= N_{0}L \overline N N_{0}$.
\begin{proposition}\label{multiplicative}
We suppose given, for any $g \in N_0\overline{P}N_0$, an element $\mathcal{H}_g \in \End_A(M)$. Then the map
\begin{align*}
    \mathcal{R}_0 : \qquad\qquad  \mathcal{A}_0 & \longrightarrow \End_A(M) \\
    \sum_{g \in N_0\overline{P}N_0} g f_g & \longmapsto \sum_{g \in N_0\overline{P}N_0} \mathcal{H}_g \circ \res(f_g)
\end{align*}
is an $A[P_+]$-equivariant (unital) $A$-algebra homomorphism which extends $\Res | \mathcal{A}_{\mathcal{C}0}$ if and only if, for all $g,h\in N_0\overline{P}N_0$,   $b\in P \cap N_0\overline{P}N_0$, and all compact open subsets $ \mathcal V \subset \mathcal C_{0}$, the relations
\begin{itemize}
\item[H1.] $ \res (1_{\mathcal V })\circ  {\mathcal H}_{g}   =
  {\mathcal H}_{g} \circ \res (1_{g^{-1}\mathcal V \cap \mathcal{C}_0}) $  ,
\item[H2.]  ${\mathcal H}_{g} \circ  {\mathcal H}_{h}  = {\mathcal H}_{gh} \circ \res ( 1_{(gh)^{-1}\mathcal C_{0} \cap h^{-1}\mathcal C_{0} \cap \mathcal C_{0}})$  ,
  \item[H3.] ${\mathcal H}_{b}  = b \circ \res (1_{b^{-1}\mathcal{C}_0 \cap \mathcal{C}_0})$ .
\end{itemize}
hold true.   When H1  is true,  H2 can equivalently be replaced by
\begin{equation*}
    {\mathcal H}_{g} \circ  {\mathcal H}_{h} = {\mathcal H}_{gh} \circ \res ( 1_{h^{-1}\mathcal C_{0} \cap \mathcal C_{0}}) \ .
\end{equation*}
\end{proposition}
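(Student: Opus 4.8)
The plan is to verify the two implications separately, the routine ``only if'' direction first. If $\mathcal R_0$ is a well-defined $A[P_+]$-equivariant unital algebra homomorphism extending $\Res|\mathcal A_{\mathcal C0}$, then H3 is just the assertion that $\mathcal R_0$ extends $\Res$ on the elements $b1_{b^{-1}\mathcal C_0\cap\mathcal C_0}\in\mathcal A_{\mathcal C0}$, combined with the formula $\Res(1_{\mathcal C_0})\circ b\circ\Res(1_{\mathcal C_0})=b\circ\res(1_{b^{-1}\mathcal C_0\cap\mathcal C_0})$ that follows from $b\circ\Res_U=\Res_{b.U}\circ b$ (Remark~\ref{rr}) and the identification $\End_A(M)=\End_A(\Res(1_{\mathcal C_0})(M^P))$. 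For H1 and H2 one simply computes the products $(1_{\mathcal V})(g1_{g^{-1}\mathcal C_0\cap\mathcal C_0})$ and $(g1_{g^{-1}\mathcal C_0\cap\mathcal C_0})(h1_{h^{-1}\mathcal C_0\cap\mathcal C_0})$ inside $\mathcal A_0$ using the skew-group-ring multiplication rule $(b_1f_1)(b_2f_2)=b_1b_2f_1^{b_2}f_2$ and the support formula \eqref{f:supp}; applying $\mathcal R_0$ to both sides and using that $\res$ is multiplicative (Proposition~\ref{in}) yields exactly H1, H2. The last sentence (replacing H2 by the simpler relation with $1_{h^{-1}\mathcal C_0\cap\mathcal C_0}$) follows from H1 applied with $\mathcal V=(gh)^{-1}\mathcal C_0\cap\mathcal C_0$, which lets one move the extra factor $\res(1_{(gh)^{-1}\mathcal C_0})$ across $\mathcal H_{gh}$ and absorb it; the point is that $h^{-1}((gh)^{-1}\mathcal C_0\cap\mathcal C_0)\cap(h^{-1}\mathcal C_0\cap\mathcal C_0)=(gh)^{-1}\mathcal C_0\cap h^{-1}\mathcal C_0\cap\mathcal C_0$.

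\textbf{The converse.} Assume H1, H2, H3. First I must check that the proposed formula
\[
\mathcal R_0\Big(\sum_{g\in N_0\overline PN_0}gf_g\Big)=\sum_g\mathcal H_g\circ\res(f_g)
\]
is well-defined; this is automatic because $\mathcal A_0=\bigoplus_{g\in N_0\overline PN_0}gC^\infty(g^{-1}\mathcal C_0\cap\mathcal C_0,A)$ is a direct sum, and $\res(f_g)$ for $f_g\in C^\infty(g^{-1}\mathcal C_0\cap\mathcal C_0,A)$ lands in $\End_A(M)$ via the canonical algebra map $C^\infty(N_0,A)\to\End_A(M)$ of Proposition~\ref{in}. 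Unitality is the case $g=1$, $f_1=1_{\mathcal C_0}$: $\mathcal H_1=\id_M$ by H3 with $b=1$. That $\mathcal R_0$ extends $\Res|\mathcal A_{\mathcal C0}$: an arbitrary element of $\mathcal A_{\mathcal C0}$ is a sum of terms $b1_{b^{-1}\mathcal C_0\cap\mathcal C_0}\cdot(1f)$ with $b\in P\cap N_0\overline PN_0$ and $f\in C^\infty(\mathcal C_0,A)$, and H3 gives $\mathcal R_0(b1_{b^{-1}\mathcal C_0\cap\mathcal C_0})=b\circ\res(1_{b^{-1}\mathcal C_0\cap\mathcal C_0})=\Res(1_{\mathcal C_0})\circ b\circ\Res(1_{\mathcal C_0})$, which is exactly the restriction of $\Res(b1_{b^{-1}\mathcal C_0\cap\mathcal C_0})$ to $M=\Res(1_{\mathcal C_0})(M^P)$.

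\textbf{Multiplicativity and equivariance — the main work.} The heart of the argument is showing $\mathcal R_0$ is multiplicative. It suffices to check $\mathcal R_0((g1_{U_g})(h1_{U_h}))=\mathcal R_0(g1_{U_g})\circ\mathcal R_0(h1_{U_h})$ for generators $g1_{U_g}$, $h1_{U_h}$ with $U_g=g^{-1}\mathcal C_0\cap\mathcal C_0$ etc.; more generally for $g1_{\mathcal V_1}$, $h1_{\mathcal V_2}$ with $\mathcal V_i$ compact open in the appropriate pieces, since these span. Compute the left side: $(g1_{\mathcal V_1})(h1_{\mathcal V_2})=gh\,1_{\mathcal V_1}^{h}1_{\mathcal V_2}=gh\,1_{h^{-1}\mathcal V_1\cap\mathcal V_2}$ by \eqref{f:supp}, and $h^{-1}\mathcal V_1\cap\mathcal V_2\subset (gh)^{-1}\mathcal C_0\cap h^{-1}\mathcal C_0\cap\mathcal C_0$, so $\mathcal R_0$ of it is $\mathcal H_{gh}\circ\res(1_{h^{-1}\mathcal V_1\cap\mathcal V_2})$. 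The right side is $\mathcal H_g\circ\res(1_{\mathcal V_1})\circ\mathcal H_h\circ\res(1_{\mathcal V_2})$. Now apply H1 to the middle pair $\res(1_{\mathcal V_1})\circ\mathcal H_h=\mathcal H_h\circ\res(1_{h^{-1}\mathcal V_1\cap\mathcal C_0})$, then H2 to $\mathcal H_g\circ\mathcal H_h=\mathcal H_{gh}\circ\res(1_{(gh)^{-1}\mathcal C_0\cap h^{-1}\mathcal C_0\cap\mathcal C_0})$, and use multiplicativity of $\res$ (Proposition~\ref{in}) to collapse $\res(1_{(gh)^{-1}\mathcal C_0\cap h^{-1}\mathcal C_0\cap\mathcal C_0})\circ\res(1_{h^{-1}\mathcal V_1\cap\mathcal C_0})\circ\res(1_{\mathcal V_2})=\res(1_{h^{-1}\mathcal V_1\cap\mathcal V_2})$, matching the left side. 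For $A[P_+]$-equivariance of $\mathcal R_0$ one checks it on generators: $b\cdot(g1_{U_g})\cdot b^{-1}=bgb^{-1}1_{b.U_g}$ for $b\in P_+$, and $\mathcal R_0$ of this should equal $\varphi_b\circ\mathcal R_0(g1_{U_g})\circ\psi_b$; this comes down to the case $g\in P$ via a factorization, where it reduces to the $P_+$-equivariance of $\res$ (Proposition~\ref{in}) together with H3. The main obstacle is the bookkeeping in the multiplicativity computation — specifically, being careful that all the sets $h^{-1}\mathcal V_1\cap\mathcal C_0$, $(gh)^{-1}\mathcal C_0\cap h^{-1}\mathcal C_0\cap\mathcal C_0$ over which we take $\res$ of characteristic functions actually lie in $\mathcal C_0$ (so that $\res$ rather than $\Res$ is applicable) and that the intersection identities among these sets are exactly right; once the correct set-theoretic lemma is isolated, the algebra is forced. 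The final sentence of the proposition (that the $o[P]$-equivariant extension $\mathcal R:\mathcal A\to\End_A(M_c^P)$ is multiplicative) is then precisely Lemma~\ref{mult} applied to $\mathcal R_0$, using Proposition~\ref{P+-P}.
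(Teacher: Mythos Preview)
Your approach is the same as the paper's: necessity is routine, and for sufficiency H1+H2 give multiplicativity (your computation on generators is exactly what the paper leaves implicit), H3 gives the extension of $\Res|\mathcal A_{\mathcal C0}$, and then $P_+$-equivariance is derived from the three relations.

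Two small corrections. First, your set-theoretic identity for the H2-equivalence is wrong: $h^{-1}\big((gh)^{-1}\mathcal C_0\cap\mathcal C_0\big)\cap(h^{-1}\mathcal C_0\cap\mathcal C_0)$ equals $(gh\,h)^{-1}\mathcal C_0\cap h^{-1}\mathcal C_0\cap\mathcal C_0$, not $(gh)^{-1}\mathcal C_0\cap h^{-1}\mathcal C_0\cap\mathcal C_0$. The correct (and simpler) argument is to apply H1 with $\mathcal V=\mathcal C_0$ to $\mathcal H_{gh}$, giving $\mathcal H_{gh}=\mathcal H_{gh}\circ\res(1_{(gh)^{-1}\mathcal C_0\cap\mathcal C_0})$; then composing with $\res(1_{h^{-1}\mathcal C_0\cap\mathcal C_0})$ and using multiplicativity of $\res$ yields the original H2. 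Second, your sketch for $P_+$-equivariance (``comes down to the case $g\in P$ via a factorization'') is too vague. What actually works along your line is: having already established multiplicativity, factor $(c1_{G/P})(g1_{U_g})(c1_{G/P})^{-1}=(c1_{\mathcal C_0})(g1_{U_g})(c^{-1}1_{c\mathcal C_0})$ inside $\mathcal A_0$ for $c\in P_+$, then apply $\mathcal R_0$ multiplicatively and use H3 on the outer factors to get $\varphi_c$ and $\psi_c$. The paper instead derives equivariance directly from H1+H2+H3 without invoking multiplicativity, reducing to the identity $\mathcal H_{cgc^{-1}}\circ\varphi_c\circ\res(1_{g^{-1}\mathcal C_0\cap\mathcal C_0})=\varphi_c\circ\mathcal H_g$ and using $\varphi_c=\mathcal H_c$ together with H2 twice.
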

\begin{proof}
Necessity of the relations is easily checked. Vice versa,
the first two relations imply that $\mathcal{R}_0$ is multiplicative. The third relation says that $\mathcal{R}_0$ extends $\Res | \mathcal{A}_{\mathcal{C}0}$.

 The last sentence of the assertion derives from the fact that we have
\begin{align*}
 {\mathcal H}_{gh} \circ \res ( 1_{(gh)^{-1}\mathcal C_{0} \cap h^{-1}\mathcal C_{0} \cap \mathcal C_{0}})
 & =  {\mathcal H}_{gh} \circ \res ( 1_{(gh)^{-1}\mathcal C_{0} \cap \mathcal C_{0}}) \circ \res ( 1_{h^{-1}\mathcal C_{0} \cap \mathcal C_{0}})\\
&= {\mathcal H}_{gh} \circ \res ( 1_{h^{-1}\mathcal C_{0} \cap \mathcal C_{0}})
\end{align*}
since ${\mathcal H}_{gh} \circ \res ( 1_{(gh)^{-1}\mathcal C_{0} \cap \mathcal C_{0}}) = {\mathcal H}_{gh}$ by the first relation.

The $P_+$-equivariance is equivalent to the identity
\begin{equation*}
    \mathcal{R}_0((c1_{G/P})gf_g (c1_{G/P})^{-1}) = \varphi_c \circ \mathcal{R}_0(gf_g) \circ \psi_c
\end{equation*}
where  $c\in P_{+}$ and $f_g$ is any function in $C^\infty(g^{-1} \mathcal{C}_0 \cap \mathcal{C}_0)$. By the definition of $\mathcal{R}_0$ and the $P_+$-equivariance of $\res$ the left hand side is equal to
\begin{equation*}
    {\mathcal H}_{cgc^{-1}} \circ \varphi_c \circ \res(f_g) \circ \psi_c
\end{equation*}
whereas the right hand side is
\begin{equation*}
    \varphi_c \circ \mathcal{H}_g \circ \res(f_g) \circ \psi_c \ .
\end{equation*}
Since $\psi_c$ is surjective and $\res(f_g) = \res(1_{g^{-1}\mathcal \mathcal{C}_0 \cap \mathcal{C}_0}) \circ \res(f_g)$ we see that the $P_+$-equivariance of $\mathcal{R}_0$ is equivalent to the identity
\begin{equation*}
    {\mathcal H}_{cgc^{-1}} \circ \varphi_c \circ \res(1_{g^{-1}\mathcal \mathcal{C}_0 \cap \mathcal{C}_0}) = \varphi_c \circ \mathcal{H}_g \circ \res(1_{g^{-1}\mathcal \mathcal{C}_0 \cap \mathcal{C}_0}) \ .
\end{equation*}
But as a special case of the first relation we have $\mathcal{H}_g \circ \res(1_{g^{-1}\mathcal \mathcal{C}_0 \cap \mathcal{C}_0}) = \mathcal{H}_g$. Hence the latter identity coincides with the relation
\begin{equation*}
    {\mathcal H}_{cgc^{-1}} \circ \varphi_c \circ \res(1_{g^{-1}\mathcal \mathcal{C}_0 \cap \mathcal{C}_0}) = \varphi_c \circ \mathcal{H}_g   \ .
\end{equation*}
This relation holds true because $\varphi_{c}= {\mathcal H}_{c}$ and by  the second relation ${\mathcal H}_{cgc^{-1}} \circ {\mathcal H}_c ={\mathcal H}_{cg }$ and ${\mathcal H}_c \circ  \mathcal{H}_g =  \mathcal{H}_{cg} \circ  \res(1_{g^{-1}\mathcal \mathcal{C}_0 \cap \mathcal{C}_0})$.
\end{proof}

\subsection{Integrating $\alpha$  when $M$ is compact} \label{6.3}

  Let $M $  be a compact topologically  \'etale $A[P_{+}]$-module. Then $M^{P}$ is compact,  hence the continuous action  of $P$ on $M^{P}$  (proposition \ref{cont2}) induces a continuous
  map $P\to E^{cont}$.

 We will construct an extension $\widetilde{\Res}$ of $\Res$ to $\mathcal{A}_{\mathcal{C} \subset G/P}$ by integration. For any $g\in G$,  we consider the  continuous map
\begin{equation*}
    \alpha_{g} \ : \ g^{-1}\mathcal{C} \cap \mathcal{C} \  \xrightarrow{\alpha(g,.)} \  P \ \to \  E ^{cont} \ .
\end{equation*}
We introduce the $A$-linear maps
\begin{align*}
    \rho \ : \ \mathcal{A} = \mathcal{A}_{\mathcal{C} \subset G/P}  & \ \to \ C_c(\mathcal{C},E^{cont}) \\
    \sum_{g \in G} g f_g & \ \mapsto \ \sum_{g \in G} \alpha_g f_g \ .
\end{align*}
and
\begin{align*}
    \widetilde{\Res} \ : \ \mathcal{A} = \mathcal{A}_{\mathcal{C} \subset G/P} & \ \to \ E^{cont} \\
    a & \ \mapsto \ \int_{\mathcal{C}} \rho(a) d\Res \ .
\end{align*}
For $b \in P$ the map $\alpha_b$ is the constant map on $\mathcal{C}$ with value $b$ (lemma \ref{6.3} iii). It follows that
\begin{equation*}
    \widetilde{\Res} \, | \, \mathcal{A}_{\mathcal{C}} = \Res \ .
\end{equation*}
is an extension as we want it.

\begin{theorem}\label{theo}
$\widetilde{\Res}$ is a homomorphism of $A$-algebras.
\end{theorem}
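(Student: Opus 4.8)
The map $\widetilde{\Res}$ is $A$-linear by construction, so the entire content of the theorem is multiplicativity, $\widetilde{\Res}(ab)=\widetilde{\Res}(a)\circ\widetilde{\Res}(b)$. Since $\widetilde{\Res}$ is $A$-linear in each argument and $\mathcal{A}=\bigoplus_{g\in G}gC_c^\infty(g^{-1}\mathcal{C}\cap\mathcal{C},A)$ is spanned over $A$ by the elements $g1_V$ with $g\in G$ and $V\subseteq g^{-1}\mathcal{C}\cap\mathcal{C}$ compact open, the plan is to reduce to checking the identity on pairs $a=g1_V$, $b=h1_W$ of such generators. For these, $g1_V\cdot h1_W=gh\,(1_V^h1_W)=gh\,1_{h^{-1}V\cap W}$ by the support formula \eqref{f:supp} (and $h^{-1}V\cap W\subseteq(gh)^{-1}\mathcal{C}\cap\mathcal{C}$), so the left hand side is $\int_{\mathcal{C}}\alpha_{gh}\,1_{h^{-1}V\cap W}\,d\Res$, while the right hand side is $\bigl(\int_{\mathcal{C}}\alpha_g1_V\,d\Res\bigr)\circ\bigl(\int_{\mathcal{C}}\alpha_h1_W\,d\Res\bigr)$.

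I would evaluate both sides as limits of Riemann sums, using Corollary \ref{limit} and Lemma \ref{lzero} (the latter ensuring the sums approach the integral uniformly), with the partitions chosen compatibly and fine enough for a prescribed open right ideal $E^{cont}_{\mathcal{L}}\subseteq E^{cont}$. Three facts drive the matching: the cocycle identity $\alpha(gh,x)=\alpha(g,hx)\alpha(h,x)$ of Lemma \ref{prod!}; the $P$-equivariance of $\Res$ in the form $b\circ\Res(1_X)=\Res(1_{b.X})\circ b$ for $b\in P$, together with the multiplicativity of $\Res$ on $C_c^\infty(\mathcal{C},A)$; and the covering Lemma \ref{covering}, which lets one partition $V=\bigsqcup_iV_i$ with base points $x_i\in V_i$ so small that $\alpha(g,x_i)V_i\subseteq gV$ and that $\alpha_g$ (and similarly $\alpha_h$, $\alpha_{gh}$) is constant modulo $E^{cont}_{\mathcal{L}}$ on each piece — the required continuity of $\alpha_g\colon g^{-1}\mathcal{C}\cap\mathcal{C}\to E^{cont}$ coming from Lemma \ref{c3}(ii) composed with the continuous map $P\to E^{cont}$ afforded by the compactness of $M^P$ (Proposition \ref{cont2}). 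It is worth recording first the prototype $h=b\in P$, where $\alpha_b$ is the constant map $b$ and one gets directly $\widetilde{\Res}(b1_W)=b\circ\Res(1_W)$; in particular $\widetilde{\Res}|_{\mathcal{A}_{\mathcal{C}}}=\Res$, so $\widetilde{\Res}$ really is an extension.

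Carrying out the computation, with partitions $V=\bigsqcup_iV_i$ and $W=\bigsqcup_jW_j$ as above, one obtains $\widetilde{\Res}(g1_V)\equiv\sum_i\alpha(g,x_i)\circ\Res(1_{V_i})$ and $\widetilde{\Res}(h1_W)\equiv\sum_j\alpha(h,y_j)\circ\Res(1_{W_j})$ modulo $E^{cont}_{\mathcal{L}}$; composing, moving $\alpha(h,y_j)$ leftwards through $\Res(1_{V_i})$ via $\Res(1_{V_i})\circ\alpha(h,y_j)=\alpha(h,y_j)\circ\Res(1_{\alpha(h,y_j)^{-1}V_i})$, and then using $\Res(1_{\alpha(h,y_j)^{-1}V_i})\circ\Res(1_{W_j})=\Res(1_{\alpha(h,y_j)^{-1}V_i\cap W_j})$ gives
\[
\widetilde{\Res}(g1_V)\circ\widetilde{\Res}(h1_W)\ \equiv\ \sum_{i,j}\alpha(g,x_i)\alpha(h,y_j)\circ\Res(1_{\alpha(h,y_j)^{-1}V_i\cap W_j})\ .
\]
On each nonempty piece $\alpha(h,y_j)^{-1}V_i\cap W_j$, for $z$ in it one has $hz=\alpha(h,z)z$ close to $\alpha(h,y_j)z\in V_i$, so $\alpha(g,x_i)\alpha(h,y_j)$ is congruent mod $E^{cont}_{\mathcal{L}}$ to $\alpha(gh,z)$ by Lemma \ref{prod!} and the continuity of the $\alpha$'s into $E^{cont}$; and as the partitions shrink, the disjoint union of these pieces fills up exactly $h^{-1}V\cap W$. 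Hence the right hand side converges to $\int_{h^{-1}V\cap W}\alpha_{gh}\,d\Res=\widetilde{\Res}(g1_V\cdot h1_W)$, and letting $\mathcal{L}$ run over a basis of neighborhoods of $0$ yields the equality. The main obstacle is exactly the bookkeeping for these two superimposed limits: one must produce a single partition refining both the partition of $V$ and the $h$-translate of the partition of $W$, simultaneously fine enough that all the approximations above (constancy of $\alpha_g,\alpha_h,\alpha_{gh}$ mod $\mathcal{L}$, the inclusions from Lemma \ref{covering}, the Riemann-sum estimates for all three integrals from Lemma \ref{lzero}) hold for the given $\mathcal{L}$, and one must identify the index set $\{(i,j)\}$ correctly with a partition of $h^{-1}V\cap W$. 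The fact that $E^{cont}_{\mathcal{L}}$ is a right ideal — so that it absorbs right composition by the various $\Res$- and $\alpha$-factors appearing — is what makes this telescoping of the error terms legitimate; once it is organized, the remaining verification is routine.
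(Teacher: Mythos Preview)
Your strategy is correct and assembles the right ingredients, but it is organised differently from the paper and glosses over two steps that are less routine than you suggest. The paper does \emph{not} juggle two independent partitions. Instead it first establishes, via two special cases, the projection identity $\int_{\mathcal C}\alpha_h 1_V\,d\Res=\Res(1_{hV})\circ\int_{\mathcal C}\alpha_h 1_{V_h}\,d\Res$ for compact open $V\subset V_h$ (Cases~1--2). In Case~3 (where $V_g=hV_h$) both sides are then computed over a \emph{single} partition $\{V_i\}$ of $V_h$: the projection identity turns the Riemann sum $\sum_i\alpha(g,hx_i)\Res(1_{hV_i})$ for the left factor into $\sum_i\alpha(g,hx_i)\circ\int\alpha_h 1_{V_i}\,d\Res$, and the difference with the LHS becomes $\sum_i\alpha(g,hx_i)\circ\int(\alpha_h 1_{V_i}-\alpha_h(x_i))\,d\Res$, handled by Lemma~\ref{lzero}. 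The general case is then recovered from Case~3 together with Case~2 and Remark~\ref{multiplic}.

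In your direct computation two points need more than you indicate. First, the sets $\alpha(h,y_j)^{-1}V_i\cap W_j$ do not, for a generic pair of partitions, cover $h^{-1}V\cap W$; one has $\alpha(h,y_j)^{-1}V\cap W_j=h^{-1}V\cap W_j$ only once the partition of $W$ is fine enough that $\alpha(h,y_j)\alpha(h,z)^{-1}$ lies in the open stabiliser $G_V^\dagger\cap P$ of Lemma~\ref{open} for every $z\in W_j$, so the refinements must be chosen in the right order and with this extra constraint. Second, and more seriously, the right-ideal property of $E^{cont}_{\mathcal L}$ absorbs factors composed on the \emph{right}, but the error term $\alpha(g,hz)\bigl(\alpha(h,y_j)-\alpha(h,z)\bigr)$ in comparing $\alpha(g,x_i)\alpha(h,y_j)$ with $\alpha(gh,z)$, and likewise the cross-term $A'\bigl(\widetilde{\Res}(h1_W)-B'\bigr)$ when comparing the product with the product of Riemann sums, carry a \emph{left} factor coming from $P$. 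The paper's Case~3 meets exactly this problem and resolves it by first replacing $\mathcal L$ with an open submodule invariant under the compact image $\alpha(g,hV_h)\subset P$ (arguing as in the proof of Lemma~\ref{No}); you need the same device here. With these two refinements your direct argument does go through.
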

\begin{proof}
Let $g,h \in G$ and $V_g$ and $V_h$ compact open subsets in $g^{-1} \mathcal{C} \cap \mathcal{C}$ and $h^{-1} \mathcal{C} \cap \mathcal{C}$, respectively. We have to show that
\begin{equation*}
    \widetilde{\Res}((g1_{V_g})(h1_{V_h})) = \widetilde{\Res}(g1_{V_g}) \circ \widetilde{\Res}(h1_{V_h})
\end{equation*}
holds true. This is equivalent to the identity
\begin{equation*}
    \int_{\mathcal{C}} \alpha_{gh} 1_{h^{-1}V_g \cap V_h} d\Res = \int_{\mathcal{C}} \alpha_g 1_{V_g} d\Res \circ \int_{\mathcal{C}} \alpha_h 1_{V_h} d\Res \ .
\end{equation*}
We first treat special cases of this identity.

\textit{Case 1:} We assume that $g=1$ and that $V_1 = h V_h$. In this case we have to show that
\begin{equation*}
    \int_{\mathcal{C}} \alpha_h 1_{V_h} d\Res = \Res(1_{hV_h}) \circ \int_{\mathcal{C}} \alpha_h 1_{V_h} d\Res \ .
\end{equation*}
holds true. The set of all disjoint coverings $V_h = V_1 \,\dot{\cup} \ldots \dot{\cup}\, V_m$ by compact open subsets $V_i$ is partially ordered by refinement. Associating with this covering the element
\begin{equation*}
    \sum_{i=1}^m \alpha(h,x_i) \circ \Res(1_{V_i}) \ ,
\end{equation*}
where the $x_i \in V_i$ are arbitrarily chosen points, defines a net in $E^{cont}$ which converges to $\int_{\mathcal{C}} \alpha_h 1_{V_h} d\Res$. By applying the lemma \ref{covering} to each $V_i$ we obtain a refinement of the given covering which satisfies the assertion of that lemma. In other words, by restricting to a certain cofinal set of coverings and choosing the $x_i$ appropriately, we have $\alpha(h,x_i)V_i \subset hV_h$. But by the $P$-equivariance of $\Res$ we have
\begin{equation*}
    \sum_{i=1}^m \alpha(h,x_i) \circ \Res(1_{V_i}) = \sum_{i=1}^m \Res(1_{\alpha(h,x_i)V_i}) \circ \alpha(h,x_i) \ .
\end{equation*}
Since $1_{hV_h} 1_{\alpha(h,x_i)V_i} = 1_{\alpha(h,x_i)V_i}$ we obtain
\begin{align*}
    \sum_{i=1}^m \alpha(h,x_i) \circ \Res(1_{V_i}) & = \sum_{i=1}^m \Res(1_{\alpha(h,x_i)V_i}) \circ \alpha(h,x_i) \\
    & = \Res(1_{hV_h}) \circ \sum_{i=1}^m \Res(1_{\alpha(h,x_i)V_i}) \circ \alpha(h,x_i) \\
    & = \Res(1_{hV_h}) \circ \sum_{i=1}^m \alpha(h,x_i) \circ \Res(1_{V_i}) \ .
\end{align*}
As the multiplication in $E^{cont}$ is continuous our initial identity follows by passing to the limit.

\textit{Case 2:} We assume that $g=1$ and that $V_1 = h V$ fo some compact open subset $V \subset V_h$. In this case we have to show that
\begin{equation*}
    \int_{\mathcal{C}} \alpha_h 1_V d\Res = \Res(1_{hV}) \circ \int_{\mathcal{C}} \alpha_h 1_{V_h} d\Res \ .
\end{equation*}
holds true. But, applying the first case to $(h,V)$ and $(h, V_h - V)$, we obtain
\begin{align*}
    \Res(1_{hV}) \circ \int_{\mathcal{C}} \alpha_h 1_{V_h} d\Res & = \Res(1_{hV}) \circ \int_{\mathcal{C}} \alpha_h 1_V d\Res + \Res(1_{hV}) \circ \int_{\mathcal{C}} \alpha_h 1_{V_h - V} d\Res \\
    & = \int_{\mathcal{C}} \alpha_h 1_V d\Res + \Res(1_{hV}) \circ \Res(1_{hV_h - hV}) \circ \int_{\mathcal{C}} \alpha_h 1_{V_h - V} d\Res \\
    & = \int_{\mathcal{C}} \alpha_h 1_V d\Res \ .
\end{align*}

\textit{Case 3:} We assume that $V_h \subset (gh)^{-1}\mathcal{C} \cap h^{-1}\mathcal{C} \cap \mathcal{C}$ and that $V_g = hV_h$. In this case we have to show that
\begin{equation*}
    \int_{\mathcal{C}} \alpha_{gh} 1_{V_h} d\Res = \int_{\mathcal{C}} \alpha_g 1_{hV_h} d\Res \circ \int_{\mathcal{C}} \alpha_h 1_{V_h} d\Res \ .
\end{equation*}
holds true. As before we consider the partially ordered set of  disjoint coverings
$V_h = V_1 \,\dot{\cup} \ldots \dot{\cup}\, V_m$ by compact open subsets $V_i$, and we pick points $x_i \in V_i$. The left hand side is the limit of the net
\begin{equation*}
    \sum_{i=1}^m \alpha(gh,x_i) \circ \Res(1_{V_i}) = \sum_{i=1}^m \alpha(g,hx_i) \circ \alpha(h,x_i) \circ \Res(1_{V_i})
\end{equation*}
where we have used the lemma \ref{prod!}. Using the continuity of the product in $E^{cont}$ and the second case we see that the right hand side is the limit of the net
\begin{equation*}
    \sum_{i=1}^m \alpha(g,hx_i) \circ \Res(1_{hV_i}) \circ \int_{\mathcal{C}} \alpha_h 1_{V_h} d\Res = \sum_{i=1}^m \alpha(g,hx_i) \circ \int_{\mathcal{C}} \alpha_h 1_{V_i} d\Res \ .
\end{equation*}
Hence we have to show that the net of differences
\begin{multline*}
    \qquad \sum_{i=1}^m \alpha(g,hx_i) \circ \big( \int_{\mathcal{C}} \alpha_h 1_{V_i} d\Res - \alpha(h,x_i) \circ \Res(1_{V_i}) \big) \\ = \sum_{i=1}^m \alpha(g,hx_i) \circ \int_{\mathcal{C}} (\alpha_h 1_{V_i} - \alpha_h(x_i)) d\Res \qquad
\end{multline*}
converges to zero. This means that, for any open $A$-submodule $\mathcal{L} \subset M^P$ we have to find a disjoint covering of $V_h$ such that for all its refinements we have
\begin{equation*}
    \sum_{i=1}^m \alpha(g,hx_i) \circ \int_{\mathcal{C}} (\alpha_h 1_{V_i} - \alpha_h(x_i)) d\Res \in E^{cont}_{\mathcal{L}} \ .
\end{equation*}
The image $C \subset P$ of $V_h$ under the continuous map $x \mapsto \alpha(g,hx)$ is compact. Hence, by an argument completely analogous to the proof of the lemma \ref{No}, the $C$-invariant open submodules of $M^P$ are cofinal among all open submodules. We therefore may assume that $\mathcal{L}$ is $C$-invariant. This reduces us further to finding a disjoint covering of $V_h$ such that for all its refinements we have
\begin{equation*}
    \int_{\mathcal{C}} (\alpha_h 1_{V_i} - \alpha_h(x_i)) d\Res \in E^{cont}_{\mathcal{L}} \ .
\end{equation*}
This is a special case of the lemma \ref{lzero}.

We now combine these cases to obtain the asserted identity for general $g,h, V_g$, and $V_h$. First of all we note that $h^{-1} V_g \cap V_h \subset (gh)^{-1} \mathcal{C} \cap h^{-1} \mathcal{C} \cap \mathcal{C}$. Hence the third case gives the first equality in the following computation:
\begin{align*}
    \int_{\mathcal{C}} \alpha_{gh} 1_{h^{-1}V_g \cap V_h} d\Res & = \int_{\mathcal{C}} \alpha_g 1_{V_g \cap hV_h} d\Res \circ \int_{\mathcal{C}} \alpha_h 1_{h^{-1}V_g \cap V_h} d\Res \\
    & = \int_{\mathcal{C}} \alpha_g 1_{V_g \cap hV_h} d\Res \circ \Res(1_{V_g \cap hV_h}) \circ \int_{\mathcal{C}} \alpha_h 1_{V_h} d\Res \\
    & = \int_{\mathcal{C}} \alpha_g 1_{V_g} d\Res \circ \int_{\mathcal{C}} \alpha_h 1_{V_h} d\Res
\end{align*}
The second, resp.\ third, equality uses the second case for the right factor, resp.\ the remark \ref{multiplic} for the left factor, on the right hand side.
\end{proof}

\subsection{$G$-equivariant sheaf  on $G/P$ }

Let $M$ be a compact topologically \'etale $A[P_+]$-module. We briefly survey our construction of a $G$-equivariant sheaf on $G/P$ functorially associated with $M$.

From the proposition \ref{Res} we have obtained an $A$-algebra homomorphism
\begin{equation*}
    \Res \ : \ C^\infty_c(\mathcal{C},A) \# P  \ \to \ E^{cont}
\end{equation*}
which gives rise to a $P$-equivariant sheaf on $\mathcal{C}$ as described in detail in the theorem \ref{the11}. In the theorem \ref{theo} we have seen that it extends to an $A$-algebra homomorphism
\begin{equation*}
    \widetilde{\Res} \ : \ \mathcal{A}_{\mathcal{C} \subset G/P} \ \to \ E^{cont} \ .
\end{equation*}
This homomorphism defines on the global sections with compact support $M^P_c$ of the sheaf on $\mathcal{C}$ the structure of a nondegenerate $\mathcal{A}_{\mathcal{C} \subset G/P}$-module. The latter leads, by the proposition \ref{cat-equiv}, to the unital $C^\infty_c(G/P,A) \# G$-module $\mathcal{Z} \otimes_{\mathcal{A}} M^P_c$ which corresponds to a $G$-equivariant sheaf on $G/P$ extending the earlier sheaf on $\mathcal{C}$ (remark \ref{extension}). We will denote the sections of this latter sheaf on an open subset $\mathcal{U} \subset G/P$ by $M \boxtimes \mathcal{U}$. The restriction maps in this sheaf, for open subsets ${\mathcal V} \subset {\mathcal U} \subset G/P$, will simply be written as $\Res_{\mathcal V}^{\mathcal U} \ : \  M\boxtimes {\mathcal U} \ \to \    M\boxtimes {\mathcal V}$.

We observe that for a standard compact open subset $\mathcal{U} \subset G/P$ with $g \in G$ such that $g\mathcal{U} \subset \mathcal{C}$ the action of the element $g$ on the sheaf induces an isomorphism of $A$-modules $M \boxtimes \mathcal{U} \xrightarrow{\cong} M \boxtimes g\mathcal{U} = M_{g\mathcal{U}}$. Being the image of a continuous projector on $M^P$ (proposition \ref{cont2}), $M_{g\mathcal{U}}$ is naturally a compact topological $A$-module. We use the above isomorphism to transport this topology to $M \boxtimes \mathcal{U}$. The result is independent of the choice of $g$ since, if $g\mathcal{U} = h\mathcal{U}$ for some other $h \in G$, then $h\mathcal{U} \subset (gh^{-1})^{-1}\mathcal{C} \cap \mathcal{C}$ and, by construction, the endomorphism $gh^{-1}$ of $M \boxtimes h\mathcal{U}$ is given by the continuous map $\widetilde{\Res}(gh^{-1} 1_{h\mathcal{U}})$.

A general compact open subset $\mathcal{U} \subset G/P$ is the disjoint union $\mathcal{U} = \mathcal{U}_1 \, \dot{\cup} \ldots \dot{\cup} \, \mathcal{U}_m$ of standard compact open subsets $\mathcal{U}_i$ (proposition \ref{fpar}). We equip $M \boxtimes \mathcal{U} = M \boxtimes \mathcal{U}_1 \oplus \ldots \oplus M \boxtimes \mathcal{U}_m$ with the direct product topology. One easily verifies that this is independent of the choice of the covering.

Finally, for an arbitrary open subset $\mathcal{U} \subset G/P$ we have $M \boxtimes \mathcal{U} = \varprojlim M \boxtimes \mathcal{V}$, where $\mathcal{V}$ runs over all compact open subsets $\mathcal{V} \subset \mathcal{U}$, and we equip $M \boxtimes \mathcal{U}$ with the corresponding projective limit topology.

It is straightforward to check that all restriction maps are continuous and that any $g \in G$ acts by continuous homomorphisms. We see that $(M \boxtimes \mathcal{U})_{\mathcal{U}}$ is a $G$-equivariant sheaf of compact  topological $A$-modules.

\begin{lemma}\label{cont4}
For any compact open subset $\mathcal{U} \subset G/P$ the action $G_{\mathcal U}^{\dagger}  \times (M \boxtimes{\mathcal U}) \  \to  \ M \boxtimes{\mathcal U}$ of the open subgroup $G^\dagger_{\mathcal{U}}$ (lemma \ref{open}) on the sections on $\mathcal{U}$ is  continuous.
\end{lemma}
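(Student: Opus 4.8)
The plan is to reduce in two formal steps to the case of a compact open subset $V$ of the cell $\mathcal{C}$, and then to prove that case by an integration argument whose core is a version of Lemma \ref{lzero} that is uniform in a compact set of group elements. First suppose $\mathcal{U}$ is \emph{standard}, say $g_0\mathcal{U}=V\subset\mathcal{C}$ for some $g_0\in G$. Conjugation $g\mapsto g_0gg_0^{-1}$ is a homeomorphism $G^\dagger_{\mathcal{U}}\xrightarrow{\sim}G^\dagger_{V}$, and the action of $g_0$ on the sheaf is, by the very construction of the topology on $M\boxtimes\mathcal{U}$ preceding this lemma, a topological isomorphism $M\boxtimes\mathcal{U}\xrightarrow{\sim}M\boxtimes V=M_V$ intertwining the action of $g$ on the left with that of $g_0gg_0^{-1}$ on the right (since $g_0\cdot(g\cdot\xi)=(g_0gg_0^{-1})\cdot(g_0\cdot\xi)$). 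So it suffices to treat $\mathcal{U}=V\subset\mathcal{C}$.

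For a general compact open $\mathcal{U}$, write $\mathcal{U}=\mathcal{U}_1\,\dot{\cup}\,\cdots\,\dot{\cup}\,\mathcal{U}_m$ as a finite disjoint union of standard compact open subsets (Proposition \ref{fpar}); by construction $M\boxtimes\mathcal{U}=\prod_j M\boxtimes\mathcal{U}_j$ as topological $A$-modules. Fix $g_1\in G^\dagger_{\mathcal{U}}$. Each $G^\dagger_{\mathcal{U}_j}$ is open (Lemma \ref{open}), and the fibre of $g\mapsto g^{-1}\mathcal{U}_j$ through $g_1$ contains the open coset $G^\dagger_{\mathcal{U}_j}g_1$; hence on the open neighbourhood $\Omega_1:=\bigcap_j G^\dagger_{\mathcal{U}_j}g_1\ni g_1$ we have $g^{-1}\mathcal{U}_j=g_1^{-1}\mathcal{U}_j$ and $gg_1^{-1}\in G^\dagger_{\mathcal{U}_j}$ for all $j$, and in particular $\Omega_1\subset G^\dagger_{\mathcal{U}}$. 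Writing $\xi=(\xi_i)_i$, sheaf equivariance and gluing give, for $g\in\Omega_1$,
\begin{equation*}
    (g\cdot\xi)|_{\mathcal{U}_j}\ =\ (gg_1^{-1})\cdot\big(g_1\cdot(\xi|_{g_1^{-1}\mathcal{U}_j})\big),
\end{equation*}
where $\xi|_{g_1^{-1}\mathcal{U}_j}$ is a $g$-independent continuous function of $(\xi_i)_i$ assembled from the restriction maps $\Res^{\mathcal{U}_i}_{g_1^{-1}\mathcal{U}_j\cap\mathcal{U}_i}$. Thus, granting the standard case for each $\mathcal{U}_j$ (and that every $h\in G$ acts by continuous homomorphisms), the $j$-th component of the action map is continuous on $\Omega_1\times\prod_i M\boxtimes\mathcal{U}_i$, which reduces us to $\mathcal{U}=V\subset\mathcal{C}$.

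For $V\subset\mathcal{C}$ compact open and $g\in G^\dagger_V$ the action of $g$ on $M\boxtimes V=M_V$ is the restriction of $\widetilde{\Res}(g1_V)=\int_{\mathcal{C}}\alpha_g 1_V\,d\Res\in E^{cont}$; since evaluation $E^{cont}\times M^P\to M^P$ is continuous (a routine check from the definition of the topology on $E^{cont}$), it is enough to show $g\mapsto\widetilde{\Res}(g1_V)$ is continuous on $G^\dagger_V$. Fix $g_1\in G^\dagger_V$ and an open submodule $\mathcal{L}\subset M^P$; let $U\subset N$ be the compact open set with $Uw_0P/P=V$ and choose a compact open neighbourhood $K$ of $g_1$ inside $G^\dagger_V$. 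Since $M^P$ is compact, the $P$-action on $M^P$ gives a continuous map $P\to E^{cont}$ (beginning of \S\ref{6.3}), so by \eqref{c5} the map $(g,u)\mapsto\alpha(g,x_u)\in E^{cont}$ is continuous on $K\times U$, and likewise $(g,u,v)\mapsto\alpha(g,x_{uv})-\alpha(g,x_u)$ is continuous on $K\times U\times\tilde V$ and vanishes on $K\times U\times\{1\}$, where $\tilde V\subset N$ is a compact open subgroup with $U\tilde V=U$. By compactness of $K\times U$ (a tube-lemma argument, as in the proofs of Lemmas \ref{No} and \ref{lzero}) there is a compact open subgroup $W_{\mathcal{L}}\subset\tilde V$ with $\alpha(g,x_{uv})-\alpha(g,x_u)\in E^{cont}_{\mathcal{L}}$ for all $g\in K$, $u\in U$, $v\in W_{\mathcal{L}}$. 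Pick $W\subset W_{\mathcal{L}}$ with $U=u_1W\,\dot{\cup}\,\cdots\,\dot{\cup}\,u_rW$. Then, exactly as in the proof of Lemma \ref{lzero}, for every $g\in K$,
\begin{equation*}
    \widetilde{\Res}(g1_V)\ =\ \sum_{i=1}^r\int_{\mathcal{C}}\alpha_g 1_{u_iW}\,d\Res\ \equiv\ \sum_{i=1}^r\alpha(g,x_{u_i})\circ\Res(1_{u_iW})\pmod{E^{cont}_{\mathcal{L}}},
\end{equation*}
the congruence holding because each integrand $(\alpha_g-\alpha(g,x_{u_i}))1_{u_iW}$ has values in the right ideal $E^{cont}_{\mathcal{L}}$, which is open hence closed. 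The right-hand side is a continuous function of $g\in K$ (a finite sum of products of the continuous maps $g\mapsto\alpha(g,x_{u_i})$ with the fixed endomorphisms $\Res(1_{u_iW})$), so it lies within $E^{cont}_{\mathcal{L}}$ of its value at $g_1$ for $g$ in some neighbourhood $\Omega_0\subset K$ of $g_1$. Combining the two displays gives $\widetilde{\Res}(g1_V)-\widetilde{\Res}(g_11_V)\in E^{cont}_{\mathcal{L}}$ for $g\in\Omega_0$, proving continuity at $g_1$; as the action map then factors as $G^\dagger_V\times M_V\to E^{cont}\times M^P\to M^P$ with image in $M_V$, we are done.

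The main obstacle is precisely this last step: upgrading the single-$g$ estimate of Lemma \ref{lzero} to one uniform over a compact neighbourhood of $g_1$, which is exactly what forces the use of the joint continuity \eqref{c5} of $\alpha$ together with the compactness of $M^P$ (to make $P\to E^{cont}$ continuous) and a tube-lemma compactness argument. Once $g\mapsto\widetilde{\Res}(g1_V)$ is known to be continuous, the reduction to the cell and the passage to a general compact open $\mathcal{U}$ are formal manipulations with the sheaf topology already fixed in the construction preceding the lemma.
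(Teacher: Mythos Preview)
Your proof is correct and follows essentially the same strategy as the paper's: reduce to a compact open $V\subset\mathcal{C}$, show that $g\mapsto\widetilde{\Res}(g1_V)$ is continuous $G^\dagger_V\to E^{cont}$, and conclude continuity of the action from continuity of evaluation.

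The one genuine difference is in how the continuity of $g\mapsto\widetilde{\Res}(g1_V)$ is obtained. You establish it by hand: a tube-lemma argument on the jointly continuous map $(g,u)\mapsto\alpha_g(x_u)$ gives a finite Riemann-type sum $\sum_i\alpha(g,x_{u_i})\circ\Res(1_{u_iW})$ approximating $\widetilde{\Res}(g1_V)$ uniformly in $g\in K$ modulo $E^{cont}_{\mathcal{L}}$, and this sum is visibly continuous in $g$. The paper instead packages the same idea abstractly: it uses the exponential law (\cite{BTG} TG X.28 Th.\ 3) to pass from the continuous map $G^\dagger_V\times V\to E^{cont}$ to a continuous map $G^\dagger_V\to C(V,E^{cont})$, and then observes that integration $\int_V\cdot\,d\Res:C(V,E^{cont})\to E^{cont}$ is continuous for the compact-open topology. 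Your uniform estimate is precisely what the first step encodes, and your passage from the approximating sum to the integral is what the second step encodes; so the two arguments are the same in content, yours being explicit and the paper's being a clean two-line factorization. Your reduction from a general compact open $\mathcal{U}$ to the standard case is also more carefully spelled out than the paper's one-line appeal to Proposition~\ref{fpar}.
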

\begin{proof}
 Using the proposition \ref{fpar}, it suffices to consider the case that $\mathcal{U} \subset \mathcal{C}$. Note that  $G_{\mathcal U}^{\dagger}$ acts by continuous automorphisms on $M \boxtimes{\mathcal U}=M _{\mathcal U}$. By \eqref{c5} the map
\begin{align*}
    G_{\mathcal U}^{\dagger} \times \mathcal{U} & \ \to \  E^{cont} \\
    (g,x) & \ \mapsto \ \alpha_g(x)
\end{align*}
is continuous. Hence (\cite{BTG} TG X.28 Th.\ 3) the corresponding map
\begin{equation*}
    G_{\mathcal U}^{\dagger} \ \to \ C(\mathcal{U}, E^{cont})
\end{equation*}
is continuous, where we always equip the module $C(\mathcal{U}, E^{cont})$ of $E^{cont}$-valued continuous maps on $\mathcal{U}$ with the compact-open topology. On the other hand it is easy to see that, for any measure $\lambda$ on $\mathcal{C}$ with values in $E^{cont}$, the map
\begin{equation*}
    \int_{\mathcal{U}} . \, d\lambda \ : \ C(\mathcal{U}, E^{cont}) \ \to \ E^{cont}
\end{equation*}
is continuous. It follows that the map
\begin{align*}
    G_{\mathcal U}^{\dagger} & \ \to \ E^{cont} \\
    g & \ \mapsto \ \widetilde{\Res}(g 1_{\mathcal{U}})
\end{align*}
is continuous. The direct decomposition $M^{P}= M _{\mathcal U}\oplus M _{\mathcal C -\mathcal U}$  gives a natural  inclusion map $\End_A^{cont}(M_{\mathcal{U}}) \to E^{cont}$ through which the above map factorizes.
The  resulting map
\begin{equation*}
    G_{\mathcal U}^{\dagger} \ \to \ \End_A^{cont}(M_{\mathcal{U}})
\end{equation*}
is continuous and coincides with   the $G_{\mathcal U}^{\dagger}$-action on $M_{\mathcal{U}}$. As   $M_{\mathcal{U}}$ is compact  this continuity implies the   continuity of the action $G_{\mathcal U}^{\dagger}\times M_{\mathcal{U}} \to M_{\mathcal{U}}$.
\end{proof}

The same construction can be done, starting from the compact topologically \'etale $A[P_{U}]$-module $M_{U}$, for any compact open subgroup $U\subset N$.
\begin{proposition}
Let $U\subset N$ be a compact open subgroup. The $G$-equivariant sheaves  on $G/P$  associated to $(N_{0},M)$ and to $(U,M_{U})$ are  equal.
 \end{proposition}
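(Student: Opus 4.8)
The claim is that the $G$-equivariant sheaf on $G/P$ constructed from the étale $A[P_+]$-module $M$ (with respect to $N_0$ and the central element $s$) coincides with the one constructed from the étale $A[P_U]$-module $M_U$ (with respect to $U$ and the central element $s_U=s^{k_U}$). Both constructions pass through the same three formal steps: (1) the $P$-equivariant sheaf on $\mathcal C$, (2) its encoding as a nondegenerate $\mathcal A_{\mathcal C}$-module via $\Res$, (3) the extension $\widetilde\Res:\mathcal A\to E^{cont}$ by integrating $\alpha$, followed by the purely formal passage $\mathcal A\rightsquigarrow\mathcal A_{G/P}$ of Proposition~\ref{cat-equiv}. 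The plan is to show that at step (3) the two constructions produce \emph{the same} $A$-algebra homomorphism $\widetilde\Res$ on $\mathcal A=\mathcal A_{\mathcal C\subset G/P}$, up to the canonical identifications already available; the passage to $\mathcal A_{G/P}$ and then to the sheaf is functorial, so equality there follows automatically.

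First I would recall the key input from Proposition~\ref{b}: the $P$-equivariant sheaves on $N\simeq\mathcal C$ associated to $M$ and to $M_U$ are \emph{literally equal}, and under this equality the $P$-equivariant map $\Res$ for $M$ restricted to $C^\infty_c(U,A)\#P_U$-type data agrees with the $\Res$ for $M_U$; more precisely, the subspace $M_U=\Res(1_U)(M^P)\subset M^P$ is canonically $M_U^{P}$ via Proposition~\ref{3.3}, and $\ev_0,\sigma_0,\psi_s$ for $M_U$ are the restrictions/compressions of those for $M$ (this is exactly the content of the displayed formulas in the proof of Proposition~\ref{b}). Consequently $E^{cont}(M_U)$ embeds into $E^{cont}(M)$ compatibly with the measures $\Res$. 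Next I would compare the maps $\alpha_g:g^{-1}\mathcal C\cap\mathcal C\to E^{cont}$: since $\alpha(g,\cdot)$ is defined purely in terms of the $G$-action on $G/P$ (Section~5.1) and does not depend on the choice of $N_0$, the composite $\alpha_g\colon g^{-1}\mathcal C\cap\mathcal C\xrightarrow{\alpha(g,\cdot)}P\to E^{cont}$ for $M_U$ is the restriction, along $E^{cont}(M_U)\hookrightarrow E^{cont}(M)$, of $\alpha_g$ for $M$. Therefore $\rho$ and $\widetilde\Res$ for $M_U$ are the restrictions of those for $M$.

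The one genuine point to verify — and I expect this to be the main obstacle — is that the \emph{integral} $\int_{\mathcal C}\rho(a)\,d\Res$ is compatible with this restriction, i.e.\ that integrating in $E^{cont}(M_U)$ and then including into $E^{cont}(M)$ gives the same element as integrating in $E^{cont}(M)$ directly. This requires checking that the inclusion $E^{cont}(M_U)\hookrightarrow E^{cont}(M)$ coming from the topological direct decomposition $M^P=M_U\oplus M_{\mathcal C-U}$ (cf.\ the proof of Lemma~\ref{cont4}) is a \emph{closed topological} embedding, so that the projective limits defining the two integrals (over open submodules $\mathcal L$) match up; concretely, open $A$-submodules of $M_U$ are exactly the traces of open $A$-submodules of $M^P$, and the Riemann-sum nets defining the integrals agree term by term because each summand $\alpha(g,x_i)\circ\Res(1_{V_i})$ lives in $E^{cont}(M_U)$ and its image in $E^{cont}(M)$ is the corresponding summand there. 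Once this compatibility is in hand, $\widetilde\Res_{M}=\iota\circ\widetilde\Res_{M_U}$ as algebra homomorphisms $\mathcal A\to E^{cont}(M)$, hence the nondegenerate $\mathcal A$-modules $M^P_c$ and $(M_U)^P_c$ coincide (the latter being $1_{\mathcal C_0}$- resp.\ $1_U$-generated inside the former, but generating the same $\mathcal A_{G/P}$-module since $\bigcup_V 1_V Y$ is the same for both), and so $\mathcal Z\otimes_{\mathcal A}M^P_c$ and the resulting $G$-equivariant sheaf on $G/P$ are identified. Finally I would remark that the comparison of the compact topologies on the sections $M\boxtimes\mathcal U$ is automatic, since on a standard $\mathcal U$ with $g\mathcal U\subset\mathcal C$ both are transported from $M_{g\mathcal U}=\Res(1_{g\mathcal U})(M^P)$, which is visibly the same subquotient of $M^P$ in either description.
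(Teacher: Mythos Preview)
Your plan has the right skeleton—Proposition~\ref{b} plus the intrinsic nature of $\alpha_g$—and this is exactly the paper's (three-sentence) proof. But one confusion in your setup manufactures a phantom obstacle.

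You write that ``the subspace $M_U=\Res(1_U)(M^P)\subset M^P$ is canonically $M_U^{P}$''. This is not correct: $M_U$ is the module of sections of the common sheaf over the compact open $U\subset\mathcal C$, whereas $(M_U)^P$ is, by Proposition~\ref{sections} applied to the \'etale $A[P_U]$-module $M_U$, the space of \emph{global} sections over $\mathcal C$ of the sheaf $\mathcal S_{M_U}$. Since $\mathcal S_{M_U}=\mathcal S_M$ by Proposition~\ref{b}, one has $(M_U)^P=M^P$ as $A[P]$-modules—an equality, not a proper inclusion—and topologically as well, since both carry the compact-open topology identified via Lemma~\ref{compact-open} with the $N$-model of the single sheaf. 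Consequently $E^{cont}$, the measure $\Res$, and the continuous maps $\alpha_g\colon g^{-1}\mathcal C\cap\mathcal C\to P\to E^{cont}$ are \emph{literally the same objects} in the two constructions.

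Once you see this, your ``main obstacle'' evaporates: there is no embedding $E^{cont}(M_U)\hookrightarrow E^{cont}(M)$ to chase limits through, only an equality, and the integrals $\int_{\mathcal C}\alpha_g f\,d\Res$ are the same integral in the same space against the same measure. The paper's proof is exactly this observation: the $P$-equivariant sheaves on the open cell coincide (Proposition~\ref{b}), the function $\alpha_g$ depends only on the $G$-action on $\mathcal C$, hence the formal construction outputs the same $\widetilde\Res$ and therefore the same $G$-equivariant sheaf on $G/P$. Everything you wrote about Riemann sums, closed embeddings, and traces of open submodules can be deleted.
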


\begin{proof}
As the $P$-equivariant sheaves  on the open cell associated to $(N_{0},M)$ and to $(U,M_{U})$ are equal by the proposition \ref{b}, and as the function $\alpha_{g}$ depends only on the open cell, our formal construction gives the same  $G$-equivariant sheaf.
\end{proof}

 \section{Integrating $\alpha$  when $M$ is non compact} \label{fc}

Recall that we have chosen a certain element   $s \in Z(L)$ such that $L=L_{-}s^{\mathbb Z}$ and  $(N_k = s^k N_0 s^{-k}) _{k \in \mathbb{Z}}$  is a decreasing sequence with union $N$ and trivial intersection.
{\sl We now suppose in addition that $(\overline{N}_k := s^{-k} w_0 N_0 w_0^{-1} s^k)_{k \in \mathbb{Z}}$  is a decreasing sequence with union $\overline{N} = w_0 N w_0^{-1}$ and trivial intersection}.

We have chosen $A$ and $M$  in section \ref{S4}. {\sl We suppose now in addition that
 $M$ is a topologically \'etale $A[P_{+}]$-module which is Hausdorff and complete.}

 \begin{definition} \label{mathfrakC}   A special family of compact sets  in $M$
is a family $\mathfrak{C}$ of compact subsets of $M$ satisfying :
\begin{itemize}
\item[$\mathfrak{C}(1)$] Any  compact subset of  a compact set in
$ \mathfrak{C}$  also lies in $\mathfrak{C}$.
\item[$\mathfrak{C}(2)$] If $C_1,C_2,\dots,C_n\in\mathfrak{C}$ then $\bigcup_{i=1}^nC_i$ is in
  $\mathfrak{C}$, as well.
\item[$\mathfrak{C}(3)$] For all $C\in\mathfrak{C}$ we have $N_0C\in\mathfrak{C}$.
\item[$\mathfrak{C}(4)$] $M(\mathfrak{C}):=\bigcup_{C\in\mathfrak{C}}C$ is an \'etale $A[P_+]$-submodule
of $M$.
\end{itemize}
\end{definition}

Note that  $M$ is the union of its compact subsets, and that the family  of all compact subsets of $M$ satisfies these four properties.

\bigskip
Let  $\mathfrak{C}$ be a special family of compact sets  in $M$. A map  from $M(\mathfrak{C})$ to $M$ is called $\mathfrak{C}$-continuous if its restriction to any
$C\in \mathfrak{C}$ is continuous.
We
equip the $A$-module $\Hom_A^{\mathfrak{C}ont}(M(\mathfrak{C}),M)$ of $\mathfrak{C}$-continuous $A$-linear homomorphisms from $M(\mathfrak{C})$ to $M$  with the $\mathfrak C$-open
topology. The $A$-submodules
$E(C,\mathcal{M}):=\{f\in\Hom_A^{\mathfrak{C}ont} (M(\mathfrak{C}),M)\colon f(C)\subseteq
\mathcal{M}\}$, for any $C\in\mathfrak{C}$ and any open $A$-submodule $\mathcal{M} \subseteq M$, form a fundamental system of open neighborhoods of zero in
$\Hom_A^{\mathfrak{C}ont} (M(\mathfrak{C}),M)$. Indeed, this system is   closed for
finite intersection by $\mathfrak{C}(2)$.
Since $N_0$ is compact the $E(C,\mathcal{M})$ for $C$ such that $N_0 C \subseteq C$ and $\mathcal{M}$ an $A[N_0]$-submodule still form a fundamental system of open neighborhoods of zero.
 (Lemma \ref{No} and $\mathfrak{C}(3)$).  We have:
\begin{itemize}
   \item[--]
 $\Hom_A^{\mathfrak{C}ont} (M(\mathfrak{C}),M)$ is a
 topological $A$-module.

  \item[--]   $\Hom_A^{\mathfrak{C}ont} (M(\mathfrak{C}),M)$ is Hausdorff,  since $\mathfrak{C}$ covers
$M(\mathfrak{C})$  by  $\mathfrak{C}(4)$ and $M$ is Hausdorff.

 \item[--] $\Hom_A^{\mathfrak{C}ont} (M(\mathfrak{C}),M)$ is complete (\cite{BTG} TG X.9 Cor.2).
\end{itemize}

\subsection{$(s,\res, \mathfrak{C})$-integrals}

We have the $P_+$-equivariant measure $\res : C^\infty(N_0,A) \longrightarrow \End_A^{cont}(M)$ on $N_0$. If $M$ is not compact then it is no longer possible to integrate any map in the $A$-module $C(N_0, \End_{A}^{cont}(M))$ of all continuous maps on $N_0$ with values in $\End_{A}^{cont}(M)$ against this measure. This forces us to introduce a notion of integrability with respect to a special family of compact sets in $M$.

\begin{definition} \label{integrability}A   map $F\colon N_0\to
  \Hom_A^{\mathfrak{C}ont} (M(\mathfrak{C}),M)$ is called integrable with respect to
  ($s$, $\res$, $\mathfrak{C}$)  if the limit
\begin{equation*}
    \int_{N_0} Fd\res := \lim_{k \rightarrow \infty} \sum_{u \in J(N_0/N_k)} F(u) \circ \res(1_{uN_k}) \ ,
\end{equation*}
where $J(N_0/N_k) \subseteq N_0$, for any $k \in \mathbb{N}$, is a set of representatives for the cosets in $N_0/N_k$, exists in $ \Hom_A^{\mathfrak{C}ont} (M(\mathfrak{C}),M)$ and is independent of the choice of the sets $J(N_0/N_k)$.
\end{definition}

We suppress $\mathfrak{C}$ from the notation when $\mathfrak{C}$ is the family of all compact subsets of  $M$.

\bigskip Note that we regard $\res(1_{uN_{k+1}})$ as an element in
$\End_A^{cont}(M(\mathfrak{C}))$. This makes sense as the algebraically \'etale submodule $M(\mathfrak{C})$ of the topologically \'etale module $M$ is topologically \'etale.

One easily sees that the set
    $C^{int}(N_0,  \Hom_A^{\mathfrak{C}ont} (M(\mathfrak{C}),M)) $ of integrable maps
is an $A$-module.
The $A$-linear map
\begin{equation*}
    \int_{N_0} . d\res : C^{int}(N_0,  \Hom_A^{\mathfrak{C}ont} (M(\mathfrak{C}),M)) \longrightarrow  \Hom_A^{\mathfrak{C}ont} (M(\mathfrak{C}),M)
\end{equation*}
will be called the $(s,\res, \mathfrak{C})$-integral.

\bigskip  We give now a general integrability criterion.
\begin{proposition}\label{integral}
A   map $F : N_0 \longrightarrow  \Hom_A^{\mathfrak{C}ont} (M(\mathfrak{C}),M)$ is   ($s$, $\res$, $\mathfrak{C}$)-integrable  if, for any compact subset $C
  \in\mathfrak{C}$ and any open $A$-submodule $\mathcal{M} \subseteq M$, there
  exists an integer $k_{C,\mathcal{M}} \geq 0$ such that
\begin{equation*}
    (F(u) - F(uv)) \circ \res(1_{uN_{k+1}}) \in E(C,\mathcal{M})
  \qquad\text{for any $k \geq k_{C,\mathcal{M}}$, $u \in N_0$, and $v \in
    N_k$}.
\end{equation*}

\end{proposition}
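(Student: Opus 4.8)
The plan is to show directly that the net of partial sums
\[
\Sigma_k(J) \ := \ \sum_{u \in J(N_0/N_k)} F(u) \circ \res(1_{uN_k}) \ \in \ \Hom_A^{\mathfrak{C}ont}(M(\mathfrak{C}),M)
\]
is Cauchy and that its limit does not depend on the chosen representative sets. Since $\Hom_A^{\mathfrak{C}ont}(M(\mathfrak{C}),M)$ is complete and the $A$-submodules $E(C,\mathcal{M})$, with $C\in\mathfrak{C}$ and $\mathcal{M}\subseteq M$ open, form a fundamental system of neighborhoods of zero, it suffices to prove the following single-step estimate: for every $C$ and $\mathcal{M}$, with $k_{C,\mathcal{M}}$ as in the hypothesis, one has $\Sigma_{k+1}(J')-\Sigma_k(J)\in E(C,\mathcal{M})$ for all $k\geq k_{C,\mathcal{M}}$ and all representative sets $J=J(N_0/N_k)$ and $J'=J(N_0/N_{k+1})$; here $J$ and $J'$ are allowed to be completely unrelated, which will later make the independence statement a formality.

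For the single-step estimate, first I would use that $\res$ is additive on finite disjoint unions of compact open subsets (Proposition~\ref{in}): for each $\bar u\in J$ the cosets $wN_{k+1}$ with $w\in J'$ and $wN_k=\bar uN_k$ partition $\bar uN_k$, so $\res(1_{\bar uN_k})=\sum_w\res(1_{wN_{k+1}})$. Substituting this into $\Sigma_k(J)=\sum_{\bar u\in J}F(\bar u)\circ\res(1_{\bar uN_k})$ and writing each $w\in J'$ as $w=\bar u_w v_w$ with $\bar u_w\in J$ its class representative and $v_w:=\bar u_w^{-1}w\in N_k$, I obtain
\[
\Sigma_{k+1}(J')-\Sigma_k(J) \ = \ \sum_{w\in J'}\bigl(F(w)-F(\bar u_w)\bigr)\circ\res(1_{wN_{k+1}}) \ .
\]
Now in the $w$-th term put $u:=w\in N_0$ and $v:=v_w^{-1}\in N_k$; then $uv=\bar u_w$ and $\res(1_{uN_{k+1}})=\res(1_{wN_{k+1}})$, so the term equals $(F(u)-F(uv))\circ\res(1_{uN_{k+1}})$, which lies in $E(C,\mathcal{M})$ for $k\geq k_{C,\mathcal{M}}$ by the hypothesis. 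As $E(C,\mathcal{M})$ is an $A$-submodule it is closed under the finite sum, and the single-step estimate follows.

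From here the argument is routine. Telescoping over intermediate (arbitrary) choices gives $\Sigma_m(J_m)-\Sigma_n(J_n)\in E(C,\mathcal{M})$ for all $m>n\geq k_{C,\mathcal{M}}$, so $(\Sigma_k(J_k))_k$ is Cauchy, hence convergent in $\Hom_A^{\mathfrak{C}ont}(M(\mathfrak{C}),M)$. For independence of the choices, given two families $(J_k)_k$ and $(J'_k)_k$ I would apply the single-step estimate to the pairs $(J_{k-1},J_k)$ and $(J_{k-1},J'_k)$ — using again that it holds for unrelated choices — to get $\Sigma_k(J_k)-\Sigma_k(J'_k)\in E(C,\mathcal{M})$ for $k>k_{C,\mathcal{M}}$, whence the two sequences have the same limit. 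One should also record at the outset that each $F(u)\circ\res(1_{uN_k})$ genuinely lies in $\Hom_A^{\mathfrak{C}ont}(M(\mathfrak{C}),M)$, since the submodule $M(\mathfrak{C})$ is topologically \'etale and $\res(1_{uN_k})$ restricts to a continuous endomorphism of it. The only delicate point is the bookkeeping of the single level of refinement: one must reindex the difference $F(w)-F(\bar u_w)$ so that it matches exactly the combination $F(u)-F(uv)$, $v\in N_k$, controlled by the hypothesis — which is precisely what the substitution $u=w$, $v=v_w^{-1}$ accomplishes.
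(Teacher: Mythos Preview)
Your proof is correct and essentially the same as the paper's. The paper separates the argument into two computations --- one showing $s_k(F)-s_{k+1}(F)\in E(C,\mathcal{M})$ for a fixed family of representatives, and one showing $s_{k+1}(F)-s'_{k+1}(F)\in E(C,\mathcal{M})$ for two different families at the same level --- whereas you combine both into a single estimate $\Sigma_{k+1}(J')-\Sigma_k(J)\in E(C,\mathcal{M})$ with $J,J'$ unrelated; this is a mild streamlining, but the underlying computation (refine, reindex so that the difference matches the hypothesis, use that $E(C,\mathcal{M})$ is a submodule) is identical.
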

\begin{proof}
Let $J(N_0/N_k)$ and $J'(N_0/N_k)$, for $k \geq 0$, be two choices of sets of representatives. We put
\begin{equation*}
    s_k(F) := \sum_{u \in J(N_0/N_k)} F(u) \circ \res(1_{uN_k}) \quad\text{and}\quad s'_k(F) := \sum_{u' \in J'(N_0/N_k)} F(u') \circ \res(1_{u'N_k}) \ .
\end{equation*}
Since $ \Hom_A^{\mathfrak{C}ont} (M(\mathfrak{C}),M)$ is Hausdorff and complete it suffices to show that, given any neighborhood of zero $E(C,\mathcal{M})$, there exists an integer $k_0 \geq 0$ such that
\begin{equation*}
    s_k(F) - s_{k+1}(F),\, s_k(F) - s'_k(F) \in E(C,\mathcal{M}) \qquad\text{for any $k \geq k_0$}.
\end{equation*}
For $u \in J(N_0/N_{k+1})$ let $\bar{u} \in J(N_0/N_k)$ and $u' \in J'(N_0/N_{k+1})$ be the unique elements such that $uN_k = \bar{u} N_k$ and $uN_{k+1} = u'N_{k+1}$, respectively. Then
\begin{equation*}
    s_k(F) = \sum_{u \in J(N_0/N_{k+1})} F(\bar{u}) \circ \res(1_{uN_{k+1}})
\end{equation*}
and hence
\begin{equation}\label{Delta}
    s_k(F) - s_{k+1}(F) = \sum_{u \in J(N_0/N_{k+1})} (F(u(u^{-1}\bar{u})) - F(u)) \circ \res(1_{uN_{k+1}}) \ .
\end{equation}
Since $u^{-1}\bar{u} \in N_k$ it follows from our assumption  that the right hand side lies in $E(C,\mathcal{M})$ for $k \geq k_{C,\mathcal{M}}$. Similarly
\begin{equation*}
    s_{k+1}(F) - s'_{k+1}(F) = \sum_{u \in J(N_0/N_{k+1})} (F(u) - F(u(u^{-1}u'))) \circ \res(1_{uN_{k+1}}) \ ;
\end{equation*}
again, as $u^{-1}u' \in N_{k+1} \subseteq N_k$, the right hand sum is contained in $E(C,\mathcal{M})$ for $k \geq k_{C,\mathcal{M}}$.
\end{proof}

\subsection{Integrability criterion for $\alpha$}\label{alpha0}

Let $U_g \subseteq N_0$ be the compact open subset such that $U_g w_0 P/P = g^{-1} \mathcal{C}_0 \cap \mathcal{C}_0$. This intersection is nonempty if and only if $g \in N_0 \overline{P} N_0$, which we therefore assume in the following. We consider the map
\begin{align*}
    \alpha_{g,0} : N_0 & \longrightarrow \End_A^{cont}(M) \\
    u & \longmapsto
     \begin{cases}
     \Res(1_{N_0}) \circ \alpha_g(x_u) \circ \Res(1_{N_0}) & \text{if $u \in U_g$}, \\
     0 & \text{otherwise}
     \end{cases}
\end{align*}
(where we identify $\End_A^{cont}(M)$ with its image in $E^{cont}$ under the natural embedding \eqref{defs}  using that $\Res(1_{N_0})=\sigma_{0} \circ \ev _{0}$). Restricting $\alpha_{g,0}(u)\in \End_{A}^{cont}(M)$ (defined in section \ref{alpha0}) to $M(\mathfrak{C})$ for any $u\in N_0$
we may view $\alpha_{g,0}$ as a map from $N_0$ to
$\End_A^{cont}(M(\mathfrak{C}))$ since $M(\mathfrak{C})$ is an \'etale
$A[P_+]$-submodule of $M$. However, as we do not assume $M(\mathfrak{C})$ to
be complete, it will be more convenient for the purpose of integration to
regard $\alpha_{g,0}$ as a map into $\Hom_A^{\mathfrak{C}ont} (M(\mathfrak{C}),M)$.
We want to establish a criterion for the $(s,\res, \mathfrak{C})$-integrability of the map $\alpha_{g,0}$.

By the argument in the proof of lemma \ref{covering} (applied to $V = g^{-1}\mathcal{C}_0 \cap \mathcal{C}_0$) we may choose an integer $k_g^{(0)} \geq 0$ such that, for any $k \geq k_g^{(0)}$, we have $U_g N_k \subseteq U_g$ and
\begin{equation}\label{f:N0}
    \alpha(g,x_u) uN_{k} \subseteq  gU_{g} \qquad\text{for any $u \in U_g$}.
\end{equation}

\begin{lemma}\label{alpha-int}
 For $u \in U_g$ and $k \geq k_g^{(0)}$ we have
\begin{equation*}
    \alpha_{g,0}(u) \circ \res(1_{uN_k}) = \alpha(g,x_u) \circ \Res(1_{uN_k}) \ .
\end{equation*}
 \end{lemma}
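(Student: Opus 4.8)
The plan is to unwind both sides of the asserted identity in the algebra $E^{cont}$, using the explicit description of $\alpha_{g,0}$ and the multiplicativity and $P$-equivariance of $\Res$. First I would recall that, by the very definition of $\alpha_{g,0}$, for $u\in U_g$ we have
$$
\alpha_{g,0}(u) = \Res(1_{N_0}) \circ \alpha_g(x_u) \circ \Res(1_{N_0}) \ ,
$$
where $\alpha_g(x_u) = \alpha(g,x_u)$ is viewed inside $E^{cont}$ via the continuous action of $P$ on $M^P$ (which makes sense here because $M$, hence $M^P$, is compact in the setting of this subsection). Since $\Res(1_{N_0})$ is an idempotent and $\res(1_{uN_k}) = \Res(1_{uN_k})$ factors as $\Res(1_{uN_k}) = \Res(1_{N_0})\circ \Res(1_{uN_k})$ because $uN_k \subseteq N_0$ for $k\ge k_g^{(0)}$ (one of the conclusions of the argument from Lemma \ref{covering} we are invoking), we get
$$
\alpha_{g,0}(u) \circ \res(1_{uN_k}) = \Res(1_{N_0}) \circ \alpha(g,x_u) \circ \Res(1_{N_0}) \circ \Res(1_{uN_k}) = \Res(1_{N_0}) \circ \alpha(g,x_u) \circ \Res(1_{uN_k}) \ .
$$

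The key step is then to move the left-hand idempotent $\Res(1_{N_0})$ past $\alpha(g,x_u)$. By $P$-equivariance of $\Res$ (Proposition \ref{Res}) we have the conjugation rule $\alpha(g,x_u) \circ \Res(1_V) = \Res(\alpha(g,x_u).V) \circ \alpha(g,x_u)$ for any compact open $V\subseteq N$. Applying this with $V = uN_k$ gives
$$
\alpha(g,x_u) \circ \Res(1_{uN_k}) = \Res(1_{\alpha(g,x_u)uN_k}) \circ \alpha(g,x_u) \ .
$$
Now the condition $\alpha(g,x_u)uN_k \subseteq gU_g \subseteq gN_0$ from \eqref{f:N0} shows that $\alpha(g,x_u)uN_k$ is contained in $gN_0$, which in the $N$-model is disjoint from $N_0$ unless it meets it; more to the point, I want $\Res(1_{N_0}) \circ \Res(1_{\alpha(g,x_u)uN_k}) = \Res(1_{N_0 \cap \alpha(g,x_u)uN_k})$, and the containment \eqref{f:N0} together with $\alpha(g,x_u)uN_k \subseteq gN_0$ forces $N_0 \cap \alpha(g,x_u)uN_k = \alpha(g,x_u)uN_k$ (since $\alpha(g,x_u)uw_0P/P \in \mathcal{C}_0$, i.e. $\alpha(g,x_u)u \in N_0$ by \eqref{f:alpha}, and $N_k\subseteq N_0$, so actually $\alpha(g,x_u)uN_k \subseteq N_0$ directly). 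Hence $\Res(1_{N_0}) \circ \Res(1_{\alpha(g,x_u)uN_k}) = \Res(1_{\alpha(g,x_u)uN_k})$, and reversing the conjugation rule,
$$
\Res(1_{N_0}) \circ \alpha(g,x_u) \circ \Res(1_{uN_k}) = \Res(1_{\alpha(g,x_u)uN_k}) \circ \alpha(g,x_u) = \alpha(g,x_u) \circ \Res(1_{uN_k}) \ .
$$
Combining with the first display yields $\alpha_{g,0}(u)\circ\res(1_{uN_k}) = \alpha(g,x_u)\circ\Res(1_{uN_k})$, which is the claim, writing $\alpha_g(x_u)$ for $\alpha(g,x_u)$.

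The main obstacle I anticipate is bookkeeping the precise supports and making sure the idempotent absorptions are legitimate — in particular verifying that $\alpha(g,x_u)uN_k\subseteq N_0$ (so that the $\Res(1_{N_0})$ on the left is harmless once $\alpha(g,x_u)$ has been commuted past it) rather than merely $\subseteq gU_g$. This is exactly where the choice of $k_g^{(0)}$ via \eqref{f:N0} and the factorization $\alpha(g,x_u)u = n(g,u)t(g,u)$ of \eqref{f:alpha} (with $n(g,u)\in N$, and in fact $n(g,u)\in N_0$ since $x_u, gx_u \in \mathcal{C}_0$) enter; one should check that $U_g N_k \subseteq U_g \subseteq N_0$ and that conjugation by $t(g,u)$ keeps things inside $N_0$ for $k$ large. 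Once those inclusions are pinned down, the rest is a routine manipulation of the orthogonal idempotents $\Res(1_V)$ granted by Corollary \ref{Rr} and the $P$-equivariance of $\Res$.
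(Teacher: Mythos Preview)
Your proposal is correct and follows essentially the same approach as the paper: expand $\alpha_{g,0}(u)$ by definition, use the $P$-equivariance of $\Res$ to commute $\alpha(g,x_u)$ past $\Res(1_{uN_k})$, and absorb the outer $\Res(1_{N_0})$ using the inclusion \eqref{f:N0}. The paper runs the same chain of equalities in the reverse direction, starting from $\alpha(g,x_u)\circ\Res(1_{uN_k})$.

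Two minor points of cleanup. First, your parenthetical about compactness is misplaced: this lemma sits in the non-compact setting of section~\ref{fc}, and you only need that $P$ acts on $M^P$ (Proposition~\ref{cont2}(ii)), not that $M$ is compact. Second, your justification of $\alpha(g,x_u).uN_k\subseteq N_0$ is slightly muddled. The ``direct'' argument via $\alpha(g,x_u).u\in N_0$ and $N_k\subseteq N_0$ does not quite work, since $\alpha(g,x_u).(uv)=(\alpha(g,x_u).u)\,t(g,u)vt(g,u)^{-1}$ and $t(g,u)$ need not lie in $L_+$. The clean reason is the one you already cite: by \eqref{f:N0} one has $\alpha(g,x_u).uN_k\subseteq gU_g$, and under the identification $N\cong\mathcal{C}$ the set $gU_g$ corresponds to $g(g^{-1}\mathcal{C}_0\cap\mathcal{C}_0)=\mathcal{C}_0\cap g\mathcal{C}_0\subseteq\mathcal{C}_0$, hence to a subset of $N_0$. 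This is exactly how the paper uses \eqref{f:N0}.
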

\begin{proof}
 Using the $P$-equivariance of $\Res$ we have
\begin{align*}
    \alpha(g,x_u) \circ \Res(1_{uN_k}) & = \Res(1_{\alpha(g,x_u).uN_k}) \circ \alpha(g,x_u) \circ \Res(1_{uN_k}) \\
    & = \Res(1_{N_0}) \circ  \Res(1_{\alpha(g,x_u).uN_k}) \circ \alpha(g,x_u) \circ \Res(1_{uN_k}) \\
    & = \Res(1_{N_0}) \circ \alpha(g,x_u) \circ \Res(1_{N_0}) \circ \Res(1_{uN_k}) \\
    & = \alpha_{g,0}(u) \circ \res(1_{uN_k})
\end{align*}
where the second identity follows from \eqref{f:N0}.
\end{proof}
For $u \in U_g$ and $k \geq k_g^{(0)}$ we put
\begin{equation}\label{Hgk}
  \mathcal H_{g, J(N_0/N_k) }:=\sum_{u \in U_g \cap J(N_0/N_k)} \alpha(g,x_u) \circ \Res(1_{uN_k}) \ .
\end{equation}
By Lemma \ref{alpha-int}, each summand on the right hand side belongs to    $\End_A(M(\mathfrak{C}))$.   If $\alpha_{g,0}$ is $(s,\res, \mathfrak{C})$-integrable,  the limit
\begin{equation}\label{Hgl}
\mathcal H_{g}:= \lim_{k \geq k_g^{(0)}, k \rightarrow \infty} \mathcal H_{g, J(N_0/N_k) }
\end{equation}
 exists in $\Hom_A^{\mathfrak{C}ont} (M(\mathfrak{C}),M)$ and  is equal to the $(s,\res, \mathfrak{C})$-integral of $\alpha_{g,0}$
   \begin{equation}\label{Hg}
 \int_{N_0} \alpha_{g,0} d\res =\mathcal H_{g} \ .
\end{equation}

We investigate the integrability criterion of Prop. \ref{integral} for the function $\alpha_{g,0}$. We have to consider the elements
 \begin{equation}\label{Deltag}
   \Delta_g(u,k,v)   := (\alpha_{g,0}(u) - \alpha_{g,0}(uv)) \circ \res(1_{uN_{k+1}})  \ ,
\end{equation}
for $u \in U_g$, $k \geq k_g^{(0)}$, and $v \in N_k$. By Lemma \ref{alpha-int}, we have
\begin{align*}
    \Delta_g(u,k,v)   & = (\alpha_{g,0}(u) \circ \res(1_{uN_k}) - \alpha_{g,0}(uv) \circ \res(1_{uvN_k})) \circ \res(1_{uN_{k+1}}) \\
    & = (\alpha(g,x_u) \circ \Res(1_{uN_k}) - \alpha(g,x_{uv}) \circ \Res(1_{uvN_k})) \circ \Res(1_{uN_{k+1}}) \\
    & = (\alpha(g,x_u) - \alpha(g,x_{uv})) \circ \Res(1_{uN_{k+1}}) \\
    & = (\alpha(g,x_u) - \alpha(g,x_{uv})) \circ u \circ \Res(1_{N_{k+1}}) \circ u^{-1}
\end{align*}
Recall that $N_{g}\subset N$ is  the subset such that $N_g w_0 P/P = g^{-1} \mathcal{C} \cap \mathcal{C} $.

\begin{lemma}\label{u-uv}
For $u \in N_g$ and $v \in N$ such that $uv \in N_g$ we have:
\begin{itemize}
  \item[i.] $v \in N_{\bar{n}(g,u)}$;
  \item[ii.] $\alpha(g,x_{uv}) = \alpha(g,x_u) u \alpha(\bar{n}(g,u),x_v) u^{-1}$.
\end{itemize}
\end{lemma}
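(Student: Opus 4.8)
The plan is to establish (i) and (ii) in that order, since the term $\alpha(\bar{n}(g,u),x_v)$ appearing in (ii) is only defined once (i) is known. Throughout I will use freely the identity $\mathcal{C} = Nw_0P/P = Pw_0P/P$ (from $Pw_0P = Nw_0P$) and the factorisation \eqref{f:alpha}, namely $gu = \alpha(g,x_u)\,u\,\bar{n}(g,u)$ with $\bar{n}(g,u) \in \overline{N}$, which is legitimate because $u \in N_g$.

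For (i): since $uv \in N_g$, the coset $x_{uv} = uvw_0P/P$ lies in $g^{-1}\mathcal{C}\cap\mathcal{C}$, so in particular $g\cdot uvw_0P \in Nw_0P = Pw_0P$. Multiplying the factorisation $gu = \alpha(g,x_u)\,u\,\bar{n}(g,u)$ on the right by $vw_0$ gives $g\cdot uvw_0 = \alpha(g,x_u)\,u\,\bar{n}(g,u)\,v\,w_0$, hence $\alpha(g,x_u)\,u\,\bar{n}(g,u)\,vw_0P \in Pw_0P$. Because $\alpha(g,x_u) \in P$ and $u \in N \subset P$, I conclude that $\bar{n}(g,u)\,vw_0P \in Pw_0P$, i.e. $x_v = vw_0P/P \in \bar{n}(g,u)^{-1}\mathcal{C}$; since trivially $x_v \in \mathcal{C}$, this is exactly the statement $v \in N_{\bar{n}(g,u)}$. (By Remark \ref{dense} the set $N_{\bar{n}(g,u)}$ is in any case a nonempty open subset of $N$, so the notation makes sense.)

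For (ii): now that (i) holds, $\alpha(\bar{n}(g,u),x_v)$ is defined and, by its defining property, $\bar{n}(g,u)\,vw_0N = \alpha(\bar{n}(g,u),x_v)\,vw_0N$. Substituting this into $g\cdot uvw_0 = \alpha(g,x_u)\,u\,\bar{n}(g,u)\,vw_0$ yields
\[
 g\cdot uvw_0N \;=\; \alpha(g,x_u)\,u\,\alpha(\bar{n}(g,u),x_v)\,vw_0N \;=\; \bigl(\alpha(g,x_u)\,u\,\alpha(\bar{n}(g,u),x_v)\,u^{-1}\bigr)\,uvw_0N .
\]
The element in parentheses lies in $P$, being a product of elements of $P$ (recall $u, u^{-1} \in N \subset P$). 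Since $uv \in N_g$, the element $\alpha(g,x_{uv})$ is characterised by $g\cdot uvw_0N = \alpha(g,x_{uv})\,uvw_0N$ together with membership in $P$; uniqueness therefore forces $\alpha(g,x_{uv}) = \alpha(g,x_u)\,u\,\alpha(\bar{n}(g,u),x_v)\,u^{-1}$, which is (ii).

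The proof is essentially bookkeeping, and the only genuine point to watch is that every instance of $\alpha(\cdot,\cdot)$ appearing in the computation is actually defined on the relevant cell — which is why (i) must precede (ii), and why the equality $\mathcal{C} = Pw_0P/P$ is used repeatedly. One could alternatively deduce (ii) from the cocycle relation of Lemma \ref{prod!} applied to the factorisation $g = \alpha(g,x_u)\,\bigl(u\,\bar{n}(g,u)\,u^{-1}\bigr)$ together with Lemma \ref{c3}(iii), but the direct argument above seems the most transparent.
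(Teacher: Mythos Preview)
Your proof is correct and follows essentially the same approach as the paper's own argument: both parts use the factorisation $gu = \alpha(g,x_u)\,u\,\bar{n}(g,u)$, multiply through by $vw_0$, and reduce (i) to showing $\bar{n}(g,u)vw_0P \in Pw_0P$ and (ii) to the uniqueness of $\alpha(g,x_{uv})$ via the bijection $Pw_0N/N \cong P$. Your write-up for (i) is arguably a touch cleaner since it avoids explicitly invoking $\alpha(g,x_{uv})$ at that stage, but the two proofs are otherwise the same.
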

\begin{proof}
i. Because of $gu = \alpha(g,x_u)u\bar{n}(g,u)$ we have
\begin{equation*}
    \alpha(g,x_u)u\bar{n}(g,u)v = guv \in \alpha(g,x_{uv})uv \overline{N}
\end{equation*}
and hence
\begin{equation*}
    \bar{n}(g,u)vw_0P = u^{-1}\alpha(g,x_u)^{-1} \alpha(g,x_{uv}) uv w_0 P \in Pw_0 P  \ .
\end{equation*}
ii. By i. the equation $\bar{n}(g,u)vw_0N = \alpha (\bar{n} (g,u), x_v)vw_0N$ holds. Hence
\begin{equation*}
    guvw_0N = \alpha(g,x_u)u \bar{n}(g,u)vw_0N = \alpha(g,x_u)u \alpha (\bar{n} (g,u), x_v)vw_0N
\end{equation*}
and therefore $\alpha(g,x_{uv})uv = \alpha(g,x_u)u \alpha (\bar{n} (g,u), x_v)v$.
\end{proof}

We deduce that
\begin{equation*}
    \Delta_g(u,k,v) = \alpha(g,x_u) \circ u \circ (1 - \alpha(\bar{n}(g,u),v)) \circ \Res(1_{N_{k+1}}) \circ u^{-1} \ .
\end{equation*}
For $u \in U_g$ we have
\begin{equation*}
    \alpha(g,x_u)u =  n (g,u)t(g,u) \qquad\text{with $ n (g,u) \in N_0$ and $t(g,u) \in L$},
\end{equation*}
hence   $ \Delta_g(u,k,v) $ is contained in
\begin{equation*}
    N_0 t(g,u)  (1 -  \alpha(\bar{n}(g,u),v) ) \circ \Res(1_{N_{k+1}}) \circ N_0 \ .
\end{equation*}
Since $t(g,U_g) \subseteq L$ and $\bar{n}(g,U_g) \subseteq \overline{N}$ are compact subsets we may choose a $k_g^{(1)} \geq k_g^{(0)}$ such that
\begin{equation}\label{kg1}
    \Lambda_g := t(g,U_g)s^{k_g^{(1)}} \subseteq L_+ \qquad\text{and}\qquad \bar{n}(g,U_g) \subseteq \overline{N}_{-k_g^{(1)}} \ .
\end{equation}
 Writing $t(g,u)= s^{k-k_g^{(1)}} t(g,u)s^{k_g^{(1)}}s^{-k} \subseteq s^{k-k_g^{(1)}} \Lambda_{g} s^{-k}$ we then obtain that $\Delta_g(u,k,v)$ is contained in
\begin{equation*}
    N_0 s^{k-k_g^{(1)}} \big(1 - s^{-(k-k_g^{(1)})}t(g,u)\alpha(\bar{n}(g,u),v)t(g,u)^{-1}s^{k-k_g^{(1)}} \big)\Lambda_g s^{-k} \circ \Res(1_{N_{k+1}}) \circ N_0 \
\end{equation*}
when $k\geq k_g^{(1)}$.  We define
\begin{equation*}
    P_{g,k} := \{ s^{-(k-k_g^{(1)})}t(g,u)\alpha(\bar{n}(g,u),v)t(g,u)^{-1}s^{k-k_g^{(1)}} : u \in U_g, v \in N_k \}
\end{equation*}
which is a subset of $P$.

\begin{lemma}\label{compact}
For any compact open subgroup $P_1 \subseteq P_0$ there is an integer $k_g^{(2)}(P_1) \geq k_g^{(1)}$ such that
\begin{equation*}
    P_{g,k} \subseteq P_1 \qquad\text{for any $k \geq k_g^{(2)}(P_1)$}.
\end{equation*}
\end{lemma}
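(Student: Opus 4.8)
The plan is to show that as $k\to\infty$ the conjugated elements $s^{-(k-k_g^{(1)})}t(g,u)\alpha(\bar n(g,u),v)t(g,u)^{-1}s^{k-k_g^{(1)}}$ are forced into $P_1$ uniformly in $u\in U_g$ and $v\in N_k$, by first controlling $\alpha(\bar n(g,u),v)$ and then using the contracting action of conjugation by $s$. First I would recall that by \eqref{f:alpha} applied with $\bar n(g,u)$ in place of $g$ we have, for $u\in U_g$ and $v\in N_k\subseteq N_{\bar n(g,u)}$ (the latter inclusion by Lemma \ref{u-uv}.i together with $\bar n(g,U_g)\subseteq\overline N_{-k_g^{(1)}}$ from \eqref{kg1}), an identity $\bar n(g,u)v=\alpha(\bar n(g,u),x_v)v\,\bar m$ with $\bar m\in\overline N$; as $v\to 1$ in $N$, i.e.\ as $k\to\infty$, continuity of the map $\alpha$ on $\overline N_{-k_g^{(1)}}\times N$ (Lemma \ref{c3}(ii) applied with the compact $\overline N_{-k_g^{(1)}}$, or more precisely the continuity statement \eqref{c5}) together with $\alpha(\bar n,1\cdot w_0P)=\bar n\in\overline N$ shows that $\alpha(\bar n(g,u),v)$ lies in a prescribed small neighborhood of $\overline N$ once $k$ is large, uniformly for $u\in U_g$ since $\bar n(g,U_g)$ is compact.

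Next I would write $\alpha(\bar n(g,u),v)\in N\rtimes L$ and track its two components. The key point is that, because $v\in N_k$ and $\bar n(g,u)$ ranges over a fixed compact subset of $\overline N_{-k_g^{(1)}}$, the element $\alpha(\bar n(g,u),v)$ approaches the corresponding $\bar n(g,u)$-conjugation data; concretely one shows $t(g,u)\alpha(\bar n(g,u),v)t(g,u)^{-1}$ lies, for $k$ large, in a fixed compact neighborhood $Q$ of $P_0$ in $P$, uniformly in $u$ and $v$, using that $t(g,U_g)\subseteq L$ is compact and that conjugation $P\times P\to P$ is continuous. Then I invoke the standing hypothesis that $(N_j=s^jN_0s^{-j})_j$ and $(\overline N_j)_j$ are decreasing with trivial intersection, which gives that conjugation by $s^{-m}$ contracts $Q$ into the given open subgroup $P_1\subseteq P_0$: more precisely, writing $Q\subseteq N_{-r}\rtimes L_Q$ for suitable $r$ and using $L=L_-s^{\mathbb Z}$, for every compact open subgroup $P_1\subseteq P_0$ there is an $m_0$ with $s^{-m}Qs^{m}\subseteq P_1$ for all $m\ge m_0$ — this is exactly the statement that the $N_j$ shrink to $1$, applied after absorbing the (bounded) $L$-part. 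Setting $k_g^{(2)}(P_1):=k_g^{(1)}+m_0$ and taking $m=k-k_g^{(1)}$ then yields $P_{g,k}\subseteq P_1$ for all $k\ge k_g^{(2)}(P_1)$.

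I would carry out the steps in this order: (1) fix $P_1$; (2) use compactness of $\bar n(g,U_g)$ and $t(g,U_g)$ plus continuity of $\alpha$ and of conjugation to find a fixed compact set $Q\supseteq P_0$ containing all $t(g,u)\alpha(\bar n(g,u),v)t(g,u)^{-1}$ for $k\ge k_g^{(1)}$; (3) shrink $v$: show that as $k\to\infty$, $\alpha(\bar n(g,u),v)\to\overline N$-part so that actually $t(g,u)\alpha(\bar n(g,u),v)t(g,u)^{-1}$ lies in a neighborhood of the conjugates $t(g,u)\bar n(g,u)t(g,u)^{-1}$, but since we only need containment in $Q$ this refinement is not strictly necessary; (4) apply the contracting property of $s^{-m}(\cdot)s^m$ on $Q$. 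The main obstacle I expect is step (2)–(3): one must be careful that $\alpha(\bar n(g,u),v)$ does not have an unboundedly large $N$-component as $\bar n(g,u)$ ranges over its compact set and $v$ ranges over $N_k$ — this is controlled because $\bar n(g,u)\in\overline N_{-k_g^{(1)}}$ is a \emph{fixed} compact set (independent of $k$) and $v\in N_k$ shrinks, so the relevant Bruhat-type identity $\bar n v=\alpha(\bar n,x_v)v\bar m$ keeps $\alpha(\bar n,x_v)$ in a fixed compact set; once that uniform boundedness is in hand, the contraction by $s$ finishes the argument cleanly.
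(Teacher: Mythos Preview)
Your step (4) has the direction of contraction backwards, and this is fatal. For $Q\subseteq P=N\rtimes L$ with, say, $Q\subseteq N_{-r}\rtimes L_Q$, conjugation by $s^{-m}$ gives $s^{-m}N_{-r}s^{m}=N_{-r-m}$, which \emph{expands} as $m\to\infty$ (recall $N_k=s^kN_0s^{-k}$ is decreasing in $k$), while on the $L$-factor it does nothing since $s\in Z(L)$. So there is no $m_0$ with $s^{-m}Qs^{m}\subseteq P_1$ in general. Merely bounding $t(g,u)\alpha(\bar n(g,u),v)t(g,u)^{-1}$ in a fixed compact $Q$ therefore cannot conclude the argument: you would need a \emph{quantitative} estimate showing that $\alpha(\bar n(g,u),v)$ shrinks fast enough (as $v\in N_k$, $k\to\infty$) to beat the expansion by $s^{-(k-k_g^{(1)})}$, and your outline supplies no such estimate. (There is also a smaller error: $\alpha(\bar n,w_0P)=1$, not $\bar n$, since $\alpha$ takes values in $P$ and $\bar n w_0N=w_0N$ forces $\alpha(\bar n,w_0P)\in P\cap\overline N=\{1\}$.)

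The paper's proof avoids this trap by exploiting the \emph{other} contraction: conjugation by $s^{-1}$ contracts $\overline N$ (this is the standing hypothesis at the start of the section that $(\overline N_k)_k$ is decreasing with trivial intersection). One first replaces $P_1$ by a smaller $P_2$ absorbing the bounded conjugation by $s^{k_g^{(1)}}t(g,u)u'$, reducing to the set $P'_{g,k}=\{(s^{-k}v^{-1}s^k)\,s^{-k}\alpha(\bar n(g,u),v)s^k\,(s^{-k}vs^k)\}$. Then one does \emph{not} estimate $\alpha(\bar n,v)$ directly in $P$; instead, from the defining identity $\bar n(g,u)v\,\overline N=\alpha(\bar n(g,u),v)v\,\overline N$ one gets
\[
P'_{g,k}\cdot\overline N\ \subseteq\ \bigcup_{u'\in N_0} u'\,\overline N_{\,k-k_g^{(1)}}\,u'^{-1}\cdot\overline N,
\]
using that $s^{-k}\bar n(g,u)s^k\in\overline N_{\,k-k_g^{(1)}}$ and $s^{-k}vs^k\in N_0$. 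Since $N_0$ is compact and $\overline N_{\,k-k_g^{(1)}}\to\{1\}$, the right-hand side is eventually contained in the open neighbourhood $P_2\times\overline N$ of $\overline N$ in $P\overline N$, forcing $P'_{g,k}\subseteq P_2$. The point is that the needed shrinking happens on the $\overline N$-side after passing to the quotient $P\overline N/\overline N$; your proposed contraction on the $N$-side of $P$ goes the wrong way.
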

\begin{proof}
By compactness of the set $\{s^{k_g^{(1)}}t(g,u)u' : u \in U_g, u' \in N_0\}$ we find an open subgroup $P_2 \subseteq P_1$ such that
\begin{equation*}
    s^{k_g^{(1)}}t(g,u)u' P_2 u'^{-1} t(g,u)^{-1} s^{-k_g^{(1)}} \subseteq P_1 \qquad\text{for any $u \in U_g$ and $u' \in N_0$}.
\end{equation*}
Hence we may replace in the assertion $P_1$ by $P_2$ and $P_{g,k}$ by
\begin{equation*}
    P'_{g,k} := \{ (s^{-k}v^{-1}s^k) s^{-k} \alpha(\bar{n}(g,u),v) s^k (s^{-k}v s^k) : u \in U_g, v \in N_k \}.
\end{equation*}
If we multiply the identity
\begin{equation*}
    \bar{n}(g,u)v \overline{N} = \alpha(\bar{n}(g,u),v) v \overline{N}
\end{equation*}
from the left by $v^{-1}$ and conjugate by $s^{-k}$ then we obtain
\begin{align*}
    (s^{-k}v^{-1}s^k)s^{-k}\alpha(\bar{n}(g,u),v)s^k (s^{-k}v s^k) \overline{N} & = (s^{-k}v^{-1}s^k) s^{-k} \bar{n}(g,u) s^k (s^{-k}vs^k) \overline{N} \\
     & \subseteq (s^{-k}v^{-1}s^k) \overline{N}_{k - k_g^{(1)}} (s^{-k}vs^k)  \overline{N}
\end{align*}
and hence
\begin{equation*}
    P'_{g,k} \times \overline{N} \subseteq \bigcup_{u' \in N_0} u' \overline{N}_{k - k_g^{(1)}} u'^{-1}  \overline{N} \ .
\end{equation*}
Since $N_0$ is compact and $P_2 \times \overline{N}$ is an open neighborhood of the unit element in $G$ we find an integer $k_g^{(2)}(P_2) \geq k_g^{(1)}$ such that
\begin{equation*}
    \bigcup_{u' \in N_0} u' \overline{N}_{k - k_g^{(1)}} u'^{-1}  \overline{N} \subseteq P_2 \times \overline{N} \qquad\text{for any $k \geq k_g^{(2)}(P_2)$}.
\end{equation*}
It follows that $P'_{g,k} \subseteq P_2$ for any $k \geq k_g^{(2)}(P_2)$.
\end{proof}

This result says that for $k \geq k_g^{(2)}(P_1)$ we have
\begin{equation*}
    \{\Delta_g(u,k,v) : u \in U_g, v \in N_k\} \subseteq N_0 s^{k-k_g^{(1)}} (1- P_1) \Lambda_g s^{-k} \circ \Res(1_{N_{k+1}}) \circ N_0 \ .
\end{equation*}
Using lemma \ref{equ} we finally observe that
\begin{equation*}
    s^{-(k+1)} \circ \Res(1_{N_{k+1}}) = \Res(1_{N_0}) \circ s^{-(k+1)} = \sigma_0 \circ \ev_0 \circ s^{-(k+1)} = \sigma_0 \circ \psi^{k+1} \circ \ev_0
\end{equation*}
is the image in $\End_{A}^{cont}(M^{P})$ of $\psi^{k+1} \in \End_A^{cont}(M)$. We therefore conclude that,  for any compact open subgroup $P_1 \subseteq P_0$ and for $k \geq k_g^{(2)}(P_1)$, we have
\begin{equation}\label{delta}
    \{\Delta_g(u,k,v) : u \in U_g, v \in N_k\} \subseteq N_0 s^{k-k_{g}^{(1)}} (1- P_1) \Lambda_g s \psi^{k+1} N_0
\end{equation}
in  $\End_A^{cont} (M )$. This leads to an integrability criterion for $\alpha_{g,0}$, which depends only on $(s,M, \mathfrak C)$.

 \begin{proposition}\label{criterion}
 We suppose that $(s,M, \mathfrak C)$ satisfies:

\begin{itemize}
\item[$\mathfrak{C}(5)$]    For any $C\in\mathfrak{C}$ the compact subset
  $ \psi(C)\subseteq M$ also lies in $\mathfrak{C}$.

\item[$\mathfrak{T}(1)$]   For any special compact
subset $C\in\mathfrak{C}$ such that $C=N_{0}C$, any open $A[N_0]$-submodule $\mathcal{M}$ of $M$, and any
compact subset $C_+ \subseteq L_+$ there exists a compact open subgroup $P_{1}=P_1(C,\mathcal{M},C_+)
\subseteq P_0$ and an integer $k(C,\mathcal{M},C_+) \geq 0$ such that
\begin{equation}
s^{k }(1-P_1)C_+ \psi^{k} \subseteq E(C,\mathcal M) \qquad\text{for any $k \geq k(C,\mathcal{M},C_+)$} \ .
\end{equation}
\end{itemize}
Then the map $\alpha_{g,0}\colon N_0\to
  \Hom_A^{\mathfrak{C}ont} (M(\mathfrak{C}),M)$ is   ($s$, $\res$, $\mathfrak{C}$)-integrable
 for all  $g\in N_{0}\overline P N_{0}$.

\end{proposition}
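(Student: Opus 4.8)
The plan is to verify the hypothesis of Proposition~\ref{integral} for the map $F=\alpha_{g,0}$, using the explicit description of the differences $\Delta_g(u,k,v)$ obtained in the preceding computation together with the new axioms $\mathfrak{C}(5)$ and $\mathfrak{T}(1)$. Concretely, fix a compact subset $C\in\mathfrak{C}$ and an open $A$-submodule $\mathcal{M}\subseteq M$. By the discussion following Lemma~\ref{No} and by $\mathfrak{C}(3)$ we may assume that $C=N_0C$ and that $\mathcal{M}$ is an $A[N_0]$-submodule; this is harmless because the $E(C,\mathcal{M})$ with such $C,\mathcal{M}$ still form a fundamental system of neighbourhoods of zero in $\Hom_A^{\mathfrak{C}ont}(M(\mathfrak{C}),M)$. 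We must then produce an integer $k_{C,\mathcal{M}}$ such that $\Delta_g(u,k,v)\in E(C,\mathcal{M})$ for all $k\geq k_{C,\mathcal{M}}$, all $u\in N_0$ and all $v\in N_k$. For $u\notin U_g$ (more precisely once $k\geq k_g^{(0)}$, when $U_gN_k\subseteq U_g$, so that $u\in U_g\Leftrightarrow uN_{k+1}\subseteq U_g$) both terms $\alpha_{g,0}(u)$ and $\alpha_{g,0}(uv)$ vanish after composing with $\res(1_{uN_{k+1}})$, so only the case $u\in U_g$ needs attention.

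The core of the argument is the containment \eqref{delta}: for any compact open subgroup $P_1\subseteq P_0$ and any $k\geq k_g^{(2)}(P_1)$,
\begin{equation*}
\{\Delta_g(u,k,v):u\in U_g,\ v\in N_k\}\ \subseteq\ N_0\,s^{k-k_g^{(1)}}(1-P_1)\Lambda_g\,s\,\psi^{k+1}\,N_0\ ,
\end{equation*}
where $\Lambda_g=t(g,U_g)s^{k_g^{(1)}}\subseteq L_+$ is compact. The plan is to feed this into $\mathfrak{T}(1)$. Apply $\mathfrak{T}(1)$ to the special compact set $C$ (which satisfies $C=N_0C$), to the $A[N_0]$-submodule $\mathcal{M}$, and to the compact subset $C_+:=\Lambda_g s\subseteq L_+$; this yields a compact open subgroup $P_1=P_1(C,\mathcal{M},C_+)\subseteq P_0$ and an integer $k(C,\mathcal{M},C_+)$ with
\begin{equation*}
s^{m}(1-P_1)\Lambda_g s\,\psi^{m}\subseteq E(C,\mathcal{M})\qquad\text{for all }m\geq k(C,\mathcal{M},C_+)\ .
\end{equation*}
Taking $m=k-k_g^{(1)}$ and noting $s\psi^{k+1}=s\psi^{k_g^{(1)}+1}\psi^{k-k_g^{(1)}}$, we rewrite the right-hand side of \eqref{delta} as $N_0\,s^{k-k_g^{(1)}}(1-P_1)(\Lambda_g s)\,\psi^{k-k_g^{(1)}}\,\psi^{k_g^{(1)}+1}N_0$, so that a set in $E(C,\mathcal{M})$ is flanked on the left by the compact group $N_0$ and on the right by $\psi^{k_g^{(1)}+1}N_0$.

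It remains to absorb these two flanking factors, and this is where $\mathfrak{C}(5)$, $\mathfrak{C}(3)$ and the $A[N_0]$-stability of $C$ and $\mathcal{M}$ are used. Given an element $x\in C$, the set $N_0x$ lies in $C$ by $\mathfrak{C}(3)$ (indeed $C=N_0C$), and $\psi^{j}(N_0x)$ lies in $\mathfrak{C}$ by iterating $\mathfrak{C}(5)$; since $N_0$ is compact, $N_0\psi^{k_g^{(1)}+1}(N_0C)$ is again a member of $\mathfrak{C}$, call it $C'$, and it depends only on $g$ and $C$, not on $k$. Applying the displayed inclusion with $C'$ in place of $C$ (enlarging $k_{C,\mathcal{M}}$ to $k':=k(C',\mathcal{M},C_+)+k_g^{(1)}$, and also demanding $k\geq\max(k_g^{(0)},k_g^{(2)}(P_1'))$ with $P_1'$ the subgroup attached to $C'$), we get that the inner block $s^{k-k_g^{(1)}}(1-P_1')\Lambda_g s\,\psi^{k-k_g^{(1)}}$ sends $C'$ into $\mathcal{M}$; hence the full composite $N_0\,(\cdots)\,\psi^{k_g^{(1)}+1}N_0$ sends $C$ into $N_0\mathcal{M}=\mathcal{M}$, using that $\mathcal{M}$ is $N_0$-stable. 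This yields $\Delta_g(u,k,v)\in E(C,\mathcal{M})$ for all large $k$, verifying the hypothesis of Proposition~\ref{integral}, and therefore $\alpha_{g,0}$ is $(s,\res,\mathfrak{C})$-integrable.

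The main obstacle I anticipate is bookkeeping: one has to be careful that the auxiliary compact set $C'$ and the subgroup $P_1'$ can be chosen depending only on $(g,C,\mathcal{M})$ and not on $k$, so that a single threshold $k_{C,\mathcal{M}}$ works; and one has to keep straight the interplay between conjugation by powers of $s$ and the shift between $\psi^{k+1}$ and $\psi^{k-k_g^{(1)}}$ (the ``extra'' $\psi^{k_g^{(1)}+1}$ must be handled outside the application of $\mathfrak{T}(1)$, via $\mathfrak{C}(5)$). Once the shape \eqref{delta} is in hand — which was already established in the text — the verification is essentially a matching of the three axioms $\mathfrak{C}(3)$, $\mathfrak{C}(5)$, $\mathfrak{T}(1)$ against the three structural features of $\Delta_g(u,k,v)$ (left $N_0$-factor, right $N_0\psi^{k+1}$-factor, middle $s^{k}(1-P_1)\Lambda_g s\psi^{k}$-factor), with no further genuinely new input needed.
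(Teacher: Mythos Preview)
Your proposal is correct and follows essentially the same approach as the paper: reduce to the criterion of Proposition~\ref{integral}, invoke the containment \eqref{delta}, absorb the outer $N_0$-factors using $N_0C=C$ and $N_0\mathcal{M}=\mathcal{M}$, pass from $C$ to $C'=\psi^{k_g^{(1)}+1}(C)$ via $\mathfrak{C}(5)$, and then apply $\mathfrak{T}(1)$ with $(C',\mathcal{M},C_+=\Lambda_g s)$ at level $m=k-k_g^{(1)}$. The only expository wrinkle is that your first invocation of $\mathfrak{T}(1)$ with $C$ itself is superfluous (only the application with $C'$ is actually used), and your set $C'=N_0\psi^{k_g^{(1)}+1}(N_0C)$ in fact equals $\psi^{k_g^{(1)}+1}(C)$ since $N_0\psi(C)=\psi(C)$ when $N_0C=C$.
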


\begin{proof}
By the general integrability criterion of Prop. \ref{integral}, the map $\alpha_{g,0}$  is integrable if for  any $(C,\mathcal{M})$ as  above, there exists $k_{C,\mathcal{M},g}\geq 0$ such that
\begin{equation} \label{delta'}  \Delta_g(u,k,v)\in E(C,\mathcal M)  \qquad\text{for any $k \geq k_{C,\mathcal{M},g}$, $u \in U_{g}$, and $v \in N_k$} .
\end{equation}
By (\ref{delta}),  this is   true if $k_{C,\mathcal{M},g} \geq k_g^{(2)}(P_1)$ and
\begin{align} \label{delta''}  s^{k-k_{g}^{(1)}} (1- P_1) \Lambda_g s \psi^{k+1}  (C) \subset   \mathcal M   \ ,
 \end{align}
\ because $N_{0}\mathcal M  =\mathcal M$ and $N_{0}C=C$.

We note that the  set $C _{+}=\Lambda_{g} s $ is contained in $L_{+}$ by (\ref{kg1}) and is compact,  that
  the set $C' = \psi^{k_{g}^{(1)}+1}(C)\subset M$ is compact and $N_{0}C'= C'$ because
   the map $\psi$ is continuous  and $N_{0}\psi(C)=\psi (sN_{0}s^{-1}C)=\psi(C)$, and that   (\ref{delta''}) is equivalent to
 $$
 s^{k-k_{g}^{(1)}} (1- P_1) C _{+}  \psi^{k-k_{g}^{(1)}}   \subset    E(C',  \mathcal M ) \ .
 $$
By our hypothesis, there exists an open subgroup  $P_{1}\subset P_{0}$ such that this inclusion is satisfied when $k\geq k_{g}^{(1)}+k ( C', \mathcal M , C_{+})$.
For
 \begin{equation}\label{k}
 k_{C,\mathcal{M},g} :=  \max ( k_{g}^{(1)}+k ( C', \mathcal M , C_{+}), k_{g}^{(2)}(P_{1})).
\end{equation}
  (\ref{delta'}) is satisfied. By construction, $P_{1}$ depends on $ \psi^{k_{g}^{(1)}+1}(C),  \mathcal M, \Lambda_{g} s$,  hence only on $C,  \mathcal M, g$.
  \end{proof}

Later, under the assumptions of Prop. \ref{criterion}, we will use the argument in the previous proof in the  following slightly more general form:
for $C,\mathcal{M},C_+ $ as in the proposition and an  integer $k' \geq 0$ we have
\begin{equation}\label{P1}
s^{k-k' }(1-P_1(\psi^{k'}(C),\mathcal{M},C_+ ) )C_+ \psi^{k} \subseteq E(C,\mathcal M)\
\end{equation}
 for any $k \geq k'+k(\psi^{k'}(C),\mathcal{M},C_+)$.

 \subsection{Extension of $\Res$}

\begin{proposition} \label{corspecial}
Suppose that $(s,M, \mathfrak C)$ satisfies  the assumptions of Prop. \ref{criterion} and that the $(s, \res, \mathfrak C)$-integral $\mathcal H_{g} $ of $\alpha_{g,0} $  is contained in $\End_{A}(M(\mathfrak{C}))$ for all $g \in N_0\overline{P}N_0$. In addition we assume that:
\begin{itemize}
\item[$\mathfrak{C}(6)$]    For any $C\in\mathfrak{C}$ the compact subset
  $ \varphi(C)\subseteq M$ also lies in $\mathfrak{C}$.
\item[$\mathfrak{T}(2)$] Given
  a  set $J(N_0/N_k)\subset N_{0}$ of representatives for cosets in $N_0/N_k$, for $k\geq 1$, for any $x\in M(\mathfrak{C})$ and $g\in N_0\overline{P}N_0$ there
  exists a compact $A$-submodule $C_{x,g}\in\mathfrak{C}$ and a positive integer $k_{x,g}$ such that
  $\mathcal{H}_{g,J(N_0/N_k)}(x)\subseteq C_{x,g}$ for any $k\geq  k_{x,g}$.
 \end{itemize}
Then  the $\mathcal H_{g} $ satisfy the    relations H1, H2, H3 of Prop. \ref{multiplicative}.
  \end{proposition}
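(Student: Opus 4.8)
The plan is to verify the three relations H1, H2, H3 for the integrals $\mathcal H_g$ by reducing each to an identity among the finite approximating sums $\mathcal H_{g,J(N_0/N_k)}$ and then passing to the limit, using the topology of $\Hom_A^{\mathfrak{C}ont}(M(\mathfrak C),M)$ and the hypotheses $\mathfrak C(5),\mathfrak C(6),\mathfrak T(1),\mathfrak T(2)$ together with Proposition~\ref{criterion} and the $P$-equivariance and multiplicativity of $\Res$ on $\mathcal A_{\mathcal C}$. The point of $\mathfrak C(6)$ (stability under $\varphi$) together with $\mathfrak T(2)$ is exactly to guarantee that the various composites appearing in H1--H3 are $\mathfrak C$-continuous maps $M(\mathfrak C)\to M$ which again take values in $\mathfrak C$, so that the limits defining the $\mathcal H_g$ and their composites live in $\End_A(M(\mathfrak C))$ and may be manipulated there; this is why the proposition hypothesizes $\mathcal H_g\in\End_A(M(\mathfrak C))$.

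\textbf{Relation H3.} For $b\in P\cap N_0\overline P N_0$, I would argue that $\alpha_{g,0}$ specializes: when $g=b\in P$, $\alpha_b$ is the constant map with value $b$ on $\mathcal C$ (lemma in \S\ref{6.3}), so $\alpha_{b,0}(u)=\Res(1_{N_0})\circ b\circ\Res(1_{N_0})$ on $U_b=b^{-1}\mathcal C_0\cap\mathcal C_0$. Using Lemma~\ref{alpha-int} and the $P$-equivariance of $\Res$, each approximating sum telescopes: $\mathcal H_{b,J(N_0/N_k)}=\sum_{u\in U_b\cap J(N_0/N_k)}b\circ\Res(1_{uN_k})=b\circ\Res(1_{U_b N_k})$, and for $k$ large $U_bN_k=U_b$, so $\mathcal H_b=b\circ\Res(1_{U_b})=b\circ\res(1_{b^{-1}\mathcal C_0\cap\mathcal C_0})$ after restricting to the image of $\End_A(M)$ in $E^{cont}$. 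This is exactly H3 and needs no limit argument beyond stabilization of the index sets.

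\textbf{Relation H1.} I would write $\mathcal H_g=\int_{N_0}\alpha_{g,0}\,d\res$ and compute $\res(1_{\mathcal V})\circ\mathcal H_g$ as the limit of $\sum_u \res(1_{\mathcal V})\circ\alpha(g,x_u)\circ\Res(1_{uN_k})$. Using the $P$-equivariance $\res(1_{\mathcal V})\circ\alpha(g,x_u)=\alpha(g,x_u)\circ\res(1_{\alpha(g,x_u)^{-1}\mathcal V})$ (valid on the relevant pieces, via Lemma~\ref{alpha-int} and the sheaf relations for $\Res$), together with the fact that $\alpha(g,x_u)^{-1}\mathcal V\cap uN_k$ is, up to negligible index, $(g^{-1}\mathcal V\cap\mathcal C_0)\cap uN_k$, I get that $\res(1_{\mathcal V})\circ\mathcal H_g$ equals the $(s,\res,\mathfrak C)$-integral of $u\mapsto\alpha_{g,0}(u)\circ\res(1_{g^{-1}\mathcal V\cap\mathcal C_0})$; since $\res(1_{g^{-1}\mathcal V\cap\mathcal C_0})$ is a fixed continuous endomorphism one pulls it out of the limit on the right, giving $\mathcal H_g\circ\res(1_{g^{-1}\mathcal V\cap\mathcal C_0})$. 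The legitimacy of pulling a fixed endomorphism through the limit uses continuity of composition in $\Hom_A^{\mathfrak{C}ont}(M(\mathfrak C),M)$ and $\mathfrak C(6)$/$\mathfrak T(2)$ to see the composite stays in the relevant completed module.

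\textbf{Relation H2.} This is the main obstacle. I would fix $g,h\in N_0\overline P N_0$ and compute $\mathcal H_g\circ\mathcal H_h$ as an iterated limit. Using Lemma~\ref{u-uv} — which gives the cocycle decomposition $\alpha(g,x_{uv})=\alpha(g,x_u)\,u\,\alpha(\bar n(g,u),x_v)\,u^{-1}$ — together with the cocycle relation $\alpha(gh,x)=\alpha(g,hx)\alpha(h,x)$ of Lemma~\ref{prod!}, I would rewrite $\sum_{u}\alpha(g,x_u)\circ\Res(1_{uN_j})\circ\sum_{w}\alpha(h,x_w)\circ\Res(1_{wN_k})$ and, after refining $j$ against $k$ and using that $\Res$ is multiplicative on $\mathcal A_{\mathcal C}$, identify it with $\sum\alpha(gh,x_{\bullet})\circ\Res(1_{\bullet N_\bullet})$ restricted to the overlap $g^{-1}\mathcal C_0\cap h^{-1}\mathcal C_0\cap\mathcal C_0$, i.e. with $\mathcal H_{gh}\circ\res(1_{h^{-1}\mathcal C_0\cap\mathcal C_0})$ after one application of H1. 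The delicate points are: (a) the two limits (over $j$ and over $k$) must be interchanged, which requires a uniform integrability estimate — here one invokes the enhanced form \eqref{P1} of the criterion in Proposition~\ref{criterion} applied with $C$ replaced by the compact sets $\mathcal H_{h,J(N_0/N_k)}(C)$ supplied by $\mathfrak T(2)$; (b) the bookkeeping of coset representatives, where one uses that if $J(N_0/N_k)$ represents $N_0/N_k$ then suitable products $u\cdot(\text{reps of }N_0/N_{k'})$ represent $N_0/N_{k+k'}$ and that each summand is representative-independent. Once these are in place, the last sentence of Proposition~\ref{multiplicative} lets one replace the triple-overlap form of H2 by the stated two-term form $\mathcal H_g\circ\mathcal H_h=\mathcal H_{gh}\circ\res(1_{h^{-1}\mathcal C_0\cap\mathcal C_0})$, completing the proof.
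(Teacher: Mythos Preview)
Your overall strategy and your H3 match the paper. Two differences in execution are worth noting. For H1 the paper does not push $\res(1_{\mathcal V})$ through $\alpha(g,x_u)$ directly as you propose; what Lemma~\ref{covering} actually yields is $\alpha(g,x_u)\,uN_k w_0P/P\subset \mathcal C_0\cap g\mathcal V$ (note $g\mathcal V$, not $\mathcal V$), so the paper first proves the auxiliary relation $\res(1_{\mathcal C_0\cap g\mathcal V})\circ\mathcal H_g\circ\res(1_{\mathcal V})=\mathcal H_g\circ\res(1_{\mathcal V})$ and then obtains H1 by applying this to $\mathcal V$ and to $\mathcal C_0\setminus\mathcal V$ and subtracting. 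Your direct route would instead need $\alpha(g,x_u)\,uN_k w_0P/P\subset\mathcal V$, a different uniform statement. For H2 the paper's organizing device is to write, after composing with $\Res(1_{uN_k})$, the difference $\mathcal H_g\circ\mathcal H_h-\mathcal H_{gh}\circ\res(1_{h^{-1}\mathcal C_0\cap\mathcal C_0})$ as $a_{k,u}+b_{k,u}+c_{k,u}$, where the middle term uses an \emph{offset} in the outer index: $b_{k,u}$ compares $\mathcal H_{g,J_u(N_0/N_{k-k_0})}\circ\mathcal H_{h,J(N_0/N_k)}$ with $\mathcal H_{gh,J(N_0/N_k)}$, for a fixed $k_0$ and with $J_u$ chosen to contain the specific element $n(h,u)$. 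Since $\alpha(h,x_u)\,uN_k w_0P/P\subset n(h,u)N_{k-k_0}w_0P/P$, only the summand $v=n(h,u)$ in the outer sum survives, and Lemma~\ref{prod!} then gives $b_{k,u}=0$ \emph{exactly} (Prop.~\ref{f:prod}); the remaining terms $a_{k,u},c_{k,u}$ go to zero by the convergence estimates, with $\mathfrak T(2)$ entering through a lemma on limits of products $A_n\circ B_n$ in Prop.~\ref{complimit}. Your two-parameter limit interchange and product-of-representatives combinatorics are replaced by this single-parameter device, which buys an exact vanishing of the cross term rather than an estimate.
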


\begin{remark}\label{resC}  The properties $\mathfrak C(3), \mathfrak C(5),\mathfrak C(6)$ imply that  for any $u\in N_{0}, k\geq 1$,  and $C\in \mathfrak C$ also $\res(1_{uN_{k}}) (C)$ lies in $\mathfrak C$. Indeed, $ \res(1_{uN_{k}})=u\circ \varphi^{k}\circ \psi^{k}\circ u^{-1}$.
\end{remark}

We prove now H1 and H3, which do not use the last assumption. The proof of  ii.    is postponed to the next subsection.

 \begin{proof}
The proof of H1 contains  three steps.

{\sl Step 1:} In this step we establish the relation
\begin{equation*}
    \res (1_{\mathcal{C}_0 \cap g\mathcal V  }) \circ  {\mathcal H}_{g} \circ \res(1_{\mathcal{V}})  = {\mathcal H}_{g} \circ \res (1_{\mathcal V }) .
\end{equation*}
 By additivity we may assume that $\mathcal V =vN_{r}w_{0}P/P$ for $v\in N_{0}$ and  an integer $r$ as large as we wish.

 It suffices, by  \eqref{Hg} and Remark \ref{resC}, to verify that
\begin{equation*}
    \res (1_{\mathcal{C}_0 \cap g\mathcal V  }) \circ \alpha(g,x_u) \circ \Res(1_{uN_k w_0 P/P}) \circ \res(1_{\mathcal V }) = \alpha(g,x_u) \circ \Res(1_{uN_k w_0 P/P}) \circ \res(1_{\mathcal V })
\end{equation*}
for any $u \in N_0$ such that $x_u \in g^{-1} \mathcal{C}_0 \cap \mathcal{C}_0$ and any $k \geq k_g^{(0)}$. By enlarging $k_g^{(0)}$ we have that each set $uN_k w_0 P/P$ either is contained in $\mathcal V $ or is disjoint from $\mathcal V $. This reduces us to verifying
\begin{equation*}
    \res (1_{\mathcal{C}_0 \cap g\mathcal V  }) \circ \alpha(g,x_u) \circ \Res(1_{uN_k w_0 P/P}) = \alpha(g,x_u) \circ \Res(1_{uN_k w_0 P/P})
\end{equation*}
whenever $x_u \in g^{-1} \mathcal{C}_0 \cap \mathcal V $. By the argument in the proof of lemma \ref{covering} (applied to $V := g^{-1} \mathcal{C}_0 \cap \mathcal V $) we may assume, after enlarging $k_g^{(0)}$ further, that
\begin{equation*}
    \alpha(g,x_u) uN_k w_0 P/P \subseteq \mathcal{C}_0 \cap g \mathcal V  \qquad\text{for any $x_u \in g^{-1} \mathcal{C}_0 \cap \mathcal V $}.
\end{equation*}
Using the $P$-equivariance of $\Res$ we then compute
\begin{align*}
    \alpha(g,x_u) \circ \Res(1_{uN_k w_0 P/P}) = & \Res(1_{\alpha(g,x_u)uN_k w_0 P/P}) \circ \alpha(g,x_u) \\
    & = \res (1_{\mathcal{C}_0 \cap g\mathcal V  }) \circ \Res(1_{\alpha(g,x_u)uN_k w_0 P/P}) \circ \alpha(g,x_u) \\
    & = \res (1_{\mathcal{C}_0 \cap g\mathcal V  }) \circ \alpha(g,x_u) \circ \Res(1_{uN_k w_0 P/P}) \ .
\end{align*}

{\sl Step 2:}  By applying Step 1 to  $  \mathcal{V}$ and to $  \mathcal{C}_0 \setminus \mathcal{V}$ we obtain
\begin{align*}
    \res (1_{\mathcal{C}_0 \cap g\mathcal V }) \circ \mathcal{H}_g & = \res (1_{\mathcal{C}_0 \cap g\mathcal V }) \circ \mathcal{H}_g \circ \res(1_{\mathcal{C}_0}) \\
    & = \res (1_{\mathcal{C}_0 \cap g\mathcal V }) \circ \mathcal{H}_g \circ \res(1_{\mathcal{V}}) + \res (1_{\mathcal{C}_0 \cap g\mathcal V }) \circ \mathcal{H}_g \circ \res(1_{\mathcal{C}_0 \setminus \mathcal{V}}) \\
    & = \mathcal{H}_g \circ \res(1_{\mathcal{V}}) + \res (1_{\mathcal{C}_0 \cap g\mathcal V }) \circ \res (1_{\mathcal{C}_0 \cap g(\mathcal{C}_0 \setminus \mathcal V) }) \circ \mathcal{H}_g \circ \res(1_{\mathcal{C}_0 \setminus \mathcal{V}}) \\
    & = \mathcal{H}_g \circ \res(1_{\mathcal{V}}) \ .
\end{align*}
For $\mathcal{V} = \mathcal{C}_0$ we, in particular, get
\begin{equation}\label{hg}
    \res(1_{\mathcal{C}_0 \cap g\mathcal{C}_0}) \circ \mathcal{H}_g = \mathcal{H}_g \ .
\end{equation}

{\sl Step 3:} Using the two identities in Step 2 we finally compute
\begin{align*}
    \mathcal{H}_g \circ \res(1_{g^{-1}\mathcal{V} \cap \mathcal{C}_0}) & = \res(1_{\mathcal{C}_0 \cap \mathcal{V} \cap g\mathcal{C}_0}) \circ \mathcal{H}_g \\
    & = \res(1_{\mathcal{V}}) \circ \res(1_{\mathcal{C}_0 \cap g\mathcal{C}_0}) \circ \mathcal{H}_g \\
    & = \res(1_{\mathcal{V}}) \circ \mathcal{H}_g \ .
\end{align*}

H3. For $b \in P\cap N_{0}\overline P N_{0}$ we have
\begin{equation*}
    \alpha_{b,0} = \ \text{constant map on $N_0$ with value $\res(1_{\mathcal{C}_0}) \circ b \circ \res(1_{\mathcal{C}_0})$}
\end{equation*}
and hence
\begin{equation*}
    \mathcal{H}_b = \res(1_{\mathcal{C}_0}) \circ b \circ \res(1_{\mathcal{C}_0}) = b \circ \res (1_{b^{-1}\mathcal{C}_0 \cap \mathcal{C}_0}) \ .
\end{equation*}

\end{proof}

\subsection{Proof of the product formula}

We  invoke now the full set of assumptions of Prop. \ref{corspecial} and we
   prove the product formula
   \begin{equation*}
{\mathcal H}_{g} \circ  {\mathcal H}_{h} = {\mathcal H}_{gh} \circ \res ( 1_{ h^{-1}\mathcal C_{0} \cap \mathcal C_{0}}) \ .
\end{equation*}
for $g,h\in N_{0}\overline P N_{0}$. This suffices by Prop. \ref{multiplicative}.

\bigskip

Let $k_{0} := \max(k_g^{(0)}, k_h^{(1)},  k_{gh}^{(0)})+1$ and let $k\geq  k_{0}$.

As $k\geq k_h^{(0)}$ (because $k_h^{(1)}\geq k_h^{(0)}$ (\ref{kg1})), the set $U_{h}$ is a disjoint union of cosets $uN_{k}$. We choose a set $J(N_{0}/N_{k})\subset N_{0}$ of representatives of the cosets in $N_{0}/N_{k}$ and  for each $u\in J(N_{0}/N_{k})\cap U_{h}$   a set $J_{u}(N_{0}/N_{k-k_{0}})\subset N_{0}$ of representatives of the cosets  in $N_{0}/N_{k-k_{0}}$ with $n(g,u)\in J_{u}(N_{0}/N_{k-k_{0}})$  (see \eqref{f:alpha}).

 \bigskip We   write
 ${\mathcal H}_{g} \circ  {\mathcal H}_{h} - {\mathcal
      H}_{gh} \circ   \res ( 1_{ h^{-1}\mathcal C_{0} \cap \mathcal C_{0}})$ as the sum over $u\in J(N_0/N_k)\cap U_{h}$ of
\begin{align}\label{so1}
    (\mathcal{H}_{g}\circ\mathcal{H}_{h}-\mathcal{H}_{gh}\circ
   \Res(1_{  U_{h}}))\circ\Res(1_{uN_k} )=a_{k,u}+b_{k,u}+c_{k,u}\ ,
  \end{align}
 where
   \begin{align*}
    a_{k,u}:= &(\mathcal{H}_g\circ\mathcal{H}_h-\mathcal{H}_{g,J_u(N_0/N_{k-k_0})}\circ\mathcal{H}_{h,J(N_0/N_k)})\circ\Res(1_{uN_k} )\\
 b_{k,u}:=&(\mathcal{H}_{g,J_u(N_0/N_{k-k_0})}\circ\mathcal{H}_{h,J(N_0/N_k)}-\mathcal{H}_{gh,J(N_0/N_k)})\circ \Res(1_{  U_{h}})\circ\Res(1_{uN_k} )\\
c_{k,u}:=&(\mathcal{H}_{gh,J(N_0/N_k)}-\mathcal{H}_{gh})\circ \Res(1_{  U_{h}})\circ\Res(1_{uN_k} ).
\end{align*}

The product formula follows from the claim that $b_{k,u}=0$  and that  for   an arbitrary compact subset  $C\in \mathfrak C $ such that $N_{0} C=C$,  and  an arbitrary open $A[N_{0}]$-module  $\mathcal M \subset M$,
 $a_{k,u}$ and $c_{k,u}$ lies in   $E(C,\mathcal{M})$  when $k$ is very large, independently of $u$.

\bigskip
The claim results from the following three  propositions.

 \bigskip  Because  $(s,M,\mathfrak C)$ satisfies Prop. \ref{criterion}, we associate to $(C, \mathcal M, g)$  the integer $k_{C, \mathcal M, g }$   defined in (\ref{k}) which is independent of the choice of the $J(N_{0}/N_{k})$.   For the sake of simplicity, we write
\begin{equation}\label{Hgs} \mathcal{H}_{g}^{(k)} := \mathcal{H}_{g,J(N_0/N_k)} \ ,  \  s_{g}^{(k)}:= \mathcal{H}_{g}^{(k+1)}-\mathcal{H}_{g}^{(k)} .
\end{equation}
By \eqref{Delta}, we have, for $k\geq k_{g}^{(0)}$,
 $$ s_{g}^{(k)} = \sum _{u\in U_{g}\cap J(N_{0} /N_{k+1})} \Delta_{g}(u,k,v_{u})$$
for some $v_{u}\in N_{k}$. It follows from \eqref{delta} that, for any given compact open subgroup $P_{1}\subset P_{0}$, we have
\begin{equation} \label{sgk}
 s_{g}^{(k)} \in \  <  N_0 s^{k-k_{g}^{(1)}} (1- P_1) \Lambda_g s \psi^{k+1} N_0>_{A} \quad \text{ for $k \geq k_{g}^{(2)}(P_{1})$ ,}
\end{equation}
where we use the notation $<X>_{A}$ for the $A$-submodule in $\End_{A}(M)$ generated by $X$.
We deduce from the proof
 of Prop. \ref{criterion}, that
 $ s_{g}^{(k)} \in E(C, \mathcal M)$  for any $k\geq k_{C, \mathcal M, g }$.

 \begin{proposition}\label{extendcv}
 $( \mathcal{H}_g  -   \mathcal{H}_{g,J(N_0/N_k)}) \circ \Res(1_{uN_k} )\in E(C, \mathcal M)
$
for any $k\geq k_{C, \mathcal M, g }$.
  \end{proposition}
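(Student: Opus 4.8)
The idea is to write the difference $\mathcal H_g - \mathcal H_{g,J(N_0/N_k)}$ as a convergent series of the increments $s_g^{(m)}$ for $m \geq k$ and then to apply the estimate \eqref{sgk} term by term, taking advantage of the fact that $\Res(1_{uN_k})$ effectively carries a factor $\varphi^k$ which cancels the $\psi$-tails appearing in the increments. More precisely, since by \eqref{Hgl} the integral $\mathcal H_g$ is the limit $\lim_{m\to\infty} \mathcal H_g^{(m)}$ in $\Hom_A^{\mathfrak{C}ont}(M(\mathfrak{C}),M)$, and since this space is complete and Hausdorff, we have
\begin{equation*}
\mathcal H_g - \mathcal H_g^{(k)} \ = \ \sum_{m \geq k} s_g^{(m)}
\end{equation*}
as a convergent sum in $\Hom_A^{\mathfrak{C}ont}(M(\mathfrak{C}),M)$. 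Composing on the right with $\Res(1_{uN_k})$ (a continuous operator, by Remark \ref{resC} and Proposition \ref{cont2}) we get
\begin{equation*}
(\mathcal H_g - \mathcal H_g^{(k)}) \circ \Res(1_{uN_k}) \ = \ \sum_{m \geq k} s_g^{(m)} \circ \Res(1_{uN_k}) \ .
\end{equation*}
It therefore suffices to show that each term $s_g^{(m)} \circ \Res(1_{uN_k})$, for $m \geq k \geq k_{C,\mathcal M,g}$, lies in $E(C,\mathcal M)$, and indeed in a way that the tail of the series stays in $E(C,\mathcal M)$; since $E(C,\mathcal M)$ is an $A$-submodule closed under the relevant limits, establishing that each term lies in $E(C,\mathcal M)$ for all $m \geq k$ will finish the argument.

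\textbf{The key estimate.} For the individual terms I would proceed as in the proof of Proposition \ref{criterion}. By \eqref{sgk}, for $m \geq k_g^{(2)}(P_1)$ the increment $s_g^{(m)}$ lies in the $A$-module generated by $N_0 s^{m - k_g^{(1)}}(1-P_1)\Lambda_g s \psi^{m+1} N_0$. Composing on the right with $\Res(1_{uN_k}) = u \circ \varphi^k \circ \psi^k \circ u^{-1}$ and using that $N_0$ absorbs into $C$ and $\mathcal M$ (as $N_0 C = C$ and $\mathcal M$ is an $A[N_0]$-submodule), one is reduced to controlling $s^{m-k_g^{(1)}}(1-P_1)\Lambda_g s \psi^{m+1-k}$ applied to a compact set of the form $\varphi^{k}\psi^{k}(\text{stuff})$, hence to applying the hypothesis $\mathfrak T(1)$ in the form \eqref{P1}. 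The point is that when we compose with $\Res(1_{uN_k})$ the surviving power of $\psi$ is $\psi^{m+1-k}$, which still tends to infinity with $m$ for fixed $k$, so the same integrability estimate that made $\sum_m s_g^{(m)}$ converge — applied now with the compact set $\psi^{k'}(C)$ for a suitable auxiliary $k'$ as in \eqref{P1} — shows that $s_g^{(m)} \circ \Res(1_{uN_k}) \in E(C,\mathcal M)$ for all $m \geq k \geq k_{C,\mathcal M,g}$, uniformly in $u$. Summing over $m \geq k$ and using that $E(C,\mathcal M)$ is closed, we conclude $(\mathcal H_g - \mathcal H_g^{(k)})\circ \Res(1_{uN_k}) \in E(C,\mathcal M)$.

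\textbf{Main obstacle.} The delicate point is the bookkeeping of the various thresholds: one must check that the integer $k_{C,\mathcal M,g}$ chosen in \eqref{k} is genuinely large enough to control \emph{all} the tail terms $s_g^{(m)}$ with $m \geq k$ simultaneously (not just the first), and that the auxiliary compact set entering $\mathfrak T(1)$ can be taken uniformly — this is exactly the content of the slightly strengthened form \eqref{P1} noted right after Proposition \ref{criterion}, with $C$ replaced by $\psi^{k'}(C)$ for the relevant shift $k'$, which lies in $\mathfrak C$ by $\mathfrak C(5)$. A second subtlety is the interchange of the infinite sum $\sum_{m\geq k} s_g^{(m)}$ with composition by $\Res(1_{uN_k})$; this is legitimate because composition with a fixed continuous endomorphism is continuous for the $\mathfrak C$-open topology (it maps $E(C,\mathcal M)$-type neighborhoods to $E(C',\mathcal M)$-type neighborhoods after adjusting the compact set via $\mathfrak C(1)$ and $\mathfrak C(5)$, $\mathfrak C(6)$), so the series still converges after composition and its sum is computed termwise. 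Once these uniformities are pinned down, the statement follows.
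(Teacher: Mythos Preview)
Your overall plan --- write $\mathcal{H}_g - \mathcal{H}_g^{(k)}$ as the telescoping series $\sum_{m \geq k} s_g^{(m)}$, push $\Res(1_{uN_k})$ through by continuity of right composition (legitimate since $\Res(1_{uN_k})(C)\in\mathfrak{C}$), and use that $E(C,\mathcal{M})$ is open hence closed --- is exactly the skeleton of the paper's argument. The gap is in your term-by-term estimate for $s_g^{(m)} \circ \Res(1_{uN_k})$.

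You try to re-run the analytic bound from \eqref{sgk}: after composing with $\Res(1_{uN_k}) = u\varphi^k\psi^k u^{-1}$ you say the ``surviving power of $\psi$ is $\psi^{m+1-k}$'' and appeal to \eqref{P1} with a shifted compact $\psi^{k'}(C)$. But in \eqref{sgk} the exponent of $s$ is $m - k_g^{(1)}$, and for $k > k_g^{(1)}+1$ this \emph{exceeds} $m+1-k$; in \eqref{P1} the exponent of $s$ is $k-k'$, i.e.\ \emph{at most} the exponent of $\psi$. So the invocation of \eqref{P1} as you describe it does not apply, and letting $k'$ depend on $k$ would make $P_1$ and hence the threshold depend on $k$, destroying the uniformity you claim. (There is a salvage along your lines: the compact set one actually lands on after the simplification is $\psi^k(C)$, and $\psi^{m+1-k}(\psi^k(C)) = \psi^{m+1}(C)$, so one is literally back to the original estimate from the proof of Proposition~\ref{criterion} with the \emph{same} $C$ and the same $k_{C,\mathcal{M},g}$. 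But you do not say this; your text treats $m+1-k$ as the final $\psi$-exponent.)

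The paper avoids this computation entirely with a one-line combinatorial remark. Each $\Delta_g(u',m,v)$ already carries the idempotent $\res(1_{u'N_{m+1}})$ on its right (see \eqref{Deltag}); since $m \geq k$ implies $m+1 \geq k$, the coset $u'N_{m+1}$ is either contained in $uN_k$ or disjoint from it, so
\[
\Delta_g(u',m,v)\circ\Res(1_{uN_k})\in\{\Delta_g(u',m,v),\,0\}\subset E(C,\mathcal{M})
\]
directly by \eqref{delta'}, with exactly the threshold $k_{C,\mathcal{M},g}$ and no new estimate needed. Summing over $u'$ gives $s_g^{(m)}\circ\Res(1_{uN_k})\in E(C,\mathcal{M})$ for all $m\geq k$, and then your telescoping and closedness conclude.
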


 \begin{proof}
When  $k\geq 0$,  $k_{2} \geq   \max (k-1, k_{g}^{(0)})$,  $u'\in U_{g}, v\in N_{k}$ we have that
 $ \Delta_{g}(u',k_{2},v )  \circ \Res(1_{uN_k} )$ is equal either to $\Delta_{g}(u',k_{2},v) $ or to  $0$.
 If follows that
  \begin{equation*}
s_{g }^{(k_{2})} \circ \Res(1_{uN_k} )
\subseteq
  E(C,\mathcal{M}) \qquad\text{ for any $k_{2} \geq \max (k-1,  k_{ C, \mathcal M,  g})$ and $k\geq 0$},  \end{equation*}

Now we fix $k \geq k_{ C, \mathcal M,  g}$. Note that $\Res(1_{uN_k} )(C)$ is contained in  $\mathfrak C$ by the stability of $\mathfrak C$ by  $\psi$, $\varphi$,
 and $ u^{\pm 1}$. Therefore the  sequence
 $(\mathcal{H}_{g }  ^{(k_{2})} \circ\Res(1_{uN_k} ))_{k_{2}}$
 converges to $\mathcal{H}_g \circ\Res(1_{uN_k} )$ in $\Hom_{A}^{\mathfrak Cont}(M(\mathfrak C),M)$. In particular, we have
\begin{equation*}
(\mathcal{H}_g-\mathcal{H}_{g }  ^{(k_{2})})\circ\Res(1_{uN_k} )\subseteq
  E(C,\mathcal{M}) \qquad\text{ for any $k_{2} \geq \max (k-1,  k_{ C, \mathcal M,  g})$ and $k\geq 0$}. \end{equation*}
  The statement follows by taking $k_2=k$.
\end{proof}

This establishes that $c_{k,u}$ lies in   $E(C,\mathcal{M})$  when $k \geq k_{C,\mathcal{M}, gh}$.

\bigskip Note that  the  proposition is true also for   any other system $J'(N_{0}/N_{k}) \subset N_{0}$ of representatives for the cosets in $N_{0} /N_{k} $  for the same integer $k_{C,\mathcal{M}, g}$.  We write  $\mathcal{H}_g'^{(k )}$  and $s_g'^{(k )}$ for the elements defined in \eqref{Hgs} for $J'(N_{0}/N_{k})$.

\begin{proposition}\label{complimit}
There exists an integer $k_{C, \mathcal M, g,h, k_{0}}\in \mathbb N$, independent of the choices of  $J(N_{0}/N_{k})$ and $J'(N_{0}/N_{k})$, such that:

\begin{itemize}
\item[i.]
$
\mathcal{H}_g'^{(k+1-k_0)}\circ\mathcal{H}_h^{(k+1)}-\mathcal{H}_g'^{(k-k_0)}\circ\mathcal{H}_h^{(k)}\in E(C, \mathcal M),
$ for all $k\geq k _{C, \mathcal M, g,h, k_{0}}$, and
the sequence $(\mathcal{H}_g'^{(k-k_0)}\circ\mathcal{H}_h^{(k)})$ converges to
$\mathcal{H}_g\circ\mathcal{H}_h$ in $\Hom_{A}^{\mathfrak Cont}(M(\mathfrak C), M)$.
\item[ii.]

$(\mathcal{H}_g\circ\mathcal{H}_h -   \mathcal{H}_g'^{{(k-k_{0} )}}\circ \mathcal{H}_{h}^{(k)}) \circ \Res(1_{uN_k} )\in E(C, \mathcal M),
 $ for all $k\geq k_{C, \mathcal M, g,h, k_{0}}$.
\end{itemize}
\end{proposition}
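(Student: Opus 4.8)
The strategy is to reduce the convergence of the ``shifted'' double sums $\mathcal{H}_g'^{(k-k_0)}\circ\mathcal{H}_h^{(k)}$ to the already established single-variable integrability estimates, exploiting the cocycle relation of Lemma \ref{prod!} together with the fundamental estimate \eqref{delta} (in the form \eqref{P1}) coming from Proposition \ref{criterion}. The key algebraic identity is that, since for $u\in U_h\cap J(N_0/N_k)$ we have $\alpha(h,x_u)u = n(h,u)t(h,u)$ with $n(h,u)\in N_0$ fixed inside $J_u(N_0/N_{k-k_0})$ (this is why we built the representative sets that way), the composite $\alpha(g,x_{n(h,u)})\circ\Res(1_{n(h,u)N_{k-k_0}})\circ\alpha(h,x_u)\circ\Res(1_{uN_k})$ equals $\alpha(gh,x_u)\circ\Res(1_{uN_k})$ up to terms controlled by Lemma \ref{u-uv} and Lemma \ref{prod!}; this is precisely the content of the forthcoming Proposition \ref{b=0} ($b_{k,u}=0$), which we may invoke. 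Granting that, item i.\ and item ii.\ are purely about telescoping the remaining two factors one at a time.

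\textbf{Proof of i.} First I would write the telescoping difference
\begin{align*}
\mathcal{H}_g'^{(k+1-k_0)}\circ\mathcal{H}_h^{(k+1)}-\mathcal{H}_g'^{(k-k_0)}\circ\mathcal{H}_h^{(k)}
&= \bigl(\mathcal{H}_g'^{(k+1-k_0)}-\mathcal{H}_g'^{(k-k_0)}\bigr)\circ\mathcal{H}_h^{(k+1)}\\
&\quad + \mathcal{H}_g'^{(k-k_0)}\circ\bigl(\mathcal{H}_h^{(k+1)}-\mathcal{H}_h^{(k)}\bigr) \ .
\end{align*}
The first term contains the factor $s_g'^{(k-k_0)}$, which by \eqref{sgk} and the remark following it lies in $E(C',\mathcal{M})$ for $C'$ a suitable $\psi,\varphi,N_0$-saturated compact set containing $\mathcal{H}_h^{(k+1)}(C)$ and all $k\geq k_{C',\mathcal M,g}$; here one uses $\mathfrak{T}(2)$ to bound the images $\mathcal{H}_h^{(k+1)}(x)$ uniformly inside a single compact $C_{x,h}\in\mathfrak{C}$, together with $\mathfrak{C}(3),\mathfrak{C}(5),\mathfrak{C}(6)$ and Remark \ref{resC} to control the saturation. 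The second term contains $s_h^{(k)}$; one checks directly from \eqref{sgk} and the hypothesis $\mathfrak{T}(1)$ of Proposition \ref{criterion} (applied in the form \eqref{P1} with $k'=k_0$, noting $k_0\le k_{\cdots}$) that $\mathcal{H}_g'^{(k-k_0)}\circ s_h^{(k)}\in E(C,\mathcal{M})$ once $k$ exceeds an explicit bound depending only on $C,\mathcal{M},g,h,k_0$; the point is that $\mathcal{H}_g'^{(k-k_0)}$ maps boundedly (the relevant compact sets appearing in $s_h^{(k)}$ being of the form $\res(1_{\cdot N_{k+1}})(C)\subseteq C$) and $s_h^{(k)}\in E(\cdot,\cdot)$ for the correspondingly shifted data. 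Define $k_{C,\mathcal M,g,h,k_0}$ to be the maximum of these two thresholds (enlarged past $k_0+k_{gh}^{(0)}$). Since $\Hom_A^{\mathfrak{C}ont}(M(\mathfrak{C}),M)$ is Hausdorff and complete, the Cauchy estimate just established shows the sequence converges; its limit is $\mathcal{H}_g\circ\mathcal{H}_h$ because, using Proposition \ref{extendcv} twice, $\mathcal{H}_g'^{(k-k_0)}\to\mathcal{H}_g$ and $\mathcal{H}_h^{(k)}\to\mathcal{H}_h$ and multiplication in $\Hom_A^{\mathfrak{C}ont}(M(\mathfrak{C}),M)$ — evaluated on a fixed $x$ and its bounding compact set — is continuous.

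\textbf{Proof of ii.} Composing everything on the right with $\Res(1_{uN_k})$ for a fixed $u\in J(N_0/N_k)$ only improves matters: $\Res(1_{uN_k})(C)\subseteq C$ after $N_0$-saturation, and the telescoping tail $\sum_{k'\geq k}\bigl(\mathcal{H}_g'^{(k'+1-k_0)}\circ\mathcal{H}_h^{(k'+1)}-\mathcal{H}_g'^{(k'-k_0)}\circ\mathcal{H}_h^{(k')}\bigr)\circ\Res(1_{uN_k})$ is a sum of terms each lying in $E(C,\mathcal{M})$ by the (uniform in the tail index, by \eqref{sgk}) estimates of part i., and $E(C,\mathcal{M})$ is closed; hence the difference $\mathcal{H}_g\circ\mathcal{H}_h - \mathcal{H}_g'^{(k-k_0)}\circ\mathcal{H}_h^{(k)}$, composed with $\Res(1_{uN_k})$, lies in $E(C,\mathcal{M})$ for $k\geq k_{C,\mathcal M,g,h,k_0}$, with the bound independent of $u$ because all thresholds above were chosen uniformly in $u$.

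\textbf{The main obstacle.} The delicate point is the bookkeeping that makes the bounds uniform in $u\in J(N_0/N_k)$: a priori the compact sets $t(h,u),\bar n(h,u),n(h,u)$ vary with $u$, but they range over the fixed compacta $t(h,U_h),\bar n(h,U_h),U_h$, so the estimate \eqref{delta}–\eqref{P1} can be applied once with $\Lambda_h$, $\overline N_{-k_h^{(1)}}$ and the ambient $C,\mathcal M$, absorbing the $u$-dependence into the quantifier over these compact sets. Verifying that $\mathcal{H}_g'^{(k-k_0)}$ (a finite sum of $\alpha(g,x_{u'})\circ\Res(1_{u'N_{k-k_0}})$) does not destroy membership in $E(C,\mathcal{M})$ when post-composed with $s_h^{(k)}$ — i.e.\ that it moves the relevant compact set into another element of $\mathfrak{C}$ and the open submodule back into an open submodule — is where $\mathfrak{T}(1)$, $\mathfrak{T}(2)$ and the stability axioms $\mathfrak{C}(5),\mathfrak{C}(6)$ are all genuinely needed, and getting the order of quantifiers right there is the crux.
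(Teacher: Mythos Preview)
Your telescoping decomposition is valid (the paper uses the other telescoping, $\mathcal{H}_g'^{(k+1-k_0)}\circ s_h^{(k)}+s_g'^{(k-k_0)}\circ\mathcal{H}_h^{(k)}$, but that is immaterial). The real problem is your treatment of the cross term $s_g'^{(k-k_0)}\circ\mathcal{H}_h^{(k+1)}$. You want a compact $C'\in\mathfrak{C}$ with $\mathcal{H}_h^{(k+1)}(C)\subset C'$ for all large $k$, so that you can feed $C'$ into the single-factor estimate $s_g'^{(k-k_0)}\in E(C',\mathcal{M})$. But $\mathfrak{T}(2)$ only gives, for each individual $x\in M(\mathfrak{C})$, a compact $C_{x,h}\in\mathfrak{C}$ containing $\mathcal{H}_h^{(k)}(x)$; it says nothing uniform over $x\in C$. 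The union $\bigcup_{x\in C}C_{x,h}$ is an uncountable union and there is no axiom guaranteeing it lies in $\mathfrak{C}$, nor even that it is compact. Without such a $C'$ you cannot choose a single threshold $k_{C',\mathcal{M},g}$, and your argument collapses. (In the paper $\mathfrak{T}(2)$ is invoked only for the \emph{second} assertion of i., to identify the limit as $\mathcal{H}_g\circ\mathcal{H}_h$ via the abstract claim about products of convergent sequences; it plays no role in the Cauchy estimate itself.)

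The paper sidesteps this by never trying to bound $\mathcal{H}_h^{(k)}(C)$ as a set. Instead it records the containments
\[
\mathcal{H}_h^{(k)}\in \langle N_0 s^{k-k_h^{(1)}}\Lambda_h\psi^k N_0\rangle_A,\qquad
s_g'^{(k-k_0)}\in \langle N_0 s^{k-k_0-k_g^{(1)}}(1-P_1)\Lambda_g s\,\psi^{k-k_0+1}N_0\rangle_A
\]
(the first from $\alpha(h,x_u)\circ\Res(1_{uN_k})=n(h,u)t(h,u)s^k\psi^k u^{-1}$, the second from \eqref{sgk}), and then composes them \emph{symbolically}. The key identity $\psi^a(N_0\varphi^{a+b}(m))=N_0\varphi^b(m)$ turns the interface $\psi^{k-k_0+1}N_0\,s^{k-k_h^{(1)}}$ into $N_0 s^{k_0-k_h^{(1)}-1}$, so all $k$-dependence cancels except in the outermost $s^{k-\mathrm{const}}\cdots\psi^k$. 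After conjugating $P_1$ past the fixed compact $\Lambda_g s^{k_0-k_h^{(1)}-1}\subset L_+$ (choosing $P_1$ small enough that this compact normalizes it), one is left with an expression of the exact shape $N_0 s^{k-\mathrm{const}}(1-P_1)C_+\psi^k N_0$ to which $\mathfrak{T}(1)$ applies directly, with $C_+=\Lambda_g s^{k_0-k_h^{(1)}}\Lambda_h$. The same manipulation handles the other cross term. This explicit cancellation of $\psi$'s against $s$'s in the composite is the step your outline is missing; once you have it, part ii.\ is immediate from the convergence in part i.\ together with Remark \ref{resC}, without any further telescoping.

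Two minor remarks: your plan invokes ``the forthcoming Proposition ($b_{k,u}=0$)'', but that result is not used in the paper's proof of the present proposition and is logically downstream of it; and your argument for ii.\ (summing a tail of terms in $E(C,\mathcal{M})$) would need each tail term to vanish, not merely lie in $E(C,\mathcal{M})$, since an infinite sum of elements of an open submodule need not stay in it.
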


\begin{proof}
i.  To prove the first assertion, we write
\begin{align}\label{gh0}
\mathcal{H}_g'^{(k+1-k_0)}\circ\mathcal{H}_h^{(k+1)}-\mathcal{H}_g'^{(k-k_0)}\circ\mathcal{H}_h^{(k)}=\mathcal{H}_g'^{(k+1-k_0)}\circ s_h^{(k)}+s_g'^{(k-k_0)}\circ\mathcal{H}_h^{(k)}.
\end{align}
Note that, when $k\geq k_{g}^{(1)}$, the endomorphisms   $\mathcal{H}_g'^{(k)}$ and  $\mathcal{H}_g^{(k)}$ are contained in the $A$-module $<N_{0}s^{k-k_{g}^{(1)}}\Lambda_{g} \psi^{k}N_{0}>_{A}$,
   because
\begin{equation*}
\alpha(g,x_{u}) \circ \Res(1_{uN_{k}}) =n(g,u)t(g,u)u^{-1}u s^{k}\psi^{k}u^{-1}\subset N_{0}s^{k-k_{g}^{(1)}} \Lambda_{g}\psi^{k}N_{0} \qquad\text{ for $u\in U_{g}$}.
\end{equation*}
We consider any compact open subgroup $P_{1}\subset P_{0}$ and we assume $k \geq \max (k_{g}^{(2)}(P_{1})+k_{0}, k_{h}^{(2)}(P_{1}))$. With (\ref{sgk}) we obtain  that (\ref{gh0}) is contained in
\begin{align*}
& \ <N_0s^{k+1-k_0-k_g^{(1)}}\Lambda_g\psi^{k+1-k_0}N_0 s^{k-k_h^{(1)}}(1-P_1)\Lambda_hs\psi^{k+1}N_0>_{A} \\
+& \ <N_0 s^{k-k_0-k_g^{(1)}}(1-P_1)\Lambda_gs\psi^{k-k_0+1}N_0s^{k-k_h^{(1)}}\Lambda_h\psi^{k}N_0>_{A} .
\end{align*}
  Recalling that  $\psi^{a }(N_0 \varphi ^{a+b}(m))=   \psi^{a}(N_{0}) \varphi ^{b}(m) =  N_{0}\varphi ^{b}(m) $ for $a,b\in \mathbb N$ and $m\in M$,  we see that  this is contained in
\begin{align*}
& \  <N_0s^{k+1-k_0-k_g^{(1)}}\Lambda_gN_0s^{k_0-k_h^{(1)}-1}(1-P_1)\Lambda_hs\psi^{k+1}N_0>_{A}\\
+ & \ <N_0s^{k-k_0-k_g^{(1)}}(1-P_1)\Lambda_gN_0s^{k_0-k_h^{(1)}}\Lambda_h\psi^kN_0>_{A}.
\end{align*}
As  $k+1-k_0-k_g^{(1)} \geq k_{g}^{(2)}(P_{1}) +1 -k_g^{(1)} \geq 1$ and as $\Lambda_{g} \subset L_{+}$, we have
$$
N_{0}s^{k+1-k_0-k_g^{(1)}}\Lambda_g N_{0}\subset N_{0}s^{k+1-k_0-k_g^{(1)}}\Lambda_g \ ,
$$
and this is  contained in
\begin{align*}
& \ <N_0s^{k+1-k_0-k_g^{(1)}}\Lambda_g s^{k_0-k_h^{(1)}-1}(1-P_1)\Lambda_hs\psi^{k+1}N_0>_{A} \\
+ & \ <N_0s^{k-k_0-k_g^{(1)}}(1-P_1)\Lambda_g s^{k_0-k_h^{(1)}}\Lambda_h\psi^kN_0>_{A}.
\end{align*}
We assume, as we may,  that the compact open subgroup $P_{1}$ of $P_{0}$ satisfies $tP_1t^{-1}\subseteq P_1$ for all $t$ in the compact  set
$\Lambda_g s^{k_0-k_h^{(1)}-1} $ of $L_+$. Then we finally obtain that  (\ref{gh0} is contained in
 \begin{align*} & \ <N_0s^{k+1-k_0-k_g^{(1)}}(1-P_1)\Lambda_g s^{k_0-k_h^{(1)}}\Lambda_h\psi^{k+1}N_0>_{A} \\
+ & \ < N_0s^{k-k_0-k_g^{(1)}} (1-P_1)\Lambda_g s^{k_0-k_h^{(1)}}\Lambda_h\psi^kN_0>_{A}.
\end{align*}
This subset of $\End_{A}(M)$ is contained in $E(C, \mathcal M)$  when
\begin{align*} s^{k+1-k_0-k_g^{(1)}}(1-P_1)\Lambda_g s^{k_0-k_h^{(1)}}\Lambda_h\psi^{k+1}(C) \quad\text{and} \quad s^{k-k_0-k_g^{(1)}} (1-P_1)\Lambda_g s^{k_0-k_h^{(1)}}\Lambda_h\psi^k(C)
\end{align*} are contained in $E(C,\mathcal M)$ because $N_{0}C=C$ and $\mathcal M$ is an $A[N_{0}]$-module.
By \eqref{P1}, this is true when
 $P_{1}$ is contained in $P_{1}(\psi^{k_{0}+k_{g}^{(1)}}(C) , \mathcal M,\Lambda_g s^{k_0-k_h^{(1)}}\Lambda_h ) $ and   $k\geq k_{C, \mathcal M, g,h, k_{0}}$ where
\begin{equation}\label{k0}  k_{C, \mathcal M, g,h, k_{0}}:= \max(k_{g}^{(2)}(P_{1})+k_{0}, k_{h}^{(2)}(P_{1}),
k( \psi^{k_{0}+k_{g}^{(1)}}(C) , \mathcal M,\Lambda_g s^{k_0-k_h^{(1)}}\Lambda_h ) ).
\end{equation}
The first assertion of i. is proved.  We deduce the second assertion from the following claim and the last assumption of Prop. \ref{corspecial}:

\bigskip Let $(A_n)_{n \in \mathbb{N}}$ and $(B_n)_{n \in \mathbb{N}}$ be two convergent sequences in $\Hom_A^{\mathfrak{C}ont} (M (\mathfrak C), M)$ with limits $A$ and $B$, respectively;  assume that $(B_n)_{n \in \mathbb{N}}$ and $B$  are in $ \End_{A}(M (\mathfrak C)) $ and  that, for any $x\in  \mathfrak C$ there exists an $A$-submodule
$C \in \mathfrak C$ such that $B_{n}(x)\in C$ for any large $n$.
Then, if  the sequence $(A_n \circ B_n)_{n \in \mathbb{N}}$ is convergent,  its limit   is $A \circ B$.

Let $D$ be the limit of the sequence $(A_n \circ B_n)_n$. It suffices to show that, for any open $A$-submodule $\mathcal{M} \subseteq M$ and any element $x \in M (\mathfrak C)$ we have $(D - A \circ B)(x) \in \mathcal{M}$. We write
\begin{equation*}
     D- A \circ B = (D - A_n \circ B_n) - (A - A_n) \circ B_n - A \circ (B - B_n) \ .
\end{equation*}
Obviously $(D - A_n \circ B_n)(x) \in \mathcal{M}$ for large $n$. Secondly,    the elements  $B_n(x) $   for  any large $n$ are contained in some compact $A$-submodule $C \in \mathfrak C$, hence also $(B - B_n)(x)$. Moreover $A - A_n \in E(C,\mathcal{M})$ for large $n$. Hence $(A - A_n) \circ B_n (x) \in \mathcal{M}$ for large $n$. Finally, $A$ being $\mathfrak C$-continuous there is an open $A$-submodule $\mathcal{M}' \subseteq M$ such that $A(\mathcal{M}' \cap  C) \subseteq \mathcal{M}$. Furthermore $(B - B_n)(x) \in \mathcal{M}' \cap C$ for large $n$.    Hence $A \circ (B - B_n)(x) \in \mathcal{M}$ for large $n$.

\bigskip

ii.    This follows from the second assertion in i. together with remark \ref{resC}.
\end{proof}

  We have now proved that $a_{k,u}\in E(C, \mathcal M)$ when $k\geq k_{C, \mathcal M, g,h, k_{0}}$.

\begin{proposition}  \label{f:prod}  For
$u\in J(N_0/N_k)\cap U_{h}$, we have
 \begin{equation}\label{zero}
\mathcal{H}_{g,J_u(N_0/N_{k-k_0})}\circ\mathcal{H}_{h,J(N_0/N_{k})}\circ\Res(1_{uN_k} )=\mathcal{H}_{gh,J(N_0/N_k)}\circ\Res(1_{uN_k} ).
\end{equation}
 \end{proposition}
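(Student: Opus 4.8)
The statement to be proved is the identity \eqref{zero}, which is the ``local'' form of the product formula H2 restricted to a single coset $uN_k$ with $u\in J(N_0/N_k)\cap U_h$. The strategy is to unravel both sides into finite sums indexed by representatives modulo deeper congruence subgroups, match the terms using the cocycle relation of Lemma~\ref{prod!}, and exploit the orthogonality of the idempotents $\Res(1_{\mathcal V})$ (Corollary~\ref{Rr}) together with the geometric fact \eqref{f:alpha} that $\alpha(h,x_u)u=n(h,u)t(h,u)$ and $hu=\alpha(h,x_u)u\bar n(h,u)$. First I would use Lemma~\ref{alpha-int} to rewrite $\mathcal{H}_{h,J(N_0/N_k)}\circ\Res(1_{uN_k})$ as the single term $\alpha(h,x_u)\circ\Res(1_{uN_k})$ (all other summands of $\mathcal H_{h,J(N_0/N_k)}$ are killed by $\Res(1_{uN_k})$ since the cosets $u'N_k$ are pairwise disjoint and $u\in U_h\cap J(N_0/N_k)$). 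By the $P$-equivariance of $\Res$ this equals $\Res(1_{\alpha(h,x_u)uN_k})\circ\alpha(h,x_u)$, and by the choice $k\geq k_h^{(0)}$ (via \eqref{f:N0}) the set $\alpha(h,x_u)uN_kw_0P/P$ lies inside $\mathcal C_0\cap h\mathcal C_0$; writing $\alpha(h,x_u)u=n(h,u)t(h,u)$ one identifies $\alpha(h,x_u)uN_k = n(h,u)t(h,u)N_kt(h,u)^{-1}$, a coset of the subgroup $t(h,u)N_kt(h,u)^{-1}\subseteq N_0$ based at $n(h,u)\in N_0$.

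Next I would expand $\mathcal{H}_{g,J_u(N_0/N_{k-k_0})}$ applied to this. Since $k_0\geq k_g^{(0)}+1$, the subgroup $t(h,u)N_kt(h,u)^{-1}$ is contained in $N_{k-k_0}$ (this is exactly why $k_0$ was chosen $\geq k_h^{(1)}$, controlling $t(h,u)$, and $\geq k_g^{(0)}$); hence $\alpha(h,x_u)uN_k$ is contained in a single coset of $N_{k-k_0}$, namely $n(h,u)N_{k-k_0}$, and we arranged $n(g,u)\in J_u(N_0/N_{k-k_0})$ to be precisely its representative. So $\mathcal{H}_{g,J_u(N_0/N_{k-k_0})}\circ\Res(1_{\alpha(h,x_u)uN_k})$ collapses to the single term $\alpha(g,x_{n(h,u)})\circ\Res(1_{\alpha(h,x_u)uN_k})$, provided $n(h,u)\in U_g$; if it is not, both sides of \eqref{zero} vanish after composing with $\Res(1_{\alpha(h,x_u)uN_k})$ and one must check the right-hand side vanishes too (this is where $g^{-1}\mathcal C_0\cap\mathcal C_0$ versus the full $g^{-1}\mathcal C\cap\mathcal C$ distinction enters and must be handled carefully). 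Assuming $n(h,u)\in U_g$, the left-hand side of \eqref{zero} becomes $\alpha(g,x_{n(h,u)})\circ\alpha(h,x_u)\circ\Res(1_{uN_k})$.

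The core algebraic step is then to show $\alpha(g,x_{n(h,u)})\alpha(h,x_u)=\alpha(gh,x_u)$ modulo the subgroup that acts trivially on $uw_0N$, i.e.\ to match it with the single surviving term $\alpha(gh,x_u)\circ\Res(1_{uN_k})$ of $\mathcal{H}_{gh,J(N_0/N_k)}\circ\Res(1_{uN_k})$ (again via Lemma~\ref{alpha-int}, using $k\geq k_{gh}^{(0)}$ and $u\in U_{gh}$, which holds because $x_u\in h^{-1}\mathcal C_0\cap\mathcal C_0$ and $hx_u\in g^{-1}\mathcal C_0$ force $x_u\in(gh)^{-1}\mathcal C_0$). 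Here I expect the main obstacle: one cannot directly apply the cocycle relation $\alpha(gh,x_u)=\alpha(g,hx_u)\alpha(h,x_u)$ of Lemma~\ref{prod!} because the first slot is $n(h,u)$, not $hx_u$ — but $n(h,u)w_0P/P$ and $hx_u$ define the \emph{same} point of $\mathcal C$ (since $\alpha(h,x_u)u=n(h,u)t(h,u)$ and $t(h,u)\in L$ fixes $w_0P/P$), so $x_{n(h,u)}=hx_u$ and the cocycle relation applies verbatim. One then gets $\alpha(g,x_{n(h,u)})\alpha(h,x_u)=\alpha(g,hx_u)\alpha(h,x_u)=\alpha(gh,x_u)$ on the nose, and \eqref{zero} follows after composing both sides with $\Res(1_{uN_k})$. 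The only remaining care is bookkeeping: verifying that all the enlargements of $k_0$ ($\geq k_h^{(1)}$ for the $t(h,u)$-containment, $\geq k_g^{(0)},k_{gh}^{(0)}$ for Lemma~\ref{alpha-int}) are simultaneously in force, which is exactly the definition $k_0=\max(k_g^{(0)},k_h^{(1)},k_{gh}^{(0)})+1$.
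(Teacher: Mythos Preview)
Your proposal is correct and matches the paper's proof essentially step for step: collapse $\mathcal H_{h,J(N_0/N_k)}\circ\Res(1_{uN_k})$ and $\mathcal H_{gh,J(N_0/N_k)}\circ\Res(1_{uN_k})$ to single terms by orthogonality, use $\alpha(h,x_u)uN_k\subset n(h,u)N_{k-k_0}$ (from $t(h,u)\in L_+s^{-k_h^{(1)}}$ and $k_0>k_h^{(1)}$) to collapse the $g$-sum to the term $v=n(h,u)$, identify $x_{n(h,u)}=hx_u$, and finish with the cocycle relation of Lemma~\ref{prod!} together with the equivalence $n(h,u)\in U_g\Leftrightarrow u\in U_{gh}$. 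One slip: where you write ``$n(g,u)\in J_u(N_0/N_{k-k_0})$'' you mean $n(h,u)$ --- the paper's setup paragraph contains the same typo, but its proof uses $n(h,u)$ throughout.
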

\begin{proof}
  The left side of (\ref{zero}) is
 \begin{align*}
     \sum_{v \in U_g \cap J_{u}(N_0/N_{k-k_0})}
    \alpha(g,x_v) \circ \Res(1_{vN_{k-k_0} }) \circ \, \alpha(h,x_u) \circ
    \Res(1_{uN_k })\ .
\end{align*}
The right side of (\ref{zero}) is $\alpha(gh,x_u) \circ
    \Res(1_{uN_k })$ if $u\in J(N_0/N_k)\cap U_{h}\cap U_{gh}$ and is $0$ if $u$ does not belong to $U_{gh}$.
We recall that
\begin{equation*}
    \alpha(h,x_u)u = n(h,u)t(h,u) \qquad\text{with $n(h,u) \in N_0$ and $t(h,u) \in L_+ s^{-k_h^{(1)}}$}.
\end{equation*}
It follows that
\begin{equation*}
    \alpha(h,x_u)uN_k w_0 P \subseteq n(h,u) N_{k-k_h^{(1)}} w_0 P \subset n(h,u) N_{k-k_ 0} w_0 P .
\end{equation*}
 We obtain
\begin{equation*}
    \Res(1_{vN_{k-k_0} }) \circ \alpha(h,x_u) \circ
    \Res(1_{uN_k })     =
    \begin{cases}
    \alpha(h,x_u) \circ  \Res(1_{uN_k }) & \text{if $vN_{k-k_0} = n(h,u)N_{k-k_0}$}, \\
    0 & \text{otherwise}.
    \end{cases}
\end{equation*}
We check now that $u\in U_{gh}\cap U_{h}$ if and only if   $n(h,u) \in U_{g}$.
Indeed $x_{u}= uw_{0} P/P$ belongs to $h^{-1} \mathcal{C}_0 \cap \mathcal{C}_0=U_{h}w_{0}P/P$,
\begin{equation*}
x_u \in (gh)^{-1} \mathcal{C}_0 \cap h^{-1} \mathcal{C}_0 \cap \mathcal{C}_0 \quad\text{ if and only if}\quad  hx_{u}\in g^{-1} \mathcal{C}_0 \cap   \mathcal{C}_0
\end{equation*}
and $hx_{u}=\alpha(h,x_{u})x_{u}=n(h,u)w_{0}P/P$. It follows that   $u\in U_{gh}\cap U_{h}$ if and only if   $n(h,u) \in U_{g}$.
As  $J_u(N_0/N_{k-k_0})$ contains
$n(h,u)$, we have $v= n(h,u)$ when $vN_{k-k_0} = n(h,u)N_{k-k_0}$.
We deduce that the left side of (\ref{zero}) is  $0$ when $u$ does not belong to $U_{gh} $ and otherwise  is equal to
$$ \alpha(g,hx_u)\circ \alpha(h,x_u) \circ  \Res(1_{uN_k })  =  \alpha(gh,x_u) \circ  \Res(1_{uN_k }) \ ,
$$
where the last equality  follows from the product formula for $\alpha$ (Lemma \ref{prod!}).
     \end{proof}

 We have proved that $b_{k,u}=0$, therefore ending the proof of the product formula.

\subsection{Reduction modulo $p^{n}$}\label{topAM}

We investigate now the situation that will appear
for generalized $(\varphi, \Gamma)$-modules $M$, where the reduction modulo a power of $p$ allows  to reduce to the simpler case where
$M$ is killed by a power of $p$. We will use later this section to get a special family $\mathfrak C_{s}$ in $M$ such that  the $(s,\res, \mathfrak C_{s})$-integrals $\mathcal H_{g}$ exist for all $g\in N_{0}\overline P N_{0}$ and  satisfy the relations H1, H2, H3 of Prop.  \ref{multiplicative}.

\bigskip We assume now that $(A,M)$  satisfies:
{\sl \begin{itemize}
 \item[a.]   $A$ is a commutative ring with the $p$-adic topology (the ideals $p^{n}A$  for $n\geq 1$ form a basis of neighborhoods of $0$) and   is Hausdorff.

  \item[b.]  $M$ is a linearly topological $A$-module with  a topology weaker than the $p$-adic topology (a neighborhood of $0$  contains some  $p^n M$) and     $M$ is a Hausdorff  and  topological  $A[P_{+}]$-module   as in section \ref{fc} (we do not suppose that $M$ is complete).
   \item[c.] The submodules $p^n M$, for  $n \geq 1$, are closed in $M$.
  \item[d.] $M$ is $p$-adically complete: the linear map $
    M  \to \varprojlim_{n\geq 1} (M/p^n M)
$ is  bijective.
  \end{itemize}}
 For all $n \geq 1$,  we equip $M/p^{n}M$ with  the quotient topology so that  the quotient map $p_{n}:M\to M/p^{n}M$ is continuous.
  The natural  homomorphism
 \begin{equation*}
  M \xrightarrow{\; \cong \;} \varprojlim_{n \geq 1} (M/p^n M)
  \end{equation*}
is an homeomorphism, and the natural  homomorphism
\begin{equation*}
       \End_A^{cont}(M) \xrightarrow{\; \cong \;} \varprojlim_{n \geq 1}\End_A^{cont}(M/p^n M)
\end{equation*}
is bijective. We  have:

\begin{itemize} \label{spintegr}

  \item[-] For a  subset $C$ of $M$,  let $\overline C $ be the  closure  of $C$. Then $\overline C= \varprojlim_{n \geq 1}\overline {p_{n}(C)}$ and if $C$ is closed, $C=\varprojlim_{n \geq 1}p_{n}(C)$.  If  $C$ is $p$-compact  (i.e. $p_{n}(C)$ are compact for all $n\geq 1$), then $C$ is compact, and conversely (\cite{TG} I.29  Cor. and I.64  Prop.8).

\item[-] An endomorphism $f$ of $M$ which is $p$-continuous (i.e. the endomorphism $f_{n}$  induced by $f$ on $M/p^{n}M$ is continuous for all $n\geq 1$) is continuous, and conversely.

\item[-] An action of a topological group $H$   on $M$  which is $p$-continuous (i.e. the induced action of $H$ on $M/p^{n}M$ is continuous for all $n\geq 1$)   is continuous, and conversely.

\item[-]  If the  $M/p^{n}M$ are complete  for all $n\geq 1$, then $M$ is complete.

\item[-] The image $\mathfrak C_{n}$  in $M/p^{n}M$, for all $n\geq 1$,  of a special family $\mathfrak C$ of compact subsets in $M$  such that, for all positive integers  $n$,
\begin{equation*}   p^{n} M\cap M(\mathfrak C)= p^{n}  M(\mathfrak C)
\end{equation*}
is a special family. In this case, one has $M(\mathfrak C_{n})=M(\mathfrak C )/p^{n}M(\mathfrak C )$.
 \item[-]    $M$ is  a topologically \'etale $A[P_{+}]$-module if and only if $M/p^{n}M$ is a topologically  \'etale $A[P_{+}]$-module, for all $n\geq 1$.
   If we replace ``topologically'' by ``algebraically'', this is the same proof  as for classical $(\varphi, \Gamma)$-modules (see subsection \ref{CPG}).
      The canonical inverse $\psi_{s}$ of the action $\varphi_{s}$ of $s$ is continuous if and only if it is $p$-continuous.

   \end{itemize}
We introduce now our setting which will be discussed in this section.

\bigskip {\sl We suppose that :
 \begin{itemize}
 \item[-] $M$ is  a   topologically \'etale $A[P_{+}]$-module, and  $M/p^{n}M$ is complete  for all $n\geq 1$.
 \item[-] We are given, for $n\geq 1$,  a special family $ \mathfrak C_{n }$
of compact  subsets in  $M_{n}=M/p^{n}M$ such that $\mathfrak C_{n }$ contains the image of $\mathfrak C_{n+1 }$ in $M_{n}$ for all $n\geq 1$.
    \end{itemize}}
  Let $ \mathfrak C$ be the set of compact subsets   $C\subset M$ such that $p_{n}(C)\in  \mathfrak C_{n }$ for all $n\geq 1$.

  \begin{lemma}\label{Limspe}
   $ \mathfrak C$ is a  special family in $M$ and $M( \mathfrak C) =\varprojlim_{n\geq 1} M(\mathfrak{C}_{n})$.
  \end{lemma}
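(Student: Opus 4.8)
The statement has two parts: that $\mathfrak{C}$ is a special family in $M$ (i.e. satisfies $\mathfrak{C}(1)$--$\mathfrak{C}(4)$), and that $M(\mathfrak{C}) = \varprojlim_{n \geq 1} M(\mathfrak{C}_n)$. I will verify the axioms one by one, using throughout the homeomorphism $M \xrightarrow{\sim} \varprojlim_n M/p^nM$ and the facts recorded just before the lemma: a subset $C \subseteq M$ is compact if and only if it is $p$-compact (all $p_n(C)$ compact), and a closed subset $C$ satisfies $C = \varprojlim_n p_n(C)$. The key point to keep in mind is that the compatibility hypothesis ``$\mathfrak{C}_n$ contains the image of $\mathfrak{C}_{n+1}$ in $M_n$'' is what makes the projective limit construction coherent.

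\textbf{Checking $\mathfrak{C}(1)$--$\mathfrak{C}(3)$.} For $\mathfrak{C}(1)$: if $C \in \mathfrak{C}$ and $C' \subseteq C$ is compact, then each $p_n(C')$ is a compact subset of $p_n(C) \in \mathfrak{C}_n$, hence lies in $\mathfrak{C}_n$ by $\mathfrak{C}_n(1)$; so $C' \in \mathfrak{C}$. For $\mathfrak{C}(2)$: if $C_1, \dots, C_r \in \mathfrak{C}$ then $p_n(\bigcup_i C_i) = \bigcup_i p_n(C_i) \in \mathfrak{C}_n$ by $\mathfrak{C}_n(2)$, and $\bigcup_i C_i$ is compact; so it lies in $\mathfrak{C}$. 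For $\mathfrak{C}(3)$: $p_n(N_0 C) = N_0 p_n(C) \in \mathfrak{C}_n$ by $\mathfrak{C}_n(3)$ (here $N_0$ acts on $M_n$ through the quotient action), and $N_0 C$ is compact since $N_0$ is compact and the action is continuous; so $N_0 C \in \mathfrak{C}$. Each of these is routine once one notes that $p_n$ commutes with the relevant set operations and module/group actions, and that the relevant sets remain compact.

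\textbf{Checking $\mathfrak{C}(4)$ and the displayed formula.} This is the substantive step. First I would prove $M(\mathfrak{C}) = \varprojlim_n M(\mathfrak{C}_n)$, viewing the right-hand side as a subset of $\varprojlim_n M_n = M$; here I use that the transition maps $M_{n+1} \to M_n$ send $M(\mathfrak{C}_{n+1})$ into $M(\mathfrak{C}_n)$, which holds because $\mathfrak{C}_{n+1}$ maps into $\mathfrak{C}_n$. The inclusion $M(\mathfrak{C}) \subseteq \varprojlim_n M(\mathfrak{C}_n)$ is clear: if $x \in C$ with $C \in \mathfrak{C}$, then $p_n(x) \in p_n(C) \in \mathfrak{C}_n$, so $p_n(x) \in M(\mathfrak{C}_n)$ for all $n$. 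For the reverse inclusion, take $x = (x_n)_n$ with $x_n \in M(\mathfrak{C}_n)$; choose $C_n \in \mathfrak{C}_n$ with $x_n \in C_n$, and I would like to produce a single $C \in \mathfrak{C}$ containing $x$. The natural candidate is $C := \varprojlim_n D_n$ where $D_n \in \mathfrak{C}_n$ is built so that the transition map $M_{n+1} \to M_n$ sends $D_{n+1}$ onto (or at least into, surjectively onto $p_n$-image of) $D_n$ and $x_n \in D_n$; concretely one can take $D_n$ to be the image in $M_n$ of $C_m$ for $m$ large together with $\{x_1,\dots\}$-compatible corrections, using $\mathfrak{C}_n(1)$ and $\mathfrak{C}_n(2)$ to stay inside $\mathfrak{C}_n$. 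Then $C$ is closed (a projective limit of compact Hausdorff pieces) and $p$-compact, hence compact, and $p_n(C) \subseteq D_n \in \mathfrak{C}_n$ gives $C \in \mathfrak{C}$; also $x \in C$. This shows $\varprojlim_n M(\mathfrak{C}_n) \subseteq M(\mathfrak{C})$, establishing the formula. Once the formula is known, $\mathfrak{C}(4)$ follows: $M(\mathfrak{C}_n) = M(\mathfrak{C})_n$-type reasoning shows $M(\mathfrak{C})$ is an $A[P_+]$-submodule of $M$ because each $M(\mathfrak{C}_n)$ is one in $M_n$ (by $\mathfrak{C}_n(4)$) and these are compatible; étaleness of $M(\mathfrak{C})$ then follows from étaleness of each $M(\mathfrak{C}_n)$ together with Remark~\ref{projlim} (a projective limit of étale modules is étale), or directly by checking the decomposition $M(\mathfrak{C}) = \oplus_{u} u\varphi_s(M(\mathfrak{C}))$ level by level and passing to the limit using that $\varphi_s, \psi_s$ are $p$-continuous.

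\textbf{Main obstacle.} The delicate point is the reverse inclusion $\varprojlim_n M(\mathfrak{C}_n) \subseteq M(\mathfrak{C})$: given a compatible sequence $(x_n)$ with $x_n \in C_n \in \mathfrak{C}_n$, the chosen $C_n$ need not be compatible under the transition maps, so one cannot naively set $C = \varprojlim C_n$. The fix is to replace $C_n$ by $\widetilde{C}_n := \bigcap_{m \geq n} (\text{image of } C_m \text{ in } M_n)$ or, more robustly, to use a diagonal/telescoping construction exploiting $\mathfrak{C}_n(1)$ (pass to compact subsets) and $\mathfrak{C}_n(2)$ (take finite unions) to arrange a coherent system $D_n \in \mathfrak{C}_n$ with $x_n \in D_n$ and $D_{n+1} \mapsto D_n$; I expect this bookkeeping, rather than any deep topological input, to be where the real work lies. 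Everything else reduces to the standard interplay between $p$-adic completeness, compactness, and the stated lemmas on $\varprojlim$.
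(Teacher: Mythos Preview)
Your approach for $\mathfrak{C}(1)$--$\mathfrak{C}(3)$ and the overall structure for $\mathfrak{C}(4)$ matches the paper exactly, including the appeal to Remark~\ref{projlim} for \'etaleness of the projective limit.

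However, you have misdiagnosed the ``main obstacle''. The reverse inclusion $\varprojlim_n M(\mathfrak{C}_n) \subseteq M(\mathfrak{C})$ is in fact immediate, and the paper simply records it as the equivalence ``$x\in M(\mathfrak C)$ if and only if $p_n(x)\in M(\mathfrak C_n)$ for all $n$''. The point you are missing is that you may take $C=\{x\}$. Indeed, if $p_n(x)\in M(\mathfrak C_n)$ then $p_n(x)$ lies in some $C_n\in\mathfrak C_n$; since $\{p_n(x)\}$ is a compact subset of $C_n$, axiom $\mathfrak C_n(1)$ gives $\{p_n(x)\}\in\mathfrak C_n$. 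Thus the compact set $\{x\}\subset M$ satisfies $p_n(\{x\})\in\mathfrak C_n$ for every $n$, so $\{x\}\in\mathfrak C$ and $x\in M(\mathfrak C)$. No telescoping or diagonal construction is needed, and the vague construction you sketch (infinite unions or intersections of images of the $C_m$) would in any case not obviously stay inside $\mathfrak C_n$, which is only closed under \emph{finite} unions.
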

   \begin{proof}
    $ \mathfrak C(1)$ It is obvious that a compact subset $C'$ of $C\in \mathfrak{C} $ is in  $\mathfrak{C} $ because $p_{n}$ is continuous and $p_{n}(C')$ is compact.

$ \mathfrak C(2)$  $p_{n}$ commutes with finite union hence $ \mathfrak{C} $ is stable by finite union.

$ \mathfrak C(3)$ $p_{n}$ commutes with the action of $N_{0}$ hence  $C \in \mathfrak{C} $  implies $N_{0}C \in \mathfrak{C} $.

$ \mathfrak C(4)$ By definition $x\in M(\mathfrak{C}) $ if and only if $p_{n}(x)\in M(\mathfrak{C_{n}}) $ for all $n>1$.   The compatibility of the   $\mathfrak C_{n }$ implies that the $M(\mathfrak{C}_{n})$ form a projective system. We deduce   $M(\mathfrak{C})= \varprojlim_{n\geq 1} M(\mathfrak{C}_{n})$.
 As the latter ones are topologically \'etale,   the topological $A[P_{+}]$-module $M(\mathfrak{C})$ is topologically \'etale
by Remark \ref{projlim}.
\end{proof}
We have the natural map
\begin{equation*}
\varprojlim_{n} \Hom_{A} (M(\mathfrak C_{n}),  M/p^{n}M)\to \Hom_{A} (\varprojlim_{n}M(\mathfrak C_{n}),  \varprojlim_{n}M/p^{n}M)= \Hom_{A} ( M(\mathfrak C ),   M ) \ .
\end{equation*}

\begin{lemma}  The above map induces a continuous map
\begin{equation}\label{f:pl}
\varprojlim_{n} \Hom_{A}^{\mathfrak C_{n}cont}(M(\mathfrak C_{n}),  M/p^{n}M) \to \Hom_{A}^{\mathfrak Ccont}( M(\mathfrak C ),   M ) \ ,
\end{equation}
for the projective limit of the $\mathfrak C_{n}$-open topologies on the left hand side.
  \end{lemma}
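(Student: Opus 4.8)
The plan is to show three things: that the map in \eqref{f:pl} is well defined (i.e. lands in $\mathfrak{C}$-continuous homomorphisms), that it is continuous, and — although the statement as phrased only asserts these two — to keep in mind that one really wants it to be an isomorphism onto its image (or even bijective), since that is how it will be used. First I would unwind the definitions. An element of $\varprojlim_{n} \Hom_{A}^{\mathfrak C_{n}cont}(M(\mathfrak C_{n}),  M/p^{n}M)$ is a compatible family $(f_{n})_{n\geq 1}$ of $A$-linear maps, each $f_{n}$ continuous on every $C_{n}\in\mathfrak{C}_{n}$; compatibility means the square with the reduction maps $M(\mathfrak C_{n+1})\to M(\mathfrak C_{n})$ and $M/p^{n+1}M\to M/p^{n}M$ commutes. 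By Lemma \ref{Limspe} we have $M(\mathfrak C)=\varprojlim_{n}M(\mathfrak C_{n})$, and by assumption d (together with the identification $M\xrightarrow{\cong}\varprojlim_{n}M/p^{n}M$) the family $(f_{n})$ glues to a unique $A$-linear map $f\colon M(\mathfrak C)\to M$. So the underlying set map is the obvious one; the content is the topological statements.

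Next I would check $\mathfrak{C}$-continuity of $f$. Fix $C\in\mathfrak{C}$; I must show $f|_{C}$ is continuous. By the description of the topologies, the composite $C\xrightarrow{f}M\xrightarrow{p_{n}}M/p^{n}M$ equals $p_{n}(C)\xrightarrow{f_{n}}M/p^{n}M$ precomposed with $C\to p_{n}(C)$ (which is continuous, $p_{n}$ being continuous), and $p_{n}(C)\in\mathfrak{C}_{n}$ by definition of $\mathfrak{C}$, so $f_{n}|_{p_{n}(C)}$ is continuous by hypothesis. Hence $p_{n}\circ f|_{C}$ is continuous for every $n$; since the topology on $M$ is the projective limit of those on $M/p^{n}M$, it follows that $f|_{C}$ is continuous. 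Thus $f\in\Hom_{A}^{\mathfrak Ccont}(M(\mathfrak C),M)$ and the map \eqref{f:pl} is well defined.

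For continuity of \eqref{f:pl}, I would work directly with the fundamental systems of open neighborhoods of zero. A basic open neighborhood of $0$ on the target is $E(C,\mathcal{M})$ for $C\in\mathfrak{C}$ and $\mathcal{M}\subseteq M$ an open $A$-submodule; by assumption b we may take $\mathcal{M}\supseteq p^{n}M$ for some $n$, and then $\mathcal{M}$ is the preimage of the open submodule $\overline{\mathcal{M}}:=\mathcal{M}/p^{n}M\subseteq M/p^{n}M$. I claim the preimage of $E(C,\mathcal{M})$ contains the preimage under the projection $\varprojlim_{m}\Hom_{A}^{\mathfrak C_{m}cont}(M(\mathfrak C_{m}),M/p^{m}M)\to\Hom_{A}^{\mathfrak C_{n}cont}(M(\mathfrak C_{n}),M/p^{n}M)$ of the basic open set $E(p_{n}(C),\overline{\mathcal{M}})$: indeed if $(f_{m})$ maps under the $n$-th projection into $E(p_{n}(C),\overline{\mathcal{M}})$, then $f_{n}(p_{n}(C))\subseteq\overline{\mathcal{M}}$, so $p_{n}(f(C))\subseteq\overline{\mathcal{M}}$, i.e. $f(C)\subseteq\mathcal{M}$, whence $f\in E(C,\mathcal{M})$. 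Since the indicated preimage is open in the projective limit topology, continuity follows.

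The main obstacle, and the thing I would flag, is that the bare well-definedness and continuity are genuinely routine once the identifications $M=\varprojlim M/p^{n}M$, $M(\mathfrak C)=\varprojlim M(\mathfrak C_{n})$ (Lemma \ref{Limspe}) and $\End_{A}^{cont}(M)=\varprojlim\End_{A}^{cont}(M/p^{n}M)$ are in hand; the delicate point in the surrounding argument is the reverse direction — showing that a $\mathfrak{C}$-continuous map $M(\mathfrak C)\to M$ actually descends to a compatible family of $\mathfrak{C}_{n}$-continuous maps, which requires $p^{n}M\cap M(\mathfrak C)=p^{n}M(\mathfrak C)$ (so that $M(\mathfrak C_{n})=M(\mathfrak C)/p^{n}M(\mathfrak C)$ as noted in the bulleted list preceding Lemma \ref{Limspe}) and the fact that the $\mathfrak C_{n}$-open topology on the quotient is the quotient of the $\mathfrak C$-open topology. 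If the intended claim is that \eqref{f:pl} is a topological isomorphism, I would prove injectivity from Hausdorffness of $M$ (assumption b and the bulleted remarks), surjectivity by the descent just described, and bicontinuity by combining the estimate above with its converse, namely that for $C'\in\mathfrak{C}_{n}$ one can lift to $C\in\mathfrak{C}$ with $p_{n}(C)\supseteq C'$ (using $\mathfrak{C}(1)$ and the compatibility $\mathfrak{C}_{n+1}\to\mathfrak{C}_{n}$), so that $E(C,\mathcal{M})$ pulled back sits inside the preimage of $E(C',\overline{\mathcal{M}})$.
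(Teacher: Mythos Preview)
Your proof of well-definedness and continuity is correct and follows essentially the same route as the paper: for well-definedness you factor $f|_C$ through the continuous maps $f_n|_{p_n(C)}$ and use that $M$ carries the projective-limit topology; for continuity you pick $n$ with $p^nM\subseteq\mathcal{M}$ and identify an open set at level $n$ whose preimage lands in $E(C,\mathcal{M})$. The paper does exactly this, phrasing the continuity step as an explicit equality of the preimage with a finite-level condition rather than just a containment, but the argument is the same.

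Your third paragraph, however, reflects a misreading of how the lemma is used. The application (Prop.~\ref{basicspecial}) only needs the forward direction: the compatible family $(\mathcal{H}_{g,n})_n$ lives in the projective limit, and one pushes it forward via \eqref{f:pl} to obtain $\mathcal{H}_g$; continuity then guarantees that the limit $\lim_k\mathcal{H}_{g,J(N_0/N_k)}$ exists and equals this image. No descent of $\mathfrak{C}$-continuous maps on $M(\mathfrak{C})$ back to the levels is ever invoked, so the questions you raise about $p^nM\cap M(\mathfrak{C})=p^nM(\mathfrak{C})$, surjectivity, and bicontinuity are not needed here and can be dropped.
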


   \begin{proof} Let $f=\varprojlim f_{n}$ be a map in the image, and let $C\in \mathfrak C $. Then $f|_{C}$ is the projective limit of the $f_{n}|_{p_{n}(C)}$ hence is continuous. This means that the map in the  assertion is well defined. For the continuity, let $C\in  \mathfrak C $ and $\mathcal M \subset M$
   be an open $A$-submodule. The preimage of $E(C, \mathcal M)$ is equal to
 \begin{equation*}
 \big ( \varprojlim_{n} \ \Hom_{A}^{\mathfrak C_{n}cont}(M(\mathfrak C_{n}),  M/p^{n}M) \big )\  \cap  \ \big ( \prod_{n} E(p_{n}(C), \mathcal M+p^{n}M/p^{n}M) \big ) \ .
  \end{equation*}
Since $\mathcal M$ contains some $p^{n_{o}} M$, this intersection is equal to the open submodule
 \begin{equation*}
  \{ (f_{n}) \in \varprojlim_{n} \ \Hom_{A}^{\mathfrak C_{n}cont}(M(\mathfrak C_{n}),  M/p^{n}M)  :  f_{n  } \in E(p_{n}(C), \mathcal M+p^{n}M/p^{n}M)  \ \text{for} \  n \leq n_{0} \}.
  \end{equation*}
\end{proof}

   \begin{proposition} \label{basicspecial}   In the above setting assuming that    all the assumptions of Prop. \ref{corspecial} are satisfied for $(s, M/p^{n}M, \mathfrak C_{n } )$ and  for all $n\geq 1$.
 Then,  for all $g\in N_{0}\overline P N_{0}$,
  the functions
  $$\alpha_{g,0}:N_{0}\to \Hom_A^{\mathfrak{C}ont} (M(\mathfrak{C}),M)$$ are $(s,res, \mathfrak C)$-integrable,  their $(s,res, \mathfrak C)$-integrals
  $\mathcal H_{g} $ belong to $\End_{A}(M(\mathfrak{C}))$ and satisfy  the relations H1, H2, H3 of Prop. \ref{multiplicative}.
  \end{proposition}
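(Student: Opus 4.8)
The plan is to reduce the statement for a general $p$-adically complete $M$ to the already-established case of Proposition \ref{corspecial}, applied level by level to the quotients $M/p^{n}M$. First I would record the setup: by Lemma \ref{Limspe} the family $\mathfrak C$ is a special family in $M$ with $M(\mathfrak C)=\varprojlim_{n}M(\mathfrak C_{n})$, and one checks that $p^{n}M\cap M(\mathfrak C)=p^{n}M(\mathfrak C)$ so that the image of $\mathfrak C$ in $M_{n}=M/p^{n}M$ is exactly $\mathfrak C_{n}$ and $M(\mathfrak C_{n})=M(\mathfrak C)/p^{n}M(\mathfrak C)$. One also notes that the maps $\res(1_{uN_{k}})$, $\varphi$, $\psi$, and the action of $N_{0}$ on $M$ are all compatible with reduction modulo $p^{n}$, as is the function $\alpha_{g,0}$; concretely $\alpha_{g,0}$ for $M$ reduces modulo $p^{n}$ to $\alpha_{g,0}$ for $M_{n}$, since the maps $\Res(1_{N_{0}})$, $\alpha(g,x_{u})$ and the embedding $\End_{A}^{cont}(M)\hookrightarrow E^{cont}$ are all defined functorially from the $P_{+}$-action.

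Next I would prove the integrability of $\alpha_{g,0}\colon N_{0}\to\Hom_{A}^{\mathfrak Cont}(M(\mathfrak C),M)$. Since all hypotheses of Proposition \ref{corspecial} — in particular $\mathfrak C(5)$ and $\mathfrak T(1)$ — hold for each $(s,M_{n},\mathfrak C_{n})$, Proposition \ref{criterion} gives that each $\alpha_{g,0}\colon N_{0}\to\Hom_{A}^{\mathfrak C_{n}cont}(M(\mathfrak C_{n}),M_{n})$ is $(s,\res,\mathfrak C_{n})$-integrable, with integral $\mathcal H_{g}^{(n)}$ lying in $\End_{A}(M(\mathfrak C_{n}))$. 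The partial sums $\sum_{u\in J(N_{0}/N_{k})}\alpha_{g,0}(u)\circ\res(1_{uN_{k}})$ for $M$ reduce modulo $p^{n}$ to the corresponding partial sums for $M_{n}$; using the continuous map \eqref{f:pl} and the fact that the $\mathfrak C$-open topology on $\Hom_{A}^{\mathfrak Cont}(M(\mathfrak C),M)$ is the projective limit (via $p^{n}M\subseteq\mathcal M$ for any open $\mathcal M$) of the $\mathfrak C_{n}$-open topologies, the net converges to the element $\mathcal H_{g}:=\varprojlim_{n}\mathcal H_{g}^{(n)}$ and the limit is independent of the choices of $J(N_{0}/N_{k})$. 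Because each $\mathcal H_{g}^{(n)}$ is $A$-linear on $M(\mathfrak C_{n})$, the projective limit $\mathcal H_{g}$ is $A$-linear on $M(\mathfrak C)$, i.e.\ $\mathcal H_{g}\in\End_{A}(M(\mathfrak C))$.

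Finally, the relations H1, H2, H3 of Proposition \ref{multiplicative} are closed conditions that can be checked after applying each continuous reduction map $p_{n}$: indeed $\res(1_{\mathcal V})$, $\res(1_{g^{-1}\mathcal V\cap\mathcal C_{0}})$, $\res(1_{h^{-1}\mathcal C_{0}\cap\mathcal C_{0}})$ and the action of $b\in P$ all commute with $p_{n}$, and $\mathcal H_{g}$, $\mathcal H_{h}$, $\mathcal H_{gh}$ reduce to the corresponding operators on $M_{n}$. Since by hypothesis all the assumptions of Proposition \ref{corspecial} hold for each $(s,M_{n},\mathfrak C_{n})$, that proposition yields H1, H2, H3 on $M(\mathfrak C_{n})$ for every $n$; as $M(\mathfrak C)=\varprojlim_{n}M(\mathfrak C_{n})$ and the relations are equalities of $A$-linear endomorphisms, they lift to $M(\mathfrak C)$. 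I expect the main obstacle to be the bookkeeping around topologies: one must verify carefully that convergence and independence of $J(N_{0}/N_{k})$ transfer across the projective limit, i.e.\ that a net in $\Hom_{A}^{\mathfrak Cont}(M(\mathfrak C),M)$ converges precisely when all its reductions mod $p^{n}$ converge compatibly — this uses that every open $A$-submodule of $M$ contains some $p^{n}M$ together with the completeness of $\Hom_{A}^{\mathfrak Cont}(M(\mathfrak C),M)$, and the precise form of \eqref{f:pl}. The algebraic relations themselves, once the limiting operators are known to exist, are formal consequences of the level-$n$ statements.
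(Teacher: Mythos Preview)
Your proposal is correct and follows essentially the same route as the paper: reduce $\alpha_{g,0}$ and its Riemann sums modulo $p^{n}$, apply the level-$n$ results from Proposition~\ref{corspecial}, and use the continuity of the map \eqref{f:pl} together with $M(\mathfrak C)=\varprojlim_{n}M(\mathfrak C_{n})$ to lift both the convergence and the relations H1--H3. One small caveat: the side claims in your first paragraph that $p^{n}M\cap M(\mathfrak C)=p^{n}M(\mathfrak C)$ and that the image of $\mathfrak C$ in $M_{n}$ is \emph{exactly} $\mathfrak C_{n}$ are neither used later nor obviously true in this setup (here the $\mathfrak C_{n}$ are given and $\mathfrak C$ is built from them, so one only knows $p_{n}(\mathfrak C)\subset\mathfrak C_{n}$); you can safely drop them and rely solely on Lemma~\ref{Limspe} and the lemma establishing \eqref{f:pl}.
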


 \begin{proof}  In the following we indicate with an extra index $n$ that the corresponding notation is meant for the module $M/p^{n} M $ with the special family
$\mathfrak C_{n } $. Then  $\alpha_{g,0}(u)$ is the image of $(\alpha_{g,0,n}(u))_{n}$ by the map \eqref{f:pl},  for $u\in N_{0}$. It follows that  $\mathcal H_{g, J(N_0/N_k) }$ is the image of  $(\mathcal H_{g, J(N_0/N_k) ,n})_{n}$ for $g\in N_{0}\overline P N_{0}$. By assumption the integral
$\mathcal H_{g,   n }=\lim_{k\to \infty}\mathcal H_{g, J(N_0/N_k) ,n} $ exists, lies in  $\Hom_A^{\mathfrak{C}_{n}ont} (M(\mathfrak{C}_{n}),M/p^{n}M)$, and satisfies the relations  H1, H2, H3 of Prop. \ref{multiplicative}.

The continuity of the map \eqref{f:pl} implies that the image  of $(\mathcal H_{g,   n })_{n}$ is  equal to the limit $ \lim_{k\to \infty}\mathcal H_{g, J(N_0/N_k) } $, therefore is
the integral $\mathcal H_{g}$ of $\alpha_{g,0}$. The additional properties for   $\mathcal H_{g}$ are inherited from the corresponding properties of the $\mathcal H_{g,   n }$.
 \end{proof}

Under the assumptions of Prop. \ref{basicspecial}, we  associate to $(s,M, \mathfrak C  )$,  an $A$-algebra homomorphism
\begin{equation*}
    \widetilde{\Res} \ : \ \mathcal{A}_{\mathcal{C} \subset G/P} \ \to \ \End_{A} (M(\mathfrak C)^{P}) \ .
\end{equation*}
via the propositions \ref{multiplicative} , \ref{P+-P}, which extends  the  $A$-algebra homomorphism
\begin{equation*}
    \Res \ : \ C^\infty_c(\mathcal{C},A) \# P  \ \to \ \End_{A} (M(\mathfrak C)^{P})
\end{equation*}
constructed in the proposition \ref{Res}.
  The  homomorphism  $\Res$ gives rise to a $P$-equivariant sheaf on $\mathcal{C}$ as described in detail in the theorem \ref{the11}.
The  homomorphism  $\widetilde{\Res}$ defines on the global sections with compact support $M(\mathfrak C)^{P}_c$ of the sheaf on $\mathcal{C}$ the structure of a nondegenerate $\mathcal{A}_{\mathcal{C} \subset G/P}$-module. The latter leads, by the proposition \ref{cat-equiv}, to the unital $C^\infty_c(G/P,A) \# G$-module $\mathcal{Z} \otimes_{\mathcal{A}} M(\mathfrak C)^{P}_c$ which corresponds to a $G$-equivariant sheaf on $G/P$ extending the earlier sheaf on $\mathcal{C}$ (remark \ref{G-extension}).

\section{Classical $(\varphi, \Gamma)$-modules on $\mathcal O_{\mathcal E}$}

\subsection{The Fontaine ring ${\cal O}_{\cal E}$} Let $K/\mathbb Q_{p}$  be  a finite extension of ring of integers $o$, of uniformizer $p_{K}$ and residue field $k$. By definition the Fontaine ring  ${\cal O}_{\cal E}$ over $o$ is the $p$-adic completion of the localisation of the Iwasawa $o$-algebra $\Lambda (\mathbb Z_{p}):=o[[\mathbb Z_{p}]]$ with respect to the multiplicative set of elements which are not divisible by $p$. We choose a generator $\gamma$ of $\mathbb Z_{p}$ of image $[\gamma]$ in ${\cal O}_{\cal E}$ and we denote $X=[\gamma]-1 \in {\cal O}_{\cal E}$.  The Iwasawa $o$-algebra $\Lambda (\mathbb Z_{p})$ is a local noetherian ring of maximal ideal $\mathcal M (\mathbb Z_{p})$ generated by $p_{K}, X$. It is a compact ring for the $\mathcal M (\mathbb Z_{p})$-adic topology.
The ring ${\cal O}_{\cal E}$  can be viewed as the ring of infinite Laurent series $\sum_{n \in \mathbb Z}a_{n}X^{n}$ over $o$ in the variable $X$ with $\lim_{n\to -\infty}a_{n}=0$, and $\Lambda (\mathbb Z_{p} )$ as the  subring  $o[[X]]$ of Taylor series. The Fontaine ring  ${\cal O}_{\cal E}$ is a local noetherian ring of maximal ideal $ p_{K}{\cal O}_{\cal E}$ and residue field isomorphic to $k((X))$; it is  a pseudo-compact  ring for the $p$-adic  ($=$ strong) topology and a complete ring  (with continuous multiplication) for the weak topology. A fundamental system of open neighborhoods of $0$ for the weak topology of ${\cal O}_{\cal E}$ is given by
$$
 (  O_{n,k} =p^{n}{\cal O}_{\cal E} + \mathcal M (\mathbb Z_{p})^{k})_{n,k\in \mathbb N}
$$
or by
$$
   (B_{n,k} =p^{n}{\cal O}_{\cal E} + X^{k}\Lambda (\mathbb Z_{p})_{n,k\in \mathbb N}
$$
Other fundamental system of  neighborhoods of $0$ for the weak topology  are
$$(O_{n}:=O_{n,n})_{n\geq 1} \quad \text{ or  } \quad (B_{n}:=B_{n,n})_{n\geq 1} \ . $$

\subsection{The group $GL(2, \mathbb Q_{p})$}
We consider the group $G=GL(2, \mathbb Q_{p})$ and
$$
N_{0 }:=\begin{pmatrix} 1&  \mathbb Z_{p} \cr 0 & 1
\end{pmatrix}\ , \
 \Gamma := \begin{pmatrix} \mathbb Z_{p} ^{*} & 0 \cr 0 & 1 \end{pmatrix} \ ,
\   L_{0} := \begin{pmatrix} \mathbb Z_{p} ^{*}& 0 \cr 0 & \mathbb Z_{p} ^{*}
\end{pmatrix}\ ,  \  L_{* }:= \begin{pmatrix} \mathbb Z_{p}-\{0\} & 0 \cr 0 & 1
\end{pmatrix}  \ ,  $$
$$
N_{k }:=\begin{pmatrix} 1& p^{k} \mathbb Z_{p} \cr 0 & 1
\end{pmatrix}\  , \ L_k:= \begin{pmatrix}1+p^{k} \mathbb Z_{p}  & 0 \cr 0 & 1+p^{k}\mathbb Z_{p}
\end{pmatrix}\    {\rm for}  \ k\geq 1\ ,
$$
$ P_k=L_kN_{k}$ for $k\in \mathbb N  $,  the upper triangular subgroup $P$, the diagonal subgroup $L$, the  upper unipotent subgroup $N$,  the center $Z$, the mirabolic monoid $P_{* } =N_{0}  L_{* } $, and the monoids  $L_{+}= L_{*}Z$ , $P_{+}=N_{0}L_{+} $.
The subset of non invertible elements in  the monoid $L_{*}$ is
$$\Gamma s_{p}^{\mathbb N-\{0\}}  = \{s_{a}: =\begin{pmatrix} a&0 \\ 0 & 1 \end{pmatrix} \ \ {\rm for } \ \ a\in p\mathbb Z_{p}- \{0\}  \} \  .
$$
An element $s\in \Gamma s_{p}^{\mathbb N-\{0\}}Z$  is called strictly dominant.
 In the following we   identify the group $\mathbb{Z}_p$  with $N_{0}$. The action of  $P_{+} $ on $N_{0}$ induces an \'etale ring action of $P_{ +} $ trivial on $Z$ on $\Lambda ( N_{0})$  which respects the ideal generated by $p$. This action  extends first to the localisation and then to the completion to give an \'etale ring action of $P_{ + }$ on  ${\cal O}_{\cal E}$ determined by its restriction to $P_{*}$. For   the
weak topology (and not for the $p$-adic topology), the  action $P_{ +} \times  {\cal O}_{\cal E}\to  {\cal O}_{\cal E}$ of the monoid $P_{+}$ on $ {\cal O}_{\cal E}$ is continuous (see  Lemma 8.24.i in \cite{SVig}). For $t\in L_+$ the  canonical left inverse $\psi_t$ of the action $\varphi _t$  of $t$ is continuous (this is proved in a more general setting later in Prop. \ref{contphipsi-general}).

\subsection{Classical \'etale $(\varphi, \Gamma)$-module}\label{CPG}

Let $s\in  \Gamma s_{p}^{\mathbb N-\{0\}}Z$. A finitely generated \'etale   $\varphi_s$-module $D$ over ${\cal O}_{\cal E}$ is a finitely generated ${\cal O}_{\cal E}$-module with an \'etale semilinear endomorphism   $\varphi_s$.  These modules form an abelian category $\mathfrak M_{{\cal O}_{\cal E}}^{et}(\varphi_s)  $. We fix such a module $D$.

\bigskip
In the following,  the topology of   $D$ is its weak topology.  For any surjective ${\cal O}_{\cal E}$-linear map $f: \oplus ^{d}{\cal O}_{\cal E}\to D$, the image  in $D$ of a fundamental system of  neighborhoods of $0$ in $ \oplus ^{d}{\cal O}_{\cal E}$ for the weak topology is a fundamental system of  neighborhoods of $0$ in $D$. Finitely generated $\Lambda (N_{0})$-submodules  of $D$ generating the ${\cal O}_{\cal E}$-module $D$  will be called lattices.
The   map $f$  sends $\oplus  ^{d}\Lambda (\mathbb{Z}_{p})$  onto a lattice $D^{0}$ of $D$. We note $\mathcal O_{n,k}:=p^{n}D +  \mathcal M (\mathbb Z_{p})^{k} D^{0}$  and  $\mathcal B_{n,k}:=p^{n}D +  X^{k} D^{0}$. Writing $\mathcal O_{n}:=\mathcal O_{n,n}$ and $\mathcal B_{n}:=\mathcal B_{n,n}$,
  $(\mathcal O_{n})_{n}$ and $(\mathcal B_{n})_{n}$ are two  fundamental systems of neighborhoods of $0$ in $D$. The  topological ${\cal O}_{\cal E}$-module $D$ is Hausdorff and complete.

A treillis $D _{0}$ in   $D$ is a compact $\Lambda (N_{0})$-submodule $D _{0}$  such that the image of $D_{0}$ in  the finite dimensional $k((X))$-vector space $D/p_K D$   is a $k[[X]]$-lattice (\cite{Mira} D\'ef. I.1.1).
 A lattice is a treillis and a treillis contains a lattice.

\bigskip

For $n  \geq 1$, the reduction modulo $p^{n} $ of $D$ is the finitely generated ${\cal O}_{\cal E}$-module $D/p^{n}D$ with the induced action of   $\varphi_s$. The action remains \'etale, because the multiplication by $p^{n}$ being a morphism in $\mathfrak M_{{\cal O}_{\cal E}}^{et}(\varphi_s)  $ its cokernel belongs to the category. The reduction modulo $p^{n}$ of   $\psi_s$ is the canonical left inverse of the reduction modulo $p^{n}$ of   $\varphi_s$.  The reduction modulo $p^{n}$ of a treillis of $D$ is a treillis of $D/p^{n}D$.

Conversely, if the reduction modulo $p^{n} $ of a finitely generated    $\varphi_s$-module $D$ over ${\cal O}_{\cal E}$  is \'etale for all $n\geq 1$, then $D$ is an \'etale   $\varphi_s$-module over ${\cal O}_{\cal E}$ because $D=\varprojlim_{n} D/p^{n}D$.

The weak topology of $D$ is the projective limit of the weak topologies of $D/p^{n}D$.

\bigskip When  $D$ is killed by a power of $p$ and  $D _{0}$ is a treillis of $D$, we have :

\begin{enumerate}
\item $D_{0}$ is  open and closed in $D$.

\item  $ (\mathcal M (\mathbb Z_{p})^{n} D_{0})_{n\in \mathbb N}$  and $(X^{n}D_{0})_{n\in \mathbb N}$ form two fundamental systems of open neighborhoods of zero in $D$.

\item Any  treillis of $D$ is contained in $X^{-n}D_{0}$ for some $n\in \mathbb N$.

\item $D=\bigcup_{k\in \mathbb N}X^{-k}D_{0}$.

\item $D_{0}$ is a lattice.

\end{enumerate}

The first   four properties are easy;  a reference  is   \cite{Mira} Prop. I.1.2. To show that $D_{0}$ is a lattice, we pick some lattice $D^0$ then $D_{0}$  is contained in the lattice  $X^{-n} D^0$ for some $n \in \mathbb{N}$ by the property 3.  Since the ring $\Lambda(N_0)$ is noetherian the assertion follows.

When  $D$ is killed by a power of $p$, the weak topology of $D$ is locally compact (by properties 2 and 5).

\begin{proposition}\label{contphipsi}
  Let $D$ be a finitely generated \'etale   $\varphi_s$-module  over ${\cal O}_{\cal E}$. Then   $\varphi_s$ and its canonical inverse   $\psi_s$ are continuous.
\end{proposition}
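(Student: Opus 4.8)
The plan is to reduce the statement to the two cases that have effectively already been treated in the excerpt: the case where $D$ is killed by a power of $p$, where the weak topology is locally compact and the module is compact modulo choosing a treillis, and the general case, where $D = \varprojlim_n D/p^nD$ and one bootstraps continuity from the finite levels. First I would observe that it suffices to prove the proposition when $D$ is killed by a power of $p$: indeed $D$ is $p$-adically complete with $D = \varprojlim_n D/p^nD$, the weak topology on $D$ is by definition the projective limit of the weak topologies on the $D/p^nD$, each $D/p^nD$ is again a finitely generated \'etale $\varphi_s$-module over $\mathcal O_{\mathcal E}$, and $\psi_s$ on $D$ is the inverse limit of the $\psi_s$ on the $D/p^nD$ (reduction mod $p^n$ commutes with $\varphi_s$ and sends the canonical left inverse to the canonical left inverse). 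So continuity of $\varphi_s$ and $\psi_s$ on $D$ follows from continuity on each $D/p^nD$ by the general fact (recorded in Section~\ref{topAM}) that a $p$-continuous endomorphism of a $p$-adically complete linearly topological module is continuous.

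Next I would handle the case $p^N D = 0$. Here I fix a treillis $D_0$ of $D$; by the listed properties $D_0$ is open, closed, compact, and in fact a lattice, the submodules $(X^k D_0)_k$ form a fundamental system of open neighborhoods of $0$, and $D = \bigcup_k X^{-k}D_0$, so the weak topology is locally compact and $D_0$ is a compact open $\Lambda(N_0)$-submodule. For continuity of $\varphi_s$: since $\varphi_s$ is $\Lambda(N_0)$-semilinear and $D_0$ generates $D$ over $\mathcal O_{\mathcal E}$, hence $\varphi_s(D_0)$ lies in some $X^{-m}D_0$; combined with the fact that $\varphi_s$ is additive and $\varphi_s(X^k x) = \varphi_s(X)^k\varphi_s(x)$ with $\varphi_s(X) \in X'\Lambda(N_0)$ for a suitable element (more precisely $\varphi_s$ maps $\mathcal M(\mathbb Z_p)$-adic neighborhoods into $\mathcal M(\mathbb Z_p)$-adic neighborhoods after the shift by $X^{-m}$), one gets that $\varphi_s$ maps a basis of neighborhoods of $0$ into neighborhoods of $0$, which is continuity. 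For continuity of $\psi_s$: this is where the \'etale decomposition $D = \bigoplus_{u \in J(N_0/sN_0s^{-1})} u\,\varphi_s(D)$ is essential. By compactness of $D$ and the fact that each summand is closed, this is a topological direct sum decomposition of $D$ into finitely many closed submodules (the argument of Lemma~\ref{cont1}(i)); $\psi_s$ kills all summands but $u=1$, and on $\varphi_s(D)$ it is the inverse of the homeomorphism $\varphi_s \colon D \xrightarrow{\sim} \varphi_s(D)$ (a continuous bijection between compact Hausdorff spaces, hence a homeomorphism). Therefore $\psi_s$ is continuous on each summand, hence continuous.

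Putting these together: in the $p^N D = 0$ case both maps are continuous by the compactness argument above, and in general both maps are $p$-continuous because they are continuous at every finite level, hence continuous by $p$-adic completeness. I expect the main obstacle to be the $\psi_s$ part in the $p$-power-torsion case, specifically justifying that the algebraic direct sum $D = \bigoplus_u u\varphi_s(D)$ is a \emph{topological} direct sum of closed submodules; once that is in place, the homeomorphism statement for $\varphi_s$ restricted to its image is the standard compact-Hausdorff fact, and everything else is routine bookkeeping with the neighborhood bases $(X^k D_0)_k$ and $(\mathcal O_n)_n$. One subtlety worth flagging is that $D_0$ being a genuine $\Lambda(N_0)$-\emph{lattice} (not merely a treillis) is what guarantees $\varphi_s(D_0)$ and $u^{-1}D_0$ stay within finitely many $X$-shifts of $D_0$, so the reduction to ``lattice'' via noetherianity of $\Lambda(N_0)$ should be invoked explicitly; alternatively one can work directly with an actual lattice $D^0 \subset D_0$ and the neighborhoods $\mathcal B_{n,k}$, which may be cleaner. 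Finally, continuity of $\varphi_s$ is actually also subsumed by the cited continuity of the monoid action $P_+ \times \mathcal O_{\mathcal E} \to \mathcal O_{\mathcal E}$ together with semilinearity and finite generation, so that half could be shortened by reference, but giving the direct argument keeps the proof self-contained.
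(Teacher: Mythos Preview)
Your reduction to the $p$-power torsion case and your argument for the continuity of $\varphi_s$ are fine (the paper proves $\varphi_s$ continuous by a slicker quotient-map argument, but yours works too). The gap is in your argument for $\psi_s$ in the torsion case: you invoke ``compactness of $D$'' and the fact that a continuous bijection between compact Hausdorff spaces is a homeomorphism, but $D$ is \emph{not} compact when killed by a power of $p$---it is only locally compact (it is the increasing union $\bigcup_k X^{-k}D_0$ of the compact treillis $X^{-k}D_0$). Consequently neither Lemma~\ref{cont1}(i) nor the compact-Hausdorff homeomorphism argument applies, and you have no justification that the algebraic decomposition $D=\bigoplus_u u\varphi_s(D)$ is a \emph{topological} direct sum, nor that $\varphi_s\colon D\to\varphi_s(D)$ is open onto its image.

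The paper closes this gap by a direct construction rather than by abstract compactness. Fix a lattice $D^0$ coming from a surjection $\oplus^d\mathcal O_{\mathcal E}\twoheadrightarrow D$ with generators $d_i$, and for each $k$ let $D_{0,k}$ be the $\Lambda(N_0)$-submodule generated by the $\varphi_s(X^k)\varphi_s(d_i)$. Since $\varphi_s$ is \'etale, $D_{0,k}$ is again a treillis, hence open in $D$; and by Lemma~\ref{produit} one has $\psi_s(D_{0,k})=X^kD^0$. Thus for every basic open neighborhood $X^kD^0$ of $0$ one exhibits an explicit open neighborhood $D_{0,k}$ with $\psi_s(D_{0,k})\subset X^kD^0$, giving continuity of $\psi_s$ at $0$ (hence everywhere by additivity). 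Your overall strategy is correct, but you need to replace the false compactness claim by this explicit treillis computation (or some equivalent argument showing $\varphi_s$ is open onto its image).
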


\begin{proof}
 a)  The above ${\cal O}_{\cal E}$-linear surjective  map $f:\oplus^{d}{\cal O}_{\cal E}\to D$ sends $(a_{i})_{i}$ onto $\sum_{i}a_{i}d_{i}$ for some elements $d_{i}\in D$.
As   $\varphi_s$ is \'etale, the map  $(a_{i})_{i}\mapsto \sum_{i}a_{i}\varphi _s(d_{i})$
 also gives an ${\cal O}_{\cal E}$-linear surjective map $\oplus^{d}{\cal O}_{\cal E} \to  D$. Both surjections are topological quotient maps by the definition of the  topology on $D$, and  the morphism   $\varphi_s$ of ${\cal O}_{\cal E}$ is continuous. We deduce that the morphism   $\varphi_s$ of $D$ is continuous.

b) The image  $\oplus^{d}\Lambda(N_{0})$ by $f$ is a lattice $D ^{0}$ of $D$. For any $k \in \mathbb{N}$ the $\Lambda(N_{0})$-submodule $D_{0,k}$ of $D$ generated by $(  \varphi_s(X^{k}e_{i}))_{ 1\leq i \leq d}$ also is a treillis of $D$ because   $\varphi_s$ is \'etale. We have $\psi_s (D_{0,k})= X^{k}D_{0}$ (cf.\ lemma \ref{produit}).

c) When $D$ is killed by a power of $p$, we deduce that   $\psi_s$ is continuous by the properties 1 and 2 of the treillis. When $D$ is not killed by a power of $p$,  we deduce that  the reduction modulo $p^{n} $ of   $\psi_s$ is continuous for all $n$; this implies that   $\psi_s$ is continuous  because $(A=o,D)$ satisfy the properties a, b, c, d of section \ref{topAM},
 and $D/p^{n}  D$ is a (finitely generated) \'etale   $\varphi_s$-module  over ${\cal O}_{\cal E}$.
 \end{proof}

We put
\begin{equation*}
    D^{+} := \{x \in D :\ \text{the sequence $(\varphi_s^k(x))_{k \in \mathbb{N}}$ is bounded in $D$} \}
\end{equation*}
(cf.\ \ref{subsec:bounded}) and
\begin{equation} \label{D++}
D^{++} :=\{x\in D \ | \ \lim_{k\to \infty}\varphi_s^{k}(x)=0\} \ .
\end{equation}

\begin{proposition} \label{Mira}
\begin{itemize}
  \item[(i)] When  $D$ is killed by a power of $p$, then $D^+$ and $D^{++}$ are lattices in $D$.
  \item[(ii)] There exists a  unique maximal treillis $D^{\sharp}$ such that $\psi_s(D^{\sharp})=D^{\sharp}$.
  \item[(iii)] The set of   $\psi_s$-stable treillis in $D$ has a unique minimal element $D^\natural$; it satisfies $\psi_s(D^\natural)=D^\natural$.
  \item[(iv)] $X^{-k} D^\sharp$ is a treillis stable by   $\psi_s$ for all $k\in \mathbb N$.
\end{itemize}
\end{proposition}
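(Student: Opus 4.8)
The four statements are the standard structural facts about classical étale $(\varphi,\Gamma)$-modules due to Colmez, adapted to our normalization where $\varphi=\varphi_s$ for a strictly dominant $s$. The plan is to prove (i) first, then deduce (ii)--(iv) from it by passing to reductions modulo $p^n$ and taking projective limits, using the already established fact (Proposition \ref{contphipsi}) that $\varphi_s$ and $\psi_s$ are continuous for the weak topology. Throughout we use the properties 1--5 of treillis recorded above (when $D$ is killed by a power of $p$) and the fact that $\Lambda(N_0)$ is noetherian so that treillis and lattices interact well.

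\emph{Step 1: proof of (i).} Assume $p^m D=0$. For $D^{++}$: the set $D^{++}=\{x\mid \varphi_s^k(x)\to 0\}$ is visibly an $o$-submodule, and it is $\Lambda(N_0)$-stable since $\varphi_s^k(yx)=\varphi_s^k(y)\varphi_s^k(x)$ and multiplication is (weakly) continuous with $\varphi_s^k(y)$ ranging in a bounded set. Pick a lattice $D^0$ generated by $e_1,\dots,e_d$; étaleness gives $\varphi_s(D^0)\subseteq D^0$ after possibly enlarging (replace $D^0$ by the treillis $\sum\Lambda(N_0)\varphi_s(e_i)$ if needed, intersecting with the original — here one invokes that a treillis contains a lattice and is contained in $X^{-n}$ of a lattice). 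Then $X^{k}D^0\subseteq D^{++}$ for $k$ large because $\varphi_s$ is topologically nilpotent on $X^k D^0$ (the key computation: $\varphi_s(X^k D^0)\subseteq X^{pk}\varphi_s(D^0)\cdot(\text{unit})\subseteq X^{k+1}D^0$ for $k\geq 1$, using that $\varphi_s$ raises $X$-adic valuation — this is where the explicit action of $s$ on $\mathcal O_{\mathcal E}$ enters), so $D^{++}$ contains an open lattice; and $D^{++}\subseteq \bigcup_k X^{-k}D^0=D$ is automatic, but one must check $D^{++}\neq D$, i.e. that $D^{++}$ is bounded — equivalently contained in some $X^{-N}D^0$. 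For this note $D^{++}\subseteq D^{+}$, and $D^{+}$ is bounded: if $x\in D^{+}$ then $\{\varphi_s^k(x)\}$ lies in a treillis $D_1$, and $x=\psi_s^k\varphi_s^k(x)$ shows... actually the cleaner route is to show $D^{+}$ is itself a treillis directly by reduction to the residue field $k((X))$, where $\varphi_s$-boundedness of a $k[[X]]$-submodule forces it to be a lattice by a valuation argument (no infinite ascending chain of $X$-powers). So $D^{+}$, and hence $D^{++}\subseteq D^{+}$, is a lattice, proving (i).

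\emph{Step 2: proof of (ii) and (iii).} For (ii), when $p^m D=0$, consider the family of $\psi_s$-stable treillis; set $D^\sharp:=\bigcap_{k\geq 0}\psi_s^k(D^0)$ for a lattice $D^0$ with $\psi_s(D^0)\subseteq D^0$ (such exists: enlarge $D^0$ using $\psi_s(X^k D^0)=X^k D^0$-type identities from Lemma \ref{produit}, or rather take $D^0$ large enough that $\psi_s(D^0)\subseteq D^0$, possible since $\psi_s$ is continuous and the treillis are cofinal). This intersection is a treillis (decreasing intersection of treillis stabilizes $X$-adically mod $p_K$ by noetherianity on the residue field, a Mittag-Leffler/dimension argument) and $\psi_s(D^\sharp)=D^\sharp$ because $\psi_s$ is surjective and $\psi_s$ commutes with the intersection on the relevant tail. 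Maximality: any $\psi_s$-stable treillis $D_1$ satisfies $D_1=\psi_s^k(D_1)\subseteq\psi_s^k(D^0)$ once $D_1\subseteq X^{-N}D^0$ and one absorbs the $X^{-N}$; more precisely $D_1$ being $\psi_s$-stable and a treillis forces $D_1\subseteq D^\sharp$ after comparing $X$-adic sizes. For (iii), dually, $D^\natural:=\sum_{k\geq 0}\psi_s^k(D_{\min})$ where $D_{\min}$ is a small enough lattice, is the minimal $\psi_s$-stable treillis; one checks it is a treillis (the sum stabilizes since $D^\natural\subseteq D^\sharp$ which is a treillis) and minimality is immediate. In the general case (not killed by $p$), define $D^\sharp:=\varprojlim_n (D/p^nD)^\sharp$ and $D^\natural:=\varprojlim_n (D/p^nD)^\natural$, using compatibility of these constructions with reduction mod $p^n$ (reduction of a treillis is a treillis, reduction of $\psi_s$ is $\psi_s$), and verify the universal properties pass to the limit since $D=\varprojlim D/p^nD$ with the projective-limit weak topology.

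\emph{Step 3: proof of (iv).} Since $\psi_s(D^\sharp)=D^\sharp$ and $\psi_s(X^{-1}y)$ relates to $X^{-p}\psi_s(y)$ up to units (the adjoint of $\varphi_s(X)\equiv X^p$), one computes $\psi_s(X^{-k}D^\sharp)\subseteq X^{-k}D^\sharp$: indeed $\psi_s(X^{-k}D^\sharp)$ lands in $X^{-k'}D^\sharp$ with $k'\leq k$ by the valuation behaviour, hence equals $X^{-k}D^\sharp$ using that $X^{-k}D^\sharp\subseteq\psi_s^{-1}(X^{-k}D^\sharp)$ is forced by applying $\varphi_s$. That $X^{-k}D^\sharp$ is a treillis is clear since $D^\sharp$ is. This is the easiest of the four and follows formally once (ii) is in hand.

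\textbf{Main obstacle.} The technical heart is the precise control of how $\varphi_s$ and $\psi_s$ shift the $X$-adic (equivalently $\mathcal M(\mathbb Z_p)$-adic) filtration — i.e. that $\varphi_s$ strictly raises $X$-valuation so that it is topologically nilpotent on small treillis, and correspondingly that $\psi_s$ cannot decrease $X$-valuation too fast. This uses the explicit formula for the action of a strictly dominant $s$ on $\mathcal O_{\mathcal E}$ (the image of $X$ under $\varphi_s$ has positive $X$-valuation, $=p\cdot\mathrm{val}$ up to the choice of $s$), together with the noetherianity of $\Lambda(N_0)$ to guarantee that decreasing/increasing families of treillis stabilize. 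Once killed-by-$p$ case is secured, the passage to the general case via $\varprojlim_n$ is routine given Proposition \ref{contphipsi} and the material of section \ref{topAM}.
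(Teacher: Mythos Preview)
Your approach to (ii)--(iii) diverges substantially from the paper's, and has a genuine gap. The paper does not construct $D^\sharp$ and $D^\natural$ by intersecting or summing $\psi_s$-iterates of an auxiliary lattice. Instead, for $D$ killed by a power of $p$, it passes to the Pontrjagin dual $D^\vee=\Hom_o^{cont}(D,K/o)$, which is again an \'etale $\varphi_s$-module over $\mathcal O_{\mathcal E}$ with the roles of $\varphi_s$ and $\psi_s$ swapped ($\varphi_s(f)=f\circ\psi_s$, $\psi_s(f)=f\circ\varphi_s$). Under the lattice duality $D_0\mapsto D_0^\perp$, the $\psi_s$-stability conditions of (ii)--(iii) on $D$ become the $\varphi_s$-boundedness conditions of (i) on $D^\vee$; thus $D^\natural$ and $D^\sharp$ arise as the orthogonals of $(D^\vee)^+$ and $(D^\vee)^{++}$, and (ii)--(iii) follow immediately from (i) applied to $D^\vee$. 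The projective-limit passage to general $D$ is then as you indicate.

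Your direct construction is circular as written. For $D^\sharp=\bigcap_k\psi_s^k(D^0)$ the maximality step (``$D_1=\psi_s^k(D_1)\subseteq\psi_s^k(D^0)$ once $D_1\subseteq X^{-N}D^0$ and one absorbs the $X^{-N}$'') requires $X^{-N}D^0$ to be $\psi_s$-stable, which is essentially (iv), while your (iv) rests on (ii). For $D^\natural=\sum_k\psi_s^k(D_{\min})$ you need $D_{\min}\subseteq D^\sharp$ to get boundedness and $D_{\min}\subseteq D_1$ for \emph{every} $\psi_s$-stable treillis $D_1$ to get minimality; neither the existence of such a universal $D_{\min}$ nor independence of the construction from the choice is argued. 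The duality trick is precisely what cuts through these dependency issues. (For (iv) itself, the paper's computation is a clean one-liner and avoids your vague valuation reasoning: since $X\mid\varphi_s(X)$ in $o[[X]]$ one writes $\varphi_s(X^k)=X^kf(X)^k$ with $f\in o[[X]]$, hence $X^{-k}=\varphi_s(X^{-k})f(X)^k$ in $\mathcal O_{\mathcal E}$ and $\psi_s(X^{-k}D^\sharp)=X^{-k}\psi_s(f(X)^kD^\sharp)\subseteq X^{-k}D^\sharp$.)
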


\begin{proof}
  The references given in the following are stated for \'etale $(\varphi_{s_p}, \Gamma)$-modules but the proofs never use that there exists an action of $\Gamma$ and they are valid for \'etale $\varphi _{s_p}$-modules.

(i) For $s=s_p$ this is \cite{Mira} Prop. II.2.2(iii) and Lemma II.2.3. The properties of $s_p$ which are needed for the argument are still satisfied for general $s$ in the following form:
\begin{itemize}
  \item[--] $\varphi_s(X) \in \varphi_{s_p}^m(X) \Lambda(\mathbb{Z}_p)^\times$ where $s = s_0s_p^m z$ with $s_0 \in \Gamma$, $m \geq 1$, and $z \in Z$.
  \item[--] $(\varphi_s(X) X^{-1} )^{p^k} \in p^{k+1}\Lambda(\mathbb{Z}_p) + X^{(p-1)p^k} \Lambda(\mathbb{Z}_p)$ for any $k \in \mathbb{N}$.
\end{itemize}

(ii) and (iii) For any finitely generated ${\cal O}_{\cal E}$-torsion module $M$ we denote its Pontrjagin dual of continuous $o$-linear maps from $M$ to $K/o$ by $M^\vee := \Hom_{o}^{cont}(M,K/o)$. Obviously, $M^\vee$ again is an ${\cal O}_{\cal E}$-module by $(\lambda f)(x) := f(\lambda x)$ for $\lambda \in {\cal O}_{\cal E}$, $f \in M^\vee$, and $x \in M$. It is shown in \cite{Mira} Lemma I.2.4 that:
\begin{itemize}
  \item[--] $M^\vee$ is a finitely generated ${\cal O}_{\cal E}$-torsion module,
  \item[--] the topology of pointwise convergence on $M^\vee$ coincides with its weak topology as an ${\cal O}_{\cal E}$-module, and
  \item[--] $M^{\vee\vee} = M$.
\end{itemize}
Now let $D$ be as in the assertion but killed by a power of $p$. One checks that $D^\vee$ also belongs to $\mathfrak M_{{\cal O}_{\cal E}}^{et}(\varphi_s)$ with respect to the semilinear map $\varphi_s(f) := f \circ \psi_s $ for $f \in D^\vee$; moreover, the canonical left inverse is $\psi_s(f) = f \circ \varphi_s$. next, \cite{Mira} Lemma I.2.8 shows that:
\begin{itemize}
  \item[--] If $D_0 \subset D$ is a lattice then $D_0^\perp := \{ d \in D^\vee : f(D_0) = 0 \}$ is a lattice in $D^\vee$, and $D_0^{\vee\vee} = D_0$.
\end{itemize}
We now define $D^\natural := (D^\vee)^+$ and $D^\sharp := (D^\vee)^{++}$. The purely formal arguments in the proofs of \cite{Mira} Prop.\ II.6.1, Lemma II.6.2, and Prop.\ II.6.3 show that $D^\natural$ and $D^\sharp$ have the asserted properties.

For a general $D$ in $\mathfrak M_{{\cal O}_{\cal E}}^{et}(\varphi_s)$ the (formal) arguments in the proof of \cite{Mira} Prop.\ II.6.5 show that $((D/p^n D)^\natural)_{n \in \mathbb{N}}$ and $((D/p^n D)^\sharp)_{n \in \mathbb{N}}$ are well defined projective systems of compact $\Lambda(\mathbb{Z}_p)$-modules (with surjective transition maps). Hence
\begin{equation*}
    D^\natural := \varprojlim (D/p^n D)^\natural \quad\text{and}\quad D^\sharp := \varprojlim (D/p^n D)^\sharp
\end{equation*}
have the asserted properties.

(iv) $X^{-k} D^\sharp$ is clearly a treillis. As
$X$ divides $\varphi_s(X)= (1+X)^a -1$ in $\Lambda (\mathbb Z_p)=o[[X]]$,  there exists $f(X) \in o[[X]]$ such that  $\varphi_s(X^k)=X^kf(X)^k$. So we
have
$\psi_s(X^{-k}D^{\sharp})=\psi_s( \varphi_s(X^{-k}) f(X)^kD^{\sharp})=X^{-k}\psi_s(f(X)^kD^{\sharp})
\subset X^{-k}\psi_s(D^{\sharp}) \subset X^{-k}D^{\sharp}$
since $D^{\sharp}$ is   $\psi_s$-stable  by definition.
\end{proof}

\begin{proposition} \label{bd} Let $D$ be  a finitely generated \'etale   $\varphi_s$-module over ${\cal O}_{\cal E}$. For any compact subset $C \subseteq D$ and any $n\in \mathbb N$, there exists  $k_{0} \in \mathbb N$ such that
\begin{equation*}
    \bigcup_{k \geq k_0} \psi_s^{k}(N_0 C) \subseteq D^{\sharp}+p^{n}D \ .
\end{equation*}
\end{proposition}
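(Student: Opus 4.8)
The plan is to reduce to the case where $D$ is killed by a power of $p$ and there exploit the fact that $D^\sharp$ is, up to a bounded defect, the ``sink'' of iterating $\psi_s$. First I would fix a compact $C\subseteq D$ and $n\in\mathbb N$. By replacing $C$ with $N_0C$, which is again compact by continuity of the $N_0$-action, it suffices to treat sets of the form $C=N_0C$; moreover it is enough to find $k_0$ with $\psi_s^{k}(C)\subseteq D^\sharp+p^nD$ for all $k\ge k_0$. Applying the reduction mod $p^n$ (section \ref{topAM}), namely $p_n\colon D\to D/p^nD$, and using that $(D/p^nD)^\sharp$ is the image of $D^\sharp$ under $p_n$ (as follows from the description $D^\sharp=\varprojlim (D/p^mD)^\sharp$ in Proposition \ref{Mira}(ii) with surjective transition maps), the statement for $D$ follows from the statement for $D/p^nD$: indeed $\psi_s^k(C)\subseteq D^\sharp+p^nD$ is equivalent to $p_n(\psi_s^k(C))\subseteq (D/p^nD)^\sharp$. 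So I may and do assume $p^ND=0$, and then I must show $\bigcup_{k\ge k_0}\psi_s^k(C)\subseteq D^\sharp$ for a suitable $k_0$, where now $C\subseteq D$ is compact.

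Now $D^\sharp$ is a treillis, hence open in $D$ by property (1) of treillis (for $D$ killed by a power of $p$), and it is $\psi_s$-stable by Proposition \ref{Mira}(ii). The compact set $C$ is covered by finitely many cosets of the open submodule $D^\sharp$; equivalently, $C$ is contained in some treillis, and by Proposition \ref{Mira}(iv) we may enlarge it to $X^{-r}D^\sharp$ for some $r\in\mathbb N$, which is a $\psi_s$-stable treillis containing $C$. Thus it suffices to show $\psi_s^k(X^{-r}D^\sharp)\subseteq D^\sharp$ for all large $k$. The key computation, already carried out in the proof of Proposition \ref{Mira}(iv), is that $\psi_s(X^{-j}D^\sharp)\subseteq X^{-j}D^\sharp$ — but I need a genuine contraction, not just stability. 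Here I would use the explicit relation between $\varphi_s$ and $X$: writing $\varphi_s(X)=X f(X)$ with $f(X)\in o[[X]]$ (valid since $\varphi_s(X)=(1+X)^a-1$ is divisible by $X$), and more importantly recalling from the proof of Proposition \ref{Mira}(i) that $\varphi_s(X)X^{-1}$ lies in a high power of the maximal ideal, $(\varphi_s(X)X^{-1})^{p^k}\in p^{k+1}\Lambda(\mathbb Z_p)+X^{(p-1)p^k}\Lambda(\mathbb Z_p)$. Combined with $p^ND=0$, this gives, for $k$ large, $\varphi_s^k(X^{-r})\in X^{-r'}\Lambda(\mathbb Z_p)$ with $r'$ as small as we like — in fact $\varphi_s^k(X^r)\in X^{r+s(k)}\Lambda(\mathbb Z_p)$ with $s(k)\to\infty$. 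Applying $\psi_s^k$ and using $\psi_s^k\circ\varphi_s^k=\mathrm{id}$ together with $\psi_s^k(\Lambda(\mathbb Z_p)D^\sharp)\subseteq D^\sharp$ (the $\psi_s$-stability of $D^\sharp$ and semilinearity, Lemma \ref{produit}), one gets $\psi_s^k(X^{-r}D^\sharp)=\psi_s^k\big(\varphi_s^k(X^{-r})\cdot\varphi_s^k(X^r)X^{-r}D^\sharp\big)$; choosing $k$ so that $\varphi_s^k(X^r)X^{-r}\in\Lambda(\mathbb Z_p)$ and tracking the $X$-power absorption shows this is contained in $D^\sharp$.

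The main obstacle I anticipate is the bookkeeping in this last step: making precise the claim that iterating $\varphi_s$ pushes $X^{-r}$ back into $\Lambda(\mathbb Z_p)$ modulo $p^N$, i.e. that $\varphi_s^k(X)/X\to 0$ fast enough in the weak topology that after $k$ steps a fixed negative power $X^{-r}$ is ``absorbed'' when working modulo $p^N$. Concretely one must check that $\varphi_s^k(X^{-r})\in X^{-c}\Lambda(\mathbb Z_p)$ for $k$ large, with $c$ bounded or even tending to $0$, using $\varphi_s(X)\in\varphi_{s_p}^m(X)\Lambda(\mathbb Z_p)^\times$ and the valuation estimate on $\varphi_{s_p}(X)=(1+X)^p-1=X^p+\dots+pX$, whose $X$-adic valuation is $1$ but whose ``$X$-part divided by $p$'' has valuation $p$; iterating, modulo $p^N$ only finitely many of the leading terms survive and the surviving part is divisible by a large power of $X$. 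This is exactly the mechanism behind Proposition \ref{Mira}(i) (the argument that $D^{++}$ is a lattice), so I would cite or mimic that estimate rather than redo it. Once that estimate is in hand, the statement of Proposition \ref{bd} drops out by the uniform choice $k_0=k_0(C,n)$ coming from: (a) $C\subseteq X^{-r}D^\sharp$ for some $r=r(C,n)$, and (b) $k_0$ large enough that $\varphi_s^{k_0}(X^r)X^{-r}\in\Lambda(\mathbb Z_p)$ modulo $p^N$.
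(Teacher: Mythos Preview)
Your reduction to the $p$-torsion case and the containment $C \subseteq X^{-r}D^\sharp$ are correct, but the computation in your last two paragraphs has the key estimate backwards. You assert that one should check $\varphi_s^k(X^{-r}) \in X^{-c}\Lambda(\mathbb Z_p)$ with $c$ bounded or tending to $0$; in fact $c \to +\infty$. The estimate you cite from the proof of Proposition~\ref{Mira}(i) says precisely that $\varphi_s^k(X)$ is divisible by arbitrarily high powers of $X$ modulo any fixed $p^N$, so $\varphi_s^k(X^{-1})$ becomes \emph{more} singular as $k$ grows, not less. Accordingly, your identity $X^{-r}D^\sharp = \varphi_s^k(X^{-r}) \cdot (\varphi_s^k(X^r)X^{-r})\, D^\sharp$ together with Lemma~\ref{produit} yields only
\[
\psi_s^k(X^{-r}D^\sharp) \;=\; X^{-r}\,\psi_s^k\big((\varphi_s^k(X^r)X^{-r})D^\sharp\big) \;\subseteq\; X^{-r}\psi_s^k(D^\sharp) \;=\; X^{-r}D^\sharp,
\]
which is the $\psi_s$-stability already recorded in Proposition~\ref{Mira}(iv), not a contraction into $D^\sharp$.

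The paper does not argue by direct $X$-power bookkeeping; it invokes the proof of \cite{Mira}~Prop.~II.6.4, whose mechanism is the duality built into the construction of $D^\sharp$ in Proposition~\ref{Mira}: since $D^\sharp = ((D^\vee)^{++})^\perp$ and the dual $\varphi_s$ on $D^\vee$ is $f \mapsto f \circ \psi_s$, the inclusion $\psi_s^k(D^0) \subseteq D^\sharp$ is equivalent to $\varphi_s^k((D^\vee)^{++}) \subseteq (D^0)^\perp$, and this follows from the definition of $(\cdot)^{++}$ together with compactness and $\varphi_s$-stability of $(D^\vee)^{++}$. If you want a direct argument in the $p$-torsion case, here is one that replaces your failed estimate: set $E_k := \psi_s^k(X^{-r}D^\sharp)$. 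This is a decreasing chain of treillis with $D^\sharp \subseteq E_k \subseteq X^{-r}D^\sharp$; since $X^{-r}D^\sharp / D^\sharp$ is finite (killed by $X^r$ and by $p^N$), the chain stabilises at some $E_{k_0}$. Then $\psi_s(E_{k_0}) = E_{k_0+1} = E_{k_0}$, and the maximality of $D^\sharp$ in Proposition~\ref{Mira}(ii) forces $E_{k_0} = D^\sharp$.
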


\begin{proof}
 We choose a treillis $D^{0}$ containing $C$, as we may. A treillis is a $\Lambda(N_{0})$-module hence $N_0 C \subseteq D^{0}$.
By the formal argument in the proof of \cite{Mira} Prop.\ II.6.4 we find a $k_0$ such that $\bigcup_{k \geq k_0} \psi_s^{k}(D^0) \subseteq D^{\sharp}+p^{n}D$.
 \end{proof}

\begin{corollary}\label{cbd}
Let $D$ be  a finitely generated \'etale   $\varphi_s$-module over ${\cal O}_{\cal E}$ killed by a power of $p$. For any compact subset $C \subseteq D$,  there exists  $k_{0}, r \in \mathbb N$ such that
\begin{equation*}
    \bigcup_{k \geq k_0} \psi_s^{k}(N_0 C) \subseteq X^{-r}D^{++}  \ .
\end{equation*}
\end{corollary}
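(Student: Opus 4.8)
The plan is to deduce Corollary \ref{cbd} directly from Proposition \ref{bd}, which gives the statement with $D^\sharp + p^n D$ in place of $X^{-r}D^{++}$, together with the elementary structure theory of treillis in the $p$-power-torsion case. Since $D$ is killed by a power of $p$, say $p^N D = 0$, we are free to choose $n = N$ in Proposition \ref{bd}, and then $D^\sharp + p^N D = D^\sharp$. Thus Proposition \ref{bd} already yields a $k_0 \in \mathbb N$ with $\bigcup_{k \geq k_0}\psi_s^k(N_0 C) \subseteq D^\sharp$. It therefore suffices to compare the two treillis $D^\sharp$ and $D^{++}$.

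The key step is the observation that both $D^\sharp$ and $D^{++}$ are treillis in $D$ (Proposition \ref{Mira}(i) for $D^{++}$, and $D^\sharp$ is a treillis by Proposition \ref{Mira}(ii)). By property 3 of treillis in the $p$-power-torsion case (stated in the excerpt just before Proposition \ref{contphipsi}, with reference to \cite{Mira} Prop.\ I.1.2), any treillis of $D$ is contained in $X^{-r}E$ for any fixed treillis $E$ and some $r \in \mathbb N$. Applying this with $E = D^{++}$ and the treillis $D^\sharp$, I obtain an $r \in \mathbb N$ with $D^\sharp \subseteq X^{-r}D^{++}$. Combining this inclusion with the inclusion from the previous paragraph gives
\begin{equation*}
    \bigcup_{k \geq k_0}\psi_s^k(N_0 C) \ \subseteq \ D^\sharp \ \subseteq \ X^{-r}D^{++},
\end{equation*}
which is precisely the assertion.

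I do not anticipate a genuine obstacle here: the corollary is a routine packaging of Proposition \ref{bd} with the already-recorded facts that $D^{++}$ is a treillis (indeed a lattice) and that one treillis is always contained in a bounded $X$-power multiple of another. The only point requiring a moment's care is to make legitimate the choice $n = N$ with $p^N D = 0$, so that the term $p^n D$ disappears; this is exactly where the hypothesis that $D$ be killed by a power of $p$ is used, and it is the reason the corollary can state the cleaner conclusion $X^{-r}D^{++}$ rather than $D^\sharp + p^n D$. Alternatively, if one prefers to avoid invoking $D^\sharp$ at all, one can re-run the argument of \cite{Mira} Prop.\ II.6.4 directly with $D^{++}$ in place of $D^\sharp$, using that $\psi_s(D^{++}) \subseteq D^{++}$ and that $\bigcap_k X^k D^0 = 0$ for a treillis $D^0 \supseteq C$; but the shortest route is the comparison of treillis just described.
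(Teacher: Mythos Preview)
Your proof is correct and is precisely the intended deduction: the paper states Corollary~\ref{cbd} without proof, and the natural argument is exactly the one you give---apply Proposition~\ref{bd} with $n$ large enough that $p^nD=0$, obtaining $\bigcup_{k\ge k_0}\psi_s^k(N_0C)\subseteq D^\sharp$, and then use that $D^\sharp$ and $D^{++}$ are both treillis together with property~3 to get $D^\sharp\subseteq X^{-r}D^{++}$.
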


 For any submonoid $ L' \subset L_{+}$ containing a strictly dominant element, an \'etale $L'$-module over ${\cal O}_{\cal E}$   is a finitely generated  ${\cal O}_{\cal E}$-module with an \'etale semilinear action of $L'$.

A topologically   \'etale $L'$-module   over ${\cal O}_{\cal E}$ will be an \'etale $L'$-module  $D$  over ${\cal O}_{\cal E}$  such that the action $ L'\times D\to D$ of $ L'$ on $D$ is continuous.  This terminology is provisional since we will show later on (Cor. \ref{top-etale}) in a more general context that any \'etale $L'$-module over ${\cal O}_{\cal E}$ in fact is topologically \'etale and, in particular, is a complete topologically \'etale  $o[N_{0}L']$-module in our previous sense.

Let $D$ be  a topologically  \'etale $L_{+}$-module over  ${\cal O}_{\cal E}$. When the matrix  $g=\begin{pmatrix} a_g& b_g \cr c_g & d_g
\end{pmatrix}\in GL(2,\mathbb Q_{p})$  belongs to $N_{0}\overline P N_{0}$, the set  $X_g\subset \mathbb Z_p$ of $r$ such that
$$\begin{pmatrix} a_g& b_g \cr c_g & d_g\end{pmatrix} \begin{pmatrix} 1& r \cr 0 & 1\end{pmatrix}= \begin{pmatrix} 1& b(g,r) \cr 0 & 1\end{pmatrix} \begin{pmatrix} a(g,r)& 0 \cr 0 & d(g,r)\end{pmatrix}\begin{pmatrix} 1& 0\cr c(g,r) & 1\end{pmatrix}
$$ with $b(g,r)\in \mathbb Z_p$, is not an empty set. We denote by $t(g,r)$ the diagonal matrix in the right hand side.
For $s\in L_+$ strictly dominant, i.e. $s=s_a z $  with $a\in p\mathbb Z_{p} -\{0\}$ and $z\in Z$,  and a large positive integer $k_{g,s}$, we have $t(g,r)s^k \in L_+$. For   $k\geq k_{g,s}$, and
a system of representatives $J(\mathbb Z_p/a\mathbb Z_p)\subset \mathbb Z_p$ for the cosets $\mathbb Z_p/a\mathbb Z_p$, we set
$$
\mathcal H_{g,s,J(\mathbb Z_p/a^k\mathbb Z_p)}( . )=\sum_{r\in  X_g \cap J(\mathbb Z_p/a^k\mathbb Z_p)}  (1+X)^{b(g,r)}  \varphi_{t(g,r)s^k } \psi_s^k  ((1+X)^{-r } . )) $$
in $\End^{cont}_o(D)$.

\begin{proposition} \label{3}
Let $D$ be  a topologically  \'etale $L_{+}$-module over  ${\cal O}_{\cal E}$. For  the compact open topology in $\End^{cont}_o(D)$,   the maps $\alpha_{g,0} : N_0 \longrightarrow \End_o^{cont}(D)$, for $g\in  N_{0}\overline P N_{0}$, are integrable with respect to $s $ and $\res$, for all $s\in L_+$ strictly dominant, i.e. $s=s_a z $  with $a\in p\mathbb Z_{p} -\{0\}$ and $z\in Z$,
their  integrals
$$\mathcal H_{g}=\int_{N_{0}}\alpha_{g,0} d\res = \lim_{k\to \infty}\mathcal H_{g,s,J(\mathbb Z_p/a^k\mathbb Z_p)}$$
for any choices of $J(\mathbb Z_p/a^k\mathbb Z_p)\subset \mathbb Z_p$,   do not depend on the choice of $s$ and satisfy the relations H1, H2, H3 of Proposition \ref{multiplicative}.
\end{proposition}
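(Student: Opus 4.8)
The plan is to verify that the abstract machinery of Sections 5 and 6 applies to the concrete situation $G = GL(2,\mathbb{Q}_p)$, $P$ the upper triangular Borel, $N_0 = \begin{pmatrix} 1 & \mathbb{Z}_p \\ 0 & 1 \end{pmatrix}$, and a topologically \'etale $L_+$-module $D$ over $\mathcal{O}_{\mathcal{E}}$. First I would observe that $D$ is a complete topologically \'etale $o[P_+]$-module: completeness and Hausdorffness of $D$ for the weak topology were recorded in Section 7, continuity of the $P_+$-action follows from \cite{SVig} Lemma 8.24.i together with the identification $\mathbb{Z}_p = N_0$, and continuity of $\psi_s$ is Proposition \ref{contphipsi}. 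Thus $\Res$, $E^{cont}$, and the integration theory of Section \ref{Inte} are available.

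Next I would introduce the special family $\mathfrak{C}$ of compact subsets of $D$ and check the axioms. When $D$ is killed by a power of $p$, I would take $\mathfrak{C}$ to be the family of compact subsets contained in some $X^{-r}D^\sharp$ (equivalently in some treillis), using the structure results of Proposition \ref{Mira}: the $\psi_s$-stability of $X^{-r}D^\sharp$ (Proposition \ref{Mira}(iv)) gives $\mathfrak{C}(5)$, the fact that treillis are $\Lambda(N_0)$-modules gives $\mathfrak{C}(3)$, the \'etaleness of $\varphi_s$ gives $\mathfrak{C}(6)$, and $D = \bigcup_k X^{-k}D^\sharp$ together with $\mathfrak{C}(1)$, $\mathfrak{C}(2)$ gives $\mathfrak{C}(4)$ with $D(\mathfrak{C}) = D$. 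For general $D$ I would pass to the projective limit $D = \varprojlim D/p^nD$ as in Section \ref{topAM}, defining $\mathfrak{C}$ via the families $\mathfrak{C}_n$ on $D/p^nD$, and invoke Lemma \ref{Limspe}. The technical hypotheses $\mathfrak{T}(1)$ and $\mathfrak{T}(2)$ of Propositions \ref{criterion} and \ref{corspecial} would be verified using Proposition \ref{bd} and Corollary \ref{cbd}: these precisely say that $\bigcup_{k \geq k_0}\psi_s^k(N_0 C)$ is swallowed by $D^\sharp + p^nD$, which is exactly the boundedness needed to show the relevant subsets $s^k(1-P_1)C_+\psi_s^k$ land in $E(C,\mathcal{M})$ and that the partial sums $\mathcal{H}_{g,J(N_0/N_k)}(x)$ stay in a fixed compact set.

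With the hypotheses of Proposition \ref{basicspecial} (equivalently Proposition \ref{criterion} and Proposition \ref{corspecial}) verified, I would conclude that for every $g \in N_0\overline{P}N_0$ the map $\alpha_{g,0}$ is $(s,\res,\mathfrak{C})$-integrable, that $\mathcal{H}_g = \int_{N_0}\alpha_{g,0}\,d\res$ lies in $\End_o(D(\mathfrak{C})) = \End_o(D)$, and that the $\mathcal{H}_g$ satisfy H1, H2, H3. The explicit formula for $\mathcal{H}_{g,s,J(\mathbb{Z}_p/a^k\mathbb{Z}_p)}$ in terms of $(1+X)^{b(g,r)}\varphi_{t(g,r)s^k}\psi_s^k((1+X)^{-r}\cdot)$ is just the unwinding of \eqref{Hgk} and Lemma \ref{alpha-int} using the LU-type decomposition defining $b(g,r)$, $t(g,r)$, $c(g,r)$: here $uN_kw_0P/P$ corresponds via the homeomorphism $u \mapsto uw_0P/P$ to the coset $(1+X)^r + X^kD$ under $\mathbb{Z}_p \cong N_0$, and $\Res(1_{uN_k})$ restricted to $M = M_{\mathcal{C}_0}$ is $\varphi_u \circ \varphi_s^k \circ \psi_s^k \circ \varphi_u^{-1}$, i.e. multiplication by $(1+X)^r$ composed with $\varphi_s^k\psi_s^k$ composed with multiplication by $(1+X)^{-r}$.

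The final point — independence of the sheaf, hence of the $\mathcal{H}_g$, from the choice of strictly dominant $s$ — I would deduce from part (iii) of the Proposition preceding the Main Theorem in the introduction (the compatibility statement $\mathcal{H}_{g,s_1} = \mathcal{H}_{g,s_2}$ on $M_{s_1}^{bd} \cap M_{s_2}^{bd}$), noting that in the rank one case $D$ itself equals $D_s^{bd}$ for every $s$ because $D^\sharp$ and the treillis structure do not genuinely depend on $s$: the relevant fact is that $\psi_{s_1}$ and $\psi_{s_2}$ generate, together with $N_0$-translations, the same operators up to the $\Gamma$- and $Z$-actions which are invertible, so the definitions of $D^+$, $D^\sharp$ and the associated families coincide. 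I expect the main obstacle to be the careful bookkeeping in verifying $\mathfrak{T}(1)$: one must track how the compact sets $t(g,U_g)$, $\bar{n}(g,U_g)$ and the integers $k_g^{(1)}$, $k_g^{(2)}(P_1)$ interact with the estimate coming from Proposition \ref{bd}, ensuring the open subgroup $P_1 \subseteq P_0$ can be chosen uniformly; this is precisely the content of the general Proposition \ref{criterion}, so once the boundedness input from Proposition \ref{bd}/Corollary \ref{cbd} is in place the argument is formal, but assembling the constants in the right order is the delicate step.
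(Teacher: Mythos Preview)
Your proposal is essentially correct and follows the paper's approach for the integrability and the relations H1, H2, H3: reduce via Prop.~\ref{basicspecial} to the $p$-torsion case, take $\mathfrak{C}$ to be all compact subsets of $D$ (which, when $D$ is killed by a power of $p$, coincides with your family of subsets of some $X^{-r}D^\sharp$), and verify $\mathfrak{T}(1)$ using Cor.~\ref{cbd} and the lattice $D^{++}$. One small correction: for $\mathfrak{T}(2)$ the paper does not invoke Prop.~\ref{bd}; it simply notes that when $D$ is killed by a power of $p$ the weak topology is locally compact, so a convergent sequence is eventually trapped in a compact neighbourhood of its limit.

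The one genuine difference is your argument for independence of $s$. You defer to the general compatibility statement from Section~9 (announced in the introduction), arguing that $D_s^{bd} = D$ for all $s$ and then invoking $\mathcal{H}_{g,s_1} = \mathcal{H}_{g,s_2}$ on the intersection. The paper instead gives a direct one-line argument specific to $GL_2$: writing $s = z s_e s_{p^r}$ with $z \in Z$, $e \in \mathbb{Z}_p^*$, $r \geq 1$, the operator $\varphi_{zs_e}$ is bijective with inverse $\psi_{zs_e}$, so $\varphi_s^k\psi_s^k = \varphi_{s_p}^{rk}\psi_{s_p}^{rk}$ and $s^k N_0 s^{-k} = s_p^{rk} N_0 s_p^{-rk}$; hence the partial sums $\mathcal{H}_{g,s,J(\mathbb{Z}_p/a^k\mathbb{Z}_p)}$ are literally a subsequence of those for $s_p$, and all limits coincide. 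Your route works but is heavier and, in the paper's logical order, positioned after this proposition; the paper's observation is both more elementary and self-contained.
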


\begin{proof}   By Prop. \ref{basicspecial}, we  reduce to the case  that $D$ is killed by a power of $p$ and to showing the assumptions of Prop. \ref{corspecial} for the family of all compact subsets of $D$.  The axioms $\mathfrak C_{i}$, for $1\leq i \leq 6$, are obviously satisfied by continuity of $ \varphi_s,  \psi_s$, and of the action of $ n\in N_{0}$  on $D$.

i. We show  first the convergence criterion of Proposition \ref{criterion},  using   the theory of treillis, i.e. of lattices, in $D$.

Given a   lattice  $\mathcal M \subseteq D$, a compact subset $C \subseteq D$ such that $N_0 C \subseteq C$, and a compact subset $C_+ \subseteq L_{+}$,    we want  to find a compact open subgroup $P_1 \subset P_{0}$ and an integer $k_{0} \in \mathbb N$ such that
\begin{equation}\label{cv}
     s^{k }(1- P_1) C_+  \psi_s^{k} \subseteq E(C,\mathcal M)
\end{equation}
for all $ k\geq k_{0}$.

We choose $r_{0}\in \mathbb N$ with $ \varphi_s^{k}(D^{++}) \subset \mathcal M$ for all $k\geq r_{0}$, as we may by properties 5 and  6 of treillis.
We choose $r, k_{0}\in \mathbb N$ such that $k_{0}\geq r_{0}$ and
\begin{equation*}
    \bigcup_{k \geq k_0} \psi_s^{k}(C) \subseteq X^{-r}D^{++} \ ,
\end{equation*}
as we may by Cor.\ref{cbd}.  Applying $C_{+}$ we obtain
\begin{equation*}
    \bigcup_{k \geq k_0} C_{+}  \psi_s^{k}(C) \subseteq C_{+}( X^{-r}D^{++})  \ . \end{equation*}
The continuity of the action of $P_{+}$  on $D$ implies that $C_{+}( X^{-r}D^{++})$ is compact. Hence we can choose $r' \in \mathbb N$  such that $C_{+}( X^{-r}D^{++}) \subseteq X^{-r'}D^{++}$ and we obtain
\begin{equation*}
    \bigcup_{k \geq k_0} C_{+}  \psi_s^{k}(C)   \subseteq X^{-r'}D^{++} \ .
\end{equation*}
  As $X^{-r'}D^{++}$ is compact and $D^{++}$ an open neighborhood of $0$, the continuity of the action of $P_{+}$  on $D$
there exists a compact open subgroup  $P_1 \subseteq P_+$ such that
\begin{equation*}(1- P_1)  X^{-r'}D^{++}\subseteq  D^{++} \ . \end{equation*}
 Hence we have
$s^{k }(1- P_1) C_{+}  \psi_s^{k}  (C) \subset \varphi_s^{k }( D^{++}) \subset \mathcal M
$ for all $ k\geq k_{0}$.

ii.    To obtain all the  assumptions of Prop. \ref{corspecial} for the family of all compact subsets of $D$, it remains to prove that, given $ x \in D$ and  $g\in N_{0}\overline P N_{0}$, $s=s_a z $  with $a\in p\mathbb Z_{p} -\{0\}$ and $z\in Z$,  and  $(J(\mathbb Z_p/a^k \mathbb Z_p))_k$,  there exists a compact $C_{x,g,s}\subset D$ and a positive integer $k_{x,g,s}$ such that $\mathcal H_{g,s,J(\mathbb Z_p/a^k\mathbb Z_p)}(x)\in C_{x,g,s}$ for any $k\geq k_{x,g,s}$. This is clear because $D$ is locally compact (by hypothesis $D$ is killed by a power of $p$) and  the sequence $(\mathcal H_{g,s,J(\mathbb Z_p/a^k\mathbb Z_p)}(x))_k$ converges.

iii. The  independence of the choice of $s\in L_+$ strictly dominant results from the fact that,   for $z\in Z$,  $e\in \mathbb Z_p^*$, and  a positive integer  $r$, we have $(zs_{p^r e})^k N_0(zs_{p^r e} )^{-k}=
 s_p^{kr}N_0s_p^{kr}$ and $\varphi_{zs_{p^r e}}^k\psi_{zs_{p^r e}}^k = \varphi_{s_p}^{rk}\psi_{s_p}^{kr}$ as $\psi_{z s_e}$ is the right and left inverse of $\varphi_{z s_e}$.
 \end{proof}

\begin{remark}{\rm
 For a topologically \'etale $L _{+}$-module $D$ finitely generated over ${\cal O}_{\cal E}$ on which $Z$ acts through a character $\omega$ the pointwise convergence of the integrals $\int_{N_0} \alpha_{g,0} d\res$ is  a basic theorem of  Colmez, allowing him the construction of the representation of $GL(2,\mathbb Q_{p})$  that he denotes $D \boxtimes_{\omega}{\mathbb P}^{1}$.

 Our construction coincides with Colmez's construction because  our
  $\mathcal H_{g}\in \End_{o}^{cont}(D)$ are the same than the $H_{g}$ of Colmez given in \cite{C} Lemma II.1.2  (ii). To see this, we denote
$$
w_{0} \ = \   \begin{pmatrix}  0 &1 \cr 1 & 0 \end{pmatrix} , u(b)=\begin{pmatrix}  1&b \cr 0 & 1 \end{pmatrix}, \overline u(c)=\begin{pmatrix}  1&0 \cr c & 1 \end{pmatrix}, \delta(a,d) = \begin{pmatrix}  a&0 \cr 0 & d \end{pmatrix} , g = \begin{pmatrix}  a&b \cr c & d \end{pmatrix} .
$$
A matrix  $g= \begin{pmatrix}  a &b \cr c & d \end{pmatrix} $ belongs to $Pw_{0} P$ if and only if $c\neq 0$. When $c\neq 0$ we have
\begin{equation*}
  g\ =\ u(ac^{-1})\ \delta(b-adc^{-1},c ) \ w_{0}\ u(dc^{-1}) .
\end{equation*}
From
$$
gu(x)w_{0}= \begin{pmatrix}  ax+b&a \cr c x+d& c \end{pmatrix}   ,
$$
we see that $u( \{ x\in \mathbb Q_{p} , cx+d \ \neq 0 \} ) $ is the subset $ N_{g}\subset N$  of $u(x)$ such that $gu(x)w_{0}\in  Pw_{0} P$. We suppose  $cx+d\neq 0$ and we write
$$
g[x]=\frac{ax+b}{cx+d} \ , \ g'[x]= \frac{ad-bc}{(cx+d)^{2}} .
$$
Then we have
$$
gu(x)=  \delta(cx+d, cx+d)  \begin{pmatrix} g'[x] & g[x] \cr 0 & 1 \end{pmatrix}\overline n(g,u(x))
$$
with $\overline n(g,u(x))\in N$.  We deduce that the   subset  of $u(x)\in N_{g}$ such that $gu(x)w_{0}\in  N_{0}w_{0} P$ is
$$
U_{g}= u (X_{g}) \quad {\rm where} \quad X_{g}=\{ x\in \mathbb Z_{p}, cx+d \neq 0, \frac{ax+b}{cx+d} \in  \mathbb Z_{p}\}  ,
$$
and that we have,
$$
\mathcal H_{g,s_p,J(\mathbb Z_p/p^k\mathbb Z_p)} =\sum_{x\in  X_g \cap J(\mathbb Z_p/p^k\mathbb Z_p)}  \delta(cx+d, cx+d)  \begin{pmatrix} g'[x] & g[x] \cr 0 & 1 \end{pmatrix} \varphi_{s_p} ^{k} \psi_{s_p}^{k} u(-x)\  ,
$$
By Colmez's formula in \cite{C} Lemma II.1.2  (ii),  $H_g=\lim_{k\to \infty} \mathcal H_{g,p,J(\mathbb Z_p/p^k\mathbb Z_p)}$.  Hence $H_g=\mathcal H_g$.}
\end{remark}

The major goal of the paper is to generalize Prop. \ref{3}. See Prop. \ref{912}.

 \section{A generalisation of $(\varphi, \Gamma)$-modules}\label{GM}

 We return to a general group $G$.   We denote
 $G^{(2)}:=GL(2,\mathbb Q_{p})$  and the objects relative to $G^{(2)}$  will be  affected with an upper index $^{(2)}$.

\bigskip  {\sl  a)  We suppose that $N_{0}$ has the structure of a $p$-adic Lie group  and  that we have a  continuous surjective homomorphism
   $$\ell : N_{0}\to N _{0}^{(2)}  \ .     $$
   We choose  a  continuous homomorphism $\iota:N _{0}^{(2)}  \to N_{0} $ which is a section of $\ell $} (which is possible because $N_{0}^{(2)} \simeq \mathbb Z_{p}$).

  \bigskip    We  have   $N_{0}= N_{\ell}\,\iota( N _{0}^{(2)})$ where $N_{\ell}$ is the kernel of $\ell$.

  We denote by
  $L_{\ell, +}:=\{t\in L  \ | \ tN_{\ell}t^{-1} \subset  N_{\ell}\ , \  tN_{0}t^{-1} \subset N_{0}\}$   the stabilizer of $N_{\ell}$ in the $L$-stabilizer of $N_{0}$, and
      by  $ L_{\ell ,\iota}:=\{t\in L \ | \ tN_{\ell}t^{-1} \subset N_{\ell}\ , \ t\iota( N _{0}^{(2)})t^{-1} \subset \iota(N _{0}^{(2)})\}$
 the stabilizer of $N_{\ell}$ in the $L$-stabilizer of $\iota( N _{0}^{(2)})$. We have
  $ L_{\ell ,\iota} \subset L_{\ell, +} $.

\bigskip

{\sl  b) We suppose  given a submonoid $L_{*}$ of  $ L_{\ell ,\iota}$ containing $s$ and   a continuous homomorphism $\ell: L_{*} \to L_{+}^{(2)} $   such  that $(\ell, \iota)$ satisfies
$$
\ell (t ut^{-1} )=\ell (t)\ell (u) \ell (t)^{-1}  \ , \ \ t \iota(y)t^{-1} =\iota (\ell (t) y \ell (t)^{-1}) \ , \ {\rm for} \ u\in N_{0},y\in N_{0}^{(2)}, t\in L_{*} \ .
$$}

The sequence $\ell (s^{n}N_{0}s^{-n})= \ell(s)^{n}N_{0}^{(2)}\ell (s)^{-n}$ in $N^{(2)}$ is decreasing with trivial intersection.
 The maps $\ell$ in a) and b)  combine to a unique continuous homomorphism $$\ell \ : \ P_{*}:=N_{0}\rtimes L_{*}\to P_{+}^{(2)}.$$

\subsection{The microlocalized ring $\Lambda_{\ell}(N_{0})  $}
 The ring $ \Lambda_{\ell}(N_{0})$, denoted by $ \Lambda_{N_{\ell}}(N_{0})$ in \cite{SVig},
 is a generalisation of the ring  $\mathcal O_{\mathcal E}$, which corresponds to  $\Lambda_{\id} (N_{0}^{(2)})$.

The maximal ideal ${\cal M }(N_{\ell})$ of  the completed group $o$-algebra $ \Lambda(N_{\ell})=o[[N_{\ell}]]$ is generated by $p_{K}$ and by the kernel of the augmentation map $o[N_{\ell}]\to o$.

The ring $ \Lambda_{\ell}(N_{0})$  is the ${\cal M }(N_{\ell})$-adic completion of the localisation of  $ \Lambda (N_{0})$ with respect to the Ore subset $S_{\ell} (N_{0})$
of elements which are not in $ {\cal M }(N_{\ell})  \Lambda (N_{0})$.  The   ring $\Lambda ( N_{0})$
 can be viewed as the ring  $\Lambda (N_{\ell})[[X]]$ of skew Taylor series over $\Lambda (N_{\ell})$ in the variable $X=[\gamma]-1$  where  $\gamma \in N_{0}$ and $\ell(\gamma)$ is  a topological generator of $\ell (N_{0})$.  Then   $ \Lambda_{\ell}(N_{0})$ is viewed as
the ring of infinite skew Laurent series $\sum_{n \in \mathbb Z}a_{n}X^{n}$ over $\Lambda (N_{\ell})$ in the variable $X$ with $\lim_{n\to -\infty}a_{n}=0$ for the pseudo-compact topology of $\Lambda (N_{\ell})$.

 The ring $ \Lambda_{\ell}(N_{0})$
 is  strict-local noetherian  of maximal ideal  ${\mathcal M }_{\ell}(N_{0})$ generated by   ${\mathcal M }(N_{\ell})$.  It is  a pseudocompact  ring for the ${\cal M }(N_{\ell})$-adic topology (called the strong topology). It is  a complete Hausdorff ring for the weak topology (\cite{SVig} Lemma 8.2)
with fundamental system of open neighborhoods of $0$ given by
$$O_{n,k} :={\mathcal M }_{\ell}(N_{0})^{n}  +{\cal M}(N_{0})  ^{k}  \ \ {\rm for } \ \ n \in \mathbb N , k\in \mathbb N \ .
$$
In the computations it is sometimes better to use the fundamental systems of open neighborhoods of $0$ defined by
$$B_{n,k}:= {\mathcal M }_{\ell}(N_{0})^{n}  +X ^{k} \Lambda(N_{0}) \ \ {\rm for } \ \ n \in \mathbb N , k\in \mathbb N \ , $$
and
$$
C_{n,k}:=  \mathcal M_{\ell} (N_{0})^{n}   + \Lambda(N_{0})X^{k}  \ \ {\rm for } \ \ n \in \mathbb N , k\in \mathbb N \ ,
$$ which are equivalent due to   the two formulae
  \begin{equation*} X^k \Lambda(N_0)  \subseteq  \Lambda(N_0) X^k + \mathcal{M}(N_0)^k
\quad {\rm and} \quad
  \Lambda(N_0) X^k  \subseteq  X^k\Lambda(N_0) + \mathcal{M}(N_0)^k \ ,
  \end{equation*}
 We  write $O_{n}:=O_{n,n},B_{n}:=B_{n,n},$ and $C_{n}=C_{n,n}$. Then $(O_{n})_{n} ,  (B_{n})_{n}$, and $(C_{n})_{n}$
are also  fundamental system of open neighborhoods of $0$ in $ \Lambda_{\ell}(N_{0})$.

The  action $(b=ut,n_{0})\mapsto b.n_{0}=utn_{0}t^{-1}$ of  the monoid $P_{\ell, +}=N_{0}\rtimes L_{\ell, +}$ on $N_{0}$  induces a  ring  action $(t,x)\mapsto \varphi_{t}(x)$  of $L_{\ell, +}$ on the $o$-algebra  $\Lambda (N_{0})$   respecting  the ideal $ \Lambda (N_{0}) {\cal M }(N_{\ell})$, and the Ore set  $S_{\ell}(N_{0})$  hence defines a ring action  of $L_{\ell, +}$ on the $o$-algebra  $ \Lambda_{\ell}(N_{0})$. This actions respects the maximal ideals ${\cal M}(N_{0})$  and ${\mathcal M }_{\ell}(N_{0})$  of the rings $\Lambda (N_{0})$   and  $ \Lambda_{\ell}(N_{0})$ and hence the open neighborhoods of zero $O_{n,k}$.

\begin{lemma} \label{tOnk} For $t\in L_{\ell, +}$, a  fundamental system of open neighborhoods of $0$ in $\Lambda_{\ell }(N_{0})$ is given by $$
(\varphi_{t}(O_{n,k} ) \Lambda(N_{0}))_{n,k\in \mathbb N}\ .
$$

\end{lemma}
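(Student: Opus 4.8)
The plan is to reduce the claim to the already-established fundamental systems $(O_{n,k})$, $(B_{n,k})$, $(C_{n,k})$ by exhibiting mutual inclusions between $\varphi_t(O_{n,k})\Lambda(N_0)$ and the neighborhoods of zero coming from one of these systems. First I would observe that, since $t\in L_{\ell,+}$, the map $\varphi_t$ is a ring endomorphism of $\Lambda_\ell(N_0)$ respecting both maximal ideals ${\mathcal M}(N_0)$ and ${\mathcal M}_\ell(N_0)$, so $\varphi_t(O_{n,k})\subseteq O_{n,k}$ and hence $\varphi_t(O_{n,k})\Lambda(N_0)\subseteq O_{n,k}\Lambda(N_0)$. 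But $O_{n,k}={\mathcal M}_\ell(N_0)^n+{\mathcal M}(N_0)^k$ is already a right ideal-like neighborhood; more precisely ${\mathcal M}_\ell(N_0)$ and ${\mathcal M}(N_0)$ are two-sided ideals of $\Lambda_\ell(N_0)$ and $\Lambda(N_0)\subseteq\Lambda_\ell(N_0)$, so $O_{n,k}\Lambda(N_0)\subseteq O_{n,k}$. This gives one inclusion: each $\varphi_t(O_{n,k})\Lambda(N_0)$ is contained in the open neighborhood $O_{n,k}$, so the family $(\varphi_t(O_{n,k})\Lambda(N_0))_{n,k}$ consists of sets that are ``small enough''.

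For the reverse direction — that these sets are also ``large enough'', i.e. each contains some $O_{n',k'}$ — the essential input is the étale property of $\varphi_t$ on $\Lambda(N_0)$: $\Lambda(N_0)$ is free of finite rank $[N_0:tN_0t^{-1}]$ as a right $\varphi_t(\Lambda(N_0))$-module, and $\varphi^k=\varphi_{s}^k$ relates to $\varphi_t$ via a factorization $\varphi^k=\varphi_{s^k t^{-1}}\circ\varphi_t$ for $k$ large (using that $s$ is central in $L$ and $L=L_-s^{\mathbb N}$, so $s^kt^{-1}\in L_{\ell,+}$ for $k\gg 0$). The key computational lemma to establish is that $\varphi_t({\mathcal M}_\ell(N_0)^n)\Lambda(N_0)\supseteq {\mathcal M}_\ell(N_0)^{n'}$ for suitable $n'$, and similarly $\varphi_t({\mathcal M}(N_0)^k)\Lambda(N_0)\supseteq {\mathcal M}(N_0)^{k'}$ for suitable $k'$; I would do this by noting that $tN_{\ell}t^{-1}$ has finite index in $N_{\ell}$, so ${\mathcal M}(N_{\ell})^{n'}\subseteq \Lambda(N_{\ell})\,\varphi_t({\mathcal M}(N_{\ell})^n)$ for $n'$ large (a standard fact about Iwasawa algebras of open subgroups), and that likewise a power of $X$ lies in $\varphi_t(X)^{\ell(\text{something})}\Lambda(N_0)$ because $\varphi_t(X)=\varphi_t([\gamma]-1)$ and $X$ divides a power of $\varphi_t(X)$ in the relevant sense (this is the skew-Taylor-series analogue of the argument used in Proposition \ref{Mira}(iv) that $X$ divides $\varphi_s(X)$ in $o[[X]]$). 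Combining these, $\varphi_t(O_{n,k})\Lambda(N_0)\supseteq \varphi_t({\mathcal M}_\ell(N_0)^n)\Lambda(N_0)\cap\varphi_t({\mathcal M}(N_0)^k)\Lambda(N_0)$ contains $O_{n',k'}$ for appropriate $n',k'$ depending on $n,k,t$.

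Putting the two inclusions together shows that $(\varphi_t(O_{n,k})\Lambda(N_0))_{n,k}$ is cofinal with $(O_{n,k})_{n,k}$ in both directions, hence is itself a fundamental system of open neighborhoods of $0$ for the weak topology on $\Lambda_\ell(N_0)$. I would phrase the final step as: given any $O_{a,b}$, choose $n,k$ with $O_{n',k'}\subseteq O_{a,b}$ where $(n',k')$ is produced from $(n,k)$ as above, so $\varphi_t(O_{n,k})\Lambda(N_0)$ is sandwiched between $O_{n',k'}$ and $O_{n,k}$; and conversely each $\varphi_t(O_{n,k})\Lambda(N_0)$ is open since it contains an $O_{n',k'}$ and is contained in the open set $O_{n,k}$.

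The main obstacle I anticipate is the noncommutative bookkeeping in the second direction: unlike the classical $o[[X]]$ case, $\Lambda_\ell(N_0)$ is a skew Laurent series ring over $\Lambda(N_\ell)$, so one must be careful that multiplying $\varphi_t(O_{n,k})$ by $\Lambda(N_0)$ on the right genuinely recovers a neighborhood defined by the two-sided ideals ${\mathcal M}_\ell(N_0)$ and ${\mathcal M}(N_0)$, and that the divisibility statements ``$X^{k'}\in\varphi_t(X^k)\Lambda(N_0)+\ldots$'' and ``${\mathcal M}(N_\ell)^{n'}\subseteq\Lambda(N_\ell)\varphi_t({\mathcal M}(N_\ell)^n)$'' hold with the $\Lambda(N_0)$ (resp.\ $\Lambda(N_\ell)$) on the correct side; here the equivalence of $(B_{n,k})$ and $(C_{n,k})$ recorded in the excerpt (the two formulae $X^k\Lambda(N_0)\subseteq\Lambda(N_0)X^k+{\mathcal M}(N_0)^k$ and its mirror) is precisely the tool that lets one move $X$-powers from one side to the other modulo higher terms, and I would lean on it to absorb the side-switching. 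Everything else is a routine consequence of finite index of $tN_0t^{-1}$ in $N_0$ together with étaleness of the $L_{\ell,+}$-action.
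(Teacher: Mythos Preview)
Your overall structure matches the paper's: show $\varphi_t(O_{n,k})\Lambda(N_0)\subseteq O_{n,k}$ (your argument here is correct and identical to the paper's), and conversely find $n',k'$ with $O_{n',k'}\subseteq\varphi_t(O_{n,k})\Lambda(N_0)$ by handling the summands $\mathcal{M}_\ell(N_0)^n$ and $\mathcal{M}(N_0)^k$ separately. For the $\mathcal{M}(N_\ell)$-piece you invoke the ``standard fact about Iwasawa algebras of open subgroups'', and indeed this is exactly the single lemma the paper isolates and proves: for $H'\subset H$ open one has $\mathcal{M}(H)^{k'}\subset\mathcal{M}(H')^{k}\Lambda(H)$ for $k'$ large, because after shrinking $H'$ to be normal the quotient $\Lambda(H)/\mathcal{M}(H')^{k}\Lambda(H)$ is artinian local, so its maximal ideal is nilpotent. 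The paper then applies this \emph{uniformly} to both $H=N_\ell,\ H'=tN_\ell t^{-1}$ and $H=N_0,\ H'=tN_0t^{-1}$.

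The gap in your proposal is the second piece, where you switch to an $X$-divisibility argument. The claim ``a power of $X$ lies in $\varphi_t(X)^k\Lambda(N_0)$'' is false in general: already for $N_0=\mathbb{Z}_p$ and $t$ acting by multiplication by $p$, one has $\varphi_t(X)=(1+X)^p-1=X\cdot g(X)$ with $g$ an Eisenstein (hence irreducible) element of $o[[X]]$ coprime to $X$, so no $X^{k'}$ is divisible by $\varphi_t(X)^k$ in $o[[X]]$ for $k\ge1$. The analogy with Proposition~\ref{Mira}(iv) goes the wrong way: there one uses $X\mid\varphi_s(X)$, which gives $\varphi_s(X)^k\Lambda\subset X^k\Lambda$, the opposite inclusion. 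What you actually need is $\mathcal{M}(N_0)^{k'}\subset\mathcal{M}(tN_0t^{-1})^k\Lambda(N_0)$, and this is precisely the artinian fact above applied to $H=N_0$; the presence of $p_K$ in the maximal ideal is essential and cannot be bypassed by tracking $X$ alone. Drop the $X$-divisibility route (and the detours through \'etaleness and the factorization $\varphi^k=\varphi_{s^kt^{-1}}\circ\varphi_t$, which are not needed here) and use the same Iwasawa/artinian lemma for both summands; then your argument goes through and coincides with the paper's.
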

\begin{proof} Obviously $\varphi_{t}(O_{n,k} ) \Lambda(N_{0})\subset  O_{n,k} $
because $\varphi_{t}(\mathcal M(H))= \mathcal M(tHt^{-1}) \subset \mathcal M(H) $ for $H$ equal to $N_{0}$ or $N_{\ell}$. Conversely given $ n,k\in \mathbb N$, we have to find $n',k'\in \mathbb N$ such that $ O_{n',k'} \subset \varphi_{t}(O_{n,k} ) \Lambda(N_{0})$. This can be deduced  from the following fact. Let  $H' \subset H$  be an open subgroup. Then given $k'\in \mathbb N$, there is   $k\in \mathbb N$ such that
$$\mathcal M (H') ^{k'}\Lambda(H) \supset \mathcal M (H) ^{k} \ .
$$
Indeed by taking a smaller $H'$ we can suppose that $H'\subset H$ is  open normal.
Then $\mathcal M (H') ^{k'}\Lambda(H)$ is a two-sided ideal in $\Lambda(H)$
and the factor ring $\Lambda(H)/\mathcal M (H') \Lambda(H)$ is an artinian local ring
with maximal ideal  $\mathcal M(H)/\mathcal M (H') \Lambda(H)$. It remains to observe that in  any artinian local ring
the maximal ideal is nilpotent.
\end{proof}
\begin{proposition} \label{sw}
The   action of $L_{\ell, +}$ on $ \Lambda_{\ell}(N_{0})$  is  \'etale :  for any $t\in L_{\ell, +}$, the map
$$
(\lambda, x) \mapsto \lambda \varphi_{t}(x) : \Lambda(N_{0} )\otimes_{\Lambda (N_{0}) , \varphi_{t}}\Lambda_{\ell}(N_{0}) \to \Lambda_{\ell}(N_{0})
$$
is bijective.
\end{proposition}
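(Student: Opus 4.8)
## Proof plan for Proposition (étale action on $\Lambda_\ell(N_0)$)

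The plan is to reduce the claim, just as in the analogous statement for $\mathcal{O}_{\mathcal{E}}$ (and as in \cite{SVig}), to a decomposition of $\Lambda_\ell(N_0)$ as a finite direct sum of free rank-one $\varphi_t(\Lambda_\ell(N_0))$-modules indexed by coset representatives of $N_0/tN_0t^{-1}$. Concretely, I would first fix $t \in L_{\ell,+}$ and choose a system of representatives $J(N_0/tN_0t^{-1}) \subset N_0$. The key structural input is that $\Lambda(N_0)$ is \'etale over itself for the $\varphi_t$-action (this is Remark \ref{cr} 2), giving $\Lambda(N_0) = \oplus_{u \in J(N_0/tN_0t^{-1})} u\,\varphi_t(\Lambda(N_0))$, and that $\varphi_t$ is injective with image $\Lambda(tN_0t^{-1})$. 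The strategy is to show that localizing at $S_\ell(N_0)$ and then completing $\mathcal{M}(N_\ell)$-adically both preserve this direct-sum decomposition.

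First I would treat the localization. One must check that $\varphi_t$ sends $S_\ell(N_0)$ into $S_\ell(tN_0t^{-1})$ (a consequence of $\varphi_t(\mathcal{M}(N_\ell)) = \mathcal{M}(tN_\ell t^{-1})$ together with $tN_\ell t^{-1} \subseteq N_\ell$, which holds since $t \in L_{\ell,+}$), so that $\varphi_t$ extends to the localizations and the Ore localization $S_\ell(N_0)^{-1}\Lambda(N_0)$ inherits the decomposition as a module over $\varphi_t$ of the corresponding localization of $\Lambda(tN_0t^{-1})$. Here I would use that $J(N_0/tN_0t^{-1})$ also serves as a system of representatives after localization, because the elements of $J$ are units in $\Lambda(N_0)$ already. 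This gives $S_\ell(N_0)^{-1}\Lambda(N_0) = \oplus_u u\,\varphi_t\big(S_\ell(N_0)^{-1}\Lambda(N_0)\big)$, i.e.\ the \'etale property for the localized ring.

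Next I would pass to the $\mathcal{M}(N_\ell)$-adic completion. Since the index set $J(N_0/tN_0t^{-1})$ is finite, completion commutes with the finite direct sum, so it suffices to show that $\varphi_t\big(\Lambda_\ell(N_0)\big)$ is the $\mathcal{M}(N_\ell)$-adic completion of $\varphi_t\big(S_\ell(N_0)^{-1}\Lambda(N_0)\big)$ inside $\Lambda_\ell(N_0)$, equivalently that the subspace topology on this image agrees with its own $\mathcal{M}(N_\ell)$-adic (or, by Lemma \ref{tOnk}, weak) topology. This is exactly the content made available by Lemma \ref{tOnk}: the family $(\varphi_t(O_{n,k})\Lambda(N_0))_{n,k}$ is cofinal with $(O_{n,k})_{n,k}$, so $\varphi_t$ is a topological embedding up to the right-multiplication by $\Lambda(N_0)$, and the direct sum over $u$ of the images $u\varphi_t(O_{n,k})$ realizes a neighborhood basis of $0$ in $\Lambda_\ell(N_0)$. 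Assembling: the natural map $\Lambda(N_0)\otimes_{\Lambda(N_0),\varphi_t}\Lambda_\ell(N_0) \to \Lambda_\ell(N_0)$ is, componentwise in the decomposition $\Lambda(N_0) = \oplus_u u\varphi_t(\Lambda(N_0))$, the identity on each completed summand, hence bijective.

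The main obstacle is the interplay between the two topologies in the completion step: one must be careful that the $\mathcal{M}(N_\ell)$-adic topology on $\varphi_t(\Lambda_\ell(N_0))$ (as an abstract ring, via the isomorphism $\varphi_t$) coincides with the subspace topology induced from $\Lambda_\ell(N_0)$, and that right multiplication by the finitely many $u \in J(N_0/tN_0t^{-1})$ — which is not $\varphi_t$-linear — does not disturb openness of the summands. Lemma \ref{tOnk} is precisely designed to handle the first point; for the second I would invoke that each $u$ is a unit of $\Lambda(N_0)$ acting by a homeomorphism. Once these topological compatibilities are in place, the algebraic bijectivity is formal from the $\Lambda(N_0)$-level \'etale decomposition, exactly as in the classical case over $\mathcal{O}_{\mathcal{E}}$.
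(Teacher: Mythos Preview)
Your approach differs in organization from the paper's. The paper does not track the coset decomposition through localization and completion directly; instead it introduces the auxiliary microlocalized ring $\Lambda_{tN_\ell t^{-1}}(tN_0t^{-1})$ and argues via the chain: (a1) conjugation by $t$ is an isomorphism $\Lambda_\ell(N_0) \xrightarrow{\sim} \Lambda_{tN_\ell t^{-1}}(tN_0t^{-1})$; (a3) the natural map $\Lambda(N_0)\otimes_{\Lambda(tN_0t^{-1})}\Lambda_{tN_\ell t^{-1}}(tN_0t^{-1}) \to \Lambda_\ell(N_0)$ is bijective (this is the substantive step, imported from \cite{SVig} Prop.~9.6); together with flatness of the microlocalization these give injectivity of $\varphi_t$ and the \'etale decomposition at once. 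What this buys is that the comparison of Ore sets and of adic filtrations is packaged into the single base-change statement (a3), whose proof uses that $\Lambda(N_0)$ is finite free over $\Lambda(tN_0t^{-1})$. Your hands-on route is viable and more self-contained, but requires verifying by hand exactly the compatibilities that (a3) encodes.

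Two of your justifications need repair. First, the argument you give for $\varphi_t(S_\ell(N_0)) \subseteq S_\ell(N_0)$ runs in the wrong direction: the inclusion $\varphi_t(\mathcal{M}(N_\ell)) = \mathcal{M}(tN_\ell t^{-1}) \subseteq \mathcal{M}(N_\ell)$ shows that $\varphi_t$ \emph{preserves} the ideal $\mathcal{M}(N_\ell)\Lambda(N_0)$, not that it reflects it. What you actually need is that the induced endomorphism of the quotient $\Lambda(N_0)/\mathcal{M}(N_\ell)\Lambda(N_0) \cong k[[N_0/N_\ell]] \cong k[[X]]$ is injective; this holds because conjugation by $t \in L_{\ell,+}$ induces an injective endomorphism of $N_0/N_\ell \cong \mathbb{Z}_p$. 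Second, Lemma~\ref{tOnk} is stated for the \emph{weak} topology, whereas $\Lambda_\ell(N_0)$ is defined as the $\mathcal{M}(N_\ell)$-adic completion; for your completion step you need the strong-topology cofinality of $(\varphi_t(\mathcal{M}_\ell(N_0)^n)\Lambda(N_0))_n$ with $(\mathcal{M}_\ell(N_0)^n)_n$, which is indeed contained in the \emph{proof} of Lemma~\ref{tOnk} (the fact that $\mathcal{M}(H')^{k'}\Lambda(H) \supset \mathcal{M}(H)^k$ for open $H' \subseteq H$, applied to $H=N_\ell$, $H'=tN_\ell t^{-1}$) but is not the statement of that lemma. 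With these two corrections your argument goes through.
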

\begin{proof}  a) We  follow  (\cite{SVig} Prop. 9.6, Proof, Step 1).

a1) The conjugation by $t$ gives   a natural isomorphism
 $$
 \Lambda_{\ell}(N_{0})\to \Lambda_{tN_{\ell}t^{-1}}(tN_{0}t^{-1}) \ .
 $$

 a2) Obviously $\Lambda_{tN_{\ell}t^{-1}}(tN_{0}t^{-1}) = \Lambda(tN_{0}t^{-1}) \otimes_{\Lambda(tN_{0}t^{-1})}\Lambda_{tN_{\ell}t^{-1}}(tN_{0}t^{-1})$, and the map
$$\Lambda(tN_{0}t^{-1}) \otimes_{\Lambda(tN_{0}t^{-1})}\Lambda_{tN_{\ell}t^{-1}}(tN_{0}t^{-1})\to \Lambda (N_{0} )\otimes_{\Lambda (tN_{0} t^{-1})}\Lambda_{tN_{\ell}t^{-1}}(tN_{0}t^{-1})$$  is injective  because $\Lambda_{tN_{\ell}t^{-1}}(tN_{0}t^{-1})$ is flat on $\Lambda(tN_{0}t^{-1})$.

a3) The natural map
$$\Lambda (N_{0} )\otimes_{\Lambda (tN_{0} t^{-1})}\Lambda_{tN_{\ell}t^{-1}}(tN_{0}t^{-1})\to \Lambda_{\ell}(N_{0})$$
is bijective.

a4) The ring action $\varphi_{t}: \Lambda_{\ell}(N_{0})\to \Lambda_{\ell}(N_{0})$ of $t$ on $\Lambda_{\ell}(N_{0})$  is the composite of the maps of a1), a2), a3),  hence is injective.

a5) The  proposition is equivalent to a3) and $\varphi_{t}$ injective.
 \end{proof}

\begin{remark} {\rm The proposition is equivalent to :  for any $t\in L_{\ell, +}$, the map
$$(u, x) \mapsto u \varphi_{t}(x): o[N_{0}]\otimes_{o[N_{0}], \varphi_{t}}\Lambda_{\ell}(N_{0}) \to \Lambda_{\ell}(N_{0})$$
is bijective. }
\end{remark}

\subsection{The categories  $ {\mathfrak M}_{\Lambda_{\ell}(N_{0})}^{et}( L_{*})$ and  $ {\mathfrak M}^{et}_{{\cal O}_{\cal E}, \ell}(L_*)$}

  By the universal properties of  localisation and adic completion the  continuous homomorphisms $\ell$  and $\iota$  between $N_{0}$ and $N_{0}^{(2)}$ extend to  continuous   $o$-linear homomorphisms  of pseudocompact rings,
 \begin{equation}\label{elliota}
  \ell: \Lambda_{\ell}(N_{0})\to  {\cal O}_{\cal E} \ , \ \iota: {\cal O}_{\cal E}\to  \Lambda_{\ell}(N_{0}) \ \ , \ \ \ell \circ \iota = \id \ .
\end{equation}
If we view the rings  as Laurent series, $\ell (X)=X^{(2)}$, $\iota (X^{(2)})=X$, and $\ell$ is  the augmentation map $\Lambda (N_{\ell})\to o$ and $\iota$ is  the natural injection $o\to \Lambda (N_{\ell})$, on the coefficients. We have for $n,k\in \mathbb N$,
\begin{equation}\label{elioeq}
\begin{split}
\ell ({\mathcal M}_{\ell}(N_{0}) )= p_{K} {\cal O}_{\cal E} \ \ &, \ \ \ell (B_{n,k}) =B_{n,k}^{(2)} \ , \\
\iota(p_{K} {\cal O}_{\cal E})={\mathcal M}_{\ell}(N_{0})\cap \iota({\cal O}_{\cal E})
 \ \ &, \ \ \iota(B_{n,k}^{(2)})=B_{n,k}\cap \iota({\cal O}_{\cal E}) \ .
 \end{split}
\end{equation}
We denote  by $J(N_{0})$ the kernel of  $\ell:\Lambda(N_{0})\to \Lambda(N_{0}^{(2)})$  and by   $J_{\ell }(N_{0})$ the kernel of $\ell: \Lambda_{\ell}(N_{0})\to  {\cal O}_{\cal E} $. They are the closed two-sided ideals generated (as left or right ideals) by the kernel of the augmentation map $o[ N_{\ell}]\to o$.
We have
\begin{equation}\label{Ldeco}
\begin{split} \Lambda_{\ell}(N_{0}) = \iota ({\cal O}_{\cal E}) \oplus   J_{\ell }(N_{0}) \ \ & , \ \    \mathcal M_{\ell}(N_{0}) = p_{K}\iota ({\cal O}_{\cal E}) \oplus   J_{\ell }(N_{0})  \ , \\
 X^{k}\Lambda (N_{0})= \iota ((X^{(2)})^{k}\Lambda(N_{0}^{(2)}) \oplus X^{k}J(N_{0}) \ \ &, \ \ B_{n,k}= \iota (B_{n,k}^{(2)}) \oplus (J_{\ell}(N_{0})\cap B_{n,k}) \ .
\end{split}
\end{equation}
The maps $\ell$ and $\iota$  are $L_{*}$-equivariant: for $t\in L_{*}$,
\begin{equation} \label{Leq}
\ell \circ \varphi _{t}  = \varphi _{\ell (t)}\circ \ell   \quad,  \quad \iota \circ \varphi _{\ell (t) }=\varphi_{ t} \circ \iota  \ ,
\end{equation}
thanks to the hypothesis b) made at the beginning of this chapter.
The map $\iota$ is  equivariant for the canonical action of the inverse monoid $L_{*}^{-1}$, but not the map $\ell$.

  \begin{lemma} \label{3.6} For $t\in L_{*}$, we have $\iota\circ \psi_{\ell (t)} =\psi_{t} \circ  \iota $.
  We have  $\ell \circ \psi_{t} =\psi_{\ell (t)} \circ  \ell  $ if and only if $N_{\ell} = tN_{\ell}t^{-1}$.
  \end{lemma}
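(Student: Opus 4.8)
The plan is to verify both assertions using the explicit direct sum decompositions of $\Lambda_\ell(N_0)$ recorded in \eqref{Ldeco}, together with the $L_*$-equivariance of $\ell$ and $\iota$ in \eqref{Leq} and the characterization of $\psi_t$ as the canonical left inverse of $\varphi_t$ with kernel $\sum_{u\in N_0-tN_0t^{-1}}u\varphi_t(\Lambda_\ell(N_0))$.

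First I would prove $\iota\circ\psi_{\ell(t)}=\psi_t\circ\iota$. Both sides are $o$-linear maps $\mathcal O_\mathcal E\to\Lambda_\ell(N_0)$, so by continuity and the density of $o[N_0^{(2)}]\mathcal O_{\mathcal E}$-type elements it suffices to check equality after composing with the étale decomposition; concretely, write any $x\in\mathcal O_\mathcal E$ as $x=\sum_{v\in J(N_0^{(2)}/\ell(t)N_0^{(2)}\ell(t)^{-1})}v\varphi_{\ell(t)}(x_v)$ with $x_v=\psi_{\ell(t)}(v^{-1}x)$. Applying $\iota$ and using \eqref{Leq} in the form $\iota\circ\varphi_{\ell(t)}=\varphi_t\circ\iota$ gives $\iota(x)=\sum_v \iota(v)\varphi_t(\iota(x_v))$. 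Now $\iota(v)$ runs over a subset of $N_0$ mapping bijectively onto $N_0^{(2)}/\ell(t)N_0^{(2)}\ell(t)^{-1}$, and since $t\in L_{\ell,\iota}$ stabilizes $\iota(N_0^{(2)})$, the elements $\iota(v)$ lie in distinct cosets of $tN_0t^{-1}$; hence applying $\psi_t$ kills all terms with $\iota(v)\notin tN_0t^{-1}$, i.e. all $v\neq 1$, and on the remaining term $\psi_t(\varphi_t(\iota(x_1)))=\iota(x_1)=\iota(\psi_{\ell(t)}(x))$. This yields $\psi_t(\iota(x))=\iota(\psi_{\ell(t)}(x))$ for all $x$, as desired. (One should check that $\psi_t$ indeed annihilates $\iota(v)\varphi_t(\Lambda_\ell(N_0))$ when $\iota(v)\notin tN_0t^{-1}$, which is exactly the defining property of $\psi_t$ recalled in Lemma \ref{idemp} and the definition preceding it.)

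Next I would treat $\ell\circ\psi_t=\psi_{\ell(t)}\circ\ell$. Using the étale decomposition $\Lambda_\ell(N_0)=\oplus_{u\in J(N_0/tN_0t^{-1})}u\varphi_t(\Lambda_\ell(N_0))$, we get for $y\in\Lambda_\ell(N_0)$ that $\ell(\psi_t(y))=\ell(\psi_t(\varphi_t(\psi_t(y))))=\ell(\psi_t(y))$ trivially, so instead write $y=\sum_u u\varphi_t(y_u)$ with $y_u=\psi_t(u^{-1}y)$, so $\psi_t(y)=y_1$. Applying $\ell$: $\ell(y)=\sum_u\ell(u)\varphi_{\ell(t)}(\ell(y_u))$ by \eqref{Leq}. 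Now $\ell(u)$ ranges over $N_0^{(2)}$ but the map $J(N_0/tN_0t^{-1})\to N_0^{(2)}/\ell(t)N_0^{(2)}\ell(t)^{-1}$ induced by $\ell$ need not be injective: the index of $tN_0t^{-1}$ in $N_0$ is $[N_0:tN_0t^{-1}]=[N_\ell:tN_\ell t^{-1}]\cdot[\ell(N_0):\ell(t)\ell(N_0)\ell(t)^{-1}]$, and the fibers over $N_0^{(2)}/\ell(t)N_0^{(2)}\ell(t)^{-1}$ have size $[N_\ell:tN_\ell t^{-1}]$. So $\ell(y)=\sum_{v}\left(\sum_{u\mapsto v}\ell(u)\ell(u)^{-1}\cdot\ell(u)\varphi_{\ell(t)}(\ell(y_u))\right)$, and applying $\psi_{\ell(t)}$ picks out the terms with $v=1$, i.e. $u\in N_\ell\cdot tN_0t^{-1}$ (the fiber over the trivial coset), giving $\psi_{\ell(t)}(\ell(y))=\sum_{u\in N_\ell\cap J}\ell(u)\ell(y_u)$ — wait, more carefully, the preimage of the identity coset of $\ell(t)N_0^{(2)}\ell(t)^{-1}$ consists of those cosets $utN_0t^{-1}$ with $\ell(u)\in\ell(t)N_0^{(2)}\ell(t)^{-1}$, equivalently $u\in N_\ell\,tN_0t^{-1}$. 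Thus $\psi_{\ell(t)}(\ell(y))=\sum_{u\in N_\ell\cap J(N_0/tN_0t^{-1})}\ell(y_u)$ (choosing representatives with $\ell(u)=1$, possible since $\ell$ is surjective). On the other hand $\ell(\psi_t(y))=\ell(y_1)$. These agree for all $y$ if and only if the sum over $u\in N_\ell\cap J$ reduces to the single term $u=1$, i.e. if and only if $N_\ell\subseteq tN_0t^{-1}$, equivalently (since $N_\ell\subseteq N_0$ and $t$ stabilizes $N_\ell$, giving $tN_\ell t^{-1}\subseteq N_\ell$) if and only if $tN_\ell t^{-1}=N_\ell$.

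The main obstacle is handling the "only if" direction cleanly: one must exhibit, when $tN_\ell t^{-1}\subsetneq N_\ell$, an explicit $y$ for which $\ell(\psi_t(y))\neq\psi_{\ell(t)}(\ell(y))$ — e.g. take $y=u$ for some $u\in N_\ell\cap J(N_0/tN_0t^{-1})$ with $u\neq 1$, so that $\psi_t(y)=\psi_t(u)=0$ (as $u\notin tN_0t^{-1}$, viewing $u=u\varphi_t(1)$), hence $\ell(\psi_t(y))=0$, whereas $\psi_{\ell(t)}(\ell(y))=\psi_{\ell(t)}(\ell(u))=\psi_{\ell(t)}(1)=1\neq0$ since $\ell(u)=1\in\ell(t)N_0^{(2)}\ell(t)^{-1}$. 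This forces the stated equivalence. I would also double-check the index factorization $[N_0:tN_0t^{-1}]=[N_\ell:tN_\ell t^{-1}][\ell(N_0):\ell(t)\ell(N_0)\ell(t)^{-1}]$, which follows from the short exact sequence $1\to N_\ell\to N_0\xrightarrow{\ell}\ell(N_0)\to 1$ being respected by conjugation by $t$, and from multiplicativity of indices; this is the one routine verification underpinning the coset-counting argument above.
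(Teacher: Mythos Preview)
Your proof is correct and follows essentially the same approach as the paper: both arguments use the \'etale decomposition \eqref{writing} on each side and compare via the equivariance \eqref{Leq}, and both reduce the second claim to counting the fiber over the trivial coset, which has size $[N_\ell:tN_\ell t^{-1}]$. The paper streamlines the bookkeeping by fixing from the start a product system of representatives $J(N_0/tN_0t^{-1})=\{u\iota(v):u\in J(N_\ell/tN_\ell t^{-1}),\ v\in J(N_0^{(2)}/\ell(t)N_0^{(2)}\ell(t)^{-1})\}$, which makes the fiber structure transparent and avoids the mid-argument corrections; your explicit counterexample $y=u\in N_\ell\setminus tN_0t^{-1}$ for the ``only if'' direction is a nice addition that the paper leaves implicit.
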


\begin{proof}
Clearly  $N_{0}= N_{\ell}\rtimes \iota (N_{0}^{(2)}) $ and  $tN_{0}t^{-1} =tN_{\ell}t^{-1} \rtimes  t\iota(N_{0}^{(2)})t^{-1}$ for $t\in L $.   We  choose, as we may, for $t\in L_{\ell, \iota}$, a system $J(N_{0}/tN_{0}t^{-1})$ of representatives of $N_{0}/tN_{0}t^{-1}$ containing $1$ such  that
\begin{equation}\label{J}
J(N_{0}/tN_{0}t^{-1})=\{u\iota (v) \ | u\in J(N_{\ell}/tN_{\ell}t^{-1}) \ , \ v\in J(N_{0}^{(2)}/\ell(t)N_{0}^{(2)}\ell (t^{-1}))\} \ .
\end{equation}

We have
 $\iota\circ \psi_{\ell (t)} =\psi_{t} \circ  \iota $ because, for $\lambda \in {\cal O}_{\cal E}$, we have on one hand  \eqref{writing}
 $$\lambda=  \sum _{v\in J(N_{0}^{(2)}/\ell(t) N_{0}^{(2)}\ell (t)^{-1})} v   \varphi_{\ell (t)}( \lambda _{v, \ell (t)}) \ \  , \ \   \lambda _{v, \ell (t)}=\psi_{\ell (t)}(v^{-1}\lambda)  \ , $$
 $$
    \iota(\lambda) =
    \sum _{v\in J(N_{0}^{(2)}/\ell(t) N_{0}^{(2)} \ell (t)^{-1})}\iota (v)  \varphi_{t} (\iota (\lambda _{v, \ell (t)})) \ ,
     $$
and on the other hand   \eqref{writing}
$$   \iota(\lambda) =
    \sum _{u\in J(N_{\ell} /t  N_{\ell}t^{-1}), v\in J(N_{0}^{(2)}/\ell(t) N_{0}^{(2)})\ell(t)^{-1})} u\iota(v) \varphi_{t} (\iota (\lambda) _{u \iota (v),t})
\    , $$
where
$ \iota (\lambda )_{u \iota(v),t} = \psi_{t}(\iota(v)^{-1}u^{-1}\iota (\lambda))$.
By the uniqueness of the decomposition,
$$\iota (\lambda) _{\iota (v),t}=\iota (\lambda _{v, \ell(t)}) \ , \ \iota (\lambda) _{u\iota(v),t}=0 \ \ {\rm if} \  u\neq 1 \ .
$$ Taking $u=1,v=1$, we get
$\psi_{t}( \iota (\lambda))=\iota (\psi_{\ell(t)}(\lambda))$.

A similar argument shows that  $\ell \circ \psi_{t} =\psi_{\ell (t)} \circ  \ell  $ if and only if $N_{\ell} = tN_{\ell}t^{-1}$. For $\lambda \in \Lambda_{\ell}(N_{0})$,
$$
 \lambda =\sum _{u  \in J(N_{0}/t  N_{0}t^{-1})}  u \varphi_{t} (\lambda _{u,t}) \ \ , \ \  \lambda _{u,t} =\psi_{t} (u^{-1}\lambda)\ ,
  $$
$$
  \ell (\lambda) =\sum _{u  \in J(N_{0}/t  N_{0}t^{-1})} \ell (u) \varphi_{\ell (t)}(\ell( \lambda _{u,t}))= \sum _{v\in J(N_{0}^{(2)}/ \ell (t) N_{0}^{(2)})}v \varphi_{\ell(t)}( \ell (\lambda)_{v, \ell(t)})
$$
By the uniqueness of the decomposition,
$$\ell (\lambda)_{v,\ell (t)} = \sum_{u\in J(N_{\ell} /t  N_{\ell}t^{-1})}  \ell( \lambda _{u\iota(v),t}) \ . $$
We deduce that $\ell \circ \psi_{t} =\psi_{\ell (t)}\circ \ell  $ if and only if   $ N_{\ell} =t  N_{\ell}t^{-1}$.
\end{proof}

\begin{remark}\label{3.7}{\rm  $\ell \circ \psi_{s}\neq \psi_{\ell (s)}\circ \ell$,   except in the trivial case where $\ell:N_{0} \to N_{0}^{(2)}$ is  an isomorphism, because $sN_{\ell}s^{-1}\neq N_{\ell}$ as the intersection of the decreasing sequence  $s^{k}N_{\ell}s^{-k}$  for $k\in \mathbb N$ is trivial.  }

\end{remark}

For future use, we note:

\begin{lemma} \label{lista}   The left or right $o[N_{0}]$-submodule   generated by $ \iota ({\cal O}_{\cal E})$  in $\Lambda_{\ell}(N_{0})$ is dense.
\end{lemma}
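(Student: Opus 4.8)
\textbf{Proof strategy for Lemma \ref{lista}.} The plan is to exploit the direct sum decomposition $\Lambda_{\ell}(N_{0}) = \iota ({\cal O}_{\cal E}) \oplus J_{\ell}(N_{0})$ from \eqref{Ldeco} together with the fact that $J_{\ell}(N_{0})$ is the closed two-sided ideal generated by the augmentation ideal $I_{\ell}$ of $o[N_{\ell}]$, and to show that the $o[N_{0}]$-submodule generated by $\iota({\cal O}_{\cal E})$ already captures, modulo the neighborhoods $O_{n,k}$ of zero, everything in $J_{\ell}(N_{0})$. First I would reduce to a density statement: it suffices to show that for every $n,k \in \mathbb N$ the submodule $o[N_{0}]\iota({\cal O}_{\cal E})$ surjects onto $\Lambda_{\ell}(N_{0})/O_{n,k}$, equivalently that $\Lambda_{\ell}(N_{0}) = o[N_{0}]\iota({\cal O}_{\cal E}) + O_{n,k}$. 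Since $O_{n,k} = {\mathcal M}_{\ell}(N_{0})^{n} + {\cal M}(N_{0})^{k}$ and $\Lambda_{\ell}(N_{0})$ is complete for the weak topology, this is a genuine density criterion.

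The key step is the observation that for $h \in N_{\ell}$ the element $[h]-1 \in o[N_{\ell}]$ can be written, using $N_{0} = N_{\ell}\,\iota(N_{0}^{(2)})$, as $[h]\cdot 1 - 1$ with $[h] \in o[N_{0}]$; hence every generator of the augmentation ideal $I_{\ell}$ lies in $o[N_{0}]\cdot 1 \subseteq o[N_{0}]\iota({\cal O}_{\cal E})$ (taking $1 = \iota(1)$). Because $J(N_{0})$ — the kernel of $\ell:\Lambda(N_{0})\to\Lambda(N_{0}^{(2)})$ — is generated as a left ideal of $\Lambda(N_{0})$ by $I_{\ell}$, and $\Lambda(N_{0}) = o[N_{0}] + {\cal M}(N_{0})^{k}$ for suitable truncation (the group ring is dense in its completion), one gets $J(N_{0}) \subseteq o[N_{0}]\iota({\cal O}_{\cal E}) + {\cal M}(N_{0})^{k} \cdot J(N_{0}) + \ldots$; iterating and using completeness one concludes $J(N_{0})$ is contained in the closure of $o[N_{0}]\iota({\cal O}_{\cal E})$. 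Then I would pass from $\Lambda(N_{0})$ to the microlocalization $\Lambda_{\ell}(N_{0})$: every element of $J_{\ell}(N_{0})$ is a limit (weak topology) of elements $\lambda s_{\ell}^{-1}$ with $\lambda \in J(N_{0})$, $s_{\ell} \in S_{\ell}(N_{0})$, and one checks that $s_{\ell}^{-1}$ itself can be approximated from $o[N_{0}]\iota({\cal O}_{\cal E})$ — indeed $s_{\ell} = \iota(\ell(s_{\ell})) + (\text{term in } J_{\ell})$ with $\ell(s_{\ell}) \in {\cal O}_{\cal E}^{\times}$, so $s_{\ell}^{-1} \in \iota({\cal O}_{\cal E}) + J_{\ell}(N_{0})\Lambda_{\ell}(N_{0})$, and one feeds this back into the ideal description. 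Combining with $\iota({\cal O}_{\cal E}) \subseteq o[N_{0}]\iota({\cal O}_{\cal E})$ trivially and with \eqref{Ldeco}, the closure of $o[N_{0}]\iota({\cal O}_{\cal E})$ contains both summands, hence is all of $\Lambda_{\ell}(N_{0})$.

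The main obstacle I anticipate is the bookkeeping in the microlocalized setting: controlling, uniformly in $n$ and $k$, how inverses of Ore elements interact with the filtration by the $O_{n,k}$ (or the more computationally convenient $B_{n,k}$, $C_{n,k}$), and making sure the "iterate and take limits" argument genuinely converges in the weak topology rather than only formally. One clean way to organize this is to work modulo $p^{m}$ first (where $\Lambda_{\ell}(N_{0})/p^{m}$ has the $X$-adic description as skew Laurent series over $\Lambda(N_{\ell})/p^{m}$, and $S_{\ell}$ becomes the set of series with a unit leading coefficient), prove the density there by an elementary leading-term argument on Laurent series, and then lift to $\Lambda_{\ell}(N_{0}) = \varprojlim_{m} \Lambda_{\ell}(N_{0})/p^{m}$. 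The right-module case is symmetric, using $N_{0} = \iota(N_{0}^{(2)})N_{\ell}$ and that $J(N_{0})$, $J_{\ell}(N_{0})$ are generated by $I_{\ell}$ as right ideals as well.
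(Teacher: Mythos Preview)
Your primary strategy (via the decomposition \eqref{Ldeco} and Ore inversions) is over-engineered and has a gap, while your fallback---working with the Laurent series description---is essentially what the paper does, only more simply than you anticipate.

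On the gap: your ``key observation'' that $[h]-1\in o[N_0]\cdot 1$ is content-free, since \emph{every} element of $o[N_0]$ lies in $o[N_0]\cdot 1\subseteq o[N_0]\,\iota(\mathcal{O}_{\mathcal{E}})$; the inclusion $J(N_0)\subset\overline{o[N_0]\iota(\mathcal{O}_{\mathcal{E}})}$ is just $J(N_0)\subset\Lambda(N_0)=\overline{o[N_0]}$ and has nothing to do with $I_\ell$. More seriously, the ``feed back'' does not iterate: after writing $\lambda s_\ell^{-1}=\lambda\,\iota(\ell(s_\ell)^{-1})+\lambda j$ with $j\in J_\ell(N_0)$, the remainder $\lambda j$ is again a general element of $J_\ell(N_0)$ with no Ore presentation to hand, so you have no visible contraction in any filtration forcing convergence. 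Note also that modulo $p^m$ the ring $\Lambda_\ell(N_0)/p^m$ is still a completed skew Laurent series ring over $\Lambda(N_\ell)/p^m$ with infinitely many negative terms (the coefficients go to zero $\mathcal{M}(N_\ell)$-adically, not $p$-adically), so reducing mod $p^m$ does not by itself produce finite Laurent polynomials.

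The paper bypasses all of this with one observation you missed: since $X=\iota(X^{(2)})$ one has $X^{-m}\in\iota(\mathcal{O}_{\mathcal{E}})$ for every $m\ge 0$, hence $\Lambda(N_0)X^{-m}$ (resp.\ $X^{-m}\Lambda(N_0)$) already lies in the left (resp.\ right) $\Lambda(N_0)$-submodule generated by $\iota(\mathcal{O}_{\mathcal{E}})$; density of $o[N_0]$ in $\Lambda(N_0)$ then reduces the lemma to this $\Lambda(N_0)$-statement. Given $\lambda=\sum_{n}\lambda_nX^n$ with $\lambda_n\in\Lambda(N_\ell)$ and $\lambda_n\to 0$ as $n\to -\infty$ in the $\mathcal{M}(N_\ell)$-adic topology, choose $m$ so large that $\lambda_n\in\mathcal{M}(N_\ell)^N$ for all $n<-m$; then the truncation $\mu:=\sum_{n\ge -m}\lambda_nX^n$ lies in $\Lambda(N_0)X^{-m}$ and $\lambda-\mu\in\mathcal{M}_\ell(N_0)^N$. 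This gives density even for the stronger $\mathcal{M}_\ell(N_0)$-adic topology, with no Ore bookkeeping and no reduction modulo powers of $p$.
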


\begin{proof} As $o[N_{0}]$ is dense in $\Lambda (N_{0})$ it suffices to show that the  left or right $\Lambda(N_{0})$-submodule   generated by $ \iota ({\cal O}_{\cal E})$  in $\Lambda_{\ell}(N_{0})$ is dense. This will be shown even with respect to the $\mathcal M_{\ell}(N_{0})$-adic topology.

 Viewing $\lambda \in \Lambda_{\ell}(N_{0})$  as an infinite  Laurent series $\lambda =\sum_{n\in \mathbb Z} \lambda_{n}X^{n}$ with $\lambda_{n} \in \Lambda(N_{\ell})$  and $\lim_{n\to-\infty}\lambda_{n}=0$ in the  $\mathcal M (N_{\ell})$-adic  topology of $\Lambda(N_{\ell})$, and noting that the left,  resp. right,  $\Lambda (N_{0})$-submodule  of $ \Lambda_{\ell}(N_{0})$ generated by $ \iota ({\cal O}_{\cal E})$ contains $\Lambda (N_{0})X^{-m}$, resp. $X^{-m}\Lambda (N_{0})$, for any positive integer $m$,
we use that for each $n\in \mathbb N$ there exists $\mu_{n}$ in $\Lambda (N_{0})X^{-m}$, resp. $  X^{-m}\Lambda (N_{0})$, for some large $m$, such that  $\lambda - \mu_{n} \in {\mathcal M}_{\ell}(N_{0})^{n}$.
\end{proof}

Let $M$ be a finitely generated  $\Lambda_{\ell}(N_{0})$-module and
let $f: \oplus_{i=1}^n \Lambda_{\ell} (N_{0})\to M$ be  $\Lambda_{\ell} (N_{0})$-linear surjective map. We put on $M$ the quotient topology of the weak topology
on $ \oplus_{i=1}^n \Lambda_{\ell} (N_{0})$; this is  independent of the choice of $f$. Then $M$ is a Hausdorff and complete topological $\Lambda_{\ell}(N_{0})$-module, every submodule is closed (\cite{SVig} Lemma 8.22). In the same way we can equip $M$ with the pseudocompact topology. Again $M$ is Hausdorff and complete and  every submodule is closed in the pseudocompact topology, because $\Lambda_{\ell} (N_{0})$ is noetherian. The weak topology on $M$ is weaker than the pseudocompact topology which is weaker that the $p$-adic topology. In particular the intersection of the submodules $p^{n}M$ for $n\in \mathbb N$ is $0$. By \cite{Gab} IV.3.Prop. 10, $M$ is $p$-adically complete, i.e., the natural map
$M\to \varprojlim_{n} M/p^{n}M$
is bijective.

\bigskip Unless otherwise indicated, $M$ is always understood to carry the weak topology.

\begin{lemma}\label{redmod}
The  properties  a,b,c,d  of section \ref{topAM} are satisfied by $(o,M)$ and $M$ is complete.
\end{lemma}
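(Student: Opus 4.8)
\textbf{Proof plan for Lemma \ref{redmod}.}

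The plan is to verify the four bullet conditions a, b, c, d from section \ref{topAM} one at a time, drawing on the structural facts about $\Lambda_{\ell}(N_0)$ and finitely generated modules over it that were recalled just before the statement, and then to deduce completeness.

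For a, I would simply observe that by definition $o=\mathcal{O}_K$ carries the $p$-adic topology with the ideals $p^nA$ a basis of neighborhoods of $0$, and $o$ is Hausdorff because $\bigcap_n p^n o=0$; nothing needs proof here beyond citing the setup. For b, the point is that the weak topology on $M$ is, by the discussion preceding the lemma, weaker than the pseudocompact topology, which in turn is weaker than the $p$-adic topology; hence any neighborhood of $0$ for the weak topology contains some $p^nM$, so the topology on $M$ is weaker than the $p$-adic topology. That $M$ is a Hausdorff, complete linearly topological $A[P_+]$-module of the kind considered in section \ref{fc} follows from the cited fact (\cite{SVig} Lemma 8.22) that $M$ with the weak topology is Hausdorff and complete, together with the continuity of the $L_{*}$-action (hence of $P_{*}$, in particular of $P_+\subseteq P_{*}$ once one restricts); I would note that the $P_+$-action here is the one induced by $\ell:P_{*}\to P_+^{(2)}$ and is continuous because $\ell$ and the $P_+^{(2)}$-action on $\mathcal{O}_{\mathcal{E}}$ are continuous, and more directly because it arises from the étale action of $L_{\ell,+}\supseteq L_{*}$ on $\Lambda_{\ell}(N_0)$ which respects the neighborhoods $O_{n,k}$ (the paragraph after Lemma \ref{tOnk}). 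For c, the submodules $p^nM$ are closed because, as recalled, every submodule of $M$ is closed in the weak topology (this uses that $\Lambda_{\ell}(N_0)$ is noetherian, \cite{SVig} Lemma 8.22). For d, the $p$-adic completeness $M\xrightarrow{\sim}\varprojlim_n M/p^nM$ is exactly the statement cited from \cite{Gab} IV.3, Prop.\ 10, applied to the noetherian ring $\Lambda_{\ell}(N_0)$ and the finitely generated module $M$, using that $\bigcap_n p^nM=0$.

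Finally, for completeness of $M$: this is already part of what is quoted from \cite{SVig} Lemma 8.22 (Hausdorff and complete for the weak topology), so I would just restate it. Alternatively, one can recover it from a--d: each $M/p^nM$ is a finitely generated module over the noetherian ring $\Lambda_{\ell}(N_0)$ killed by $p^n$, hence carries a topology making it Hausdorff and complete, and $M=\varprojlim_n M/p^nM$ is then complete as a projective limit of complete modules — this is precisely the mechanism used in the last bullet of the list in section \ref{topAM} ("if the $M/p^nM$ are complete for all $n\geq 1$, then $M$ is complete"). I expect no real obstacle here: the lemma is a bookkeeping check that the abstract hypotheses of section \ref{topAM} hold in the concrete case at hand, and every ingredient has been prepared. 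The only mild subtlety is making sure the topology referred to throughout is consistently the weak topology (as stated just before the lemma) rather than the pseudocompact or $p$-adic one, and that the comparison "weak $\leq$ pseudocompact $\leq$ $p$-adic" is invoked in the right direction when checking condition b.
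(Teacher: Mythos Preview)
Your proposal is correct and matches the paper's treatment: the paper offers no separate proof because every ingredient is already recorded in the paragraph immediately preceding the lemma (Hausdorff, complete, and all submodules closed from \cite{SVig} Lemma 8.22; the comparison weak $\leq$ pseudocompact $\leq$ $p$-adic; $p$-adic completeness from \cite{Gab} IV.3 Prop.~10), and your write-up simply makes this explicit.

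One small caution on the $P_+$-action clause in property~b: your sentence that the action is ``the one induced by $\ell:P_*\to P_+^{(2)}$'' is not right --- the $L_*$-action on $M$ does not factor through $\ell$ --- and at this point in the paper neither the continuity of the individual $\varphi_t,\psi_t$ (Prop.~\ref{contphipsi-general}) nor the joint continuity of the $L_*$-action (Prop.~\ref{auto-cont}) has been proved yet. The lemma is stated for a bare finitely generated $\Lambda_\ell(N_0)$-module, so what is being recorded here is really the purely topological and module-theoretic content of a--d; the \'etale $P_+$-structure is supplied later when the lemma is invoked. Simply drop that sentence and your argument is clean.
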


\begin{definition}  A finitely generated module $M$ over $\Lambda_{\ell}(N_{0})$  with an  \'etale semilinear action of a submonoid $L'$ of $L_{\ell, +}$ is called an \'etale $L'$-module over $\Lambda_{\ell}(N_{0})$.
\end{definition}

 We denote by   $ {\mathfrak M}^{et}_{\Lambda_{\ell}(N_{0})}(L')$ the category of \'etale $L'$-modules on $\Lambda_{\ell}(N_{0})$.

\begin{lemma} The category  $ {\mathfrak M}^{et}_{\Lambda_{\ell}(N_{0})}(L') $  is
abelian.
\end{lemma}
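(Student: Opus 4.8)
The plan is to mimic the proof of Proposition~\ref{abetale}, where the analogous statement for \'etale $A[P_{+}]$-modules was established, and to reduce everything to the single fact that the base change functor $\Phi_{t} := \Lambda_{\ell}(N_{0}) \otimes_{\Lambda_{\ell}(N_{0}),\varphi_{t}} -$ is exact for each $t \in L'$. Since $ {\mathfrak M}^{et}_{\Lambda_{\ell}(N_{0})}(L')$ is a full subcategory of the abelian category of finitely generated $\Lambda_{\ell}(N_{0})$-modules equipped with a semilinear $L'$-action (and morphisms are $\Lambda_{\ell}(N_{0})$-linear $L'$-equivariant maps), it suffices to check that this subcategory is closed under kernels and cokernels, and that for a morphism $f\colon M \to M'$ between \'etale modules the canonical map $\operatorname{coim} f \to \operatorname{im} f$ is an isomorphism; the last point is automatic because it already is in the ambient abelian category, so the real content is the stability of kernels and cokernels under the \'etale condition.

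First I would record that $\varphi_{t}\colon \Lambda_{\ell}(N_{0}) \to \Lambda_{\ell}(N_{0})$ is flat for every $t \in L' \subseteq L_{\ell,+}$: this is exactly the content of Proposition~\ref{sw} (together with the remark following it), which says that the multiplication map $\Lambda(N_{0})\otimes_{\Lambda(N_{0}),\varphi_{t}}\Lambda_{\ell}(N_{0}) \to \Lambda_{\ell}(N_{0})$ is bijective, equivalently that $\Lambda_{\ell}(N_{0})$ is free (of the relevant finite rank $[N_{0}:tN_{0}t^{-1}]$) as a module over itself via $\varphi_{t}$, hence $\Phi_{t}$ is exact. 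Then, given a morphism $f\colon M \to M'$ in ${\mathfrak M}^{et}_{\Lambda_{\ell}(N_{0})}(L')$, I would apply $\Phi_{t}$ to the four-term exact sequence
\begin{equation*}
0 \to \Ker f \to M \to M' \to \Coker f \to 0
\end{equation*}
to obtain an exact sequence $0 \to \Phi_{t}(\Ker f) \to \Phi_{t}(M) \to \Phi_{t}(M') \to \Phi_{t}(\Coker f) \to 0$, and compare it via the natural transformation $j\colon \Phi_{t}(-) \to -$ (given by $\lambda \otimes m \mapsto \lambda\varphi_{t}(m)$) with the original sequence. By \'etaleness of $M$ and $M'$, the maps $j_{M}$ and $j_{M'}$ are isomorphisms; the five lemma (applied to the commutative ladder) then forces $j_{\Ker f}$ and $j_{\Coker f}$ to be isomorphisms as well, which is precisely the statement that $\Ker f$ and $\Coker f$ are \'etale $L'$-modules over $\Lambda_{\ell}(N_{0})$. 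One must also note that $\Ker f$ and $\Coker f$ are finitely generated over $\Lambda_{\ell}(N_{0})$ — kernels because $\Lambda_{\ell}(N_{0})$ is noetherian (stated in the excerpt), cokernels trivially — and that $\varphi_{t}$ is injective on them, which follows from the isomorphism $j$ just established together with the injectivity of $\varphi_{t}$ on $\Lambda_{\ell}(N_{0})$.

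I do not anticipate a serious obstacle here: the argument is formally identical to Proposition~\ref{abetale}, with $A[N_{0}]$ replaced by $\Lambda_{\ell}(N_{0})$ and the flatness of $\varphi_{t}$ now coming from Proposition~\ref{sw} rather than from freeness of a group algebra over a subgroup algebra. The only point deserving a line of care is that the $L'$-action on $\Ker f$ and $\Coker f$ — a priori just the restriction of the given actions — is again \'etale in the precise sense of Definition~\eqref{almostetale}, i.e.\ that the direct-sum decomposition $M'' = \oplus_{u} u\varphi_{t}(M'')$ holds for $M'' = \Ker f$ or $\Coker f$; but this decomposition is exactly the assertion that $j_{M''}$ is an isomorphism, so nothing extra is needed. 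Thus the proof is: invoke noetherianity for finite generation, invoke Proposition~\ref{sw} for flatness of each $\varphi_{t}$, run the five-lemma comparison to get \'etaleness of kernel and cokernel, and conclude that ${\mathfrak M}^{et}_{\Lambda_{\ell}(N_{0})}(L')$ is an abelian subcategory of the ambient category of finitely generated semilinear $L'$-modules.
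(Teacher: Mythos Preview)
Your proposal is correct and takes essentially the same approach as the paper, which simply says ``As in the proof of Proposition~\ref{abetale} and using that the ring $\Lambda_{\ell}(N_{0})$ is noetherian.'' You have spelled out exactly this argument: flatness of each $\varphi_t$ from Proposition~\ref{sw}, the five-lemma comparison from the proof of Proposition~\ref{abetale}, and noetherianity of $\Lambda_{\ell}(N_{0})$ to handle finite generation of kernels.
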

\begin{proof}
As in the proof  of  Proposition \ref{abetale} and using  that the ring $\Lambda_{\ell}(N_{0})$ is noetherian.
\end{proof}

\bigskip The continuous homomorphism $\ell :L_{*}\to L_{+}^{(2)}$ defines an \'etale semilinear action of $L_{*} $ on the   ring $\Lambda_{id}(N_{0}^{(2)})$ isomorphic to $\cal O_{\cal E}$.

\begin{definition}  A finitely generated module $D$ over $\cal O_{\cal E}$  with an \'etale semilinear action of $L_{*}$ is called an \'etale $L_{*}$-module over $\cal O_{\cal E}$.
\end{definition}

An element $t\in L_*$ in the kernel $L_*^{\ell =1}$ of $\ell$ acts  trivially on $\mathcal O_{\mathcal E} $ hence bijectively on an \'etale $L_{*}$-module over $\cal O_{\cal E}$.

\begin{remark} {\rm The action of $L_*^{\ell =1}$ on $D$ extends to an action of  the subgroup of $L$ generated by $ L_*$ if $L_*^{\ell =1}$ is commutative or if we assume
that for each $t\in L_*^{\ell =1}$ there exists an integer $k>0$ such that $s^kt^{-1}\in L_* $. The assumption is trivially satisfied whenever $L_*=H\cap L_+$ for some subgroup $H\subset L$.

Indeed, the subgroup generated by $L_*^{\ell =1}$ is the set of words of the form $x_1^{\pm 1}\dots x_n^{\pm 1}$ with $x_i\in L_*^{\ell =1}$ for $i=1,\dots,n$. So if we have an action of all the elements and all the inverses, then we can take the products of these, as well. We need to show that this action is well defined, i.e., whenever we have a relation
\begin{equation}\label{relation}
x_1^{\pm 1}\dots x_n^{\pm 1}=y_1^{\pm 1}\dots y_r^{\pm 1}
\end{equation}
in the group then the action we just defined is the same using the $x$'s or the $y$'s.
If $L_*^{\ell =1}$ is commutative, this is easily checked.
In the second case, we can choose a big enough $k=\sum_{i=1}^n k_i+\sum_{j=1}^r k_j$ such that $s^{k_i}x_i^{-1}\in L_*$ and $s^{k_j}y_j^{-1}\in L_*$. Then multiplying the relation \eqref{relation} by $s^k$ we obtain a relation in $L_*$ so the two sides will define the same action on $D$. This shows that the actions defined using the two sides of \eqref{relation} are equal on $\varphi_s^k(D)\subset D$. However, they are also equal on group elements $u\in N_0^{(2)}$ hence on the whole $D=\bigoplus_{u\in J(N_0^{(2)}/\varphi_s^k(N_0^{(2)}))}u\varphi_s^k(D)$.} \end{remark}

We denote by   $ {\mathfrak M}^{et}_{{\cal O}_{\cal E, \ell}}(L_*)$ the category of  \'etale $L_{*}$-modules on ${\mathcal O}_{\mathcal E}$.

\begin{lemma} The category  $ {\mathfrak M}^{et}_{{\cal O}_{\cal E, \ell}}(L_*)$  is
abelian.
\end{lemma}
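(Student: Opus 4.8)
The plan is to mimic, essentially verbatim, the proof that $\mathfrak M^{et}_{\Lambda_{\ell}(N_0)}(L')$ is abelian (Lemma above), which in turn follows the pattern of Proposition \ref{abetale}. Concretely, $\mathfrak M^{et}_{{\mathcal O}_{\mathcal E,\ell}}(L_*)$ is by definition a full subcategory of the category of ${\mathcal O}_{\mathcal E}$-modules equipped with a semilinear $L_*$-action (which is plainly abelian, being modules over a ring with extra operators commuting appropriately). So the only thing to check is that the kernel and cokernel, formed in that ambient abelian category, of a morphism $f\colon D\to D'$ between étale $L_*$-modules over ${\mathcal O}_{\mathcal E}$ are again finitely generated over ${\mathcal O}_{\mathcal E}$ and again étale; finite generation is immediate since ${\mathcal O}_{\mathcal E}$ is noetherian (it is a noetherian local ring, as recalled in the section on the Fontaine ring), so kernels, images and cokernels of maps of finitely generated modules are finitely generated.

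For the étale property, I would fix $t\in L_*$ and use that the ring homomorphism $\varphi_t\colon{\mathcal O}_{\mathcal E}\to{\mathcal O}_{\mathcal E}$ is flat — this is exactly the content of the étaleness of ${\mathcal O}_{\mathcal E}$ itself (the analogue of Proposition \ref{sw} for ${\mathcal O}_{\mathcal E}$, equivalently the statement that ${\mathcal O}_{\mathcal E}$ is a finite free module over $\varphi_t({\mathcal O}_{\mathcal E})$ via a system of coset representatives $J(N_0^{(2)}/\ell(t)N_0^{(2)}\ell(t)^{-1})$, as in Remark \ref{cr}(2) applied to $o[N_0^{(2)}]\subset{\mathcal O}_{\mathcal E}$). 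Then I would run the four-term exact sequence argument of Proposition \ref{abetale}: apply the functor $\Phi_t:={\mathcal O}_{\mathcal E}\otimes_{{\mathcal O}_{\mathcal E},\varphi_t}-$ to
\[
0\to\Ker f\to D\to D'\to\Coker f\to 0,
\]
which stays exact by flatness of $\varphi_t$; the natural base-change maps $j_{-}\colon\Phi_t(-)\to-$ give a morphism from this sequence to itself, and $j_D$, $j_{D'}$ are isomorphisms because $D$, $D'$ are étale, whence by the five lemma (or a direct diagram chase) $j_{\Ker f}$ and $j_{\Coker f}$ are isomorphisms too; that is exactly the statement that $\Ker f$ and $\Coker f$ are étale. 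Finally $\Coker(\Ker f\to D)=\im f$ is then also étale, so images computed in the ambient category lie in the subcategory.

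There is essentially no hard obstacle here — the proof is a transcription — but the one point that deserves a sentence of care is verifying that the $L_*$-action is \emph{honestly} semilinear and compatible so that $\Ker f$, $\im f$, $\Coker f$ inherit an $L_*$-action at all (this is automatic since $f$ is $L_*$-equivariant), and that $\varphi_t$ is injective on kernel and cokernel: injectivity of $\varphi_t$ on $\Ker f$ is clear since it is the restriction of the injective $\varphi_t$ on $D$, and injectivity on $\Coker f$ follows from $j_{\Coker f}$ being an isomorphism together with injectivity of $\varphi_t$ on ${\mathcal O}_{\mathcal E}$ (flatness plus the identification of $\Phi_t(\Coker f)$ with $\Coker\Phi_t(f)$). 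The action of the elements of $L_*^{\ell=1}$ being bijective, noted just before the statement, is a harmless observation not needed for abelianness. So the writeup would be two or three lines: "Same proof as for Proposition \ref{abetale} (resp.\ the previous lemma), using that ${\mathcal O}_{\mathcal E}$ is noetherian and that $\varphi_t$ is flat for every $t\in L_*$."

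\begin{proof}
The category of finitely generated ${\mathcal O}_{\mathcal E}$-modules with a semilinear action of $L_*$ is abelian because ${\mathcal O}_{\mathcal E}$ is noetherian. It remains to see, exactly as in the proof of Proposition \ref{abetale}, that the kernel and the cokernel of a morphism $f\colon D\to D'$ of \'etale $L_*$-modules over ${\mathcal O}_{\mathcal E}$ are again \'etale. For $t\in L_*$ the ring homomorphism $\varphi_t\colon{\mathcal O}_{\mathcal E}\to{\mathcal O}_{\mathcal E}$ is flat (the analogue of Proposition \ref{sw}, or Remark \ref{cr}(2) applied to $o[N_0^{(2)}]\subset{\mathcal O}_{\mathcal E}$), so the functor $\Phi_t:={\mathcal O}_{\mathcal E}\otimes_{{\mathcal O}_{\mathcal E},\varphi_t}-$ sends the exact sequence
\begin{equation*}
0\to\Ker f\to D\to D'\to\Coker f\to 0
\end{equation*}
to an exact sequence, and the natural maps $j_{-}\colon\Phi_t(-)\to-$ give a morphism of the latter to the former. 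Since $D$ and $D'$ are \'etale, $j_D$ and $j_{D'}$ are isomorphisms, hence so are $j_{\Ker f}$ and $j_{\Coker f}$; that is, $\Ker f$ and $\Coker f$ are \'etale. In particular $\im f=\Coker(\Ker f\to D)$ is \'etale as well, so kernels, images and cokernels formed in the ambient abelian category lie in $\mathfrak M^{et}_{{\mathcal O}_{\mathcal E,\ell}}(L_*)$.
\end{proof}
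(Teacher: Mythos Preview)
Your proposal is correct and takes essentially the same approach as the paper: the paper's proof is literally the one-line ``As in the proof of Proposition \ref{abetale} and using that the ring ${\cal O}_{\cal E}$ is noetherian,'' which is exactly what you have unpacked.
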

\begin{proof}
As in the proof  of  Proposition \ref{abetale} and using  that the ring $\cal O_{\cal E}$ is noetherian.
\end{proof}

We will prove later that the categories $ {\mathfrak M}^{et}_{{\cal O}_{\cal E, \ell}}(L_*)$ and $ {\mathfrak M}^{et}_{\Lambda_{\ell}(N_{0})}(L_{*}) $ are equivalent.

 \subsection{Base change functors  }

We recall a general argument of semilinear algebra (see \cite{SVig}). Let  $A$ be a ring with a ring endomorphism $\varphi_{A}$,  let $B$ be another ring with a ring endomorphism $\varphi_{B}$, and let  $f:A\to B$ be a ring homomorphism such that $f\circ \varphi_{A}=\varphi_{B}\circ f$. When  $M$ is an $A$-module  with a semilinear endomorphism $\varphi_{M} $,  its image by base change  is the     $B$-module $B \otimes_{A,f} M$ with the semilinear endomorphism  $\varphi_{B} \otimes \varphi_{M}$.
The endomorphism $\varphi_{M}$ of $M$ is called \'etale if the natural map
$$
a\otimes m \mapsto a \varphi_{M}(m): A \otimes_{A,\varphi_{A}} M\to M
$$
is bijective.

\begin{lemma}
When $\varphi_{M}$ is \'etale, then $\varphi_{B}\otimes \varphi_{M}$
is \'etale.
\end{lemma}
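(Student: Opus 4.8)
The statement is a pure exercise in semilinear base change: given $M$ an $A$-module with an \'etale semilinear endomorphism $\varphi_M$, and $f\colon A\to B$ with $f\circ\varphi_A=\varphi_B\circ f$, I must show that the induced map
$$
\theta_{B\otimes M}\colon B\otimes_{B,\varphi_B}(B\otimes_{A,f}M)\ \longrightarrow\ B\otimes_{A,f}M,\qquad b\otimes(b'\otimes m)\mapsto b\,\varphi_B(b')\otimes\varphi_M(m),
$$
is bijective. The key point is that the source of $\theta_{B\otimes M}$ can be rewritten, using associativity of the tensor product together with the compatibility $f\circ\varphi_A=\varphi_B\circ f$, so that the whole map becomes the base change along $f$ of the corresponding map $\theta_M\colon A\otimes_{A,\varphi_A}M\to M$, which is bijective by the \'etaleness hypothesis on $\varphi_M$. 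Since applying the functor $B\otimes_A-$ (along $f$) to an isomorphism yields an isomorphism, the conclusion follows.

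First I would make the identification of the source explicit. One has a chain of natural isomorphisms of $B$-modules
$$
B\otimes_{B,\varphi_B}\bigl(B\otimes_{A,f}M\bigr)\ \cong\ \bigl(B\otimes_{B,\varphi_B}B\bigr)\otimes_{A,f}M\ \cong\ B\otimes_{A,\,\varphi_B\circ f}M\ =\ B\otimes_{A,\,f\circ\varphi_A}M\ \cong\ B\otimes_{A,f}\bigl(A\otimes_{A,\varphi_A}M\bigr),
$$
where the middle equality is exactly the hypothesis $f\circ\varphi_A=\varphi_B\circ f$, and the outer isomorphisms are the standard ``cancellation'' isomorphisms for tensor products with a bimodule structure coming from a ring homomorphism (here $B$ viewed as a $(B,A)$-bimodule via $\varphi_B$ on the left and $f$ on the right, resp.\ as a $(B,A)$-bimodule via $\mathrm{id}_B$ and $f\circ\varphi_A$). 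I would track an element through these isomorphisms: $b\otimes(b'\otimes m)$ on the left corresponds to $b\varphi_B(b')\otimes(1\otimes m)$ on the right, i.e.\ to $b\varphi_B(b')\otimes\theta_M^{-1}$-preimage data, and this is where one checks that under these identifications the map $\theta_{B\otimes M}$ goes over to $\mathrm{id}_B\otimes_{A,f}\theta_M$, namely $b\otimes(a\otimes m)\mapsto b\,f(a)\otimes\varphi_M(m)$ composed appropriately. The verification is a routine but slightly fiddly diagram chase on generators, using only that all maps in sight are additive and respect the module actions.

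Once the commuting square
$$
\begin{array}{ccc}
B\otimes_{B,\varphi_B}(B\otimes_{A,f}M) & \xrightarrow{\ \theta_{B\otimes M}\ } & B\otimes_{A,f}M\\[1mm]
\cong\big\downarrow & & \big\|\\[1mm]
B\otimes_{A,f}(A\otimes_{A,\varphi_A}M) & \xrightarrow{\ \mathrm{id}_B\otimes\theta_M\ } & B\otimes_{A,f}M
\end{array}
$$
is established, the proof finishes immediately: $\theta_M$ is bijective because $\varphi_M$ is \'etale by assumption, hence $\mathrm{id}_B\otimes_{A,f}\theta_M$ is bijective (base change of an isomorphism), hence $\theta_{B\otimes M}$ is bijective, which is precisely the statement that $\varphi_B\otimes\varphi_M$ is \'etale. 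I expect the only real obstacle to be bookkeeping: getting the three bimodule structures on $B$ straight and checking that the generator-level formula for $\theta_{B\otimes M}$ really matches $\mathrm{id}_B\otimes\theta_M$ after the identifications — there is no conceptual difficulty, and in fact this is already the ``general argument of semilinear algebra'' alluded to just before the lemma, so I would keep the write-up to a few lines, citing associativity of $\otimes$ and the hypothesis $f\circ\varphi_A=\varphi_B\circ f$.
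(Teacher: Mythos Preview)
Your proof is correct and follows exactly the same approach as the paper: the chain of isomorphisms $B\otimes_{B,\varphi_B}(B\otimes_{A,f}M)\cong B\otimes_{A,\varphi_B\circ f}M = B\otimes_{A,f\circ\varphi_A}M\cong B\otimes_{A,f}(A\otimes_{A,\varphi_A}M)\cong B\otimes_{A,f}M$ is precisely what the paper writes down in one displayed line. You are in fact slightly more careful than the paper in noting that one should check the composite isomorphism agrees with the linearization map $\theta_{B\otimes M}$, a point the paper leaves implicit.
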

\begin{proof}
We have
\begin{equation*}
    B \otimes_{B,\varphi_{B}} (B\otimes_{A,f} M) =
    B \otimes_{A, \varphi_B \circ f} M = B \otimes_{f \circ \varphi_s} M = B \otimes_{A,f} (A \otimes_{A,\varphi_s} M) \cong B\otimes_{A,f} M .
\end{equation*}
\end{proof}

Applying these general considerations to the  $L_{*}$-equivariant maps $\ell: \Lambda_{\ell}(N_{0})\to {\cal O}_{\cal E} $ and $\iota:{\cal O}_{\cal E} \to  \Lambda_{\ell}(N_{0})$ satisfying
$\ell \circ \iota = \id$ (see \eqref{elliota}, \eqref{Leq}). We   have the base change functors
$$M \mapsto  \mathbb D(M):= {\cal O}_{\cal E} \otimes_{\Lambda_{\ell}(N_{0}), \ell }M
$$
from the category of $\Lambda_{\ell}(N_{0})$-modules to the category of ${\cal O}_{\cal E}$-modules, and
$$D \mapsto \mathbb M(D):= \Lambda_{\ell} (N_{0}) \otimes_{ {\cal O}_{\cal E} , \iota }D
$$
in the opposite direction.  Obviously these base change functors respect the property of being finitely generated. By the general lemma we obtain:

\begin{proposition}
The above functors  restrict to  functors
$$\mathbb D\colon {\mathfrak M}^{et}_{\Lambda_{\ell}(N_{0})}(L_{*})\to  {\mathfrak M}^{et}_{{\cal O}_{\cal E, \ell}}(L_{*})
 \quad \text{and} \quad \mathbb M\colon {\mathfrak M}^{et}_{{\cal O}_{\cal E, \ell}}(L_{*}) \to  {\mathfrak M}^{et}_{\Lambda_{\ell}(N_{0})}(L_{*}) \ .
$$
\end{proposition}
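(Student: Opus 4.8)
The plan is to deduce this directly from the general semilinear-algebra lemma just proved, applied to the two $L_{*}$-equivariant ring homomorphisms $\ell\colon\Lambda_{\ell}(N_{0})\to{\cal O}_{\cal E}$ and $\iota\colon{\cal O}_{\cal E}\to\Lambda_{\ell}(N_{0})$ of \eqref{elliota}. First I would treat $\mathbb D$. Fix $t\in L_{*}$. Taking, in the notation of that lemma, $A=\Lambda_{\ell}(N_{0})$ with $\varphi_{A}=\varphi_{t}$, $B={\cal O}_{\cal E}$ with $\varphi_{B}=\varphi_{\ell(t)}$, and $f=\ell$, the compatibility $f\circ\varphi_{A}=\varphi_{B}\circ f$ is exactly the first identity of \eqref{Leq}. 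Hence, given $M\in{\mathfrak M}^{et}_{\Lambda_{\ell}(N_{0})}(L_{*})$, the semilinear endomorphism $\varphi_{B}\otimes\varphi_{t}$ of $\mathbb D(M)={\cal O}_{\cal E}\otimes_{\Lambda_{\ell}(N_{0}),\ell}M$ is well defined (well-definedness on the tensor product uses the $\Lambda_{\ell}(N_{0})$-semilinearity of $\varphi_{t}$ on $M$ together with $\ell\circ\varphi_{t}=\varphi_{\ell(t)}\circ\ell$), it is ${\cal O}_{\cal E}$-semilinear for the action of $t$ on ${\cal O}_{\cal E}$ induced by $\ell$, and by the preceding lemma it is étale.

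Next I would check that $t\mapsto\varphi_{B}\otimes\varphi_{t}$ is a semilinear action of the monoid $L_{*}$. This is automatic from the functoriality of the base-change functor ${\cal O}_{\cal E}\otimes_{\Lambda_{\ell}(N_{0}),-}(-)$: $\varphi_{B}\otimes\varphi_{1}=\id$ and $\varphi_{B}\otimes(\varphi_{t_{1}}\circ\varphi_{t_{2}})=(\varphi_{B}\otimes\varphi_{t_{1}})\circ(\varphi_{B}\otimes\varphi_{t_{2}})$ follow from the corresponding identities holding in $M$ and in ${\cal O}_{\cal E}$. Since $\mathbb D(M)$ is finitely generated over ${\cal O}_{\cal E}$ (as already observed), this shows $\mathbb D(M)\in{\mathfrak M}^{et}_{{\cal O}_{\cal E, \ell}}(L_{*})$; functoriality in $M$ is clear, so $\mathbb D$ restricts as claimed.

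The argument for $\mathbb M$ is entirely parallel, now taking $A={\cal O}_{\cal E}$ with $\varphi_{A}=\varphi_{\ell(t)}$, $B=\Lambda_{\ell}(N_{0})$ with $\varphi_{B}=\varphi_{t}$, and $f=\iota$; the required compatibility $\iota\circ\varphi_{\ell(t)}=\varphi_{t}\circ\iota$ is the second identity of \eqref{Leq}. One point worth flagging here is that for $t$ in the kernel $L_{*}^{\ell=1}$ the operator $\varphi_{\ell(t)}$ on ${\cal O}_{\cal E}$ is the identity while $\varphi_{t}$ on $\Lambda_{\ell}(N_{0})$ is genuinely nontrivial; nevertheless \eqref{Leq} guarantees $\varphi_{t}\circ\iota=\iota$ on ${\cal O}_{\cal E}$, so the base-changed operator on $\mathbb M(D)=\Lambda_{\ell}(N_{0})\otimes_{{\cal O}_{\cal E},\iota}D$ is still well defined and, by the preceding lemma, étale. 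Finite generation over $\Lambda_{\ell}(N_{0})$ has been noted, and the monoid-action and functoriality checks are as before.

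In all of this there is essentially no genuine obstacle: the only real content is the étale-ness of the base-changed endomorphism, which is precisely the general lemma already available, and the remainder is the routine bookkeeping of semilinear algebra. The one place requiring a little care is keeping track of which endomorphism ($\varphi_{t}$ versus $\varphi_{\ell(t)}$) acts on which ring and invoking the equivariance relations \eqref{Leq} in the correct direction for each of the two functors.
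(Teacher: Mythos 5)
Your proof is correct and follows essentially the same route as the paper: the paper's entire argument is the observation that finite generation is obviously preserved plus an invocation of the general semilinear-algebra lemma that base change along an equivariant ring homomorphism preserves étale-ness, which is exactly what you spell out (with the correct identification of $A$, $B$, $f$, and the equivariance relation \eqref{Leq} in each direction). The remark about $t\in L_{*}^{\ell=1}$ is a nice sanity check, though the general well-definedness of $\varphi_{B}\otimes\varphi_{D}$ already covers it without special pleading.
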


 When $M \in {\mathfrak M}^{et}_{\Lambda_{\ell}(N_{0})}(L_{*})$, the diagonal action of $L_{*}$ on $ \mathbb D(M)$ is:
\begin{equation}\label{A1}
 \varphi_{t}(\mu \otimes m)=
\varphi_{\ell (t)}(\mu) \otimes \varphi_{t}(m)  \ \ {\rm for}\ t\in L_{*}, \mu\in {\cal O}_{\cal E} ,m\in M,
\end{equation}
When  $D\in {\mathfrak M}^{et}_{{\cal O}_{\cal E, \ell}}(L_{*})$, the diagonal action  of $M_{*}$ on $ \mathbb M(D)$ is:
 \begin{equation}\label{A2}
 \varphi_{t} (\lambda \otimes d)=  \varphi_{t}(\lambda) \otimes  \varphi_{t} (d )
 \ \ {\rm for} \ t\in L_{*}, \lambda\in \Lambda_{\ell} (N_{0}), d\in D \ .
\end{equation}
  The  natural map
$$\ell_{M} :M\to \mathbb D(M) \ \ , \ \  \ell_{M} (m) = 1\otimes m
$$  is surjective,  $L_{*}$-equivariant,  with a  $P_{*}$-stable kernel  $M_{\ell}:=J_{\ell}(N_{0})M$.
 The injective  $L_{*}$-equivariant map
 $$\iota_{D}: D\to \mathbb M(D) \ \ , \ \  \iota_{D} (d) = 1\otimes d$$
  is  $\psi_{t} $-equivariant for $t\in L_{*}$ (same proof as Lemma \ref{3.6}).

\bigskip

For future use we note the following property.

\begin{lemma}\label{coe}
Let $d\in D$ and $t\in L_{*}$. We have
\begin{equation*}
    \psi_{t}(u^{-1} \iota_{D}( d)) =
    \begin{cases}
    \iota _{D}(\psi_{t}(v^{-1}d) ) &  \text{if $u = \iota(v)$ with $v \in N_{0}^{(2)}$}, \\
    0 & \text{if $u\in N_{0} \setminus \iota(N_{0}^{(2)})tN_{0}t^{-1}$}.
    \end{cases}
\end{equation*}
\end{lemma}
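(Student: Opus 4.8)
The plan is to reduce the statement for a general $d\in D$ and $t\in L_*$ to the two basic facts that $\iota_D$ commutes with $\psi_t$ and that $\iota(N_0^{(2)})$ and $t N_0 t^{-1}$ interact through the product decomposition $N_0=N_\ell\rtimes\iota(N_0^{(2)})$. First I would recall from the discussion just before the lemma that $\iota_D\colon D\to\mathbb M(D)$ is $\psi_t$-equivariant, i.e. $\psi_t\circ\iota_D=\iota_D\circ\psi_{\ell(t)}$ (same proof as Lemma \ref{3.6}, using that $\iota$ itself satisfies $\iota\circ\psi_{\ell(t)}=\psi_t\circ\iota$). The other ingredient is the semilinearity relation of Lemma \ref{produit}: for $x\in o[N_0]$ and $m\in\mathbb M(D)$ one has $\psi_t(x\varphi_t(m))=\psi_t(x)m$ and, more relevantly here, $\psi_t(\varphi_t(x)m)=x\,\psi_t(m)$; combined with the étale decomposition these let one compute $\psi_t(u^{-1}m)$ coset by coset.

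The core of the argument is to understand, for $u\in N_0$, the image $\psi_{\ell(t)}(\ell(u)^{-1}\cdot)$ versus $\psi_t(u^{-1}\iota_D(d))$. Here I would use the system of representatives $J(N_0/tN_0t^{-1})$ of the shape \eqref{J}, namely $u=u'\iota(v)$ with $u'\in J(N_\ell/tN_\ell t^{-1})$ and $v\in J(N_0^{(2)}/\ell(t)N_0^{(2)}\ell(t)^{-1})$, exactly as in the proof of Lemma \ref{3.6}. Writing $\iota_D(d)=\sum_{v}\iota(v)\,\varphi_t(\iota_D(\psi_{\ell(t)}(v^{-1}d)))$ (the expansion \eqref{writing} of $\iota_D(d)$, which lies in $\iota(N_0^{(2)})\varphi_t(\mathbb M(D))$ since $\iota_D$ is $L_*^{-1}$-... more precisely $\psi_t$-equivariant and $\varphi_t$-compatible with $\iota$), one sees that only the terms with trivial $N_\ell$-component survive: applying $\psi_t(u^{-1}-)$ kills every summand indexed by a representative not congruent to $u$ mod $tN_0t^{-1}$, because $\Ker\psi_t=\sum_{w\in N_0-tN_0t^{-1}}w\varphi_t(\mathbb M(D))$. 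Concretely, if $u=\iota(v)$ with $v\in N_0^{(2)}$ then $\psi_t(\iota(v)^{-1}\iota_D(d))=\iota_D(\psi_{\ell(t)}(v^{-1}d))$ by matching the $v$-th term; and if $u\in N_0\setminus\iota(N_0^{(2)})tN_0t^{-1}$, then $u^{-1}\iota_D(d)$ has trivial projection onto $\varphi_t(\mathbb M(D))$ along the decomposition indexed by $J(N_0/tN_0t^{-1})$, so $\psi_t(u^{-1}\iota_D(d))=0$.

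More carefully, I would organize the second case as follows. By \eqref{J} every element of $N_0$ can be written $u=u'\iota(v)w$ with $u'\in J(N_\ell/tN_\ell t^{-1})$, $v\in J(N_0^{(2)}/\ell(t)N_0^{(2)}\ell(t)^{-1})$, and $w\in tN_0t^{-1}$. Using $\psi_t(w^{-1}x)=\psi_t(\varphi_t(t^{-1}w^{-1}t)x)=(t^{-1}w^{-1}t)\psi_t(x)$ for $w\in tN_0t^{-1}$, one reduces to $u=u'\iota(v)$ with $u'$ a representative of $N_\ell/tN_\ell t^{-1}$. The hypothesis $u\notin\iota(N_0^{(2)})tN_0t^{-1}$ forces $u'\neq 1$ in $J(N_\ell/tN_\ell t^{-1})$. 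Since $\iota_D(d)=\sum_{v'}\iota(v')\varphi_t(\iota_D(\psi_{\ell(t)}((v')^{-1}d)))$ involves only the representatives $1\cdot\iota(v')$ of $N_0/tN_0t^{-1}$ (i.e. with trivial $N_\ell$-part), the uniqueness of the étale decomposition of $u^{-1}\iota_D(d)$ shows its $\varphi_t(\mathbb M(D))$-component is zero, hence $\psi_t(u^{-1}\iota_D(d))=0$.

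The main obstacle I anticipate is bookkeeping: making rigorous that $\iota_D(d)$ expands with representatives having trivial $N_\ell$-component, which requires the compatibility $\iota\circ\varphi_{\ell(t)}=\varphi_t\circ\iota$ from \eqref{Leq} together with the étale decomposition of $D$ over $\mathcal O_{\mathcal E}$ transported via $\iota$. Everything else is a direct transcription of the proof of Lemma \ref{3.6} with $M$ replaced by $\mathbb M(D)=\Lambda_\ell(N_0)\otimes_{\mathcal O_{\mathcal E},\iota}D$ and $\lambda$ replaced by $\iota_D(d)$; in fact the statement is just the ``off-diagonal'' refinement of the identity $\psi_t\circ\iota_D=\iota_D\circ\psi_{\ell(t)}$ already noted, so no genuinely new idea is needed.
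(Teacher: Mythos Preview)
Your proposal is correct and follows essentially the same approach as the paper: expand $d$ via the \'etale decomposition in $D$, transport it through $\iota_D$ using the $L_*$-equivariance $\iota_D\circ\varphi_t=\varphi_t\circ\iota_D$ to get $\iota_D(d)=\sum_{v}\iota(v)\varphi_t(\iota_D(\psi_t(v^{-1}d)))$, and then compare with the unique \'etale decomposition of $\iota_D(d)$ in $\mathbb{M}(D)$ indexed by a set of representatives $J(N_0/tN_0t^{-1})$ containing $\iota(J)$. The only cosmetic difference is that for the vanishing case the paper simply notes that any $u\in N_0\setminus\iota(N_0^{(2)})tN_0t^{-1}$ can itself be placed in such a $J(N_0/tN_0t^{-1})$, whereas you first reduce via Lemma~\ref{produit} to a representative in $J(N_\ell/tN_\ell t^{-1})\cdot\iota(J)$; both amount to the same uniqueness argument.
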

\begin{proof}
We choose a set $J \subset N_{0}^{(2)}$ of representatives  for the cosets in $N_{0}^{(2)}/\ell (t) N_{0}^{(2)}\ell(t)^{-1}$. The semilinear endomorphism $\varphi_{t}$ of $D$ is \'etale hence
$$
  d = \sum_{v \in J} v \varphi_{t}(d_{v,t}) \quad \text{where $d_{v,t}=\psi_{t}(v^{-1}d) $ }.
$$
Applying $\iota_{D}$ we obtain
$$
 \iota_{D}( d)=\sum_{v} \iota(v)  \iota _{D}(\varphi_{t}(d_{v,t}))= \sum_{v} \iota(v) \varphi_{t}( \iota _{D}(d_{v,t})) =
 \sum_{v} \iota(v) \varphi_{t}(\psi_t( \iota _{D}(v^{-1}d))) \ .
$$
The map $\iota $ induces an injective map from $J$ into $N_{0}/tN_{0}t^{-1}$ with image included in a set $J(N_{0}/tN_{0}t^{-1})\subset N_{0}$ of representatives  for the cosets in $N_{0}/tN_{0}t^{-1}$. As the action  $\varphi_{t}$ of $t$ in $\mathbb M(D)$ is \'etale, we have \eqref{writing}
$$
 m=\sum_{u\in J(N_{0}/tN_{0}t^{-1})} u \varphi_{t}(m_{u,t})) \quad \text{where $m_{u,t}=\psi_t(u^{-1}m) $  }
$$
for any $m \in \mathbb M(D)$. We deduce that $\psi_{t}(\iota (v^{-1}) \iota_{D}( d))=\iota _{D}(d_{v,t})$ when $v\in J$ and  $\psi_{t}(u^{-1}\iota_{D}( d))=0$ when $u \in J(N_{0}/tN_{0}t^{-1}) \setminus \iota(J)$.
As any element of $N_{0}^{(2)}$ can belong to a set of  representatives  of $N_{0}^{(2)}/\ell (t) N_{0}^{(2)}\ell(t)^{-1}$, we deduce that
$\psi_{t}(\iota (v^{-1}) \iota_{D}( d))=\iota _{D}(d_{v,t})$ for any $v\in N_{0}^{(2)}$.  For the same reason
$\psi_{t}(\iota (u^{-1}) \iota_{D}( d))=0$ for any $u\in N_{0}$ which does not belong to $\iota(N_{0}^{(2)})tN_{0}t^{-1}$.
\end{proof}

\subsection{Equivalence of categories}

Let  $D\in  {\mathfrak M}^{et}_{{\cal O}_{\cal E},\ell}(L_{*})$.    By definition $\mathbb D(\mathbb M(D))={\cal O}_{\cal E} \otimes_{\Lambda_{\ell} (N_{0}), \ell} (\Lambda_{\ell} (N_{0})\otimes_{ {\cal O}_{\cal E}, \iota} D)$, and we have a natural map
$$\mu \otimes (\lambda \otimes d)\mapsto \mu \ell(\lambda) d :   {\cal O}_{\cal E} \otimes_{\Lambda_{\ell} (N_{0}), \ell} (\Lambda_{\ell} (N_{0})\otimes_{ {\cal O}_{\cal E}, \iota} D)  \to D  \ . $$

\begin{proposition} \label{easy}  The natural  map $\mathbb D(\mathbb M(D)) \to D$ is an isomorphism
in $ {\mathfrak M}^{et}_{{\cal O}_{\cal E},\ell}(L_{*})$.
\end{proposition}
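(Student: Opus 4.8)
The plan is to unwind the two base-change functors explicitly and exhibit the inverse map, checking that the composite is the identity on both sides. First I would write out $\mathbb{D}(\mathbb{M}(D)) = \mathcal{O}_{\mathcal E} \otimes_{\Lambda_\ell(N_0),\ell} \bigl(\Lambda_\ell(N_0) \otimes_{\mathcal{O}_{\mathcal E},\iota} D\bigr)$ and note that the natural map $\nu \colon \mathbb{D}(\mathbb{M}(D)) \to D$ sending $\mu \otimes (\lambda \otimes d) \mapsto \mu\,\ell(\lambda)\,d$ is well-defined and $\mathcal{O}_{\mathcal E}$-linear; using \eqref{A1}, \eqref{A2} and the $L_*$-equivariance of $\ell$ (that is, $\ell\circ\varphi_t = \varphi_{\ell(t)}\circ\ell$, \eqref{Leq}), it is straightforward that $\nu$ is $L_*$-equivariant and hence a morphism in $\mathfrak{M}^{et}_{\mathcal{O}_{\mathcal E},\ell}(L_*)$. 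The candidate inverse is $d \mapsto 1 \otimes (1 \otimes d) = 1 \otimes \iota_D(d)$; this is visibly a section of $\nu$, since $\nu(1\otimes(1\otimes d)) = \ell(1)\,d = d$. So the only real content is surjectivity and injectivity of $\nu$, equivalently that this section is also surjective onto $\mathbb{D}(\mathbb{M}(D))$.

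For surjectivity of the section, the key point is the direct-sum decomposition $\Lambda_\ell(N_0) = \iota(\mathcal{O}_{\mathcal E}) \oplus J_\ell(N_0)$ from \eqref{Ldeco}, where $J_\ell(N_0) = \Ker\bigl(\ell\colon\Lambda_\ell(N_0)\to\mathcal{O}_{\mathcal E}\bigr)$. Tensoring $\mathbb{M}(D) = \Lambda_\ell(N_0)\otimes_{\mathcal{O}_{\mathcal E},\iota}D$ down along $\ell$ kills exactly the submodule $J_\ell(N_0)\otimes_{\mathcal{O}_{\mathcal E},\iota}D = J_\ell(N_0)\cdot\mathbb{M}(D)$, because $\ell\colon\Lambda_\ell(N_0)\to\mathcal{O}_{\mathcal E}$ is surjective with kernel $J_\ell(N_0)$ and base change is right exact: indeed $\mathcal{O}_{\mathcal E}\otimes_{\Lambda_\ell(N_0),\ell}\mathbb{M}(D) = \mathbb{M}(D)/J_\ell(N_0)\mathbb{M}(D)$. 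So every element of $\mathbb{D}(\mathbb{M}(D))$ is the class of some $\lambda\otimes d$ with $\lambda\in\Lambda_\ell(N_0)$, $d\in D$; writing $\lambda = \iota(\mu) + j$ with $\mu\in\mathcal{O}_{\mathcal E}$ and $j\in J_\ell(N_0)$, the class of $\lambda\otimes d$ equals the class of $\iota(\mu)\otimes d = 1\otimes(\iota(\mu)\otimes d) = 1\otimes \iota_D(\mu d)$ (here using $\iota(\mu)\otimes d = 1\otimes\iota_D(\mu d)$ in $\mathbb{M}(D)$, since $\iota(\mu)\cdot(1\otimes d) = 1\otimes\mu d$ over the base ring $\mathcal{O}_{\mathcal E}$), which lies in the image of the section. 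This shows the section is surjective, hence $\nu$ is an isomorphism of $\mathcal{O}_{\mathcal E}$-modules, and since both $\nu$ and its inverse are $L_*$-equivariant, it is an isomorphism in $\mathfrak{M}^{et}_{\mathcal{O}_{\mathcal E},\ell}(L_*)$. That the source is indeed an object of this category was already established by the proposition that $\mathbb{D}$ and $\mathbb{M}$ preserve étaleness.

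I do not expect a serious obstacle here: the statement is the "easy half" of the equivalence of categories (the harder direction, $\mathbb{M}(\mathbb{D}(M))\cong M$, needs genuine work with the density of $o[N_0]\iota(\mathcal{O}_{\mathcal E})$ in $\Lambda_\ell(N_0)$, Lemma \ref{lista}, and completeness arguments, mirroring \cite{Z}). The mild point requiring care is bookkeeping with the two different base-ring structures on $\mathbb{M}(D)$ — it is a $\Lambda_\ell(N_0)$-module but was formed by tensoring over $\mathcal{O}_{\mathcal E}$ via $\iota$ — so one must be careful that the identity $\iota(\mu)\otimes d = 1\otimes\mu d$ is the one valid in the tensor product $\Lambda_\ell(N_0)\otimes_{\mathcal{O}_{\mathcal E},\iota}D$. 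Once that is stated cleanly, the rest is the formal right-exactness of base change applied to the split exact sequence $0 \to J_\ell(N_0) \to \Lambda_\ell(N_0) \xrightarrow{\ell} \mathcal{O}_{\mathcal E} \to 0$, together with the observation that the splitting is realized by $\iota$ and is $L_*$-equivariant by hypothesis b) at the start of the section.
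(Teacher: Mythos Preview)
Your proof is correct and follows essentially the same approach as the paper: both verify $L_*$-equivariance via \eqref{A1}, \eqref{A2}, \eqref{Leq}, and both derive bijectivity from $\ell\circ\iota=\id$. The only difference is presentational: the paper dispatches bijectivity in one line by observing that the composite base change along $\iota$ then $\ell$ is base change along $\ell\circ\iota=\id$ (associativity of the tensor product), whereas you unpack this explicitly via the splitting $\Lambda_\ell(N_0)=\iota(\mathcal{O}_{\mathcal E})\oplus J_\ell(N_0)$ from \eqref{Ldeco}.
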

\begin{proof}   The natural map
 is  bijective because  $\ell \circ \iota = \id: {\cal O}_{\cal E}\to \Lambda_{\ell} (N_{0}) \to  {\cal O}_{\cal E}$,  and $L_{*}$-equivariant because
the action of $t\in L_{*}$ satisfies
$$\varphi_{t}(\mu \otimes (\lambda \otimes d))=\varphi_{\ell (t)}(\mu)   \otimes  \varphi_{t} (\lambda \otimes d) = \varphi_{\ell (t)}(\mu)   \otimes (\varphi_{t}(\lambda) \otimes \varphi_{t}(d)) \ ,
$$
$$
\varphi_{t}(\mu \ell(\lambda) d)= \varphi_{\ell (t)}( \mu (\ell(\lambda))\varphi_{t}( d) = \varphi_{\ell (t)}( \mu) \ell(\varphi_{ t}(\lambda))\varphi_{t}( d) \ ,$$
by (\ref{A1}), (\ref{A2}).
\end{proof}

 The kernel $N_{\ell}$ of
$\ell:N_{0}\to \mathbb Z_{p}$  being  a closed subgroup of $N_{0}$ is also a $p$-adic Lie group, hence  contains an  open pro-$p$-subgroup $H$  with the following property (\cite{S} Remark 26.9 and Thm. 27.1):

 For any integer $n\geq 1$, the map $h\mapsto h^{p^{n}}$ is an homeomorphism of $H$ onto an open subgroup $H_{n}\subseteq H$,
and $(H_{n})_{n\geq 1}$ is a fundamental system of open neighborhoods of $1$ in $H$.

The groups $s^{k}N_{\ell}s^{-k}$ for $k\geq 1$ are open and form a fundamental system of neighborhoods of $1$ in $N_{\ell}$.
  For any integer $n \geq 1$ there exists a positive  integer $k$ such that any element in
$ s^{k}N_{\ell}s^{-k}$   is  contained in $H_{n}$, hence is a  $p^{n}$-th power  of some element in $N_{\ell}$. We denote by $k_{n}$ the smallest positive  integer such that
any element in
$ s^{k_{n}}N_{\ell}s^{-k_{n}}$  is a  $p^{n}$-th power  of some element in $N_{\ell}$.

\begin{lemma} \label{4.6} For any positive integers $n $ and $k\geq k_{n}  $, we have
   $$
  \varphi ^{k}(J_{\ell}(N_{0})) \subset {\mathcal M}_{\ell}(N_{0})^{n+1}
  \ .
  $$
\end{lemma}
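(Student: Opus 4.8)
The plan is to understand the ideal $J_\ell(N_0)$ concretely and then use the hypothesis that large conjugates $s^k N_\ell s^{-k}$ consist of $p^n$-th powers. Recall that $J_\ell(N_0)$ is the closed two-sided ideal of $\Lambda_\ell(N_0)$ generated by the augmentation ideal of $o[N_\ell]$, i.e. by the elements $h-1$ for $h \in N_\ell$; and $\varphi^k$ here denotes $\varphi_{s^k}$, the action of $s^k$, which satisfies $\varphi^k(h-1) = s^k h s^{-k} - 1$. So it suffices to control where $\varphi^k$ sends these generators, using that $\varphi^k$ is a continuous ring homomorphism of $\Lambda_\ell(N_0)$ (hence takes the closed ideal generated by a set to something inside the closed ideal generated by the image of that set).

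First I would reduce to generators: since $\varphi^k$ is a ring endomorphism and $J_\ell(N_0)$ is topologically generated as a two-sided ideal by $\{h-1 : h \in N_\ell\}$, the image $\varphi^k(J_\ell(N_0))$ lies in the closed two-sided ideal generated by $\{\varphi^k(h-1) : h \in N_\ell\} = \{s^k h s^{-k} - 1 : h \in N_\ell\} = \{h' - 1 : h' \in s^k N_\ell s^{-k}\}$. Now for $k \geq k_n$, every $h' \in s^k N_\ell s^{-k}$ is of the form $h' = g^{p^n}$ for some $g \in N_\ell$. The key elementary identity is then $g^{p^n} - 1 = (g-1)\big(1 + g + g^2 + \dots + g^{p^n - 1}\big)$ in $\Lambda(N_\ell)$, and more usefully the binomial-type expansion: writing $g - 1 = x$, we have $g^{p^n} - 1 = (1+x)^{p^n} - 1 = \sum_{j=1}^{p^n}\binom{p^n}{j} x^j$. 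Each coefficient $\binom{p^n}{j}$ for $1 \le j \le p^n$ is divisible by $p^{n - v_p(j)}$, and $x^j \in \mathcal{M}(N_\ell)^j \subseteq \mathcal{M}_\ell(N_0)^j$ while $\binom{p^n}{j} \in p^{n-v_p(j)} o$; one checks $v_p\big(\binom{p^n}{j}\big) + j \geq n+1$ for all $j \geq 1$ (indeed $v_p(\binom{p^n}{j}) = n - v_p(j)$ and $j \geq p^{v_p(j)} \geq 1 + v_p(j)$, so the sum is $\geq n + 1$). Hence each term lies in $p^{n+1-\min(n,j)}\mathcal{M}_\ell(N_0)^{j} $... more cleanly: the term $\binom{p^n}{j}x^j$ lies in $\mathcal{M}_\ell(N_0)^{n+1}$ because $p \in \mathcal{M}_\ell(N_0)$ and $x \in \mathcal{M}_\ell(N_0)$, and the total "degree" $v_p(\binom{p^n}{j}) + j \geq n+1$. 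Therefore $h' - 1 \in \mathcal{M}_\ell(N_0)^{n+1}$ for every such $h'$.

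Next I would observe that $\mathcal{M}_\ell(N_0)^{n+1}$ is a closed two-sided ideal of $\Lambda_\ell(N_0)$ (it is a power of the maximal ideal of a noetherian local ring, and closed for the weak topology by the description of neighborhoods of zero since $O_{n+1,k} \supseteq \mathcal{M}_\ell(N_0)^{n+1}$ appropriately — actually $\mathcal{M}_\ell(N_0)^{n+1}$ is exactly the intersection relevant here, and in any case it is a finitely generated hence closed submodule). Since the closed two-sided ideal generated by a subset of $\mathcal{M}_\ell(N_0)^{n+1}$ is again contained in $\mathcal{M}_\ell(N_0)^{n+1}$, we conclude $\varphi^k(J_\ell(N_0)) \subseteq \mathcal{M}_\ell(N_0)^{n+1}$ for all $k \geq k_n$, which is the assertion.

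The main obstacle I anticipate is the bookkeeping around "closed two-sided ideal generated by": one must be careful that $\varphi^k$, being continuous, maps the closure of the ideal generated by $S$ into the closure of the ideal generated by $\varphi^k(S)$, and that multiplying generators in $\mathcal{M}_\ell(N_0)^{n+1}$ on the left and right by arbitrary elements of $\Lambda_\ell(N_0)$ keeps them in $\mathcal{M}_\ell(N_0)^{n+1}$ (true since it is a two-sided ideal), and that taking closures preserves this (true since $\mathcal{M}_\ell(N_0)^{n+1}$ is closed). The other slightly delicate point is the noncommutativity of $\Lambda_\ell(N_0)$ in the binomial expansion step: here one works inside the commutative subring generated by the single element $g$ (or rather inside $\Lambda(N_\ell)$ restricted to powers of $g$), so $(1+x)^{p^n} = \sum \binom{p^n}{j}x^j$ is a legitimate identity with $x = g - 1$ commuting with itself, and the conclusion $\binom{p^n}{j}x^j \in \mathcal{M}_\ell(N_0)^{n+1}$ uses only that $p$ and $x$ both lie in the two-sided maximal ideal together with the valuation estimate $v_p(\binom{p^n}{j}) + j \geq n+1$. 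Everything else is routine.
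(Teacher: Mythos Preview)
Your proof is correct and follows essentially the same route as the paper: reduce to the generators $h-1$ of $J_\ell(N_0)$, use that $\varphi^k(h-1)=s^khs^{-k}-1$ with $s^khs^{-k}=g^{p^n}$ for some $g\in N_\ell$ when $k\geq k_n$, show $g^{p^n}-1\in\mathcal{M}_\ell(N_0)^{n+1}$, and conclude since $\mathcal{M}_\ell(N_0)^{n+1}$ is a two-sided ideal. The only difference is in the verification of $g^{p^n}-1\in\mathcal{M}_\ell(N_0)^{n+1}$: the paper uses the cyclotomic factorization $g^{p^n}-1=\prod_{j=0}^{n}\Phi_{p^j}(g)$, observing that each factor lies in $\mathcal{M}_\ell(N_0)$, whereas you use the binomial expansion with the valuation estimate $v_p\binom{p^n}{j}+j\geq n+1$; both are short and standard.
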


\begin{proof}   For $u\in N_{\ell}$,
 and  $j\in \mathbb N$, the value at $u$ of the
  $p^{j}$-th cyclotomic polynomial $\Phi_{p^{j}}(u)$ lies in  ${\mathcal M}_{\ell}(N_{0})$ and
  $$u^{p^{n}}-1 =\prod_{j=0}^{n}\Phi_{p^{j}}(u)$$
lies in   ${\mathcal M}_{\ell}(N_{0})^{n+1}$. An element  $v \in  s^{k}N_{\ell}s^{-k}$ is a $p^{n}$-th power  of some element in $N_{\ell}$ hence $v-1$ lies in   ${\mathcal M}_{\ell}(N_{0})^{n+1}$.
The ideal $J_{\ell}(N_{0})$ of $\Lambda_{\ell}(N_{0})$ is generated by $u-1$ for $u\in N_{\ell}$ and  $\varphi ^{k}(J_{\ell}(N_{0}))$ is contained in the ideal generated by $v-1$ for
  $v \in  s^{k}N_{\ell}s^{-k}$.   As  ${\mathcal M}_{\ell}(N_{0}) $ is an ideal of $\Lambda_{\ell}(N_{0})$ we deduce that  $\varphi ^{k}(J_{\ell}(N_{0}))\subset {\mathcal M}_{\ell}(N_{0})^{n+1} $.
\end{proof}

\begin{lemma}\label{fully-faithful}
\begin{itemize}
  \item[i.] The functor $\mathbb{D}$ is faithful.
  \item[ii.] The functor $\mathbb{M}$ is fully faithful.
\end{itemize}
\end{lemma}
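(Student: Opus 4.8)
The two assertions concern the base change functors $\mathbb{D} = {\cal O}_{\cal E}\otimes_{\Lambda_\ell(N_0),\ell}-$ and $\mathbb{M} = \Lambda_\ell(N_0)\otimes_{{\cal O}_{\cal E},\iota}-$ between the abelian categories $ {\mathfrak M}^{et}_{\Lambda_{\ell}(N_{0})}(L_{*})$ and $ {\mathfrak M}^{et}_{{\cal O}_{\cal E},\ell}(L_*)$. For (ii), the fully faithfulness of $\mathbb{M}$, the plan is to produce a natural transformation $\id \Rightarrow \mathbb{D}\circ\mathbb{M}$ that is an isomorphism — but this is exactly Proposition \ref{easy}, which shows the natural map $\mathbb{D}(\mathbb{M}(D))\to D$ is an isomorphism in $ {\mathfrak M}^{et}_{{\cal O}_{\cal E},\ell}(L_{*})$. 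Given a unit isomorphism $\epsilon:\mathbb{D}\circ\mathbb{M}\xrightarrow{\sim}\id$, for any $D_1,D_2$ the composite $\Hom(D_1,D_2)\xrightarrow{\mathbb{M}}\Hom(\mathbb{M}D_1,\mathbb{M}D_2)\xrightarrow{\mathbb{D}}\Hom(\mathbb{D}\mathbb{M}D_1,\mathbb{D}\mathbb{M}D_2)\xrightarrow{\epsilon\text{-conjugation}}\Hom(D_1,D_2)$ is the identity; hence $\mathbb{M}$ is faithful and $\mathbb{D}$ (restricted to the essential image of $\mathbb{M}$) is full onto the relevant Hom. To get that $\mathbb{M}$ is \emph{full}, one argues that any morphism $g:\mathbb{M}D_1\to\mathbb{M}D_2$ has $\mathbb{D}(g)$ corresponding under $\epsilon$ to some $f:D_1\to D_2$, and then that $\mathbb{M}(f)=g$: indeed $\mathbb{M}(f)$ and $g$ agree after applying $\mathbb{D}$, so their difference $h=g-\mathbb{M}(f):\mathbb{M}D_1\to\mathbb{M}D_2$ satisfies $\mathbb{D}(h)=0$, i.e. $h$ has image contained in $J_\ell(N_0)\mathbb{M}D_2 = (\mathbb{M}D_2)_\ell$; one must then show such an $L_*$-equivariant $h$ vanishes.

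\textbf{The key step and main obstacle.} The heart of (ii) — and the engine behind (i) as well — is the vanishing statement: an $L_*$-equivariant $\Lambda_\ell(N_0)$-linear map $h$ out of an étale module $M_1=\mathbb{M}D_1$ whose image lies in $J_\ell(N_0)M_2$ must be zero. Here I would exploit the étale structure together with Lemma \ref{4.6}: for any $m\in M_1$ and any $k$, write $m=\sum_{u\in J(N_0/s^kN_0s^{-k})}u\varphi_s^k(\psi_s^k(u^{-1}m))$ using \eqref{writing}, apply $h$ and $L_*$-equivariance to get $h(m)=\sum_u u\,\varphi_s^k(h(\psi_s^k(u^{-1}m)))$, and observe that $h(\psi_s^k(u^{-1}m))\in J_\ell(N_0)M_2$, so $\varphi_s^k(h(\psi_s^k(u^{-1}m)))\in \varphi_s^k(J_\ell(N_0))M_2 \subseteq {\mathcal M}_\ell(N_0)^{n+1}M_2$ for $k\geq k_n$ by Lemma \ref{4.6}. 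Since $N_0$ is compact this forces $h(m)\in {\mathcal M}_\ell(N_0)^{n+1}M_2$ for all $n$, hence $h(m)=0$ because $M_2$ is Hausdorff for the weak topology (every submodule closed, $\cap_n {\mathcal M}_\ell(N_0)^n M_2 \subseteq \cap_n p^n M_2 = 0$, using Lemma \ref{redmod}). The main obstacle is bookkeeping: making sure the étale decomposition is compatible with the $L_*$-action correctly (one only has $\varphi_s$-semilinearity, so one should phrase the argument purely in terms of $\varphi_s$ and its canonical inverse $\psi_s$, which is legitimate by Proposition \ref{red1}/Corollary \ref{redMF}), and verifying that $J_\ell(N_0)M_2$ is genuinely $\psi_s$-stable and $\varphi_s$-stable as needed.

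\textbf{Faithfulness of $\mathbb{D}$ (part i).} For (i) the plan is shorter and can be done directly, in parallel. Let $\phi:M_1\to M_2$ be a morphism in $ {\mathfrak M}^{et}_{\Lambda_{\ell}(N_{0})}(L_{*})$ with $\mathbb{D}(\phi)=0$, i.e. $\phi(M_1)\subseteq J_\ell(N_0)M_2 = (M_2)_\ell$. Then run exactly the vanishing argument of the previous paragraph with $h=\phi$: using the étale decomposition along powers of $s$, $\varphi_s$-semilinearity of $\phi$, and Lemma \ref{4.6} to push $\varphi_s^k(J_\ell(N_0))$ into ${\mathcal M}_\ell(N_0)^{n+1}$, together with compactness of $N_0$ and the Hausdorff property of the weak topology on $M_2$, conclude $\phi = 0$. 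Thus $\mathbb{D}$ is faithful. (One subtlety: here $M_1$ is an arbitrary étale $L_*$-module, not necessarily of the form $\mathbb{M}D_1$; but the argument only used the étale structure of $M_1$ and the topology of $M_2$, so it applies verbatim.) I expect (i) to be essentially immediate once this lemma is isolated, and (ii) to follow by combining it with Proposition \ref{easy} as sketched; the later (un-excerpted) part of the paper presumably upgrades "fully faithful" to "equivalence" by showing every $D$ is in the essential image, which is Proposition \ref{easy} in the other direction plus a check that $\mathbb{M}(D)$ is indeed étale over $\Lambda_\ell(N_0)$.
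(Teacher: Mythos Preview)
Your proposal is correct and follows essentially the same route as the paper. For (i) the paper argues in one line that étaleness forces $f(M_1)\subseteq\bigcap_k\varphi^k(J_\ell(N_0))M_2\subseteq\bigcap_n\mathcal{M}_\ell(N_0)^nM_2=0$ (Lemma \ref{4.6} plus Hausdorffness of the pseudocompact topology), which is exactly your expanded argument; for (ii) the paper simply says ``follows from (i) by Proposition \ref{easy}'', and your categorical unpacking (faithfulness from $\mathbb{D}\mathbb{M}\cong\id$, fullness by applying (i) to $g-\mathbb{M}(f)$) is the standard way to read that sentence. One small correction: your justification $\bigcap_n\mathcal{M}_\ell(N_0)^nM_2\subseteq\bigcap_n p^nM_2$ is not right as stated (the inclusion of ideals goes the other way); instead invoke directly that the $\mathcal{M}_\ell(N_0)$-adic (pseudocompact) topology on a finitely generated $\Lambda_\ell(N_0)$-module is Hausdorff, as recorded just before Lemma \ref{redmod}.
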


\begin{proof}
Obviously ii. follows from i. by proposition \ref{easy}. To prove i. let $f: M_{1} \to M_{2}$ be a morphism in ${\mathfrak M}^{et}_{\Lambda_{\ell} (N_{0})}(L_*)$ such that $\mathbb D(f) =0$, i.\ e., such that $f(M_1) \in J_{\ell}(N_{0}) M_{2}$. Since $M_1$ is \'etale we deduce that $f(M_1) \subseteq \bigcap_{k} \varphi^k(J_{\ell}(N_{0})) M_{2}$ and hence, by lemma \ref{4.6}, in $\bigcap_{n} {\mathcal M}_{\ell}(N_{0})^{n} M_2$. Since the pseudocompact topology on $M_2$ is Hausdorff we have $\bigcap_{n} {\mathcal M}_{\ell}(N_{0})^{n} M_2 = 0$. It follows that $f = 0$.
\end{proof}

Let $M \in {\mathfrak M}^{et}_{\Lambda_{\ell} (N_{0})}(L_{*})$. By definition,
$$
\mathbb  M \mathbb D(M)= \Lambda_{\ell} (N_{0}) \otimes_{{\mathcal O}_{\mathcal E}, \iota} ({\mathcal O}_{\mathcal E}\otimes_{\Lambda_{\ell} (N_{0}), \ell} M ) =\Lambda_{\ell} (N_{0}) \otimes_{\Lambda_{\ell} (N_{0}), \iota \circ \ell} M  \ .
$$

In the particular case where $L_{*}=s^{\mathbb N}$ is the monoid generated by $s$,  we denote the category  $ {\mathfrak M}^{et}_{\Lambda_{\ell}(N_{0})}(L_*)$  (resp.  $ {\mathfrak M}^{et}_{{\cal O}_{\cal E, \ell}}(L_*)$),
by $ {\mathfrak M}^{et}_{\Lambda_{\ell}(N_{0})}(\varphi) $ (resp. $ {\mathfrak M}^{et}_{{\cal O}_{\cal E, \ell}}(\varphi)$).
The category  $ {\mathfrak M}^{et}_{\Lambda_{\ell}(N_{0})}(L_*)$  (resp.  $ {\mathfrak M}^{et}_{{\cal O}_{\cal E, \ell}}(L_*)$) is a subcategory of ${\mathfrak M}^{et}_{\Lambda_{\ell}(N_{0})}(\varphi)$ (resp.  $ {\mathfrak M}^{et}_{{\cal O}_{\cal E, \ell}}(\varphi)$).

\begin{proposition}\label{hard}
For any $M \in {\mathfrak M}^{et}_{\Lambda_{\ell}(N_{0})}(\varphi)$ there is a unique morphism
\begin{equation*}
    \Theta_M : M \to \mathbb{M}\mathbb{D}(M) \qquad\text{in ${\mathfrak M}^{et}_{\Lambda_{\ell}(N_{0})}(\varphi)$}
\end{equation*}
such that the composed map $\mathbb{D}'(\Theta_M) : \mathbb{D}(M) \xrightarrow{\mathbb{D}(\Theta_M)} \mathbb{D}\mathbb{M}\mathbb{D}(M) \cong \mathbb{D}(M)$ is the identity. The morphism $\Theta_M$, in fact, is an isomorphism.
\end{proposition}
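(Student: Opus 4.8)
The plan is to construct $\Theta_M$ directly as the unit of the adjunction-like pair $(\mathbb{D},\mathbb{M})$, and then to prove it is an isomorphism by a filtration argument using Lemma~\ref{4.6}. Concretely, $\mathbb{M}\mathbb{D}(M)=\Lambda_{\ell}(N_{0})\otimes_{\Lambda_{\ell}(N_{0}),\,\iota\circ\ell}M$, so the obvious candidate is the map $m\mapsto 1\otimes m$; one checks it is $\Lambda_{\ell}(N_{0})$-semilinear for $\iota\circ\ell$ (this is automatic) and $\varphi$-equivariant, where the $\varphi$ on the target is the diagonal one coming from \eqref{A2} applied to $D=\mathbb{D}(M)$. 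That it is the \emph{unique} morphism with $\mathbb{D}'(\Theta_M)=\mathrm{id}$ follows from Lemma~\ref{fully-faithful}(i): if $\Theta,\Theta'$ both have this property then $\mathbb{D}(\Theta-\Theta')=0$, hence $\Theta=\Theta'$ since $\mathbb{D}$ is faithful. (One should double-check the target $\mathbb{M}\mathbb{D}(M)$ lies in ${\mathfrak M}^{et}_{\Lambda_{\ell}(N_{0})}(\varphi)$, which is the content of the base-change lemma, and that $\mathbb{D}'(\Theta_M)$ really is the identity, which is a direct computation with $\ell\circ\iota=\mathrm{id}$ as in Proposition~\ref{easy}.)

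\textbf{Surjectivity of $\Theta_M$.} First I would show $\Theta_M$ is surjective. Its image is the $\Lambda_{\ell}(N_{0})$-submodule $N:=\Lambda_{\ell}(N_{0})\cdot\Theta_M(M)$ of $\mathbb{M}\mathbb{D}(M)$, and $N$ is $\varphi$-stable. The cokernel $Q:=\mathbb{M}\mathbb{D}(M)/N$ is a finitely generated $\Lambda_{\ell}(N_{0})$-module with an \'etale(-in-the-algebraic-sense) $\varphi$ — more precisely $\varphi$ on $\mathbb{M}\mathbb{D}(M)$ is \'etale, and $N$ is $\varphi$-stable, so by Remark~\ref{cr} (or the computation in \eqref{writing}) $\varphi$ descends to $Q$; moreover $\mathbb{D}(Q)=0$ because $\mathbb{D}$ is right exact and $\mathbb{D}(\Theta_M)$ is an isomorphism. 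Thus $Q=J_{\ell}(N_{0})Q$, and iterating with \'etaleness gives $Q=\varphi^{k}(J_{\ell}(N_{0}))Q$ for all $k$; by Lemma~\ref{4.6}, $Q\subseteq\bigcap_{n}{\mathcal M}_{\ell}(N_{0})^{n}Q=0$ since the pseudocompact topology on $Q$ is Hausdorff. Hence $\Theta_M$ is surjective. (Here I am using that a $\varphi$-stable submodule generating a finitely generated module over a noetherian ring as a $\varphi$-module is the whole module — the Remark after Lemma~\ref{surj} — to justify that $\varphi^{k}(J_{\ell}(N_{0}))Q$ exhausts $Q$; alternatively argue directly that $x\in J_\ell(N_0)Q$ and $\varphi$ \'etale force $x\in\varphi(J_\ell(N_0))Q$.)

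\textbf{Injectivity of $\Theta_M$.} Let $K:=\ker\Theta_M$, a $\varphi$-stable $\Lambda_{\ell}(N_{0})$-submodule of $M$. Applying $\mathbb{D}$ to $0\to K\to M\to\mathbb{M}\mathbb{D}(M)\to 0$ (now known to be right-exact on the right, and $\mathbb{D}$ of the surjection is split by $\mathbb{D}'$) shows $\mathbb{D}(K)=0$; since $K$ itself lies in ${\mathfrak M}^{et}$ — it is $\varphi$-stable, hence by Corollary~\ref{redMF} an \'etale submodule — the same filtration argument via Lemma~\ref{4.6} applied to $K$ gives $K\subseteq\bigcap_n{\mathcal M}_{\ell}(N_0)^nK=0$. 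Therefore $\Theta_M$ is an isomorphism. The main obstacle I anticipate is purely bookkeeping: verifying carefully that all the intermediate objects ($N$, $Q$, $K$) are genuinely finitely generated \'etale $\varphi$-modules over $\Lambda_{\ell}(N_{0})$ so that Lemma~\ref{4.6} and the Hausdorffness of the pseudocompact topology apply — in particular that $\varphi$-stability plus algebraic \'etaleness of the ambient module really does pass to submodules and quotients (Corollary~\ref{redMF} and the étaleness of quotients by $\varphi$-stable submodules), and that $\mathbb{D}$ is exact enough on this category for the cokernel/kernel computations. Once these are in place the vanishing argument is immediate.
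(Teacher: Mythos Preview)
Your proposed map $\Theta_M:m\mapsto 1\otimes m$ into $\mathbb{M}\mathbb{D}(M)=\Lambda_{\ell}(N_0)\otimes_{\Lambda_{\ell}(N_0),\iota\circ\ell}M$ is \emph{not} a morphism in ${\mathfrak M}^{et}_{\Lambda_{\ell}(N_{0})}(\varphi)$: as you yourself note, it is only $(\iota\circ\ell)$-semilinear, whereas morphisms in this category are $\Lambda_{\ell}(N_0)$-linear. Concretely, $1\otimes(\lambda m)=(\iota\circ\ell)(\lambda)\otimes m$, which equals $\lambda\otimes m$ only when $\lambda\in\iota(\mathcal{O}_{\mathcal E})$. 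Since $\iota\circ\ell\neq\id$ (its kernel is $J_\ell(N_0)\neq 0$), there is no ``obvious candidate'' unit map here, and this is precisely the content of the proposition: one has to \emph{construct} a genuine $\Lambda_{\ell}(N_0)$-linear $\varphi$-equivariant map. The paper does this first for free $M$ by finding a basis $(\eta_i)$ whose $\varphi$-matrix has entries in $\iota(\mathcal{O}_{\mathcal E})$ --- obtained from an arbitrary basis by solving a matrix equation via a series converging thanks to Lemma~\ref{4.6} --- so that $\eta_i\mapsto 1\otimes(1\otimes\eta_i)$ extended $\Lambda_\ell(N_0)$-linearly is $\varphi$-equivariant; the general case then follows from a two-term free resolution in the (abelian) category.

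There is a secondary gap in your injectivity step: from the right-exact sequence $\mathbb{D}(K)\to\mathbb{D}(M)\xrightarrow{\sim}\mathbb{D}\mathbb{M}\mathbb{D}(M)\to 0$ you only get that the map $\mathbb{D}(K)\to\mathbb{D}(M)$ is zero, not that $\mathbb{D}(K)=0$; $\mathbb{D}$ is only known to be right exact at this point. (This particular gap is easily patched: $\mathbb{D}(K\hookrightarrow M)=0$ together with faithfulness of $\mathbb{D}$ from Lemma~\ref{fully-faithful}.i gives $K=0$ directly.) But the construction problem above is the essential obstacle; once $\Theta_M$ exists, the paper gets the isomorphism property for free in the free case and by diagram chase in general, without needing your cokernel/kernel vanishing arguments.
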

\begin{proof}
The uniqueness follows immediately from Lemma \ref{fully-faithful}.i. The construction of  such an isomorphism $\Theta_M$ will be done in three steps.

\textit{Step 1:} We assume that $M$ is free over $\Lambda_{\ell} (N_{0})$, and we start with an arbitrary finite $\Lambda_{\ell} (N_{0})$-basis $(\epsilon_{i})_{i\in I}$ of $M$.    By (\ref{Ldeco}), we have
$$
 M=(\oplus_{i\in I} \iota ({\mathcal O}_{\mathcal E})\epsilon_{i})\oplus (\oplus_{i\in I} J_{\ell}(N_{0})\epsilon_{i}) \ .
$$
The $\Lambda_{\ell} (N_{0})$-linear map from $M$ to $\mathbb  M \mathbb D(M)$ sending $\epsilon_{i}$ to $1\otimes (1\otimes  \epsilon_{i})$   is bijective.  If  $\oplus_{i\in I} \iota ({\mathcal O}_{\mathcal E})\epsilon_{i}$ is $\varphi$-stable, the map is also  $\varphi$-equivariant and is an isomorphism in the category ${\mathfrak M}^{et}_{\Lambda_{\ell} (N_{0})}(\varphi)$. We will construct a   $\Lambda_{\ell} (N_{0})$-basis $(\eta_{i})_{i\in I}$ of $M$ such that $\oplus_{i\in I} \iota ({\mathcal O}_{\mathcal E})\eta_{i}$ is $\varphi$-stable.

We have
$$
\varphi(\epsilon_{i})=\sum_{j\in I}(a_{i,j} +b_{i,j}) \epsilon_{j} \ \ {\rm where } \ a_{i,j}
\in \iota ({\cal O}_{\cal E}) \ , \ b_{i,j}\in J_{\ell}(N_{0})  \ .
$$
If  the $b_{i,j}$ are not all $0$, we will show that there exist elements $ x_{i,j}\in J_{\ell} (N_{0})$  such that  $(\eta_{i})_{i\in I}$ defined by
$$
\eta_{i} := \epsilon_{i}     +\sum_{j\in I}  x_{i,j} \epsilon_{j} \ ,
$$
satisfies $\varphi (\eta_{i}) = \sum_{j\in I}a_{i,j}\eta_{j}  $ for $ i\in I$. By the Nakayama lemma (\cite{BAC} II \S3.2 Prop.\ 5), the set $(\eta_{i})_{i\in I}$ is  a $\Lambda_{\ell} (N_{0})$-basis of $M$, and we obtain an isomorphism   in ${\mathfrak M}^{et}_{\Lambda_{\ell} (N_{0})}(\varphi)$,
$$
\Theta_M \ : \ M \to \mathbb  M\mathbb D(M) \  \ , \ \ \Theta (\eta_{i})=1\otimes (1 \otimes \eta_{i}) \ \ {\rm for} \ i\in I \  , $$
such that $\mathbb D'(\Theta_M)$ is the identity morphism of $\mathbb D(M)$.

The conditions on the matrix    $X:=(x_{i,j})_{i,j\in I}$  are :
    $$\varphi (\Id+X) (A+B) = A (\Id +X) \   $$
   for the matrices $A:=(a_{i,j})_{i,j\in I} \ , B:=(b_{i,j})_{i,j\in I}$.
The coefficients  of  $A$ belong to the commutative ring $\iota ({\cal O}_{\cal E}) $. The matrix $A$ is invertible because the $\Lambda_{\ell} (N_{0})$-endomorphism $f$ of $M$ defined by
$$
  f(\epsilon_{i})=\varphi (\epsilon_{i})  \ \ {\rm for } \ i\in I
$$
is an automorphism of $M$ as $\varphi$ is \'etale. We  have to solve the equation
$$
  A^{-1}B+A^{-1} \varphi (X) (A+B ) = X \ .
$$
For any $k \geq 0$ define
$$
U_{k} =A^{-1}\varphi  (A^{-1})\ldots   \varphi ^{k-1}(A^{-1}) \, \varphi ^{k}(A^{-1}B) \,   \varphi ^{k-1}(A+B) \ldots  \varphi  (A+B)(A+ B)  \ .
$$
We have
$$
A^{-1} \varphi(U_k) (A+B) = U_{k+1} \ .
$$
Hence $X := \sum_{k \geq 0} U_k$ is a solution of our equation provided this series converges with respect to the pseudocompact topology of $\Lambda_{\ell} (N_{0})$. The coefficients of $A^{-1}B $ belong to the two-sided ideal $J_{\ell}(N_{0})$ of $\Lambda_{\ell} (N_{0})$. Therefore the coefficients of  $U_{k}$
belong to the two-sided ideal generated by $\varphi^{k}(J_{\ell}(N_{0}))$.  Hence the series converges (Lemma \ref{4.6}). The coefficients of every term in the series belong to $J_{\ell}(N_{0})$ and  $J_{\ell}(N_{0})$ is closed in $\Lambda_{\ell} (N_{0})$, hence $x_{i,j}\in J_{\ell}(N_{0})$ for $i,j\in I$.

\textit{Step 2:} We show that any module $M$ in ${\mathfrak M}^{et}_{\Lambda_{\ell} (N_{0})}(\varphi)$ is the quotient of another module $M_1$ in ${\mathfrak M}^{et}_{\Lambda_{\ell} (N_{0})}(\varphi)$ which is  free over $\Lambda_{\ell}(N_{0})$ .

Let $(m_{i})_{i\in I}$ be a minimal finite system of generators of the $\Lambda_{\ell} (N_{0})$-module $M$. As $\varphi$ is \'etale, $(\varphi(m_{i}))_{i\in I}$ is also a minimal system of generators. We denote by $(e_{i})_{i\in I}$  the canonical $\Lambda_{\ell} (N_{0})$-basis of $\oplus_{i\in I}\Lambda_{\ell} (N_{0})$, and we consider the two surjective $\Lambda_{\ell} (N_{0})$-linear maps
$$f , g   : \oplus_{i\in I}\Lambda_{\ell} (N_{0}) \to M \ \ ,  \ \ f(e_{i})=m_{i}\ , \ g(e_{i})=\varphi (m_{i}) \ .
$$
In particular, we find elements $m'_i \in M$, for $i \in I$, such that $g(m'_i) = \varphi(m_i)$. By the Nakayama lemma (\cite{BAC} II \S3.2 Prop.\ 5) the $(m'_i)_{i \in I}$ form another $\Lambda_{\ell} (N_{0})$-basis of $\oplus_{i\in I} \Lambda_{\ell} (N_{0})$. The $\varphi$-linear map
$$
  \oplus_{i\in I}\Lambda_{\ell} (N_{0}) \to  \oplus_{i\in I}\Lambda_{\ell} (N_{0}) \ \ , \ \
\varphi  (\sum_{i\in I}\lambda_{i}e_{i}):= \sum_{i\in I}\varphi(\lambda_{i}) m'_i
$$
therefore is \'etale. With this map,  $M_1 := \oplus_{i\in I}\Lambda_{\ell} (N_{0}) $ is a module in ${\mathfrak M}^{et}_{\Lambda_{\ell} (N_{0})}(\varphi)$ which is  free over $\Lambda_{\ell}(N_{0})$, and the surjective map $f$ is a morphism in ${\mathfrak M}^{et}_{\Lambda_{\ell} (N_{0})}(\varphi)$.

\textit{Step 3:} As $\Lambda_{\ell} (N_{0})$ is noetherian,  we deduce from Step 2 that for any module  $M$ in $ {\mathfrak M}^{et}_{\Lambda_{\ell} (N_{0})}(\varphi)$ we have an exact sequence
\begin{equation*}
    M_2 \xrightarrow{f} M_1 \xrightarrow{f'} M \to 0
\end{equation*}
in $ {\mathfrak M}^{et}_{\Lambda_{\ell} (N_{0})}(\varphi)$ such that $M_1$ and $M_2$ are free over $\Lambda_{\ell} (N_{0})$. We now consider the diagram
$$
\xymatrix{
\mathbb  M  \mathbb D (M_2) \ar[r]^{\mathbb  M \mathbb D(f)}&
\mathbb  M  \mathbb D ( M_1 ) \ar[r] ^{\mathbb  M  \mathbb D  (f')}&
\mathbb  M  \mathbb D ( M  ) \ar[r] &0  \\
M_{2} \ar[u]^{\Theta_{M_2}}_{\cong} \ar[r]^{f} & M_{1} \ar[u]^{\Theta_{M_1}}_{\cong} \ar[r]^{f'}&M \ar@{-->}[u]^{\Theta_M} \ar[r] &0}.
 $$
Since the functors $\mathbb M$ and $\mathbb D$ are right exact both rows of the diagram are exact. By Step 1 the left two vertical maps exist and are isomorphisms. Since
$$
\mathbb D(\mathbb  M \mathbb D(f) \circ \Theta_{M_2} - \Theta_{M_1} \circ f) =
\mathbb{D}(f) \circ \mathbb D'(\Theta_{M_2}) - \mathbb D'(\Theta_{M_1}) \circ \mathbb{D}(f) = 0
$$
it follows from lemma \ref{fully-faithful}.i that the left square of the diagram commutes. Hence we obtain an induced isomorphism $\Theta_M$ as indicated, which moreover by construction satisfies $\mathbb{D}'(\Theta_M) = \id_{\mathbb{D}(M)}$.
\end{proof}

\begin{theorem}\label{eq}  The  functors
$$ \mathbb M:  {\mathfrak M}^{et}_{{\cal O}_{\cal E}, \ell}(L_*)\to  {\mathfrak M}^{et}_{\Lambda_{\ell}(N_{0})}(L_*) \ \  ,
  \ \  \mathbb D:
 {\mathfrak M}^{et}_{\Lambda_{\ell}(N_{0})}(L_*)\to  {\mathfrak M}^{et}_{{\cal O}_{\cal E}, \ell}(L_*)\ , $$
are quasi-inverse equivalences of categories.
\end{theorem}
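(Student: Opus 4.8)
The plan is to deduce Theorem \ref{eq} by combining the two comparison results already established, namely Proposition \ref{easy} and Proposition \ref{hard}, and checking that the natural transformations they provide assemble into unit and counit isomorphisms. First I would recall that the base change functors $\mathbb M$ and $\mathbb D$ are defined on the larger categories attached to the single endomorphism $\varphi=\varphi_s$, and that we already know (Proposition \ref{abetale}-style arguments, invoked via the two lemmas asserting abelianness) that restricting the monoid from $\langle s\rangle$ to $L_*$ is compatible with these functors; this is the content of the earlier Proposition stating that $\mathbb D$ and $\mathbb M$ restrict to functors between $ {\mathfrak M}^{et}_{\Lambda_{\ell}(N_{0})}(L_{*})$ and $ {\mathfrak M}^{et}_{{\cal O}_{\cal E, \ell}}(L_{*})$. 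So it suffices to produce, for every $D\in {\mathfrak M}^{et}_{{\cal O}_{\cal E},\ell}(L_*)$, an isomorphism $\mathbb D\mathbb M(D)\xrightarrow{\ \simeq\ }D$ and, for every $M\in {\mathfrak M}^{et}_{\Lambda_{\ell}(N_0)}(L_*)$, an isomorphism $M\xrightarrow{\ \simeq\ }\mathbb M\mathbb D(M)$, both natural in their argument and both respecting the full $L_*$-action, not merely $\varphi_s$.

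For the counit $\mathbb D\mathbb M(D)\to D$ this is already done: Proposition \ref{easy} gives precisely this isomorphism in $ {\mathfrak M}^{et}_{{\cal O}_{\cal E},\ell}(L_{*})$, and naturality in $D$ is immediate from the construction $\mu\otimes(\lambda\otimes d)\mapsto \mu\,\ell(\lambda)\,d$. For the unit I would invoke Proposition \ref{hard}: it supplies, for each $M$ in $ {\mathfrak M}^{et}_{\Lambda_{\ell}(N_0)}(\varphi)$, a \emph{unique} morphism $\Theta_M:M\to\mathbb M\mathbb D(M)$ in $ {\mathfrak M}^{et}_{\Lambda_{\ell}(N_0)}(\varphi)$ with $\mathbb D'(\Theta_M)=\id$, and asserts it is an isomorphism. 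Two things then need to be said. First, naturality of $\Theta$ in $M$: given $f:M\to M'$ in the category, both $\Theta_{M'}\circ f$ and $\mathbb M\mathbb D(f)\circ\Theta_M$ are morphisms $M\to\mathbb M\mathbb D(M')$ whose image under $\mathbb D'$ is $\mathbb D(f)$ (using that $\mathbb D'$ of either composite unwinds to $\mathbb D(f)$ because $\mathbb D'\Theta=\id$ and $\mathbb D'\mathbb M\mathbb D(f)=\mathbb D(f)$), hence by the uniqueness clause of Proposition \ref{hard} — equivalently by faithfulness of $\mathbb D$, Lemma \ref{fully-faithful}.i — they coincide. Second, and this is the step requiring a small argument rather than a citation, one must check that when $M$ actually lies in the sub\-category $ {\mathfrak M}^{et}_{\Lambda_{\ell}(N_0)}(L_*)$, the map $\Theta_M$ is automatically $L_*$-equivariant, i.e.\ commutes with $\varphi_t$ for every $t\in L_*$, not only with $\varphi_s$.

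I expect this last point to be the main (mild) obstacle, and the way to handle it is again by the uniqueness/faithfulness principle. Fix $t\in L_*$. Consider the two maps $\varphi_t^{\mathbb M\mathbb D(M)}\circ\Theta_M$ and $\Theta_M\circ\varphi_t^{M}$ from $M$ to $\mathbb M\mathbb D(M)$; each is $\varphi_s$-equivariant (since $\varphi_t$ and $\varphi_s$ commute, $s$ being central in $L$), hence each is a morphism in $ {\mathfrak M}^{et}_{\Lambda_{\ell}(N_0)}(\varphi)$ as soon as one knows it is $\Lambda_{\ell}(N_0)$-linear for the twisted structure — but here one should instead argue directly: $\Theta_M$ being an isomorphism in the $\varphi$-category, the composite $\Theta_M\circ\varphi_t^M\circ\Theta_M^{-1}$ is an endomorphism of $\mathbb M\mathbb D(M)$ in that category, and one checks via the explicit basis construction in Step 1 of the proof of Proposition \ref{hard} that it equals $\varphi_t^{\mathbb M\mathbb D(M)}$, because on the free model $\mathbb M\mathbb D(M)=\Lambda_{\ell}(N_0)\otimes_{{\cal O}_{\cal E},\iota}\mathbb D(M)$ the diagonal $L_*$-action \eqref{A2} is determined by the ${\cal O}_{\cal E}$-semilinear $L_*$-action on $\mathbb D(M)=\mathbb D'(\Theta_M)(\mathbb D(M))$, which $\Theta_M$ matches by construction; applying $\mathbb D'$ and using faithfulness of $\mathbb D$ (Lemma \ref{fully-faithful}.i) to reduce the identity to $\mathbb D(M)$, where it is the definition of the $L_*$-action on $\mathbb D(M)$, closes the argument. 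Once $\Theta$ and the counit of Proposition \ref{easy} are known to be natural isomorphisms in the $L_*$-categories, the triangle identities follow formally (or can simply be read off: $\mathbb D'(\Theta_M)=\id$ is one of them), and $(\mathbb M,\mathbb D)$ is an adjoint equivalence, proving Theorem \ref{eq}.
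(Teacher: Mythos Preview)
Your overall architecture is correct and matches the paper's: use Proposition~\ref{easy} for the counit, Proposition~\ref{hard} for the unit, and reduce everything to showing that $\Theta_M$ is $L_*$-equivariant. Your naturality argument for $\Theta$ via faithfulness of $\mathbb D$ is also fine. The gap is in the $L_*$-equivariance step itself.

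You write that $\Theta_M\circ\varphi_t^M\circ\Theta_M^{-1}$ is ``an endomorphism of $\mathbb M\mathbb D(M)$ in that category'' and then propose to compare it with $\varphi_t^{\mathbb M\mathbb D(M)}$ by ``applying $\mathbb D'$ and using faithfulness of $\mathbb D$ (Lemma~\ref{fully-faithful}.i)''. This does not work as stated: the maps $\Theta_M\circ\varphi_t$ and $\varphi_t\circ\Theta_M$ are $\varphi_t$-\emph{semilinear}, not $\Lambda_\ell(N_0)$-linear, so neither their difference nor the conjugate $\Theta_M\circ\varphi_t^M\circ\Theta_M^{-1}$ is a morphism in ${\mathfrak M}^{et}_{\Lambda_\ell(N_0)}(\varphi)$, and Lemma~\ref{fully-faithful}.i does not apply to them. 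Nor can you fall back on the explicit basis of Step~1 of Proposition~\ref{hard}: that basis is adapted to $\varphi_s$ only, and in Step~3 the free resolution $M_2\to M_1\to M$ lives only in the $\varphi$-category, with no $L_*$-action on $M_1,M_2$ to transport.

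The paper closes this gap by redoing the faithfulness argument \emph{in the semilinear setting}. Set $h:=\Theta_M\circ\varphi_t-\varphi_t\circ\Theta_M$. From $\mathbb D'(\Theta_M)=\id$ one gets $h(M)\subset J_\ell(N_0)\,\mathbb M\mathbb D(M)$. Now use that $s$ is central in $L$, so $h$ commutes with $\varphi_s$; combined with the $\varphi_t$-semilinearity of $h$ and the \'etale expansion $m=\sum_u u\,\varphi_s^r(m_{u,s^r})$, one obtains
\[
h(m)=\sum_u \varphi_t(u)\,\varphi_s^r\bigl(h(m_{u,s^r})\bigr)\in \Lambda_\ell(N_0)\,\varphi_s^r\bigl(J_\ell(N_0)\bigr)\,\mathbb M\mathbb D(M)
\]
for every $r$, and Lemma~\ref{4.6} forces $h=0$. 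This is the same mechanism behind Lemma~\ref{fully-faithful}.i, but it has to be rerun by hand because $h$ is not in the category; citing the lemma as a black box is exactly where your argument breaks.
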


\begin{proof} By proposition \ref{easy} and lemma \ref{fully-faithful}.ii it remains to show that the functor $\mathbb{M}$ is essentially surjective. Let $M \in {\mathfrak M}^{et}_{\Lambda_{\ell} (N_{0})}(L_{*})$. We have to find a $D \in  {\mathfrak M}^{et}_{{\cal O}_{\cal E, \ell}}(L_*)$ together with an isomorphism $M \cong \mathbb{M}(D)$ in ${\mathfrak M}^{et}_{\Lambda_{\ell} (N_{0})}(L_{*})$. It suffices to show that the morphism $\Theta_M$ in proposition \ref{hard} is $L_*$-equivariant.

We want to prove that $(\Theta_M \circ \varphi_{t} - \varphi_{t}\circ \Theta_M)(m)=0$ for any $m\in M$ and $t\in L_{*}$. Since $\mathbb D'(\Theta_M) = \id_{\mathbb{D}(M)}$ we certainly have $(\Theta\circ \varphi_{t} - \varphi_{t}\circ \Theta)(m)\in J_{\ell}(N_{0}) \mathbb{M}\mathbb{D}(M)$ for any $m \in M$ and $t\in L_{*}$. We choose for any positive integer $r $ a set $J(N_{0}/N_{r})\subseteq N_{0}$ of representatives for the cosets in $N_{0}/N_{r}$. Writing \eqref{writing}
\begin{equation*}
m=\sum_{u\in J(N_{0}/N_{r})}u \varphi^{r}(m_{u,s^r})  \ ,\ m_{u,s^r}=\psi^{r}(u^{-1}m)
\end{equation*}
and using that $st=ts$ we see that
$$
(\Theta_M\circ \varphi_{t} - \varphi_{t}\circ \Theta_M)(m)=\sum_{u\in J(N_{0}/N_{r})}\varphi_{t}(u) \varphi^{r} ((\Theta_M\circ \varphi_{t} - \varphi_{t}\circ \Theta_M)(m_{u,s^r}))
$$
lies, for any $r$, in the  $\Lambda_{\ell}(N_{0} )$-submodule of $\mathbb{M}\mathbb{D}(M)$ generated by $\varphi^{r}(J_{\ell}(N_{0})) \mathbb{M}\mathbb{D}(M)$. As in the proof of lemma \ref{fully-faithful}.ii we obtain $\bigcap_{r > 0} \varphi^{r}(J_{\ell}(N_{0})) \mathbb{M}\mathbb{D}(M) = 0$.
\end{proof}

Since the functors  $ \mathbb M$ and $ \mathbb D$ are right exact they commute with the reduction modulo $p^{n}$, for any integer $n\geq 1$.

\subsection{Continuity}

 {\sl In this section we assume that $L_*$  contains a subgroup $L_1$ which is open in $L_*$ and is a topologically finitely generated pro-$p$-group.}

We will show that the $L_*$-action on any \'etale $L_*$-module over $\Lambda_\ell(N_0)$ is automatically continuous.  Our proof is highly indirect so that we temporarily we will have to make some definitions. But first a few partial results can be established directly.

\bigskip
Let $M$ be a finitely generated $\Lambda_{\ell}(N_{0})$-module.

\begin{definition} A   lattice in $M$ is a $\Lambda (N_{0})$-submodule of  $M$ generated by a finite system of generators of the $ \Lambda_{\ell} (N_{0})$-module $M$.
\end{definition}

The lattices of $M$  are of the form $M^{0} =  \sum_{i=1}^{r}\Lambda (N_{0})m_{i}$ for a   set $(m_{i})_{1\leq i \leq r}$ of generators of the $\Lambda (N_{0})$-module $M$.

   We have the three fundamental systems of neighborhoods of $0$  in $M$ :
\begin{align}\label{SFV0}
  (\sum_{i=1}^{r} O_{n,k}m_{i} &=\mathcal M_{\ell} (N_{0})^{n} M + \mathcal M (N_{0})^{k}M^{0})_{n,k\in \mathbb N}  \ , \\
 \label{SFV1}  (\sum_{i=1}^{r} B_{n,k}m_{i} & = \mathcal M_{\ell} (N_{0})^{n} M +X^{k}M^{0})_{n,k\in \mathbb N}  \ , \\
\label{SFV3}  (\sum_{i=1}^{r} C_{n,k}m_{i} & =\mathcal M_{\ell} (N_{0})^{n} M + M_{k}^{0})_{n,k\in \mathbb N}  \ ,
\end{align}
where $M^{0} _{k} $ is the lattice $\sum_{i=1} ^{r} \Lambda (N_{0})X^{k}m_{i}$, and    is different from  the set $X^{k}M_{0}$ when $N_{0}$ is not commutative.

If $M$ is an \'etale $L_*$-module over $\Lambda_\ell(N_0)$, for any fixed $t\in L_{\ell, +}$
 we have a fourth fundamental system of neighborhoods of $0$  in $M$ :
   \begin{equation*}
( \sum_{i=1}^{r} \varphi_{t}(O_{n,k}) \Lambda (N_{0}) \varphi_{t}(m_{i}))_{n,k\in \mathbb N}  \ ,
\end{equation*}
given by Lemma \ref{tOnk},  because $( \varphi_{t}( m_{i})_{1\leq i \leq r}$ is also a system of generators of the $\Lambda_{\ell} (N_{0} )$-module $M$.

\begin{proposition}\label{contphipsi-general}
Let $L'$ be a submonoid of $L_{\ell,+}$.  Let $M$ be an \'etale $L'$-module over $\Lambda_\ell(N_0)$. Then the maps $\varphi_t$ and $\psi_t$, for any $t \in L'$, are continuous on $M$.
\end{proposition}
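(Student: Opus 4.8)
The plan is to reduce the continuity of $\varphi_t$ and $\psi_t$ to purely formal properties of the weak topology on $M$, using the étale condition and the four fundamental systems of neighborhoods of $0$ listed just above the statement. First I would treat $\varphi_t$. Fix a finite generating system $(m_i)_{1\le i\le r}$ of the $\Lambda_\ell(N_0)$-module $M$, hence a lattice $M^0=\sum_i\Lambda(N_0)m_i$. Since $\varphi_t$ is semilinear, $\varphi_t(\sum_i\lambda_i m_i)=\sum_i\varphi_t(\lambda_i)\varphi_t(m_i)$, so controlling $\varphi_t$ amounts to controlling the ring endomorphism $\varphi_t$ of $\Lambda_\ell(N_0)$ (which is continuous, essentially because it respects $\mathcal M(N_0)$ and $\mathcal M(N_\ell)$, cf.\ Lemma \ref{tOnk} and its proof) together with the fact that the $\varphi_t(m_i)$ again generate $M$. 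Concretely, given a basic neighborhood $\sum_i B_{n,k}m_i$ of $0$, I would write each $\varphi_t(m_i)$ as a $\Lambda(N_0)$-combination of the $m_j$, absorb the bounded "denominators" into a shift of indices, and invoke the continuity of $\varphi_t$ on $\Lambda_\ell(N_0)$ to find $(n',k')$ with $\varphi_t(\sum_i B_{n',k'}m_i)\subseteq \sum_i B_{n,k}m_i$. This is the easy half.

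**The hard part: continuity of $\psi_t$.** For $\psi_t$ the key is the étale decomposition $M=\bigoplus_{u\in J(N_0/tN_0t^{-1})}u\varphi_t(M)$, together with Lemma \ref{produit} which says $\psi_t(u^{-1}x)$ extracts the $u$-component. The main obstacle is that $\psi_t$ is only a left inverse of $\varphi_t$, not a ring map, so I cannot argue coefficientwise as for $\varphi_t$; instead I must show that $\psi_t$ does not "blow up" the lattice structure. The crucial input is the fourth fundamental system: by Lemma \ref{tOnk}, $(\sum_i\varphi_t(O_{n,k})\Lambda(N_0)\varphi_t(m_i))_{n,k}$ is also a fundamental system of neighborhoods of $0$ in $M$, since $(\varphi_t(m_i))_i$ is again a generating system. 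Now take a basic neighborhood $\mathcal W=\sum_i O_{n,k}m_i$ of $0$; I want a neighborhood $\mathcal V$ with $\psi_t(\mathcal V)\subseteq\mathcal W$. Using the étale decomposition write a general $x$ as $\sum_u u\varphi_t(x_u)$ with $x_u=\psi_t(u^{-1}x)$; then $\psi_t(x)=x_1$. So it suffices to bound the $u=1$ component. Choosing $\mathcal V=\sum_i\varphi_t(O_{n,k})\Lambda(N_0)\varphi_t(m_i)$ (a neighborhood of $0$ by Lemma \ref{tOnk}): if $x\in\mathcal V$ then by the uniqueness of the étale decomposition and the semilinearity/Lemma \ref{produit} identities $\psi_t(\varphi_t(\lambda)m)=\lambda\psi_t(m)$, the component $x_1=\psi_t(x)$ lands in $O_{n,k}\cdot(\text{image of the }m_i\text{ under }\psi_t\varphi_t)$, i.e.\ in $\sum_i O_{n,k}m_i=\mathcal W$. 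One has to be slightly careful that $\mathcal V$ as written is stable under the projection to the $u=1$ summand; this follows because the decomposition $\Lambda(N_0)=\bigoplus_u u\Lambda(tN_0t^{-1})$ is compatible with the ideals $O_{n,k}$ (they are $\Lambda(N_0)$-ideals, pulled back appropriately), which is exactly the content used in Remark \ref{cr}.2 and Lemma \ref{produit2}.

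**Reduction modulo $p^n$ and completion.** If there is any difficulty in handling the mixed $p$-adic/$X$-adic filtration directly, I would fall back on the reduction-mod-$p^n$ machinery of Section \ref{topAM}: by Lemma \ref{redmod}, $(o,M)$ satisfies properties a--d, $M\cong\varprojlim_n M/p^nM$ as topological modules, and $M/p^nM$ is again an étale $L'$-module over $\Lambda_\ell(N_0)$; an endomorphism of $M$ is continuous iff it is $p$-continuous (one of the bullet points in Section \ref{topAM}). So it suffices to prove continuity of $\varphi_t,\psi_t$ on each $M/p^nM$. On $M/p^nM$ the module is killed by $p^n$, so the filtration by the $O_{n,k}$ collapses to the $\mathcal M(N_0)$-adic (equivalently $X$-adic) filtration on a lattice, where the lattice $M^0/p^nM^0$ is open and compact; there the argument above becomes cleaner, essentially the same computation as in the proof of Proposition \ref{contphipsi} (parts a--c) for classical $(\varphi,\Gamma)$-modules, with $\varphi_t$ étale giving $\psi_t(M^0_{t,k})=X^kM^0$-type identities. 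I expect this mod-$p^n$ route, combined with the noetherianness of $\Lambda_\ell(N_0)$ and Lemma \ref{tOnk}, to be the most robust way to finish; the remaining work is the routine bookkeeping of matching up the four neighborhood systems under $\varphi_t$ and under its left inverse.
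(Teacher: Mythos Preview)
Your approach is correct and essentially identical to the paper's: for $\varphi_t$ you use continuity of the ring endomorphism plus the fact that $(\varphi_t(m_i))_i$ generates $M$, and for $\psi_t$ you use the fourth fundamental system $\sum_i\varphi_t(O_{n,k})\Lambda(N_0)\varphi_t(m_i)$ from Lemma~\ref{tOnk} together with Lemma~\ref{produit}. The paper's execution is slightly cleaner---it computes $\psi_t(\varphi_t(a)\,b\,\varphi_t(m_i))=a\,\psi_t(b)\,m_i$ directly by applying both identities of Lemma~\ref{produit} in one line, with no need for your detour through the \'etale projection or your mod-$p^n$ fallback, but the substance is the same.
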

\begin{proof}   The ring endomorphisms $\varphi_t$ of $\Lambda_\ell(N_0)$ are continuous since they preserve $\mathcal M (N_0)$ and $\mathcal M (N_\ell)$.
 The continuity of the $\varphi_t$ on $M$ follows as in part a) of the proof of proposition \ref{contphipsi}.
The continuity of the $\psi_t$ follows from
$$
\psi_{t}(\sum_{i=1}^{r} \varphi_{t}(O_{n,k}) \Lambda (N_{0}) \varphi_{t}(m_{i}) )=
\sum_{i=1}^{r} O_{n,k}\psi_{t} (\Lambda (N_{0})) \varphi_{t}(m_{i}) = \sum_{i=1}^{r}  O_{n,k}  m_{i} \ .
$$
\end{proof}

The same proof shows that, for any  $D \in {\mathfrak M}^{et}_{{\cal O}_{\cal E}, \ell}(L_*)$,   the maps $\varphi_t$ and $\psi_t$, for any $t \in L_*$, are continuous on $D$.

\begin{proposition}\label{pre-auto-cont} The $L_*$-action $L_*\times D\to D$ on an \'etale $L_*$-module $D$ over ${\cal O}_{\cal E}$    is continuous.
\end{proposition}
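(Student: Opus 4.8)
The plan is to reduce the continuity of the $L_*$-action to the already-established continuity of the individual operators $\varphi_t$ and $\psi_t$ (the last sentence before Proposition~\ref{pre-auto-cont}), combined with the hypothesis that $L_*$ contains an open, topologically finitely generated pro-$p$ subgroup $L_1$. Since the action map $L_*\times D\to D$ is a map of topological spaces, and $L_*=\bigsqcup_{t} tL_1$ is a disjoint union of translates of $L_1$ by finitely many (or countably many) coset representatives $t\in L_*$, it suffices to check continuity on each $tL_1\times D$. For a fixed $t$, the action of $tL_1$ factors as $(tg,d)\mapsto \varphi_t(\varphi_g(d))$ for $g\in L_1$; as $\varphi_t$ is continuous on $D$, the problem reduces to proving that the restricted action $L_1\times D\to D$ is continuous. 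Here $L_1$ acts through a \emph{group} of automorphisms of $D$ (every element of $L_0\supseteq L_1$ acts invertibly on an \'etale module), and $L_1$ is pro-$p$ topologically finitely generated, so we are exactly in the classical situation of a $\Lambda(L_1)$-module structure on the finitely generated $\mathcal O_{\mathcal E}$-module $D$.

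The key step is then the following: a finitely generated $\mathcal O_{\mathcal E}$-module $D$ equipped with a (a priori only abstract) action of the compact $p$-adic analytic group $L_1$ by $\mathcal O_{\mathcal E}$-semilinear automorphisms, such that each individual $\varphi_g$ ($g\in L_1$) is continuous for the weak topology, automatically has continuous action $L_1\times D\to D$. I would prove this by choosing a surjection $f:\bigoplus_{i=1}^d\mathcal O_{\mathcal E}\twoheadrightarrow D$ and a lattice $D^0=f(\bigoplus_i\Lambda(\mathbb Z_p))$, so that $(O_n)_n$ (equivalently $(\mathcal O_{n})_n$ in the notation of section~7) is a fundamental system of neighborhoods of $0$ in $D$. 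Fix $g_0\in L_1$ and an open neighborhood $\mathcal O_n$; one must find a neighborhood $\mathcal O_m$ of $0$ in $D$ and an open subgroup $L_1'\subseteq L_1$ such that $\varphi_g(d+\mathcal O_m)\subseteq \varphi_{g_0}(d)+\mathcal O_n$ for all $g\in g_0L_1'$ and all $d$ in (a suitable fundamental domain of) $D$. Writing $\varphi_g=\varphi_{g_0}\circ\varphi_{g_0^{-1}g}$ and using the continuity of $\varphi_{g_0}$, this reduces to the case $g_0=1$: one needs an open subgroup $L_1'\subseteq L_1$ with $(\varphi_h-\id)(D^0)\subseteq \mathcal O_n$ and $\varphi_h(\mathcal O_m)\subseteq\mathcal O_n$ for all $h\in L_1'$. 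The second inclusion holds for $L_1'=L_1$ and $m=n$ because each $\varphi_h$ preserves $\mathcal O_{\mathcal E}$-structure hence $\mathcal M(\mathbb Z_p)$-filtration; the genuine content is the first inclusion, which says that the induced action of $L_1$ on the \emph{finite} module $D^0/\mathcal O_n$ (a finitely generated module over the Artinian ring $\Lambda(\mathbb Z_p)/\mathcal M(\mathbb Z_p)^n$, hence of finite length, hence finite since $o/p^n$ is finite) is continuous. But any action of a topological group on a finite discrete module by $\mathcal O_{\mathcal E}$-semilinear-but-$\mathcal M$-preserving automorphisms, with all individual translations continuous (hence the stabilizer of each element open? — this is exactly where I must be careful), is continuous: concretely, the stabilizer in $L_1$ of each of the finitely many generators of $D^0/\mathcal O_n$ modulo $\mathcal O_n$ contains an open subgroup, because $\varphi_g(d)\equiv d$ is a condition defined by the continuous map $g\mapsto \varphi_g(d)-d$ composed with the (continuous, since source is discrete) projection to $D^0/\mathcal O_n$; intersecting these finitely many open subgroups gives $L_1'$.

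The step I expect to be the main obstacle is making the passage to $D^0/\mathcal O_n$ rigorous when $D$ is not killed by a power of $p$: one has to know that $D^0/(D^0\cap\mathcal O_n)$ is finite, which follows because $\mathcal O_n\supseteq p^nD$, so $D^0/(D^0\cap\mathcal O_n)$ is a quotient of $D^0/p^nD^0$, and $D^0/p^nD^0$ is a finitely generated module over the compact ring $\Lambda(\mathbb Z_p)/p^n$, with $D^0/(D^0\cap\mathcal M(\mathbb Z_p)^nD^0+p^nD^0)$ finite because $\Lambda(\mathbb Z_p)/(\mathcal M(\mathbb Z_p)^n+p^n)$ is a finite ring. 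Alternatively, and perhaps more cleanly, I would invoke the reduction-modulo-$p^n$ machinery of section~\ref{topAM}: since $D\cong\varprojlim_n D/p^nD$ topologically and $\End^{cont}$ commutes with this limit, it is enough to prove continuity of the $L_*$-action on each $D/p^nD$, which is killed by a power of $p$ and whose weak topology is locally compact (by the theory of treillis in section~7.3); there $D^0/\mathcal O_k$ is genuinely finite for every $k$, the argument above goes through verbatim, and one concludes by the bulleted fact in section~\ref{topAM} that a $p$-continuous action is continuous. This second route is the one I would actually write up, as it isolates the finiteness input and reuses existing infrastructure; the only nontrivial point to verify is that the $L_1$-stabilizers of finitely many elements of the finite module $D/(p^n,\,\mathcal M^k)D$ are open in $L_1$, which is immediate from continuity of the individual $\varphi_g$ and discreteness of the target.
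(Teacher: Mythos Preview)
Your reduction to the $L_1$-action and then to $D$ killed by a power of $p$ is correct and matches the paper. The gap is at the crucial final step. You need, for each element $d$ of a finite quotient, that its $L_1$-stabilizer is open; your justification --- ``immediate from continuity of the individual $\varphi_g$ and discreteness of the target'' --- is circular. Continuity of $d\mapsto\varphi_g(d)$ for \emph{fixed} $g$ says nothing about how $\varphi_g(d)$ varies with $g$, which is exactly the continuity you are trying to establish. You even flag the worry (``this is exactly where I must be careful'') and then dismiss it incorrectly. What you do know is that the stabilizer has \emph{finite index} (the quotient being finite), but finite index does not imply open in an arbitrary profinite group.

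This is precisely where the hypothesis on $L_1$ is used: by Serre's theorem, in a topologically finitely generated pro-$p$ group every finite-index subgroup is open. The paper invokes this explicitly. To apply it you also need the finite quotient to carry an honest $L_1$-action, i.e.\ an $L_1$-stable filtration; your $\mathcal O_n=p^nD+\mathcal M(\mathbb Z_p)^n D^0$ depends on a lattice $D^0$ with no reason to be $L_1$-invariant (so neither is $\varphi_h(\mathcal O_m)\subseteq\mathcal O_n$ clear uniformly in $h$, contrary to what you assert). The paper instead uses the intrinsically defined $L_1$-stable lattices $X^mD^{++}$, so that $L_1$ acts on the finite module $X^{-m}D^{++}/X^mD^{++}$ via a homomorphism to a finite group, and Serre's theorem gives openness of the kernel. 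The paper also streamlines the passage from separate to joint continuity by quoting Ellis' theorem (using that $D/p^nD$ is locally compact), rather than the direct neighborhood argument you sketch.
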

\begin{proof}
Let $D$ be in ${\mathfrak M}^{et}_{{\cal O}_{\cal E}, \ell}(L_*)$. Since we know already from Prop.\ \ref{contphipsi-general} that each individual $\varphi_t$, for $t \in L_*$, is a continuous map on $D$ and since $L_1$ is open in $L_*$ it suffices to show that the action $L_1 \times D \rightarrow D$ of $L_1$ on $D$ is continuous. As $D$ is $p$-adically complete with its weak topology being the projective limit of the weak topologies on the $D/p^n D$ we may further assume that $D$ is killed by a power of $p$. In this situation the weak topology on $D$ is locally compact. By Ellis' theorem (\cite{Ell} Thm.\ 1) we therefore are reduced to showing that the map $L_1 \times D \rightarrow D$ is separately continuous. Because of Prop.\ \ref{contphipsi-general} it, in fact, remains to prove that, for any $d \in D$, the map
$$
L_1 \longrightarrow D \ ,\ g \longmapsto gd
$$ is continuous at $1 \in L_1$. This amounts to finding, for any $d \in D$ and any lattice $D_0 \subset D$, an open subgroup $H \subset L_1$ such that $(H - 1)d \subset D_0$. We observe that $(X^m D_{++})_{m \in \mathbb{Z}}$ is a fundamental system of $L_1$-stable open neighbourhoods of zero in $D$ such that $\bigcup_{m} X^m D_{++} = D$. We now choose an $m \geq 0$ large enough such that $d \in X^{-m}D_{++}$ and $X^m D_{++} \subset D_0$. The $L_1$ action on $D$ induces an $L_1$-action on $X^{-m}D_{++} / X^{m}D_{++}$ which is $o$-linear hence given by a group homomorphism $L_1 \rightarrow \Aut_o(X^{-m}D_{++} / X^{m}D_{++})$. Since $D_{++}$ is a finitely generated $o[[X]]$-module which is killed by a power of $p$ we see that $X^{-m}D_{++} / X^{m}D_{++}$ is finite. It follows that the kernel $H$ of the above homomorphism is of finite index in $L_1$. Our assumption that $L_1$ is a topologically finitely generated pro-$p$-group finally implies, by a theorem of Serre (\cite{DDMS} Thm.\ 1.17), that $H$ is open in $L_1$. We obtain
$$
(H-1)d \subset (H-1)X^{-m} D_{++} \subset X^{m} D_{++} \subset D_0 \ .
$$
\end{proof}

 In the special case of classical $(\varphi, \Gamma)$-modules on ${\cal O}_{\cal E}$ the  proposition is stated as Exercise 2.4.6 in \cite{Ked} (with the indication of a totally different proof).

\begin{proposition}\label{cont-ring}
Let  $L'$  be a submonoid of $L_{\ell, +}$ containing an open subgroup $L_2$  which is a topologically finitely generated pro-$p$-group. Then the  $L'$-action  $L'\times \Lambda_\ell (N_0)\to \Lambda_\ell (N_0)$ on $\Lambda_\ell (N_0)$ is continuous.
\end{proposition}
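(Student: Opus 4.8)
The plan is to deduce this statement from Proposition~\ref{pre-auto-cont} (continuity of the $L_*$-action on \'etale $L_*$-modules over $\mathcal O_{\mathcal E}$) by applying it to a cleverly chosen module. The key observation is that $\Lambda_\ell(N_0)$ is \emph{not} itself a finitely generated module over $\mathcal O_{\mathcal E}$, so one cannot apply Proposition~\ref{pre-auto-cont} to it directly. However, $\Lambda_\ell(N_0)$ with the conjugation action of $L'$ (which extends to $L_{\ell,+}$) is an \'etale ring action by Proposition~\ref{sw}, and it is an \'etale $L'$-module over itself. The trick is to realise $\Lambda_\ell(N_0)$, or rather a sufficiently large piece of it, inside something to which the equivalence of categories (Theorem~\ref{eq}) and Proposition~\ref{pre-auto-cont} apply.

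Concretely, first I would reduce as usual: since $\Lambda_\ell(N_0)$ is $p$-adically complete with its weak topology being the projective limit of the weak topologies on the quotients $\Lambda_\ell(N_0)/p^n\Lambda_\ell(N_0)$ (by the facts recalled in Section~\ref{topAM} and Lemma~\ref{redmod}), and since these quotients are again microlocalised Iwasawa rings with the induced $L'$-action, it suffices to prove continuity modulo a fixed power of $p$. Next, since $L_2$ is open in $L'$ and each individual $\varphi_t$ is continuous by Proposition~\ref{contphipsi-general}, it is enough to show that the action $L_2\times\Lambda_\ell(N_0)\to\Lambda_\ell(N_0)$ is continuous; and as in the proof of Proposition~\ref{pre-auto-cont} one may invoke Ellis' theorem to reduce to separate continuity, hence to continuity at $1\in L_2$ of the orbit map $g\mapsto g\lambda$ for each fixed $\lambda\in\Lambda_\ell(N_0)$. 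So the goal becomes: for each $\lambda$ and each basic open neighbourhood $O_{n,k}$ of $0$, find an open subgroup $H\subseteq L_2$ with $(H-1)\lambda\subseteq O_{n,k}$.

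To produce such an $H$, I would view $\Lambda_\ell(N_0)$ as the ring of skew Laurent series $\sum_{m\in\mathbb Z}a_mX^m$ over $\Lambda(N_\ell)$ with $a_m\to 0$, where $X=[\gamma]-1$ for a suitable $\gamma\in N_0$ lifting a topological generator of $\ell(N_0)$. Given $\lambda$ and $O_{n,k}$, only finitely many ``negative tail'' terms $X^{-m}$ matter: there is a positive integer $r$ such that $\lambda\in X^{-r}\Lambda(N_0)+\mathcal M_\ell(N_0)^n$ (using that $\lim_{m\to-\infty}a_m=0$ and $\mathcal M_\ell(N_0)^n$ is open). Now $X^{-r}\Lambda(N_0)/(X^{-r}\Lambda(N_0)\cap(\text{an }L_2\text{-stable open submodule contained in }O_{n,k}))$ can be arranged to be a finitely generated $\Lambda(N_0)$-module killed by $X^{2r}$-type data and by $p^n$; the point is to find an $L_2$-stable open lattice $\mathcal L\subseteq O_{n,k}$ — these exist by Lemma~\ref{No} applied to a compact open subgroup of $L_2$ together with the conjugation action on $\Lambda_\ell(N_0)$ — such that the quotient $X^{-r}\Lambda(N_0)/\mathcal L$ is \emph{finite}. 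Then the $L_2$-action on this finite quotient factors through a finite quotient of $L_2$, whose kernel $H$ is of finite index, hence open in $L_2$ by Serre's theorem (\cite{DDMS} Thm.~1.17) since $L_2$ is a topologically finitely generated pro-$p$-group. One then checks $(H-1)\lambda\subseteq\mathcal L+\mathcal M_\ell(N_0)^n\subseteq O_{n,k}$.

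The main obstacle is the finiteness of the relevant quotient: unlike the \'etale-module case of Proposition~\ref{pre-auto-cont}, where one uses that $D_{++}$ is a finitely generated $o[[X]]$-module killed by a power of $p$ so that $X^{-m}D_{++}/X^mD_{++}$ is automatically finite, here $\Lambda_\ell(N_0)$ itself is not finitely generated over a one-variable power series ring — the ``coefficient ring'' $\Lambda(N_\ell)$ can be an Iwasawa algebra of a higher-dimensional $p$-adic Lie group. So one must be careful to cut $\Lambda_\ell(N_0)$ down to a genuinely finite piece before invoking the group-theoretic input. The cleanest way to handle this is probably to apply Theorem~\ref{eq} in reverse: take $D$ to be a well-chosen finitely generated \'etale $L_*$-module over $\mathcal O_{\mathcal E}$ (for instance a free one, or $\mathcal O_{\mathcal E}$ itself with a twist), so that $\mathbb M(D)=\Lambda_\ell(N_0)\otimes_{\mathcal O_{\mathcal E},\iota}D$ is an \'etale $L_*$-module over $\Lambda_\ell(N_0)$ whose action is continuous by the forthcoming result that \'etale $L_*$-modules over $\Lambda_\ell(N_0)$ are topologically \'etale — but since that is exactly the kind of statement we are building towards, the self-contained route through the explicit skew-Laurent-series description and the finite-quotient argument above is the one I would actually carry out in detail, with Proposition~\ref{pre-auto-cont} supplying the continuity of the action on the ``$X$-adic graded pieces'' that are finitely generated over $\mathcal O_{\mathcal E}/p^n$.
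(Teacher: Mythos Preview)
Your overall strategy---reduce to the open subgroup $L_2$, pass to a quotient where the weak topology is locally compact, invoke Ellis' theorem to reduce to orbit-map continuity at $1$, and then use Serre's theorem on a finite quotient---is exactly the paper's approach. You have also correctly identified the one genuine obstacle: after reducing only modulo $p^n$, the coefficient ring $\Lambda(N_\ell)/p^n$ is still an infinite Iwasawa algebra, so the ``$X$-adic slices'' you want to use are not finite, and your proposed workarounds (invoking Theorem~\ref{eq} or Proposition~\ref{pre-auto-cont} on graded pieces) are either circular or do not obviously close the gap.

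The fix is much simpler than what you propose: reduce modulo $\mathcal{M}_\ell(N_0)^n$ rather than modulo $p^n$. The ring $\Lambda_\ell(N_0)$ is $\mathcal{M}_\ell(N_0)$-adically complete and its weak topology is the projective limit of the weak topologies on the quotients $\Lambda' := \Lambda_\ell(N_0)/\mathcal{M}_\ell(N_0)^n$. The crucial gain is that now the coefficient ring $\Lambda(N_\ell)/\mathcal{M}(N_\ell)^n$ is a \emph{finite} local Artinian ring. Hence, setting
\[
B'_k := \big(X^k\Lambda(N_0) + \mathcal{M}_\ell(N_0)^n\big)/\mathcal{M}_\ell(N_0)^n \quad (k \in \mathbb{Z}),
\]
one has $B'_0 \cong o[[X]] \otimes_o \Lambda(N_\ell)/\mathcal{M}(N_\ell)^n$ as an $o[[X]]$-module, so each $B'_{-m}/B'_m$ is finite. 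The $B'_k$ are $L_2$-stable (since $L_2$, being a group inside $L_{\ell,+}$, normalises both $N_0$ and $N_\ell$), they form a fundamental system of compact open neighbourhoods of $0$ in $\Lambda'$, and their union is all of $\Lambda'$. Now the argument runs exactly as in Proposition~\ref{pre-auto-cont}: given $x \in \Lambda'$ and $k$, choose $m \geq k$ with $x \in B'_{-m}$; the $L_2$-action on the finite set $B'_{-m}/B'_m$ has kernel $H$ of finite index, hence open by Serre's theorem, and $(H-1)x \subseteq B'_m \subseteq B'_k$.

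In short, the missing idea is the choice of the \emph{right} adic filtration to quotient by---the one that makes the transversal direction finite---after which no appeal to the equivalence of categories or to $\mathcal{O}_{\mathcal{E}}$-modules is needed at all.
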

\begin{proof}
 Since we know already from Prop. \ref{sw} and   \ref{contphipsi-general} that each individual $\varphi_t$, for $t \in L'$, is a continuous map on $\Lambda_\ell (N_0)$ and since $L_2$ is open in $L'$ it suffices to show that the action $L_2 \times  \Lambda_\ell (N_0) \rightarrow  \Lambda_\ell (N_0)$ of $L_2$ on $ \Lambda_\ell (N_0)$ is continuous. The ring $ \Lambda_\ell (N_0)$ is $ \mathcal M_\ell (N_0)$-adically complete with its weak topology being the projective limit of the weak topologies on the $ \Lambda_\ell (N_0)/ \mathcal M_\ell (N_0)^n  \Lambda_\ell (N_0)$. It suffices to prove that the induced action of $L_2$ on $\Lambda' = \Lambda_\ell (N_0)/ \mathcal M_\ell (N_0)^n  $ is continuous.
 The weak topology on $\Lambda'$ is locally compact since   $(B_k'=( X^k \Lambda (N_0)+\mathcal M_\ell (N_0)^n)/ \mathcal M_\ell (N_0)^n   )_{k\in \mathbb Z}$ forms a fundamental system of compact neighborhoods of $0$.
 By Ellis' theorem (\cite{Ell} Thm.\ 1) we therefore are reduced to showing that the map $L_2 \times \Lambda'\rightarrow \Lambda'$ is separately continuous. Because of Prop.\ \ref{contphipsi-general} it, in fact, remains to prove that, for any $x \in \Lambda'$, the map
$$
L_2 \longrightarrow \Lambda' \ ,\ g \longmapsto gx
$$ is continuous at $1 \in L_2$. This amounts to finding, for any $x \in \Lambda'$ and any large  $k\geq 1 $, an open subgroup $H \subset L_2$ such that $(H - 1)x\subset B'_k$. We observe that  the $ B'_k $, for $k \in \mathbb{Z}$, are $L_2$-stable   of union $\Lambda'$. We now choose an $m \geq k $ large enough such that $x \in B'_{-m}$. The $L_2$-action on $\Lambda'$ induces an $L_2$-action on $B'_{-m}  /  B'_m$ which is $o$-linear hence given by a group homomorphism $L_2 \rightarrow \Aut_o(B'_{-m}  /  B'_m)$. Since $B'_0$ is isomorphic to $o[[X]]\otimes_o
\Lambda (N_\ell)/\mathcal M(N_\ell)^n$
 as an $o[[X]]$-module, and $\Lambda (N_\ell)/\mathcal M(N_\ell)^n$ is finite,
 we see that $B'_{-m}  /  B'_m$ is finite. It follows that the kernel $H$ of the above homomorphism is of finite index in $L_2$. Our assumption that $L_2$ is a topologically finitely generated pro-$p$-group finally implies, by a theorem of Serre (\cite{DDMS} Thm.\ 1.17), that $H$ is open in $L_2$. We obtain
$$
(H-1)x \subset (H-1) B'_{-m}  \subset B'_m \subset B'_k \ .
$$
\end{proof}

\begin{lemma}\label{cont-li}
\begin{itemize}
   \item[i.] For any $M \in {\mathfrak M}^{et}_{\Lambda_\ell(N_0)}(L_*)$ the weak topology on $\mathbb{D}(M)$ is the quotient topology, via the surjection $\ell_M : M \to \mathbb{D}(M)$, of the weak topology on $M$.
   \item[ii.] For any $D \in {\mathfrak M}^{et}_{{\cal O}_{\cal E}, \ell}(L_*)$ the weak topology on $\mathbb{M}(D)$ induces, via the injection $\iota_D : D \to \mathbb{M}(D)$, the weak topology on $D$.
 \end{itemize}
\end{lemma}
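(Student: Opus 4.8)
The plan is to unwind the definitions of the weak topologies in terms of explicit fundamental systems of neighborhoods of $0$ and to track them through the base-change maps, using the direct-sum decompositions recorded in \eqref{Ldeco} and \eqref{elioeq}. For part ii, let $D \in {\mathfrak M}^{et}_{{\cal O}_{\cal E}, \ell}(L_*)$ and choose a finite system of generators $(d_i)_{i}$ of $D$ over ${\cal O}_{\cal E}$; then $(1 \otimes d_i)_i$ is a system of generators of $\mathbb{M}(D) = \Lambda_{\ell}(N_0) \otimes_{{\cal O}_{\cal E},\iota} D$ over $\Lambda_{\ell}(N_0)$, and a lattice $D^0 = \sum_i \Lambda(\mathbb{Z}_p) d_i$ in $D$ gives a lattice $\mathbb{M}(D)^0 = \sum_i \Lambda(N_0)(1 \otimes d_i)$ in $\mathbb{M}(D)$. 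By the description \eqref{SFV1} the sets $\sum_i B_{n,k}(1 \otimes d_i)$ form a fundamental system of neighborhoods of $0$ in $\mathbb{M}(D)$, and the sets $\sum_i B_{n,k}^{(2)} d_i$ do the same in $D$. First I would show $\iota_D^{-1}\big(\sum_i B_{n,k}(1 \otimes d_i)\big) = \sum_i B_{n,k}^{(2)} d_i$: the inclusion $\supseteq$ is clear from $\iota(B_{n,k}^{(2)}) \subseteq B_{n,k}$, and the reverse inclusion uses the decomposition \eqref{Ldeco}, $B_{n,k} = \iota(B_{n,k}^{(2)}) \oplus (J_{\ell}(N_0) \cap B_{n,k})$, together with the fact that the $J_{\ell}(N_0)$-component of $\sum_i B_{n,k}(1\otimes d_i)$ meets $\iota_D(D)$ only in $0$ (because $\iota_D(D)$ maps isomorphically onto $\mathbb{D}\mathbb{M}(D) \cong D$ under $\ell_{\mathbb{M}(D)}$, whose kernel is $J_{\ell}(N_0)\mathbb{M}(D)$). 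This gives both that $\iota_D$ is continuous and that it is a topological embedding.

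For part i, let $M \in {\mathfrak M}^{et}_{\Lambda_{\ell}(N_0)}(L_*)$ and pick a finite system of generators $(m_i)_i$ of $M$ over $\Lambda_{\ell}(N_0)$; then $(\ell_M(m_i))_i$ generates $\mathbb{D}(M) = {\cal O}_{\cal E} \otimes_{\Lambda_{\ell}(N_0),\ell} M$ over ${\cal O}_{\cal E}$, and the lattice $M^0 = \sum_i \Lambda(N_0) m_i$ maps onto a lattice $\ell_M(M^0) = \sum_i \Lambda(\mathbb{Z}_p)\ell_M(m_i)$ in $\mathbb{D}(M)$. Since $\ell_M$ is surjective and $\ell$ sends the neighborhood basis $B_{n,k}$ of $0$ in $\Lambda_{\ell}(N_0)$ onto the basis $B_{n,k}^{(2)}$ of $0$ in ${\cal O}_{\cal E}$ (the second line of \eqref{elioeq}), one checks that $\ell_M\big(\sum_i B_{n,k} m_i\big) = \sum_i B_{n,k}^{(2)} \ell_M(m_i)$. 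Continuity of $\ell_M$ gives that the weak topology on $\mathbb{D}(M)$ is coarser than or equal to the quotient topology; the displayed equality of images of basic neighborhoods gives the reverse, so the quotient topology coincides with the weak topology. Here one must use that $\ell : \Lambda_{\ell}(N_0) \to {\cal O}_{\cal E}$ is an open map onto its image for the weak topologies, which again follows from \eqref{elioeq} since $\ell$ is the augmentation on the coefficient ring $\Lambda(N_{\ell})$ and the identity on $X$, hence carries each $B_{n,k}$ onto $B_{n,k}^{(2)}$.

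The routine parts are the verifications that $\ell$ and $\iota$ respect the neighborhood bases, which are immediate from \eqref{elioeq}, and the bookkeeping with finitely many generators. The one genuine point requiring care — the main obstacle — is the reverse inclusion in part ii, namely that a small element of $\mathbb{M}(D)$ that happens to lie in $\iota_D(D)$ is already small in $D$; this is where the direct-sum decomposition \eqref{Ldeco} and the identification $\mathbb{D}\mathbb{M}(D) \cong D$ from Proposition \ref{easy} are essential, since without the splitting one only gets that $\iota_D$ is continuous, not that it is a topological embedding. An equivalent and perhaps cleaner way to organize this last point is to note that $\iota_D$ admits the continuous retraction $\ell_{\mathbb{M}(D)}$ (continuous by part i applied to $\mathbb{M}(D)$, or directly), and a continuous injection with a continuous retraction onto a Hausdorff space, when the basic neighborhoods split compatibly as in \eqref{Ldeco}, is automatically an embedding.
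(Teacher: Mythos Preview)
Your proposal is correct and follows essentially the same approach as the paper. For part i the paper packages the generator argument into a commutative square with free modules $\oplus_i \Lambda_\ell(N_0) \twoheadrightarrow M$ and $\oplus_i {\cal O}_{\cal E} \twoheadrightarrow \mathbb{D}(M)$, but the content is exactly your observation that $\ell$ carries $B_{n,k}$ onto $B_{n,k}^{(2)}$, making $\ell_M$ continuous and open; for part ii the paper skips your direct preimage computation and goes straight to the retraction argument you give at the end: $\iota_D$ is continuous (analogously to i) and has the continuous left inverse $\ell_{\mathbb{M}(D)}$, hence is a topological inclusion.
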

\begin{proof}
i. If we write $M$ as a quotient of a finitely generated free $\Lambda_\ell(N_0)$-module then we obtain an exact commutative diagram of surjective maps of the form
$$
\xymatrix{
  \oplus_{i=1}^n \Lambda_\ell(N_0)  \ar@{>>}[d]_{\oplus_i \ell} \ar@{>>}[r] & M \ar@{>>}[d]^{\ell_M}  \\
  \oplus_{i=1}^n {\cal O}_{\cal E} \ar@{>>}[r] & \mathbb{D}(M)  }.
$$
The horizontal maps are continuous and open by the definition of the weak topology. The left vertical map is continuous and open by direct inspection of the open zero neighbourhoods $B_{n,k}$ (see \eqref{elioeq}). Hence the right vertical map $\ell_M$ is continuous and open.

ii. An analogous argument as for i. shows that $\iota_D$ is continuous. Moreover $\iota_D$ has the continuous left inverse $\ell_{\mathbb{M}(D)}$. Any continuous map with a continuous left inverse is a topological inclusion.
\end{proof}

An \'etale $L_*$-module $M$ over $\Lambda_\ell(N_0)$, resp.\ over ${\cal O}_{\cal E}$, will be called topologically \'etale if the $L_*$-action $L_* \times M \to M$ is continuous. Let ${\mathfrak M}^{et,c}_{\Lambda_\ell(N_0)}(L_*)$ and ${\mathfrak M}^{et,c}_{{\cal O}_{\cal E}, \ell}(L_*)$ denote the corresponding full subcategories of ${\mathfrak M}^{et}_{\Lambda_\ell(N_0)}(L_*)$ and ${\mathfrak M}^{et}_{{\cal O}_{\cal E}, \ell}(L_*)$, respectively. Note that, by construction, all morphisms in ${\mathfrak M}^{et}_{\Lambda_\ell(N_0)}(L_*)$ and in ${\mathfrak M}^{et}_{{\cal O}_{\cal E}, \ell}(L_*)$ are automatically continuous. Also note that by proposition \ref{contphipsi-general} any object in this categories is a complete topologically \'etale $o[N_0L_*]$-module in our earlier sense.

\begin{proposition}\label{cont-eq}
The  functors $\mathbb{M}$ and $\mathbb{D}$ restrict to quasi-inverse equivalences of categories
$$
\mathbb M:  {\mathfrak M}^{et,c}_{{\cal O}_{\cal E}, \ell}(L_*)\to  {\mathfrak M}^{et,c}_{\Lambda_{\ell}(N_{0})}(L_*) \ \  ,
  \ \  \mathbb D:
 {\mathfrak M}^{et,c}_{\Lambda_{\ell}(N_{0})}(L_*)\to  {\mathfrak M}^{et,c}_{{\cal O}_{\cal E}, \ell}(L_*) \ .
$$
\end{proposition}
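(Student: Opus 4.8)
The plan is to deduce this from Theorem \ref{eq} together with the three continuity propositions \ref{pre-auto-cont}, \ref{contphipsi-general} and Lemma \ref{cont-li}. Since we already know from Theorem \ref{eq} that $\mathbb M$ and $\mathbb D$ are quasi-inverse equivalences between ${\mathfrak M}^{et}_{{\cal O}_{\cal E}, \ell}(L_*)$ and ${\mathfrak M}^{et}_{\Lambda_{\ell}(N_{0})}(L_*)$, and since ${\mathfrak M}^{et,c}_{{\cal O}_{\cal E}, \ell}(L_*)$ and ${\mathfrak M}^{et,c}_{\Lambda_{\ell}(N_{0})}(L_*)$ are by definition full subcategories with the same morphisms, the only thing to check is that the two functors carry topologically \'etale modules to topologically \'etale modules; the unit and counit isomorphisms $\Theta_M$ and $\mathbb D(\mathbb M(D))\xrightarrow{\cong} D$ are morphisms in the big categories, hence automatically continuous, so they restrict to the subcategories. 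Thus the statement reduces to two claims: (a) if $D\in{\mathfrak M}^{et,c}_{{\cal O}_{\cal E}, \ell}(L_*)$ then $\mathbb M(D)\in{\mathfrak M}^{et,c}_{\Lambda_{\ell}(N_{0})}(L_*)$, and (b) if $M\in{\mathfrak M}^{et,c}_{\Lambda_{\ell}(N_{0})}(L_*)$ then $\mathbb D(M)\in{\mathfrak M}^{et,c}_{{\cal O}_{\cal E}, \ell}(L_*)$.

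For claim (b), suppose $M$ has continuous $L_*$-action. By Lemma \ref{cont-li}.i the weak topology on $\mathbb D(M)$ is the quotient topology along the surjection $\ell_M:M\to\mathbb D(M)$, which is moreover $L_*$-equivariant. The composite $L_*\times M\to M\to\mathbb D(M)$ is then continuous, and since $\ell_M\times\id_{L_*}$ (or rather the product map $L_*\times M\to L_*\times\mathbb D(M)$) is a continuous open surjection — open because $\ell_M$ is open by Lemma \ref{cont-li}.i and products of open maps with identities are open — the $L_*$-action on $\mathbb D(M)$ factors as a continuous map through the quotient. Hence $\mathbb D(M)$ is topologically \'etale.

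For claim (a), the cleanest route is simply to invoke Proposition \ref{pre-auto-cont}: under the standing hypothesis that $L_*$ contains an open topologically finitely generated pro-$p$-subgroup $L_1$, \emph{every} \'etale $L_*$-module over ${\cal O}_{\cal E}$ is automatically topologically \'etale, so ${\mathfrak M}^{et,c}_{{\cal O}_{\cal E}, \ell}(L_*)={\mathfrak M}^{et}_{{\cal O}_{\cal E}, \ell}(L_*)$; thus $D$ is topologically \'etale to begin with, and we must show $\mathbb M(D)$ is. Alternatively, and not relying on the full strength of \ref{pre-auto-cont} for the source, one argues directly: by Lemma \ref{cont-li}.ii, $\iota_D:D\to\mathbb M(D)$ is a topological embedding, and $\mathbb M(D)=\bigoplus_{u\in J(N_0/tN_0t^{-1})}u\varphi_t(\mathbb M(D))$ with $\varphi_t\iota_D=\iota_D\varphi_t$ (the $L_*$-equivariance of $\iota_D$, \eqref{A2}); since $o[N_0]\iota_D(D)$ is dense in $\mathbb M(D)$ (Lemma \ref{lista}), one checks continuity of $g\mapsto gx$ on $\mathbb M(D)$ at $1\in L_1$ for $x$ ranging over a dense set, using that each $\varphi_t$ and each $n\in N_0$ act continuously (Proposition \ref{contphipsi-general}) and that the $L_1$-action on $D$ is continuous, then passes to the whole module via completeness. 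In fact the quickest formulation: $\mathbb M(D)$ is finitely generated over $\Lambda_\ell(N_0)$, its weak topology is $p$-adically and $\mathcal M_\ell(N_0)$-adically describable, and it is a topologically \'etale $o[N_0L_*]$-module already in the earlier sense by Proposition \ref{contphipsi-general}; combining this with Proposition \ref{cont-ring}, which gives continuity of the $L_*$-action on the coefficient ring $\Lambda_\ell(N_0)$ itself, and writing $\mathbb M(D)$ as a quotient of a free module $\bigoplus\Lambda_\ell(N_0)$ on which $L_*$ acts diagonally by the semilinear rule, one gets continuity of $L_*\times\mathbb M(D)\to\mathbb M(D)$ from continuity on $L_*\times\bigoplus\Lambda_\ell(N_0)$ together with the standard fact that the quotient map of topological modules is open.

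The main obstacle I expect is claim (a): transporting continuity across $\mathbb M$ is less formal than across $\mathbb D$, because $\mathbb M$ is a completed tensor product and one cannot simply appeal to a quotient-topology argument. The subtlety is that a finitely generated free $\Lambda_\ell(N_0)$-module with the diagonal semilinear $L_*$-action is not literally one of our categories' objects unless we know the action is \'etale, and we must be careful that the chosen free presentation $M_1\twoheadrightarrow\mathbb M(D)$ from the proof of Proposition \ref{hard} can be taken inside ${\mathfrak M}^{et,c}$ — i.e. with continuous $L_*$-action on $M_1$. This is where Proposition \ref{cont-ring} is essential: it tells us the diagonal action on $\bigoplus\Lambda_\ell(N_0)$ (with $\varphi_t$ acting on a permuted basis as in Step 2 of \ref{hard}) is continuous, precisely because the action on the ring is, and the permutation of the basis plus multiplication by fixed ring elements are continuous operations. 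Once $M_1$ is topologically \'etale, claim (b) applied in reverse — or rather the openness of $M_1\twoheadrightarrow\mathbb M(D)$ from the definition of the weak topology — finishes it. I would write up claim (b) first since it is immediate, then handle claim (a) via the free-presentation argument leaning on Propositions \ref{cont-ring} and \ref{contphipsi-general}, and finally note the restriction of the unit/counit isomorphisms.
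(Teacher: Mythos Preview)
Your overall structure is right, and claim (b) matches the paper's argument exactly: Lemma \ref{cont-li}.i gives that $\ell_M$ is open, so continuity of the $L_*$-action descends to $\mathbb D(M)$.

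The gap is in claim (a). None of your three routes actually closes. The first, invoking Proposition \ref{pre-auto-cont}, only tells you $D$ itself is topologically \'etale, which you already assume; it says nothing about $\mathbb M(D)$. The second, ``check continuity of $g\mapsto gx$ on a dense set and pass to the whole module via completeness'', is precisely the nontrivial step: separate continuity on a dense subset together with continuity of each individual $\varphi_t$ does not by itself yield separate continuity everywhere, let alone joint continuity, without some uniformity or an Ellis-type argument that you have not supplied. The third route, via a free presentation, fails for the reason you half-identify but then mis-resolve: the $L_*$-action on the free module $M_1$ from Step 2 of Proposition \ref{hard} is only constructed for the single endomorphism $\varphi=\varphi_s$, not for all of $L_*$; and even granting an $L_*$-equivariant free presentation, the action on $M_1$ is $(t,(\lambda_i))\mapsto \sum_i \varphi_t(\lambda_i)\,\varphi_t(e_i)$ where the coefficients of $\varphi_t(e_i)$ in the basis \emph{depend on $t$}. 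Your phrase ``multiplication by fixed ring elements'' is exactly wrong here, and showing these $t$-dependent coefficients vary continuously is essentially the statement you are trying to prove.

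The paper handles (a) by a direct neighbourhood argument. Fixing a lattice $D_0\subset D$ with generators $(d_i)$, it shows that for any $t\in L_*$, $\lambda_0\in\Lambda_\ell(N_0)$, $d_0\in D_0$, and $n\in\mathbb N$ one can find a neighbourhood $L_t$ of $t$ in $L_*$ and $n'\in\mathbb N$ with
\[
L_t\cdot\big(\lambda_0\iota_D(d_0)+C_{n'}\iota_D(D_0)\big)\subset t\cdot\lambda_0\iota_D(d_0)+C_n\iota_D(D_0+D_t),
\]
where $D_t=\sum_i\Lambda(N_0^{(2)})\,t.d_i$. The proof uses exactly the two continuity inputs you name --- the continuous $L_*$-action on $D$ and Proposition \ref{cont-ring} for the ring --- but combines them by tracking, for $t'$ near $t$, both $t'\cdot\lambda_0$ in $\Lambda_\ell(N_0)$ and $t'\cdot d_0$ in $D$, and controlling the cross term $L_t\cdot C_{n'}\iota_D(D_0)=(L_t\cdot C_{n'})\,\iota_D(L_t\cdot D_0)$ explicitly. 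This is the genuine work you are missing.
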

\begin{proof}
It is immediate from lemma \ref{cont-li}.i that if $L_*$ acts continuously on $M \in {\mathfrak M}^{et}_{\Lambda_{\ell}(N_{0})}(L_*)$ then it also acts continuously on $\mathbb{D}(M)$.

 On the other hand,  let $D \in {\mathfrak M}^{et}_{{\cal O}_{\cal E}, \ell}(L_*)$ such that the action of $L_{*}$ on $D$ is continuous. We choose  a lattice $D_{0}$ in $D$ with a finite system $(d_{i})$  of generators. Given $t\in L_{*}$ we introduce $D_{t}:=\sum_{i}\Lambda (N_{0}^{(2)})t.d_{i} $ which is a lattice in $D$ since the action of $t$ on $D$ is \'etale.
Also $D_{0}+D_{t}$ is a lattice in $D$.
 The $\Lambda_\ell(N_0)$-module $\mathbb{M}(D)$ is  generated by $\iota_{D}(D _{0})$  as well as by $\iota_{D}(D _{0}+D_{t})$  and both
  $$
 (C_{n}\iota_{D}(D_{0}))_{n \in \mathbb N}\quad {\rm and} \quad (C_{n}\iota_{D}(D_{0}+D_{t}))_{n \in \mathbb N}
  $$
 are fundamental systems of neighbourhoods of $0$  in $\mathbb{M}(D)$ for the weak topology.  To show that the action of $L_{*}$ on $\mathbb{M}(D)$ is continuous, it suffices to find for any $t\in L_{*}, \lambda_{0}\in\Lambda_{\ell}(N_{0}), d_{0}\in D_{0}, n \in \mathbb N$ a neighborhood  $L_{t}\subset L_{*}$ of $t$ and  $n'\in \mathbb N$ such that
 \begin{equation}\label{0t}
L_{t}. (\lambda_{0} \iota_{D} (d_{0}) + C_{n'} \iota_{D }(D_{0} ))\subset t.\lambda_{0} \iota_{D} (d_{0}) + C_{n}\iota_{D }(D_{0}+D_{t} )\ .
 \end{equation}
   The three maps
   \begin{align*}
   \lambda \mapsto \lambda  \iota_{D} (d_{0}) & : \Lambda_{\ell}(N_{0} )\to \mathbb M (D) \  \\
    d\mapsto \lambda_{0} \iota_{D} (d) & : D \to  \mathbb M (D) \    \\
   (\lambda, d) \mapsto \lambda \iota_{D}(d) & : \Lambda_{\ell}(N_{0} \times D) \to  \mathbb M (D)\
     \end{align*}
are continuous  because $\iota_{D} $ is continuous.
  The action of $L_{*}$  on $D$  and on $\Lambda_{\ell}(N_{0})$  is continuous (Prop. \ref{cont-ring}). Altogether this implies   that  we can find a small  $L_{t}$ such  that
  $$L_{t}.\lambda_{0}  \iota_{D} (d_{0}) \subset t.\lambda_{0}  \iota_{D} (d_{0}) + C_{n}\iota_{D }(D_{0}+D_{t} )  \ .
  $$
  Since $\iota_{D}$ is $L_{*}$-equivariant we have, for any $n' \in \mathbb N$,
\begin{equation*}
L_{t}. C_{n'}\iota_{D}(D_{0} )=(L_{t}.  C_{n'}) \,  \iota_{D}(L_{t}.D_{0})\ .
\end{equation*}
 The continuity of the action of $L_{*}$ on $\Lambda_{\ell}(N_{0})$ shows that  $L_{t}.   C_{n'}\subset
  C_{n}$ when $L_{t}$ is small enough and $n'$ is large enough.

For $d \in D_{0}$ we have $L_{t}. \Lambda(N_{0}^{(2)})d  \subset \Lambda(N_{0}^{(2)}) (L_{t}.d ) $. The action of $ L_{*} $ on $D$ is continuous hence, for any $n'$, we can choose a small $L_{t}$ such that $L_{t}.d \subset t.d +C_{n'} ^{(2)}D _{0}$.
We can choose the same $L_{t} $ for each $d_{i}$ and we obtain
 $$L_{t}.D_{0} \subset  \sum_{i}\Lambda (N_{0}^{(2)})t.d_{i} +C_{n'} ^{(2)}D _{0} \ . $$
 Applying $\iota_{D}$, we obtain
$$\iota_{D}(L_{t}.D_{0}) \subset \iota_{D}(D_{t}) + C_{n'}\iota_{D}(D_{0})
$$
and then
$$
(L_{t}.  C_{n'}) \,  \iota_{D}(L_{t}.D_{0}) \subset C_{n}\iota_{D}(D_{t})+ C_{n}C_{n'}\iota_{D}(D_{0}) \ .
$$
We check that $C_{n}C_{n'} \subset C_{n,n+n'}\subset C_{n}$ when $n'\geq n $. Hence when $n'$ is large enough,
$$
L_{t}. (C_{n'}\iota_{D}(D_{0}) ) \subset C_{n}\iota_{D}(D_{t}+D_{0}) \ . $$
This ends the proof of \eqref{0t}.
\end{proof}

\begin{proposition}\label{auto-cont}
We have ${\mathfrak M}^{et,c}_{{\cal O}_{\cal E}, \ell}(L_*) = {\mathfrak M}^{et}_{{\cal O}_{\cal E}, \ell}(L_*)$ and ${\mathfrak M}^{et,c}_{\Lambda_{\ell}(N_{0})}(L_*) = {\mathfrak M}^{et,c}_{\Lambda_{\ell}(N_{0})}(L_*)$.
\end{proposition}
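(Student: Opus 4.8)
The plan is to reduce the statement entirely to results already proved, namely Proposition~\ref{pre-auto-cont} and Proposition~\ref{cont-eq}, together with the equivalences $\mathbb M$, $\mathbb D$ of Theorem~\ref{eq}; there is essentially nothing new to do.

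First I would observe that the first equality is nothing but Proposition~\ref{pre-auto-cont}: by that proposition the $L_*$-action on any $D \in {\mathfrak M}^{et}_{{\cal O}_{\cal E}, \ell}(L_*)$ is continuous for the weak topology, so every object of ${\mathfrak M}^{et}_{{\cal O}_{\cal E}, \ell}(L_*)$ already lies in the full subcategory ${\mathfrak M}^{et,c}_{{\cal O}_{\cal E}, \ell}(L_*)$; the reverse inclusion is trivial. Hence ${\mathfrak M}^{et,c}_{{\cal O}_{\cal E}, \ell}(L_*) = {\mathfrak M}^{et}_{{\cal O}_{\cal E}, \ell}(L_*)$.

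For the second equality (which, up to the evident typo, asserts ${\mathfrak M}^{et,c}_{\Lambda_{\ell}(N_{0})}(L_*) = {\mathfrak M}^{et}_{\Lambda_{\ell}(N_{0})}(L_*)$) I would argue as follows. Let $M \in {\mathfrak M}^{et}_{\Lambda_{\ell}(N_{0})}(L_*)$ be arbitrary. By the first equality $\mathbb D(M)$ is topologically étale, i.e. $\mathbb D(M) \in {\mathfrak M}^{et,c}_{{\cal O}_{\cal E}, \ell}(L_*)$; then Proposition~\ref{cont-eq} yields $\mathbb M(\mathbb D(M)) \in {\mathfrak M}^{et,c}_{\Lambda_{\ell}(N_{0})}(L_*)$, so the $L_*$-action on $\mathbb M(\mathbb D(M))$ is continuous for the weak topology. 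It then remains to transfer this continuity to $M$ itself. For this I would use the isomorphism $\Theta_M : M \xrightarrow{\ \sim\ } \mathbb M(\mathbb D(M))$ of Proposition~\ref{hard}, which by the proof of Theorem~\ref{eq} is $L_*$-equivariant, together with the fact that any $\Lambda_{\ell}(N_{0})$-linear bijection between finitely generated $\Lambda_{\ell}(N_{0})$-modules is a homeomorphism for the weak topologies: one lifts the map and its inverse to finitely generated free modules, where $\Lambda_{\ell}(N_{0})$-linear maps are continuous because multiplication by ring elements is continuous, and one uses that the weak topology is the quotient topology of a free presentation and is independent of the presentation. Conjugating the $L_*$-action along $\Theta_M$ then shows the $L_*$-action on $M$ is continuous, i.e. $M \in {\mathfrak M}^{et,c}_{\Lambda_{\ell}(N_{0})}(L_*)$, which is the claim.

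The hard part has, in effect, already been carried out: it lies in Propositions~\ref{pre-auto-cont} and~\ref{cont-eq}, whose proofs combine Ellis' theorem on separately continuous actions of locally compact groups with Serre's theorem that finite-index subgroups of topologically finitely generated pro-$p$-groups are open. What remains in the present statement is purely formal — verifying that $\Theta_M$ is simultaneously a homeomorphism and $L_*$-equivariant, so that topological étaleness descends from $\mathbb M(\mathbb D(M))$ to $M$ — and I do not anticipate any obstacle there.
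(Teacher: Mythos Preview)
Your proof is correct and follows essentially the same route as the paper's: the first equality is precisely Proposition~\ref{pre-auto-cont}, and the second follows by combining it with Theorem~\ref{eq} and Proposition~\ref{cont-eq}. The only extra detail you supply is the explicit verification that $\Theta_M$ is a homeomorphism, which the paper handles by the remark (made just before defining the topologically \'etale subcategories) that all morphisms in ${\mathfrak M}^{et}_{\Lambda_\ell(N_0)}(L_*)$ are automatically continuous; your argument for this is fine and amounts to the same fact.
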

\begin{proof}
The first identity was shown in proposition \ref{pre-auto-cont}. The second identity follows from the first one together with theorem \ref{eq} and proposition \ref{cont-eq}.
\end{proof}

\begin{corollary}\label{top-etale}
Any \'etale $L_*$-module over $\Lambda_{\ell}(N_{0})$, resp.\ over ${\cal O}_{\cal E}$, is a complete topologically \'etale $o[N_0L_*]$-module in our sense.
\end{corollary}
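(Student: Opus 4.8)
The statement is a matter of collecting the pieces that are now in place. Recall that, for $A=o$ with its $p$-adic topology, a complete topologically \'etale $o[N_0L_*]$-module in the sense of Sections~\ref{S4} and~\ref{fc} is an algebraically \'etale $o[N_0L_*]$-module $M$ which is a Hausdorff complete linearly topological $o$-module, on which the monoid $N_0L_*$ acts continuously, and for which the maps $\psi_t$, $t\in L_*$, are continuous. The plan is simply to verify these requirements in turn, observing that all of the genuine analytic input (automatic continuity via the theorems of Ellis and Serre) has already been absorbed into Propositions~\ref{pre-auto-cont}, \ref{cont-ring} and~\ref{auto-cont}.

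First, by Lemma~\ref{redmod} every finitely generated $\Lambda_{\ell}(N_{0})$-module, equipped with its weak topology, is a Hausdorff complete linearly topological $o$-module; the analogous assertion for a finitely generated $\mathcal O_{\mathcal E}$-module with its weak topology is recorded in Section~7. Algebraic \'etaleness over $o[N_0L_*]$ in the sense of Section~\ref{S3} is exactly the hypothesis: the bijectivity of $\Lambda_{\ell}(N_{0})\otimes_{\Lambda_{\ell}(N_{0}),\varphi_t}M\to M$ (resp.\ of $\mathcal O_{\mathcal E}\otimes_{\mathcal O_{\mathcal E},\varphi_t}M\to M$) for $t\in L_*$ is equivalent, since $\Lambda_{\ell}(N_{0})$ is free of finite rank over $\varphi_t(\Lambda_{\ell}(N_{0}))$ (resp.\ by the corresponding decomposition of $\mathcal O_{\mathcal E}$), to the direct sum decomposition $M=\bigoplus_{u\in J(N_0/tN_0t^{-1})}u\varphi_t(M)$ of~\eqref{almostetale}.

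Next, the action of $N_0L_*=N_0\rtimes L_*$ on $M$ is continuous. The $L_*$-action on $M$ is continuous by Proposition~\ref{auto-cont}, which asserts precisely that ${\mathfrak M}^{et,c}_{\mathcal O_{\mathcal E},\ell}(L_*)={\mathfrak M}^{et}_{\mathcal O_{\mathcal E},\ell}(L_*)$, resp.\ ${\mathfrak M}^{et,c}_{\Lambda_{\ell}(N_{0})}(L_*)={\mathfrak M}^{et}_{\Lambda_{\ell}(N_{0})}(L_*)$. The $N_0$-action on $M$ is continuous because $N_0$ maps continuously into $\Lambda_{\ell}(N_{0})$ (resp.\ into $\mathcal O_{\mathcal E}$) for the weak topology and module multiplication $\Lambda_{\ell}(N_{0})\times M\to M$ (resp.\ $\mathcal O_{\mathcal E}\times M\to M$) is continuous, the ring multiplication being continuous for the weak topology. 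Composing $N_0\times L_*\times M\to N_0\times M\to M$ and using $P_+=N_0L_*$ yields continuity of the full $N_0L_*$-action. Finally, the maps $\psi_t$ for $t\in L_*$ are continuous by Proposition~\ref{contphipsi-general} (and the analogue for $\mathcal O_{\mathcal E}$-modules stated just after its proof). Thus all requirements hold, and $M$ is a complete topologically \'etale $o[N_0L_*]$-module; in particular the hypotheses of Sections~\ref{S4}, \ref{fc} and~\ref{topAM} are met.

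The only points that warrant a little care—and hence the nearest thing to an obstacle—are the reconciliation of the two notions of ``\'etale'' (module over $o[N_0L_*]$ versus finitely generated module over $\Lambda_{\ell}(N_{0})$, resp.\ $\mathcal O_{\mathcal E}$, with semilinear $L_*$-action) and the passage from the separate continuity of the $N_0$- and $L_*$-actions to the joint continuity of the $N_0L_*$-action; both are routine once the ring-theoretic facts above and Proposition~\ref{auto-cont} are available. Alternatively, one may transfer the $\mathcal O_{\mathcal E}$-case to the $\Lambda_{\ell}(N_{0})$-case via the equivalence $\mathbb D$, $\mathbb M$ of Theorem~\ref{eq} together with Lemma~\ref{cont-li}, but the direct verification above is the shortest.
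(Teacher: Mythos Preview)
Your proof is correct and follows exactly the paper's approach: the paper's proof simply says ``Use propositions \ref{contphipsi-general} and \ref{auto-cont},'' and you have spelled out precisely how those two inputs (continuity of $\psi_t$, and automatic continuity of the $L_*$-action) combine with the ambient completeness and the continuity of the $N_0$-action through the module structure to give all the required properties. Your passage from separate $N_0$- and $L_*$-continuity to joint $N_0L_*$-continuity via the semidirect product decomposition is the only detail the paper leaves entirely implicit, and it is handled correctly.
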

\begin{proof} Use propositions \ref{contphipsi-general} and \ref{auto-cont}.
\end{proof}

\section{Convergence in  $L_{+}$-modules on $\Lambda_{\ell}(N_{0})$}\label{S9}

  {\sl In this section, we use the  notations of sections \ref{GM}  where we assume that $N$ is a $p$-adic Lie group. We assume that  $\ell$ and $\iota$ are continuous  group  homomorphisms
$$
\ell: P\to P^{(2)} \ , \ \iota :  N^{(2)}\to N \ , \  \ell \circ  \iota = \id \ , $$
such that
$  \ell(L_{+}) \subset L_{+}^{(2)} , \ \ell(N) = N^{(2)}
  $,  $(\iota\circ   \ell )(N_{0}) \subset N_{0}$, and
  \begin{equation}\label{fe} t \iota(y) t^{-1} = \iota (\ell (t)y\ell(t)^{-1}) \quad \text{for $y\in N^{(2)}, t\in L$} \ .
 \end{equation}

  }

 \bigskip  The assumptions  of Chapter \ref{GM} are naturally satisfied with  $L_{*}=L_{+}$. Indeed, the  compact open subgroup $N_{0}$ of $N$ is a compact $p$-adic Lie group, the  group $\ell (N_{0})$
  is a compact non-trivial subgroup $N_{0}^{(2)}$ of $N^{(2)}\simeq \mathbb Q_{p}$ hence  $N_{0}^{(2)}$  is isomorphic to $\mathbb Z_{p}$ and is open in $N^{(2)}$, the kernel of $\ell|_{N_{0}}$ is normalized by $L_{\ell,+}$.
 Note that $L_{+}$ normalizes $\iota(N_{0}^{(2)})$ since $\ell (L_{+}) $ normalises  $N_{0}^{(2)}$ and   \eqref{fe}.

\bigskip
Let  $M\in  {\mathfrak M}^{et}_{\Lambda_{\ell}(N_{0})}(L_{+})$ and  $D\in  {\mathfrak M}^{et}_{{\cal O}_{\cal E}, \ell}(L_{+})$ related by   the equivalence of categories (Thm. \ref{eq}),
$$ M =\Lambda_{\ell}(N_{0})\otimes_{\mathcal O_{\mathcal E}, \iota}D  =\Lambda_{\ell}(N_{0} )\iota_{D}(D)\ . $$

\bigskip We will exhibit in this chapter
  a special family $\mathfrak C _{s}$ of  compact subsets in $M$ such that $M(\mathfrak C_{s})$ is a dense $o$-submodule   of  $M$, and such that
  the $P$-equivariant sheaf on $\mathcal C$ associated to the \'etale $o[P_{+}]$-module $M(\mathfrak C_{s})$ by the theorem \ref{the11} extends to a $G$-equivariant sheaf on $G/P$.  We will follow the   method  explained   in subsection \ref{topAM} which reduces   the most technical part
  to the easier case where  $M$ is killed by a power of $p$.

\subsection{Bounded sets}\label{subsec:bounded}

   \begin{definition} A subset $A$ of $M$ is  called bounded  if for any open neighborhood $\cal B$  of $0$ in $M$ there exists an open neighborhood $B$  of $0$ in  $\Lambda_{\ell}(N_{0})$ such that
$$B  A   \subset {\cal B}  \ . $$
\end{definition}
Compare with (\cite{SVig} Def. 8.5.
The  properties satisfied by   bounded subsets of $M$  can be proved directly or deduced   from the properties of bounded subsets of
$\Lambda_{\ell}(N_{0})$ (\cite{War}  \S 12).
Using  the fundamental system   \eqref{SFV1} of neighborhoods of $0$, the set $A$ is bounded if and only if
 for any  large $n$ there exists $n' > n$ such that
 $$( {\mathcal M}_{\ell} (N_{0})^{n'} +X^{n'} \Lambda (N_{0} ))A   \subset {\mathcal M}_{\ell} (N_{0})^{n} M + X^{n} M^{0} \ , $$
 equivalently $X^{n'-n}A\subset  {\mathcal M}_{\ell} (N_{0})^{n} M +M^{0}$. We obtain (compare with (\cite{SVig}  Lemma 8.8):

\begin{lemma}\label{bdP}  A subset $A$ of $M$ is   bounded  if and only if for any large positive $n$ there exists a positive integer $n'$ such that
$$A \subset {\mathcal M}_{\ell} (N_{0})^{n} M + X^{-n'}M^{0} \ . $$
\end{lemma}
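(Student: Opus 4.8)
The plan is to read the statement off from the reformulation of boundedness recorded immediately before the lemma, using only that $X=[\gamma]-1$ is a unit in the skew Laurent series ring $\Lambda_{\ell}(N_{0})$. By that reformulation, $A$ is bounded if and only if for every sufficiently large $n$ there is a positive integer $m$ (namely $m=n'-n$ in the notation there) with $X^{m}A\subseteq {\mathcal M}_{\ell}(N_{0})^{n}M+M^{0}$; so it suffices to check that this last condition is equivalent to the one in the statement.

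First I would treat the implication from the reformulated condition to the statement. Fix a large $n$ and the associated $m$, and apply the bijection $\mu\mapsto X^{-m}\mu$ of $M$ to the inclusion $X^{m}A\subseteq {\mathcal M}_{\ell}(N_{0})^{n}M+M^{0}$. Since $X^{\pm 1}\in\Lambda_{\ell}(N_{0})$ and ${\mathcal M}_{\ell}(N_{0})^{n}M$ is a $\Lambda_{\ell}(N_{0})$-submodule, multiplication by $X^{-m}$ maps ${\mathcal M}_{\ell}(N_{0})^{n}M$ onto itself, and it maps $M^{0}$ onto $X^{-m}M^{0}$; hence the inclusion becomes $A\subseteq {\mathcal M}_{\ell}(N_{0})^{n}M+X^{-m}M^{0}$, which is the desired form with $n':=m$. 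Conversely, given $A\subseteq {\mathcal M}_{\ell}(N_{0})^{n}M+X^{-n'}M^{0}$ with $n'\ge 1$, I would apply multiplication by $X^{n'}$. Using again that it preserves ${\mathcal M}_{\ell}(N_{0})^{n}M$, and that $X^{n'}\cdot X^{-n'}M^{0}=M^{0}$, one obtains $X^{n'}A\subseteq {\mathcal M}_{\ell}(N_{0})^{n}M+M^{0}$, which is exactly the reformulated boundedness condition with $m:=n'$. (If one prefers to argue straight from Definition of boundedness rather than from that reformulation, the same multiplications work, together with the elementary facts that for $k\ge 0$ one has $X^{k}M^{0}\subseteq M^{0}$, because $X\in\Lambda(N_{0})$ and $X^{k}M^{0}=\sum_{i}X^{k}\Lambda(N_{0})m_{i}$ by the convention fixed in \eqref{SFV1}, and that conjugation by $X$ stabilizes $\Lambda(N_{0})$ inside $\Lambda_{\ell}(N_{0})$.)

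An alternative route, in the spirit of the remark preceding the statement, is to deduce the lemma from the corresponding description of bounded subsets of $\Lambda_{\ell}(N_{0})$ itself (\cite{SVig} Lemma 8.8) by choosing a presentation $\bigoplus_{i=1}^{r}\Lambda_{\ell}(N_{0})\twoheadrightarrow M$ and observing that the weak topology on $M$ is the quotient topology. The only point that needs a little care in either approach — and it is a mild one, the sole real ``obstacle'' — is bookkeeping with the non-commutativity of $\Lambda_{\ell}(N_{0})$: one must observe that $X^{-n'}M^{0}$ is a well-defined subset of $M$ because $X$ is invertible, that multiplication by any power of $X$ is a homeomorphism of $M$ carrying the submodule ${\mathcal M}_{\ell}(N_{0})^{n}M$ onto itself, and that non-negative powers of $X$ carry $M^{0}$ into $M^{0}$. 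Once these are in place the proof in each direction is a single application of such a multiplication.
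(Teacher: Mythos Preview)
Your proposal is correct and follows essentially the same approach as the paper: the paper records the reformulation $X^{n'-n}A\subset {\mathcal M}_{\ell}(N_{0})^{n}M+M^{0}$ immediately before the lemma and then simply states the lemma (with a reference to \cite{SVig} Lemma 8.8), leaving the passage between the two forms implicit. You have just spelled out that passage — left multiplication by $X^{\pm m}$, using that $X$ is a unit in $\Lambda_{\ell}(N_{0})$ and that ${\mathcal M}_{\ell}(N_{0})^{n}$ is a two-sided ideal — which is exactly the intended step.
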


The following properties of bounded subsets will be used in the construction of a special family $\mathfrak C_{s}$ in the next subsection.
 \begin{itemize}
  \item[--]  Let $f:\oplus _{i=1}^{r}\Lambda_{\ell}(N_{0}) \to M$ be a surjective homomorphism of $\Lambda_{\ell}(N_{0})$-modules.
The image by $f$ of a bounded subset of $\oplus _{i=1}^{r}\Lambda_{\ell}(N_{0}) $
is a bounded subset of $M$.
  For $1\leq i \leq r$, the $i$-th projections $A_{i} \subset \Lambda_{\ell}(N_{0}) $ of a   subset  $A$ of   $\oplus_{i=1} ^{r}\Lambda_{\ell}(N_{0}) $ are all   bounded  if and only if $A$ is bounded.

 \item[--]  A compact subset   is bounded.

\item[--] The $\Lambda(N_{0}  )$-module generated by  a bounded subset is bounded.

\item[--] The closure  of a bounded subset   is bounded.

\item[--] Given a compact subset $C$ in $\Lambda_{\ell}(N_{0})$ and a bounded subset $A$ of $M$, the subset  $CA$ of $M$ is bounded.

\item[--]  The image of a bounded subset by  $f\in \End_{o}^{cont}(M)$  is bounded.    The image by $\ell_{M}$  of a bounded subset in $M$  is bounded in $D$.

\item[--]   A  subset $A$ of $D$ is bounded if and only if the  image $A_{n}$ of $A$ in $D/p^{n}D$ is bounded for all large  $n$.

\item[--]    When $D$ is killed by a power of $p$,   a  subset $A$ of $D$ is bounded if and only if
$A$ is contained in a lattice, i.e. if $A$ is contained in a compact subset (by  the properties of  lattices  given in Section \ref{CPG}).

 \end{itemize}

\begin{lemma}
The image by $\iota_{D}$  of a bounded subset   in $D$  is bounded in $M$.
\end{lemma}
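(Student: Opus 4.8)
The plan is to reduce immediately to the case where $D$ is killed by a power of $p$, exactly as in the bulleted characterization of bounded subsets just proved: a subset of $D$ is bounded if and only if its image in $D/p^nD$ is bounded for all large $n$, and a subset of $M$ is bounded if and only if its image in $M/p^nM$ is bounded for all large $n$. Since $\iota_D$ commutes with the reduction maps (because $\mathbb M$ and $\mathbb D$ are right exact and commute with reduction modulo $p^n$, and $\iota_D$ is compatible with this), it suffices to prove: if $A \subset D$ is bounded and $D$ is killed by a power of $p$, then $\iota_D(A) \subset M$ is bounded.

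So assume $p^N D = 0$. By the last bullet point, $A$ being bounded means $A$ is contained in a compact subset of $D$, hence in a lattice $D_0$ of $D$ (by the properties of treillis/lattices recalled in Section~\ref{CPG}, a bounded set lies in a lattice, and conversely). Pick a finite generating set $(d_i)_{1\le i\le r}$ of the $\Lambda(\mathbb Z_p)$-module $D_0$; then $\iota_D(D_0) \subseteq \sum_i \Lambda(N_0^{(2)})\,\iota_D(d_i)$ via $\iota$, and the $\Lambda_\ell(N_0)$-module $M = \mathbb M(D)$ is generated by $\iota_D(D)$, with $(\iota_D(d_i))_i$ a system of generators of $M$ over $\Lambda_\ell(N_0)$ after enlarging $(d_i)$ to a generating set of $D$ if necessary — in fact it is cleaner to take from the start a lattice $D_0$ containing $A$ together with a set of generators of $D$ over $\mathcal O_{\mathcal E}$, so that the corresponding $\iota_D(D_0)$ is a lattice $M_0$ in $M$ in the sense of the definition preceding Lemma~\ref{contphipsi-general}. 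Then $\iota_D(A) \subseteq M_0$, and a subset of $M$ contained in a lattice is bounded: indeed by Lemma~\ref{bdP} (or directly from the fundamental system \eqref{SFV1}) $M_0 \subseteq {\mathcal M}_\ell(N_0)^n M + X^{-n'} M_0$ holds trivially with $n' = 0$ for every $n$, so the boundedness criterion is satisfied.

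The one genuine point to check is that $\iota_D(A)$ really does land inside a lattice of $M$, i.e. that $\iota(N_0^{(2)})$-translates and the $\Lambda(N_0)$-module structure do not enlarge things beyond control. This is where one uses the explicit description of $\mathbb M(D) = \Lambda_\ell(N_0) \otimes_{\mathcal O_{\mathcal E},\iota} D$ together with the decomposition \eqref{Ldeco}: choosing $D_0$ stable under $\Lambda(N_0^{(2)}) = o[[\iota(N_0^{(2)})]]$ (a lattice automatically is, being a $\Lambda(\mathbb Z_p)$-module), the image $\iota_D(D_0)$ is a $\Lambda(N_0^{(2)})$-submodule of $M$, and the lattice $M_0 := \sum_{i} \Lambda(N_0)\,\iota_D(d_i)$ it generates over $\Lambda(N_0)$ contains $\iota_D(D_0) \supseteq \iota_D(A)$. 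I expect no real obstacle here; the only care needed is bookkeeping with the two reduction steps (from general $D$ to $p$-power torsion $D$, and from bounded to "contained in a lattice"), and making sure the lattice of $M$ one produces is built from a generating set of $D$ over $\mathcal O_{\mathcal E}$ so that it is a lattice in the technical sense used for $M$. The argument then concludes by assembling the reductions: $\iota_D(A)$ has bounded image in each $M/p^nM$, hence is bounded in $M$.
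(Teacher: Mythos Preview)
Your argument is essentially correct but takes a substantially longer route than the paper, and one step is asserted without the justification it needs.

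The paper's proof is a direct three-line application of Lemma~\ref{bdP} on both sides, with no reduction and no case split. Given $A \subset D$ bounded, for every $n$ there exists $n'$ with $A \subset p^{n} D + (X^{(2)})^{-n'} D^{0}$. Applying $\iota_{D}$ (which is $o$-linear and satisfies $\iota(X^{(2)}) = X$) gives
\[
\iota_{D}(A) \subset p^{n}\iota_{D}(D) + X^{-n'}\iota_{D}(D^{0}) \subset \mathcal{M}_{\ell}(N_{0})^{n} M + X^{-n'} M^{0},
\]
where $M^{0} := \Lambda(N_{0})\iota_{D}(D^{0})$ is a lattice in $M$. By Lemma~\ref{bdP} again, $\iota_{D}(A)$ is bounded.

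Your detour through reduction modulo $p^{n}$ rests on the claim that a subset of $M$ is bounded if and only if its image in $M/p^{n}M$ is bounded for all $n$. You cite this as ``exactly as in the bulleted characterization of bounded subsets just proved'', but that bullet is stated only for $D$, where the maximal ideal of $\mathcal{O}_{\mathcal{E}}$ is $p_{K}\mathcal{O}_{\mathcal{E}}$ and the criterion of Lemma~\ref{bdP} is literally a statement about reduction modulo $p_{K}^{n}$. For $M$ the maximal ideal $\mathcal{M}_{\ell}(N_{0})$ of $\Lambda_{\ell}(N_{0})$ is strictly larger than $p_{K}\Lambda_{\ell}(N_{0})$, so the equivalence requires an extra argument: given that the image of $\iota_{D}(A)$ in $M/p^{m}M$ is bounded, apply Lemma~\ref{bdP} to $M/p^{m}M$ with the index $m$ itself, pull back to $M$, and use $p^{m}M \subset \mathcal{M}_{\ell}(N_{0})^{m}M$ to obtain $\iota_{D}(A) \subset \mathcal{M}_{\ell}(N_{0})^{m}M + X^{-m'}M^{0}$. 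This works, but you skipped it. Once filled in, your proof is correct---but the reduction buys nothing, since the direct argument already handles general $D$ uniformly in two lines.
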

\begin{proof} Let $A\subset D$ be a bounded subset and let $D^{0}$ be a fixed lattice in $D$. For all $n\in \mathbb N$ there exists $n'\in \mathbb N$ such that $A \subset p^{n}D + (X^{(2)})^{-n'}D^{0}$ by Lemma \ref{bdP}. Applying $\iota_{D}$
we obtain
$$\iota_{D}(A) \subset p^{n}\iota_{D}(D) + X^{-n'}\iota_{D}(D^{0})\subset \mathcal M_{\ell}(N_{0})^{n}M+ X^{-n'} M^{0}$$
where $M^{0}= \Lambda(N_{0})\iota_{D}(D^{0})$ is a lattice in $M$. By  the same lemma, this means that $\iota_{D}(A)$ is bounded in $M$.
\end{proof}

 \subsection{The module $M_{s}^{bd}$}

\begin{definition}  $M_{s}^{bd}  $ is the set of $m\in  M$ such that the  set of $\ell _{M}(\psi^{k}(u^{-1}m))$  for $k\in \mathbb N, u\in N_{0}$ is bounded in $D$.
\end{definition}

The definition of  $M_{s}^{bd}  $ depends on $s $ because   $\psi$  is the canonical left inverse of the action $\varphi$ of $s$ on $M$. We recognize $m_{u,s^k}=\psi^{k}(u^{-1}m)$ appearing in the expansion $(\ref{writing})$.

  \begin{proposition} \label{Lbd}  $M_{s}^{bd} $ is  an   \'etale $o[P_{+}]$-submodule  of $M$.
\end{proposition}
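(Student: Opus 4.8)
The plan is to show that $M_s^{bd}$ is stable under the three structures in play: the $\Lambda(N_0)$-module (in fact $o[N_0]$-module) structure, the action $\varphi_t$ of elements $t\in L_+$, and — crucially for étaleness — the inverse $\psi = \psi_s$, after which Corollary \ref{redMF} gives étaleness for free once we know $M_s^{bd}$ is an $o[P_+]$-submodule.

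First I would record the reformulation: by Lemma \ref{idemp}/formula \eqref{writing}, for $m \in M$ and $k \in \mathbb N$ the elements $\psi^k(u^{-1}m)$ for $u \in J(N_0/N_k)$ are exactly the coefficients $m_{u,s^k}$, and replacing $u$ by an arbitrary element of $N_0$ only changes these by the left $N_0$-action, so the set $S(m) := \{\ell_M(\psi^k(u^{-1}m)) : k\in\mathbb N, u\in N_0\}$ is what must be bounded in $D$. I would then check closure under addition and under multiplication by a group element $n_0 \in N_0$: boundedness of sets in $D$ is preserved by finite sums (the $N_0$-module generated by a bounded set is bounded, and a sum of two bounded sets is bounded by the fundamental-system description in Lemma \ref{bdP}), and $S(n_0 m) \subseteq S(m)$ up to relabeling $u \mapsto u n_0$, because $\psi^k((u n_0)^{-1} m) = \psi^k(n_0^{-1}(u^{-1}m))$ and $n_0^{-1}$ just permutes the index set $N_0$. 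Hence $M_s^{bd}$ is an $o[N_0]$-submodule. Extending to $\Lambda(N_0)$ is then automatic since $o[N_0]$ is dense and $M_s^{bd}$ will be seen to be closed (or one argues directly: $\lambda m$ for $\lambda \in \Lambda(N_0)$ is a limit of $o[N_0]$-combinations, and one uses that $\psi^k$ is continuous by Proposition \ref{contphipsi-general} together with closedness of bounded sets' ``boundedness closure'' — but the cleanest route is to note $M_s^{bd} \supseteq o[N_0]M_s^{bd}$ already gives a $\Lambda(N_0)$-module after verifying $M_s^{bd}$ is a $\Lambda(N_0)$-submodule via Lemma \ref{bdP} applied coefficientwise).

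Next, stability under $\varphi = \varphi_s$: if $m \in M_s^{bd}$ then $\psi^k(u^{-1}\varphi(m))$ — for $u \in N_1 = sN_0s^{-1}$ this equals $s^{-1}us \cdot \psi^{k-1}(\cdot)$-type expression and for $u \notin N_1$ it vanishes — so $S(\varphi(m))$ is, up to the $N_0$-action, contained in $S(m) \cup \{$finitely many extra terms with $k=0\}$, hence bounded; more carefully one uses $\psi \circ u \circ \varphi^{k} = (s^{-1}us)\circ\varphi^{k-1}$ when $u\in N_1$ and $0$ otherwise, exactly as in the proof of Proposition \ref{3.3}. For general $t \in L_+$, pick $k_0$ with $s^{k_0}t^{-1} \in L_+$ so $\varphi_t$ divides $\varphi^{k_0}$; then $\psi^k(u^{-1}\varphi_t(m))$ is expressible through $\psi_{s^{k_0}t^{-1}}$ and the $\psi^j(v^{-1}m)$, and one applies the last bullet of the bounded-sets list ($\ell_M$ of a bounded set is bounded, and continuous endomorphisms preserve boundedness) to conclude $S(\varphi_t(m))$ is bounded. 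Stability under $\psi = \psi_s$ is the easiest of all: $S(\psi(m)) = \{\ell_M(\psi^k(u^{-1}\psi(m)))\}$, and since $\psi(u^{-1}\psi(m)) = \psi((s u^{-1} s^{-1})^{-1} \cdot \psi(\cdots))$ — more simply, $\psi^k(u^{-1}\psi(m)) = \psi^{k}(\psi(s u^{-1} s^{-1} \cdot m'))$ type rewriting, giving $\psi^{k+1}$ of an $N_0$-translate of $m$ — so $S(\psi(m)) \subseteq S(m)$ outright. Thus $M_s^{bd}$ is $\psi$-stable, and Corollary \ref{redMF} upgrades it to an étale $o[P_+]$-submodule. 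The main obstacle I anticipate is bookkeeping the index-set manipulations for the $\varphi_t$ case cleanly — making sure the finitely many ``boundary'' coefficients at small $k$ don't spoil boundedness — but this is handled by the fact that any finite set in $D$ is bounded and a finite union of bounded sets is bounded (Lemma \ref{bdP}).
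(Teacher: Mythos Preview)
Your $N_0$-stability and $\psi_s$-stability arguments are correct and agree with the paper (the paper records $\psi$-stability via the identity $\psi^k(u^{-1}\psi(m))=\psi^{k+1}(\varphi(u^{-1})m)$, formula \eqref{Lbdb}), and invoking Corollary~\ref{redMF} at the end is exactly how the paper finishes. Your separate case $t=s$ also works.

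The gap is in the general $\varphi_t$ step. You factor $\varphi_t=\psi_{t'}\circ\varphi^{k_0}$ with $t'=s^{k_0}t^{-1}\in L_+$, which indeed gives
\[
\psi^k\bigl(u^{-1}\varphi_t(m)\bigr)=\psi_{t'}\bigl(\psi^k(v^{-1}\varphi^{k_0}(m))\bigr),\qquad v=t'ut'^{-1}\in N_0.
\]
But now you must bound $\ell_M(\psi_{t'}(z))$ knowing only that $\ell_M(z)$ lies in a bounded set, and this is precisely where the argument breaks: $\ell_M$ does \emph{not} commute with $\psi_{t'}$. By the module analogue of Lemma~\ref{3.6}, $\ell_M\circ\psi_{t'}=\psi_{\ell(t')}\circ\ell_M$ would force $t'N_\ell t'^{-1}=N_\ell$, which fails for any $t'\in L_+$ contracting $N_\ell$ strictly (cf.\ Remark~\ref{3.7}). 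What one actually gets is that the \emph{sum} $\sum_{w\in J(N_\ell/t'N_\ell t'^{-1})}\ell_M(\psi_{t'}(w^{-1}z))$ equals $\psi_{\ell(t')}(\ell_M(z))$ and is therefore bounded --- but this says nothing about the individual term $\ell_M(\psi_{t'}(z))$. Your appeal to ``continuous endomorphisms preserve boundedness'' has no continuous endomorphism of $D$ to attach to.

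The paper sidesteps this by using the equivariance that \emph{does} hold: $\ell_M$ commutes with $\varphi_t$. Combining $st=ts$ with the expansion \eqref{writing} yields
\[
\ell_M\bigl(\psi^k(n_0^{-1}\varphi_t(m))\bigr)
=\sum_{u\in J(N_0/N_k)} v_{k,n_0,u}\,\varphi_t\bigl(\ell_M(m_{u,s^k})\bigr),
\]
where each $v_{k,n_0,u}=\ell(\psi^k(n_0^{-1}tut^{-1}))$ is either an element of $N_0^{(2)}$ or zero. Since $\{\ell_M(m_{u,s^k})\}$ is bounded in $D$ and $\varphi_t$ is continuous on $D$, its image is bounded; the $\Lambda(N_0^{(2)})$-module generated by a bounded set is bounded, so these (arbitrarily long) sums all lie in one bounded set. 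The difficulty you flagged --- boundary terms at small $k$ --- is not the real issue; the crucial input is the $\varphi_t$-equivariance of $\ell_M$, not $\psi_{t'}$-equivariance.
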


 \begin{proof}
  a)  We check first that   $M_{s}^{bd} $ is $P_{+}$-stable. As $M_{s}^{bd}$ is   $N_{0}$-stable and $P_{+}=N_{0}L_{+}$, it suffices to   show that  $tm=\varphi_{t}(m)\in M_{s}^{bd}$ when
$t\in L_{+} $ and  $m\in M_{s}^{bd}$.
 Using the expansion \eqref{writing} of $m$ and $st=ts$, for $k\in \mathbb N$ and  $ n_{0}\in N_{0}$,  we write $ \psi^{k}(n_{0}^{-1}  tm)$ as  the sum  over $u\in J(N_{0}/N_{k})$ of
$$
   \psi^{k}(n_{0}^{-1}tu \varphi^{k} (m_{u,s^k})) =  \psi^{k}(n_{0}^{-1} tut^{-1} \varphi^{k} (\varphi_{t}(m_{u,s^k})))
  =    \psi^{k}(n_{0}^{-1} tut^{-1})\varphi_{t}(m_{u,s^k}) \ ,
  $$
and  $\ell _{M}( \psi^{k}(n_{0}^{-1} \varphi_{t} (m)))$ as the sum  over $u\in J(N_{0}/N_{k})$ of
$$
\ell_{M} (\psi^{k}(n_{0}^{-1} tut^{-1}) \varphi_{t}(m_{u,s^k})) = v_{k,n_{0}}\ell _{M}( \varphi_{t}(m_{u,s^k}))=  v_{k,n_{0}}\varphi _{  t} (\ell_{M} (m_{u,s^k}) ) \ ,
$$
where
$v_{k,n_{0}}:=\ell (\psi^{k}(n_{0}^{-1} tut^{-1}))$ belongs to $N_{0}^{(2)}$ or is $0$. As $m\in M_{s}^{bd}$,
the set of $\ell _{M}(m_{u,s^k} )$ for $k\in \mathbb N$ and $u\in N_{0}$ is bounded in $D$.
Its image by the continuous map $\varphi _{ t}$ is bounded and generates a bounded $o[N_{0}^{(2)}]$-submodule of $D$. Hence $\varphi_{t}(m)\in M_{s}^{bd}$.

  b) The $o[P_{+}]$-module $M_{s}^{bd} $ is $\psi$-stable (hence
   $M_{s}^{bd} $ is \'etale by Corollary \ref{redMF}) because we have,
for $m\in M_{s}^{bd}, u \in N_{0},k\in \mathbb N $,
 \begin{equation}\label{Lbdb}
 \psi ^{k}(u^{-1} \psi(m) )= \psi^{k+1}( \varphi (u^{-1}) m)   \ .
\end{equation}

   \end{proof}

The goal of this section is to show that the $P$-equivariant sheaf on $\mathcal C$ associated to the \'etale $o[P_{+}]$-module $M_{s}^{bd}$  extends to a $G$-equivariant sheaf on $G/P$.
We will follow the method explained in subsection \ref{topAM}.

\bigskip  Put $p_{n}:M\to M/p^{n}M$ for the reduction modulo $p^{n}$ for a positive integer $n$. Recall that $M$ is $p$-adically complete.

 \begin{lemma}\label{lred1} The $o$-submodule $ M_{s}^{bd}\subset M$ is closed  for the $p$-adic topology, in  particular
 $$M_{s}^{bd} = \varprojlim_{n} ( M_{s}^{bd}/p^{n} M_{s}^{bd}) \ . $$
Moreover  $M_{s}^{bd} $ is the set of $m\in M$ such that $p_{n}(m)$ belongs to $(M/p^{n}M)_{s}^{bd}$ for all $n\in \mathbb N$, and we have
$$M_{s}^{bd} = \varprojlim_{n} ( M /p^{n} M)_{s}^{bd} \ . $$

\end{lemma}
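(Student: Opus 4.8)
The plan is to establish the three assertions in order, using the reduction-mod-$p^n$ machinery of section \ref{topAM} together with the characterization of boundedness in $D$ via reductions.

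\textbf{Step 1: $M_s^{bd}$ is $p$-adically closed.} First I would observe that $\bigcap_n p^n M = 0$ since $M$ is $p$-adically complete and Hausdorff, so it suffices to show $M_s^{bd}$ is closed. Suppose $m \in M$ lies in the closure of $M_s^{bd}$; I want to show the set $S(m) := \{\ell_M(\psi^k(u^{-1}m)) : k \in \mathbb{N}, u \in N_0\}$ is bounded in $D$. Fix a large $n$ and a lattice $D^0 \subset D$. By boundedness (Lemma \ref{bdP}) applied to any $m' \in M_s^{bd}$, there is $n'$ with $S(m') \subseteq \mathcal{M}_\ell(N_0^{(2)})^n D + (X^{(2)})^{-n'} D^0$; here the subtlety is that $n'$ depends a priori on $m'$, so the key point is to control $S(m) - S(m')$ when $m - m'$ is small enough $p$-adically. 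Concretely, if $m - m' \in p^n M$ then $\psi^k(u^{-1}(m-m')) \in p^n M$ (the $\psi^k$ and the $N_0$-action are $o$-linear and $M$ is $p$-torsion-compatible), hence $\ell_M(\psi^k(u^{-1}m)) - \ell_M(\psi^k(u^{-1}m')) \in p^n D$, so $S(m) \subseteq p^n D + S(m') \subseteq \mathcal{M}_\ell(N_0^{(2)})^n D + (X^{(2)})^{-n'} D^0 + p^n D$. Since $\mathcal{M}_\ell(N_0^{(2)})$ contains $p_K$, absorbing the $p^n D$ term into a slightly larger $\mathcal{M}_\ell$-power (or simply enlarging $n$ and passing to a limit) gives the required bound. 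This shows $m \in M_s^{bd}$, hence $M_s^{bd}$ is closed, and the displayed identity $M_s^{bd} = \varprojlim_n (M_s^{bd}/p^n M_s^{bd})$ follows from $p$-adic completeness of $M$ restricted to a closed submodule, using that $p^n M \cap M_s^{bd} \supseteq p^n M_s^{bd}$ and the quotients inject compatibly.

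\textbf{Step 2: pointwise characterization via reductions.} For the second claim I would use the bullet point in section \ref{subsec:bounded}: a subset $A \subseteq D$ is bounded if and only if its image $A_n$ in $D/p^n D$ is bounded for all large $n$. Now if $m \in M$, then $p_n$ intertwines $\psi$, the $N_0$-action, and $\ell_M$ with their analogues on $M/p^n M$ and $D/p^n D = \mathbb{D}(M/p^n M)$ (since $\mathbb{D}$ commutes with reduction mod $p^n$, as noted after Theorem \ref{eq}). Hence the image in $D/p^n D$ of the set $S(m)$ above is exactly the set $S(p_n(m))$ computed inside $M/p^n M$. Therefore $S(m)$ is bounded in $D$ $\iff$ $S(p_n(m))$ is bounded in $D/p^n D$ for all large $n$ $\iff$ $p_n(m) \in (M/p^n M)_s^{bd}$ for all large $n$. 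One then checks that "for all large $n$" can be replaced by "for all $n$": if $p_{n_0}(m) \in (M/p^{n_0}M)_s^{bd}$ and $n \le n_0$, then applying the further reduction $M/p^{n_0}M \to M/p^n M$ (which again intertwines everything) shows $p_n(m) \in (M/p^n M)_s^{bd}$, since the image of a bounded set under a continuous $o$-linear map is bounded.

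\textbf{Step 3: the inverse-limit identity.} Finally, combining Steps 1 and 2: the map $M_s^{bd} \to \varprojlim_n (M/p^nM)_s^{bd}$, $m \mapsto (p_n(m))_n$, is well-defined by Step 2 and injective since $\bigcap_n p^n M = 0$. For surjectivity, given a compatible system $(m_n)_n$ with $m_n \in (M/p^nM)_s^{bd}$, $p$-adic completeness of $M$ yields a unique $m \in M$ with $p_n(m) = m_n$ for all $n$; then Step 2 gives $m \in M_s^{bd}$. This is an isomorphism of $o$-modules (indeed of $o[P_+]$-modules, as all maps are $P_+$-equivariant), which is the second displayed identity. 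The first displayed identity of the lemma, $M_s^{bd} = \varprojlim_n(M_s^{bd}/p^n M_s^{bd})$, then also follows: the transition maps $(M/p^nM)_s^{bd} \to (M/p^{n-1}M)_s^{bd}$ factor through $M_s^{bd}/p^n M_s^{bd} \to M_s^{bd}/p^{n-1}M_s^{bd}$ once one knows the natural surjection $M_s^{bd}/p^n M_s^{bd} \to (M/p^nM)_s^{bd}$ — which holds because $p^n M \cap M_s^{bd} = p^n M_s^{bd}$; this last equality is where a small amount of care is needed, and I would verify it using that $\psi$ and multiplication by elements of $N_0$ commute with division by $p^n$ on the torsion-free pieces, so that $m \in M_s^{bd}$ and $m \in p^n M$ force $m/p^n \in M_s^{bd}$.

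\textbf{Main obstacle.} The genuinely delicate point is the uniformity in Step 1: a priori the integer $n'$ in the boundedness condition depends on the approximating element $m'$, and one must exploit the fact that $p$-adic closeness of $m$ and $m'$ gives closeness of the corresponding sets $S(m)$, $S(m')$ in a way that is compatible with the $\mathcal{M}_\ell$-adic filtration. Once one packages this correctly — essentially, that $p^n D$ sits inside $\mathcal{M}_\ell(N_0^{(2)})^n D$ up to harmless adjustments — the rest is bookkeeping with the equivalence of categories and the exactness/compatibility of $\mathbb{D}$ with reduction mod $p^n$.
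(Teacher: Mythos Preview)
Your approach is essentially the paper's: Step~1 is the paper's part~(a), Step~2 is part~(b), and the first half of Step~3 just unpacks the final sentence of~(b). Two remarks, though.

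First, your ``main obstacle'' is illusory. Lemma~\ref{bdP} only requires that for \emph{each} large $r$ there exist \emph{some} $r'$ with $S(m)\subset p^r D + X^{-r'}D^0$. Since you choose the approximant $m'$ \emph{after} fixing $r$ (namely with $m-m'\in p^r M$), the dependence of $r'$ on $m'$ is exactly what the criterion permits; no absorption or passage to a limit is needed, and the paper dispatches this in one line.

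Second, the last paragraph of Step~3 (relating the two inverse limits via the equality $p^n M\cap M_s^{bd}=p^n M_s^{bd}$) is both unnecessary and shaky. You already obtained the first displayed identity from $p$-adic closedness in Step~1; your proposed verification of the intersection identity via ``$m/p^n$'' breaks down when $M$ has $p$-torsion (nothing in the setup excludes this); and the asserted surjection $M_s^{bd}/p^n M_s^{bd}\twoheadrightarrow (M/p^n M)_s^{bd}$ is not justified. Simply drop that paragraph and your proof matches the paper's.
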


  \begin{proof} a)  Let  $m$ be an element in the closure of $M_{s}^{bd}$ in $M$  for the $p$-adic topology.  For any $r\in \mathbb N$, we choose   $m'_{r} \in  M_{s}^{bd}$ with $m-m'_{r}\in p^{r}M$.  For each $r$, we   choose  $r'\geq 1$ such that $\ell _{M}(\psi^{k}(u^{-1}m'_{r}))\in p^{r}D+X^{-r'}D^{0}$  for all  $k\in \mathbb N, u\in N_{0},$ applying Lemma \ref{bdP}.
We have
$$\ell_{M} (\psi^{k}(u^{-1}m)) \in  \ell _{M}(\psi^{k}(u^{-1}m'_{r})+ p^{r}M) =  \ell_{M} (\psi^{k}(u^{-1}m'_{r}) )+p^{r}D \subset  p^{r}D+X^{-r'}D^{0} \  . $$
  By the same lemma,  $ m\in M_{s}^{bd}$.  This proves that    $ M_{s}^{bd} $ is closed  in $M$ hence $p$-adically complete.

 b) The reduction modulo $p^{n}$ commutes with $\ell_{M}, \psi,$ and the action of $N_{0}$.       The following properties are equivalent :

 $m\in M_{s}^{bd} $,

 $\{\ell_{M} (\psi^{k}(u^{-1}m ))$ for  $k\in \mathbb N, u\in N_{0}\}  \subset D $ is bounded,

 $\{\ell_{M/p^{n}M} (\psi^{k}(u^{-1}p_{n} (m)))$ for  $k\in \mathbb N, u\in N_{0}\}  \subset D/p^{n}D$ is bounded for all  positive integers $n $,

 $p_{n}(m)\in (M/p^{n} M)_{s}^{bd}$ for all positive integers $n$.

\noindent  We deduce that  $m\mapsto (p_{n}(m))_{n}:M^{bd}_{s}\to  \varprojlim_{n} (M/p^{n}M)^{bd}_{s}$  is an isomorphism.
     \end{proof}

\begin{proposition} \label{io}
$ D=D_{s}^{bd}$ and $M_{s}^{bd}$ contains $\iota_{D}(D)$.
 \end{proposition}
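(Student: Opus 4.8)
The plan is to establish the two assertions separately. The equality $D=D_s^{bd}$ is a statement purely about $D$, viewed via $\ell=\id$ as an \'etale $L_+$-module over $\mathcal O_{\mathcal E}=\Lambda_{\id}(N_0^{(2)})$ (so that the map playing the role of $\ell_M$ is here $\id_D$), and the inclusion $\iota_D(D)\subseteq M_s^{bd}$ will then be deduced from it together with Lemma~\ref{coe}.

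For the first assertion I would fix $d\in D$ and show that the set $A_d:=\{\psi_s^k(u^{-1}d)\mid k\in\mathbb N,\ u\in N_0^{(2)}\}=\bigcup_{k\ge 0}\psi_s^k(N_0^{(2)}d)$ is bounded in $D$. Given $n\in\mathbb N$, Proposition~\ref{bd} applied to the compact set $C=\{d\}$ yields an integer $k_0=k_0(n)$ with $\bigcup_{k\ge k_0}\psi_s^k(N_0^{(2)}d)\subseteq D^{\sharp}+p^nD$; on the other hand $\bigcup_{0\le k<k_0}\psi_s^k(N_0^{(2)}d)$ is a finite union of continuous images of the compact group $N_0^{(2)}$, hence compact. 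Since $D^{\sharp}$ is a treillis, hence compact, and a compact subset of $D$ is bounded, both subsets lie, for a suitable $n'=n'(n)$ and a fixed lattice $D^0$, inside $p^nD+X^{-n'}D^0$; thus $A_d\subseteq p^nD+X^{-n'}D^0$. As $n$ was arbitrary, Lemma~\ref{bdP} gives that $A_d$ is bounded, i.e.\ $d\in D_s^{bd}$; since $d$ was arbitrary, $D=D_s^{bd}$.

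For the second assertion I would fix $d\in D$ and bound $\{\ell_M(\psi^k(u^{-1}\iota_D(d)))\mid k\in\mathbb N,\ u\in N_0\}$, using that $\ell_M\circ\iota_D=\id_D$ (Lemma~\ref{cont-li}.ii, or directly because $\ell\circ\iota=\id$). Fix $k\ge 0$ and $u\in N_0$, and put $t=s^k\in L_+$. If $u\notin\iota(N_0^{(2)})tN_0t^{-1}$ then $\psi_t(u^{-1}\iota_D(d))=0$ by Lemma~\ref{coe}. Otherwise, using $tN_0t^{-1}=\varphi_t(N_0)$, write $u=\iota(v)\varphi_t(n_0)$ with $v\in N_0^{(2)}$ and $n_0\in N_0$; then Lemma~\ref{produit} and Lemma~\ref{coe} give
\[
\psi_t(u^{-1}\iota_D(d))=n_0^{-1}\psi_t(\iota(v)^{-1}\iota_D(d))=n_0^{-1}\iota_D(\psi_t(v^{-1}d)),
\]
whence $\ell_M(\psi_t(u^{-1}\iota_D(d)))=\ell(n_0)^{-1}\,\psi_t(v^{-1}d)$ with $\ell(n_0)^{-1}\in N_0^{(2)}$. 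Consequently the set in question is contained in $\{0\}\cup N_0^{(2)}A_d$, with $A_d$ the bounded set of the first part. Since $N_0^{(2)}$ is a compact subset of $\mathcal O_{\mathcal E}$ and $A_d$ is bounded, $N_0^{(2)}A_d$ is bounded. Therefore $\iota_D(d)\in M_s^{bd}$.

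The only genuinely delicate point is the passage from $u\in\iota(N_0^{(2)})$ to an arbitrary $u\in N_0$ in the second assertion: for $u$ in $\iota(N_0^{(2)})tN_0t^{-1}$ one must write $u=\iota(v)\varphi_t(n_0)$ and extract the factor $n_0^{-1}$ by Lemma~\ref{produit} before invoking Lemma~\ref{coe}, while for $u$ outside this set the relevant term vanishes. Everything else --- the compactness of $N_0^{(2)}d$ and of its images under the continuous maps $\psi_s^k$, the applicability of the bounded-set formalism (Lemma~\ref{bdP} and the listed stability properties) to $D$ over $\mathcal O_{\mathcal E}=\Lambda_{\id}(N_0^{(2)})$, and the identity $\ell_M\circ\iota_D=\id_D$ --- is routine bookkeeping around Proposition~\ref{bd}.
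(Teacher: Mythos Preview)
Your proof is correct and follows essentially the same approach as the paper: Proposition~\ref{bd} for part~i) and Lemma~\ref{coe} for part~ii). Two minor differences are worth noting. In part~i) the paper first reduces via Lemma~\ref{lred1} to the case where $D$ is killed by a power of $p$, whereas you work directly with general $D$ by invoking Lemma~\ref{bdP}; your route is a slight streamlining. In part~ii) the paper decomposes an element of $N_0$ as $\iota(v)u$ with $u\in N_\ell$ and obtains the \emph{equality} $\{\ell_M(\psi^k(u^{-1}\iota_D(d)))\mid k,u\}=\{\psi^k(v^{-1}d)\mid k,v\}$, while you decompose $u=\iota(v)\varphi_t(n_0)$ and obtain only the \emph{containment} in $\{0\}\cup N_0^{(2)}A_d$; your containment suffices for boundedness, and both arguments rest on the same Lemma~\ref{coe}.
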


\begin{proof}
i) We show that $D=D_{s}^{bd}$.  By Lemma \ref{lred1}, we can suppose that $D$ is killed by a power of $p$. Let $d\in D$.
 By Prop. \ref{bd},
 for $n\in \mathbb N$, there exists $k_{0}\in \mathbb N$ such that $ \psi^{k}(v^{-1}d)\in D^{\sharp} $ for $k\geq k_{0}, v\in N_{0}^{(2)}$.  As $D^{\sharp} \subset D$   is bounded,  and as the   set of $ \psi^{k}(v^{-1}d)$  for all $0\leq k <k_{0}, v\in N_{0}^{(2)},$ is also bounded because
the set of $v^{-1}d$ for $v\in N_{0}^{(2)}$ is bounded and $\psi^{k}$ is continuous, we deduce that
 $d\in D_{s}^{bd}$.

ii) We show that $M_{s}^{bd}$ contains $\iota_{D}(D)$ by showing
$$\{\ell _{M}(\psi^{k}( u^{-1} \iota_{D}(d))) \ { \rm for } \ k\in \mathbb N, u\in N_{0}\}  = \{\psi^{k}( v^{-1}d)\  {\rm  for } \ k\in \mathbb N, v\in  N_{0}^{(2)} \}
$$
when $d\in D$ (the right hand side is bounded in $D$ by i)).
We write an element  $N_{0}$   as  $\iota (v)u$ for   $u $ in $N_{\ell} $  and $v\in  N_{0}^{(2)} $. By Lemma \ref{coe},
  $$
  \psi^{k}( u^{-1}\iota (v)^{-1}\iota_{D}(d)) = \psi^{k}(u^{-1}\iota_{D}(v^{-1}d))=
s^{-k}u^{-1}s^{k} \psi^{k}( \iota_{D}(v^{-1}d))
$$  when $u\in s^{k}N_{\ell}s^{-k}$ and is $0$
  when    $u$  is not  in $s^{k}N_{\ell}s^{-k}$.  When $u\in s^{k}N_{\ell}s^{-k}$ we have $\ell _{M}(s^{-k}u^{-1}s^{k} \psi^{k}(\iota_{D}( v^{-1}d)))=\psi^{k}( v^{-1}d)$ as $\iota_{D}$ is $\psi$-equivariant.
  \end{proof}

\begin{proposition} \label{list}
$M_{s}^{bd}  $ is  dense in $M$
\end{proposition}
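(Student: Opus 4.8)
The plan is to reduce everything to the density of the $o[N_0]$-submodule $o[N_0]\iota_D(D)$ in $M$ for the weak topology, and then invoke the two facts already established about $M_s^{bd}$: by Proposition \ref{Lbd} it is an \'etale $o[P_+]$-submodule of $M$, in particular it is stable under $N_0\subset P_+$; and by Proposition \ref{io} it contains $\iota_D(D)$. Combining these, $M_s^{bd}\supseteq o[N_0]\iota_D(D)$, so once the latter is known to be dense in $M$ the assertion follows immediately. Thus the only real content lies in that density statement.

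To prove it, first I would fix a finite system $d_1,\dots,d_r$ of generators of the $\mathcal{O}_{\mathcal{E}}$-module $D$. Since $M=\Lambda_\ell(N_0)\otimes_{\mathcal{O}_{\mathcal{E}},\iota}D=\Lambda_\ell(N_0)\iota_D(D)$, the elements $\iota_D(d_1),\dots,\iota_D(d_r)$ generate $M$ over $\Lambda_\ell(N_0)$, and the resulting $\Lambda_\ell(N_0)$-linear surjection
\begin{equation*}
f\colon \bigoplus_{i=1}^r \Lambda_\ell(N_0)\longrightarrow M\ ,\qquad (\lambda_i)_i\longmapsto \sum_{i=1}^r \lambda_i\,\iota_D(d_i)\ ,
\end{equation*}
is a topological quotient map by the very definition of the weak topology on $M$, in particular continuous. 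Next I would invoke Lemma \ref{lista}, according to which the left $o[N_0]$-submodule $o[N_0]\iota(\mathcal{O}_{\mathcal{E}})$ is dense in $\Lambda_\ell(N_0)$; hence $\bigoplus_{i=1}^r o[N_0]\iota(\mathcal{O}_{\mathcal{E}})$ is dense in $\bigoplus_{i=1}^r\Lambda_\ell(N_0)$. Applying the continuous surjection $f$, its image $f\bigl(\bigoplus_i o[N_0]\iota(\mathcal{O}_{\mathcal{E}})\bigr)=\sum_i o[N_0]\iota(\mathcal{O}_{\mathcal{E}})\iota_D(d_i)$ is dense in $M$; and since $\iota(\lambda)\iota_D(d)=\iota_D(\lambda d)$ for $\lambda\in\mathcal{O}_{\mathcal{E}}$ and $d\in D$, this sum is contained in $o[N_0]\iota_D(D)$, which is therefore dense in $M$.

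I do not expect a genuine obstacle: the argument is formal once Lemma \ref{lista} is in hand, that lemma being the substantive input on the structure of $\Lambda_\ell(N_0)$. The only point needing a word of care is that $f$ carries dense subsets to dense subsets, which holds simply because $f$ is continuous and surjective: if $A$ is dense in $\bigoplus_i\Lambda_\ell(N_0)$, then $\overline{f(A)}\supseteq f(\overline{A})=f\bigl(\bigoplus_i\Lambda_\ell(N_0)\bigr)=M$. No openness or completeness of $f$ is needed, and no reduction modulo $p^n$ is required here.
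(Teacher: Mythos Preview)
Your proof is correct and follows essentially the same approach as the paper: reduce to showing that $o[N_0]\iota_D(D)$ is dense in $M$ (using Propositions \ref{Lbd} and \ref{io}), and then invoke Lemma \ref{lista}. The paper's proof is terser and simply cites Lemma \ref{lista} for the density of $o[N_0]\iota_D(D)$ in $M$, whereas you spell out the passage from density in $\Lambda_\ell(N_0)$ to density in $M$ via the continuous surjection $f$; this makes explicit a step the paper leaves implicit.
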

\begin{proof}
 $M_{s}^{bd} \subset M $ is an  $o[N_{0}]$-submodule, which by Proposition \ref{io} contains $\iota_{D}(D)$. The  $o[N_{0}]$-submodule of $M$ generated by $\iota_{D}(D)$ is dense by Lemma \ref{lista}.
\end{proof}

We summarize: we proved that $M_{s}^{bd}\subset M$ is a dense $o[N_{0}]$-submodule, stable by $L_{+} $, and the action of $L_{+}$ on $M_{s}^{bd}$ is \'etale.

\begin{remark}\label{Landa} {\rm It follows  from Lemma \ref{lred1} and the subsequent proposition \ref{sfp} and that $M_{s}^{bd}$  is a $\Lambda (N_{0})$-submodule of $M$.
}
\end{remark}

\subsection{The  special family $\mathfrak{C}_{s } $  when $M $ is killed by a power of $p$}

{\sl  We suppose that $M$ is killed by a power of $p$}.

\begin{proposition} \label{sfp}

1. For any  lattice $D_{0}$ in $D$, the $o$-submodule
\begin{equation*}
M_s^{bd}(D_0):=\{m\in M\mid \ell_{M}(\psi^k(u^{-1}m))\in D_0\text{ for all }u\in
N_0\text{ and }k\in \mathbb N \}.
\end{equation*}
of $M$ is compact, and is a $\psi$-stable $\Lambda (N_{0})$-submodule.

2. The   family    $\mathfrak{C}_{s } $   of compact subsets   of $M $ contained in  $M_{s}^{bd}(D_{0})$ for some lattice $D_{0} $ of $D$, is special  (Def. \ref{mathfrakC}), satisfies $\mathfrak C (5)$ (Prop. \ref{criterion}) and $\mathfrak C (6)$ (Prop. \ref{corspecial}), and  $M(\mathfrak{C}_{s } )=  M_s^{bd}$  is a $\Lambda (N_{0})$-submodule of $M$.

\end{proposition}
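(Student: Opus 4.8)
\textbf{Plan of proof for Proposition \ref{sfp}.}

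The plan is to prove part 1 first and then bootstrap part 2 from it. For part 1, fix a lattice $D_{0}$ in $D$ and observe that $M_s^{bd}(D_0)$ is the intersection, over all $u\in N_{0}$ and $k\in\mathbb N$, of the preimages $(\ell_{M}\circ\psi^k\circ u^{-1})^{-1}(D_0)$. Each of these maps is continuous (the action of $N_0$, the map $\psi$, and $\ell_M$ are all continuous), and $D_0$ is closed in $D$ (it is open and closed in the weak topology since $D$ is killed by a power of $p$, by property 1 of treillis in Section \ref{CPG}), so each preimage is closed, hence $M_s^{bd}(D_0)$ is closed in $M$. To get compactness I would exhibit $M_s^{bd}(D_0)$ as a closed subset of a compact set: taking $u=1$, $k=0$ already forces $\ell_{M}(m)\in D_0$; combining the expansion \eqref{writing} $m=\sum_{u\in J(N_0/N_k)}u\varphi^k(\psi^k(u^{-1}m))$ with the fact that $\iota_D$ is $\psi$-equivariant and $\ell_M\circ\iota_D=\mathrm{id}$, the conditions $\ell_M(\psi^k(u^{-1}m))\in D_0$ for all $u,k$ pin down $m$ inside the $\Lambda(N_0)$-module generated by $\iota_D(D_0)$, which is a lattice $M^0$ of $M$ and hence compact (it is a finitely generated $\Lambda(N_0)$-module, $\Lambda(N_0)$ compact). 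More precisely I would argue $M_s^{bd}(D_0)\subseteq M^0:=\Lambda(N_0)\iota_D(D_0)$ directly from the expansion, then conclude that $M_s^{bd}(D_0)$ is a closed subset of the compact set $M^0$, hence compact. The $\Lambda(N_0)$-stability is a formal check: if $m\in M_s^{bd}(D_0)$ and $n_0\in N_0$, then $\psi^k(u^{-1}n_0 m)=\psi^k((n_0^{-1}u)^{-1}m)$ after relabelling $u$, so $N_0$-stability is immediate; additivity is clear; and density of $o[N_0]$ in $\Lambda(N_0)$ together with closedness of $M_s^{bd}(D_0)$ upgrades $o[N_0]$-stability to $\Lambda(N_0)$-stability. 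The $\psi$-stability uses identity \eqref{Lbdb}: $\psi^k(u^{-1}\psi(m))=\psi^{k+1}(\varphi(u^{-1})m)$, and $\varphi(u^{-1})\in N_1\subset N_0$, so all the relevant values of $\psi$-iterates of $\psi(m)$ already occur among those of $m$.

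For part 2, I would verify the axioms $\mathfrak C(1)$--$\mathfrak C(6)$ one at a time for the family $\mathfrak C_s$ of compact subsets contained in some $M_s^{bd}(D_0)$. Axiom $\mathfrak C(1)$ is trivial (a compact subset of a set in $\mathfrak C_s$ is compact and still contained in the same $M_s^{bd}(D_0)$). For $\mathfrak C(2)$: given $C_i\subseteq M_s^{bd}(D_0^{(i)})$, take $D_0:=\sum_i D_0^{(i)}$, which is again a lattice in $D$ (finite sum of finitely generated $\Lambda(\mathbb Z_p)$-modules generating $D$); then $\bigcup_i C_i\subseteq M_s^{bd}(D_0)$ and the union is compact. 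For $\mathfrak C(3)$: by part 1, $N_0 C\subseteq N_0 M_s^{bd}(D_0)= M_s^{bd}(D_0)$ since $M_s^{bd}(D_0)$ is $N_0$-stable, and $N_0 C$ is compact. Axioms $\mathfrak C(5)$ and $\mathfrak C(6)$ follow because $\psi(M_s^{bd}(D_0))\subseteq M_s^{bd}(D_0)$ by part 1, and for $\varphi$ one checks $\varphi(M_s^{bd}(D_0))\subseteq M_s^{bd}(D_0')$ for a suitable lattice $D_0'$: from $\ell_M(\psi^k(u^{-1}\varphi(m)))=\ell_M(\psi^{k-1}(\psi(u^{-1})\varphi(m)))$, which is $\ell_M(\psi^{k-1}(s^{-1}us\,m))$ if $u\in N_1$ and $0$ otherwise (using $\psi\circ u\circ\varphi=s^{-1}us$ on $N_1$ and $0$ off $N_1$), so $\varphi$ only reindexes the family of values, and for $k=0$ one picks up $\ell_M(\varphi(m))=\varphi_{\ell(s)}(\ell_M(m))$, which lies in the lattice $D_0':=D_0+\varphi_{\ell(s)}(D_0)$; the continuity of $\varphi$ shows $\varphi(C)$ is compact.

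The remaining point, and the one I expect to be the main obstacle, is the identity $M(\mathfrak C_s)=M_s^{bd}$ together with axiom $\mathfrak C(4)$ (that $M_s^{bd}$ is an \'etale $o[P_+]$-submodule) and the assertion that $M_s^{bd}$ is a $\Lambda(N_0)$-submodule. One inclusion is easy: any $C\in\mathfrak C_s$ lies in some $M_s^{bd}(D_0)\subseteq M_s^{bd}$ by the very definition of $M_s^{bd}$ (a subset of a lattice is bounded), so $M(\mathfrak C_s)\subseteq M_s^{bd}$. For the reverse inclusion I would take $m\in M_s^{bd}$ and must produce a compact $C\in\mathfrak C_s$ with $m\in C$. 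By definition of $M_s^{bd}$ the set $S:=\{\ell_M(\psi^k(u^{-1}m)):k\in\mathbb N,u\in N_0\}$ is bounded in $D$; since $M$ (hence $D$) is killed by a power of $p$, a bounded subset is contained in a lattice (property of lattices in Section \ref{CPG}), so $S\subseteq D_0$ for some lattice $D_0$, which says precisely $m\in M_s^{bd}(D_0)$; then take $C:=\Lambda(N_0)m$, or better $C:=M_s^{bd}(D_0)$ itself if one wants, which is compact by part 1 and lies in $\mathfrak C_s$, and contains $m$. Hence $M_s^{bd}=\bigcup_{D_0}M_s^{bd}(D_0)=M(\mathfrak C_s)$. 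That $M_s^{bd}$ is an \'etale $o[P_+]$-submodule is then exactly Proposition \ref{Lbd}, and $\Lambda(N_0)$-stability follows since $M_s^{bd}$ is a union of the $\Lambda(N_0)$-modules $M_s^{bd}(D_0)$ which form a directed system under inclusion (using $\mathfrak C(2)$'s lattice-sum trick). The subtlety to watch is ensuring the lattice $D_0$ extracted from boundedness can be chosen uniformly enough that $M_s^{bd}(D_0)$, and not merely $m$, lies in $\mathfrak C_s$; this is where Lemma \ref{bdP} (the explicit shape ${\mathcal M}_\ell(N_0)^nM+X^{-n'}M^0$ of bounded sets) is the right tool, and since $M$ is killed by a power of $p$ the ${\mathcal M}_\ell$-part disappears for $n$ large, leaving a genuine lattice.
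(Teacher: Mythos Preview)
Your overall architecture is correct and matches the paper closely: closedness from continuity, $\Lambda(N_0)$-stability from $o[N_0]$-stability plus density, $\psi$-stability via \eqref{Lbdb}, and all of Part 2 (the verification of $\mathfrak{C}(1)$--$\mathfrak{C}(6)$ and the identification $M(\mathfrak{C}_s)=M_s^{bd}$ using that bounded sets in $D$ sit inside lattices when $D$ is $p$-power torsion) is essentially the paper's argument.

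The genuine gap is in your compactness argument for Part 1. You claim $M_s^{bd}(D_0)\subseteq M^0:=\Lambda(N_0)\iota_D(D_0)$ ``directly from the expansion'', but the expansion only tells you that $\psi^k(u^{-1}m)\in\ell_M^{-1}(D_0)=\iota_D(D_0)\oplus J_\ell(N_0)M$, so
\[
m\ \in\ \Lambda(N_0)\varphi^k\bigl(\iota_D(D_0)\bigr)\ +\ \Lambda(N_0)\varphi^k\bigl(J_\ell(N_0)M\bigr).
\]
Neither summand is contained in $M^0$ in general: the first because $\varphi^k(\iota_D(D_0))=\iota_D(\varphi^k(D_0))$ and $\varphi^k(D_0)$ need not lie in $D_0$ (take any \'etale $D$ whose $\varphi$-matrix in a chosen basis has entries outside $\Lambda(\mathbb{Z}_p)$); the second because $J_\ell(N_0)M$ is not contained in any lattice. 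The facts you invoke ($\iota_D$ $\psi$-equivariant, $\ell_M\circ\iota_D=\id$) do not eliminate the $J_\ell(N_0)M$ part. What is missing is precisely Lemma~\ref{4.6}: for each $n$ one can choose $k$ large so that $\varphi^k(J_\ell(N_0))\subseteq\mathcal{M}_\ell(N_0)^{n+1}$, whence
\[
M_s^{bd}(D_0)\ \subseteq\ \Lambda(N_0)\varphi^k(M_0)\ +\ \mathcal{M}_\ell(N_0)^n M
\]
for a suitable lattice $M_0$. This does \emph{not} give containment in a single lattice, but it shows that the image of $M_s^{bd}(D_0)$ in each $M/\mathcal{M}_\ell(N_0)^nM$ is a finitely generated $\Lambda(N_0)$-module, hence compact; compactness of $M_s^{bd}(D_0)$ then follows from the identification $M_s^{bd}(D_0)=\varprojlim_n\bigl(M_s^{bd}(D_0)+\mathcal{M}_\ell(N_0)^nM\bigr)/\mathcal{M}_\ell(N_0)^nM$ (valid since $M_s^{bd}(D_0)$ is closed). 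This is the route the paper takes, and the characterization $M_s^{bd}(D_0)=\bigcap_k\Lambda(N_0)\varphi^k(\ell_M^{-1}(D_0))$ (equation \eqref{Msbd}) is the clean way to organize it.
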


\begin{proof} 1.  a)
 As $\ell$ and $\psi$ are continuous (Proposition \ref{contphipsi-general}) and $D_0 \subset D$ is
closed, it follows that $M_s^{bd}(D_0)$ is an intersection of closed subsets in $M$, hence $M_s^{bd}(D_0)$ is closed in  $M$. As $M_s^{bd}(D_0)$ is an $o[N_{0}]$-submodule of $M$ and $o[N_{0}]$ is dense in $\Lambda(N_{0})$ we deduce that $M_s^{bd}(D_0)$ is a $\Lambda (N_{0})$-submodule. It is $\psi$-stable by \eqref{Lbdb}.
The weak topology on $M$ is  the projective limit of the
weak topologies on $M/\mathcal{M}_\ell(N_0)^nM$, and we have (\cite{TG} I.29 Corollary)
 \begin{equation*}
M_s^{bd}(D_0)=\varprojlim_{n\geq 1}
(M_s^{bd}(D_0)+\mathcal{M}_\ell(N_0)^nM)/\mathcal{M}_\ell(N_0)^nM\ .
\end{equation*}
Therefore it suffices to show
that
\begin{equation*}
(M_s^{bd}(D_0)+\mathcal{M}_\ell(N_0)^nM)/\mathcal{M}_\ell(N_0)^nM
\end{equation*}
is compact for each large $n $. We will show the stronger property that it is a  finitely generated  $\Lambda(N_0)$-module.

b)
We prove first that $M_s^{bd}(D_0)$ is the intersection of the $\Lambda(N_0)$-modules generated by the image by $\varphi^k$ of the inverse image $\ell_{M}^{-1}(D_0)$ of $D_{0}$ in $M$, for $k\in \mathbb N$,
\begin{equation}\label{Msbd}
M_s^{bd}(D_0)=\bigcap_{k\in \mathbb N}\Lambda(N_0)\varphi^k(\ell_{M}^{-1}(D_0))\ .
\end{equation}
The inclusion from left to right  follows from the expansion (\ref{writing}), as $m\in M_s^{bd}(D_0)$ is equivalent to $m_{u,s^k} =\psi^k(u^{-1}m)\in  \ell_{M}^{-1}(D_0)$ for all  $u\in N_{0}$  and  $k\in \mathbb N$. The inclusion from right to left follows
$$
\ell_{M} \psi^{k}u^{-1}(\Lambda(N_0)\varphi^k(\ell_{M}^{-1}(D_0)))=D_{0} \ .
$$
c) We pick a lattice $M_{0}$ of $M$ such that $\ell_{M}^{-1}(D_{0})=M_{0}+J_{\ell}(N_{0})M$, as $J_{\ell}(N_{0})M$ is the kernel of $\ell_{M} $. By  Lemma \ref{4.6} we can choose  for each $n\in \mathbb N$ a large integer $r$  such that
$\varphi^r(J_\ell(N_0)M)\subseteq
\mathcal{M}_\ell(N_0)^nM$. Therefore we have
\begin{align*}
M_s^{bd}(D_0)\subseteq \Lambda(N_0)\varphi^r(M_0+ J_\ell(N_0)M)\subseteq \Lambda(N_0)\varphi^r(M_0)+\mathcal{M}_\ell(N_0)^nM \ .
\end{align*}
We deduce
\begin{align*}
(M_s^{bd}(D_0)+\mathcal{M}_\ell(N_0)^nM)/\mathcal{M}_\ell(N_0)^nM\subseteq  (\Lambda(N_0)\varphi^r(M_0)+\mathcal{M}_\ell(N_0)^nM)/\mathcal{M}_\ell(N_0)^nM \ .
\end{align*}
The right term is a finitely generated  $\Lambda(N_0)$-module hence the left term is  finitely generated  as a $\Lambda(N_0)$-module
 since $\Lambda(N_0)$ is noetherian.

 2. The family is stable by finite union because a finite sum of lattices is a lattice.
If $C \in \mathfrak{C}_{s }$ then $N_{0}C \in \mathfrak{C}_{s }$ because $M_{s}^{bd}(D_{0})$ is a $\Lambda (N_{0})$-module. We have
$$M(\mathfrak{C}_{s } )=\cup_{D_{0}}M_{s}^{bd}(D_{0}) =  M_s^{bd}  \ , $$
 when $D_{0}$ runs over the lattices of $D$, the last follows from the fact that
 a bounded subset of $D$ is contained in a lattice (this is the only part in the proof where the assumption that $M$ is killed by a power of $p$ is used). Apply Prop. \ref{Lbd}.

Property $\mathfrak C (5)$  is immediate because $M_{s}^{bd}(D_{0})$ is $\psi$-stable.
Property $\mathfrak C (6)$ follows from $\varphi (M_{s}^{bd}(D_{0}))\subset M_{s}^{bd}(D_{s})$ where $D_{s}$ is the lattice of $D$ generated by $\varphi (D_{0})$ (this uses the part a) of the proof of  Prop. \ref{Lbd}).
 \end{proof}

\begin{proposition}\label{912} All the assumptions of   Prop. \ref{corspecial} are satisfied
\end{proposition}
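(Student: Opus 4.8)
The plan is to verify that the special family $\mathfrak C_s$ constructed in subsection \ref{S9} (more precisely in Proposition \ref{sfp} when $M$ is killed by a power of $p$, and then assembled by the reduction-modulo-$p^n$ procedure of subsection \ref{topAM} via Lemma \ref{Limspe} in general) satisfies all the hypotheses of Proposition \ref{corspecial}. By Proposition \ref{basicspecial} it suffices to do this when $M$ is killed by a power of $p$: the families $\mathfrak C_{s,n}$ attached to $M/p^nM$ are compatible, so the family $\mathfrak C_s$ of compact subsets $C\subset M$ with $p_n(C)\in\mathfrak C_{s,n}$ for all $n$ is special with $M(\mathfrak C_s)=M_s^{bd}=\varprojlim_n (M/p^nM)_s^{bd}$, and the assumptions propagate through the projective limit. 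Note also, as remarked there, that $M$ and all $M/p^nM$ are complete topologically \'etale $o[P_+]$-modules by Corollary \ref{top-etale}, and $M/p^nM$ is again an \'etale $L_+$-module over $\Lambda_{\ell}(N_0)$, so the setting of subsection \ref{topAM} genuinely applies.

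So assume $M$ is killed by a power of $p$. The axioms $\mathfrak C(1),\dots,\mathfrak C(4)$ together with $\mathfrak C(5)$ and $\mathfrak C(6)$ are exactly the content of Proposition \ref{sfp}.2, using that $\varphi=\varphi_s$, $\psi=\psi_s$ and the action of $N_0$ are continuous (Proposition \ref{contphipsi-general}) and that $M_s^{bd}(D_0)$ is a compact $\psi$-stable $\Lambda(N_0)$-submodule. There remain the two technical properties. For $\mathfrak T(1)$: given $C\in\mathfrak C_s$ with $C=N_0C$, an open $A[N_0]$-submodule $\mathcal M\subset M$ and a compact $C_+\subseteq L_+$, I would argue exactly as in part i of the proof of Proposition \ref{3}. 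Choose $r_0$ with $\varphi^k(D^{++})\subset\mathcal M$ for $k\ge r_0$ (properties 4--6 of treillis applied to a treillis mapping into $\mathcal M$ under $\ell_M$; here one uses the lattice structure of $D^{++}$ in $D$, Proposition \ref{Mira}(i)). Choose $r,k_0$ with $k_0\ge r_0$ and $\bigcup_{k\ge k_0}\psi^k(\ell_M(C))\subseteq X^{-r}D^{++}$ using Corollary \ref{cbd} (applied to the compact $\ell_M(C)\subset D$). Apply $C_+$, use continuity of the $L_+$-action on $D$ to find $r'$ with $C_+(X^{-r}D^{++})\subseteq X^{-r'}D^{++}$, then find a compact open $P_1\subseteq P_0$ with $(1-P_1)X^{-r'}D^{++}\subseteq D^{++}$; one then gets $s^k(1-P_1)C_+\psi^k(C)\subset\ell_M^{-1}(\varphi^k(D^{++}))+(\text{kernel terms})\subset\mathcal M$ for $k\ge k_0$, where one must be a little careful to pass from the estimate in $D$ back to $M$ through $\ell_M$, absorbing $J_\ell(N_0)M$ into $\mathcal M$ via Lemma \ref{4.6} as in the proof of Proposition \ref{sfp}. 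For $\mathfrak T(2)$: given $x\in M_s^{bd}$ and $g\in N_0\overline PN_0$, the partial sums $\mathcal H_{g,J(N_0/N_k)}(x)$ all lie in a single compact $\Lambda(N_0)$-submodule — indeed each summand is $\alpha(g,x_u)\circ\Res(1_{uN_k})$ evaluated at $x$, which by Lemma \ref{alpha-int} and the formula $\alpha(g,x_u)u=n(g,u)t(g,u)$ stays inside $N_0 t(g,U_g)s^{k-k_g^{(1)}}\psi^k N_0\cdot x$; since $t(g,U_g)s^{k_g^{(1)}}\subseteq L_+$ is compact and $x\in M_s^{bd}$, the set $\{\psi^k(n^{-1}x):k\in\mathbb N,n\in N_0\}$ has bounded image in $D$, hence lies in some $M_s^{bd}(D_0)$, and applying the compact set of group elements and $L_+$-elements keeps it inside a slightly larger $M_s^{bd}(D_1)\in\mathfrak C_s$, giving $C_{x,g}$.

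Finally I would record that the $(s,\res,\mathfrak C_s)$-integrals $\mathcal H_g$ land in $\End_A(M(\mathfrak C_s))=\End_A(M_s^{bd})$: this is needed as an extra hypothesis of Proposition \ref{corspecial}, and it follows because $M_s^{bd}$ is an \'etale $o[P_+]$-submodule of $M$ (Proposition \ref{Lbd}) that is $\Lambda(N_0)$-stable (Proposition \ref{sfp}.1), so each $\alpha(g,x_u)\circ\Res(1_{uN_k})$ preserves it by the same computation as for $\mathfrak T(2)$, and the limit stays inside because $M_s^{bd}(D_1)$ is closed; more precisely $\mathcal H_g$ is $\mathfrak C_s$-continuous by Proposition \ref{integral} (whose criterion is $\mathfrak T(1)$ through \eqref{delta}) and maps into $M_s^{bd}$ by $\mathfrak T(2)$. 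With all of $\mathfrak C(1)$--$\mathfrak C(6)$, $\mathfrak T(1)$, $\mathfrak T(2)$ and the $\End_A(M(\mathfrak C_s))$-condition checked, Proposition \ref{corspecial} applies and yields that the $\mathcal H_g$ satisfy H1, H2, H3. The main obstacle is the bookkeeping in $\mathfrak T(1)$: translating the treillis-theoretic bounds available in $D$ (via $D^{\sharp}$, $D^{++}$, Corollary \ref{cbd}) back through the non-injective map $\ell_M$ into statements about $\mathcal M\subset M$, controlling the kernel $J_\ell(N_0)M$ by Lemma \ref{4.6}, and ensuring the subgroup $P_1$ depends only on $(C,\mathcal M,C_+)$ and not on $k$ — exactly the delicate points already handled in the killed-by-$p$ case of Proposition \ref{3}, which is why the reduction via Proposition \ref{basicspecial} is the right strategy.
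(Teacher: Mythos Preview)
Your overall strategy is right and matches the paper: reduce to the $p$-power-torsion case via Proposition~\ref{basicspecial}, invoke Proposition~\ref{sfp} for $\mathfrak{C}(1)$--$\mathfrak{C}(6)$, and then verify $\mathfrak{T}(1)$, $\mathfrak{T}(2)$ and the condition $\mathcal{H}_g\in\End_o(M_s^{bd})$. But two of the verifications, as you wrote them, do not go through.

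\textbf{On $\mathfrak{T}(1)$.} Your appeal to Corollary~\ref{cbd} applied to $\ell_M(C)$ is off target: that corollary controls the $\psi$ on $D$, whereas $\ell_M$ does \emph{not} intertwine $\psi_M$ with $\psi_D$ (Remark~\ref{3.7}). Fortunately nothing like Corollary~\ref{cbd} is needed here: the very definition of $M_s^{bd}(D_0)$ gives $\ell_M(\psi^{k'}(N_0C))\subset D_0$ for all $k'$ when $C\subset M_s^{bd}(D_0)$. The paper then works with the $\varphi$-stable lattice $M^{++}:=\Lambda(N_0)\iota_D(D^{++})\subset M$, chooses $P_1^{(2)}\subset P_0^{(2)}$ with $(1-P_1^{(2)})\ell(C_+)X^{-r}D^{++}\subset X^rD^{++}$, pulls back via $\ell_M$ to land in $X^rM^{++}+J_\ell(N_0)M$, and only at this point uses Lemma~\ref{4.6} by applying an extra $s^{k_0}$ on the left to push $J_\ell(N_0)M$ into $\mathcal{M}_\ell(N_0)^rM$. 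Your sentence ``$\varphi^k(D^{++})\subset\mathcal{M}$'' is also a type mismatch ($\mathcal{M}\subset M$, not $D$); the correct object to look at is $M^{++}$.

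\textbf{On $\mathfrak{T}(2)$ and $\mathcal{H}_g(M_s^{bd})\subset M_s^{bd}$.} This is where the real gap lies. You write each summand of $\mathcal{H}_g^{(k)}(x)$ as lying in $N_0\,t(g,U_g)\,s^{k-k_g^{(1)}}\psi^k N_0\cdot x$ and then say ``applying the compact set of group elements and $L_+$-elements keeps it inside a slightly larger $M_s^{bd}(D_1)$''. But the $L_+$-element you must apply is $t(g,u)s^k$, whose $k$-dependence is an \emph{unbounded} power of $\varphi$; and $\varphi(M_s^{bd}(D_0))\subset M_s^{bd}(D_s)$ with $D_s$ the lattice generated by $\varphi(D_0)$, which in general grows with each iteration (Proposition~\ref{sfp}, property~$\mathfrak{C}(6)$). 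So there is no fixed $D_1$ produced by your argument. The paper avoids this by a direct, two-regime computation of $\ell_M(\psi^x(n_0^{-1}\mathcal{H}_g^{(k)}(m)))$: writing $\mathcal{H}_g(m)=\mathcal{H}_g^{(x+k_g)}(m)+\sum_{k\ge x+k_g}s_g^{(k)}(m)$ and bounding both pieces in $D$ (part~b), and then, for $\mathfrak{T}(2)$, splitting according to $x\le k-k_g$ versus $x\ge k-k_g$ and, in the second regime, using the description \eqref{Msbd} of $M_s^{bd}(D_0)$ to pass through $\psi^{x-k+k_g}$ (part~c). That split is the substantive content you are missing.
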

\begin{proof}

   a) Proof of the  convergence criterion.

The lattice $M^{++}:= \Lambda (N_{0}) \iota_{D}(D^{++})$  of $M$ satisfies $\ell_{M} (M^{++})=D^{++}$, and   is $\varphi$-stable (because $\Lambda (N_{0})$ is $\varphi$-stable, $\iota_{D}$ and $\varphi$ commute, and $D^{++} $ is $\varphi$-stable).

A lattice in $D$ is contained in $X^{-n}D^{++}$  for some $n\in \mathbb N$, and    $C \in \mathfrak{C}_{s}$   is contained in $M_{s}^{bd}(X^{-n}D ^{++})$ for some $n\in \mathbb N$.
An  open $o[N_{0}] $-submodule $\mathcal M$ of $M$ contains  ${\mathcal M}_{\ell}(\Lambda_{0})^{r}M+X^{r} M^{++} $ for some $r\in \mathbb N$. Let   $C_{+}$ be a compact subset of $L_{+}$. We want to   find a compact open subgroup $P_{1}\subset P_{+}$ and an integer $k_{0}\geq 0$ such that, for $k\geq k_{0}$,
$$s^{k}(1-P_{1})C_{+} \psi^{k} (C) \subset  {\mathcal M}\ . $$
It suffices to find $P_{1}$ and $k_{0}$ when $$C=M_{s}^{bd}(X^{-r}D ^{++})\ , \ \mathcal M= {\mathcal M}_{\ell}(\Lambda_{0})^{r}M+X^{r}M^{++} $$ for large $r$.

Let $r\in \mathbb N$.
As $\ell$ is continuous, $\ell (C_{+})$ is a compact subset of $L_{+}^{(2)}$.  By the continuity of the action of $P_{+}^{(2)}$ on $D$ there exists   a compact open subgroup $P_{1}^{(2)}\subset P_{+}^{(2)}$ such that
$$ (1-P_{1}^{(2)})\ell (C_{+})  X^{-r}D^{++}\subset X^{r}D^{++} \ . $$
We deduce that for all $k\in \mathbb N$ we have
$$ s^{k} (1-P_{1}^{(2)})\ell (C_{+})  X^{-r}D^{++}\subset \varphi^{k}(X^{r}D^{++}) = \varphi^{k}(X^{r})  \varphi^{k}(D^{++}) \subset X^{r} D^{++}\ , $$
where the last inclusion follows from $\varphi(X^{r})\Lambda(N_{0}) \subset X^{r}\Lambda (N_{0})$ and $\varphi (D^{++})\subset D^{++}$.

We choose, as we can, a compact open subgroup $P_{1}$ of $P_{+}$ such that $\ell (P_{1})
\subset P_{1}^{(2)}$.   For any $k'\in \mathbb N$, we have
$$\ell_{M} (s^{k}(1-P_{1})C_{+} \psi^{k'} (M_{s}^{bd}(X^{-r}D ^{++}))) \subset  X^{r}D^{++} \ . $$
The inverse image of  $X^{r}D^{++}$ by $\ell_{M}$ is $X^{r}M^{++} +  J_{\ell}(N_{0})M$,  and we have
\begin{equation}\label{subs}
s^{k}(1-P_{1})C_{+} \psi^{k'} (M_{s}^{bd}(X^{-r}D ^{++})) \subset  X^{r}M^{++} +  J_{\ell}(N_{0})M \ .
\end{equation}
  The module    $X^{r }M^{++}$  is $\varphi$-stable because  $$\varphi(X^{r}M^{++})= \varphi(X^{r})\varphi (M^{++})\subset \varphi(X^{r})M^{++}
 \subset X^{r}M^{++} \ .$$
We choose $k_{0}\in \mathbb N$ such that   (Prop. \ref{4.6}), for $k\geq k_{0} $.
$$\varphi ^{k}(J_{\ell}(N_{0})) \subset {\mathcal M}_{\ell}(N_{0})^{r} \ .$$
Applying $s^{k_{0}}$ to (\ref{subs}) for any $k\in \mathbb N$ we obtain
$$s^{k+k_{0}}(1-P_{1})C_{+} \psi^{k'} (M_{s}^{bd}(X^{-r}D ^{++})) \subset {\mathcal M}_{\ell}(N_{0})^{r} + X^{r}M^{++}  \ .
$$
 Then, taking $k'=k+k_{0}$,  we obtain
$$s^{k }(1-P_{1})C_{+} \psi^{k} (M_{s}^{bd}(X^{-r}D ^{++})) \subset    {\mathcal M}_{\ell}( N_{0})^{r}  + X^{r}M^{++} \
$$
for all $k\geq k_{0}$. This ends the proof of the  convergence criterion.

\bigskip b) Proof that $\mathcal H_{g}(m)$ belongs to $M_{s}^{bd}$ when $m\in M_{s}^{bd}$ and $g\in N_{0}\overline P N_{0}$.

 We have  to show that
 the set   $$
\bigcup_{x\in \mathbb N, n_{0}\in N_{0}} \ell _{M}(\psi^{x}(n_{0}^{-1} \mathcal  H_{g}(m)))  $$  is  bounded.
In general  $M_{s}^{bd} $ is not complete and $M$ is complete,    the  convergence criterion  implies that  the sequence   $(\mathcal  H_{g  }^{(k)}(m) )_{k\geq k_{g}^{(0)}}$  (see \eqref{Hgk}, \eqref{Hgs})
 converges to   $ \mathcal  H_{g}(m)\in M$.

Given an integer $k_{g}$ and we write, for $x\in \mathbb N$,
$$
  \mathcal H_{g}(m)=\mathcal H_{g}^{(x+ k_{g} )}(m) + \sum_{k\geq  x+k_{g} }s_{g}^{(k)}(m) \ .
  $$
As $\psi , \ell _{M}$ are continuous, the action of $n_{0}^{-1}\in N_{0}$ is continuous, we have
$$
\ell _{M}(\psi^{x}(n_{0}^{-1}\mathcal H_{g}(m))) =\ell_{M} (\psi^{x}(n_{0}^{-1}\mathcal H_{g}^{(x+ k_{g} )}(m)) )+ \sum_{k\geq  x+k_{g} }\ell_{M} (\psi^{x}(n_{0}^{-1}s_{g}^{(k)}(m))) \ .
$$
Let $r\in \mathbb N$ such that $p_{K}^{r}M=p_{K}^{r}D=0$.  It suffices to find  an integer $k_{g} $ such that
$$\bigcup_{ x\in \mathbb N, n_{0}\in N_{0}} \ell_{M} (\psi^{x}(n_{0}^{-1}\mathcal H_{g}^{(x+ k_{g} )}(m) ))$$
is bounded and such that,  for all $x\in \mathbb N$ and $ k\geq  x+k_{g} , n_{0}\in N_{0}$,
$$  \psi ^{x}( n_{0}^{-1} s_{g}^{(k)}(m)) \ \subset \   {\mathcal M}_{\ell}(N_{0})^{r}M+ M^{++} \ , $$
 (because $\ell_{M}( {\mathcal M}_{\ell}(N_{0})^{r}M+ M^{++})= D^{++}$ is bounded).

\bigskip We explain   how one chooses $k_{g}$ in order to ensure the inclusion. First, we choose a lattice $D_{0}$ of $D$ such that $m  \in M_{s}^{bd}(D_{0})$. By \eqref{sgk},  it suffices to show that for a compact open subgroup $P_{1}\subset P_{0} $ we have  $$\psi^{x} N_{0}  s^{k-k_{g}^{(1)}} (1-P_{1}) \Lambda_{g}s\psi^{k+1}N_{0} M_{s}^{bd}(D_{0})
\subset {\mathcal M}_{\ell}(N_{0})^{r}M+ M^{++} \ ,$$
 for    $x\in \mathbb N$ and for
$k\geq k_{g}^{(2)}(P_{1})\geq k_{g}^{(1)}$ (Lemma \ref{compact}). When $k-k_{g} ^{(1)}\geq x$ the left hand side is contained in
  $$ N_{0}  s^{k-k_{g}^{(1)}-x} (1-P_{1}) C_{+}  M_{s}^{bd}(D_{0})
  $$
where  $\Lambda_{g}s= C_{+}$ is a compact subset of $L_{+}$ because $\psi^{x} N_{0}  s^{k-k_{g}^{(1)}} (m) \subset N_{0}s^{k-k_{g}^{(1)}-x} (m) \cup \{0\}$ for $m\in M $ and
$\psi^{k+1}N_{0}(M_{s}^{bd}(D_{0})  ) \subset M_{s}^{bd}(D_{0})  $. By the continuity of the action of $P_{0}$ on $M$ and the compactness of $C_{+}  M_{s}^{bd}(D_{0})$,  we choose $P_{1}$ such that
 $$ (1-P_{1}) C_{+}  M_{s}^{bd}(D_{0})   \subset {\mathcal M}_{\ell}(N_{0})^{r}M+ M^{++} \ ,$$
and  we choose $k_{g}:=k_{g} ^{(2)}(P_{1})$. We have
\begin{align*}
N_{0}  s^{k-k_{g}^{(1)}-x} (1-P_{1}) C_{+}  M_{s}^{bd}(D_{0}) & \subset N_{0}  s^{k-k_{g}^{(1)}-x}({\mathcal M}_{\ell}(N_{0})^{r}M+ M^{++})\\
&\subset {\mathcal M}_{\ell}(N_{0})^{r}M+ M^{++} \ .
\end{align*}
   such that for $k\geq k_{g}+ x$ this inclusion is satisfied.

\bigskip  We show now that the
     set of
$\ell _{M}(\psi^{x}(n_{0}^{-1}\mathcal H_{g}^{(x+ k_{g} )}(m) )$  for $n_{0}\in N_{0}, x\in \mathbb N$   is  bounded in $D$.
Indeed, applying $\psi^{x} n_{0}^{-1}$ to
\begin{align*}
\mathcal H_{g}^{(x+ k_{g} )}(m)  & = \sum_{u\in J(U_{g}/N_{x+k_{g}})} n(g,u) t(g,u) \varphi^{x+ k_{g} }(m_{u, x+ k_{g}})  \\
& = \sum_{u\in J(U_{g}/N_{x+k_{g}})} n(g,u)  \varphi^{x+ k_{g} }(t(g,u) m_{u, x+ k_{g}}) \ ,
\end{align*}
we obtain
$$
\psi^{x}(n_{0}^{-1}\mathcal H_{g}^{(x+ k_{g} )}(m))  =  \sum_{u \in J(U_{g}/N_{x+k_{g}})}
\psi ^{x}(n_{0} ^{-1}n(g,u)) \varphi^{k_{g} } (t(g,u)m_{u,x+k_{g}}) \ .
$$
Each  summand  in the right hand side  is contained in  the compact set $N_{0 }C'_{+}  M_{s}^{bd }(D_{0})$
where $C_{+}'=s^{k_{g}}t(g,U_{g})$ is compact in $L_{+}$ because $k_{g}\geq k_{g}^{(1)}$; the  image by $\ell_{M}$ of  $N_{0 }C'_{+}  M_{s}^{bd }(D_{0})$ is compact hence bounded in $D$. The $o$-submodule of $D$ generated by $\ell_{M}(N_{0 }C'_{+}  M_{s}^{bd }(D_{0}))$ is also bounded.

\bigskip c) The family $\mathfrak{C}_{s}$ satisfies the last assumption of Prop. \ref{corspecial}

We
have seen in b) that  there exists an integer $k_g=k_g(D_0)$
such that
\begin{equation}\label{uni1}
\bigcup_{(x,k), 0\leq x\leq k-k_g}\ell_{M}\circ\psi^x(N_0\mathcal{H}_g^{(k)}(M_s^{bd}(D_0))
\end{equation}
is bounded. Hence it suffices to show that the set
\begin{equation}\label{uni2}
\bigcup_{(x,k), x\geq k-k_g}\ell_{M}\circ\psi^x(N_0\mathcal{H}_g^{(k)}(M_s^{bd}(D_0))
\end{equation}
is bounded. Then the above two sets \eqref{uni1} and \eqref{uni2} will  lie in a common lattice $D_{1}$ of $D$, hence $\mathcal{H}_g^{(k)}(M_s^{bd}(D_0)) \subset M_s^{bd}(D_1)$ for all $k\geq k_{g}$.

Let $m$ be in $M_s^{bd}(D_0)$ and $n_{0}\in N_{0}$. For $x\geq k-k_g$,  we write
\begin{align*}
 \psi^x(n_0^{-1}\mathcal{H}_g^{(k)}(m))&=\sum_{u\in
  J(U_g/N_k)}\psi^x(n_0^{-1}n(g,u)\varphi^{k-k_g}(t(g,u)s^{k_g}m_{u,s^k})) \\
&=\sum_{u\in
  J(U_g/N_k)} \psi^{x-k+k_g}(\psi^{k-k_g}(n_0^{-1}n(g,u))(t(g,u)s^{k_g}m_{u,s^k}))\ .
  \end{align*}
 Each  summand  in the right hand side  is contained in   $ \psi^{x-k+k_g}(\Lambda(N_0) C'_{+}M_s^{bd}(D_0))$. By (\ref{Msbd}),
  $$M_s^{bd}(D_0)  \subset \Lambda(N_0)\varphi^{x-k+k_{g}}\ell_{M}^{-1}(D_0) \ , $$
  hence
\begin{align*}
  \psi^{x-k+k_g}(\Lambda(N_0) C_{+}M_s^{bd}(D_0))
&     \subset  \psi^{x-k+k_g} (\Lambda(N_0)\varphi ^{x-k+k_{g} } C'_{+}  \ell_{M}^{-1} (D_0))
\\
&= \Lambda(N_0) C'_{+}  \ell_{M}^{-1} (D_0)
 \ .
\end{align*}
The image by $\ell_{M}$ of $\Lambda(N_0)C_{+} \ell_{M}^{-1} (D_0)$  is
$\Lambda(N_0^{(2)})\ell ( C'_{+})(D_0)$ which is bounded in $D$ because  $\ell (C'_{+}) \subset L_{+}^{(2)}$ is  compact.
\end{proof}

  \subsection{Functoriality and dependence on $s$}

  Let $Z(L)_{\dagger \dagger}\subset Z(L)$ be the subset  of elements $s $  such that
 $L=L_{-}s^{\mathbb N}$ and $(s^{k}N_{0}s^{-k})_{k\in \mathbb Z}$ and $(s^{-k}w_0N_{0}w_0 ^{-1}s^{k})_{k\in \mathbb Z}$ are decreasing sequences of trivial intersection and union $N$ and $w_0Nw_0^{-1}$, respectively (see section \ref{fc}).

Let $M$ be a topologically etale $L_+$-module over $\Lambda_\ell(N_0)$ and let $D:= \mathbb D(M)$. We have $D/p^{n}D=  \mathbb D (M/p^{n}M)$ for $n\geq 1$. By Lemma \ref{redmod}, $M$ satisfies the properties a,b,c,d of subsection \ref{topAM} and is complete (the same is true for $M/p^{n}M$). The image  $D_{0,n}$ in $D/p^{n} D$ of any lattice $D_{0, n+1}$  in $D/p^{n+1} D$ is a lattice and the maps $\ell$ and $\psi$ commute with the reduction modulo $p^{n}$,  hence  $(M/p^{n+1} M)_{s}^{bd}(D_{0, n+1})$  maps into $(M/p^{n} M)_{s}^{bd}(D_{0, n})$. Therefore the special family $\mathfrak C_{s, n+1}$ in $M/p^{n+1}M$ maps to the special family $\mathfrak C_{s, n}$ in $M/p^{n}M$. As in Lemma \ref{Limspe} we define the special family $\mathfrak C_{s}$ in $M$ to consist of all compact subsets $C\subset M$ such that $p_{n}(C)\in \mathfrak C_{s, n}$ for all $n\geq 1$. By Prop. \ref{sfp} and Lemma \ref{lred1} we have
$$ M(\mathfrak C_{s})=M_{s}^{bd}\ .$$

\begin{theorem} \label{the}
Let   $s\in Z(L)_{\dagger \dagger}$ and $M\in \mathcal M^{et}_{\Lambda_{\ell}(N_{0})}(L_+)$.
\begin{itemize}
\item[(i)] The  $(s, \res,  \mathfrak C_{s})$-integrals $\mathcal H_{g,s}$ of the functions $\alpha_{g,0}|_{M_{s}^{bd}}$  for $g\in N_{0}\overline P N_{0}$ exist,  lie in $\End_{o} (M_{s}^{bd})$, and satisfy the relations H1, H2, H3 of Prop. \ref{multiplicative}.

\item[(ii)]The map $M\mapsto (M_s^{bd}, (\mathcal H_{g,s})_{g\in N_{0}\overline P N_{0}})$ is functorial.
\end{itemize}
\end{theorem}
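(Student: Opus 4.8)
The statement assembles the work of the preceding subsections into a clean packaging, so the proof is mostly a matter of citing what has already been done and checking functoriality. Let me lay out the steps.

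\textbf{Step 1: Reduce (i) to the $p$-torsion case.} The first thing I would do is invoke the machinery of subsection on reduction modulo $p^n$. By Lemma \ref{redmod} the pair $(o,M)$ satisfies the hypotheses a,b,c,d of that subsection and $M$ is complete; moreover $D/p^nD = \mathbb D(M/p^nM)$ and each $M/p^nM \in \mathcal M^{et}_{\Lambda_{\ell}(N_0)}(L_+)$. By Proposition \ref{sfp}, applied to each $M/p^nM$, the family $\mathfrak C_{s,n}$ of compact subsets of $M/p^nM$ contained in some $(M/p^nM)_s^{bd}(D_{0,n})$ is special, satisfies $\mathfrak C(5)$ and $\mathfrak C(6)$, with $M(\mathfrak C_{s,n}) = (M/p^nM)_s^{bd}$. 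The reduction maps send $\mathfrak C_{s,n+1}$ into $\mathfrak C_{s,n}$ (because images of lattices are lattices and $\ell,\psi$ commute with reduction mod $p$), so as in Lemma \ref{Limspe} we get a special family $\mathfrak C_s$ in $M$ with $M(\mathfrak C_s) = M_s^{bd}$ by Proposition \ref{sfp} together with Lemma \ref{lred1}. Then Proposition \ref{912} verifies all the assumptions of Proposition \ref{corspecial} in the $p$-torsion case, so Proposition \ref{basicspecial} applies: the maps $\alpha_{g,0}|_{M(\mathfrak C_s)} : N_0 \to \Hom_o^{\mathfrak Cont}(M(\mathfrak C_s),M)$ are $(s,\res,\mathfrak C_s)$-integrable for all $g \in N_0\overline P N_0$, their integrals $\mathcal H_{g,s}$ lie in $\End_o(M(\mathfrak C_s)) = \End_o(M_s^{bd})$, and satisfy H1, H2, H3. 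This is exactly assertion (i).

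\textbf{Step 2: Functoriality (ii).} Let $f : M \to M'$ be a morphism in $\mathcal M^{et}_{\Lambda_{\ell}(N_0)}(L_+)$. Since $\mathbb D$ is a functor, $f$ induces $\mathbb D(f) : D \to D'$, compatible with $\ell_M, \ell_{M'}$. I would first check that $f(M_s^{bd}) \subseteq M_s'^{bd}$: for $m \in M_s^{bd}$, one has $\ell_{M'}(\psi^k(u^{-1}f(m))) = \ell_{M'}(f(\psi^k(u^{-1}m))) = \mathbb D(f)(\ell_M(\psi^k(u^{-1}m)))$ using that $f$ commutes with $\psi$ (Lemma \ref{-eq}) and with the $N_0$-action, and that $\mathbb D(f) \circ \ell_M = \ell_{M'} \circ f$; since $\mathbb D(f)$ is continuous it carries bounded sets to bounded sets, so the set $\{\ell_{M'}(\psi^k(u^{-1}f(m)))\}$ is bounded in $D'$, i.e. $f(m) \in M_s'^{bd}$. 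Equivalently, $f$ carries $\mathfrak C_s$ into $\mathfrak C_s'$. Then I would check that $f$ intertwines the integrals, i.e. $f \circ \mathcal H_{g,s} = \mathcal H_{g,s}' \circ f$ on $M_s^{bd}$. This follows by passing to the limit in the defining finite sums $\mathcal H_{g,J(N_0/N_k)} = \sum_{u} \alpha(g,x_u)\circ \Res(1_{uN_k})$: the map $\Ind_{P_-}^P(f) : M^P \to M'^P$ is $C_c^\infty(N,o)$-equivariant (it respects $R_0 = \sigma_0 \circ \ev_0$, as noted just before the subsection on $P$-equivariant sheaves), hence commutes with each $\Res(1_{uN_k})$ and with $\alpha(g,x_u)$ (which is the $P$-action of a group element), and $f$ is continuous, so it commutes with the limit $k \to \infty$ defining $\mathcal H_{g,s}$. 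In the general (non-$p$-torsion) case this is seen level by level modulo $p^n$ and then assembled via $M_s^{bd} = \varprojlim_n (M/p^nM)_s^{bd}$ (Lemma \ref{lred1}).

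\textbf{Main obstacle.} The genuinely hard content — the convergence of the integrals and the verification of H1, H2, H3 — has already been carried out in Propositions \ref{criterion}, \ref{corspecial}, \ref{basicspecial} and \ref{912}, so for this theorem there is no serious new analytic difficulty; the only care needed is bookkeeping. The one point I would be most careful about is the interplay in assertion (ii) between the two limit processes (the $k \to \infty$ limit defining $\mathcal H_{g,s}$ and the $\varprojlim_n$ over reductions mod $p^n$): one must confirm that $f$ being a morphism of $\Lambda_\ell(N_0)$-modules makes $\Ind_{P_-}^P(f)$ continuous for the relevant topologies and $\mathfrak C$-continuous on $M_s^{bd}$, so that it commutes with the integral taken in $\Hom_o^{\mathfrak Cont}(M_s^{bd},M)$ — and then that it restricts to an endomorphism-level identity $f \circ \mathcal H_{g,s} = \mathcal H_{g,s}' \circ f$ on all of $M_s^{bd}$, not merely on a dense submodule. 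Both of these reduce, again, to the $p$-torsion case where $M_s^{bd}$ is a union of compact sets and $f$ restricted to each is continuous, so the argument closes.
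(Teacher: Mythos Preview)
Your proposal is correct and follows essentially the same route as the paper. For (i) the paper's proof is the single line ``By Prop.\ \ref{912} the assumptions of Prop.\ \ref{basicspecial} are satisfied'', which is exactly the chain of citations you spell out in Step 1. For (ii) the paper argues as you do that $f(M_s^{bd})\subset M'^{bd}_s$ via $\mathbb D(f)\circ\ell_M=\ell_{M'}\circ f$ and continuity of $\mathbb D(f)$; for the intertwining of the $\mathcal H_{g,s}$ the paper works directly on $M$ with the explicit formula $\mathcal H_{g,s}(\,\cdot\,)=\lim_k\sum_u n(g,u)\varphi_{t(g,u)s^k}\psi_s^k u^{-1}(\,\cdot\,)$ and invokes $P_+$- and $P_-$-equivariance of $f$ (Lemma \ref{-eq}), rather than passing through $\Ind_{P_-}^P(f)$ on $M^P$ as you do---but this is only a cosmetic difference, and your extra caution about the mod $p^n$ interplay in the ``main obstacle'' paragraph is not needed for (ii), since the explicit formula already lives in $\End_o(M_s^{bd})$ and $f$ commutes termwise with each finite sum.
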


\begin{proof} (i) By Prop. \ref{912} the assumptions of Prop. \ref{basicspecial} are satisfied.

(ii) Let $f:M\to M'$ be a morphism in $\mathcal M^{et}_{\Lambda_{\ell}(N_{0})}(L_+)$. For $m \in M$  we denote $E_s(m)=\{\lambda_{M}(\psi_s^k u^{-1} m) \ \text{ for $u\in N_0, k\in \mathbb N$}\}$. We have
\begin{equation}\label{Esm}
 \mathbb D(f) (E_s(m))= E_s (f(m)) \ \text{when $m\in M$} \ ,
 \end{equation}
because the  maps $\lambda_M:M\to D  $ and
$\lambda_{M'}:M'\to D'  $ sending $x$ to $1\otimes x$ for $x\in M$ or $x\in M'$ satisfy $\lambda_M' \circ f = \mathbb D(f)  \circ \lambda_M $, and
$f$ is $P^-$-equivariant by  Lemma \ref{-eq}.
Any morphism between finitely generated modules on
$\mathcal O_{\mathcal E}$ is continuous for the weak topology (cf.\ \cite{SVig} Lemma 8.22). The image of a bounded subset by a continuous map is bounded. We deduce
from \eqref{Esm} that $E_s(m)$ bounded  implies $E_s (f(m)) $ bounded, equivalently
 $m\in M_s^{bd}$ implies  $f(m)\in  {M'_{s}}^{bd}$.
For $m\in M_s^{bd}$ we have  $f (\mathcal H_{g,s} (m))=\mathcal H_{g,s} (f(m))$ where
$$\mathcal H_{g,s}(.) = \lim_{k\to \infty}\sum_{u\in J (N_0/ s^k N_0 s^{-k})}   n(g,u)\varphi_{ t(g,u) s^k} \psi_s^k u^{-1}(.) \  ,$$
because $f$ is $P_+$ and $P_-$-equivariant by  Lemma \ref{-eq}.
\end{proof}

We investigate now the dependence on $s\in Z(L)_{\dagger \dagger}$ of the dense subset
$M_s^{bd}\subseteq M$ and of the  $(s, \res,  \mathfrak C_{s})$-integrals $\mathcal H_{g,s}  $.

\begin{lemma}  $Z(L)_{\dagger \dagger} $ is stable by product.
\end{lemma}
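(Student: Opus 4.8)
The plan is to verify directly that the three defining conditions of $Z(L)_{\dagger\dagger}$ are preserved under multiplication. Let $s_1,s_2\in Z(L)_{\dagger\dagger}$ and put $s:=s_1s_2$. Since $Z(L)$ is a subgroup of $L$, clearly $s\in Z(L)$. So it remains to check: (a) $L=L_-s^{\mathbb N}$; (b) $(s^kN_0s^{-k})_{k\in\mathbb Z}$ is a decreasing sequence with trivial intersection and union $N$; and (c) the analogous statement for $(s^{-k}w_0N_0w_0^{-1}s^k)_{k\in\mathbb Z}$ with union $w_0Nw_0^{-1}$. The symmetry between (b) and (c) — conjugating by $w_0$ interchanges $N$ with $w_0Nw_0^{-1}$ and replaces $s$ by $w_0sw_0^{-1}=s$ (as $s$ is central, this is not automatic, but the hypothesis on $s_i$ is exactly the $w_0$-twisted version) — means (c) will follow from (b) by the same argument applied to $\overline N$, so I would focus on (a) and (b).

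For (b): the key point is that $s_1,s_2$ are both dominant in the sense that $s_iN_0s_i^{-1}\subseteq N_0$ (this is part of "decreasing sequence"), hence $sN_0s^{-1}=s_1s_2N_0s_2^{-1}s_1^{-1}\subseteq s_1N_0s_1^{-1}\subseteq N_0$, so $(s^kN_0s^{-k})_k$ is decreasing. For the intersection: $\bigcap_k s^kN_0s^{-k}\subseteq\bigcap_k s_1^kN_0s_1^{-k}=\{1\}$ is immediate once one observes $s^kN_0s^{-k}\subseteq s_1^kN_0s_1^{-k}$, which follows by induction from $s^jN_0s^{-j}\subseteq s_1^jN_0s_1^{-j}$ using centrality of $s_1$ (so that conjugation by $s_1^j$ commutes with conjugation by $s_2^j$, and $s_2^jN_0s_2^{-j}\subseteq N_0$). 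For the union: one has $s^{-k}N_0s^k\supseteq s_1^{-k}N_0s_1^k$ by the dual inclusion, and $\bigcup_k s_1^{-k}N_0s_1^k=N$ already, while each $s^{-k}N_0s^k\subseteq N$ because $N$ is normal in $P$; hence $\bigcup_k s^{-k}N_0s^k=N$.

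For (a): I would argue that $L=L_-s_1^{\mathbb N}$ together with $L=L_-s_2^{\mathbb N}$ gives $L=L_-s^{\mathbb N}$. Indeed, given $t\in L$, write $t=b_-s_1^{a}$ with $b_-\in L_-$, $a\in\mathbb N$; then write $b_-s_1^a s_2^a\cdot s_2^{-a}$ and note $s_1^a s_2^a=s^a$ by commutativity, so $t=b_-' s^a$ where $b_-'=b_-s^a s_2^{-a}$; but $s^a s_2^{-a}=s_1^a$ which need not lie in $L_-$. So a cleaner route: observe that $L_-$ is the inverse monoid of $L_+=L\cap(\text{stabilizer of }N_0)$, and $s,s_1,s_2$ all lie in $L_+$; it suffices to show every $t\in L_+$ satisfies $ts^{-k}\in L_-$ for some $k$, equivalently $s^{-k}t^{-1}\in L_+$, i.e. $s^{-k}t^{-1}N_0ts^k\subseteq N_0$ for large $k$. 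Since $t^{-1}N_0t\subseteq s_1^{-r}N_0s_1^r$ for some $r$ (as $\bigcup s_1^{-k}N_0s_1^k=N$ and $t^{-1}N_0t$ is compact), and $s^{-k}s_1^{-r}N_0s_1^r s^k = s_1^{-k-r}s_2^{-k}N_0s_2^k s_1^{k-r}\subseteq s_1^{-k-r}N_0s_1^{k-r}\subseteq N_0$ once $k\geq r$, we are done; then $L=L_-s^{\mathbb N}$ follows since $L=L_+\cdot(\text{powers of any dominant central element})$ in the standard way, or more directly $L=L_-L_+$ can be combined with the above. The main obstacle is this bookkeeping in (a): one must be careful that "$L=L_-s^{\mathbb N}$" is the precise decomposition needed in Section~\ref{fc}, and match it exactly, using centrality repeatedly to commute the $s_1$- and $s_2$-conjugations; everything else is routine manipulation of the decreasing/increasing filtrations.
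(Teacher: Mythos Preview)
Your treatment of (b) and (c) is essentially identical to the paper's proof: both compare $(s_1s_2)^kN_0(s_1s_2)^{-k}$ with $s_1^kN_0s_1^{-k}$ via the inclusion $s_2^kN_0s_2^{-k}\subseteq N_0$ to get the decreasing property, trivial intersection, and union $N$.

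For (a) your strategy works in principle but has a sign slip that propagates: $ts^{-k}\in L_-$ is equivalent to $s^{k}t^{-1}\in L_+$, not $s^{-k}t^{-1}\in L_+$. With the correct sign the computation becomes
\[
s^{k}t^{-1}N_0ts^{-k}\subseteq s^{k}s_1^{-r}N_0s_1^{r}s^{-k}=s_1^{k-r}\bigl(s_2^{k}N_0s_2^{-k}\bigr)s_1^{r-k}\subseteq s_1^{k-r}N_0s_1^{r-k}\subseteq N_0
\]
for $k\geq r$, which is what you intended (your displayed exponents $-k-r$ and $k-r$ are off). Combined with $L=L_-L_+$ this does give $L=L_-s^{\mathbb N}$.

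The paper's argument for (a) is a one-liner and avoids all of this: since $s_1\in L_+$ one has $s_1^{-1}\in L_-$, hence
\[
L_-s_2^{\,n}=L_-s_1^{-n}(s_1s_2)^{n}\subseteq L_-(s_1s_2)^{n},
\]
so $L=L_-s_2^{\mathbb N}\subseteq L_-(s_1s_2)^{\mathbb N}\subseteq L$. This bypasses the compactness/filtration bookkeeping entirely and is worth knowing.
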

\begin{proof}  Let $s,s'\in Z(L)_{\dagger \dagger}$.  Clearly $L_- s'^n = L_- s^{-n } s^ns'^n \subset L_-(ss')^n $ because  $L_-$ is a monoid and $s^{-1}\in Z(L)_- = Z(L)\cap L_-$. Therefore  $L=L_{-}(ss')^{\mathbb N}$.
The sequence  $((ss')^{k}N_{0}(ss')^{-k})_{k\in \mathbb Z}$ is decreasing because
$$s'^{k+1}s^ {k+1}N_{0} s^{-k-1}s'^{-k-1}\subset s'^k s^{k+1}N_0 s^{-k-1} s'^{-k} \subset
s'^k s^{k}N_0s^{-k}s'^{-k}  \ . $$
The intersection is trivial and  the union is $N$ because $s'^k s^{k}N_0s^{-k}s'^{-k}\subset  s^{k}N_0s^{-k} $   when $k\in \mathbb N$ and $s'^k s^{k}N_0s^{-k}s'^{-k}\supset  s^{k}N_0s^{-k} $ when $-k\in \mathbb N$.   One makes the same argument  with   $w_0N_0w_0^{-1}$.
 \end{proof}

\begin{lemma}\label{above}
\begin{itemize}
\item[(i)] The action of
 $t_{0}\in \ell^{-1}(L_{0}^{(2)})\cap L_{+}$ on $D $  is invertible.

\item[(ii)]
 There exists a  treillis $D_{0}$ in $D$ which is stable by  $\ell^{-1}(L_{0}^{(2)})\cap L_{+}$.

\end{itemize}
 \end{lemma}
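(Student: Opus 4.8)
Both statements reduce to understanding the subgroup $L_\ell^{(0)} := \ell^{-1}(L_0^{(2)}) \cap L_+$, which consists of those $t \in L_+$ whose image $\ell(t)$ acts \emph{invertibly} on $\mathcal O_{\mathcal E}$ (since $L_0^{(2)}$ is the maximal subgroup of $L_+^{(2)}$, acting on $\mathcal O_{\mathcal E}$ through $\Gamma$-like automorphisms). The plan is to first prove (i) purely by semilinear algebra, and then deduce (ii) by an averaging/stabilization argument over the action of the compact-image group $L_\ell^{(0)}$, using the theory of treillis from Section 7.

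\textbf{Step 1: invertibility (part (i)).} Let $t_0 \in L_\ell^{(0)}$. By definition $\ell(t_0) \in L_0^{(2)}$, so $\varphi_{\ell(t_0)}$ is an invertible ring automorphism of $\mathcal O_{\mathcal E}$ with inverse $\varphi_{\ell(t_0)}^{-1} = \varphi_{\ell(t_0)^{-1}}$. On $D \in \mathfrak M^{et}_{\mathcal O_{\mathcal E},\ell}(L_+)$ the action $\varphi_{t_0}$ is $\varphi_{\ell(t_0)}$-semilinear and \'etale; \'etaleness means $D = \bigoplus_{u \in J(N_0^{(2)}/\ell(t_0)N_0^{(2)}\ell(t_0)^{-1})} u\,\varphi_{t_0}(D)$. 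But $\ell(t_0) \in L_0^{(2)}$ normalizes $N_0^{(2)}$, so $\ell(t_0)N_0^{(2)}\ell(t_0)^{-1} = N_0^{(2)}$ and the index $[N_0^{(2)} : \ell(t_0)N_0^{(2)}\ell(t_0)^{-1}]$ equals $1$; the direct sum has a single summand and $\varphi_{t_0}(D) = D$. Combined with injectivity of $\varphi_{t_0}$ (part of \'etaleness), $\varphi_{t_0}$ is bijective. Equivalently, the base-change map $\mathcal O_{\mathcal E} \otimes_{\mathcal O_{\mathcal E},\varphi_{\ell(t_0)}} D \to D$ is an isomorphism and $\varphi_{\ell(t_0)}$ is itself an isomorphism of $\mathcal O_{\mathcal E}$, so the source is just $D$ again. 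Thus $\psi_{t_0} = \varphi_{t_0}^{-1}$, proving (i).

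\textbf{Step 2: a stable treillis (part (ii)).} By Proposition \ref{Mira}(ii) applied to the $\varphi_s$-structure (for any fixed strictly dominant $s$), $D$ has a canonical maximal $\psi_s$-stable treillis $D^\sharp$. The group $\Gamma$-action, here the action of $L_\ell^{(0)}$, need not preserve $D^\sharp$, but $D^\sharp$ is at least a treillis, and by the analogue of Proposition \ref{Mira}(iv) each $X^{-k}D^\sharp$ is again a $\psi_s$-stable treillis. I would first reduce to the case where $D$ is killed by a power of $p$ via the projective limit $D = \varprojlim D/p^nD$ (the construction below being visibly compatible with reduction mod $p^n$, since $\ell$, $\psi_s$ and the treillis operations all commute with it). Now $L_\ell^{(0)}$ acts on $D$ through a group whose image in $\Aut(D)$ is compact: indeed $\ell(L_\ell^{(0)}) \subseteq L_0^{(2)}$ acts on $\mathcal O_{\mathcal E}$ through the compact group $\Gamma \simeq \mathbb Z_p^\times$ (more precisely through its image, which by continuity — Proposition \ref{pre-auto-cont} / Corollary \ref{top-etale} — lies in a compact subgroup of automorphisms of a treillis modulo $X^m$), and $L_\ell^{(0)}$ normalizes $N_0^{(2)}$. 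Concretely, when $D$ is killed by $p^r$, $X^{-m}D^\sharp / X^m D^\sharp$ is a finite $o$-module for each $m$, and $L_\ell^{(0)}$ acts on it through a finite quotient; choosing $m_0$ large enough that $D^\sharp \subseteq X^{-m_0}D^\sharp$ contains a set of generators, the $\Lambda(N_0^{(2)})$-module
\begin{equation*}
D_0 := \sum_{t_0 \in L_\ell^{(0)}} \varphi_{t_0}(D^\sharp)
\end{equation*}
is a finite sum (by the finiteness just noted it stabilizes), hence a treillis (a finite sum of treillis is a treillis, being squeezed between $D^\sharp$ and some $X^{-m}D^\sharp$), and it is $L_\ell^{(0)}$-stable by construction since $\varphi_{t_0}$ for $t_0 \in L_\ell^{(0)}$ is invertible and permutes the summands. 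Finally I would check $D_0$ remains $\psi_s$-stable if that is needed downstream: since each $\varphi_{t_0}(D^\sharp)$ with $t_0 \in L_\ell^{(0)}$ satisfies $\psi_s(\varphi_{t_0}(D^\sharp)) = \varphi_{t_0}(\psi_s(D^\sharp)) = \varphi_{t_0}(D^\sharp)$ (using $t_0$ central, or at least that $s$ and $t_0$ commute so $\psi_s \varphi_{t_0} = \varphi_{t_0}\psi_s$), $D_0$ is $\psi_s$-stable; for the general $D$ one then takes $D_0 := \varprojlim (D/p^nD)_0$.

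\textbf{Main obstacle.} The delicate point is Step 2's claim that the sum defining $D_0$ stabilizes to a finite sum, i.e. that the $L_\ell^{(0)}$-action on $D$, or on the relevant finite-length quotient $X^{-m}D^\sharp/X^mD^\sharp$, factors through a finite (or at least compact) group. This rests on two inputs: the continuity of the $L_+$-action (Corollary \ref{top-etale}), which gives an open subgroup acting trivially modulo any fixed $X^m D^\sharp$ when $D$ is $p$-power torsion, and the fact that $\ell(L_\ell^{(0)})$ lands in $L_0^{(2)}$ so no ``$\varphi$-expansion'' occurs — this is exactly where the hypothesis $t_0 \in \ell^{-1}(L_0^{(2)})$ (as opposed to a general strictly dominant element) is essential, and it is what was established in part (i). The bookkeeping of treillis inclusions $D^\sharp \subseteq \varphi_{t_0}(D^\sharp) \subseteq X^{-m}D^\sharp$ uniformly in $t_0$ is the technical heart; everything else is the routine reduction mod $p^n$ and the standard stability properties of treillis recalled in Section 7.
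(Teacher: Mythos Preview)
Your argument for (i) is correct and matches the paper's: \'etaleness together with $\ell(t_0)N_0^{(2)}\ell(t_0)^{-1}=N_0^{(2)}$ forces $\varphi_{t_0}$ to be bijective on $D$, with $\psi_{t_0}=\varphi_{t_0}^{-1}$.

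For (ii) your averaging approach has a genuine gap. The set $\ell^{-1}(L_0^{(2)})\cap L_+$ is only a monoid and is typically \emph{non-compact}: for $G=GL_3$ with $\alpha=\alpha_1$, it contains every $\diag(t_1,t_2,t_3)$ with $t_1/t_2\in\mathbb Z_p^\times$ and $t_2/t_3\in\mathbb Z_p$, in particular $\diag(1,1,p^{-n})$ for all $n\geq 0$. Hence your claim that $\sum_{t_0}\varphi_{t_0}(D^\sharp)$ is a finite sum, justified by the action on $X^{-m}D^\sharp/X^mD^\sharp$ factoring through a finite quotient, does not follow: you would first need all $\varphi_{t_0}(D^\sharp)$ to lie in a \emph{common} $X^{-m}D^\sharp$, and continuity of the $L_+$-action does not give this absent compactness. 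Also, your justification of $\psi_s\varphi_{t_0}=\varphi_{t_0}\psi_s$ from ``$s$ and $t_0$ commute'' is incomplete: $st_0=t_0s$ gives $\psi_s\psi_{t_0}=\psi_{t_0}\psi_s$, and to pass to $\psi_s\varphi_{t_0}=\varphi_{t_0}\psi_s$ one must use that $\psi_{t_0}$ is the \emph{two-sided} inverse of $\varphi_{t_0}$, i.e.\ part (i).

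The paper avoids any averaging: it takes the \emph{minimal} $\psi_s$-stable treillis $D^\natural$ (Prop.~\ref{Mira}(iii)) and shows that, for each $t_0$, both $\varphi_{t_0}(D^\natural)$ and $\psi_{t_0}(D^\natural)$ are again $\psi_s$-stable treillis (this is where the commutation above, hence (i), is used). Minimality forces $D^\natural\subset\varphi_{t_0}(D^\natural)$ and $D^\natural\subset\psi_{t_0}(D^\natural)$; applying $\varphi_{t_0}$ to the second inclusion yields $\varphi_{t_0}(D^\natural)\subset D^\natural$, hence equality. You are in fact one step away from this: once you know (as you essentially check) that each $\varphi_{t_0}(D^\sharp)$ is a $\psi_s$-stable treillis, the \emph{maximality} of $D^\sharp$ immediately gives $\varphi_{t_0}(D^\sharp)\subset D^\sharp$, so $D^\sharp$ itself is already stable under $\ell^{-1}(L_0^{(2)})\cap L_+$ and no summation is needed.
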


\begin{proof}   (i) is true because the action of $t_{0}$ on $D$  is \'etale and   $N_0^{(2)}=\ell(t_0)N_0^{(2)}\ell(t_0)^{-1}$.

(ii) Let $s \in Z(L)_{\dagger \dagger}$ and let $\psi_{s}$ be the canonical inverse of the \'etale action $\varphi_{s}$ of $s$ on $D$.    We show that    the minimal  $\psi_s$-stable treillis $D^\natural$  of $D$ (Prop.\ \ref{Mira}(iii)) is stable by  $\ell^{-1}(L_{0}^{(2)})\cap L_{+}$.

For  $t_{0}\in \ell^{-1}(L_{0}^{(2)})\cap L_{+}$ we claim that $\varphi_{t_0}(D^{\natural})$ is also a
$\psi_s$-stable treillis in $D$. We have $\psi_{s} \psi_{t_{0}}= \psi_{t_{0}} \psi_{s}$  as $t_{0}\in Z(L)$.  Multiplying by  $\varphi_{t_{0}}$ on both sides, one gets
$\varphi_{t_{0}}\psi_{s} \psi_{t_{0}}\varphi_{t_{0}}= \varphi_{t_{0}}\psi_{t_{0}} \psi_{s}\varphi_{t_{0}}$. Since $\psi_{t_{0}}$  is the two-sided inverse of $\varphi_{t_{0}} $   by (i) we get that $\varphi_{t_{0}}$  and $\psi_{s}$ commute.  Hence $\varphi_{t_0}(D^{\natural})$  is  a compact $o$-module which is $\psi_{s}$-stable. It is a $\Lambda (N_{0}^{(2)})$-module because any $\lambda \in \Lambda (N_{0}^{(2)})$ is of the form $\lambda=\varphi_{\ell (t_{0})}(\mu)$ for some $\mu\in  \Lambda (N_{0}^{(2)})$ and
$\lambda \varphi_{t_0}(d)=\varphi_{t_{0}}(\mu d)$ for all $d\in D$. As $D^{\natural}$ contains a lattice and $\varphi_{t_0}$ is \'etale, we deduce that $\varphi_{t_0}(D^{\natural})$ contains a lattice and therefore is a treillis. By the minimality of $D^{\natural}$ we must have
$$D^{\natural} \subset \varphi_{t_0}(D^{\natural})  \ . $$
Similarly one checks that $\psi_{t_0}(D^{\natural})$ is a treillis. It is  $\psi_s$-stable because  $\psi_s$ and $\psi_{t_0}$ commute. Hence
$$D^{\natural} \subset \psi_{t_0}(D^{\natural})  \ . $$
Applying  $\varphi_{t_{0}} $ which is the two-sided inverse of $\psi_{t_0}$ we obtain
$\varphi_{t_0}(D^{\natural}) \subset D^{\natural}$ hence
$D^{\natural} = \varphi_{t_0}(D^{\natural})\ .$
\end{proof}

 We denote by $Z(L)_{\dagger} \subset Z(L)$ the monoid of $z\in Z(L)_{+}=Z(L)\cap L_{+}$  such that $z^{-1}w_{0}N_{0}w_{0}^{-1}z \subset w_{0}N_{0}w_{0}^{-1}$. We have $Z(L)_{\dagger \dagger}Z(L)_{\dagger}\subset Z(L)_{\dagger \dagger}$.

\bigskip  Note that  $L_{0}^{(2)}$ contains the center of $GL(2,\mathbb Q_{p})$ and that   $Z(L^{(2)})_{\dagger} = L^{(2)}_{+}$.

For $m\in M,t\in L_{+}, u\in U,$ and a system of representatives $J(N_{0}/tN_{0}t^{-1})\subset N_{0}$  for the cosets in $N_{0}/tN_{0}t^{-1}$
we have \eqref{writing}
 \begin{equation}\label{writingmu}
 m = \sum_{u\in J(N_{0}/tN_{0}t^{-1}) } u \mu_{t,u} \quad , \quad  \mu_{t,u} := \varphi_{t}  \psi_{t} (u^{-1}m)  \ .
 \end{equation}
  For $g\in  N_{0}\overline P N_{0}$ and $ s\in Z(L)_{\dagger \dagger}$,   we have the smallest positive integer  $k_{g,s}^{(0)}$  as in \eqref{f:N0}.   For    $k\geq k_{g,s}^{(0)}$, we have $\mathcal H_{g,s,J(N_{0}/N_{k})}\in \End_{o}^{cont}(M)$ where (compare with \eqref{Hgk})
 \begin{equation}\label{Hgkmu}
   \mathcal H_{g,s,J(N_{0}/N_{k})} (m)=  \sum_{u \in J(U_{g}/N_{k })} n(g,u)t(g,u)\mu_{s^{k},u} \ .
  \end{equation}
   When $m\in M_s^{bd}$, the integral $\mathcal H_{g,s}(m)$ is the limit of $\mathcal H_{g,s,J(N_{0}/N_{k})}(m)$ by Theorem \ref{the} and \eqref{Hgl}.

 \begin{proposition}\label{powers}  Let  $s\in Z(L)_{\dagger \dagger}$, $t_0 \in  \ell^{-1}(L_{0}^{(2)})\cap Z(L)_{\dagger}$ and $r$ a positive integer.

 (i) We have $M_{st_0} ^{bd} \subseteq M_{s}^{bd}  = M_{s^r}^{bd}$.

 (ii) For $g\in  N_{0}\overline P N_{0}$ we have  $\mathcal H_{g,s}= \mathcal H_{g,st_{0}}$ on $M_{st_0} ^{bd}$ and $\mathcal H_{g,s}= \mathcal H_{g,s^{r}}$ on $M_s^{bd}$.
\end{proposition}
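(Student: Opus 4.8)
The plan is to prove each half of (i) and (ii) separately, using the characterization of $M_s^{bd}$ in terms of boundedness of the sets $E_s(m) := \{\ell_M(\psi_s^k(u^{-1}m)) : u \in N_0, k \in \mathbb N\}$ in $D$, together with the structure of the $(s,\res,\mathfrak C_s)$-integrals as limits of the partial sums $\mathcal H_{g,s,J(N_0/N_k)}$. By Lemma \ref{lred1} and the reduction-modulo-$p^n$ formalism of subsection \ref{topAM}, all statements reduce to the case where $M$ (hence $D$) is killed by a power of $p$; I would carry out the argument there and invoke Lemma \ref{lred1} and Prop.\ \ref{sfp} to lift.

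First I would handle the equality $M_s^{bd} = M_{s^r}^{bd}$ in (i). The key observation is that $\psi_{s^r} = \psi_s^r$ and $s^r N_0 s^{-r} = N_r$, so that iterating the expansion \eqref{writing} relative to $s^r$ reproduces the expansion relative to $s$ at the levels $k$ divisible by $r$; conversely every $\psi_s^k(u^{-1}m)$ with general $k$ can be written, using \eqref{writing} at level $k - r\lfloor k/r\rfloor$ and the identity $\psi_s^k(u^{-1}m) = \psi_s^{r\lfloor k/r\rfloor}(\psi_s^{k-r\lfloor k/r\rfloor}(u^{-1}m))$ together with Lemma \ref{produit}, as a finite $\Lambda(N_0)$-combination of elements $\psi_{s^r}^j(v^{-1}m)$. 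Since $\ell_M$ is continuous, the image under $\ell_M$ of a bounded set is bounded, and a bounded set multiplied by a fixed compact subset of $\Lambda_\ell(N_0)$ stays bounded (the properties of bounded sets in subsection \ref{subsec:bounded}), the two sets $E_s(m)$ and $E_{s^r}(m)$ generate the same bounded $o$-submodule of $D$ up to a fixed compact factor. Hence one is bounded iff the other is, giving $M_s^{bd} = M_{s^r}^{bd}$.

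For the inclusion $M_{st_0}^{bd} \subseteq M_s^{bd}$ I would use Lemma \ref{above}: since $t_0 \in \ell^{-1}(L_0^{(2)}) \cap L_+$, its action $\varphi_{t_0}$ on $D$ is invertible and there is a treillis $D^\natural$ stable by $t_0$. Because $st_0 \in Z(L)_{\dagger\dagger}$ (the two stability lemmas above) and $\psi_{st_0} = \psi_s \circ \psi_{t_0} = \psi_{t_0} \circ \psi_s$ with $\psi_{t_0} = \varphi_{t_0}^{-1}$ on $D$, one has $\psi_s^k(u^{-1}m) = \varphi_{t_0}^k(\psi_{st_0}^k(u^{-1}m))$ modulo the part of the expansion \eqref{writing} for $st_0$ supported away from $\iota(N_0^{(2)})$-translates. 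Concretely, expanding $m$ via \eqref{writingmu} relative to $(st_0)^k$, applying $\ell_M$, and using that $\ell_M \circ \varphi_{t_0} = \varphi_{\ell(t_0)} \circ \ell_M$ with $\ell(t_0) \in L_0^{(2)}$ acting invertibly on $D$ by a map carrying the treillis-scale into itself, I would show that $E_s(m)$ is bounded whenever $E_{st_0}(m)$ is. This is where I expect the main technical friction: one has to keep careful track of how the expansion coefficients relative to $s^k$ and relative to $(st_0)^k$ are related, which involves the index sets $J(N_0/s^kN_0s^{-k})$ versus $J(N_0/(st_0)^k N_0 (st_0)^{-k})$ and the identity $\varphi_{t_0}^{-1} = \psi_{t_0}$ only holding after applying $\ell_M$ (since on $M$ itself $\varphi_{t_0}$ is merely étale, not invertible). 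The cleanest route is probably to argue entirely downstairs in $D$, using $D = D_{s}^{bd} = D_{st_0}^{bd}$ (Prop.\ \ref{io}) and transporting boundedness through $\ell_M$ via the formula for $\psi_s^k$ in terms of $\psi_{st_0}^k$ and $\varphi_{t_0}$.

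Finally, for (ii), both equalities of integrals follow from the corresponding equalities of partial sums in the limit. For $\mathcal H_{g,s} = \mathcal H_{g,s^r}$ on $M_s^{bd}$: the partial sum $\mathcal H_{g,s^r,J(N_0/N_{rk})}$ computed along the cofinal subsequence of levels divisible by $r$ is literally equal to $\mathcal H_{g,s,J'(N_0/N_{rk})}$ for the corresponding system of representatives, because $\psi_{s^r}^k = \psi_s^{rk}$, $n(g,u)\varphi_{t(g,u)(s^r)^k}\psi_{s^r}^k u^{-1} = n(g,u)\varphi_{t(g,u)s^{rk}}\psi_s^{rk}u^{-1}$, and $N_{rk}$ is the same subgroup; since both nets converge (Theorem \ref{the}) and agree along a cofinal subnet, they agree in the limit. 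For $\mathcal H_{g,s} = \mathcal H_{g,st_0}$ on $M_{st_0}^{bd}$: here I would compare $\mathcal H_{g,st_0,J(N_0/M_k)}$, where $M_k := (st_0)^kN_0(st_0)^{-k}$, with $\mathcal H_{g,s,J(N_0/N_k)}$ by the cocycle relation for $\alpha$ (Lemma \ref{prod!}) applied to $gt_0^k \cdot t_0^{-k}$ — essentially Prop.\ \ref{f:prod} with $h = t_0^k$ — noting that $t_0 \in P_+ \cap N_0\overline P N_0$ contributes $\mathcal H_{t_0^k} = \varphi_{t_0^k}$ by relation H3, which then cancels against $\psi$'s by étaleness, exactly as in the independence-of-$s$ argument at the end of the proof of Prop.\ \ref{3}. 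The routine but necessary check is that the integers $k_{g,s}^{(0)}$, $k_{g,st_0}^{(0)}$ controlling when \eqref{f:N0} holds can be chosen uniformly, which follows from compactness of $t(g,U_g)$ and $\bar n(g,U_g)$ as in \eqref{kg1}.
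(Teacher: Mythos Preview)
Your treatment of (i) and of the $s^r$ half of (ii) follows the paper's line and is essentially correct: the inclusion $E_{s^r}(m)\subseteq E_s(m)$ gives $M_s^{bd}\subseteq M_{s^r}^{bd}$ at once, the reverse comes from expanding via \eqref{writing} at the residual level $k_2<r$, and for the integrals your subsequence argument is exactly why the paper calls $\mathcal H_{g,s}=\mathcal H_{g,s^r}$ ``obvious.'' Likewise $M_{st_0}^{bd}\subseteq M_s^{bd}$ is proved in the paper by the same mechanism you sketch, made precise via \eqref{writing} for $t=t_0^k$ and the $\varphi_{t_0}$-stability of a lattice $D_0$ from Lemma~\ref{above}.

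The genuine gap is in the $st_0$ half of (ii). Your plan appeals to the independence argument at the end of Prop.~\ref{3} and says the $t_0^k$ contribution ``cancels against $\psi$'s by \'etaleness.'' But that $GL_2$ argument works only because there $\psi_{zs_e}$ is the \emph{two-sided} inverse of $\varphi_{zs_e}$; here $\varphi_{t_0}\psi_{t_0}\neq\id$ on $M$, a point you yourself raised when discussing (i). Concretely
\[
\mathcal H_{g,st_0,J(N_0/(st_0)^kN_0(st_0)^{-k})}
=\sum_{u}\alpha(g,x_u)\circ u\,\varphi_s^k\varphi_{t_0}^k\psi_{t_0}^k\psi_s^k\,u^{-1},
\]
and neither the cocycle relation nor Prop.~\ref{f:prod} collapses the inner projector $\varphi_{t_0}^k\psi_{t_0}^k$. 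The paper's proof circumvents this by a different, two-stage mechanism: in part~(c) it shows the partial sums for $s$ and $st_0$ \emph{coincide termwise} on the dense submodule $\langle N_0\iota_D(D)\rangle_o$, because by Lemma~\ref{coe} the coefficients $\psi_{st_0}^k(u^{-1}\iota_D(d))$ vanish for $u$ outside $\iota(N_0^{(2)})(st_0)^kN_0(st_0)^{-k}$, so refining the partition adds nothing; then in parts~(d) and~(e) it passes to all of $M_{st_0}^{bd}$ by approximating an arbitrary $m\in M_{st_0}^{bd}(D_1)$ by such an element $m(st_0)$ modulo $N_0\varphi_{st_0}^{k}(J_\ell(N_0)M)$, which lies in $\mathcal M_\ell(N_0)^n M$ for $k$ large (Lemma~\ref{4.6}), and invoking the $\mathfrak C_s$- and $\mathfrak C_{st_0}$-uniform convergence from Theorem~\ref{the}. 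Without this density-plus-uniform-convergence step your argument for $\mathcal H_{g,s}=\mathcal H_{g,st_0}$ does not close.
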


\begin{proof} a) Note that  $st_{0}$ and $s^{r}$  in the proposition belong also to $Z(L)_{\dagger \dagger}$.

For a treillis $D_{0}$ in $D$ which is stable by  $
\ell^{-1}(L_{0}^{(2)})\cap L_{+}$ (Lemma \ref{above}),
$(X^{(2)})^{-r}D_{0}$ is a treillis in $D$; it is also stable by $t_0 \in
\ell^{-1}(L_{0}^{(2)})\cap L_{+}$ because
$$
\varphi _{\ell (t_{0})}((X^{(2)})^{-r}\Lambda (N_{0}^{(2)}) )= \varphi _{\ell (t_{0})}((X^{(2)})^{-r})\varphi _{\ell (t_{0})}(\Lambda (N_{0}^{(2)}) )=(X^{(2)})^{-r} \Lambda (N_{0}^{(2)}) \ . $$

When $M$ is killed by a power of $p$, this implies with Prop. \ref{sfp}  that  $M_{s}^{bd}$ is the union of $M_s^{bd}(D_{0})$ when $D_{0}$ runs over the lattices of $D$ which are stable by $\ell^{-1}(L_{0}^{(2)})\cap L_{+}$.

b)  We suppose from now on, as we can by Lemma \ref{lred1}, that $M$ is killed by a power of $p$ to prove $M_{st_{0}}^{bd}\subset M_{s}^{bd} =M_{s^r}^{bd}$.
  Let $m\in M_{s t_{0}}^{bd}(D_{0})$ where
 $D_{0}$ is a  $\ell^{-1}(L_{0}^{(2)})\cap L_{+}$-stable  lattice of $D$. For $u\in N_{0}$ and $k\in \mathbb N$, using \eqref{writing} for $t=t_0^k$ we obtain that
\begin{align*}
\ell_{M}(\psi_{s}^k(u^{-1}m))=
\ell_{M}(\sum_{v\in J(N_0/t_0^kN_0t_0^{-k})}v\circ \varphi_{t_0}^k\circ\psi_{t_0}^k\circ
v^{-1}\circ\psi_{s}^{k}(u^{-1}m))=\\
=\sum_{v\in J(N_0/t_0^kN_0t_0^{-k})}\ell(v)\varphi_{t_0}^k(\ell_{M}(\psi_{st_{0}}^k(\varphi_{s}^k(v^{-1})u^{-1}m)))
\end{align*}
lies in $D_0$, since $D_0$ is both $N_{0}^{(2)}$- and $\varphi_{t_0}$-invariant and
$\ell_{M}(\psi_{st_{0}}^k( u'm))\in D_{0}$ for $u'\in N_{0}$.  Therefore
$M_{s t_{0}}^{bd}(D_{0}) \subset M_{s}^{bd}(D_{0})$ and by a) we deduce $M_{st_{0}}^{bd}\subset M_{s}^{bd} $.

 For any $m\in M$ we observe that
$$\{\ell_{M}(\psi_{s^{r}}^{k}(u^{-1}m) \ \text{for $k\in \mathbb N,u\in N_{0}$}\} \subset \{\ell_{M}(\psi_{s}^{k}(u^{-1}m) \ \text{for $k\in \mathbb N,u\in N_{0}$}\} \ ,$$
as $\psi_{s^{r}}^{k}= \psi_{s}^{rk}$.
We deduce that $M_{s }^{bd}(D_0)\subset M_{s^{r}}^{bd}(D_0)$ for any lattice $D_0$ of $D$ hence  $M_s^{bd}\subset  M_{s^r}^{bd}$. Conversely, for $k_{1}\in \mathbb N$
we write $k_1=rk-k_2$  with $k\in \mathbb N$ and
 $0\leq k_2<r$ and we observe that
 \begin{align*}
\ell_{M}(\psi_s^{k_1}(u^{-1}m))&=\ell_{M}(\sum_{v\in
  J(N_0/s^{k_2}N_0s^{-k_2})}v\circ \varphi_s^{k_2}\circ\psi_{s}^{rk}(\varphi_s^{k_1}(v^{-1})u^{-1}m))\\
&=\sum_{v\in
  J(N_0/s^{k_2}N_0s^{-k_2})}\ell(v)\varphi_s^{k_2}(\ell_{M}(\psi_{s^r}^{k}(\varphi_s^{k_1}(v^{-1})u^{-1}m))) \ .
\end{align*}
The $\Lambda (N_{0}^{(2)})$-submodule  $D_{r}$ generated by  $\sum _{i=1}^{r-1}\varphi_s^{i }(D_0)$ is  a lattice because  the action $\varphi_{s}  $ of $s$ on $D$ is \'etale. We deduce that  $ M_{s^{r}}^{bd}(D_{0})\subset M_{s}^{bd}(D_{r})$ since $\ell_{M}(\psi_{s^r}^{k}(u'm))\in D_{0}$ for $u'\in N_{0}, m \in M_{s^{r}}^{bd}(D_{0}) $. Therefore
$M_{s ^{r}}^{bd}(D_{0}) \subset M_{s}^{bd}(D_{r})$ hence $M_{s ^{r}}^{bd}  \subset M_{s}^{bd} $. It is obvious that $\mathcal H_{g,s}= \mathcal H_{g,s^{r}}$ on $M_s^{bd}$.

c)   Let  $g\in  N_{0}\overline P N_{0} , k\geq k_{g,s}^{(0)} , t_{0} \in  \ell^{-1}(L_{0}^{(2)})\cap Z(L)_{\dagger}$ and $r\geq 1$.
 We have
 $$k_{g,st_{0}}^{(0)} \leq k_{g,s}^{(0)} \quad, \quad k_{g,s^{r}} ^{(0)}\leq k_{g,s}^{(0)}$$ because $(st_0)^k N_0 (st_0)^{-k} \subset N_{k} $ and $ (s^{r})^k N_0 (s^{r})^{-k}=N_{kr} \subset N_k$.

 Let $d$ in $D$ and $v\in N_0$.  By \eqref{writing} we have
 \begin{align*}
d&= \sum_{u\in  J(N_0^{(2)}/\ell(st_0)^kN_0^{(2)}\ell(st_0)^{-k})}u \varphi_{st_0}^k\circ\psi_{st_0}^k(u^{-1}d) \\
&=\sum_{u\in  J(N_0^{(2)} /\ell(s)^k N_0^{(2)} \ell(s )^{-k})}
u\varphi_{s}^k\circ\psi_{s}^k (u^{-1}d) \ ,
\end{align*}
with the second equality holding true summand per summand, because $\psi_{t_0}$ is the left and right inverse of  $\varphi_{t_0}$ on $D$ (Lemma \ref{above} (i)) and  $\ell(t_0)N_0^{(2)}\ell(t_0)^{-1}=N_0^{(2)}$.
Since $\iota_D$ commutes with $\varphi_t$ and $\psi_t$ for $t\in L_+$, this implies
\begin{align*}
v \iota_D(d)&=\sum_{u\in  J(N_0^{(2)} /\ell(st_0)^k N_0^{(2)} \ell(st_0 )^{-k})}
v\iota(u)\varphi_{st_0}^k\circ\psi_{st_0}^k(\iota(u)^{-1}\iota_D(d)) \\
&=\sum_{u\in  J(N_0^{(2)} /\ell(s)^k N_0^{(2)} \ell(s )^{-k})}
v\iota(u)\varphi_{s}^k\circ\psi_{s}^k(\iota(u)^{-1}\iota_D(d)) \ ,
\end{align*}
again with the second equality holding true summand per summand. We choose, as we can,
 system of representatives   $J(N_{0}/(st_0)^k N_0 (st_0)^{-k}) $ and $J(N_{0}/ s^kN_0 s^{-k})$  containing $ \iota(J(N_0^{(2)} /\ell(s)^k N_0^{(2)} \ell(s )^{-k}))$.
  For $k\geq k_{g,s}^{(0)} \geq k_{g,st_0}^{(0)}$, we   obtain
$$\mathcal H_{g,st_0,vJ(N_{0}/(st_0)^k N_0 (st_0)^{-k}) }(v\iota_D(d)) =
\mathcal H_{g,s,vJ(N_{0}/ s^kN_0 s^{-k}) }(v\iota_D(d)) \ . $$
Passing to the limit when $k$ goes to infinity, and using linearity we deduce that $\mathcal H_{g,st_0}=
\mathcal H_{g,s}$ on the $o[N_0]$-submodule $<N_0\iota_D (D)>_o$ generated by $\iota_D (D)$ in $M_{st_0}^{bd}$.

 d) Let $m \in M_s^{bd}(D_1)$ with $D_1 \subset D$ a $\psi_s$-stable lattice (Prop. \ref{Mira} (iv)). For  a positive integer $k$, and a set of representatives  $J(N_0/s^{k}N_0s^{-k })$, we write $m$ in the form \eqref{writing}
\begin{align*}
m=\sum_{u\in
  J(N_0/s^{k}N_0s^{-k })}u\varphi_{s }^{k }(\iota_D(d(s,u))+m(s,u))
\end{align*}
with $m(s,u)$ in $J_\ell(N_0)M$ and   $d(s,u)=\ell_M(\psi_{s }^{k}(u^{-1}m))$ in $D_{1}$.
Then
\begin{equation*} m(s ):=\sum_{u\in
   J(N_0/s^{k}N_0s^{-k })} u\varphi_{s }^{k }(\iota_D(d (s,u))) \ \text{  lies in $<N_0\iota_D (D)>_o$}
    \end{equation*}
  because $\iota_D$ is $L_+$-equivariant. Moreover
 $m-m(s)$ is contained in the $o[N_0]$-submodule
$N_0 \varphi_s^{k}(J_\ell(N_0)M)$ generated by $\varphi_s^{k}(J_\ell(N_0)M)$.
 We show that
\begin{equation}
m (s) \in M_s^{bd } (D_1) \ .
\end{equation}
For $v\in N_{0}$ and $r\leq k $  we have
\begin{align*}
\psi_{s}^{r}(v^{-1}(m-m (s))) &=
\psi_{s}^{r}(v^{-1}\sum_{u\in J(N_0/s^{k}N_0s^{-k })}u\varphi_{s }^{k }(m(s,u))) \\
&=\sum_{u\in J(N_0/s^{k}N_0s^{-k })}\psi_{s}^r(v^{-1}u)\varphi_{s}^{k-r}(m(s,u))
\end{align*}
which lies in $J_\ell(N_0)M$ since $m(s,u)$ is in $J_\ell(N_0)M$ and  $J_\ell(N_0)M$ is $N_0$ and $\varphi_s$-stable. This shows that
$\ell_M(\psi_{s}^{r}(v^{-1}m(s)))=\ell_M(\psi_{s}^{r}(v^{-1}m))$ lies in $D_1$.
On the other hand, for $r>k$  we have
\begin{align*}
\ell_M(\psi_{s}^{r}(v^{-1}m (s)))&=\ell_M(\psi_{s}^{r}(v^{-1}
\sum_{u\in J(N_0/s^{k}N_0s^{-k })}u\varphi_s^{k}(\iota_D(d(s,u))))) \\
& =\sum_{u\in J(N_0/s^{k}N_0s^{-k })}\ell_M(\psi_{s}^{r-k}(\psi_{s}^{k}(v^{-1}u)\iota_D(d(s,u))))
\end{align*}
which lies in $D_1$. Indeed, since $D_1$ is $\psi_s$-stable the formula in part ii) of the proof of Prop.\ \ref{io} implies that $\iota_D(D_1) \subseteq M_s^{bd}(D_1)$; hence the  $\iota_D(d(s,u))$ lie in the $\psi_{s}$- and
$N_0$-invariant subspace $M_{s}^{bd}(D_1)$. We conclude that  $m(s)\in M_{s}^{bd}(D_1)$.

Therefore,  for any $\psi_{st_0}$-stable lattice $D_1 \subset D$, any $k\geq 1 $, and  any set of representatives $J(N_0/(st_0)^{k}N_0(st_0)^{-k })$, we have defined an $o$-linear homomorphism
 \begin{equation*}
 m\mapsto m (st_0) \quad  M_{st_0}^{bd}(D_1) \to M_{st_0}^{bd}(D_1)\ \cap \ <N_0\iota_D(D)>_o
\end{equation*}
such that
\begin{equation}\label{msto}
m-m(st_0)\in M_{st_0}^{bd}(D_1)\cap \varphi_{st_0}^k(J_\ell(N_0) M).
\end{equation}
 By c) we have  $\mathcal H_{g,st_0} (  m(st_0))=
\mathcal H_{g,s} (  m(st_0))$ for $m\in M_{st_0}^{bd}(D_1)$.

e)  To end the proof   that $\mathcal H_{g,st_0} =
\mathcal H_{g,s}$ on  $M_{st_0}^{bd}(D_1)$ we use the $\mathfrak{C}_s$-uniform convergence of $(\mathcal H_{g,s ^{k} , J(N_0/s ^{k}N s ^{-k}) })_k$. We fix, for any $k \geq 1$, systems of representatives $J(N_0/(st_0)^{k}N_0(st_0)^{-k })$ and $J(N_0/s ^{k}N s ^{-k})$. We also choose a lattice $D_0 \subset D$ which is stable by $\ell^{-1}(L_{0}^{(2)})\cap L_{+}$ and such that $D_1 \subset D_0$.
We recall that
$M_{st_0}^{bd}(D_1) $ is compact (Prop. \ref{sfp} i)) and that $M_{st_0}^{bd}(D_1) \subset M_{st_0}^{bd}(D_0) \subset M_s^{bd}(D_0)$ by b).
For any
open $\Lambda(N_0)$-submodule  in the weak topology $M_0\subset  M$, there exists  a
common constant $k_0 \geq k_{g,s}^{(0)}\geq k_{g,st_0}^{(0)}$ (by c)) such that for $k\geq k_{0}$,
\begin{align}\label{cun}
\mathcal H_{g,(st_0)^{k} , J(N_0/(st_0)^{k}N (st_0)^{-k})}
&\in     \mathcal H_{g,st_0}  + E(M_{st_0}^{bd}(D_1), M_0 ) \\
\label{cuns}\mathcal H_{g,s ^{k} , J(N_0/s ^{k}N s ^{-k}) }  & \in   \mathcal H_{g,s}  +E(M_{st_0 }^{bd}(D_1), M_0 ) \ .
\end{align}
On the left hand side of \eqref{cun}, \eqref{cuns}, we have continuous endomorphisms
  of $M$.
   By Lemma \ref{4.6}, there exists an
integer $k_1\geq k_0$  such that   they send   $N_0 \varphi_{st_0 }^{k _1}(J_\ell(N_0)M) $  into
 $M_0$. Therefore, for $m\in M_{st_0}^{bd}(D_1)$, they send the element
  $m-m(st_0)$ associated to $k_1$ and  $J(N_0/((st_0)^{k_1}N_0(st_0)^{-k_1 })$ as in d)  \eqref{msto}   into $M_0$  hence
 $$
 \text{
$\mathcal H_{g,st_0}( m-m(st_0))$ and $\mathcal H_{g,s}(m-m(st_0))$  lie in $M_0$.
}
$$
By d)  we obtain that $H_{g,st_0}(m)-H_{g,s_2}(m)$ lies in $M_{0}$ for $m\in M_{st_0}^{bd}(D_1)$.
The statement follows since we chose $M_0$ to be an arbitrary open
neighborhood of zero in the weak topology of $M$.
\end{proof}

\begin{definition}\label{br} We define the transitive relation $s_1\leq s_2$ on $Z(L)_{\dagger \dagger}$ generated by
 $$s_1=s_{2} t_{0} \text{ for  } t_{0} \in   \ell^{-1}(L_{0}^{(2)})\cap Z(L)_{\dagger} \quad \text{ or} \quad s_1^{r_{1} }= s_{2}^{r_{2} } \ \text{for     positive  integers $r_{1}, r_{2}$.} $$
\end{definition}

Proposition \ref{powers} admit the following corollary.

\begin{corollary}\label{subset} Let $s_{1}, s_{2}\in Z(L)_{\dagger \dagger}$.
\begin{itemize}
\item[i)] When  $s_1\leq s_2$   we have
$M_{s_1}^{bd}\subseteq M_{s_2}^{bd}$ and
$\mathcal H_{g,s_{1}}=\mathcal H_{g,s _{2}}$  on $M_{s_1}^{bd}$.
\item[ii)] When the relation $\leq$  on $Z(L)_{\dagger \dagger}$ is  right filtered,   we have $\mathcal H_{g,s_1} =\mathcal H_{g,s_2}$   on $M_{s_1}^{bd}\cap M_{s_2}^{bd}$.
\end{itemize}
\end{corollary}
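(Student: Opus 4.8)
The plan is to deduce both assertions directly from Proposition \ref{powers} by walking along a chain of elementary relations. First I would unwind Definition \ref{br}: $s_1 \leq s_2$ means there is a finite sequence $s_1 = u_0, u_1, \ldots, u_n = s_2$ in $Z(L)_{\dagger\dagger}$ such that each consecutive pair $(u_{i-1}, u_i)$ satisfies one of the two generating relations, i.e.\ either $u_{i-1} = u_i t_0$ with $t_0 \in \ell^{-1}(L_0^{(2)}) \cap Z(L)_\dagger$, or $u_{i-1}^{r_{i-1}} = u_i^{r_i}$ for some positive integers $r_{i-1}, r_i$. (All the $u_i$ lie in $Z(L)_{\dagger\dagger}$ since the relation is defined there; this is also consistent with the stability of $Z(L)_{\dagger\dagger}$ under products and with $Z(L)_{\dagger\dagger}Z(L)_\dagger \subseteq Z(L)_{\dagger\dagger}$.)

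Next I would record what a single elementary step gives. If $u_{i-1} = u_i t_0$, then Proposition \ref{powers} applied with $s = u_i$ yields $M_{u_{i-1}}^{bd} = M_{u_i t_0}^{bd} \subseteq M_{u_i}^{bd}$ and $\mathcal H_{g,u_{i-1}} = \mathcal H_{g,u_i}$ on $M_{u_{i-1}}^{bd}$, for every $g \in N_0\overline{P}N_0$. If $u_{i-1}^{r_{i-1}} = u_i^{r_i}$, then Proposition \ref{powers} gives $M_{u_{i-1}}^{bd} = M_{u_{i-1}^{r_{i-1}}}^{bd} = M_{u_i^{r_i}}^{bd} = M_{u_i}^{bd}$ and similarly $\mathcal H_{g,u_{i-1}} = \mathcal H_{g,u_{i-1}^{r_{i-1}}} = \mathcal H_{g,u_i^{r_i}} = \mathcal H_{g,u_i}$ on this common module. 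So in either case $M_{u_{i-1}}^{bd} \subseteq M_{u_i}^{bd}$ and $\mathcal H_{g,u_{i-1}} = \mathcal H_{g,u_i}$ on $M_{u_{i-1}}^{bd}$.

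Then part i) follows by induction on $n$: one gets the increasing chain $M_{s_1}^{bd} = M_{u_0}^{bd} \subseteq M_{u_1}^{bd} \subseteq \cdots \subseteq M_{u_n}^{bd} = M_{s_2}^{bd}$, and, restricting all the operator identities to the smallest member $M_{s_1}^{bd}$ (legitimate because the domains only grow along the chain), $\mathcal H_{g,s_1} = \mathcal H_{g,u_1} = \cdots = \mathcal H_{g,u_n} = \mathcal H_{g,s_2}$ on $M_{s_1}^{bd}$. For part ii), assuming $\leq$ is right filtered I would pick $s_3 \in Z(L)_{\dagger\dagger}$ with $s_1 \leq s_3$ and $s_2 \leq s_3$; part i) then gives $\mathcal H_{g,s_1} = \mathcal H_{g,s_3}$ on $M_{s_1}^{bd}$ and $\mathcal H_{g,s_2} = \mathcal H_{g,s_3}$ on $M_{s_2}^{bd}$, hence $\mathcal H_{g,s_1} = \mathcal H_{g,s_2}$ on $M_{s_1}^{bd} \cap M_{s_2}^{bd}$. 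I do not expect a genuine obstacle here: the entire analytic content is already packaged in Proposition \ref{powers}, and the only point requiring care is the bookkeeping along the chain — tracking the direction of each elementary relation and observing that an operator identity valid on a smaller module persists once later operators agree on a larger one.
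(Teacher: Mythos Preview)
Your proposal is correct and follows essentially the same approach as the paper: reduce to a chain of elementary steps coming from Definition~\ref{br}, apply Proposition~\ref{powers} at each step (distinguishing the $u_{i-1}=u_i t_0$ case from the $u_{i-1}^{r_{i-1}}=u_i^{r_i}$ case), and then use right filteredness together with part~i) to handle part~ii). You simply spell out the two elementary cases and the bookkeeping along the chain in more detail than the paper does.
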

\begin{proof}
i) If $s_1 \leq s_2$ then there exists, by definition, a sequence $s_1 = s'_1 \leq s'_2 \leq \ldots \leq s'_m = s_2$ in $Z(L)_{\dagger \dagger}$ such that each pair $s'_i , s'_{i+1}$ satisfies one of the two conditions in Def.\ \ref{br}. Hence we may assume, by induction, that the pair $s_1, s_2$ satisfies one of these conditions, and we apply Prop.\ \ref{powers}.

 ii)  When there exists $s_{3}\in Z(L)_{\dagger \dagger}$ such that $s_{1}\leq s_{3}$ and $s_{2}\leq s_{3}$,  by i)  $M_{s_1}^{bd} $ and $M_{s_2}^{bd} $  are contained in $M_{s_3}^{bd}$ and
$\mathcal H_{g,s_1}=\mathcal H_{g,s_2}=\mathcal H_{g,s_3}$ on
   $M_{s_1}^{bd}\cap M_{s_2}^{bd}$.
\end{proof}

\begin{proposition}\label{spr}  We assume that  the relation $\leq$  on $Z(L)_{\dagger \dagger}$ is  right filtered.
   Then, the  intersection and the union
$$M^{bd}_\cap:= \bigcap_{s\in Z(L)_{\dagger \dagger}}M_{s}^{bd}\  \subset \
M_\cup^{bd}:= \bigcup_{s\in Z(L)_{\dagger \dagger}}M_{s}^{bd}
$$
  are  dense  \'etale $L_+ $-submodules of $M$ over $\Lambda (N_0)$.

 For $g\in N_0\overline P N_0$ the  endomorphisms $\mathcal H_g \in \End_{o} (M _\cup^{bd})$
 equal to $\mathcal H_{g,s}$ on $M_{s}^{bd}$ for each  $s\in Z(L)_{\dagger \dagger}$,  are well defined, stabilize $M^{bd}_\cap$
and satisfy the relations H1, H2, H3  of Prop. \ref{multiplicative}.
 \end{proposition}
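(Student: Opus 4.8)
The plan is to assemble Proposition~\ref{spr} from the pieces already established, the main work being bookkeeping of the right-filtered condition on $\leq$ and transport of the properties of the individual $M_s^{bd}$ through the union and intersection. First I would record the structural statements. Each $M_s^{bd}$ is a dense $\Lambda(N_0)[L_+]$-\'etale submodule of $M$ by Proposition~\ref{list}, Proposition~\ref{Lbd} (together with Remark~\ref{Landa}). Density of $M_\cup^{bd}$ is immediate since it contains $M_s^{bd}$ for any fixed $s$, and $M_\cup^{bd}$ is an $L_+$-submodule and a $\Lambda(N_0)$-submodule as a (filtered, by the right-filtered hypothesis) union of such; \'etaleness follows from Corollary~\ref{redMF} once one checks it is $\psi_s$-stable for one (hence every relevant) $s$, which holds because each $M_{s'}^{bd}$ with $s \leq s'$ is $\psi_{s}$-stable when $s' \in s^{\mathbb N}$ or more generally by Proposition~\ref{powers}(i) applied with $s^r$. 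For the intersection $M_\cap^{bd}$, each $M_s^{bd}$ is a $\Lambda(N_0)[L_+]$-submodule so the intersection is one; density of $M_\cap^{bd}$ requires the extra observation that $\iota_D(D) \subseteq M_s^{bd}$ for every $s$ (Proposition~\ref{io}), whence $\iota_D(D) \subseteq M_\cap^{bd}$, and then the $o[N_0]$-submodule generated by $\iota_D(D)$ is dense by Lemma~\ref{lista}; \'etaleness of $M_\cap^{bd}$ is checked the same way via $\psi_s$-stability of each factor.

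Next I would define the endomorphisms $\mathcal H_g$ on $M_\cup^{bd}$. For $x \in M_\cup^{bd}$ pick any $s$ with $x \in M_s^{bd}$ and set $\mathcal H_g(x) := \mathcal H_{g,s}(x)$; well-definedness is exactly Corollary~\ref{subset}(ii), which is available precisely because we assume $\leq$ is right filtered. Additivity and $o$-linearity are inherited from the $\mathcal H_{g,s}$ (Theorem~\ref{the}(i)). To see $\mathcal H_g$ stabilizes $M_\cup^{bd}$: given $x \in M_s^{bd}$, Theorem~\ref{the}(i) gives $\mathcal H_{g,s}(x) \in M_s^{bd} \subseteq M_\cup^{bd}$. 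To see $\mathcal H_g$ stabilizes $M_\cap^{bd}$: let $x \in M_\cap^{bd}$ and let $s'$ be arbitrary in $Z(L)_{\dagger\dagger}$; I must show $\mathcal H_g(x) \in M_{s'}^{bd}$. Using right-filteredness choose $s''$ with $s \leq s''$ and $s' \leq s''$ for some $s$ with $x \in M_s^{bd}$ (here one can take $s$ to be any fixed element, say, and feed it and $s'$ into the filtered bound); then by Corollary~\ref{subset}(i) $\mathcal H_g(x) = \mathcal H_{g,s''}(x) \in M_{s''}^{bd}$, but I actually need membership in $M_{s'}^{bd}$, not $M_{s''}^{bd}$. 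The correct argument is: $x \in M_{s'}^{bd}$ since $x \in M_\cap^{bd}$, so $\mathcal H_g(x) = \mathcal H_{g,s'}(x) \in M_{s'}^{bd}$ directly by Theorem~\ref{the}(i) — no filtering needed for stabilization of the intersection, only for well-definedness.

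Finally I would verify the relations H1, H2, H3 for $\mathcal H_g$ on $M_\cup^{bd}$ and on $M_\cap^{bd}$. Each relation is an identity among endomorphisms evaluated at an element $x$; given $x$ (in $M_\cup^{bd}$, or in $M_\cap^{bd}$), choose $s$ with $x$ and all the finitely many elements appearing on both sides (images under $\res(1_{\mathcal V})$, $\mathcal H_h$, etc.) lying in $M_s^{bd}$ — possible because $M_s^{bd}$ is $\res$-stable (Remark~\ref{resC} shows $\res(1_{uN_k})$ preserves $\mathfrak C_s$, hence $M_s^{bd}=M(\mathfrak C_s)$) and $\mathcal H_{h,s}$-stable, and for finitely many witnesses we use right-filteredness to find a common $s$; then the identity on these arguments is precisely H1, H2, H3 for $\mathcal H_{g,s}$, which hold by Theorem~\ref{the}(i). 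For $M_\cap^{bd}$ one can alternatively just restrict the already-proven identities on $M_\cup^{bd}$, since $\mathcal H_g$ preserves $M_\cap^{bd}$. The main obstacle I anticipate is purely organizational: making sure the "choose a common $s$" step is always legitimate, i.e.\ that only finitely many elements of $M$ need to be simultaneously captured in a single $M_s^{bd}$ and that the relevant auxiliary operators ($\res(1_{\mathcal V})$, $b\in P\cap N_0\overline P N_0$, $\mathcal H_h$) preserve each $M_s^{bd}$ — all of which is already contained in Theorem~\ref{the} and Remark~\ref{resC}, so no genuinely new estimate is required.
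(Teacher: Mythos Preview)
Your proposal is correct and follows essentially the same route as the paper's proof, which assembles the result from Proposition~\ref{Lbd}, Remark~\ref{Landa}, Proposition~\ref{io}, Lemma~\ref{lista}, Corollary~\ref{subset}, and Theorem~\ref{the}. Two places where you over-engineer slightly: for \'etaleness the paper simply observes that each $M_s^{bd}$, being \'etale, is $L_-$-stable (hence $\psi_t$-stable for \emph{every} $t\in L_+$, not just when $s'\in s^{\mathbb N}$), so the intersection and filtered union inherit this; and for H1--H3 no ``common $s$'' manoeuvre is needed, since once $x\in M_s^{bd}$ all of $\res(1_{\mathcal V})(x)$, $\mathcal H_{h,s}(x)$, etc.\ remain in the same $M_s^{bd}$ and the identities reduce directly to those of Theorem~\ref{the}(i).
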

\begin{proof}
 $M^{bd}_\cap$ is an $L_+$-submodule of $M$ over  $\Lambda (N_0)$  by Prop. \ref{Lbd} and Remark \ref{Landa}. It is   dense in $M$ by Prop. \ref{io} and Lemma \ref{lista}. The action of $L_+$ on  $M^{bd}_\cap$  is \'etale  because  $M^{bd}_\cap$ is $L_-$-stable.  When $\leq$ is  right filtered, $M_\cup^{bd}$ is a $\Lambda_\ell (N_0)$-module  by Cor. \ref{subset} i).   For the same reasons than for $M^{bd}_\cap$, it is an \'etale $L_+ $-submodule of $M$ over $\Lambda (N_0)$.

 By Cor. \ref{subset} the $\mathcal H_{g }$ are well defined and stabilize $M^{bd}_\cap$. They  satisfy the relations H1, H2, H3 of Prop. \ref{multiplicative} because the $\mathcal H_{g,s}$ satisfy them (Theorem \ref{the}).
\end{proof}

 We summarize our results and give our main theorem.

\begin{theorem}  \label{main}
For any $s\in Z(L)_{\dagger \dagger}$,  we have a faithful functor
$$
\mathbb Y_s: \mathcal M^{et}_{\Lambda_{\ell}(N_{0})}(L_+) \quad \to \quad \text { $G$-equivariant sheaves   on $G/P$} \ ,
 $$
 which associates to $M \in \mathcal M^{et}_{\Lambda_{\ell}(N_{0})}(L_+)$ the  $G$-equivariant  sheaf    $\mathfrak Y_s$ on $G/P$ such that $\mathfrak Y_s(\mathcal C_0)=M_s^{bd}$.

When the relation $\leq $ on $Z(L)_{\dagger \dagger}$ is right filtered, we have faithful functors
$$
\mathbb Y_\cap, \mathbb Y_\cup: \mathcal M^{et}_{\Lambda_{\ell}(N_{0})}(L_+) \quad \to \quad \text { $G$-equivariant sheaves   on $G/P$} \ ,
 $$
  which associate  to $M \in \mathcal M^{et}_{\Lambda_{\ell}(N_{0})}(L_+)$ the  $G$-equivariant  sheaves      $\mathfrak Y_\cap$ and $\mathfrak Y_\cup$ on $G/P$ with sections on $\mathcal C_0$ equal to  $\mathfrak Y_\cap (\mathcal C_0)=M^{bd}_\cap $  and  $\mathfrak Y_\cup (\mathcal C_0) =M_\cup^{bd}$.
\end{theorem}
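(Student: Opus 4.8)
The plan is to assemble Theorem~\ref{main} by combining the ingredients that have already been built in Sections~\ref{S5} and~\ref{S9}, so the proof is essentially a bookkeeping exercise chaining together the stated propositions. First I would fix $s \in Z(L)_{\dagger\dagger}$ and an object $M \in \mathcal M^{et}_{\Lambda_{\ell}(N_{0})}(L_+)$. By Corollary~\ref{top-etale}, $M$ is a complete topologically \'etale $o[N_0L_+]$-module in the sense of Section~\ref{S4}, and by Lemma~\ref{redmod} the pair $(o,M)$ satisfies the hypotheses a,b,c,d of Subsection~\ref{topAM}. The special family $\mathfrak C_s$ of compact subsets of $M$ is the one constructed just before Theorem~\ref{the}: one forms $\mathfrak C_{s,n}$ inside $M/p^nM$ as in Proposition~\ref{sfp} (legitimate since $M/p^nM$ is killed by a power of $p$) and then sets $\mathfrak C_s := \{C \text{ compact in } M : p_n(C) \in \mathfrak C_{s,n}\ \forall n\}$, which by Lemma~\ref{Limspe} is a special family with $M(\mathfrak C_s) = M^{bd}_s$. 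By Theorem~\ref{the}(i) (whose proof rests on Proposition~\ref{912} and the reduction machinery of Proposition~\ref{basicspecial}), the $(s,\res,\mathfrak C_s)$-integrals $\mathcal H_{g,s} \in \End_o(M^{bd}_s)$ exist for all $g \in N_0\overline PN_0$ and satisfy the relations H1, H2, H3.

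Next I would feed $(M^{bd}_s, (\mathcal H_{g,s})_g)$ into the machine of Section~\ref{S5}. Proposition~\ref{multiplicative} turns the relations H1, H2, H3 into an $A[P_+]$-equivariant $A$-algebra homomorphism $\mathcal R_0 : \mathcal A_0 \to \End_o(M^{bd}_s)$ extending $\Res|_{\mathcal A_{\mathcal C 0}}$; Proposition~\ref{P+-P} and Lemma~\ref{mult} extend this to a multiplicative $A[P]$-equivariant $\mathcal R : \mathcal A \to \End_o((M^{bd}_s)^P_c)$, equivalently an algebra homomorphism $\widetilde{\Res}: \mathcal A_{\mathcal C \subset G/P} \to \End_o((M^{bd}_s)^P_c)$ extending the map $\Res : \mathcal A_{\mathcal C} \to \End_o((M^{bd}_s)^P)$ of Proposition~\ref{Res} (here using that $M^{bd}_s$ is itself an \'etale $o[P_+]$-module, which is Proposition~\ref{Lbd}). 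Then $\widetilde{\Res}$ makes the compactly supported global sections $(M^{bd}_s)^P_c$ of the $P$-equivariant sheaf on $\mathcal C$ (Theorem~\ref{the11}) into a nondegenerate $\mathcal A$-module, and Proposition~\ref{cat-equiv} together with Remark~\ref{G-extension} produces the unital $\mathcal A_{G/P}$-module $\mathcal Z \otimes_{\mathcal A} (M^{bd}_s)^P_c$, hence a $G$-equivariant sheaf $\mathfrak Y_s$ on $G/P$ whose restriction to $\mathcal C$ is the $P$-equivariant sheaf attached to $M^{bd}_s$; in particular $\mathfrak Y_s(\mathcal C_0) = (M^{bd}_s)_{\mathcal C_0} = M^{bd}_s$. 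Functoriality of $\mathbb Y_s := (M \mapsto \mathfrak Y_s)$ follows from Theorem~\ref{the}(ii): a morphism $f : M \to M'$ restricts to $M^{bd}_s \to M'^{bd}_s$ and intertwines the $\mathcal H_{g,s}$, hence is compatible with all the functorial constructions above. Faithfulness of $\mathbb Y_s$ follows from Remark~\ref{restrict} (restriction to $\mathcal C$ detects the zero morphism) together with the observation that the sheaf on $\mathcal C_0$ recovers $M^{bd}_s$, and $M^{bd}_s$ is dense in $M$ by Proposition~\ref{list}, so a morphism inducing $0$ on $M^{bd}_s$ is $0$ on $M$.

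For the statements about $\mathbb Y_\cap$ and $\mathbb Y_\cup$, I would invoke Proposition~\ref{spr}: under the hypothesis that $\leq$ on $Z(L)_{\dagger\dagger}$ is right filtered, the submodules $M^{bd}_\cap := \bigcap_s M^{bd}_s$ and $M^{bd}_\cup := \bigcup_s M^{bd}_s$ are dense \'etale $L_+$-submodules of $M$ over $\Lambda(N_0)$, and the endomorphisms $\mathcal H_g$ (well defined by Corollary~\ref{subset}(ii), since $\mathcal H_{g,s_1} = \mathcal H_{g,s_2}$ on $M^{bd}_{s_1} \cap M^{bd}_{s_2}$) stabilize $M^{bd}_\cap$ and satisfy H1, H2, H3 on both $M^{bd}_\cap$ and $M^{bd}_\cup$. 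Then exactly the same Section~\ref{S5} procedure — Propositions~\ref{multiplicative}, \ref{P+-P}, Lemma~\ref{mult}, Proposition~\ref{cat-equiv}, Remark~\ref{G-extension} — applied to $(M^{bd}_\cap, (\mathcal H_g)_g)$ and to $(M^{bd}_\cup, (\mathcal H_g)_g)$ yields $G$-equivariant sheaves $\mathfrak Y_\cap$ and $\mathfrak Y_\cup$ on $G/P$ with $\mathfrak Y_\cap(\mathcal C_0) = M^{bd}_\cap$ and $\mathfrak Y_\cup(\mathcal C_0) = M^{bd}_\cup$. Functoriality and faithfulness follow as for $\mathbb Y_s$, using Theorem~\ref{the}(ii) (the assignment $M \mapsto (M^{bd}_s, (\mathcal H_{g,s})_g)$ is functorial, hence so are $M \mapsto M^{bd}_\cap$ and $M \mapsto M^{bd}_\cup$ with their $\mathcal H_g$) and the density of $M^{bd}_\cap$ in $M$.

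The main obstacle is not in this final assembly, which is routine, but lies entirely in the earlier results it quotes — above all Proposition~\ref{912} (verifying all the technical axioms $\mathfrak C(5)$, $\mathfrak C(6)$, $\mathfrak T(1)$, $\mathfrak T(2)$ and the convergence and product-formula criteria of Propositions~\ref{criterion} and~\ref{corspecial} for the family $\mathfrak C_s$ attached to $M^{bd}_s$) and Proposition~\ref{powers} (the delicate comparison of $M^{bd}_s$ and the integrals $\mathcal H_{g,s}$ for different $s$, which uses the $\psi_s$-stable treillis $D^\natural$, $D^\sharp$ of Proposition~\ref{Mira} and the $\mathfrak C_s$-uniform convergence estimates). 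In writing the proof of Theorem~\ref{main} itself I would simply cite Theorem~\ref{the}, Proposition~\ref{spr}, and the Section~\ref{S5} package, and the only genuinely new point to check carefully is that the sheaf $\mathfrak Y_s$ produced has sections $M^{bd}_s$ over $\mathcal C_0$ and not merely over $\mathcal C$ — this is immediate because $\mathcal C_0 \subset \mathcal C$ is compact open, the sheaf on $\mathcal C$ attached to the \'etale $o[P_+]$-module $M^{bd}_s$ has sections $\Res(1_{\mathcal C_0})((M^{bd}_s)^P) = (M^{bd}_s)_{N_0}$ by Definition~\ref{not}, and by Theorem~\ref{the11} (applied with $U = N_0$) this equals $M^{bd}_s$.
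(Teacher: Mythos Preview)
Your proposal is correct and follows essentially the same assembly as the paper's own proof: existence of the functors is obtained by combining Theorem~\ref{the} and Proposition~\ref{spr} with the Section~\ref{S5} machinery (Proposition~\ref{multiplicative}, Proposition~\ref{P+-P}, Lemma~\ref{mult}, Proposition~\ref{cat-equiv}, Remark~\ref{G-extension}), and faithfulness from density of $M^{bd}_s$ (resp.\ $M^{bd}_\cap$) in $M$ together with continuity of morphisms. The paper's proof is more terse and argues faithfulness once for $\mathbb Y_\cap$ (using that $M^{bd}_\cap \supseteq \Lambda(N_0)\iota_D(D)$ is dense and that any morphism in the category is automatically continuous by \cite{SVig} Lemma 8.22), then deduces faithfulness of $\mathbb Y_s$ and $\mathbb Y_\cup$ \emph{a fortiori} from the inclusions $M^{bd}_\cap \subseteq M^{bd}_s \subseteq M^{bd}_\cup$; you argue each case separately via Proposition~\ref{list}, which is equally valid. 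The one point you leave implicit that the paper makes explicit is the continuity of $f$, needed to pass from $f|_{M^{bd}_s}=0$ to $f=0$; this is worth citing.
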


\begin{proof} The existence of the functors results from Prop. \ref{spr}, Theorem \ref{the}, Prop. \ref{multiplicative}, and Remark \ref{G-extension}.

We show the  faithfulness of the functors.
 For a non zero  morphism $f:M\to M'$ in $\mathcal M^{et}_{\Lambda_{\ell}(N_{0})}(L_+)$, we have  $f(M_{\cap}^{bd})\neq 0$   because $f$ is continuous (\cite{SVig} Lemma 8.22) and $M_{\cap}^{bd}$ containing $\Lambda (N_0)\iota_D(D)$  is dense  (proof of Prop. \ref{list}). We deduce $\mathbb Y_\cap(f)\neq 0$ since it is nonzero on sections on $\mathcal C_0$. A fortiori  $\mathbb Y_s(f)\neq 0$, and $\mathbb Y_\cup(f)\neq 0$.
\end{proof}

\section{Connected reductive  split  group} \label{crsg}

 We explain how our results  apply to connected reductive groups.

 \bigskip a) Let  $F$ be  a locally compact non archimedean field of ring of integers $o_{F}$ and   uniformizer $p_{F}$.
Let $G$ be a connected reductive  $F$-group, let $S$ be a maximal $F$-split subtorus of $G$ and let $P$ be a  parabolic $F$-subgroup of $G$  with Levi component $L$ containing $S$ and unipotent radical $N$.
 Let   $X^{*}(S)$ be the group of characters of $S$, let  $\Phi_{L} $, resp. $\Phi$,    be the subset of roots of $S$ in $L$, resp.  $G$,  and let $\Phi_{+,N}$ be the subset of roots of $S$  in $N$ (we suppress the index $N$ if $P$ is a minimal parabolic $F$-subgroup of $G$).

 Let $s$ be any element of $S(F)$   such that  $\alpha (s) =1$ for  $\alpha \in \Phi_{L}$  and the $p$-valuation of $\alpha(s)\in F^{*}$ is positive for all  roots $\alpha\in \Phi_{+,N}$.  {\sl For any  compact open subgroup $N_{0}$ of $N(F)$,
the data $(P(F),L(F),N(F), N_{0},s)$ satisfy all the conditions  introduced in the section on \'etale $P_{+}$-modules (\ref{S3}), (\ref{2.4}), the assumptions  introduced in the
section \ref{fc}, and   in the section \ref{S9}.}

\bigskip b) We suppose that $P$ is a minimal parabolic $F$-subgroup. Let $W\subset N_{G}(L)$ be a system of representatives of the Weyl group $N_{G}(L)/L$ and let $w_{0}=w_{0}^{2}$ is the longest element of the Weyl group.
{\sl The data $(G(F),P(F),W)$ satisfy the assumptions  of the section \ref{S5}
on $G$-equivariant sheaves on $G/P$.}

\bigskip  c) We suppose until the end of this article that
$$\text{$F=\mathbb Q_{p}$,   $G $ is $\mathbb Q_{p}$-split and  $P$  is a  Borel $\mathbb Q_{p}$-subgroup.}
$$
 The Levi subgroup $L=T$ of $P$ is a split  $\mathbb Q_{p}$-torus. The  monoid  of dominant elements  and the submonoid without unit  of strictly dominant elements  are
\begin{align*}T(\mathbb Q_{p})_{+} &= \{t\in T(\mathbb Q_{p}), \ \alpha (t)\in \mathbb Z_p \ {\rm for \ all } \ \alpha \in \Delta\} \ , \\
  T(\mathbb Q_{p})_{++} &= \{t\in T(\mathbb Q_{p}), \ \alpha (t)\in p\mathbb Z_p -\{0\}\ {\rm for \ all } \ \alpha \in \Delta\} \ .
  \end{align*}
 With our former notation  $Z(L)=T(\mathbb Q_{p}), Z(L)_{\dagger \dagger}=T(\mathbb Q_{p})_{++}$.
     For each root $\alpha \in \Phi$,  let
\begin{equation}\label{ua}
u_{\alpha}: \mathbb Q_{p}\to N_{\alpha}(\mathbb Q_{p})  \ \ , \ \ tu_{\alpha}(x)t^{-1}= u_{\alpha}(\alpha(t) x) \ \ {\rm for}\  \ x\in \mathbb Q_{p}, t\in T( \mathbb Q_{p}) \ ,
\end{equation}
 be a  continuous isomorphism  from $\mathbb Q_{p}$ onto the root subgroup  $N_{\alpha} (\mathbb Q_{p})$ of $N (\mathbb Q_{p})$ normalized by $T(\mathbb Q_{p})$.
 We can write an element  $u\in N (\mathbb Q_{p})$ in the form
  \begin{equation*}
 u= \prod_{\alpha \in \Phi_{+}}u_{\alpha}(x_{\alpha})
\end{equation*}
 for any ordering of $\Phi_{+}$. The  coordinates $x_{\alpha}=x_{\alpha }(u)\in \mathbb Q_p$ of $u$ are determined by the ordering of the roots, but for a simple root $\alpha$, the coordinate
 \begin{equation}\label{la}
 x_{\alpha}: N (\mathbb Q_{p}) \to \mathbb Q_{p}
 \end{equation} is independent of the choice of the ordering, and satisfies $u_{\alpha}\circ x_{\alpha}=1$.
 We suppose, as we can, that the $u_{\alpha}$ have be chosen  such that
 the product
 $$N_0=\prod_{\alpha \in \Phi_+}u_\alpha(\mathbb Z_p) $$   is a group for some ordering  of $\Phi_+$. Then $N_0$ is the product of the $u_\alpha(\mathbb Z_p)=N_{\alpha} (\mathbb Z_p)$ for any ordering of $\Phi_+$.

 We choose a simple root $\alpha $.   We consider
the continuous homomorphisms
  $$\ell_{\alpha} \  : \ P(\mathbb Q_{p}) \to P^{(2)}(\mathbb Q_{p}) \ \ , \ \ \iota_{\alpha} :\ N(\mathbb Q_{p})^{(2)} \to N(\mathbb Q_{p}) \ \ , \ \ \ell_{\alpha} \circ \iota_{\alpha} = 1 \ ,$$
 defined by
 $$ \ell_{\alpha} (ut) := \begin{pmatrix} \alpha (t) & x_{\alpha }(u) \\ 0 & 1 \end{pmatrix} \ \ , \ \
 \iota_{\alpha}(u^{(2)}(x)) := u_{\alpha}(x) \ \ {\rm for } \ \ u^{(2)}(x) := \begin{pmatrix} 1 & x  \\ 0 & 1 \end{pmatrix} \ ,$$
 for $t\in T(\mathbb Q_{p}), u\in N(\mathbb Q_{p}), x\in \mathbb Q_{p}$.
They satisfy the functional equation
$$t\iota_{\alpha}(y) t^{-1}= \iota_{\alpha}(\ell_{\alpha}(t) y \ell_{\alpha} (t)^{-1})$$
for $y\in N(\mathbb Q_{p})  ^{(2)}$ and $t\in T(\mathbb Q_{p}) $.
{\sl The data $(N_{0},\ell_{\alpha} , \iota_{\alpha} )$ satisfies the assumptions introduced in the \ref{GM} and in the section \ref{S9}.}

 We consider  the binary relation $s_1\leq s_2 $ on $ T(\mathbb Q_{p})_{+ +}$  generated by
$$s_1 =s_2 s_0  \ \text{ with $ s_0 \in  T(\mathbb Q_{p})_+, \alpha(s_0)\in \mathbb Z_p^* $  \ , \ or  $  s_1^n=s_2 ^m$  with $n,m \geq 1$. }
$$

\begin{lemma} The  relation $s_1\leq s_2 $ on $ T(\mathbb Q_{p})_{++}$   is right filtered.
\end{lemma}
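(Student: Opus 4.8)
The plan is to translate the relation $\le$ into linear algebra on valuation vectors and then to write down a common upper bound of any two elements explicitly. Fix the chosen simple root $\alpha$ and write $\Delta=\{\alpha=\alpha_1,\dots,\alpha_r\}$. Since every positive root is a non-negative integral combination of simple roots, for $t\in T(\mathbb Q_p)$ one has $t\in T(\mathbb Q_p)_+$ if and only if $\val_p(\alpha_i(t))\ge 0$ for all $i$, and $t\in T(\mathbb Q_p)_{++}$ if and only if $\val_p(\alpha_i(t))\ge 1$ for all $i$. Introduce the group homomorphism $\nu\colon T(\mathbb Q_p)\to\mathbb Z^{\Delta}$, $\nu(t)=(\val_p(\alpha_i(t)))_i$. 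Using that $G$ is split, so that $T$ is a split torus with $T(\mathbb Q_p)=\Hom(X^{*}(T),\mathbb Q_p^{*})$, the image of $\nu$ equals the image of the homomorphism $X_{*}(T)\to\mathbb Z^{\Delta}$, $\lambda\mapsto(\langle\alpha_i,\lambda\rangle)_i$; since the simple roots are linearly independent in $X^{*}(T)\otimes\mathbb Q$, this last map has rank $r$, hence its image is a subgroup of finite index in $\mathbb Z^{\Delta}$. I would fix once and for all an integer $N\ge 1$ with $N\mathbb Z^{\Delta}\subseteq\nu(T(\mathbb Q_p))$.

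Next I would record the two generating moves of $\le$ in valuation terms. If $s_1=s_2s_0$ with $s_0\in T(\mathbb Q_p)_+$ and $\alpha(s_0)\in\mathbb Z_p^{*}$, then $\nu(s_1)-\nu(s_2)=\nu(s_0)$ has non-negative entries and vanishing $\alpha$-entry; conversely, if $s,s'\in T(\mathbb Q_p)$ are such that $\nu(s)-\nu(s')$ has non-negative entries and vanishing $\alpha$-entry, then $s_0:=s'^{-1}s$ automatically lies in $T(\mathbb Q_p)_+$ with $\alpha(s_0)\in\mathbb Z_p^{*}$, so that $s=s's_0$ exhibits $s\le s'$. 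Also, for any $s\in T(\mathbb Q_p)_{++}$ and any $k\ge 1$ one has $s\le s^{k}$ and $s^{k}\le s$, directly from the power move applied to $s^{k}=(s^{k})^{1}$; in particular $s$ and $s^{k}$ have the same $\le$-neighbours.

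Then I would prove the filtering property. Given $s_1,s_2\in T(\mathbb Q_p)_{++}$, put $d_i:=\val_p(\alpha(s_i))\ge 1$ and set $\tilde s_1:=s_1^{Nd_2}$, $\tilde s_2:=s_2^{Nd_1}$, so that $s_1\le\tilde s_1$ and $s_2\le\tilde s_2$, while $\nu(\tilde s_1)=Nd_2\,\nu(s_1)$ and $\nu(\tilde s_2)=Nd_1\,\nu(s_2)$ both have every entry divisible by $N$ and at least $N$, and have the same $\alpha$-entry $d:=Nd_1d_2$. Since the vector with $\alpha$-entry $d$ and all other entries equal to $N$ lies in $N\mathbb Z^{\Delta}\subseteq\nu(T(\mathbb Q_p))$, I choose $s_3\in T(\mathbb Q_p)$ realising it; then $s_3\in T(\mathbb Q_p)_{++}$. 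By construction $\nu(\tilde s_1)-\nu(s_3)$ and $\nu(\tilde s_2)-\nu(s_3)$ have non-negative entries and vanishing $\alpha$-entry, so the converse of the first move gives $\tilde s_1\le s_3$ and $\tilde s_2\le s_3$, and transitivity yields $s_1\le s_3$ and $s_2\le s_3$. Hence $s_3$ is a common upper bound, and $\le$ is right filtered.

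The only step that is not pure bookkeeping is the finite-index statement for the image of $\nu$, which is exactly what lets one prescribe a valuation vector whose $\alpha$-coordinate is the forced value $Nd_1d_2$ while the remaining coordinates are $N$; this uses the split hypothesis on $G$. Everything else is a mechanical manipulation of the two generating relations, so I expect no real obstacle beyond setting up these conventions carefully.
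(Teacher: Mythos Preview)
Your proof is correct. The approach is in the same spirit as the paper's but is organised differently and is somewhat more direct. The paper first treats the special situation where elements $s_{[k]}\in T(\mathbb Q_p)$ with $A(s_{[k]})=(k,1,\ldots,1)$ exist for all $k\ge 1$, showing filteredness in that case; it then verifies this hypothesis for adjoint $G$ (where the fundamental coweights give surjectivity of $A$), and finally reduces the general reductive case to the adjoint one in two steps, via the isogeny $G\to G_{ad}$ for semisimple $G$ and via the quotient $G\to G/Z^0$ for reductive $G$. You instead bypass this three-stage reduction by directly invoking that for any split reductive $G$ the simple roots are linearly independent in $X^*(T)\otimes\mathbb Q$, so $\nu(T(\mathbb Q_p))$ has finite index in $\mathbb Z^\Delta$; fixing $N$ with $N\mathbb Z^\Delta\subseteq\nu(T(\mathbb Q_p))$ lets you write down the common upper bound $s_3$ in one stroke. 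Your argument is shorter and avoids the structure-theoretic detours; the paper's has the minor advantage that in the adjoint case one sees the chain $s_{[k]}\le s_{[k+1]}$ explicitly (with $N=1$). Both proofs rest on the same two mechanisms: replacing $s$ by a power to adjust the $\alpha$-valuation, and then peeling off an $s_0\in T(\mathbb Q_p)_+$ with $\alpha(s_0)\in\mathbb Z_p^*$.
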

\begin{proof}
Let $\Delta=\{\alpha=\alpha_1,\ldots,  \alpha_n\}$.  The  image of
$T(\mathbb Q_p)_{++}$ by $A= (\val_p (\alpha_i (.))_{\alpha_i\in \Delta}$ is  contained in  $(\mathbb N-\{0\})^n$ and $s_1\leq s_2$ depends only on the cosets
$s_1T(\mathbb Q_{p})_0$ and $s_1T(\mathbb Q_{p})_0$, where
 $$ T(\mathbb Q_{p})_0 =\{t\in T(\mathbb Q_{p}), \ \alpha (t) \in \mathbb Z_p^*  \ {\rm for \ all } \ \alpha \in \Delta\} \ . $$

a) First we  assume  that, for any positive integer $k$,   there exists
$s_{[k]}\in  T(\mathbb Q_{p}) $  such $A(s_{[k]}) =(k,1, \ldots, 1)$.
 Then we have $s_{[k]}\leq s_{[k+1]} $, and $s\leq s_{[k(s)]} $ for   $s\in T(\mathbb Q_{p})_{++}$ with $k(s) = \val_p (\alpha (s))$. For any $s_1, s_2$ in $T(\mathbb Q_{p})_{++}$ we deduce that $s_1\leq  s_{[k(s_1)+k(s_2)]}$ and $s_2\leq  s_{[k(s_1)+k(s_2)]}$. Hence the relation $\leq$ on $ T(\mathbb Q_{p})_{++}$   is right filtered.

b) When $G$ is semi-simple and adjoint the dominant coweights $\omega_{\alpha_1}, \ldots, \omega_{\alpha_n}$  for $\Delta=\{\alpha=\alpha_1,\ldots,  \alpha_n\}$ form a basis of  $Y=\Hom (\mathbb G_m, T)$,  and
$A(T(\mathbb Q_p)_{++}) =(\mathbb N-\{0\})^n$. Hence
  $s_{[k]}$ exists for any $k\geq 1$.

c) When $G$ is semi-simple we consider the isogeny $\pi:G\to G_{ad}$ from $G$ onto the adjoint group $G_{ad}$  (\cite{Spr} 16.3.5). The image $T_{ad}$ of $T$ is  a maximal
split $\mathbb Q_p$-torus in  $G_{ad}$.
The isogeny
   gives an homomorphism $ T (\mathbb Q_p)\to  T_{ad} (\mathbb Q_p)$,
   inducing an injective map
 between the cosets
$$ T (\mathbb Q_p)_{++}/T(\mathbb Q_p)_{0}\  \to \ T_{ad}(\mathbb Q_p)_{++}/T_{ad} (\mathbb Q_p)_{0} $$
 respecting $\leq $, and
    such that for any $t_{ad}\in T_{ad}(\mathbb Q_p)$ there exists an integer $n \geq 1$ such that $ t_{ad}^n \in \pi(T (\mathbb Q_p))$.
    Given $s_1,s_2\in  T (\mathbb Q_p)_{++}$ there exists $s_{ad}\in T_{ad} (\mathbb Q_p)_{++}$ such that $\pi(s_1), \pi(s_2) \leq s_{ad}$ by b) and a). Let $n\geq 1$ such that  $s_{ad}^n=\pi(s_3)$ for $s_3\in T (\mathbb Q_p)$. We have $s_{ad}\leq s_{ad}^n$ hence $\pi(s_1), \pi(s_2) \leq \pi(s_3)$. This is equivalent to $s_1, s_2 \leq s_3$.

 d) When $G$ is reductive
 let  $\pi:G\to G'=G/Z^0 $ be the natural $\mathbb Q_p$-homomorphism from $G$ to the quotient of $G$ by its maximal split central torus $Z^0$.  The group $G'$ is semi-simple, $\pi(T)=T'$  is  a maximal
split $\mathbb Q_p$-torus in  $G'$,  $\pi|_T$ gives an exact sequence
$$
1\to Z_0(\mathbb Q_p) \to T(\mathbb Q_p) \to T'(\mathbb Q_p) \to 1 \ ,
$$
inducing a bijective map between the cosets
$$ T (\mathbb Q_p)_{++}/T(\mathbb Q_p)_{0}\  \to \ T'(\mathbb Q_p)_{++}/T'(\mathbb Q_p)_{0} $$
respecting $\leq$. By c),  $\leq$  is right filtered on $T'(\mathbb Q_p)_{++}$. We deduce that $\leq$  is right filtered on $T(\mathbb Q_p)_{++}$.
\end{proof}

By Theorem \ref{eq} and Theorem \ref{main}, we can associate functorially to  an \'etale $T_+$-module $D$ over $\mathcal O_{\mathcal E, \alpha}$ different  sheaves :

\begin{itemize}
\item For any $s\in T_{++}$,   a $G(\mathbb Q_p)$-equivariant sheaf $\mathfrak Y_s$  on $G(\mathbb Q_p)/P(\mathbb Q_p)$ with sections  on $\mathcal C_0$ equal to
 $\mathbb M(D)_s^{bd} $
   \item The  $G(\mathbb Q_p)$-equivariant sheaves $\mathfrak Y_\cap $ and $ \mathfrak Y_\cup$ on $G(\mathbb Q_p)/P(\mathbb Q_p)$ with sections on $\mathcal C_0$ equal to  $\cap_{s\in T_{++}}\mathbb M(D)_s^{bd} $ and $ \cup_{s\in T_{++}}\mathbb M(D)_s^{bd}$.
 \end{itemize}

In general $\mathbb M(D)$ is different from $ \cup_{s\in T_{++}}\mathbb M(D)_s^{bd}$, by the following proposition.

\begin{proposition} Let $M$ be  an  \'etale $T_+$-module $M$ over $ \Lambda_{\ell_\alpha}(N_0)$.  When  the root system of $G$ is irreducible of positive rank $rk(G)$, we have:

(i) If  $rk(G)=1$,  the $G(\mathbb Q_p)$-equivariant sheaf  on  $G(\mathbb Q_p)/P(\mathbb Q_p)$ with sections $M_{s}^{bd}$ over $\mathcal C_0$
does not depend on the choice of $s\in T_{++}$, and $M=M_{s}^{bd}$.

(ii) If $rk(G)>1 $,    a  $G(\mathbb Q_p)$-equivariant sheaf of $o$-modules $\mathfrak Y$ on  $G(\mathbb Q_p)/P(\mathbb Q_p)$   such that  $\mathfrak Y (\mathcal C_0) \subset M$ and $(u_\alpha(1)-1)$ is bijective on $\mathfrak Y (\mathcal C_0) $,  is zero.
 \end{proposition}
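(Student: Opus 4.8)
For part (i), when $\mathrm{rk}(G)=1$ the unique simple root is $\alpha$, $\Phi_+=\{\alpha\}$, and $\ell_\alpha\colon N_0\xrightarrow{\sim}N_0^{(2)}$ is an isomorphism; hence $N_{\ell_\alpha}=\{1\}$ and $\ell_\alpha$ identifies $\Lambda_{\ell_\alpha}(N_0)$ with $\mathcal O_{\mathcal E}$. Consequently $\ell_M\colon M\to D=\mathbb D(M)$ and $\iota_D\colon D\to M$ are mutually inverse isomorphisms, so Proposition \ref{io} gives $M=\iota_D(D)\subseteq M_s^{bd}$, i.e.\ $M_s^{bd}=M$ for every $s\in T_{++}$. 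For the independence of the sheaf we must show, by Theorem \ref{main}, that the operators $\mathcal H_{g,s}$ do not depend on $s$. Given $s,s'\in T_{++}$ with $v=\val_p(\alpha(s))$, $v'=\val_p(\alpha(s'))$, put $t_0:=s^{v'}(s'^{\,v})^{-1}$; then $\val_p(\alpha(t_0))=0$, so $\alpha(t_0)\in\mathbb Z_p^\times$, which gives $t_0\in\ell_\alpha^{-1}(L_0^{(2)})\cap Z(L)_\dagger$ (here $\overline N=N_{-\alpha}$ is normalized by $t_0$ since $\alpha(t_0)$ is a unit). Applying Proposition \ref{powers} to the pair $(s'^{\,v},t_0)$ and to the exponents $v,v'$, and using $M_s^{bd}=M$, one gets $\mathcal H_{g,s}=\mathcal H_{g,s^{v'}}=\mathcal H_{g,s'^{\,v}t_0}=\mathcal H_{g,s'^{\,v}}=\mathcal H_{g,s'}$ on $M$ for all $g\in N_0\overline PN_0$, hence $\mathfrak Y_s=\mathfrak Y_{s'}$.

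For part (ii), assume for contradiction that $\mathfrak Y\ne 0$; then $V:=\mathfrak Y(\mathcal C_0)$ is a nonzero \'etale $o[P_+]$-submodule of $M$ on which $X:=u_\alpha(1)-1$ acts bijectively, so $XV=V$. Since the root system is irreducible of rank $>1$, the node $\alpha$ has a neighbour $\beta$ in the Dynkin diagram; then $m:=-\langle\alpha,\beta^\vee\rangle\ge 1$ and $s_\beta(\alpha)=\alpha+m\beta$ is a positive root $\ne\alpha$, so $u_{\alpha+m\beta}(\mathbb Z_p)\subseteq N_{\ell_\alpha}$ and $Y:=u_{\alpha+m\beta}(\epsilon)-1\in\mathcal M_{\ell_\alpha}(N_0)$ for every $\epsilon\in\mathbb Z_p$. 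Let $\bar n_\beta$ be a representative of $s_\beta$ inside the rank-one subgroup $\langle N_\beta,N_{-\beta}\rangle$. A direct $SL_2$-computation (writing $\bar n_\beta=u_\beta(t)\,\bar p\,u_\beta(t^{-1})$ with $t\in\mathbb Z_p^\times$ and $\bar p$ in the lower triangular Borel of $\langle N_\beta,N_{-\beta}\rangle$, which lies in $\overline P$) shows $\bar n_\beta,\bar n_\beta^{-1}\in N_0\overline PN_0$, while $\bar n_\beta^2\in T_0$; moreover $\bar n_\beta u_\alpha(x)\bar n_\beta^{-1}=u_{\alpha+m\beta}(\epsilon x)$ for a structure constant $\epsilon\in\mathbb Z_p^\times$. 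The $G$-equivariant extension $\mathfrak Y$ of $\mathfrak Y|_{\mathcal C}$ equips $V$ with operators $\mathcal H_g\in\End_o(V)$ for $g\in N_0\overline PN_0$ satisfying the relations H1, H2, H3 of Proposition \ref{multiplicative} (necessity part). Define $U_+\subseteq N_0$ by $U_+w_0P/P=\bar n_\beta^{-1}\mathcal C_0\cap\mathcal C_0$ and set $V_+:=\res(1_{U_+w_0P/P})V$, $V_-:=\res(1_{\bar n_\beta\mathcal C_0\cap\mathcal C_0})V$. From H1, H2 and H3 (using $\mathcal H_1=\id_V$ and the simplified product relation) one checks that $\mathcal H_{\bar n_\beta}$ and $\mathcal H_{\bar n_\beta^{-1}}$ restrict to mutually inverse isomorphisms $V_+\xrightarrow{\ \sim\ }V_-$.

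The key geometric point is that $\bar n_\beta^{-1}\mathcal C_0\cap\mathcal C_0$ is stable under left translation by $u_\alpha(\mathbb Z_p)$: indeed $u_\alpha(1)\bar n_\beta^{-1}=\bar n_\beta^{-1}u_{\alpha+m\beta}(\epsilon)$ with $u_{\alpha+m\beta}(\epsilon)\in N_0$, so $u_\alpha(1)(\bar n_\beta^{-1}\mathcal C_0\cap\mathcal C_0)=\bar n_\beta^{-1}\mathcal C_0\cap\mathcal C_0$, and symmetrically $\bar n_\beta\mathcal C_0\cap\mathcal C_0$ is stable under left translation by $u_{\alpha+m\beta}(\epsilon)$. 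Hence $\res(1_{U_+w_0P/P})$ commutes with $X$, so $V=V_+\oplus\res(1_{\mathcal C_0\setminus U_+w_0P/P})V$ is an $X$-stable decomposition and the bijectivity of $X$ on $V$ forces $X$ to be bijective on $V_+$. Next, computing in the skew group ring $\mathcal A_{G/P}$ and using $\bar n_\beta u_\alpha(1)=u_{\alpha+m\beta}(\epsilon)\bar n_\beta$ together with the above $u_\alpha(1)$-stability, one gets the identity of elements $(\bar n_\beta1_{\bar n_\beta^{-1}\mathcal C_0\cap\mathcal C_0})(u_\alpha(1)1_{\mathcal C_0})=(u_{\alpha+m\beta}(\epsilon)1_{\mathcal C_0})(\bar n_\beta1_{\bar n_\beta^{-1}\mathcal C_0\cap\mathcal C_0})$, which acting on $V$ yields
\begin{equation*}
\mathcal H_{\bar n_\beta}\circ(u_\alpha(1)-1)=(u_{\alpha+m\beta}(\epsilon)-1)\circ\mathcal H_{\bar n_\beta},\qquad\text{i.e.}\qquad\mathcal H_{\bar n_\beta}\circ X^{n}=Y^{n}\circ\mathcal H_{\bar n_\beta}\quad\text{for all }n\ge 1.
\end{equation*}
Restricting to $V_+$ and using $X^nV_+=V_+$ and the isomorphism $\mathcal H_{\bar n_\beta}\colon V_+\xrightarrow{\sim}V_-$, we obtain $V_-=\mathcal H_{\bar n_\beta}(X^nV_+)=Y^n(\mathcal H_{\bar n_\beta}(V_+))=Y^n(V_-)\subseteq Y^nM\subseteq\mathcal M_{\ell_\alpha}(N_0)^nM$ for every $n$. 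Since $M$ is finitely generated over the noetherian local ring $\Lambda_{\ell_\alpha}(N_0)$ and its pseudocompact topology is Hausdorff, Krull's intersection theorem gives $\bigcap_n\mathcal M_{\ell_\alpha}(N_0)^nM=0$, so $V_-=0$ and hence $V_+=0$.

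Finally $V_+=\mathfrak Y(U_+w_0P/P)=0$, and translating by $N_0$ (which acts on $\mathcal C_0\cong N_0$ by left translations) gives $\mathfrak Y(u\,U_+w_0P/P)=u\cdot\mathfrak Y(U_+w_0P/P)=0$ for all $u\in N_0$; since $U_+$ is nonempty and open, the sets $u\,U_+w_0P/P$ form an open cover of $\mathcal C_0$, so the sheaf axiom forces $V=\mathfrak Y(\mathcal C_0)=0$, a contradiction; hence $\mathfrak Y=0$ by Remark \ref{restrict}. The main obstacle in this argument is the precise bookkeeping at the level of $\mathcal A_{G/P}$: one must verify that the characteristic-function cut-offs entering $\mathcal H_{\bar n_\beta}$ are compatible with left translation by $u_\alpha(\mathbb Z_p)$ (this is exactly what makes $X$ bijective on $V_+$ and $Y\in\mathcal M_{\ell_\alpha}(N_0)$ act inside the iteration), so that the intertwining $\mathcal H_{\bar n_\beta}X^n=Y^n\mathcal H_{\bar n_\beta}$ is genuinely available and the Krull-intersection step applies.
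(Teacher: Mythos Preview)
Your argument is correct and rests on the same mechanism as the paper: conjugating $u_\alpha(1)-1$ by a lift of the simple reflection $s_\beta$ turns it into $u_{s_\beta(\alpha)}(\epsilon)-1\in\mathcal M_{\ell_\alpha}(N_0)$, and then infinite divisibility in the Hausdorff $\mathcal M_{\ell_\alpha}(N_0)$-adic topology forces vanishing. Part (i) matches the paper's proof (Prop.~\ref{3}); your route to the $s$-independence via Prop.~\ref{powers} is an acceptable alternative.

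For part (ii) the execution differs. The paper works elementwise: starting from an arbitrary $m\in M'$ it forms $m_k:=(n_\alpha-1)^{-k}\,n_\beta\varphi_s(m)$, checks (using the \'etale expansion) that every $m_k$ already lies in the image of $\res(1_{w_\beta^{-1}\mathcal C_0\cap\mathcal C_0})$, and then the product relations give $\mathcal H_{w_\beta}(n_\beta\varphi_s(m))=(n_\gamma-1)^k\,\mathcal H_{w_\beta}(m_k)$ for all $k$, whence $\mathcal H_{w_\beta}(n_\beta\varphi_s(m))=0$ and then $n_\beta\varphi_s(m)=\mathcal H_{w_\beta}^2(n_\beta\varphi_s(m))=0$, so $m=0$. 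The trick $m\mapsto n_\beta\varphi_s(m)$ is what places the element inside $U_{w_\beta}$, and it uses the \'etale structure of $M'$ explicitly. You instead argue structurally: from H1--H3 you extract the mutually inverse isomorphisms $\mathcal H_{\bar n_\beta}\colon V_+\xrightarrow{\sim}V_-$, observe that $X$ is bijective on the $X$-stable direct summand $V_+$, deduce $V_-=Y^nV_-$ for all $n$ and hence $V_-=0$, then finish with a covering of $\mathcal C_0$ by $N_0$-translates of $U_+w_0P/P$ (using that $N_0$ acts transitively on $\mathcal C_0$). Your version avoids manufacturing the specific element $n_\beta\varphi_s(m)$ and the $\varphi_s$-expansion, at the price of the covering step; the paper's version is a shorter chain of equalities once one has the right element. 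Both are clean realizations of the same idea.
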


\begin{proof} We prove (i).  When $rk(G)=1$, then $\mathcal O_{\mathcal E}= \Lambda_{\ell_\alpha} (N_0)$ and $M=D$ is an \'etale  $T_+$-module over $\mathcal O_{\mathcal E}$.  With the same proof than in Prop. \ref{3}, we have $M_s^{bd}=M$ for any $s\in T_{++}$ and the integrals $\mathcal H_g$ for $g\in N_0 \overline P N_0$ do not depend on the choice of $s$.

 (ii) is equivalent to the property:   an \'etale $o[P_+]$-submodule $M'$ of $M$ wich is also
 a  $R=o[N_0][(u_\alpha(1)-1)^{-1}]$-submodule of $M$,  and is endowed with endomorphisms $\mathcal{H}_{g}\in \End_o(M)$, for all $g\in N_0\overline P(F) N_0$, satisfying the relations H1, H2, H3 (Prop. \ref{multiplicative}), is $0$.

a) Preliminaries. As $rk(G)\geq 2$ and the root system is irreducible, there exists a simple root $\beta$ such that $\alpha +\beta$ is a root. The elements  $n_\alpha:=u_\alpha(1)$ and $n_\beta:=u_\beta(1)$  do not commute. By the commutation formulas,
  $n_\alpha n_\beta = n_\beta n_\alpha h$  for some $h  \neq 1$ in  the group $H=\prod_\gamma N_{\gamma} (\mathbb Z_p)$ for all positive roots of the form $\gamma =i\alpha +j\beta\in \Phi_+$ with $i,j>0$. Note that $H$ is normalized by   $N_\alpha (\mathbb Z_p)$.
  Let $s\in T_{++}$. We have
 the expansion \eqref{writing}
  \begin{equation}\label{mk}
  (n_\alpha h -1)^{-k}= \sum_{u\in J(N_{\alpha}(\mathbb Z_p)H/ s N_{\alpha}(\mathbb Z_p)H s^{-1})} u\varphi_s(\psi_s(u^{-1}(n_\alpha h -1)^{-k}))
  \end{equation}
   in  $R$.
    We choose, as we can,  a lift $w_\beta$ of $s_\beta$ in the normalizer of  $T(\mathbb Q_p)$  such that
\begin{itemize}
\item[-] $w_\beta n_{\beta}\in n_{\beta} \overline P(\mathbb Q_p)$
\item[-] $w_\beta$ normalizes  the group $N_{\Phi_+-\beta}(\mathbb Z_p)=\prod_{\gamma }N_\gamma(\mathbb Z_p)$ for all positive roots $\gamma \neq \beta$.
\end{itemize}
The subset $N'_\beta(\mathbb Z_p)\subset N_\beta(\mathbb Z_p)$   of $u_\beta (b)$ such that   $w_\beta u_\beta (b) \in u_\beta (\mathbb Z_p) \overline P(\mathbb Q_p)$, contains $n_\beta$ but does not contain $1$.
The subset $U_{w_\beta }\subset N_{0}$ of   $u$ such that $w_\beta u \in
N_{0} \overline P(\mathbb Q_p)$ is equal to
$$U_{w_{\beta}} = N'_\beta(\mathbb Z_p)N_{\Phi_+-\beta}(\mathbb Z_p)=N_{\Phi_+-\beta}(\mathbb Z_p)N'_\beta(\mathbb Z_p)\ . $$
Hence $U_{w_\beta }=u U_{w_\beta }$, i.e. $w_\beta ^{-1}\mathcal{C}_0\cap \mathcal{C}_0=uw_\beta ^{-1}\mathcal{C}_0\cap \mathcal{C}_0 $,
 for any  $u\in N_{\Phi_+-\beta}(\mathbb Z_p)$.

b) Let $M'$ be an $R=o[N_0][(n_\alpha-1)^{-1}]$-module of $M$, which is also  an \'etale  $o[P_+]$-submodule, and is endowed with endomorphisms $\mathcal{H}_{g}\in \End_o(M)$, for all $g\in N_0\overline P(F) N_0$, satisfying the relations H1, H2, H3 (Prop. \ref{multiplicative}), and let $m\in M'$ be an arbitrary element.  We want to prove that $m=0$.

The idea of the proof is that, for $s\in T_{++}$, we have  $m=0$ if  $\mathcal{H}_{w_\beta}(n_\beta\varphi_s(m))=0$ and that $\mathcal{H}_{w_\beta}(n_\beta\varphi_s(m))=0$ because it is
infinitely divisible by
$n_\gamma-1$,  where $\gamma=s_\beta(\alpha)$.  An element in $M$ with this property is $0$ because $n_\gamma-1$ lies in the maximal ideal of
$\Lambda_{\ell_\alpha}(N_0)$.

 Let  $a\in \mathbb Z_p$. The product formula   in    Prop.\ \ref{corspecial}ii implies
\begin{align*}
\mathcal{H}_{w_\beta}\circ\mathcal{H}_{n_\alpha^a}
\circ\res(1_{w_\beta^{-1}\mathcal{C}_0\cap\mathcal{C}_0})=\mathcal{H}_{w_\beta n_\alpha^a}\circ\res(1_{w_\beta ^{-1}\mathcal{C}_0\cap\mathcal{C}_0})=\\
\mathcal{H}_{n_\gamma^aw_\beta }\circ\res(1_{w_\beta ^{-1}\mathcal{C}_0\cap\mathcal{C}_0})=\mathcal{H}_{n_\gamma^a}\circ\mathcal{H}_{w_\beta} \circ\res(1_{w_\beta ^{-1}\mathcal{C}_0\cap\mathcal{C}_0})
\end{align*}
since
$n_\alpha^{-a}w_\beta^{-1}\mathcal{C}_0\cap \mathcal{C}_0=w_\beta^{-1}\mathcal{C}_0\cap \mathcal{C}_0=w_\beta^{-1}n_\gamma^{-a}\mathcal{C}_0\cap \mathcal{C}_0$.
For all  $k\in \mathbb N$,  the elements
\begin{align}
m_k:=&(n_\alpha-1)^{-k}n_\beta\varphi_s(m)=n_{\beta}(n_\alpha
h  -1)^{-k}\varphi_s(m) \label{m_k}
\end{align}
   lie in the image of the idempotent
$\res(1_{w_\beta^{-1}\mathcal{C}_0\cap\mathcal{C}_0}) \in \End_o (M)$, because
  \begin{equation}\label{mkk}
  m_k= \sum_{u\in J(N_{\alpha}(\mathbb Z_p)H/ s N_{\alpha}(\mathbb Z_p)H s^{-1})} n_\beta u\varphi_s(\psi_s(u^{-1}(n_\alpha h -1)^{-k}m))
     \end{equation}
by \eqref{mk}, \eqref{m_k}.
   Therefore the product relations between  $\mathcal H_{w_\beta}, \mathcal H_{n_\alpha^a}$  and $\mathcal H_{n_\gamma^a}$  imply
\begin{align*}
\mathcal{H}_{w_\beta}(n_\beta\varphi_s(m))&=\mathcal{H}_{w_\beta}((n_\alpha-1)^km_k)=
\sum_{a=0}^k(-1)^{k-a}\binom{k}{a}\mathcal{H}_{w_\beta}\circ\mathcal{H}_{n_\alpha^a}(m_k)\\
&=\sum_{a=0}^k(-1)^{k-a}\binom{k}{a}\mathcal{H}_{w_\beta}\circ\mathcal{H}_{n_\alpha^a}\circ\res(1_{w_\beta^{-1}\mathcal{C}_0\cap\mathcal{C}_0})(m_k) \\
&= \sum_{a=0}^k(-1)^{k-a}\binom{k}{a}\mathcal{H}_{n_\gamma^a}\circ\mathcal{H}_{w_\beta}\circ\res(1_{w_\beta^{-1}\mathcal{C}_0\cap\mathcal{C}_0})(m_k)\\
&=(n_\gamma-1)^k\mathcal{H}_{w_\beta}(m_k)\ ,
\end{align*}
Hence $\mathcal{H}_{w_\beta}(n_\beta\varphi_s(m))=0$ since it is infinitely divisible by
$n_\gamma-1$ which lies in the maximal ideal of
$\Lambda_{\ell_\alpha}(N_0)$. We also have
\begin{equation*}
n_\beta\varphi_s(m)=\mathcal{H}_{1}\circ\res(1_{w_\beta^{-1}\mathcal{C}_0\cap\mathcal{C}_0})(n_\beta\varphi_s(m))=\mathcal{H}_{w_\beta}\circ\mathcal{H}_{w_\beta}(n_\beta\varphi_s(m))=0 \ .
\end{equation*}
As $n_\beta \circ \varphi_s\in \End_o(M')$ is injective, we deduce $m=0$.
\end{proof}

\begin{corollary}  There exists a   $G(\mathbb Q_p)$-equivariant sheaf on  $G(\mathbb Q_p)/P(\mathbb Q_p)$ with sections $M$ on $\mathcal C_0$ if and only if   $\mathrm{rk}(G)=1$.
\end{corollary}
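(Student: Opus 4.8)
The plan is to read off both implications from the preceding Proposition; the only point requiring a short verification is that $u_\alpha(1)-1$ is a unit of $\Lambda_{\ell_\alpha}(N_0)$. Throughout I assume $M\neq 0$, which is the intended case: if $M=0$ the zero sheaf trivially has sections $M$ on $\mathcal C_0$ for every $G$, so the statement is to be read with $M\neq 0$ (recall that $M=\mathbb M(\mathbb D(M))$ is nonzero as soon as $\mathbb D(M)$ is, since $\iota_D$ is injective).

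For the ``if'' direction, suppose $\mathrm{rk}(G)=1$ and fix any $s\in T_{++}=Z(L)_{\dagger\dagger}$. By Theorem \ref{main} the functor $\mathbb Y_s$ attaches to $M$ a $G(\mathbb Q_p)$-equivariant sheaf $\mathfrak Y_s$ on $G(\mathbb Q_p)/P(\mathbb Q_p)$ with $\mathfrak Y_s(\mathcal C_0)=M_s^{bd}$. By part (i) of the Proposition, in rank one one has $\Lambda_{\ell_\alpha}(N_0)=\mathcal O_{\mathcal E}$ and $M_s^{bd}=M$ (this is the situation of Proposition \ref{3}). Hence $\mathfrak Y_s$ is a $G(\mathbb Q_p)$-equivariant sheaf with sections $M$ on $\mathcal C_0$.

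For the ``only if'' direction I argue by contraposition. Assume $\mathrm{rk}(G)>1$ and let $\mathfrak Y$ be a $G(\mathbb Q_p)$-equivariant sheaf of $o$-modules on $G(\mathbb Q_p)/P(\mathbb Q_p)$ with $\mathfrak Y(\mathcal C_0)=M$; I must derive a contradiction. The key point is that $u_\alpha(1)-1$ acts invertibly on $M$. Indeed, since $\ell_\alpha(u_\alpha(1))=u^{(2)}(1)$ is a topological generator of $\ell_\alpha(N_0)=N_0^{(2)}\cong\mathbb Z_p$, we may take $\gamma=u_\alpha(1)$ in the presentation of $\Lambda_{\ell_\alpha}(N_0)$ as the ring of (skew) Laurent series $\sum_{n\in\mathbb Z}a_nX^n$ over $\Lambda(N_{\ell_\alpha})$ in $X=[\gamma]-1$; equivalently, the image of $X$ in $\Lambda(N_0)/\mathcal M(N_{\ell_\alpha})\Lambda(N_0)\cong k[[X^{(2)}]]$ is $X^{(2)}\neq 0$, so $X=u_\alpha(1)-1$ lies in the Ore set $S_{\ell_\alpha}(N_0)$ and is a unit of $\Lambda_{\ell_\alpha}(N_0)$. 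Being a unit of the base ring, it acts bijectively on the $\Lambda_{\ell_\alpha}(N_0)$-module $\mathfrak Y(\mathcal C_0)=M$. Thus $\mathfrak Y$ satisfies the hypotheses of part (ii) of the Proposition ($\mathfrak Y(\mathcal C_0)\subset M$ and $u_\alpha(1)-1$ bijective on $\mathfrak Y(\mathcal C_0)$), whence $\mathfrak Y=0$ and $M=\mathfrak Y(\mathcal C_0)=0$, contradicting $M\neq 0$. Therefore $\mathrm{rk}(G)=1$.

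The only non-formal ingredient is thus that $u_\alpha(1)-1$ is a unit of $\Lambda_{\ell_\alpha}(N_0)$, which I expect to be immediate from the definition of the microlocalised ring together with the formula $\ell_\alpha(u_\alpha(1))=u^{(2)}(1)$ of Section \ref{crsg}; no serious obstacle is anticipated. The remaining points demand only care with bookkeeping: recording the standing hypotheses inherited from the Proposition (the root system of $G$ is irreducible, and $M$, equivalently $\mathbb D(M)$, is nonzero), and keeping the two descriptions of $\Lambda_{\ell_\alpha}(N_0)$ — as skew Laurent series and as a microlocalisation of $\Lambda(N_0)$ — consistent.
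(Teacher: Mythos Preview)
Your proposal is correct and follows exactly the intended route: the paper states the Corollary without proof, leaving it as an immediate consequence of the preceding Proposition, and you have correctly identified and verified the one nontrivial point needed to apply part (ii), namely that $u_\alpha(1)-1=X$ is a unit of $\Lambda_{\ell_\alpha}(N_0)$ (since $\ell_\alpha(u_\alpha(1))$ is a topological generator of $N_0^{(2)}$, so $X$ is the variable in the skew Laurent series description). Your remarks about the standing hypotheses ($M\neq 0$, root system irreducible) are also appropriate.
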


\phantomsection
\addcontentsline{toc}{section}{References}

\end{document}